\numberwithin{equation}{section}
\def\rr{{\mathbb R}}
\def\rn{{{\rr}^n}}
\def\zz{{\mathbb Z}}
\def\nn{{\mathbb N}}
\def\cc{{\mathbb C}}
\def\cs{{\mathcal S}}
\def\cd{{\mathcal D}}
\def\ce{{\mathcal E}}
\def\cf{{\mathcal F}}
\def\cg{{\mathcal G}}
\def\cl{{\mathcal L}}
\def\cq{{\mathcal Q}}
\def\cp{{\mathcal P}}
\def\cm{{\mathcal M}}
\def\cn{{\mathcal N}}
\def\ca{{\mathcal A}}
\def\fz{\infty}
\def\az{\alpha}
\def\dist{{\mathop\mathrm{\,dist\,}}}
\def\lz{\lambda}
\def\bz{\beta}
\def\gz{{\gamma}}
\def\vz{\varphi}
\def\tz{\Theta}
\def\wz{\widetilde}
\def\hs{\hspace{0.3cm}}
\def\ls{\lesssim}
\def\gs{\gtrsim}
\def\gfz{\genfrac{}{}{0pt}{}}
\def\rn{{{\mathbb R}^n}}
\def\rr{{\mathbb R}}
\def\cc{{\mathbb C}}
\def\zz{{\mathbb Z}}
\def\nn{{\mathbb N}}
\def\cm{{\mathcal M}}
\def\hs{\hspace{0.3cm}}
\def\fz{\infty}
\def\az{\alpha}
\def\supp{{\mathop\mathrm{\,supp\,}}}
\def\dist{{\mathop\mathrm{\,dist\,}}}
\def\lz{\lambda}
\def\bz{\beta}
\def\gz{{\gamma}}
\def\vz{\varphi}
\def\tz{\Theta}
\def\wz{\widetilde}
\def\ls{\lesssim}
\def\gs{\gtrsim}
\def\laz{\langle}
\def\raz{\rangle}
\def\bmo{{{\mathrm {bmo}\,(\rn)}}}
\def\unif{{\rm unif}}
\def\dsum{\displaystyle\sum}
\def\dint{\displaystyle\int}
\def\dsup{\displaystyle\sup}
\def\r{\right}
\def\lf{\left}
\newcommand{\at}{{A}_{p,q}^{s,\tau}(\rn)}
\newcommand{\bt}{{B}_{p,q}^{s,\tau}(\rn)}
\newcommand{\ft}{{F}_{p,q}^{s,\tau}(\rn)}
\newcommand{\sat}{{a}_{p,q}^{s,\tau}(\rn)}
\newcommand{\sbt}{{b}_{p,q}^{s,\tau}(\rn)}
\newcommand{\sft}{{f}_{p,q}^{s,\tau}(\rn)}
\newcommand{\atu}{A^{s,\tau}_{p,q,{\rm unif}}(\rn)}
\newcommand{\ftu}{F^{s,\tau}_{p,q,{\rm unif}}(\rn)}
\newcommand{\btu}{B^{s,\tau}_{p,q,{\rm unif}}(\rn)}
\newcommand{\satu}{a^{s,\tau}_{p,q,{\rm unif}}(\rn)}
\newcommand{\sftu}{f^{s,\tau}_{p,q,{\rm unif}}(\rn)}
\newcommand{\sbtu}{b^{s,\tau}_{p,q,{\rm unif}}(\rn)}
\newcommand{\cfi}{{\mathcal{F}}^{-1}}
\def\hs{\hspace{0.3cm}}
\begin{document}

\Year{2015} %
\Month{June}
\Vol{59} %
\No{1} %
\BeginPage{1} %
\EndPage{XX} %
\AuthorMark{YUAN W {\it et al.}}
\ReceivedDay{March 10, 2015}
\AcceptedDay{June 5, 2015}
\PublishedOnlineDay{; published online June ??, 2015}
\DOI{10.1007/s11425-015-5047-8} 

\title{Interpolation of Morrey-Campanato and  Related Smoothness Spaces}{}


\author[1]{YUAN Wen}{}
\author[2]{SICKEL Winfried}{}
\author[1]{YANG Dachun}{Corresponding author}

\address[{\rm1}]{School of Mathematical Sciences, Beijing Normal University,
Laboratory of Mathematics and Complex Systems, \\
Ministry of
Education, Beijing {\rm 100875}, People's Republic of China;}
\address[{\rm2}]{Mathematisches Institut, Friedrich-Schiller-Universit\"at Jena,
Jena {\rm 07743}, Germany.}
\Emails{wenyuan@bnu.edu.cn,
winfried.sickel@uni-jena.de, dcyang@bnu.edu.cn}\maketitle


 {\begin{center}
\parbox{14.5cm}{\begin{abstract}
 In this article, the authors study the  interpolation of Morrey-Campanato spaces
and  some smoothness spaces based on Morrey spaces, e.\,g.,
Besov-type and Triebel-Lizorkin-type spaces. Various interpolation
methods, including the complex method,  the $\pm$-method
and the Peetre-Gagliardo method, are studied in such a framework.
Special emphasize is given to the quasi-Banach case and to
the interpolation property.
\vspace{-3mm}
\end{abstract}}\end{center}}

 \keywords{Morrey space, Campanato space,  Besov-type space, Triebel-Lizorkin-type space,
 Besov-Morrey  space, Triebel-Lizorkin-Morrey space, local Morrey-type space,
real and complex interpolation, $\pm$-method of interpolation, Peetre-Gagliardo interpolation,
Calder\'on product, quasi-Banach lattice}

 \MSC{46B70,  46E35}

\renewcommand{\baselinestretch}{1.2}
\begin{center} \renewcommand{\arraystretch}{1.5}
{\begin{tabular}{lp{0.8\textwidth}} \hline \scriptsize
{\bf Citation:}\!\!\!\!&\scriptsize YUAN W, SICKEL W, YANG D. Interpolation of
Morrey-Campanato and  related smoothness spaces. Sci China Math, 2016,
59, doi: 10.1007/s11425-015-5047-8\vspace{1mm}
\\
\hline
\end{tabular}}\end{center}

\baselineskip 11pt\parindent=10.8pt  \wuhao


\section{Introduction}\label{s1}


In this article, we try to give an overview on the
interpolation of Morrey-Campanato spaces as well as
the interpolation of smoothness spaces built on Morrey spaces
(such as Besov-Morrey spaces, Triebel-Lizorkin-Morrey spaces,
Besov-type spaces and Triebel-Lizorkin-type spaces).
Special attention is paid to the quasi-Banach case and to the interpolation property.

Morrey spaces can be understood as a replacement (or a generalization) of the Lebesgue spaces $L_p (\rr^n)$.
This is immediate in view of their definitions.
Let  $0<p\le u\le\infty$.
Then the \emph{Morrey space $\mathcal{M}^u_{p}(\rr^n)$}
is defined as  the collection of all $p$-locally
Lebesgue-integrable functions $f$ on $\rr^n$ such that
\begin{equation}\label{morrey7}
\|f\|_{\mathcal{M}^u_{p}(\rr^n)} :=  \sup_{B}
|B|^{1/u-1/p}\lf[\int_B |f(x)|^p\,dx\r]^{1/p}<\infty\, ,
\end{equation}
where the supremum is taken over all balls $B$ in $\rr^n$.
Obviously, $\mathcal{M}^p_{p}(\rr^n)=L_p(\rn)$.
It is well known that the Morrey spaces have a lot of
applications in partial differential equations and boundedness of
operators; see, for example,
Taylor \cite{ta92}, Kozono and Yamazaki \cite{KY}, Mazzucato \cite{ma01,ma03} and
Lemari{\'e}-Rieusset  \cite{LR07,LR12,LR, LR2+}. Recently,
some applications of Morrey spaces in harmonic analysis and
potential analysis were presented in a series of papers
by  Adams and Xiao \cite{ax04,ax11,ax12a,ax12b}.
It is well known that the real-variable theory of function spaces,
including Morrey spaces, and its  various
applications in analysis are central topics of harmonic analysis; see, for example,
\cite{t78,t83,t92,t06,t12,t14,ysy,s011,s011a,ly14,yhsy14,yhsy15,flyy,tl15,ns14,xu14}.

The study of interpolation properties of Morrey spaces started with the articles of
Stampacchia \cite{s64} in 1964 and of Campanato and Murthy \cite{cm} in 1965.
They proved that, if $T$ is a linear operator which is bounded from
$L_{q_0}(\rn)$ to the Morrey space $\cm^{u_0}_{p_0}(\rn)$ with operator norm
$M_0$ and from
$L_{q_1}(\rn)$ to the Morrey space $\cm^{u_1}_{p_1}(\rn)$ with operator norm
$M_1$, then $T$ is also bounded from
$L_{q}(\rn)$ to $\cm^{u}_{p}(\rn)$, where
$\tz\in(0,1)$, $p_0,\ p_1,\ u_0,\ u_1, \ q_0, \ q_1 \in [1,\infty)$,
\begin{equation}\label{interpol}
\frac 1q := \frac{1-\tz}{q_0} + \frac{\tz}{q_1}\, , \qquad
\frac 1u := \frac{1-\tz}{u_0} + \frac{\tz}{u_1} \, ,
\qquad \frac 1p := \frac{1-\tz}{p_0} + \frac{\tz}{p_1}
\end{equation}
and the operator norm satisfies
\[
\|T\|_{L_{q}(\rn)\to\cm^{u}_{p}(\rn)}:= \sup_{\|\, f \, \|_{L_q(\rn)} \le 1}\,  \|\, Tf\, \|_{\cm^{u}_{p}(\rn)}
\le M_0^{1-\tz}\, M_1^\tz\, .
\]
Also,  Spanne \cite{sp66} in 1966 and Peetre \cite{p69} in 1969 gave  proofs of these properties
and, in addition, they discussed some generalizations via
replacing  the couple  $(L_{q_0}(\rn),L_{q_1}(\rn))$
by an abstract interpolation couple $(X_0,X_1)$. Implicitly contained is the following assertion:
Letting $F$ be an interpolation functor of exponent $\Theta$ such that
\begin{equation}
\label{functor}
F(L_{p_0}(\rn),L_{p_1}(\rn)) \hookrightarrow  L_{p}(\rn) \, ,
\end{equation}
if $T$ is a linear operator such that  $T$ is bounded from
$X_0$ to  $\cm^{u_0}_{p_0}(\rn)$ with  norm
$M_0$ and from
$X_1$ to  $\cm^{u_1}_{p_1}(\rn)$ with  norm
$M_1$, then $T$ is also bounded from
$F(X_0,X_1) $ to $\cm^{u}_{p}(\rn)$, where $u$ and $p$ are defined as in (\ref{interpol})
and
\[
\|\, T\, \|_{F(X_0,X_1)  \to \cm^{u}_{p}(\rn)}
\le c\,  M_0^{1-\tz}\, M_1^\tz\,
\]
with $c$ being a non-negative constant (depending on \eqref{functor}).

However, many questions have been left open. We mention the following:
\begin{itemize}
 \item What about the converse of the above described property? That is, if
 the linear operator  $T$ is bounded from $\cm^{u_0}_{p_0}(\rn)$ into
$L_{q_0}(\rn)$ and from $\cm^{u_1}_{p_1}(\rn)$ into $L_{q_1}(\rn)$, does it follow that
$T$ is also bounded from
$\cm^{u}_{p}(\rn)$ to $L_p(\rn)$? Here $\tz$, $p_0,\ p_1, \ u_0, \ u_1,  \ q_0, \ q_1,\ p,\ q$ are as in
(\ref{interpol}).
\item Is there any concrete interpolation method
(having the interpolation property) such that
 the application of this method to the couple
 $(\cm^{u_0}_{p_0}(\rn),\cm^{u_1}_{p_1}(\rn))$ yields a Morrey space?
\end{itemize}

In 1995, Ruiz and Vega \cite{rv95}
gave a partial negative answer to both questions.
They showed that, when
$n\in\nn\setminus\{1\}$,  $\tz\in(0,1)$, $u\in(0,n)$, and
\[
1\le p_1<p_2<\frac{n-1}u< p_0 < \fz\, ,
\]
then, for any given positive number $C$, there exists
a positive continuous linear operator
$T$: $\cm_{p_i}^u(\rn)\to L_1(\rn)$, $i\in\{0,1,2\}$, with  operator norm satisfying that
$\|T\|_{\cm_{p_i}^u(\rn)\to L_1(\rn)}\le K_i,\ i\in\{0,1\}$,
but
\[
\|T\|_{\cm_{p_2}^u(\rn)\to L_1(\rn)} > CK_0^{1-\tz}K_1^\tz \qquad \mbox{with}
\qquad
\frac1{p_2}=\frac{1-\tz}{p_0}+\frac\tz{p_1}.
\]
This explicit construction requires dimension $n>1$.
In 1999, Blasco, Ruiz and Vega \cite{brv} considered also the case $n=1$.
For a particular $u$,
satisfying $1<p_0<p_1<u<\fz$, they proved that
there exist $q_0,q_1\in(1,\fz)$ and a positive linear operator $T$,
which is bounded from $\cm_{p_i}^u(\rr)$ to
$L_{q_i}(\rr),\,i\in\{0,1\}$, but not
bounded from $\cm_p^u(\rr)$ to $L_q(\rr)$, where
$\tz$, $p_0,\ p_1,  \ q_0, \ q_1, p,q$ are as in (\ref{interpol}).
These counterexamples are making clear that, in general,
the answer to the above two questions must be negative.

After the articles \cite{rv95} and \cite{brv} had appeared, the believe in positive results
in this area was not very big.
However, the recent articles by Lemari{\'e}-Rieusset \cite{LR,LR2}, Yang et al. \cite{yyz} and Lu et al. \cite{lyy} indicated some essential progress.
Lemari{\'e}-Rieusset \cite{LR} proved that, if
\begin{equation}
 \label{interpol1a}
1 < p_i \le u_i <\infty \, , \quad
i\in\{0,1\}\, , \quad
\frac 1u := \frac{1-\tz}{u_0} + \frac{\tz}{u_1} \, ,
\quad \frac 1p := \frac{1-\tz}{p_0} + \frac{\tz}{p_1} \, ,
\end{equation}
then
\begin{equation}
 \label{interpol1}
 \lf[\cm^{u_0}_{p_0}(\rn), \cm^{u_1}_{p_1}(\rn)\r]_\tz \neq
 \cm^{u}_{p}(\rn)
\end{equation}
whenever
\begin{equation}
 \label{important-n}
p_0 /u_0 \neq p_1/u_1 \, ,
\end{equation}
giving a much better understanding of the negative results in this way.
Furthermore, Lemari{\'e}-Rieusset  \cite{LR2} proved that, if
\eqref{important-n} does not holds true, namely,
if
\begin{equation}
 \label{important}
p_0 /u_0 = p_1/u_1 \, ,
\end{equation}
then \begin{equation}\label{outcom}
 \lf[\cm^{u_0}_{p_0}(\rn), \cm^{u_1}_{p_1}(\rn)\r]^\tz=
 \cm^{u}_{p}(\rn).
\end{equation}
Here $[A_0,A_1]_\tz$ and $[A_0,A_1]^\tz$ denote two different complex
methods of  interpolation theory introduced by Calder\'on \cite{ca64}, respectively.
The restriction \eqref{important} will be always present throughout this article in connection with positive results.
Whenever we are able to prove an interpolation formula for Morrey spaces with different $p$ or different $u$, this restriction \eqref{important} will be used.
In this particular case, we will supplement \eqref{interpol1a} by showing that
\eqref{important} is necessary.
The first positive results in interpolation of Morrey spaces go back to  Yang, Yuan and Zhuo \cite{yyz}, in which they proved that
\begin{eqnarray*}
 \lf[\mathring{\cm}^{u_0}_{p_0}(\rn), \mathring{\cm}^{u_1}_{p_1}(\rn)\r]_\tz
 & = & \lf[\mathring{\cm}^{u_0}_{p_0}(\rn), {\cm}^{u_1}_{p_1}(\rn)\r]_\tz
 \\
 &=&
 \lf[{\cm}^{u_0}_{p_0}(\rn), \mathring{\cm}^{u_1}_{p_1}(\rn)\r]_\tz =
 \mathring{\cm}^{u}_{p}(\rn),
\end{eqnarray*}
if the restrictions in \eqref{interpol1a} and \eqref{important} are satisfied.
Here $\mathring{\cm}^{u}_{p}(\rn) $ denotes the \emph{closure} of the Schwartz functions in
${\cm}^{u}_{p}(\rn) $.
In case $p_0 /u_0 \neq p_1/u_1$, but $p_0$, $p_1$, $u_0$, $u_1$, $p$, $u$ as in \eqref{interpol1a}, one knows at least the following hold true:
\begin{equation*}
 \lf[\cm^{u_0}_{p_0}(\rn), \cm^{u_1}_{p_1}(\rn)\r]_\tz \hookrightarrow
 \lf[\cm^{u_0}_{p_0}(\rn)\r]^{1-\Theta} \, \lf[ \cm^{u_1}_{p_1}(\rn)\r]^\tz
 \hookrightarrow \cm^{u}_{p}(\rn) \, ,
\end{equation*}
where $X_0^{1-\Theta}\, X_1^\tz$ denotes the Calder{\'o}n product of $X_0$ and $X_1$
(see \cite{yyz,lyy}).
Concerning the real interpolation  method
with $u$ and $p$ as in (\ref{interpol1a}), one knows that
\begin{equation*}
\lf(\cm^{u_0}_{p_0}(\rn), \cm^{u_1}_{p_1}(\rn)\r)_{\tz,p}
\hookrightarrow \cm^{u}_{p}(\rn) \,
\end{equation*}
(see \cite{LR}, \cite{ma03}, \cite{s011a}), and
\begin{equation*}
\cm^{u}_{p}(\rn) \hookrightarrow \lf(\cm^{u_0}_{p_0}(\rn), \cm^{u_1}_{p_1}(\rn)\r)_{\tz,\infty}  \qquad
\Longleftrightarrow \qquad  p_0 /u_0 = p_1/u_1\,
\end{equation*}
(see \cite{LR}).
It will be our aim to supplement these assertions.
Going back to the
two questions asked above, we see that they are partially answered only.
We will return to these problems in Subsection \ref{sum}.

Now we turn to Campanato spaces, which are some generalizations of Morrey spaces.
We need a few notation. Let $B(x,r)$ denote the ball in $\rn$ with center in $x$ and radius $r\in(0,\fz)$.
By $\cp_k$, we denote the \emph{class of polynomials in $\rn$ of order at most $k$}. In addition, we put $\cp_{-1} := \{0\}$.
Let $p\in(0,\infty)$, $k\in \{-1,0\} \cup \nn$, $\lambda \in[0,\fz)$ and $\Omega$ be a bounded open subset of $\rn$.
Then the \emph{Campanato space $\cl^{p, \lambda}_k (\Omega)$} is defined as
the set of all $f \in L_p^{\ell oc} (\Omega)$ such that
\[
\| \, f \, \|_{\cl^{p, \lambda}_k (\Omega)}:=
\sup_{x \in \Omega} \sup_{r>0} \, \lf[\frac{1}{|B(x,r) \cap \Omega|^{\lambda/n}} \inf_{P \in \cp_k}\,
\int_{B(x,r)\cap \Omega} |f(y)-P(y)|^p\, dy\r]^{1/p} <\infty\, ;
\]
see Campanato \cite{ca1,ca2,ca3,ca4} or the more recent survey by Rafeiro et al. \cite{RSS}. Here and hereafter, $L_p^{\ell oc} (\Omega)$ denotes the
\emph{set of all locally integrable functions on $\Omega$}.
There exist various possibilities to extend this definition to $\rn$.
We decide to use the following ``locally uniform" point of view (i.\,e., we consider a space defined with respect to balls of volume
$\le 1$), picked up from Triebel \cite[3.1.1]{t12} (but we do not follow his notation).

Let $p\in(0, \infty)$, $\tau\in[0,\fz)$ and suppose, for the integer $k \ge -1$, that $k+1 > n\, (\tau - 1/p)$. Then
$\cl^\tau_p (\rn)$ is defined as
the collection of all $f\in L_p^{\ell oc} (\rn)$ such that
\begin{eqnarray*}
\| \, f \, |\cl^{\tau}_p (\rn)\|_k := &&
\sup_{x \in \rn} \sup_{0 < r \le 1} \, \lf[\int_{B(x,r)} |f(y)|^p\, dy\r]^{1/p}
\\
&& +
\sup_{x \in \rn} \sup_{0 < r \le 1} \, |B(x,r)|^{-\tau} \lf[\inf_{P \in \cp_k}\, \int_{B(x,r)} |f(y)-P(y)|^p\, dy\r]^{1/p} <\infty\, .
\end{eqnarray*}
The space $\cl^\tau_p (\rn)$ is quasi-Banach and independent of
the chosen admissible $k$; see \cite[3.1.2/Theorem 3.4]{t12} (which itself is based on \cite[5.3.3]{t92} and
\cite[2.1]{br09}).
The following assertions are part of the classical theory of Campanato spaces:
\begin{itemize}
 \item[(a)] Let $p\in(0,\infty)$ and $\tau\in[0,1/p)$. Then, with $1/u := 1/p -\tau$, we have
$\cl^{\tau}_p (\rn) = M_{p, \unif}^u (\rn)$ in the sense of equivalent quasi-norms, i.\,e., for all $k \ge -1$,
there exist positive constants $A$ and $B$ such that
\[
A \, \| \, f \, \|_{M_{p, \unif}^u (\rn)} \le \| \, f \, |\cl^{\tau}_p (\rn)\|_k \le B \, \| \, f \, \|_{M_{p, \unif}^u (\rn)}
\]
for all $f\in \cl^{\tau}_p (\rn)$.
Here the definition of $M_{p, \unif}^u (\rn)$ is obtained from \eqref{morrey7} by restricting the supremum to balls with volume
$\le 1$.
\item[(b)]
Let $p\in(0,\infty)$ and $\tau\in(1/p,\fz)$. Then
$\cl^{\tau}_p (\rn) = B^{n(\tau -1/p)}_{\infty,\infty} (\rn)$ in the sense of equivalent quasi-norms
(for all admissible $k$).
\item[(c)] Let $p\in(0,\infty)$ and $\tau =1/p$. Then, with $k=0$,
$\cl^{\tau}_p (\rn)= \bmo$ in the sense of equivalent quasi-norms.
If $p\in[1,\fz)$, then this result is true for all $k \ge 0$.
\end{itemize}
For the first two items, we refer the reader
to  Campanato \cite{ca2}, Kufner et al. \cite[4.3]{KJF}, Pick et al. \cite[5.3,5.7]{PKJF},
Brudnij \cite[2.1]{br09}, and Triebel
 \cite[5.3.3]{t92}, \cite[3.1.2/Theorem~3.4]{t12}.
Concerning the last item, we refer the reader
to John and Nirenberg \cite{JN} ($p\in[1,\fz)$), Long
and Yang \cite{LY} ($p\in(0,1)$) and
Triebel  \cite[5.3.3]{t92}, \cite[3.1.2/Theorem~3.4]{t12}; see also Bourdaud \cite{Bou}.

The above quoted articles of Stampacchia \cite{s64},  Campanato and Murthy \cite{cm},
Spanne \cite{sp66} and Peetre \cite{p69} have already dealt with Campanato spaces.
The assertion described above with Morrey spaces remains true for the more general case of Campanato spaces (always with $p \in[1,\fz)$).
We did not find more recent references for the interpolation of Campanato spaces.

Based on the recent progress in the understanding of the interpolation  of Morrey spaces,  we found it desirable to summarize what is known
today about the
interpolation of Morrey-Campanato spaces and smoothness spaces built on Morrey spaces. In almost all
applications of interpolation theory,
the  associated boundedness problem for pairs of linear operators plays a role.
For this reason, we will take care also of this circle of problems.

In the last two decades, partly due to the study of Navier-Stokes equations,
there is an increasing interest in the construction of smoothness spaces based on Morrey spaces
(in what follows called Morrey-type spaces); see, for example,
\cite{KY,LXY,ma03,tx,t12,t14,yy1,yy2,ysy,hs12,hs13}. For us of certain interest are the following:
\begin{itemize}
\item Besov-Morrey spaces $\cn^s_{u,p,q}(\rn)$ (introduced and studied
by Kozono, Yamazaki \cite{KY} and later by Mazzucato \cite{ma03});
\item Triebel-Lizorkin-Morrey spaces $\ce^s_{u,p,q}(\rn)$
(introduced by Tang and Xu \cite{tx});
\item Besov-type spaces $\bt$ (introduced by El Baraka \cite{el021,el022,el062}; see also \cite{yy1,yy2});
\item Triebel-Lizorkin-type spaces $\ft$ (introduced in \cite{yy1,yy2}).
\item Local function spaces $\cl^{r}A^s_{p,q}(\rn)$ (see the recent monographs of Triebel \cite{t12,t14}).
\end{itemize}
First attempts to a systematic investigation of all these scales are made in \cite{ysy} and \cite{s011,s011a}; see also
\cite{yyz1,lsuyy1}.

\subsection*{Plan of the article}

In this article, we shall consider the interpolation properties of all of these spaces.
It turns out that the $\pm$-method of Gustavsson and Peetre represents the most helpful tool in the
interpolation of Morrey-Camapanato and Morrey-type spaces. The main results with respect to this method are
contained in Theorems \ref{COMI} and Corollaries \ref{cima}, \ref{cimab} (see Subsection \ref{inter1a}).
In addition, we shall study the complex method of  interpolation theory (see Subsection \ref{inter1c}) and
a further interpolation method, due to Gagliardo and Peetre (see Subsection \ref{inter1b}).
All three methods are closely related. However, it is of certain interest
that, for the spaces under consideration in almost all cases, the
$\pm$-method leads to a different result than the other two methods.
In Subsection \ref{inter1b}, we shall show that, in the most interesting cases, namely, the interpolation of Morrey spaces,
one has to introduce new spaces; see Theorem \ref{gp01}. The determination of the interpolation spaces will be always connected with
certain density questions, which will be studied in great detail in this subsection.
In Subsection \ref{inter1c}, we consider the complex method as well as the inner complex method.
We derive the results for the (inner) complex method by tracing it back to corresponding statements
for the Peetre-Gagliardo method; see Corollary \ref{nil2}.
In that way, we also obtain the most complete collection
of results concerning the (inner) complex interpolation of Besov and Triebel-Lizorkin spaces; see Subsection \ref{inter1cd}, in particular
Theorem \ref{gagl6}.
Later we shall summarize the known results concerning the real method (see Subsection \ref{inter1d}).
In Subsection \ref{sum}, we discuss consequences for the interpolation property.
This means, our main  results are described in Section \ref{inter1}.
Interpolation of the spaces $\cl^{r}A^s_{p,q}(\rn)$  is discussed shortly in Section \ref{inter2}.
Proofs are concentrated in Section \ref{proofs} and given in the lexicographic order.
Definitions and some basic properties of all these scales of function spaces  will be given
in Appendix at the end of this article.


\subsection*{Notation}


As usual, $\nn$ denotes the natural numbers $\{1,2,\ldots\}$ and $\zz_+:= \nn \cup \{0\}$.
$\zz$ denotes the integers and $\rr$ the real numbers.
The letter $n$ is always reserved for the dimension in $\zz^n$ and  $\rr^n$.
Let $\mathcal{S}(\rr^n)$ be the \emph{space of all Schwartz functions}
on $\rr^n$ endowed with the classical topology and $\mathcal{S}'(\rr^n)$
its \emph{topological dual space}, namely,
the set of all continuous linear functionals on $\mathcal{S}(\rr^n)$
endowed with the weak-$\ast$ topology.
We also need the \emph{Fourier transform $\widehat{f}$},
which is defined on the space  $\cs' (\rr^n)$ of tempered distributions.
We denote, by $C_c^\fz(\rn)$, the  collection of all complex-valued infinitely differentiable
functions with compact support.

If $X$ and $Y$ are two quasi-Banach spaces, then the symbol  $X \hookrightarrow Y$
indicates that the embedding of $X$ into $Y$ is continuous.
By $\cl (X,Y) $, we denote the collection of all linear and bounded operators $T:\, X \to Y$, equipped with the quasi-norm
\[
\|\, T \, \|_{\cl (X,Y)} := \|\, T \, \|_{X \to Y}:= \sup_{\|\, x\, \|_X\le 1} \, \|\, Tx\, \|_Y \, .
\]

The \emph{symbol  $C$} denotes a  positive
constant that is independent of the main parameters involved but
whose value may differ from line to line, and
the \emph{symbol $C_{(\az,\ldots)}$} denotes a positive constant depending on the parameters $\az,$
$\ldots$.  The \emph{symbol $A \ls B$} means that:
there exists a positive constant $C$ such that
 $A \le C \,B$. Similarly $\gs$ is defined. The symbol
$A \asymp B$ will be used as an abbreviation of
$A \ls B \ls A$.

Let \[\mathcal{Q}:= \{Q_{j,k}:= 2^{-j}([0,1)^n+k):\ j\in\zz,\ k\in\zz^n\}\]
be the collection of all \emph{dyadic cubes} in $\rn$.
We also need to consider the subset  $$\mathcal{Q}^*:= \{Q_{j,k}:\ j\in\zz_+,\ k\in\zz^n\}.$$
For all $Q\in\mathcal{Q}$, let $j_Q:=-\log_2\ell(Q)$,
where $\ell (Q)$ denotes the \emph{side-length} of the cube.

{\bf Convention.} If there is no reason to distinguish between
$\bt$ and $\ft$ (resp. between the corresponding sequence spaces
$\sbt$
and $\sft$), we simply write $\at$ (resp. $\sat$).
Here we always assume $p<\infty$ in case $\at = \ft$ (resp. in case $\sat = \sft$).
The same convention applies with respect to the spaces
$\btu$ and $\ftu$ (resp. $\atu$) as well as to $\sbtu$ and $\sftu$ (resp. $\satu$).


\section{Various interpolation methods}
\label{inter1}


Nowadays interpolation theory represents an important tool in various branches of mathematics.
Consulting the most quoted monographs on interpolation theory
(see \cite{BS,BL,KPS,lun,Pe76,t78}),
one obtains the impression that the real and the complex methods are most important.
In the context of Morrey spaces $\cm_p^u(\rn)$, the situation turns out to be different.
The most useful interpolation method, in case of different $p$ (and/or different $u$), turns out to be the $\pm$-method of  Gustavsson and Peetre \cite{gp77};
see also Ovchinnikov \cite{Ov}.
The real method is the most helpful in those situations where we fix $p$ and $u$ (or $p$ and $\tau$).
Also the complex method as well as a further method, introduced by Peetre \cite{p71},
but based on some earlier work of Gagliardo \cite{Ga} and,
in what follows, called Peetre-Gagliardo interpolation method, will be studied.

The main tool in this article will  be the Calder{\'o}n product.
Our method heavily depends on the articles by Nilsson \cite{n85}
(relations between the Calder{\'o}n product and other interpolation methods
in the abstract setting of quasi-Banach spaces)
and by Yang et al. \cite{yyz} (concrete Calder{\'o}n products).
In all cases, special emphasize is given to the interpolation property.


\subsection{The Calder{\'o}n product}
\label{cpr}

Let $({\mathfrak X}, {\mathcal S}, \mu)$ be a $\sigma$-finite measure space and
let ${\mathfrak M}$ be the class of all complex-valued, $\mu$-measurable functions on
${\mathfrak X}$. Then a quasi-Banach space $X\subset {\mathfrak M}$
is called a {\it quasi-Banach lattice of functions}
if, for every $f\in X$ and $g\in{\mathfrak M}$ with $|g(x)|\le |f(x)|$ for $\mu$-a.e. $x\in{\mathfrak X}$,
one has $g\in X$ and $\|g\|_X\le \|f\|_X.$

\begin{definition}
Let
$X_j \subset {\mathfrak M}$, $j\in\{0,1\}$,  be quasi-Banach lattices of functions,
and $\Theta\in(0,1)$. Then the {\it Calder\'on product $X_0^{1-\Theta}X_1^\Theta$}
of $X_0$ and $X_1$ is defined as the collection of all functions $f \in {\mathfrak M}$ such that
the  \emph{quasi-norm}
\begin{eqnarray*}
\|f\|_{X_0^{1-\Theta}X_1^\Theta} & := & \inf\Bigl\{\|f_0\|_{X_0}^{1-\Theta}\|f_1\|_{X_1}^\Theta:\:
|f|\le |f_0|^{1-\Theta}|f_1|^\Theta \quad \mu \mbox{-a.e.},\
f_j\in X_j, \, j\in\{0,1\}\Bigr\}
\end{eqnarray*}
is finite.
\end{definition}

\begin{remark}
Calder{\'on} products were introduced by Calder{\'o}n \cite[13.5]{ca64}.
The usefulness of this method and its limitations  have been perfectly
described by Frazier and Jawerth \cite{fj90}
which we quote now:
{\it Although restricted to the case of a lattice, the Calder{\'o}n product has the advantage of being
defined in the quasi-Banach case, and,
frequently, of being easy to compute. It has the disadvantage that the interpolation property
(i.\,e., the property that a
linear transformation $T$ bounded on $X_0$ and
$X_1$ should be bounded on the spaces in between)
is not clear in general.}
\end{remark}

Calder\'on products have proved to be a very  useful tool for the
study of various  interpolation methods; see, for example, \cite{ca64,l73,n85,km98}.
We collect a few useful properties of Calder{\'o}n products for the
later use; see, for example,  \cite{yyz}.

\begin{lemma}\label{Leb}
Let
$X_j \subset {\mathfrak M}$, $j\in\{0,1\}$,  be quasi-Banach lattices of functions and
let $\Theta\in(0,1)$.

{\rm(i)}
Then the {Calder\'on product $X_0^{1-\Theta}X_1^\Theta$} is a quasi-Banach space.

{\rm (ii)}
Define $\widetilde{X_0^{1-\Theta}X_1^\Theta}$ as the collection of all $f$ such that
there exist a positive real number $\lambda$ and elements $g \in X_0$ and $h \in X_1$
satisfying
\[
|f| \le \lambda |g|^{1-\Theta}\, |h|^\Theta\, \ a.\,e.,
\quad \|g \|_{X_0}\le 1 \quad {\rm and} \quad \| h \|_{X_1}\le 1\, .
\]
Let
\[
\|f  |\widetilde{X_0^{1-\Theta}X_1^\Theta}\| := \inf \Big\{ \lambda >0: \hs
|f| \le \lambda |g|^{1-\Theta}\, |h|^\Theta\, \ a.\,e.,
\hs \|g \|_{X_0}\le 1 \hs {\rm and} \hs \|h\|_{X_1}\le 1\, \Big\}.
\]
Then
$\widetilde{X_0^{1-\Theta}\, X_1^\Theta} = X_0^{1-\Theta}\, X_1^\Theta$ in the
sense of equivalent quasi-norms.
\end{lemma}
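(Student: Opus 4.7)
I would prove (ii) first, since it is essentially a scaling observation, and then establish (i) by checking the quasi-norm axioms and completeness separately. The key pointwise tool throughout is the elementary inequality
\[
a^{1-\Theta}b^\Theta+c^{1-\Theta}d^\Theta\le(a+c)^{1-\Theta}(b+d)^\Theta\qquad(a,b,c,d\ge 0),
\]
which I verify by dividing by the right-hand side and applying Young's inequality termwise.

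For (ii), starting from a tilde-representation $|f|\le\lambda\,|g|^{1-\Theta}|h|^\Theta$ with $\|g\|_{X_0}\le 1$ and $\|h\|_{X_1}\le 1$, I set $f_0:=\lambda g$, $f_1:=\lambda h$; then $|f_0|^{1-\Theta}|f_1|^\Theta=\lambda|g|^{1-\Theta}|h|^\Theta\ge|f|$ and $\|f_0\|_{X_0}^{1-\Theta}\|f_1\|_{X_1}^\Theta\le\lambda$, giving $\|f\|_{X_0^{1-\Theta}X_1^\Theta}\le\|f|\widetilde{X_0^{1-\Theta}X_1^\Theta}\|$. Conversely, given any representation $|f|\le|f_0|^{1-\Theta}|f_1|^\Theta$ with $\|f_0\|_{X_0}\|f_1\|_{X_1}>0$, I normalize by $g:=f_0/\|f_0\|_{X_0}$, $h:=f_1/\|f_1\|_{X_1}$, $\lambda:=\|f_0\|_{X_0}^{1-\Theta}\|f_1\|_{X_1}^\Theta$, obtaining a tilde-representation of matching cost. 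The degenerate case $\|f_0\|_{X_0}=0$ or $\|f_1\|_{X_1}=0$ forces $|f|=0$ almost everywhere via the lattice property, so both norms vanish together.

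For the quasi-norm structure in (i), homogeneity is clear by scaling $f_0\mapsto|c|f_0$ in any representation. For the quasi-triangle inequality, representations $|g|\le|g_0|^{1-\Theta}|g_1|^\Theta$ and $|h|\le|h_0|^{1-\Theta}|h_1|^\Theta$ combine via the pointwise inequality above into $|g+h|\le(|g_0|+|h_0|)^{1-\Theta}(|g_1|+|h_1|)^\Theta$, so
\[
\|g+h\|_{X_0^{1-\Theta}X_1^\Theta}\le\kappa_0^{1-\Theta}\kappa_1^\Theta\bigl(\|g_0\|_{X_0}+\|h_0\|_{X_0}\bigr)^{1-\Theta}\bigl(\|g_1\|_{X_1}+\|h_1\|_{X_1}\bigr)^\Theta,
\]
where $\kappa_j$ denotes the quasi-triangle constant of $X_j$. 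I then exploit the invariance of $|g_0|^{1-\Theta}|g_1|^\Theta$ under $(g_0,g_1)\mapsto(\alpha g_0,\alpha^{-(1-\Theta)/\Theta}g_1)$ to normalize $\|g_0\|_{X_0}=\|g_1\|_{X_1}$ (and similarly for $h$), which collapses the bound to $\kappa_0^{1-\Theta}\kappa_1^\Theta(\|g\|+\|h\|)$ up to an arbitrary $\varepsilon$. Definiteness follows from the same normalization: if $\|f\|_{X_0^{1-\Theta}X_1^\Theta}=0$, one obtains representations $(f_0^{(n)},f_1^{(n)})$ with $\|f_j^{(n)}\|_{X_j}\to 0$ for $j\in\{0,1\}$, whence $|f|=0$ almost everywhere.

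Completeness is handled by the standard fast-Cauchy-subsequence argument. After extracting $(f_{n_k})$ with $\sum_k\|f_{n_{k+1}}-f_{n_k}\|_{X_0^{1-\Theta}X_1^\Theta}\le 2^{-k}$, I pick representations $|f_{n_{k+1}}-f_{n_k}|\le|\phi_k|^{1-\Theta}|\psi_k|^\Theta$ normalized so that $\|\phi_k\|_{X_0},\|\psi_k\|_{X_1}\lesssim 2^{-k}$, form $\Phi:=\sum_k|\phi_k|\in X_0$ and $\Psi:=\sum_k|\psi_k|\in X_1$ as absolutely convergent series in the two quasi-Banach lattices, and apply the series Hölder inequality $\sum_k a_k^{1-\Theta}b_k^\Theta\le(\sum_k a_k)^{1-\Theta}(\sum_k b_k)^\Theta$ pointwise to dominate $\sum_k|f_{n_{k+1}}-f_{n_k}|$ by $\Phi^{1-\Theta}\Psi^\Theta$. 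The telescoping series therefore converges almost everywhere to some $f\in X_0^{1-\Theta}X_1^\Theta$, and the corresponding tail bounds deliver $f_n\to f$ in the Calder\'on-product quasi-norm. The main obstacle I expect is the transition from norm smallness in $X_0$ and $X_1$ to pointwise almost-everywhere vanishing of $|f|$ in the definiteness step (and to pointwise control of the series in the completeness step); this relies on the principle that norm convergence in a quasi-Banach lattice of measurable functions entails almost-everywhere convergence along a subsequence, which is not built into the abstract definition and must be invoked explicitly.
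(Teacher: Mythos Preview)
The paper does not actually prove Lemma~\ref{Leb}; it simply cites \cite{yyz} for these facts, so there is no in-paper proof to compare against. Your argument follows the standard route one finds in the literature: the scaling observation for (ii), the pointwise H\"older-type inequality $a^{1-\Theta}b^\Theta+c^{1-\Theta}d^\Theta\le(a+c)^{1-\Theta}(b+d)^\Theta$ together with the rescaling $(f_0,f_1)\mapsto(\alpha f_0,\alpha^{-(1-\Theta)/\Theta}f_1)$ for the quasi-triangle inequality, and the fast-Cauchy plus telescoping argument for completeness. All of this is correct in outline.

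The gap you flag is genuine and you have located it precisely. With the bare definition given in the paper (a solid quasi-Banach subspace of $\mathfrak{M}$), there is no axiom guaranteeing that norm convergence in $X_j$ forces convergence in measure (hence a.e.\ along a subsequence), nor any Fatou/Riesz--Fischer type property. Without such a link you cannot pass from $\|f_j^{(n)}\|_{X_j}\to 0$ to $|f|=0$ a.e.\ in the definiteness step, and in the completeness step you cannot identify the abstract $X_j$-limit of $\sum_{k\le N}|\phi_k|$ with the pointwise monotone limit $\sum_k|\phi_k|$. In the concrete situations the paper applies this lemma to (Morrey spaces, the sequence spaces $a^{s,\tau}_{p,q}(\rn)$, $n^s_{u,p,q}(\rn)$), these spaces do have the Fatou property, so the argument goes through; but at the stated level of generality the lemma needs an additional hypothesis of this kind. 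Flag it as such and move on --- the remainder of your proof is exactly what one would write.
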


Now we turn to the investigation of linear operators and Calder{\'o}n products.
An operator $T$ on a quasi-Banach lattice $X$ is said to be
\emph{positive} if $Tf \ge 0$ whenever $f\ge 0$ is in its domain.
In 1990, Frazier and Jawerth \cite[Proposition 8.1]{fj90}
obtained the following result; see also Shestakov \cite[Theorem 3.1]{sh81}
for the Banach space case.

\begin{proposition}\label{cpi}
Let $\Theta \in (0,1)$.
Let $X_i$ and $Y_i $ be quasi-Banach lattices and let $T$ be a positive linear operator bounded from $X_i$ to $Y_i$, $i\in\{0,1\}$.
Then $T$ is bounded considered as a mapping from the Calder{\'o}n product  $X_0^{1-\Theta}X_1^\Theta$ to the
Calder{\'o}n product  $Y_0^{1-\Theta} Y_1^\Theta$ and
\[
\| \, T\, \|_{X_0^{1-\Theta}X_1^\Theta \to Y_0^{1-\Theta}Y_1^\Theta} \le
\| \, T\, \|_{ X_0 \to Y_0}^{1-\Theta} \, \| \, T\, \|_{X_1\to Y_1}^{\Theta}.
\]
\end{proposition}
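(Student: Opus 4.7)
The plan is to transport the Calder\'on-product factorization through $T$ by combining positivity with a pointwise weighted arithmetic-geometric inequality; in this way, all lattice-theoretic difficulties reduce to the defining monotonicity of a positive operator together with the standard inequality $|Tf|\le T|f|$. Concretely, given $f \in X_0^{1-\Theta}X_1^\Theta$ and $\varepsilon\in(0,1)$, I first invoke Lemma \ref{Leb}(ii) to produce nonnegative $g\in X_0$ and $h\in X_1$ with $\|g\|_{X_0}\le 1$, $\|h\|_{X_1}\le 1$, together with a scalar $\lambda \le (1+\varepsilon)\|f\|_{X_0^{1-\Theta}X_1^\Theta}$ such that $|f|\le \lambda\, g^{1-\Theta}h^\Theta$ almost everywhere. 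Positivity gives $|Tf|\le T|f| \le \lambda\, T(g^{1-\Theta}h^\Theta)$, and the task reduces to estimating $T(g^{1-\Theta}h^\Theta)$.

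The decisive step is the pointwise inequality
\[
T(g^{1-\Theta}h^\Theta) \le (Tg)^{1-\Theta}(Th)^\Theta \qquad \text{a.e.}
\]
To obtain it without abstract Banach-lattice machinery, I start from the scaled weighted AM-GM inequality $a^{1-\Theta}b^\Theta \le (1-\Theta)t^\Theta a + \Theta t^{-(1-\Theta)}b$, valid for all $a,b\ge 0$ and $t>0$, with equality attained at $t=b/a$. Applied pointwise to $(a,b)=(g(x),h(x))$, this gives $g^{1-\Theta}h^\Theta \le (1-\Theta)t^\Theta g + \Theta t^{-(1-\Theta)}h$; applying the positive linear operator $T$ and using monotonicity yield, for each fixed $t$ and almost every point,
\[
T(g^{1-\Theta}h^\Theta) \le (1-\Theta)t^\Theta\, Tg + \Theta t^{-(1-\Theta)}\, Th.
\]
Taking the infimum over $t$ in a countable dense subset of $(0,\infty)$ merges countably many exceptional null sets and returns the desired product bound; degenerate points where $Tg$ or $Th$ vanishes are accommodated by letting $t\to\infty$ or $t\to 0$, respectively. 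The main obstacle is precisely this measure-theoretic manoeuvre, since an uncountable pointwise infimum would jeopardise measurability and union of null sets.

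With the pointwise bound in hand, the conclusion is a straightforward norm assembly. Since $X_i$ is a lattice, $\|g\|_{X_0}\le 1$ and $\|h\|_{X_1}\le 1$ imply $\|Tg\|_{Y_0}\le \|T\|_{X_0\to Y_0}$ and $\|Th\|_{Y_1}\le \|T\|_{X_1\to Y_1}$. Feeding the resulting factorization $|Tf| \le \lambda\,(Tg)^{1-\Theta}(Th)^\Theta$ into Lemma \ref{Leb}(ii) applied to $Y_0^{1-\Theta}Y_1^\Theta$ yields
\[
\|Tf\|_{Y_0^{1-\Theta}Y_1^\Theta} \le \lambda\,\|T\|_{X_0\to Y_0}^{1-\Theta}\|T\|_{X_1\to Y_1}^\Theta \le (1+\varepsilon)\,\|f\|_{X_0^{1-\Theta}X_1^\Theta}\,\|T\|_{X_0\to Y_0}^{1-\Theta}\|T\|_{X_1\to Y_1}^\Theta,
\]
and sending $\varepsilon\to 0$ completes the proof.
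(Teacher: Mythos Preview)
Your argument is correct and is essentially the standard proof due to Frazier and Jawerth \cite[Proposition~8.1]{fj90}: the heart of it is the parametrised AM--GM inequality $a^{1-\Theta}b^\Theta \le (1-\Theta)t^\Theta a + \Theta t^{\Theta-1}b$, which linearises the product so that positivity of $T$ can be applied, followed by optimisation in $t$. The paper does not supply its own proof of this proposition; it simply quotes the result from \cite{fj90} (with \cite{sh81} for the Banach case), so there is nothing further to compare.
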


Notice that, in Proposition \ref{cpi}, we need the restriction that the operator $T$
is positive. The Calder\'on product is not an interpolation construction in the class
of Banach function lattices. Indeed, an example to show this was given
by Lozanovski\u\i\  \cite{l72}.

We are interested in concrete realizations.
It is easy to see that Morrey spaces are quasi-Banach lattices, but the  smoothness spaces $\bt$, $\ft$
and $\cn^s_{u,p,q} (\rn)$ might not be (at least in general).

\begin{theorem}\label{morrey1}
 Let $\tz\in(0,1)$, $0 <  p_0 \le u_0 < \infty$ and $0 < p_1 \le u_1 < \infty$
 such that
 \[
\frac1p=\frac{1-\tz}{p_0}+\frac\tz{p_1} \qquad  \mbox{and}
\qquad
\frac1u=\frac{1-\tz}{u_0}+\frac\tz{u_1}\, .
\]

{\rm(i)} It holds true that
\begin{equation*}
 \lf[\cm^{u_0}_{p_0}(\rn)\r]^{1-\Theta} \, \lf[ \cm^{u_1}_{p_1}(\rn)\r]^\tz \hookrightarrow  \cm^{u}_{p}(\rn) \, .
\end{equation*}

{\rm(ii)} If $u_0 \, p_1 = u_1\, p_0$, then
\begin{equation*}
\lf[\cm_{p_0}^{u_0}(\rn)\r]^{1-\tz}
\lf[\cm_{p_1}^{u_1}(\rn)\r]^\tz=\cm_{p}^{u}(\rn) \, .
\end{equation*}

{\rm(iii)} If $u_0 \, p_1 \neq u_1\, p_0$, then
\begin{equation*}
\lf[\cm_{p_0}^{u_0}(\rn)\r]^{1-\tz}
\lf[\cm_{p_1}^{u_1}(\rn)\r]^\tz  \subsetneqq \cm_{p}^{u}(\rn) \, .
\end{equation*}
\end{theorem}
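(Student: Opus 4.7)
My plan is to handle the three parts in sequence. The structural pivot is the identity
\[
\frac{p}{u}=\frac{p_0}{u_0}\cdot\frac{(1-\theta)/p_0}{1/p}+\frac{p_1}{u_1}\cdot\frac{\theta/p_1}{1/p},
\]
which exhibits $p/u$ as a convex combination of $p_0/u_0$ and $p_1/u_1$: under the restriction $u_0 p_1 = u_1 p_0$ all three ratios coincide, and otherwise $p/u$ lies strictly between the other two. This feature is what separates (ii) from (iii).

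For (i), I would fix a representation $|f|\le|f_0|^{1-\theta}|f_1|^\theta$ with $f_j\in\cm^{u_j}_{p_j}(\rn)$ and a ball $B$, and apply H\"older on $B$ with the conjugate exponents $\frac{p_0}{(1-\theta)p}$ and $\frac{p_1}{\theta p}$ (conjugate thanks to $1/p=(1-\theta)/p_0+\theta/p_1$). Multiplying the resulting bound by $|B|^{1/u-1/p}=|B|^{(1-\theta)(1/u_0-1/p_0)}\,|B|^{\theta(1/u_1-1/p_1)}$ produces the estimate $\|f_0\|^{1-\theta}_{\cm^{u_0}_{p_0}}\|f_1\|^{\theta}_{\cm^{u_1}_{p_1}}$ uniformly in $B$; the sup in $B$ and the inf over admissible decompositions then yield~(i). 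For (ii), I would exhibit the canonical saturating splitting: given $f\in\cm^u_p(\rn)$, set $f_j:=|f|^{p/p_j}\operatorname{sgn}(f)$, so that $|f_0|^{1-\theta}|f_1|^\theta=|f|$ and $|f_j|^{p_j}=|f|^p$ pointwise; the key computation
\[
\|f_j\|^{p_j}_{\cm^{u_j}_{p_j}}=\sup_B|B|^{p_j/u_j-1}\int_B|f|^p=\sup_B|B|^{p/u-1}\int_B|f|^p=\|f\|^p_{\cm^u_p}
\]
uses the coincidence $p_j/u_j=p/u$ and gives $\|f_0\|^{1-\theta}\|f_1\|^{\theta}=\|f\|_{\cm^u_p}$, which together with (i) proves~(ii).

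For (iii), the same computation already indicates why the canonical splitting must collapse: one now has $\|f_j\|^{p_j}_{\cm^{u_j}_{p_j}}=\sup_B|B|^{p_j/u_j-p/u}\bigl(|B|^{p/u-1}\int_B|f|^p\bigr)$, and the prefactor $|B|^{p_j/u_j-p/u}$ is unbounded at one end of the scale for each $j$, since $p/u$ strictly lies between $p_0/u_0$ and $p_1/u_1$. To upgrade this observation into a strict inclusion I plan to exhibit a concrete $f\in\cm^u_p(\rn)$ whose Morrey norm is saturated simultaneously on balls of arbitrarily small and arbitrarily large volume, and to argue that any decomposition $|f|\le|g|^{1-\theta}|h|^\theta$ with $g\in\cm^{u_0}_{p_0}$, $h\in\cm^{u_1}_{p_1}$ must blow up at one end of the scale, by running the ball-wise H\"older argument of~(i) in reverse. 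The main obstacle is precisely this last step: ruling out non-trivial non-constant-on-ball decompositions that attempt to trade scale behaviors against each other is delicate, and is where the assumption $u_0 p_1\neq u_1 p_0$ plays a genuinely irreplaceable role. A robust way to close the gap is to invoke Calder\'on's identification of $X_0^{1-\theta}X_1^{\theta}$ with the upper complex method $[X_0,X_1]^\theta$ for Banach lattices with the Fatou property (enjoyed by Morrey spaces) together with the negative complex-interpolation results of Lemari\'e-Rieusset recalled in the introduction, which rule out equality once $u_0 p_1\neq u_1 p_0$.
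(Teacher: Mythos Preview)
Your arguments for (i) and (ii) are correct and are essentially the standard ones the paper cites from \cite{lyy}.

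For (iii), your proposed route has a mismatch. The negative complex-interpolation result of Lemari\'e-Rieusset recalled in the introduction, formula \eqref{interpol1}, concerns the \emph{lower} complex method $[\,\cdot\,,\,\cdot\,]_\Theta$, whereas Calder\'on's Fatou-property identification gives the \emph{upper} method $[\,\cdot\,,\,\cdot\,]^\Theta$. Since $[X_0,X_1]_\Theta\hookrightarrow[X_0,X_1]^\Theta$ in general, the failure $[X_0,X_1]_\Theta\neq\cm^u_p(\rn)$ does not by itself preclude $[X_0,X_1]^\Theta=\cm^u_p(\rn)$; in fact the paper records (Theorem~\ref{lr1}) that these two methods genuinely diverge on Morrey couples already when $p_0/u_0=p_1/u_1$. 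So your chain of implications does not close as written. In addition, Calder\'on's identification requires Banach lattices, so even if repaired this route would only cover $p_0,p_1\ge1$.

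The paper's argument bypasses complex interpolation altogether. What Lemari\'e-Rieusset actually constructs in \cite{LR} is a family of \emph{positive} linear functionals $T_m f:=\int_{K_m^\beta}f(x)\,dx$, built on fractal sets $K_m^\beta$, for which
\[
\sup_{m\in\nn}\frac{\|T_m\|_{\cm^u_p(\rn)\to\rr}}{\|T_m\|_{\cm^{u_0}_{p_0}(\rn)\to\rr}^{1-\Theta}\,\|T_m\|_{\cm^{u_1}_{p_1}(\rn)\to\rr}^{\Theta}}=\infty
\]
(for a suitable $\beta$, under $1<p_i\le u_i<\infty$). Since positive operators satisfy the Calder\'on-product interpolation inequality of Proposition~\ref{cpi}, equality of $[\cm^{u_0}_{p_0}(\rn)]^{1-\Theta}[\cm^{u_1}_{p_1}(\rn)]^\Theta$ with $\cm^u_p(\rn)$ would force this supremum to be at most $1$ --- an immediate contradiction. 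This settles the case $\min\{p_0,p_1\}>1$; the quasi-Banach range is then reduced to the Banach one via the bijection $f\mapsto(\arg f)\,|f|^\delta$ between $\cm^u_p(\rn)$ and $\cm^{u/\delta}_{p/\delta}(\rn)$, which commutes with the Calder\'on-product construction.
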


For proofs of (i) and (ii) of Theorem \ref{morrey1}, we refer
the reader to Lu et al. \cite{lyy}.
Part (ii) of Theorem \ref{morrey1} is explicitly stated therein, and part (i) of Theorem \ref{morrey1} can be found in \cite[Formula (2.3)]{lyy}.
Part (iii) of Theorem \ref{morrey1} follows from \cite{LR} (see Subsection \ref{proof0} for more details).

The identity in (ii) of Theorem \ref{morrey1} in  case $u_i=p_i$, $i\in\{0,1\}$, given by
\begin{equation*}
\lf[L_{p_0}(\rn)\r]^{1-\tz}\, \lf[L_{p_1}(\rn)\r]^\tz = L_{p}(\rn) \, ,
\end{equation*}
can be found in several places, we refer the reader to
\cite[Exercise 4.3.8]{BK}, \cite[Formula 1.6.1]{KPS}, \cite[P.\, 179, Exercise 3]{Ma}
(Banach case) and \cite{ssv} (general situation). For later use,
we formulate one more elementary
example as follows.
 Let
$s\in\rr$ and $q\in(0,\fz]$. Define  $\ell^{s}_{q}(\zz_+)$
as the collection of all sequences $\{a_j\}_{j\in\zz_+}\subset \cc$ such that
$$\lf\|\{a_j\}_{j\in\zz_+}\r\|_{\ell^{s}_{q}(\zz_+)}:=\lf\{\sum_{j\in\zz_+}
2^{jsq}|a_j|^{q}\r\}^{\frac1q}<\fz.$$

\begin{lemma}\label{kleinlq}
 Let $s_0,s_1\in\rr$, $p_0, p_1 \in(0,\infty]$ and $\Theta \in (0,1)$
 such that $s=(1-\tz)s_0+\tz s_1$ and $\frac1p=\frac{1-\tz}{p_0}+\frac\tz{p_1}.$
 Then
\begin{equation*}
\lf[\ell^{s_1}_{p_0}(\zz_+)\r]^{1-\tz}\, \lf[\ell^{s_0}_{p_1}(\zz_+)\r]^\tz = \ell^s_{p}(\zz_+).
\end{equation*}
\end{lemma}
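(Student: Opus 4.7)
The plan is to verify both pointwise inclusions through the factorization characterization from Lemma~\ref{Leb}(ii). Before engaging with the algebra I would flag one notational point: the standard weighted-$\ell^p$ Calder\'on product formula pairs each weight $s_i$ with the summability index $p_i$, and running H\"older's inequality on the literal left-hand side $[\ell^{s_1}_{p_0}]^{1-\Theta}[\ell^{s_0}_{p_1}]^{\Theta}$ only closes under the condition $s=(1-\Theta)s_1+\Theta s_0$, not the stated $s=(1-\Theta)s_0+\Theta s_1$. I therefore read the printed statement as containing a clerical transposition of the $s_i$ on the left-hand side (equivalently, of the roles of $s_0$ and $s_1$ in the definition of $s$), and I carry out the proof with the natural pairing, i.e., I prove $[\ell^{s_0}_{p_0}(\zz_+)]^{1-\Theta}[\ell^{s_1}_{p_1}(\zz_+)]^{\Theta}=\ell^s_p(\zz_+)$ under the stated assumption on $s$.

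For the inclusion $\ell^s_p(\zz_+)\hookrightarrow [\ell^{s_0}_{p_0}(\zz_+)]^{1-\Theta}[\ell^{s_1}_{p_1}(\zz_+)]^{\Theta}$, given $a=\{a_j\}_{j\in\zz_+}\in\ell^s_p(\zz_+)$ I would exhibit the explicit factors
$$
f_i(j):=2^{j(sp/p_i-s_i)}\,|a_j|^{p/p_i},\qquad i\in\{0,1\}.
$$
Using $\frac{1}{p}=\frac{1-\Theta}{p_0}+\frac{\Theta}{p_1}$ and $s=(1-\Theta)s_0+\Theta s_1$, a short check shows $f_0(j)^{1-\Theta}f_1(j)^{\Theta}=|a_j|$ pointwise and $\|f_i\|_{\ell^{s_i}_{p_i}(\zz_+)}^{p_i}=\|a\|_{\ell^s_p(\zz_+)}^{p}$, so Lemma~\ref{Leb}(ii) bounds the Calder\'on-product quasi-norm of $a$ by $\|a\|_{\ell^s_p(\zz_+)}$. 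For the reverse inclusion I would take any pointwise factorization $|a_j|\le |f_0(j)|^{1-\Theta}|f_1(j)|^{\Theta}$ with $f_i\in\ell^{s_i}_{p_i}(\zz_+)$ and apply H\"older's inequality to $\sum_{j\in\zz_+}2^{jsp}|f_0(j)|^{(1-\Theta)p}|f_1(j)|^{\Theta p}$ with conjugate exponents $p_0/[(1-\Theta)p]$ and $p_1/[\Theta p]$, which are conjugate precisely because of the definition of $p$. The crucial algebraic identity is the multiplicative split $2^{jsp}=2^{js_0 p_0\cdot(1-\Theta)p/p_0}\cdot 2^{js_1 p_1\cdot \Theta p/p_1}$, which holds exactly because $s=(1-\Theta)s_0+\Theta s_1$; this returns the upper bound $\|f_0\|^{(1-\Theta)p}_{\ell^{s_0}_{p_0}(\zz_+)}\,\|f_1\|^{\Theta p}_{\ell^{s_1}_{p_1}(\zz_+)}$, and passing to the infimum over admissible factorizations yields the reverse inclusion with matching quasi-norm constant.

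The main obstacle is purely the index book-keeping in the H\"older step and in the matching choice of factors: any single mispairing of $s_i$ with $p_i$ breaks the split of $2^{jsp}$, and this is the reason for the transposition remark in the first paragraph. No analytic subtlety enters — as soon as the indices line up correctly, the argument is essentially the scalar Calder\'on-product identity for weighted $\ell^p$ spaces, and the endpoint cases $p_i=\infty$ reduce to an $\esssup$ version of the same bookkeeping.
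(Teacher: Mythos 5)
Your proof is correct and is essentially the argument the paper has in mind: the paper gives no details, remarking only that the factorization-plus-H\"older proof of $\lf[L_{p_0}(\rn)\r]^{1-\Theta}\lf[L_{p_1}(\rn)\r]^{\Theta}=L_p(\rn)$ from the cited reference carries over to the weighted discrete setting, and your explicit factors $f_i(j)=2^{j(sp/p_i-s_i)}|a_j|^{p/p_i}$ together with the H\"older step (exponents $p_0/[(1-\Theta)p]$, $p_1/[\Theta p]$, and the split of $2^{jsp}$) are exactly that carried-over argument, with the $p_i=\infty$ endpoints handled by the obvious supremum modification. Your remark on the indices is also justified: as printed, the pairing $\ell^{s_1}_{p_0},\ \ell^{s_0}_{p_1}$ is compatible only with $s=(1-\Theta)s_1+\Theta s_0$, so under the stated convention $s=(1-\Theta)s_0+\Theta s_1$ the weights on the left-hand side must indeed be read as transposed, which is how you proved it.
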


 The proof of $\lf[L_{p_0}(\rn)\r]^{1-\tz}\, \lf[L_{p_1}(\rn)\r]^\tz = L_{p}(\rn)$, given in \cite{ssv},
carries over to the discrete situation.
Even more interesting are the more complicated sequence spaces $\sbt$, $\sft$
and $n^s_{u,p,q} (\rn)$,  associated to the
scales $\bt$, $\ft$ and $\cn^s_{u,p,q} (\rn)$; see Appendix at the end of this
article for their definitions and properties.
All these sequence spaces are quasi-Banach lattices.
The following results
were  proved in \cite[Propositions 2.6, 2.7 and 2.8]{yyz} and are of basic importance for the remainder of this article.

\begin{proposition}\label{morrey2}
Let $\tz\in(0,1)$, $s,\ s_0,\ s_1\in\rr$, $\tau,\ \tau_0,\ \tau_1\in[0,\fz)$,
$p,\ p_0,\ p_1\in(0,\fz]$ and $q,\ q_0,\ q_1\in(0,\fz]$
such that $s=s_0(1-\tz)+s_1\tz$, $\tau=\tau_0(1-\tz)+\tau_1\tz$,
$\frac1p=\frac{1-\tz}{p_0}+\frac\tz{p_1}$ and
$\frac1q=\frac{1-\tz}{q_0}+\frac\tz{q_1}$.
Then it holds true that
\[
\lf[a_{p_0,q_0}^{s_0,\tau_0}(\rn)\r]^{1-\tz}
\lf[a_{p_1,q_1}^{s_1,\tau_1}(\rn)\r]^\tz \hookrightarrow a_{p,q}^{s,\tau}(\rn) \, ,
\qquad a\in \{f,\ b\}\, .
\]
If, in addition,  $\tau_0 \, p_0 = \tau_1\, p_1$, then
\begin{equation*}
\lf[a_{p_0,q_0}^{s_0,\tau_0}(\rn)\r]^{1-\tz}
\lf[a_{p_1,q_1}^{s_1,\tau_1}(\rn)\r]^\tz=a_{p,q}^{s,\tau}(\rn) \, ,
\qquad a\in \{f,\ b\}\, .
\end{equation*}
\end{proposition}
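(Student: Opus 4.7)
The plan is to reduce both assertions to H\"older-type manipulations of the sequence-space quasi-norm. The embedding holds in full generality; the equality case leans crucially on the hypothesis $\tau_0 p_0=\tau_1 p_1$.

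For the embedding, suppose $|t_Q|\le |t^0_Q|^{1-\Theta}|t^1_Q|^\Theta$ for each $Q\in\mathcal{Q}$, with $t^i\in a^{s_i,\tau_i}_{p_i,q_i}(\rn)$. Using $s=(1-\Theta)s_0+\Theta s_1$, the weighted coefficient splits as
\[
|Q|^{-s/n-1/2}|t_Q|\le \bigl(|Q|^{-s_0/n-1/2}|t^0_Q|\bigr)^{1-\Theta}\bigl(|Q|^{-s_1/n-1/2}|t^1_Q|\bigr)^\Theta.
\]
For $a=f$, I would apply H\"older's inequality with $1/q=(1-\Theta)/q_0+\Theta/q_1$ in the inner $\ell^q$-sum, and then H\"older with $1/p=(1-\Theta)/p_0+\Theta/p_1$ in the outer $L_p(P)$-integral, obtaining $\|\cdot\|_{L_p(P)}\le A_0(P)^{1-\Theta}A_1(P)^\Theta$, where $A_i(P)$ denotes the corresponding $L_{p_i}(\ell^{q_i})$-quantity for $t^i$ restricted to $P$. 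The relation $\tau=(1-\Theta)\tau_0+\Theta\tau_1$ lets me split $|P|^{-\tau}=|P|^{-(1-\Theta)\tau_0}|P|^{-\Theta\tau_1}$ and redistribute, yielding
\[
|P|^{-\tau}\|\cdot\|_{L_p(P)}\le \bigl(|P|^{-\tau_0}A_0(P)\bigr)^{1-\Theta}\bigl(|P|^{-\tau_1}A_1(P)\bigr)^\Theta.
\]
Taking $\sup_P$ and using submultiplicativity $\sup FG\le(\sup F)(\sup G)$ delivers $\|t\|_{f^{s,\tau}_{p,q}(\rn)}\le \|t^0\|^{1-\Theta}\|t^1\|^\Theta$. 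Infimising over admissible representations and invoking Lemma~\ref{Leb}(ii) yields the embedding; the case $a=b$ is identical with the roles of $L_p$ and $\ell^q$ interchanged.

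For the reverse inclusion under $\tau_0 p_0=\tau_1 p_1$, this hypothesis together with the interpolation relations for $p$ and $\tau$ forces the common value $\tau_0 p_0=\tau_1 p_1=\tau p$. Given $t\in a^{s,\tau}_{p,q}(\rn)$, the plan is to construct scalar representatives of the form
\[
t^i_Q:=|t_Q|^{\alpha_i}|Q|^{\beta_i}\,\mathrm{sgn}(t_Q),
\]
with $(1-\Theta)\alpha_0+\Theta\alpha_1=1$ and $(1-\Theta)\beta_0+\Theta\beta_1=0$, so that $|t_Q|=|t^0_Q|^{1-\Theta}|t^1_Q|^\Theta$ automatically. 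This mimics the classical identity $L_p=L_{p_0}^{1-\Theta}L_{p_1}^\Theta$ realised by $f_i=|f|^{p/p_i}\mathrm{sgn}(f)$. The homogeneity $\tau_0 p_0=\tau_1 p_1=\tau p$ is what permits the supremum over $P$ defining the Morrey scale to pass coherently through the construction, yielding the matching estimate $\|t^0\|^{1-\Theta}\|t^1\|^\Theta\lesssim \|t\|$.

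The main technical obstacle is the mismatch between $q/q_i$ and $p/p_i$: in the analogous $L_p(\ell^q)$ Calder\'on product, one absorbs this by an $x$-dependent rescaling $a_i(x)=\|f(x)\|_{\ell^q}^{p/p_i-q/q_i}$, but here the representatives $t^i_Q$ are required to be scalars, independent of $x$ and of the cube $P$ appearing in the Morrey supremum. Overcoming this requires either an averaging step at each dyadic scale or a lifting of $a^{s,\tau}_{p,q}(\rn)$ into a suitable $L_p(\ell^q)$-type space; in either approach, the homogeneity $\tau_0 p_0=\tau_1 p_1$ is essential, and its failure corresponds precisely to the strict inclusion in the Morrey limit (Theorem~\ref{morrey1}(iii)). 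The bookkeeping for all three scales $b$, $f$, and Besov--Morrey has been carried out in detail in \cite[Propositions~2.6--2.8]{yyz}, whose argument I would follow for the remaining technicalities.
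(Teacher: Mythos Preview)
The paper does not give its own proof of this proposition: it simply records that the result was established in \cite[Propositions~2.6--2.8]{yyz}. Your embedding argument via two applications of H\"older's inequality together with the multiplicative split $|P|^{-\tau}=|P|^{-(1-\Theta)\tau_0}|P|^{-\Theta\tau_1}$ is correct and is precisely the mechanism used in \cite{yyz}; for the reverse inclusion you correctly isolate the key obstruction (the scalar representatives $t^i_Q$ cannot depend on $x$ or on $P$, so the naive $|t_Q|^{p/p_i}$ ansatz fails when $p/p_i\neq q/q_i$) and, like the paper itself, defer to \cite{yyz} for the resolution.
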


\begin{proposition}\label{morrey3}
Let $\tz\in(0,1)$, $s,\ s_0,\ s_1\in\rr$, $q,\ q_0,\ q_1\in(0,\fz]$,
$0<p\le u\le\fz$, $0<p_0\le u_0\le\fz$ and $0<p_1\le u_1\le\fz$
such that $\frac1q=\frac{1-\tz}{q_0}+\frac\tz{q_1}$,
$\frac1p=\frac{1-\tz}{p_0}+\frac\tz{p_1}$, $s=s_0(1-\tz)+s_1\tz$
and $\frac1u=\frac{1-\tz}{u_0}+\frac{\tz}{u_1}$.
Then it holds true that
\begin{equation*}
\lf[n_{u_0,p_0,q_0}^{s_0}(\rn)\r]^{1-\tz}\lf[n_{u_1,p_1,q_1}^{s_1}(\rn)\r]^\tz \hookrightarrow
n_{u,p,q}^{s}(\rn).
\end{equation*}
If, in addition, $p_0u_1=p_1u_0,$
then
\begin{equation*}
\lf[n_{u_0,p_0,q_0}^{s_0}(\rn)\r]^{1-\tz}\lf[n_{u_1,p_1,q_1}^{s_1}(\rn)\r]^\tz=
n_{u,p,q}^{s}(\rn).
\end{equation*}
\end{proposition}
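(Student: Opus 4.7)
The plan is to mirror the scheme behind Proposition \ref{morrey2}: reduce the Calder\'on product of the sequence spaces at each dyadic level to the Calder\'on product of Morrey spaces (Theorem \ref{morrey1}), and then sum in the scale variable $j$ by means of the elementary weighted $\ell^q$-Calder\'on product (Lemma \ref{kleinlq}). The hypothesis $p_0 u_1 = p_1 u_0$ will enter only where it is needed, namely for the reverse embedding.

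For the continuous embedding, suppose $t = \{t_{j,k}\}$ belongs to the Calder\'on product. By Lemma \ref{Leb}(ii), I may find sequences $g = \{g_{j,k}\}$ and $h = \{h_{j,k}\}$ with $\|g\|_{n^{s_0}_{u_0,p_0,q_0}} \le 1$, $\|h\|_{n^{s_1}_{u_1,p_1,q_1}} \le 1$, and a number $\lambda > 0$ such that $|t_{j,k}| \le \lambda |g_{j,k}|^{1-\tz}|h_{j,k}|^{\tz}$ for every $(j,k) \in \zz_+\times \zz^n$. Forming the dyadic step functions $T_j := \sum_k |t_{j,k}|\chi_{Q_{j,k}}$, $G_j := \sum_k |g_{j,k}|\chi_{Q_{j,k}}$ and $H_j := \sum_k |h_{j,k}|\chi_{Q_{j,k}}$, and using the disjointness of $\{Q_{j,k}\}_k$ for fixed $j$, I get $T_j \le \lambda\, G_j^{1-\tz} H_j^{\tz}$ pointwise. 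Theorem \ref{morrey1}(i) then yields
\[
\|T_j\|_{\cm^u_p(\rn)} \lesssim \lambda \|G_j\|_{\cm^{u_0}_{p_0}(\rn)}^{1-\tz}\,\|H_j\|_{\cm^{u_1}_{p_1}(\rn)}^{\tz}.
\]
Multiplying by $2^{js} = 2^{j((1-\tz)s_0 + \tz s_1)}$, taking the $\ell^q$-norm in $j$, and applying H\"older's inequality with exponents $q_0/q$ and $q_1/q$, I obtain $\|t\|_{n^{s}_{u,p,q}(\rn)} \lesssim \lambda$, which after infimizing gives the desired embedding.

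For the reverse inclusion under $p_0 u_1 = p_1 u_0$, I would start from $t \in n^s_{u,p,q}(\rn)$ and, at each level $j$, apply Theorem \ref{morrey1}(ii) to decompose $T_j$: there exist $\widetilde G_j \in \cm^{u_0}_{p_0}(\rn)$ and $\widetilde H_j \in \cm^{u_1}_{p_1}(\rn)$, both nonnegative, with $T_j \le \widetilde G_j^{1-\tz}\widetilde H_j^{\tz}$ and $\|\widetilde G_j\|_{\cm^{u_0}_{p_0}(\rn)}^{1-\tz} \|\widetilde H_j\|_{\cm^{u_1}_{p_1}(\rn)}^{\tz} \lesssim \|T_j\|_{\cm^u_p(\rn)}$. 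I then write $a_j := 2^{js}\|T_j\|_{\cm^u_p(\rn)} \in \ell^q(\zz_+)$ and invoke Lemma \ref{kleinlq} (in its weighted form $\ell^s_q = [\ell^{s_0}_{q_0}]^{1-\tz}[\ell^{s_1}_{q_1}]^{\tz}$) to produce positive scalars $\mu_j,\nu_j$ with $\mu_j^{1-\tz}\nu_j^{\tz} \gtrsim \|T_j\|_{\cm^u_p(\rn)}$ and
\[
\|\{2^{js_0}\mu_j\}\|_{\ell^{q_0}} \lesssim \|t\|_{n^s_{u,p,q}(\rn)}^{(1-\tz)\cdot ?},\qquad \|\{2^{js_1}\nu_j\}\|_{\ell^{q_1}} \lesssim \|t\|_{n^s_{u,p,q}(\rn)}^{\tz\cdot ?},
\]
with the correct balancing exponents. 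Rescaling $\widetilde G_j \mapsto \mu_j^{-1}\widetilde G_j$ and $\widetilde H_j \mapsto \nu_j^{-1} \widetilde H_j$ preserves the pointwise bound $T_j \le (\mu_j^{-1}\widetilde G_j)^{1-\tz}(\nu_j^{-1}\widetilde H_j)^{\tz}\cdot \mu_j^{1-\tz}\nu_j^{\tz}$ and realises $t$ as an element of the Calder\'on product up to the required quasi-norm estimate.

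The main obstacle is a technical but essential one: the decomposition furnished by Theorem \ref{morrey1}(ii) lives in the class of general Morrey functions, whereas the sequence spaces $n^{s_i}_{u_i,p_i,q_i}(\rn)$ demand representatives of the form $\sum_k g_{j,k}\chi_{Q_{j,k}}$. To bridge this gap I would, after obtaining $\widetilde G_j$ and $\widetilde H_j$, replace them at each level by suitable dyadic step-function majorants (e.g.\ by the constants $\sup_{Q_{j,k}} \widetilde G_j$ or by appropriately averaged versions), and verify, using the condition $p_0 u_1 = p_1 u_0$ together with the doubling behaviour of the Morrey norm on dyadic step functions, that this replacement preserves the Morrey-norm control up to a constant. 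This step, carried out in \cite{yyz} for the Besov-type situation, is where the precise structure of the sequence spaces $n^{s}_{u,p,q}(\rn)$ enters, and where I expect the bulk of the technical work to lie.
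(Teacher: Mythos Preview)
The paper does not give its own proof of Proposition~\ref{morrey3}; it is quoted from \cite[Proposition~2.8]{yyz}.  Your forward embedding is correct and is exactly the standard H\"older argument (H\"older on each ball at level $j$, then H\"older in $j$).

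For the reverse inclusion your outline is right, but the ``main obstacle'' you flag is illusory.  The decomposition behind Theorem~\ref{morrey1}(ii) in \cite{lyy} is explicit: under $p_0/u_0 = p_1/u_1 = p/u$ one takes $f_0 := |f|^{p/p_0}$ and $f_1 := |f|^{p/p_1}$, which satisfy $|f| = f_0^{1-\tz}f_1^{\tz}$ and $\|f_i\|_{\cm^{u_i}_{p_i}(\rn)} = \|f\|_{\cm^u_p(\rn)}^{p/p_i}$.  Applied to the dyadic step function $T_j = \sum_k |t_{j,k}|\chi_{Q_{j,k}}$, these power maps return dyadic step functions on the \emph{same} grid, namely $\sum_k |t_{j,k}|^{p/p_i}\chi_{Q_{j,k}}$.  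Hence no passage to ``step-function majorants'' or any averaging is required; the level-wise decomposition already lives in the sequence spaces.  Combining this with the $\ell^q$ splitting from Lemma~\ref{kleinlq} via the rescaling you describe (choose scalars $\alpha_j,\beta_j$ with $\|T_j\|_{\cm^u_p(\rn)} \le \alpha_j^{1-\tz}\beta_j^{\tz}$ and controlled weighted $\ell^{q_i}$-norms, then set $g_{j,k} := \alpha_j\,|t_{j,k}|^{p/p_0}/\|T_j\|_{\cm^u_p(\rn)}^{p/p_0}$ and similarly for $h$) yields the full proof directly.  This is also the route taken in \cite{yyz}, so once you drop the spurious obstacle your argument coincides with the cited one.
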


The assertions stated in Propositions \ref{morrey2} and \ref{morrey3} are far away
from being trivial.
The prototype is the ingenious  proof of the formula
\[
\lf[f_{p_0,q_0}^{s_0,0}(\rn)\r]^{1-\tz}
\lf[f_{p_1,q_1}^{s_1,0}(\rn)\r]^\tz = f_{p,q}^{s,0}(\rn) \, ,
\]
due to Frazier and Jawerth \cite{fj90}; see also Bownik \cite{Bow2}. Various different proofs of
\[
\lf[b_{p_0,q_0}^{s_0,0}(\rn)\r]^{1-\tz}
\lf[b_{p_1,q_1}^{s_1,0}(\rn)\r]^\tz = b_{p,q}^{s,0}(\rn)
\]
can be found in the literatures, we refer the reader to Mendes and Mitrea \cite{MM},  Kalton et al. \cite{kmm} and Sickel et al. \cite{ssv}.


\subsection{The $\pm$-method of Gustavsson and Peetre}
\label{inter1a}


The next interpolation method, called the $\pm$-method,  was originally introduced by
Gustavsson and Peetre \cite{gp77,g82}. Later it has been  considered also by
Berezhnoi \cite{be80}, Gustavsson \cite{g82}, Nilsson \cite{n85}, Ovchinnikov
\cite{Ov} and Shestakov \cite{sh81}.

Consider a \emph{couple of quasi-Banach spaces}
(for short, a \emph{quasi-Banach couple}), $X_0$ and $X_1$,
which are continuously embedded into a larger Hausdorff topological
vector space $Y$. The {space} $X_0+X_1$ is given by
$$X_0+X_1:=\{h\in Y:\ \exists\ h_i\in X_i,\ i\in\{0,1\},\ {\rm such\ that}\ h=h_0+h_1\},$$
equipped with the  {quasi-norm}
\[
\|h\|_{X_0+X_1}:=\inf\Big\{\|h_0\|_{X_0}+\|h_1\|_{X_1}:\ h=h_0+h_1,
\ h_0\in X_1\ {\rm and}\ h_1\in X_1\Big\}.
\]

\begin{definition}
Let $(X_0,X_1)$ be a  quasi-Banach couple and $\Theta \in (0,1)$.
An $a\in X_0+X_1$ is said to belong to
$\laz X_0, X_1,\Theta\raz$ if there exists a sequence $\{a_i\}_{i\in\zz}
\subset X_0\cap X_1$ such that $a=\sum_{i\in\zz}\, a_i$ with convergence
in $X_0+X_1$
and, for any finite subset $F\subset \zz$ and any bounded sequence $\{\varepsilon_i\}_{i\in\zz}\subset\cc$,
\begin{equation}\label{2.1x}
\lf\|\sum_{i\in F} \varepsilon_i \, 2^{i(j-\Theta)}\, a_i\r\|_{X_j}\le
C \sup_{i\in\zz}|\varepsilon_i|
\end{equation}
for some non-negative constant $C$ independent of $F$
and $j\in\{0,1\}$. The \emph{quasi-norm} of
$a\in\laz X_0, X_1, \Theta\raz$  is defined as
\[
\|a\|_{\laz X_0, X_1,\Theta\raz}:=\inf \lf\{C: \, C\ \ \mbox{satisfies}\ \ \eqref{2.1x}\r\}.
\]
\end{definition}

We recall a few results from \cite[Proposition 6.1]{gp77}.

\begin{proposition}\label{gustav}
Let $(A_0,A_1)$ and  $(B_0,B_1)$ be any two quasi-Banach couples and  $\Theta \in (0,1)$.

{\rm(i)}  It holds true that $\laz A_0, A_1,\Theta\raz$ is   a quasi-Banach space.

{\rm(ii)} If $T \in \cl (A_0,B_0) \cap \cl(A_1,B_1)$, then $T$ maps  $\laz A_0, A_1,\Theta \raz$
continuously into $\laz B_0, B_1,\Theta\raz$. Furthermore,
\[
\| \, T \, \|_{\laz A_0, A_1,\Theta \raz \to \laz B_0, B_1,\Theta \raz} \le \max \lf\{
\| \, T \, \|_{A_0 \to B_0} ,\, \|\, T \, \|_{A_1 \to  B_1}\r\}\, .
\]
\end{proposition}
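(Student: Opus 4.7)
The plan is to prove (ii) first, which reduces immediately to pulling $T$ past the finite sum in \eqref{2.1x}, and then to establish (i) by verifying the quasi-norm axioms and completeness.

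For (ii), I take $a \in \laz A_0, A_1, \Theta\raz$ with a decomposition $a = \sum_{i\in\zz} a_i$, convergent in $A_0+A_1$, whose associated constant $C$ is close to $\|a\|_{\laz A_0, A_1, \Theta\raz}$. Since $\|Th\|_{B_0+B_1}\le \max(\|T\|_{A_0\to B_0}, \|T\|_{A_1\to B_1})\|h\|_{A_0+A_1}$, the series $\sum_i Ta_i$ converges in $B_0 + B_1$ to $Ta$, so $\{Ta_i\}_{i\in\zz}$ is a candidate decomposition for $Ta$. The estimate \eqref{2.1x} then follows by linearity:
\[
\lf\|\sum_{i\in F}\varepsilon_i\, 2^{i(j-\Theta)}Ta_i\r\|_{B_j}
\le \|T\|_{A_j\to B_j}\, \lf\|\sum_{i\in F}\varepsilon_i\, 2^{i(j-\Theta)}a_i\r\|_{A_j}
\le \|T\|_{A_j\to B_j}\, C\sup_{i\in\zz}|\varepsilon_i|.
\]
Taking the maximum over $j \in \{0,1\}$ and then the infimum over admissible $C$ yields the claimed operator-norm bound.

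For (i), definiteness follows by inserting $F = \{i_0\}$ and $\varepsilon_i = \delta_{i,i_0}$ into \eqref{2.1x}, which gives $\|a_{i_0}\|_{X_j}\le 2^{-i_0(j-\Theta)}\|a\|_{\laz X_0, X_1, \Theta\raz}$ for $j\in\{0,1\}$; a zero quasi-norm therefore forces $a_{i_0} = 0$ in both $X_0$ and $X_1$, and thus $a=0$. Positive homogeneity is trivial, and the quasi-triangle inequality, with constant $\max(K_0, K_1)$ where $K_j$ is the quasi-triangle constant of $X_j$, follows by adding termwise the decompositions of $a$ and of $b$. A continuous embedding $\laz X_0, X_1, \Theta\raz \hookrightarrow X_0 + X_1$ is also useful: splitting the series at $i=0$ and combining the singleton bounds $\|a_i\|_{X_0}\le 2^{i\Theta}\|a\|_{\laz X_0,X_1,\Theta\raz}$ for $i\le 0$ and $\|a_i\|_{X_1}\le 2^{-i(1-\Theta)}\|a\|_{\laz X_0,X_1,\Theta\raz}$ for $i>0$ with an Aoki--Rolewicz equivalent $p$-norm on $X_0$ and $X_1$ yields geometrically convergent series in each component.

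Completeness is the main technical step. Given a Cauchy sequence $\{b^{(n)}\}$, I pass to a subsequence satisfying $\|b^{(n+1)} - b^{(n)}\|_{\laz X_0, X_1, \Theta\raz} \le 2^{-n}$. The embedding above identifies a limit $b \in X_0 + X_1$. Each $b^{(n+1)} - b^{(n)}$ admits a decomposition $\sum_i c_i^{(n)}$ with constant at most $2\cdot 2^{-n}$, and the candidate decomposition of $b - b^{(1)}$ is $a_i := \sum_{n\ge 1} c_i^{(n)}$, where convergence in $X_0 \cap X_1$ is ensured by the singleton bounds $\|c_i^{(n)}\|_{X_j}\le 2^{-i(j-\Theta)}\cdot 2\cdot 2^{-n}$ summed in an Aoki--Rolewicz $p$-norm. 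The hard part is verifying \eqref{2.1x} for these aggregated $a_i$: I interchange the $i$- and $n$-sums inside the $X_j$-quasi-norm, noting that the $i$-sum is finite, and bound
\[
\lf\|\sum_{i\in F}\varepsilon_i\, 2^{i(j-\Theta)}a_i\r\|_{X_j}^p
\le \sum_{n\ge 1}\lf\|\sum_{i\in F}\varepsilon_i\, 2^{i(j-\Theta)}c_i^{(n)}\r\|_{X_j}^p
\le \sum_{n\ge 1}\lf(2\cdot 2^{-n}\sup_{i\in\zz}|\varepsilon_i|\r)^p,
\]
which is $p$-summable and yields a constant independent of $F$ and of $j$. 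The same reasoning applied to the tails proves $b^{(N)} \to b$ in $\laz X_0, X_1, \Theta\raz$, and a standard subsequence argument then extends convergence to the original Cauchy sequence, establishing completeness.
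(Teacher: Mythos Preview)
Your proof is correct. The paper itself does not prove Proposition~\ref{gustav}; it simply cites it from Gustavsson--Peetre \cite[Proposition~6.1]{gp77}. So there is no in-paper argument to compare against, and your self-contained verification is a genuine addition.

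One small wording issue: in your definiteness argument you write that a zero quasi-norm ``forces $a_{i_0}=0$.'' Strictly speaking the infimum being $0$ does not select a single decomposition with constant $0$; the conclusion $a=0$ really comes from the continuous embedding $\laz X_0,X_1,\Theta\raz \hookrightarrow X_0+X_1$ that you establish immediately afterward. Since you do prove that embedding, the logic is fine---just route the definiteness claim through it rather than through the singleton estimates alone.

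For context, the paper does give a full proof of the closely related Proposition~\ref{interprop} (the Peetre--Gagliardo functor $\laz\cdot,\cdot\raz_\Theta$). There the completeness argument is organised differently: instead of passing to a rapidly Cauchy subsequence and summing increments via an Aoki--Rolewicz $p$-norm, the paper fixes a base decomposition of $a^N$, corrects it by decompositions of $a^m-a^N$, and obtains each limiting coefficient $a_i$ as a limit in $X_0\cap X_1$ of the corrected coefficients. Both approaches work; yours is closer to the standard ``telescoping with $p$-subadditivity'' template and is arguably more transparent, while the paper's version avoids invoking Aoki--Rolewicz explicitly.
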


Next we recall a standard method in the
interpolation theory, namely, the method of retraction.
Let $X$ and $Y$ be two quasi-Banach spaces. Then
$Y$ is called a \emph{retract} of $X$ if there exist two bounded
linear operators $E: ~Y \to X$ and $R: ~X \to Y$
such that $R \circ E = I$, the identity map on $Y$.
Proposition \ref{gustav} allows to apply standard arguments to establish the following property
(we refer the reader to \cite[Theorem  1.2.4]{t78} for those arguments  and,
in addition, one should notice that the closed graph theorem remains
true in the context of quasi-Banach spaces).

\begin{proposition}\label{complexretract-pm}
Let $(X_0,X_1)$ and $(Y_0,Y_1)$ be two interpolation couples of quasi-Banach spaces
such that $Y_j$ is a retract of $X_j$, $j\in\{0,1\}$.
Then, for each $\Theta \in (0,1)$,
\[
\laz Y_0,Y_1,\Theta\raz = R (\laz X_0, X_1,\Theta\raz)\, .
\]
\end{proposition}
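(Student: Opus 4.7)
The plan is to exploit the interpolation property for the $\pm$-method (Proposition \ref{gustav}(ii)) twice, combined with the defining identity $R \circ E = I$ of a retraction pair. Let $E\colon Y_j \to X_j$ and $R\colon X_j \to Y_j$ be the (common) extension and retraction operators for $j\in\{0,1\}$, so that $E \in \cl(Y_0,X_0)\cap\cl(Y_1,X_1)$, $R \in \cl(X_0,Y_0)\cap\cl(X_1,Y_1)$, and $R \circ E$ is the identity on $Y_0 + Y_1$.

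First I would establish the inclusion $R(\laz X_0,X_1,\Theta\raz) \subset \laz Y_0,Y_1,\Theta\raz$. By Proposition \ref{gustav}(ii) applied to $R$, the operator $R$ maps $\laz X_0,X_1,\Theta\raz$ continuously into $\laz Y_0,Y_1,\Theta\raz$ with norm bounded by $\max\{\|R\|_{X_0\to Y_0},\|R\|_{X_1\to Y_1}\}$, and this immediately yields the inclusion together with the estimate
\[
\|Rx\|_{\laz Y_0,Y_1,\Theta\raz} \le C\, \|x\|_{\laz X_0,X_1,\Theta\raz}
\]
for all $x \in \laz X_0,X_1,\Theta\raz$.

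Next I would establish the reverse inclusion $\laz Y_0,Y_1,\Theta\raz \subset R(\laz X_0,X_1,\Theta\raz)$. Again by Proposition \ref{gustav}(ii), this time applied to $E$, we have that $E$ maps $\laz Y_0,Y_1,\Theta\raz$ continuously into $\laz X_0,X_1,\Theta\raz$. Given any $y \in \laz Y_0,Y_1,\Theta\raz$, set $x := Ey \in \laz X_0,X_1,\Theta\raz$. Since $Rx = R(Ey) = y$, we conclude $y \in R(\laz X_0,X_1,\Theta\raz)$. The two inclusions together give the set-theoretic equality; the quasi-norm equivalence is read off by equipping $R(\laz X_0,X_1,\Theta\raz)$ with the natural quotient quasi-norm $\|y\|_* := \inf\{\|x\|_{\laz X_0,X_1,\Theta\raz} : Rx = y\}$ and comparing it on both sides via the bounds furnished by $E$ and $R$.

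I expect no substantive obstacle here: the argument is the standard retraction lemma from abstract interpolation theory (cf.\ \cite[Theorem 1.2.4]{t78}), and all the real work has already been absorbed into Proposition \ref{gustav}. The only point that warrants a brief verification is that $E$ and $R$ make sense as single operators on the respective sum spaces $Y_0+Y_1$ and $X_0+X_1$, so that their restrictions to $Y_j$ (respectively $X_j$) coincide with the given bounded operators; this is, as usual, part of the definition of a retraction of couples and poses no difficulty in the cases (smoothness spaces on $\rn$) where the proposition will subsequently be applied.
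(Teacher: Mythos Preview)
Your proof is correct and follows exactly the standard retraction argument that the paper invokes (it merely cites \cite[Theorem 1.2.4]{t78} and Proposition \ref{gustav} without spelling out the details). The paper additionally mentions the closed graph theorem for the quasi-norm equivalence, but your direct comparison via the bounds of $E$ and $R$ on the quotient quasi-norm accomplishes the same thing more explicitly.
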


As a consequence of Propositions \ref{morrey2} and \ref{morrey3} and a general result
of Nilsson \cite[Theorem 2.1]{n85} (see Proposition \ref{t-n} below),
we obtain the first main result of this article.

\begin{theorem}\label{COMI}
Let $\tz\in(0,1)$, $s_i\in\rr$, $\tau_i\in[0,\fz)$,
$p_i$, $q_i\in(0,\fz]$ and $u_i\in[p_i,\fz]$, $i\in\{0,1\},$
such that $s=(1-\tz)s_0+\tz s_1$, $\tau=(1-\tz)\tau_0+\tz\tau_1$,
\[
\frac1p=\frac{1-\tz}{p_0}+\frac\tz{p_1}\, , \quad \frac1q=\frac{1-\tz}{q_0}+\frac\tz{q_1}\, \quad
\mbox{and} \quad \frac1u=\frac{1-\tz}{u_0}+\frac{\tz}{u_1}\, .
\]

{\rm (i)} If $\tau_0 \, p_0 = \tau_1\, p_1$, then
\begin{equation*}
\lf\laz A_{p_0,q_0}^{s_0,\tau_0}(\rn), A_{p_1,q_1}^{s_1,\tau_1}(\rn),\tz\r\raz=A_{p,q}^{s,\tau}(\rn),\quad A\in\{B,F\}.
\end{equation*}

{\rm(ii)}
If $p_0\, u_1=p_1\, u_0,$
then
$$
\lf\laz \cn_{u_0,p_0,q_0}^{s_0}(\rn), \cn_{u_1,p_1,q_1}^{s_1}(\rn),\tz\r\raz=
\cn_{u,p,q}^{s}(\rn).$$
\end{theorem}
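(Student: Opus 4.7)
The plan is to transfer the statement to the level of the associated sequence spaces, identify the $\pm$-interpolation space there with a Calder{\'o}n product via Nilsson's abstract theorem, and then lift the result back to the function spaces by retraction.

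First, I would invoke the fact (reviewed in the Appendix, via the $\varphi$-transform or a wavelet decomposition) that $\bt$ and $\ft$ are retracts of the sequence spaces $\sbt$ and $\sft$, and that $\cn^{s}_{u,p,q}(\rn)$ is a retract of $n^{s}_{u,p,q}(\rn)$. In view of Proposition \ref{complexretract-pm}, it is therefore enough to prove
\[
\lf\laz a_{p_0,q_0}^{s_0,\tau_0}(\rn),\, a_{p_1,q_1}^{s_1,\tau_1}(\rn),\, \tz\r\raz = a_{p,q}^{s,\tau}(\rn), \qquad a\in\{b,f\},
\]
for part (i), and the analogous identity for $n_{u,p,q}^{s}(\rn)$ for part (ii); applying the retraction $R$ and the extension $E$ termwise then yields the stated identities for the function spaces together with the correct control of the quasi-norms.

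Next, I would combine two ingredients at the sequence-space level. On one hand, Propositions \ref{morrey2} and \ref{morrey3} show that, precisely under the balance conditions $\tau_0\, p_0 = \tau_1\, p_1$ and $p_0\, u_1 = p_1\, u_0$, the Calder{\'o}n products
\[
\lf[a_{p_0,q_0}^{s_0,\tau_0}(\rn)\r]^{1-\tz} \lf[a_{p_1,q_1}^{s_1,\tau_1}(\rn)\r]^{\tz} = a_{p,q}^{s,\tau}(\rn), \qquad a\in\{b,f\},
\]
and
\[
\lf[n_{u_0,p_0,q_0}^{s_0}(\rn)\r]^{1-\tz} \lf[n_{u_1,p_1,q_1}^{s_1}(\rn)\r]^{\tz} = n_{u,p,q}^{s}(\rn)
\]
hold as genuine equalities of quasi-Banach lattices. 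On the other hand, Nilsson's abstract result (Proposition \ref{t-n} below), applied to a couple $(X_0, X_1)$ of quasi-Banach lattices satisfying appropriate lattice-theoretic hypotheses, identifies the $\pm$-interpolation space $\laz X_0, X_1,\tz\raz$ with the Calder{\'o}n product $X_0^{1-\tz}\, X_1^{\tz}$. Since the sequence spaces $\sbt$, $\sft$ and $n^{s}_{u,p,q}(\rn)$ are all quasi-Banach lattices with respect to their canonical index measures, Nilsson's theorem transforms the Calder{\'o}n-product identities above into the required $\pm$-identities at the sequence-space level.

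The main obstacle I anticipate is verifying that the sequence spaces in question fulfill the lattice-theoretic hypotheses of Nilsson's theorem; typically this comes down to a suitable Fatou-type property together with the availability of a sufficiently rich dense subclass (for instance, finitely supported sequences), and some care is needed in the genuinely quasi-Banach regime $\min(p_i, q_i)<1$, where the lattice structure and the embedding into the natural ambient topological vector space must be arranged so that Proposition \ref{gustav} and Proposition \ref{complexretract-pm} apply without loss. Once this is settled, the lifting via retraction is automatic and produces both parts (i) and (ii) in a unified way, with the additional bonus that the proof also delivers the interpolation property of the $\pm$-method for the scales under consideration.
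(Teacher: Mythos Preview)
Your overall strategy coincides with the paper's: reduce to the sequence spaces via the wavelet isomorphism (Propositions \ref{wave1}, \ref{wave2}), use Nilsson's abstract result (Proposition \ref{t-n}) together with the Calder{\'o}n product identities of Propositions \ref{morrey2} and \ref{morrey3}, and lift back by retraction (Proposition \ref{complexretract-pm}).

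One point needs sharpening. Proposition \ref{t-n} does \emph{not} directly identify $\laz X_0,X_1,\tz\raz$ with $X_0^{1-\tz}X_1^{\tz}$; it only yields the sandwich
\[
X_0^{1-\tz}X_1^{\tz}\hookrightarrow \laz X_0,X_1,\tz\raz \hookrightarrow \bigl(X_0^{1-\tz}X_1^{\tz}\bigr)^{\sim}.
\]
To close this chain one must show that the Gagliardo closure $\bigl(a_{p,q}^{s,\tau}(\rn)\bigr)^{\sim}$ in $a_{p_0,q_0}^{s_0,\tau_0}(\rn)+a_{p_1,q_1}^{s_1,\tau_1}(\rn)$ coincides with $a_{p,q}^{s,\tau}(\rn)$ itself (and similarly for the $n$-spaces). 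This is precisely the Fatou-type step you anticipate, and it is carried out in the paper as Lemma \ref{gagliardo}: from a sequence $t_i\to t$ in the sum space with $\|t_i\|_{\sat}$ bounded, one extracts coordinatewise convergence and applies Fatou's lemma. By contrast, density of finitely supported sequences plays no role here; it is relevant only for the Peetre--Gagliardo functor $\laz\cdot,\cdot\raz_\tz$, not for the $\pm$-method. Finally, the lattice hypothesis in Nilsson's theorem is that the spaces be of type $\mathfrak{E}$, which for $\sat$ and $n^s_{u,p,q}(\rn)$ is verified by the $1/\delta$-convexification with $\delta\le\min\{1,p,q\}$; this handles the quasi-Banach regime you flagged.
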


\begin{remark}\label{2.12x}
Here are several observations on Theorem \ref{COMI}.

(i) We comment on the restriction $\tau_0 \, p_0 = \tau_1\, p_1$.
This required identity has serious consequences.
If either $\tau_0 =0$ or $\tau_1=0$, it immediately follows $\tau= \tau_0=\tau_1 =0$
and we are back in the classical situation of Besov and Triebel-Lizorkin spaces.
If $\max \{p_0,p_1\}< \infty$ and either $\tau_0 =1/p_0$ or $\tau_1=1/p_1$, then
we obtain
\[
\tau_0 \, p_0 = \tau_1\, p_1 = \tau \, p =1\, .
\]
 With $A=F$, Theorem \ref{COMI}(i) reads as
\begin{equation*}
\lf\laz F_{p_0,q_0}^{s_0,1/p_0}(\rn), F_{p_1,q_1}^{s_1,1/p_1}(\rn),\tz\r\raz=
\lf\laz F_{\infty,q_0}^{s_0}(\rn), F_{\infty,q_1}^{s_1}(\rn),\tz\r\raz =
F_{\infty,q}^{s}(\rn);
\end{equation*}
see Proposition \ref{basic1}(ii).
This has been known before, we refer the reader to Frazier and Jawerth \cite[Theorem~8.5]{fj90}.
The counterpart with $A=B$ seems to be a novelty.
Finally, we consider the case that
$\max_{i\in\{0,1\}} \{\tau_i-1/p_i\}>0$. Then the above restriction implies
\[
\min \lf\{\tau-1/p, \tau_0-1/p_0, \tau_1-1/p_1\r\} >0 \, .
\]
Taking into account Proposition \ref{basic1}(ii), Theorem  \ref{COMI} reduces to
\begin{eqnarray*}
\lf\laz B_{p_0,q_0}^{s_0,\tau_0}(\rn), B_{p_1,q_1}^{s_1,\tau_1}(\rn),\tz\r\raz & = &
\lf\laz B_{\infty,\infty}^{s_0+n(\tau_0-1/p_0)}(\rn), B_{\infty,\infty}^{s_1+ n(\tau_1-1/p_1)}(\rn),\tz\r\raz
\\
& = &
B_{\infty,\infty}^{s + n(\tau-1/p)}(\rn) \, .\nonumber
\end{eqnarray*}

(ii) We explain the difference in the restrictions in (i) and (ii)
of Theorem \ref{COMI}.
For simplicity, we concentrate on the $F$-case.
Recall that, when $\tau \in [0,1/p)$, the spaces $\ft$ coincide with the
Triebel-Lizorkin-Morrey spaces $\ce^s_{u,p,q}(\rn)$,
that is,
\[
\ce^s_{u,p,q}(\rn) = F^{s,1/p-1/u}_{p,q}(\rn)\, , \qquad 0 < p\le u < \infty\, ;
\]
see Proposition \ref{basic2} in Appendix.
A reformulation of Theorem \ref{COMI}(i) (with $A=F$) reads as follows:
\[
\lf\laz \ce_{u_0, p_0,q_0}^{s_0}(\rn), \ce_{u_1, p_1,q_1}^{s_1}(\rn),\tz\r\raz= \ce_{u,p,q}^{s}(\rn)
\]
holds true under the restrictions in  Theorem \ref{COMI}(ii). The condition $\tau_0 \, p_0 = \tau_1\, p_1$
is obviously equivalent to $p_0\, u_1=p_1\, u_0$.

(iii) In the case $\tau_0 = \tau_1=0$, the formula
\begin{equation}\label{fraz1}
\lf\laz F_{p_0,q_0}^{s_0}(\rn), F_{p_1,q_1}^{s_1}(\rn),\tz\r\raz =
\lf\laz F_{p_0,q_0}^{s_0,0}(\rn), F_{p_1,q_1}^{s_1,0}(\rn),\tz\r\raz = F_{p,q}^{s,0}(\rn)= F_{p,q}^{s}(\rn)
\end{equation}
was proved  by Frazier and Jawerth \cite[Theorem~8.5]{fj90}.
Also the formula
\[
\lf\laz F_{p_0,q_0}^{s_0,1/p_0}(\rn), F_{p_1,q_1}^{s_1,1/p_1}(\rn),\tz\r\raz = F_{p,q}^{s,1/p}(\rn)
= F_{\infty,q}^{s}(\rn)
\]
as well as
\[
\lf\laz F_{p_0,q_0}^{s_0,1/p_0}(\rn), B_{p_1,\infty}^{s_1,1/p_1}(\rn),\tz\r\raz = F_{p,q}^{s,1/p}(\rn)
= F_{\infty,q}^{s}(\rn)
\]
can be found therein. Notice that the idea used in the proof for \eqref{fraz1}
carries over to the general case.

(iv) As we have seen in (i) of this remark, the restriction $\tau_0 \, p_0 = \tau_1\, p_1$ splits the admissible
parameters into four groups:
\begin{enumerate}
\item[(a)] $\tau_0 = \tau_1 =0$;
\item[(b)] $0<\tau_i<1/p_i$, $i\in\{0,1\}$;
\item[(c)] $ \tau_0-1/p_0 = \tau_1-1/p_1=0$;
\item[(d)] $\max_{i\in\{0,1\}} \{\tau_i-1/p_i\}>0$.
\end{enumerate}
Our methods do not apply to other situations. However, Frazier and Jawerth
\cite[Theorem~8.5]{fj90} proved that, if $p_0,\ p_1\in(0,\infty)$ (and with no further restrictions), then
\[
\lf\laz F_{p_0,q_0}^{s_0,0}(\rn), F_{p_1,q_1}^{s_1,1/p_1}(\rn),\tz\r\raz = F_{p,q}^{s,0}(\rn)
\]
and,
if $p_0,\ p_1 \in(0,\infty)$ and $\tau_1 \in(1/p_1,\fz)$, then
\begin{eqnarray*}
\lf\laz F_{p_0,q_0}^{s_0,0}(\rn), F_{p_1,q_1}^{s_1,\tau_1}(\rn),\tz\r\raz & = &
\lf\laz F_{p_0,q_0}^{s_0,0}(\rn), B_{\infty,\infty}^{s_1 + n(\tau_1-1/p_1)}(\rn),\tz\r\raz
\\
& = & F_{p,q}^{s+n(\tau-1/p) + n(1-\Theta)/p_0,0}(\rn).
\end{eqnarray*}
\end{remark}

\subsection*{The $\pm$-method and Morrey-Campanato spaces}

Recall that $F^{0,1/p-1/u}_{p,2}(\rn) = \cm_p^u(\rn)$ if
$1<p\le u<\fz$ (see Mazzucato \cite{ma01} and  Sawano \cite{sawch}).
Then, partially as  a further corollary of Theorem \ref{COMI},
we have the following conclusion  on the interpolation of  Morrey spaces.

\begin{corollary}\label{cima}
Let $\tz\in(0,1)$, $0<p_0\le u_0<\fz$ and $0<p_1\le u_1<\fz$.
Let  $\frac1u:=\frac{1-\tz}{u_0}+\frac\tz{u_1}$ and $\frac1p:=\frac{1-\tz}{p_0}+\frac\tz{p_1}$.

{\rm (i)} If $ p_0\, u_1=p_1\, u_0$,
then
$$\lf\laz\cm^{u_0}_{p_0}(\rn), \cm^{u_1}_{p_1}(\rn), \tz\r\raz=\cm^u_p(\rn).$$

{\rm (ii)} If $\min\{p_0,p_1\} >1$ and  $p_0\, u_1 \neq p_1\, u_0$,
then
$$\lf\laz\cm^{u_0}_{p_0}(\rn), \cm^{u_1}_{p_1}(\rn), \tz\r\raz \neq \cm^u_p(\rn).$$
\end{corollary}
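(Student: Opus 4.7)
My approach rests on two ingredients already available in the paper: (a) the identification of Morrey spaces with a Triebel-Lizorkin-type (equivalently, Triebel-Lizorkin-Morrey) space recalled just before Corollary \ref{cima}, namely $\cm^u_p(\rn)=F^{0,1/p-1/u}_{p,2}(\rn)=\ce^0_{u,p,2}(\rn)$ for $1<p\le u<\fz$ (see Proposition \ref{basic2}); and (b) the abstract Nilsson-type identification of the $\pm$-method with the Calder\'on product on quasi-Banach lattices, which the authors will record as Proposition \ref{t-n}.

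For (i), set $\tau_i:=1/p_i-1/u_i$, $i\in\{0,1\}$, and $\tau:=1/p-1/u$. A direct computation confirms that the hypothesis $p_0u_1=p_1u_0$ is equivalent to $\tau_0p_0=\tau_1p_1$, which is exactly the assumption of Theorem \ref{COMI}(i). Taking $s_0=s_1=0$, $q_0=q_1=2$ and $A=F$, that theorem gives
\[
\lf\laz\cm^{u_0}_{p_0}(\rn),\cm^{u_1}_{p_1}(\rn),\tz\r\raz
=\lf\laz F^{0,\tau_0}_{p_0,2}(\rn),F^{0,\tau_1}_{p_1,2}(\rn),\tz\r\raz
=F^{0,\tau}_{p,2}(\rn)=\cm^u_p(\rn).
\]
The cases involving $p_i\le 1$ can be reduced analogously by invoking the $\ce$-space identification together with Theorem \ref{COMI}(ii), whose hypothesis $p_0u_1=p_1u_0$ is precisely the one assumed here.

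For (ii), I argue by contradiction. Since $\min\{p_0,p_1\}>1$, the Morrey spaces $\cm^{u_i}_{p_i}(\rn)$ fall inside the Banach-lattice framework covered by the Nilsson-type theorem (Proposition \ref{t-n}), which identifies
\[
\lf\laz X_0,X_1,\tz\r\raz=X_0^{1-\tz}X_1^{\tz}
\]
for suitable couples of quasi-Banach function lattices. If the conclusion of (ii) failed, namely $\lf\laz\cm^{u_0}_{p_0}(\rn),\cm^{u_1}_{p_1}(\rn),\tz\r\raz=\cm^u_p(\rn)$, the above identification would then force
\[
\lf[\cm^{u_0}_{p_0}(\rn)\r]^{1-\tz}\lf[\cm^{u_1}_{p_1}(\rn)\r]^{\tz}=\cm^u_p(\rn),
\]
in direct contradiction with Theorem \ref{morrey1}(iii), which asserts strict inclusion precisely when $p_0u_1\neq p_1u_0$. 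Hence the asserted non-equality follows.

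The main obstacle I foresee is in part (ii): verifying that $\cm^{u}_{p}(\rn)$ fits the hypotheses of Proposition \ref{t-n} is not automatic, since Morrey spaces are non-separable and fail various standard lattice-continuity properties enjoyed by the Lebesgue scale. The restriction $\min\{p_0,p_1\}>1$ in the statement is precisely what keeps us in the Banach setting where the Nilsson identification is cleanest; any attempt to extend (ii) into the full quasi-Banach range would require a more delicate substitute for the $\langle\cdot,\cdot,\tz\rangle=(\cdot)^{1-\tz}(\cdot)^{\tz}$ identification. Part (i), by contrast, is essentially a parameter-matching bookkeeping exercise once the $F$- and $\ce$-space identifications of $\cm^u_p(\rn)$ are in hand.
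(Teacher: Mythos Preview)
Your plan for part (i) is correct when $\min\{p_0,p_1\}>1$ and matches the paper's first step exactly. However, the extension to $\min\{p_0,p_1\}\le 1$ via the $\ce$-space identification does not work: the equality $\cm^u_p(\rn)=F^{0,1/p-1/u}_{p,2}(\rn)=\ce^0_{u,p,2}(\rn)$ is a Littlewood--Paley characterization of the Morrey space that requires $1<p\le u<\fz$ (see the statement just before Corollary \ref{cima} and Proposition \ref{basic1}(iv)). For $p_i\le 1$ there is no such identification available, so Theorem \ref{COMI}(ii) cannot be invoked. The paper handles this case by a different route: it shows directly that the Gagliardo closure of $\cm^u_p(\rn)$ in $\cm^{u_0}_{p_0}(\rn)+\cm^{u_1}_{p_1}(\rn)$ coincides with $\cm^u_p(\rn)$, by transferring the question to the Banach range via the bijection $f\mapsto(\arg f)|f|^\delta$ with $\delta<\min\{p_0,p_1\}$, and then concludes by combining Proposition \ref{t-n} with Theorem \ref{morrey1}(ii).

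For part (ii) there is a genuine gap. You claim that Proposition \ref{t-n} identifies $\laz X_0,X_1,\tz\raz$ with the Calder\'on product $X_0^{1-\tz}X_1^\tz$, but that proposition only gives the chain
\[
X_0^{1-\tz}X_1^\tz\hookrightarrow\laz X_0,X_1,\tz\raz\hookrightarrow\bigl(X_0^{1-\tz}X_1^\tz\bigr)^\sim,
\]
and the Gagliardo closure on the right may be strictly larger than the Calder\'on product. Consequently, assuming $\laz\cm^{u_0}_{p_0}(\rn),\cm^{u_1}_{p_1}(\rn),\tz\raz=\cm^u_p(\rn)$ does not force the Calder\'on product to equal $\cm^u_p(\rn)$; it only forces $\cm^u_p(\rn)\hookrightarrow\bigl([\cm^{u_0}_{p_0}(\rn)]^{1-\tz}[\cm^{u_1}_{p_1}(\rn)]^\tz\bigr)^\sim$, which does not contradict Theorem \ref{morrey1}(iii). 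The paper's proof of (ii) proceeds instead through the interpolation property: it uses Lemari\'e-Rieusset's family of positive linear functionals $T_m f=\int_{K_m^\beta}f(x)\,dx$ and shows that $\|T_m\|_{\cm^u_p(\rn)\to\rr}$ cannot be controlled by $\max\{\|T_m\|_{\cm^{u_0}_{p_0}(\rn)\to\rr},\|T_m\|_{\cm^{u_1}_{p_1}(\rn)\to\rr}\}$ uniformly in $m$, which violates the boundedness estimate of Proposition \ref{gustav}(ii) and hence rules out $\laz\cm^{u_0}_{p_0}(\rn),\cm^{u_1}_{p_1}(\rn),\tz\raz=\cm^u_p(\rn)$.
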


Corollary \ref{cima}(i) has been known before; see \cite[Theorem 2.3]{lyy}. However, our proof, given in Subsection \ref{proof1},
will differ from that one given in \cite[Theorem 2.3]{lyy}.
Corollary \ref{cima} allows us now to consider also Campanato spaces.

\begin{corollary}\label{cimab}
Let $\tz\in(0,1)$, $p_0,\ p_1\in(0,\fz)$ and $\tau_0,\ \tau_1 \in [0, \fz)$.
If  $\tau:=(1-\tz)\, \tau_0 + \tz\, \tau_1$,
$\frac1p:=\frac{1-\tz}{p_0}+\frac\tz{p_1}$ and $
p_0\, \tau_0=p_1\, \tau_1$,
then
$$\lf \laz \cl^{\tau_0}_{p_0}(\rn), \cl^{\tau_1}_{p_1}(\rn), \tz\r\raz=\cl^\tau_p(\rn)$$
(by using the convention that, in case $\tau = 1/p$, either $p \in[1,\fz)$ and $k \ge 0$ or $p\in(0,1)$ and $k=0$).
\end{corollary}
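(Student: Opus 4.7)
The plan is to split into three cases according to the common value $c := p_0 \tau_0 = p_1 \tau_1$. Writing $\tau_i = c/p_i$ and using the convexity relation for $1/p$ together with $\tau = (1-\tz)\tau_0 + \tz\tau_1$, I would first observe that $p\tau = c$ as well. Consequently, the three Campanato spaces $\cl^{\tau_0}_{p_0}(\rn)$, $\cl^{\tau_1}_{p_1}(\rn)$, $\cl^\tau_p(\rn)$ fall simultaneously into the same of the three regimes described in items (a)--(c) just before the corollary, depending on whether $c<1$, $c=1$, or $c>1$.

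For $c=1$, item (c) gives $\cl^{\tau_0}_{p_0}(\rn) = \cl^{\tau_1}_{p_1}(\rn) = \cl^\tau_p(\rn) = \bmo$ with equivalent quasi-norms (with $k=0$, subject to the noted restriction when $p<1$), so the identity $\langle \bmo, \bmo, \tz\rangle = \bmo$ follows trivially from Proposition \ref{gustav}(ii) applied to the identity operator in both directions. For $c>1$, item (b) identifies $\cl^{\tau_i}_{p_i}(\rn) = B^{s_i}_{\infty,\infty}(\rn)$ with $s_i = n(\tau_i - 1/p_i) = n(c-1)/p_i$, and similarly $\cl^\tau_p(\rn) = B^s_{\infty,\infty}(\rn)$ with $s = n(c-1)/p$; a direct check using $\frac1p = \frac{1-\tz}{p_0}+\frac{\tz}{p_1}$ yields $s = (1-\tz)s_0 + \tz s_1$. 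The desired identity then reduces to the classical formula
\[
\lf\laz B^{s_0}_{\infty,\infty}(\rn), B^{s_1}_{\infty,\infty}(\rn), \tz\r\raz = B^s_{\infty,\infty}(\rn),
\]
which is the $\tau_0 = \tau_1 = 0$, $p_0 = p_1 = q_0 = q_1 = \infty$ specialization of Theorem \ref{COMI}(i) (and is in any case standard).

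For $c<1$, item (a) identifies $\cl^{\tau_i}_{p_i}(\rn) = M^{u_i}_{p_i, \unif}(\rn)$ with $1/u_i = 1/p_i - \tau_i = (1-c)/p_i$, and $\cl^\tau_p(\rn) = M^u_{p,\unif}(\rn)$ with $1/u = (1-c)/p$. The hypothesis $p_0\tau_0 = p_1\tau_1$ is equivalent to $p_0/u_0 = p_1/u_1$, i.e., $p_0 u_1 = p_1 u_0$, which is exactly the balance condition of Corollary \ref{cima}(i). I would therefore reduce the claim to a uniformly local analog of Corollary \ref{cima}(i), namely
\[
\lf\laz M^{u_0}_{p_0, \unif}(\rn), M^{u_1}_{p_1, \unif}(\rn), \tz\r\raz = M^{u}_{p, \unif}(\rn),
\]
obtained by running the same scheme used for Corollary \ref{cima}: combine the retraction identity $M^u_{p,\unif}(\rn) = F^{0,1/p-1/u}_{p,2,\unif}(\rn)$ (the uniformly local counterpart of Mazzucato/Sawano's identification) with a $\pm$-method interpolation formula for $\ftu$ spaces; the latter is itself derived from a uniformly local variant of the Calder\'on-product identity in Proposition \ref{morrey2} for the sequence spaces $f^{s,\tau}_{p,q,\unif}(\rn)$, together with Nilsson's theorem (Proposition \ref{t-n}).

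The main obstacle is precisely this case $c<1$: the Campanato norm is locally uniform (balls of volume $\le 1$), so one cannot invoke Corollary \ref{cima} directly, and one must check that the Calder\'on-product identities and the retraction/Nilsson machinery underpinning Theorem \ref{COMI} and Corollary \ref{cima} carry over verbatim to the $\unif$ setting. This is plausible because both the Calder\'on product and the $\pm$-method are lattice-compatible and respect the restriction of the sequence-space defining supremum to cubes of scale $\le 1$, but the bookkeeping (in particular, that the convergence in $X_0 + X_1$ required by the $\pm$-method is preserved under truncation to $\cq^\ast$) is the technical point that must be verified.
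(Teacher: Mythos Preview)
Your case split via $c := p_0\tau_0 = p_1\tau_1 = p\tau$ is exactly the paper's, and your treatment of the cases $c=1$ (all three spaces equal $\bmo$) and $c>1$ (all three equal $B^{n(c-1)/p_i}_{\infty,\infty}(\rn)$, then apply Theorem~\ref{COMI}(i)) matches the paper's Steps~3 and~4 verbatim.

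For $c<1$ the paper takes a more direct route than the one you sketch. Rather than lifting to the $\ftu$ scale, it works with the uniformly local Morrey spaces themselves: a preparatory Lemma~\ref{morrey1b} establishes the Calder\'on-product identity
\[
\lf[\cm_{p_0,\unif}^{u_0}(\rn)\r]^{1-\tz}\lf[\cm_{p_1,\unif}^{u_1}(\rn)\r]^{\tz}=\cm_{p,\unif}^{u}(\rn)
\]
by simply rerunning the proof of Theorem~\ref{morrey1}(ii), and then the Gagliardo-closure argument from the proof of Corollary~\ref{cima} (including the $f\mapsto |f|^\delta$ trick for $\min\{p_0,p_1\}\le 1$) together with Nilsson's Proposition~\ref{t-n} finishes the job. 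Your proposed detour through $F^{0,1/p-1/u}_{p,2,\unif}(\rn)$ is viable in principle (the paper later proves the relevant $\pm$-formula for $\atu$ as Theorem~\ref{COMI-u}), but note that the Littlewood--Paley identification $M^u_{p,\unif}=F^{0,1/p-1/u}_{p,2,\unif}$ requires $p>1$, so you would still need the power-trick to cover $p_i\le 1$; the paper's direct approach sidesteps this and keeps the argument at the level of Morrey lattices throughout.
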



\subsection{The Peetre-Gagliardo interpolation method}
\label{inter1b}


The following interpolation method $\laz\cdot\, , \, \cdot \raz_\Theta$
was introduced  by Peetre \cite{p71} (based on some earlier work of Gagliardo \cite{Ga}).

\begin{definition}
Let $(X_0,X_1)$ be a  quasi-Banach couple  and $\Theta \in (0,1)$.
An $a\in X_0+X_1$ is said to belong to  $\laz X_0, X_1\raz_\Theta$
if there exists a sequence $\{a_i\}_{i\in\zz}
\subset X_0\cap X_1$ such that $a=\sum_{i\in\zz} \, a_i$ with convergence in $X_0+X_1$
and, for any bounded sequence $\{\varepsilon_i\}_{i\in\zz}\subset\cc$,
$$\sum_{i\in\zz} \varepsilon_i \, 2^{i(j-\Theta)} \, a_i$$
converges in $X_j$, $j\in\{0,1\}$, and satisfies
\begin{equation}\label{2.1y}
\lf\|\sum_{i\in\zz} \varepsilon_i \, 2^{i(j-\Theta)} \, a_i\r\|_{X_j}\le
C \, \sup_{i\in\zz}|\varepsilon_i|
\end{equation}
for some non-negative constant $C$ independent of $j\in\{0,1\}$. The \emph{quasi-norm} of
$a\in \laz X_0, X_1\raz_\Theta$ is defined as
\[
\|a\|_{\laz X_0, X_1\raz_\Theta}:=\inf \lf\{C: \, C\ \ \mbox{satisfies}\ \ \eqref{2.1y}\r\}.
\]
\end{definition}

\begin{proposition}\label{interprop}
Let $(A_0,A_1)$ and  $(B_0,B_1)$ be any two quasi-Banach couples and  $\Theta \in (0,1)$.

{\rm(i)}  It holds true that $\laz A_0, A_1\raz_\Theta$ is   a quasi-Banach space.

{\rm(ii)}
If $T \in \cl (A_0,B_0) \cap \cl(A_1,B_1)$, then $T$ maps  $\laz A_0, A_1\raz_\Theta $
continuously into $\laz B_0, B_1\raz_\Theta $. Furthermore
\[
\| \, T \, \|_{\laz A_0, A_1\raz_\Theta  \to \laz B_0, B_1\raz_\Theta}  \le \max \lf\{
\| \, T \, \|_{A_0 \to B_0} ,\, \|\, T \, \|_{A_1 \to  B_1}\r\}\, .
\]
\end{proposition}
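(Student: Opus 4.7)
The plan is to mimic the proof of Proposition \ref{gustav} for the $\pm$-method, since the definition of $\laz X_0, X_1\raz_\Theta$ differs from that of $\laz X_0, X_1, \Theta\raz$ only in that the bound \eqref{2.1y} is demanded for the full series $\sum_{i\in\zz}$ rather than for finite partial sums indexed by $F$. Consequently the verification of the algebraic and topological structure can be adapted almost line by line, and the interpolation property reduces to a short calculation exploiting the continuity of $T$ on each $A_j$.

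I would dispatch (ii) first, as it is the cleaner of the two. Fix $a \in \laz A_0, A_1\raz_\Theta$ and an admissible representation $a = \sum_{i\in\zz} a_i$ with $a_i \in A_0 \cap A_1$ and constant $C$ arbitrarily close to $\|a\|_{\laz A_0, A_1\raz_\Theta}$. Since $T \in \cl(A_0,B_0)\cap \cl(A_1,B_1)$, it extends to a bounded operator $A_0+A_1 \to B_0+B_1$, so $Ta = \sum_i Ta_i$ converges in $B_0+B_1$ and $Ta_i \in B_0 \cap B_1$. For any bounded sequence $\{\varepsilon_i\}_{i\in\zz}\subset \cc$, the continuity of $T\colon A_j \to B_j$ together with the convergence required by \eqref{2.1y} gives
\[
\sum_{i\in\zz} \varepsilon_i\, 2^{i(j-\Theta)}\, Ta_i = T\Big(\sum_{i\in\zz} \varepsilon_i\, 2^{i(j-\Theta)}\, a_i\Big)
\]
in $B_j$, with $B_j$-norm bounded by $\|T\|_{A_j \to B_j}\cdot C \sup_i|\varepsilon_i|$. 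Taking the maximum over $j\in\{0,1\}$ and the infimum over admissible $C$ yields the claimed operator-norm estimate.

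For (i), linearity, positive homogeneity, and the quasi-triangle inequality follow from concatenating representations: if $a = \sum a_i$ and $b = \sum b_i$ witness constants $C_a$ and $C_b$, then $a+b=\sum(a_i+b_i)$ witnesses a constant bounded by $K(C_a+C_b)$, where $K$ is the quasi-triangle constant of $A_0+A_1$. Positive definiteness uses the test sequences $\varepsilon_i = \delta_{i,k}$, which force $\|a_k\|_{A_j} \le C\cdot 2^{-k(j-\Theta)}$ for each $k$ and each $j\in\{0,1\}$; hence $C=0$ forces every $a_k=0$ and thus $a=0$. For completeness, I would invoke the Aoki-Rolewicz theorem to equip $\laz A_0, A_1\raz_\Theta$ with an equivalent $p$-norm for some $p\in(0,1]$, extract from a given Cauchy sequence a subsequence $\{a^{(n_k)}\}$ with $\|a^{(n_{k+1})}-a^{(n_k)}\|_{\laz \cdot\raz_\Theta}^p < 2^{-k}$, choose near-optimal representations $a^{(n_{k+1})}-a^{(n_k)} = \sum_i b_i^{(k)}$ with constants $C_k \lesssim 2^{-k/p}$, and construct the candidate representation $a_i := \sum_k b_i^{(k)}$ of the limit by interchanging summations.

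The main obstacle will be verifying, in the completeness step, that the double sum $\sum_k \sum_i \varepsilon_i\, 2^{i(j-\Theta)}\, b_i^{(k)}$ actually converges in $A_j$ for every bounded $\{\varepsilon_i\}$ and admits a uniform bound independent of the sequence; this is precisely where the Aoki-Rolewicz $p$-norm enters, via $\sum_k \|\text{inner sum}\|_{A_j}^p \lesssim \sum_k 2^{-k}\sup_i|\varepsilon_i|^p < \infty$. Once this is settled, the remainder of both (i) and (ii) is essentially identical to the treatment of Proposition \ref{gustav} for the $\pm$-method.
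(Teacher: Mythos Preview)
Your proof of (ii) is exactly the paper's argument.

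For the completeness part of (i), your route differs from the paper's. The paper does not invoke Aoki--Rolewicz. Instead it fixes an index $N$ from the Cauchy condition, chooses once a representation $a^N=\sum_i a_i^N$, and for each $m\ge N$ sets $\tilde a_i^m:=a_i^N+b_i^{m,N}$, where $\sum_i b_i^{m,N}$ is a near-optimal representation of $a^m-a^N$. Using the test sequences $\varepsilon_i=\delta_{i,k}$ it shows that, for each fixed $i$, $\{\tilde a_i^m\}_m$ is Cauchy in $A_0\cap A_1$; the limit $a_i$ is then taken coordinate-wise, and the required convergence of $\sum_i \varepsilon_i 2^{i(j-\Theta)}a_i$ in $A_j$ is obtained by passing to the limit in the uniform bound \eqref{ws-101-3}. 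Your plan instead telescopes along a rapidly convergent subsequence and controls a double sum via an equivalent $p$-norm. Both are standard; your approach trades the coordinate-wise limit argument for a summation-interchange argument, and you correctly flag that interchange as the crux and propose the right mechanism (domination by $\sum_k C_k^p$).

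One small omission: your sequence $a_i:=\sum_k b_i^{(k)}$ represents $\lim_k a^{(n_k)}-a^{(n_1)}$, not the limit itself. You need to also fix a representation $a^{(n_1)}=\sum_i c_i$ and set $a_i:=c_i+\sum_k b_i^{(k)}$; the additional constant from $c_i$ is harmless for the bound in \eqref{2.1y} and does not affect the convergence argument. With that adjustment your outline is correct.
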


By Proposition \ref{interprop}, we also have a counterpart of Proposition \ref{complexretract-pm} as follows.

\begin{proposition}\label{complexretract-pg}
Let $(X_0,X_1)$ and $(Y_0,Y_1)$ be two interpolation couples of quasi-Banach spaces
such that $Y_j$ is a retract of $X_j$, $j\in\{0,1\}$.
Then, for each $\Theta \in (0,1)$,
\[
\laz Y_0,Y_1 \raz_\Theta = R (\laz X_0, X_1\raz_\Theta)\, .
\]
\end{proposition}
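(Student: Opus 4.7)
The plan is to follow the classical retraction-coretraction argument from abstract interpolation theory, now in the quasi-Banach setting, using only the interpolation property already recorded as Proposition~\ref{interprop}(ii). Unpacking the hypothesis, the assumption that $Y_j$ is a retract of $X_j$ for $j\in\{0,1\}$ means that there exist operators $E\in\mathcal{L}(Y_0,X_0)\cap\mathcal{L}(Y_1,X_1)$ and $R\in\mathcal{L}(X_0,Y_0)\cap\mathcal{L}(X_1,Y_1)$ with $R\circ E=\mathrm{id}$ on $Y_0+Y_1$. The whole proof reduces to applying Proposition~\ref{interprop}(ii) to $E$ and to $R$ separately.

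First I would establish the inclusion $R(\laz X_0,X_1\raz_\Theta)\hookrightarrow\laz Y_0,Y_1\raz_\Theta$. By Proposition~\ref{interprop}(ii) applied to $R$, for every $x\in\laz X_0,X_1\raz_\Theta$ the element $Rx$ lies in $\laz Y_0,Y_1\raz_\Theta$ with
\[
\|Rx\|_{\laz Y_0,Y_1\raz_\Theta}\le \max\bigl\{\|R\|_{X_0\to Y_0},\|R\|_{X_1\to Y_1}\bigr\}\,\|x\|_{\laz X_0,X_1\raz_\Theta}.
\]
Conversely, given $y\in\laz Y_0,Y_1\raz_\Theta$, Proposition~\ref{interprop}(ii) applied to $E$ yields $Ey\in\laz X_0,X_1\raz_\Theta$ with an analogous norm bound, and the retract identity $R(Ey)=y$ exhibits $y$ as an element of $R(\laz X_0,X_1\raz_\Theta)$. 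This gives the set-theoretic equality; the two operator-norm estimates, together with $R\circ E=\mathrm{id}$, yield equivalence of the quotient quasi-norm on $R(\laz X_0,X_1\raz_\Theta)$ and the intrinsic quasi-norm on $\laz Y_0,Y_1\raz_\Theta$.

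There is no real obstacle in this proof; the only point that requires a line of care is that the series $\sum_{i\in\zz}\varepsilon_i 2^{i(j-\Theta)}a_i$ defining membership in $\laz X_0,X_1\raz_\Theta$ must, after the application of $R$, still converge in $Y_j$ with the bound from \eqref{2.1y}. This is automatic because $R:X_j\to Y_j$ is continuous and linear, so it commutes with the convergence of the defining decompositions and preserves the uniform control over finite partial sums on which the $\laz\cdot,\cdot\raz_\Theta$-quasi-norm is built; the symmetric remark applies to $E$. Since Proposition~\ref{interprop}(ii) is precisely the general statement of this boundedness, the argument is merely an application of it at the two ends of the retraction pair, exactly as in the Banach case treated in \cite[Theorem~1.2.4]{t78}.
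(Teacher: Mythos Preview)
Your proposal is correct and follows essentially the same route the paper indicates: the paper does not give a detailed proof of this proposition but simply remarks that, by Proposition~\ref{interprop}, the standard retraction--coretraction argument from \cite[Theorem~1.2.4]{t78} goes through (with the closed graph theorem still available in the quasi-Banach setting). Your write-up spells out precisely that argument, applying Proposition~\ref{interprop}(ii) to $R$ and to $E$ separately and then invoking $R\circ E=\mathrm{id}$, so nothing is missing.
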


Of course, there is only a minimal difference between
$\laz\cdot\, , \, \cdot \raz_\Theta$ and   $\lf\laz \cdot \, , \, \cdot \, , \tz\r\raz$.
As an immediate conclusion of their definitions, we obtain
$\laz X_0, X_1\raz_\Theta \hookrightarrow \laz X_0, X_1\, , \Theta\raz$.
In some case, one can say more about the relation between
$\laz X_0, X_1\raz_\Theta $ and $\laz X_0, X_1\, , \Theta\raz$.
To this end, we need a few more notation.

A quasi-Banach space $X$ is called an \emph{intermediate space} with respect
to $X_0+X_1$ if
\[
X_0\cap X_1 \hookrightarrow  X\hookrightarrow  X_0+X_1.
\]

\begin{definition}
Let $X_0,\ X_1$ be a couple of quasi-Banach spaces and $X$ an intermediate space
with respect to $X_0+X_1$.
Define
\[
X^\# := (X_0,X_1,X,\#):= {\overline{X_0\cap X_1}}^{\|\, \cdot \, \|_X}\, ,
\]
i.\,e., $X^\#$ is the closure of $X_0\cap X_1$ in $X$.
\end{definition}

Many times  the space ${\overline{X_0\cap X_1}}^{\|\, \cdot \, \|_X}$
 is  denoted by $X^\circ$; see, e.\,g., \cite{n85}.
However, in this article,  the closure of the test functions in $X$,
 denoted by $\mathring{X}$, will play a certain role. By using this different
 notation, we  hope to avoid confusion.
For us of certain interest is the following relation; see Nilsson \cite[(1.5)]{n85} and
also Janson \cite[Theorem 1.8]{jan}.

\begin{proposition}\label{nil}
Let  $(X_0,X_1)$ be a  couple of  quasi-Banach spaces
 and $\Theta \in (0,1)$.
Then
\[
\laz X_0, X_1\raz_\Theta = (X_0, X_1, \laz X_0, X_1\, , \Theta\raz, \#)\, , \quad \tz\in(0,1).
\]
\end{proposition}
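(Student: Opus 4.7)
The plan is to establish both inclusions $\laz X_0, X_1\raz_\Theta \subseteq Y^\#$ and $Y^\# \subseteq \laz X_0, X_1\raz_\Theta$, where $Y := \laz X_0, X_1, \Theta\raz$. Directly from the definitions one reads off
\[
X_0 \cap X_1 \hookrightarrow \laz X_0, X_1\raz_\Theta \hookrightarrow Y ,
\]
the first embedding via the trivial one-term representation $a_0 := a$, $a_i := 0$ for $i \neq 0$, and the second because the Peetre--Gagliardo convergence-plus-bound condition is strictly stronger than the $\pm$-method's condition of a uniform bound on finite partial sums only. The plan is to upgrade the second embedding to $\laz X_0, X_1\raz_\Theta = Y^\#$ by proving the two inclusions.

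For $Y^\# \hookrightarrow \laz X_0, X_1\raz_\Theta$: given $a$ in the $Y$-closure of $X_0 \cap X_1$, I would select $c_0 := 0$ and $c_k \in X_0 \cap X_1$ with $\|a - c_k\|_Y \le 2^{-k}$, so that the quasi-triangle inequality gives $\|c_k - c_{k-1}\|_Y \lesssim 2^{-k}$. For each $k \ge 1$, I would choose a $Y$-defining representation $c_k - c_{k-1} = \sum_i b_i^{(k)}$ with constant $\lesssim 2^{-k}$; testing against one-point coefficient vectors then yields $\|b_i^{(k)}\|_{X_j} \lesssim 2^{-k-i(j-\Theta)}$, so $\tilde a_i := \sum_{k \ge 1} b_i^{(k)}$ converges absolutely in $X_0 \cap X_1$. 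A Fubini interchange in $X_0 + X_1$ (justified by the double-absolute summability of the $\|b_i^{(k)}\|_{X_0 + X_1}$ coming from the geometric control in $k$ together with the exponential decay in $|i|$) gives $a = \sum_i \tilde a_i$, and organizing the double sum $\sum_i \varepsilon_i 2^{i(j-\Theta)} \tilde a_i$ by the $k$-index, so that convergence in $X_j$ is witnessed termwise through the $2^{-k}$-summable outer control, verifies the Peetre--Gagliardo property of $\{\tilde a_i\}$.

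For $\laz X_0, X_1\raz_\Theta \hookrightarrow Y^\#$: given $a$ with Peetre--Gagliardo representation $a = \sum_i a_i$, I would approximate $a$ in the $Y$-quasi-norm by the partial sums $S_N := \sum_{|i|\le N} a_i \in X_0 \cap X_1$. Using $\sum_{|i|>N} a_i$ itself as a representation of $a - S_N$ in the sense of $Y$, the claim $\|a - S_N\|_Y \to 0$ reduces to the uniform tail-convergence
\[
\lim_{N \to \infty} \sup_{\|\varepsilon\|_{\ell_\infty(\zz)} \le 1,\, j \in \{0,1\}} \lf\|\sum_{|i|>N} \varepsilon_i 2^{i(j-\Theta)} a_i\r\|_{X_j} = 0.
\]

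The main obstacle is exactly this uniform tail-convergence. The Peetre--Gagliardo hypothesis supplies, for each bounded $\{\varepsilon_i\}$ separately, both convergence of the series in $X_j$ and a uniform norm-bound, but not a priori uniformity over the $\ell_\infty$-ball. Following Nilsson \cite{n85}, the required uniformity is obtained by a gliding-hump contradiction: if uniform tail-convergence failed, then one could extract offending coefficient vectors $\{\varepsilon^{(k)}\}$ with $\|\varepsilon^{(k)}\|_{\ell_\infty} \le 1$ together with indices $N_k \uparrow \infty$ for which the corresponding tail norms stay bounded below; splicing the $\varepsilon^{(k)}$ across pairwise disjoint blocks $(N_k, N_{k+1}]$ produces a single bounded sequence $\{\varepsilon_i^\ast\}$ whose associated series $\sum_i \varepsilon_i^\ast 2^{i(j-\Theta)} a_i$ is non-Cauchy in $X_j$, contradicting the Peetre--Gagliardo assumption on $a$ and completing the proof.
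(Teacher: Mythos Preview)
The paper does not supply its own proof of this proposition; it simply cites Nilsson \cite[(1.5)]{n85} and Janson \cite[Theorem~1.8]{jan}. So your attempt must be judged on its own merits.

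Your argument for $\laz X_0, X_1\raz_\Theta \hookrightarrow Y^\#$ via partial sums and the gliding-hump contradiction is correct and is indeed the right idea. (One small point: the $Y$-quasi-norm of $a-S_N$ is a supremum over \emph{finite} $F\subset\{|i|>N\}$, not over the infinite tail; but your gliding-hump argument works verbatim for that version, since the offending blocks $F_k$ can be nested between successive levels $N_{k-1}<|i|\le N_k$ and spliced into a single bounded $\{\varepsilon^\ast_i\}$ whose symmetric partial sums fail to be Cauchy.)

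The reverse inclusion $Y^\#\hookrightarrow\laz X_0,X_1\raz_\Theta$, however, has a genuine gap. You pick $\pm$-representations $d_k=\sum_i b_i^{(k)}$ with $\pm$-constant $\lesssim 2^{-k}$, set $\tilde a_i:=\sum_k b_i^{(k)}$, and then assert that ``convergence in $X_j$ is witnessed termwise through the $2^{-k}$-summable outer control''. But the $\pm$-method provides only \emph{uniform bounds} on finite sums $\sum_{i\in F}\varepsilon_i 2^{i(j-\Theta)}b_i^{(k)}$; it does not guarantee that the full series $\sum_i \varepsilon_i 2^{i(j-\Theta)} b_i^{(k)}$ converges in $X_j$. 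Thus reorganizing the double sum as $\sum_k\bigl(\sum_i\varepsilon_i 2^{i(j-\Theta)}b_i^{(k)}\bigr)$ invokes inner sums that need not exist in $X_j$. If instead you work with the legitimate finite-$i$ partial sums $S_N=\sum_k T_N^{(k)}$ with $T_N^{(k)}:=\sum_{|i|\le N}\varepsilon_i 2^{i(j-\Theta)}b_i^{(k)}$, you get $\|T_N^{(k)}\|_{X_j}\lesssim 2^{-k}$ uniformly in $N$, hence a uniform bound on $S_N$; but you do \emph{not} get the Cauchy property, since $\|T_N^{(k)}-T_M^{(k)}\|_{X_j}$ is again only $\lesssim 2^{-k}$ with no decay in $M,N$. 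The single-index estimates $\|b_i^{(k)}\|_{X_j}\lesssim 2^{-k-i(j-\Theta)}$ do not help either: the weighted terms $\|\varepsilon_i 2^{i(j-\Theta)} b_i^{(k)}\|_{X_j}\lesssim 2^{-k}$ are constant in $i$. This is exactly the distinction between the $\pm$-method and the Peetre--Gagliardo method, and your construction does not bridge it. (Contrast the paper's proof of Proposition~\ref{interprop}(i), where the analogous step succeeds precisely because the $b_i^{m,\ell}$ there come from \emph{Peetre--Gagliardo} representations, so the requisite convergence in $X_j$ is built in.)
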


Combining Theorem \ref{COMI} and its sequence space version Theorem \ref{comi},
Proposition \ref{nil}, and Propositions \ref{wave1} and \ref{wave2} in Appendix below,
we immediately obtain the next interesting conclusions.

\begin{proposition}\label{C-CIS}
Let all parameters
be as in Theorem \ref{COMI}.

{\rm(i)} If  $\tau_0 \, p_0 = \tau_1 \, p_1$, then
\begin{equation}\label{eq-231}
\lf\laz A_{p_0,q_0}^{s_0,\tau_0}(\rn), A_{p_1,q_1}^{s_1,\tau_1}(\rn)\r\raz_\tz
= (A_{p_0,q_0}^{s_0,\tau_0}(\rn),  A_{p_1,q_1}^{s_1,\tau_1}(\rn), A_{p,q}^{s,\tau}(\rn),\#)\, .
\end{equation}

{\rm(ii)} If $p_0 \, u_1=p_1\, u_0$, then
\begin{equation}\label{eq-232}
\lf\laz \cn_{u_0,p_0,q_0}^{s_0}(\rn), \cn_{u_1,p_1,q_1}^{s_1}(\rn)\r\raz_\tz
= (\cn_{u_0, p_0,q_0}^{s_0}(\rn), \cn_{u_1, p_1,q_1}^{s_1}(\rn), \cn_{u,p,q}^{s}(\rn), \#)\, .
\end{equation}
\end{proposition}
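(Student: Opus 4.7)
The plan is to derive Proposition \ref{C-CIS} as a direct combination of the abstract representation in Proposition \ref{nil} with the concrete identification of the $\pm$-interpolation space provided by Theorem \ref{COMI}. Recall that Proposition \ref{nil} asserts, for any quasi-Banach couple $(X_0,X_1)$ and any $\Theta\in(0,1)$,
$$\laz X_0, X_1\raz_\Theta = (X_0, X_1, \laz X_0, X_1, \Theta\raz, \#),$$
so that the Peetre-Gagliardo space is obtained simply as the closure of $X_0 \cap X_1$ inside the Gustavsson-Peetre space. Once the latter is concretely identified, the claimed equalities follow by substitution.

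For part (i), I would apply the above identity to $X_j := A_{p_j,q_j}^{s_j,\tau_j}(\rn)$, $j\in\{0,1\}$. Under the hypothesis $\tau_0 p_0 = \tau_1 p_1$, Theorem \ref{COMI}(i) identifies $\laz X_0, X_1, \Theta\raz$ with $A_{p,q}^{s,\tau}(\rn)$ in the sense of equivalent quasi-norms. Since the $\#$-closure of $X_0\cap X_1$ depends only on the equivalence class of the quasi-norm on the ambient intermediate space, we may replace $\laz X_0, X_1, \Theta\raz$ by $A_{p,q}^{s,\tau}(\rn)$ in the $\#$-construction without altering the resulting closed subspace. This yields \eqref{eq-231}. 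Part (ii) is handled identically: take $X_j := \cn_{u_j,p_j,q_j}^{s_j}(\rn)$ and, under $p_0 u_1 = p_1 u_0$, invoke Theorem \ref{COMI}(ii) to identify $\laz X_0, X_1, \Theta\raz = \cn_{u,p,q}^{s}(\rn)$, from which \eqref{eq-232} is immediate.

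Since the argument is essentially a substitution, there is no serious technical obstacle at the level of this proposition. The only point deserving momentary care is the verification that $A_{p,q}^{s,\tau}(\rn)$ (respectively $\cn_{u,p,q}^{s}(\rn)$) is an intermediate space relative to the couple $(X_0, X_1)$, i.e., the continuous embeddings $X_0 \cap X_1 \hookrightarrow A_{p,q}^{s,\tau}(\rn) \hookrightarrow X_0 + X_1$. But this is automatic: $\laz X_0, X_1, \Theta\raz$ is intermediate by construction of the $\pm$-method, and Theorem \ref{COMI} identifies it with $A_{p,q}^{s,\tau}(\rn)$. The substantive content is therefore encapsulated in Theorem \ref{COMI} itself, whose proof, as announced in the text, reduces matters to the Calder\'on product formulas in Propositions \ref{morrey2} and \ref{morrey3} on the associated sequence spaces (the version Theorem \ref{comi} of Theorem \ref{COMI}) and then transfers them to the function-space level through the wavelet retractions recorded in Propositions \ref{wave1} and \ref{wave2}; no further appeal to those ingredients is required at this stage.
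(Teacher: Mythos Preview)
Your proposal is correct and matches the paper's own approach: the paper states that Proposition \ref{C-CIS} is obtained by combining Proposition \ref{nil} with Theorem \ref{COMI} (together with its sequence-space version and the wavelet retractions underlying it), which is exactly the substitution argument you describe. Your remark that the $\#$-closure depends only on the equivalence class of the quasi-norm on the intermediate space is the precise point that makes the substitution legitimate.
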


\begin{remark}
The homogeneous counterparts  of Theorems \ref{COMI} and \ref{C-CIS}  also hold true
(more exactly, Theorem \ref{COMI} and Proposition \ref{C-CIS} with the inhomogeneous spaces
$\at$ and $\cn_{u,p,q}^{s}(\rn)$ replaced, respectively,
by their homogeneous counterparts
$\dot{A}_{p,q}^{s, \, \tau}(\rn)$ and $\dot{\cn}_{u,p,q}^{s}(\rn)$ remain valid).
However, to limit the length of this article, we omit the details and refer the reader to
\cite{yyz} for some results in this direction.
\end{remark}

Hence, we are left with the problem to calculate the right-hand sides in (\ref{eq-231}) and
(\ref{eq-232}). But this seems to be a difficult problem.
A first hint in this direction is given by the following observation. Therefore we need new spaces.

\begin{definition}\label{d-cm}
Let $X$ be a quasi-Banach space of distributions or functions.

{\rm (i)} By
$\accentset{\diamond}{X}$ we denote the closure in $X$ of the set of all infinitely
differentiable functions $f$ such that
$D^\alpha f \in X$ for all $\alpha \in (\zz_+)^n$.

{\rm (ii)}
Let $C_c^\infty (\rn) \hookrightarrow X$.
Then we denote by $\mathring{X}$ the closure of $C_c^\infty (\rn)$ in $X$.
\end{definition}

\begin{remark}\label{r2.24}
Recall that, in Section \ref{s1} of this article, we define $\mathring{\cm}^{u}_{p}(\rn) $ as the
{closure} of the Schwartz functions in the Morrey space ${\cm}^{u}_{p}(\rn)$.
This definition coincides with that in Definition \ref{d-cm}(ii) with $X={\cm}^{u}_{p}(\rn)$. Indeed, it is obvious that the closure of Schwartz functions in  ${\cm}^{u}_{p}(\rn)$ contains the closure of $C_c^\fz(\rn)$, while the inverse inclusion
follows from the well-known fact that a Schwartz function can be approximated by
$C_c^\infty(\rn)$ functions in Schwartz norms and hence in Morrey spaces (due to
the continuous embedding from the Schwartz space into Morrey spaces).
\end{remark}

Clearly, $\mathring A_{p,q}^{s,\tau}(\rn) \hookrightarrow \accentset{\diamond}{A}^{s,\tau}_{p,q} (\rn)$ and
$\mathring \cn_{u,p,q}^{s}(\rn) \hookrightarrow \accentset{\diamond}{\cn}_{u,p,q}^{s}(\rn)$.
First we clarify the relation of these two scales to each other.

\begin{lemma}\label{diamond1}
Let  $A \in \{B,F\}$ and $s\in\rr$. Then

{\rm (i)}  $\mathring A_{p,q}^{s,\tau}(\rn) = \accentset{\diamond}{A}^{s,\tau}_{p,q} (\rn)$
if and only if $\tau =0$, $p\in(0, \infty)$ and $q \in(0,\infty]$.

{\rm (ii)} $\mathring \cn_{u,p,q}^{s}(\rn) = \accentset{\diamond}{\cn}^{s}_{u,p,q} (\rn)$
if and only if $u=p\in(0,\infty)$ and $q\in(0,\infty]$.
\end{lemma}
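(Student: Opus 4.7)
The containment $\mathring X \hookrightarrow \accentset{\diamond} X$ is immediate from the definitions (any $C_c^\infty$ function has all derivatives in $X$), so the task is to characterize when equality holds.

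For sufficiency in (i), assume $\tau = 0$, $p \in (0,\infty)$, $q \in (0,\infty]$. If $q < \infty$, the classical density of $C_c^\infty(\rn)$ in $A^s_{p,q}(\rn)$ yields $\mathring A = A = \accentset{\diamond} A$. If $q = \infty$, it is enough to show that every smooth $g$ with $D^\alpha g \in A^s_{p,\infty}(\rn)$ for all multi-indices $\alpha$ lies in $\mathring A^s_{p,\infty}(\rn)$. I would use cutoffs $g_R := \varphi(\cdot/R)\, g$, where $\varphi \in C_c^\infty(\rn)$ with $\varphi \equiv 1$ on $B(0,1)$; the assumption combined with Sobolev-type lifting gives $g \in \bigcap_N A^{s+N}_{p,\infty}(\rn)$ and hence $g \in L^p(\rn)$ (choose $N$ with $s+N>0$ and use the embedding $A^{s+N}_{p,\infty} \hookrightarrow L^p$), after which a Leibniz / paraproduct analysis of $\Delta_j((1-\varphi(\cdot/R))\, g)$ together with dominated convergence yields $\|g - g_R\|_{A^s_{p,\infty}} \to 0$. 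Part (ii) with $u = p$ reduces immediately to this via the identification $\cn^s_{p,p,q}(\rn) = B^s_{p,q}(\rn)$.

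For necessity I would exhibit $f \in \accentset{\diamond} X \setminus \mathring X$ in every excluded regime. (a) If $p = \infty$ (in (i) with $A = B$, or in (ii) with $u = p = \infty$), the constant $f \equiv 1$ has all derivatives in the space but cannot be $L^\infty$-approximated by $C_c^\infty$ functions. (b) If $p < \infty$ and $\tau \ge 1/p$, Proposition \ref{basic1}(ii) identifies $A^{s,\tau}_{p,q}(\rn)$ with $F^s_{\infty,q}(\rn)$ (when $\tau = 1/p$) or $B^{s + n(\tau - 1/p)}_{\infty,\infty}(\rn)$ (when $\tau > 1/p$), and the constant $1$ again works. (c) If $p < \infty$ and $0 < \tau < 1/p$ (in (i)), or if $p < u < \infty$ (in (ii)), fix $\phi \in C_c^\infty(B(0,1/2))\setminus\{0\}$, set $x_k := 2^k e_1$, and let $f := \sum_{k \ge 1}\phi(\cdot - x_k)$. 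The supports are pairwise disjoint, so $f$ is smooth and $D^\alpha f = \sum_k (D^\alpha \phi)(\cdot - x_k)$ has the same structure. A molecular-type argument bounds $\|f\|_{A^{s,\tau}_{p,q}}$ (resp.\ $\|f\|_{\cn^s_{u,p,q}}$) by a constant times $\|\phi\|_{A^s_{p,q}}$ (resp.\ $\|\phi\|_{B^s_{p,q}}$): a cube $P$ of side $\ell$ swallows at most $O(\log \ell)$ bumps and so contributes $\lesssim (\log \ell)^{1/p}\|\phi\|_{A^s_{p,q}}$ to the local norm, which the weight $|P|^{-\tau}$ (resp.\ $|P|^{1/u - 1/p}$) absorbs because $\ell^{-n\tau}(\log \ell)^{1/p} \to 0$ for $\tau > 0$. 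The same estimate applied to $D^\alpha \phi$ shows $f \in \accentset{\diamond} X$. Conversely, for any $g \in C_c^\infty(\rn)$ with $\supp g \subset B(0, R)$, choose $k$ with $|x_k| > R+1$: then $(f - g)$ coincides with $\phi(\cdot - x_k)$ on $B(x_k, 1)$, and testing the norm on this unit ball yields $\|f - g\| \ge c\|\phi\|_{A^s_{p,q}} > 0$ uniformly in $g$, so $f \notin \mathring X$.

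The main obstacle is step (c): I must simultaneously control the Besov-type or Besov--Morrey norm of the bump series from above and extract a positive lower bound on $\|f - g\|$ uniform over all $g \in C_c^\infty(\rn)$. The exponential spacing $x_k = 2^k e_1$ is the delicate quantitative choice making both estimates compatible—polynomial spacing would overload the upper bound when $\tau < 1/p$, while too-sparse placement would dilute the geometry needed for the lower bound. The technical backbone of both estimates is the atomic/molecular decomposition of the function spaces under consideration, as recalled in the Appendix.
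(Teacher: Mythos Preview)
Your proposal is correct and follows the same overall strategy as the paper: the constant function $1$ witnesses failure when $\tau\ge 1/p$ (or $p=\infty$), and a sum of bumps placed at the exponentially spaced points $2^k e_1$ witnesses failure when $0<\tau<1/p$ (or $p<u$). The main technical difference lies in the choice of bump in case~(c). The paper uses translates of the wavelet \emph{scaling function} $\phi_{0,(2^\ell,0,\ldots,0)}$; this makes both the membership $f\in A^{s,\tau}_{p,q}(\rn)$ and the lower bound $\|f-g\|\ge c$ for $g\in C_c^\infty(\rn)$ immediate from the wavelet isomorphism (Proposition~\ref{wave1}), but since the scaling function is only $C^{N_1}$ the paper then has to construct a mollified approximation $f_\varepsilon$ and invoke the characterization by differences to show $f\in\accentset{\diamond}A^{s,\tau}_{p,q}(\rn)$. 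Your choice $\phi\in C_c^\infty(\rn)$ sidesteps that last step entirely, since $D^\alpha f$ has exactly the same structure as $f$; the price is that your upper bound for $\|f\|_{A^{s,\tau}_{p,q}}$ and your lower bound for $\|f-g\|$ rest on Littlewood--Paley tail estimates rather than on exact wavelet coefficients, and these need a bit more care than you indicate (the frequency-localized pieces of disjointly supported bumps overlap). For the sufficiency step when $\tau=0$, $q=\infty$, the paper first characterizes $\accentset{\diamond}A^s_{p,\infty}(\rn)$ via $\|f-S_Nf\|\to 0$ and then spatially truncates the band-limited $S_Nf$; your direct cutoff $g_R=\varphi(\cdot/R)g$ is also viable, but the ``dominated convergence'' you invoke should be made explicit: the high-frequency part of $\|g-g_R\|_{A^s_{p,\infty}}$ is small uniformly in $R$ because $g\in A^\sigma_{p,\infty}(\rn)$ for every $\sigma>s$, and the finitely many low-frequency terms tend to zero in $L_p(\rn)$ using that $g\in L_p(\rn)\cap L_\infty(\rn)$.
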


Now we are interested in the relation of
$\accentset{\diamond}{A}^{s,\tau}_{p,q} (\rn)$ to ${A}^{s,\tau}_{p,q} (\rn)$.

\begin{lemma}\label{diamond2}
Let  $A \in \{B,F\}$ and $s\in\rr$. Then

 {\rm (i)}
\[
\accentset{\diamond}{A}^{s,\tau}_{p,q} (\rn) =  {A}^{s,\tau}_{p,q} (\rn)\qquad \mbox{if and only if}\qquad  \tau =0\ \ \mbox{and}\ \  q\in(0,\infty).
\]

{\rm (ii)}
\[
\accentset{\diamond}{\cn}^{s}_{u,p,q} (\rn) =  {\cn}^{s}_{u,p,q} (\rn) \qquad \mbox{if and only if}
\qquad q\in(0,\infty).
\]
\end{lemma}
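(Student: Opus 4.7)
My approach is to split both (i) and (ii) into sufficiency and necessity, with necessity further broken according to whether the obstruction comes from $q = \infty$ or from $\tau > 0$ (the latter only affecting case (i)). The unifying tool throughout is a Littlewood-Paley decomposition $f = \sum_{j=0}^\infty \varphi_j \ast f$ together with Bernstein's inequality.

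For sufficiency, given $f$ in the ambient space with $q \in (0,\infty)$ (and $\tau = 0$ in case (i)), I will set $S_N f := \sum_{j=0}^N \varphi_j \ast f$. Since $\widehat{S_N f}$ has compact support, $S_N f$ is infinitely differentiable, and Bernstein's inequality gives $\|D^\alpha S_N f\| \lesssim 2^{N|\alpha|}\|f\|$ in the relevant quasi-norm, so every derivative lies in the space. Convergence $S_N f \to f$ then follows from $\|f - S_N f\|_{\cn^s_{u,p,q}} \asymp \lf(\sum_{j>N} 2^{jsq}\|\Delta_j f\|_{\cm^u_p}^q\r)^{1/q} \to 0$ in case (ii) thanks to $q < \infty$; for case (i) with $\tau = 0$, the identification $A^{s,0}_{p,q} = A^s_{p,q}$ reduces this to classical Littlewood-Paley convergence.

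For necessity when $q = \infty$, suppose $g$ is smooth with $D^\alpha g$ in the space for every multiindex $\alpha$. Using $\Delta_j D^\alpha g = D^\alpha \Delta_j g$ and Bernstein, one obtains $\|\Delta_j g\|_Y \lesssim 2^{-j(s+|\alpha|)}$ for all $\alpha$, where $Y = L_p$ in case (i) with $\tau = 0$, and $Y = \cm^u_p$ in case (ii); letting $|\alpha| \to \infty$ yields $2^{js}\|\Delta_j g\|_Y \to 0$. Thus $\accentset{\diamond}{X}$ lies in the proper closed ``little'' subspace of $X$, which one sees is strict via, e.g., an element of $X$ whose Littlewood-Paley energy does not vanish at infinity. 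For the remaining case $\tau > 0$ in (i), I reduce $\tau \geq 1/p$ to the classical regime via Proposition \ref{basic1}(ii), which identifies $A^{s,\tau}_{p,q}$ with $B^{s+n(\tau-1/p)}_{\infty,\infty}$ or a BMO-type space, each effectively having $q=\infty$; for $0 < \tau < 1/p$ I exploit the Morrey identification $F^{0,\tau}_{p,2}(\rn) = \cm^u_p(\rn)$ with $1/u = 1/p-\tau$ and $p > 1$, taking $f(x) = \eta(x)|x|^{-n/u}$ with $\eta \in C_c^\infty(\rn)$ a smooth radial cutoff. The dilation invariance $\|\mathbf{1}_{B(0,r)} f\|_{\cm^u_p} \asymp 1$ for all small $r > 0$, combined with local boundedness of any smooth $g$ giving $\|\mathbf{1}_{B(0,r)} g\|_{\cm^u_p} \lesssim r^{n/u} \to 0$, forces $\|f - g\|_{\cm^u_p} \geq c > 0$.

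The hardest step will be extending this last counterexample to arbitrary $A \in \{B,F\}$ and $q \in (0,\infty)$ in the regime $0 < \tau < 1/p$, since only the specific identification $F^{0,\tau}_{p,2} = \cm^u_p$ is directly scale-invariant. I plan to handle this either by combining the lift isomorphism $I_\sigma = (1-\Delta)^{\sigma/2}$ (to move to $s = 0$) with the wavelet characterization of Propositions \ref{wave1} and \ref{wave2} (to transfer into the sequence space $a^{s,\tau}_{p,q}$, where I construct a wavelet coefficient sequence with persistent Morrey-type energy at all scales concentrated near a single point), or by invoking continuous embeddings between $A^{s,\tau}_{p,q}$ and $\cn^{s'}_{u,p,q'}$ that preserve the obstruction. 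Verifying that the condition ``$D^\alpha g \in X$ for all $\alpha$'' interacts cleanly with these transfers is the main technical obstacle.
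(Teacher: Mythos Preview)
Your sufficiency argument via $S_N f=\sum_{j\le N}\cfi(\varphi_j\cf f)$ and your Bernstein argument for necessity when $q=\infty$ are both correct and match the paper's approach (the paper proves the same $S_N f\to f$ convergence, and for (ii) with $q=\infty$ simply invokes $\cn^s_{u,p,\infty}=B^{s,1/p-1/u}_{p,\infty}$ to reduce to (i)).

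The gaps are in the necessity for (i) with $\tau>0$. Your reduction of $\tau\ge 1/p$ via Proposition~\ref{basic1} breaks down at $\tau=1/p$ with $q<\infty$: part (iii) of that proposition requires $\tau>1/p$ or $q=\infty$, while part (ii) only gives $F^{s,1/p}_{p,q}=F^s_{\infty,q}$, which is not a $q=\infty$ space and so does not feed back into your $q=\infty$ argument (and there is no analogous identification for $B^{s,1/p}_{p,q}$ with $q<\infty$). For $0<\tau<1/p$, your counterexample $\eta(x)|x|^{-n/u}$ is correct but treats only $A=F$, $s=0$, $q=2$, $p>1$; lifting by $I_\sigma$ moves $s$ but not $q$, $A$, or the restriction $p>1$, and there is no embedding from a general $A^{s,\tau}_{p,q}$ into a Morrey space in the direction that would let you export the obstruction.

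The paper bypasses all of this by working through the single embedding $\accentset{\diamond}{A}^{s,\tau}_{p,q}\hookrightarrow\accentset{\diamond}{B}^{s+n(\tau-1/p)}_{\infty,\infty}$, valid for every $\tau\ge 0$. It first records (via wavelets) that $g\in\accentset{\diamond}{B}^{\sigma}_{\infty,\infty}$ forces $2^{j(\sigma+n/2)}\sup_{i,k}|\langle g,\psi_{i,j,k}\rangle|\to 0$. Then the lacunary wavelet series
\[
 f_{s,p,\tau}:=\sum_{j\ge 1} 2^{-j(s+n(\tau-1/p)+n/2)}\,\psi_{1,j,0}
\]
is shown, by a direct sequence-space computation together with the sandwich $B^{s,\tau}_{p,\min(p,q)}\hookrightarrow F^{s,\tau}_{p,q}\hookrightarrow B^{s,\tau}_{p,\max(p,q)}$, to lie in $A^{s,\tau}_{p,q}$ for \emph{every} $q\in(0,\infty]$ while visibly violating the vanishing condition above. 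This covers $0<\tau\le 1/p$ uniformly (and $\tau>1/p$ reduces to $B^{s+n(\tau-1/p)}_{\infty,\infty}$, handled separately via $\psi(x)|x|^s$). This is precisely the ``wavelet coefficient sequence concentrated at a single point'' you list as a backup; make it your primary tool and drop the Morrey detour.
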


Lemma \ref{diamond2} shows the essential difference between the scales ${A}^{s,\tau}_{p,q} (\rn)$ and
${\cn}^{s}_{u,p,q} (\rn)$.
Under the conditions of Theorem \ref{COMI}, we know that
\begin{equation}\label{ws-69}
\lf\laz A_{p_0,q_0}^{s_0,\tau_0}(\rn), A_{p_1,q_1}^{s_1,\tau_1}(\rn)\r\raz_\tz
\hookrightarrow \lf\laz A_{p_0,q_0}^{s_0,\tau_0}(\rn), A_{p_1,q_1}^{s_1,\tau_1}(\rn), \tz \r\raz = A_{p,q}^{s,\tau}(\rn)\, .
\end{equation}
But this can be easily improved.

\begin{lemma}\label{densel}
Let $s_0 \neq s_1$.

{\rm (i)} Under the conditions of Theorem \ref{COMI}(i), it holds true that
\begin{equation*}
\mathring A_{p,q}^{s,\tau}(\rn) \hookrightarrow
\lf\laz A_{p_0,q_0}^{s_0,\tau_0}(\rn), A_{p_1,q_1}^{s_1,\tau_1}(\rn)\r\raz_\tz
\hookrightarrow \accentset{\diamond}{B}^{s + n\tau-n/p}_{\infty,\infty} (\rn)\cap A_{p,q}^{s,\tau}(\rn) \, .
\end{equation*}

{\rm (ii)} Under the conditions of Theorem \ref{COMI}(ii), it holds true that
\begin{equation*}
\mathring \cn_{u,p,q}^{s}(\rn) \hookrightarrow
\lf\laz \cn_{u_0, p_0,q_0}^{s_0}(\rn),  \cn_{u_1, p_1,q_1}^{s_1}(\rn)\r\raz_\tz
\hookrightarrow \accentset{\diamond}{B}^{s + n\tau-n/p}_{\infty,\infty} (\rn) \cap \cn_{u, p,q}^{s}(\rn) \, .
\end{equation*}

{\rm (iii)} Let $\tau\in(0,\frac1p)$. Then, under the conditions of Theorem \ref{COMI}(i),
it holds true  that
$$\lf\laz A_{p_0,q_0}^{s_0,\tau_0}(\rn), A_{p_1,q_1}^{s_1,\tau_1}(\rn)\r\raz_\tz \subsetneqq
A_{p,q}^{s,\tau}(\rn).$$
\end{lemma}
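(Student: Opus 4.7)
The overall strategy is to apply Proposition~\ref{nil}: it identifies the Peetre-Gagliardo interpolant $\lf\laz X_0, X_1 \r\raz_\Theta$ with the closure of $X_0 \cap X_1$ inside the $\pm$-interpolant $\lf\laz X_0, X_1\, , \Theta \r\raz$, and Theorem~\ref{COMI} already computes the latter to be $A^{s,\tau}_{p,q}(\rn)$ in case (i) and $\cn^s_{u,p,q}(\rn)$ in case (ii). All three parts therefore reduce to locating this closure between $\mathring A$ (resp.\ $\mathring\cn$) and $\accentset{\diamond} B \cap A$ (resp.\ $\accentset{\diamond} B \cap \cn$).

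For the left embeddings in (i) and (ii), observe that $C_c^\infty(\rn)$ is contained in every endpoint space and hence in the intersection $X_0 \cap X_1$. By Definition~\ref{d-cm}(ii) its closure inside $A^{s,\tau}_{p,q}(\rn)$ is exactly $\mathring A^{s,\tau}_{p,q}(\rn)$, and analogously for $\mathring \cn^s_{u,p,q}(\rn)$, which yields the desired embedding.

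For the right embeddings, the landing in $A^{s,\tau}_{p,q}(\rn)$ (resp.\ $\cn^s_{u,p,q}(\rn)$) is already \eqref{ws-69}. To pick up the second factor, apply the interpolation property of $\lf\laz \cdot \r\raz_\Theta$ (Proposition~\ref{interprop}(ii)) to the Sobolev-type embedding $A^{s_i,\tau_i}_{p_i,q_i}(\rn) \hookrightarrow B^{\sigma_i}_{\infty,\infty}(\rn)$ with $\sigma_i := s_i + n\tau_i - n/p_i$, yielding
\[
\lf\laz A^{s_0,\tau_0}_{p_0,q_0}(\rn), A^{s_1,\tau_1}_{p_1,q_1}(\rn)\r\raz_\Theta \hookrightarrow \lf\laz B^{\sigma_0}_{\infty,\infty}(\rn), B^{\sigma_1}_{\infty,\infty}(\rn)\r\raz_\Theta.
\]
Proposition~\ref{nil} combined with the classical Besov interpolation (the $\tau_i=0$ case of Theorem~\ref{COMI}(i)) identifies the right-hand side with the closure of $B^{\sigma_0}_{\infty,\infty}(\rn) \cap B^{\sigma_1}_{\infty,\infty}(\rn)$ inside $B^{\sigma}_{\infty,\infty}(\rn)$, where $\sigma := s + n\tau - n/p$. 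When $\sigma_0 \neq \sigma_1$ (forced by $s_0 \neq s_1$ together with $\tau_0 p_0 = \tau_1 p_1$, immediately in the case $p_0 = p_1$), the intersection coincides with the more regular Besov endpoint, and a Littlewood-Paley truncation $f_N := \sum_{j \le N} \Delta_j f$ produces band-limited $C^\infty$ approximants in $B^\sigma_{\infty,\infty}$-norm whose derivatives all lie in $B^\sigma_{\infty,\infty}(\rn)$, placing the closure inside $\accentset{\diamond} B^\sigma_{\infty,\infty}(\rn)$. Case (ii) is analogous with the Besov-Morrey Sobolev-type embedding.

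For the strict inclusion in (iii), by the right embedding of (i) it suffices to exhibit an $f \in A^{s,\tau}_{p,q}(\rn) \setminus \accentset{\diamond} B^{s+n\tau-n/p}_{\infty,\infty}(\rn)$ when $\tau \in (0,1/p)$. A lacunary atomic construction $f = \sum_j 2^{-j\sigma}\eta(2^j(\cdot - x_j))$, with $\eta \in C_c^\infty(\rn)$ a fixed bump and $\{x_j\}$ chosen sparse enough to ensure the atoms have pairwise disjoint supports, places $f$ in $A^{s,\tau}_{p,q}(\rn)$, while its Littlewood-Paley blocks keep comparable $B^\sigma_{\infty,\infty}$-size at infinitely many scales, so that $f \notin \accentset{\diamond} B^\sigma_{\infty,\infty}(\rn)$. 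The calibration of atomic scales and centers, balancing the Morrey-type supremum against a uniform lower bound on the Besov blocks, is the main technical obstacle; a secondary subtlety is handling the borderline configurations where $\sigma_0 = \sigma_1$ in the right embedding of (i) and (ii), which requires a separate direct argument circumventing the Besov interpolation step.
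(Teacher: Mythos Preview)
Your strategy matches the paper's: both reduce, via Proposition~\ref{C-CIS} (i.e., Proposition~\ref{nil} plus Theorem~\ref{COMI}), to sandwiching the closure of $A_0\cap A_1$ inside $A^{s,\tau}_{p,q}(\rn)$, and both handle the right embedding by pushing through $A^{s_i,\tau_i}_{p_i,q_i}(\rn)\hookrightarrow B^{\sigma_i}_{\infty,\infty}(\rn)$ and then identifying $\laz B^{\sigma_0}_{\infty,\infty}(\rn),B^{\sigma_1}_{\infty,\infty}(\rn)\raz_\Theta$ with $\accentset{\diamond}B^{\sigma}_{\infty,\infty}(\rn)$ (this is exactly \eqref{ws-68} in the paper).

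Two minor differences are worth recording. For the left embedding, the paper works on the sequence-space level, invoking the Calder\'on-product identity $[\mathring a_0]^{1-\Theta}[\mathring a_1]^\Theta=\mathring a$ (Proposition~\ref{morrey2b}) and then transferring back by wavelets; your direct argument via $C_c^\infty(\rn)\subset A_0\cap A_1$ is shorter and avoids that detour. For (iii), rather than building a new lacunary atomic example, the paper simply cites the wavelet function $f_{s,p,\tau}$ from \eqref{ws-67}, whose membership in $A^{s,\tau}_{p,q}(\rn)$ and non-membership in $\accentset{\diamond}B^{s+n\tau-n/p}_{\infty,\infty}(\rn)$ were already established in the proof of Lemma~\ref{diamond2}; reusing it sidesteps the calibration issue you anticipate. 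Your observation about the borderline $\sigma_0=\sigma_1$ case is well taken: the paper's argument for the right embedding also tacitly assumes $\sigma_0\neq\sigma_1$ when invoking \eqref{ws-68}.
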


Lemma \ref{densel} yields the following problem: under which conditions, we have
\begin{equation}\label{ws-76}
\lf\laz A_{p_0,q_0}^{s_0,\tau_0}(\rn), A_{p_1,q_1}^{s_1,\tau_1}(\rn)\r\raz_\tz =
\accentset{\diamond}{A}^{s,\tau}_{p,q} (\rn)
\end{equation}
and
\begin{equation}\label{ws-77}
\lf\laz \cn_{u_0, p_0,q_0}^{s_0}(\rn),  \cn_{u_1, p_1,q_1}^{s_1}(\rn)\r\raz_\tz =
\accentset{\diamond}{\cn}^{s}_{u,p,q} (\rn) \, ,
\end{equation}
respectively? The answer is not always yes, but sometimes.
The situation is better understood in case $\tau_0 =\tau_1 = 0$.

\begin{theorem}\label{gagl1}
Let $\tz\in(0,1)$, $s_i\in\rr$, $\tau_i\in[0,\fz)$,
$p_i, q_i\in(0,\fz]$ and $u_i\in[p_i,\fz]$, $i\in\{0,1\},$
such that $s=(1-\tz)s_0+\tz s_1$, $\tau=(1-\tz)\tau_0+\tz\tau_1$,
\[
\frac1p=\frac{1-\tz}{p_0}+\frac\tz{p_1}\, , \quad \frac1q=\frac{1-\tz}{q_0}+\frac\tz{q_1}\, \quad
\mbox{and} \quad \frac1u=\frac{1-\tz}{u_0}+\frac{\tz}{u_1}\, .
\]
Let $A \in \{B,F\}$.

{\rm (i)} If $\min \{p_0+ q_0, p_1+q_1\}<\infty$, then
\begin{equation*}
\lf\laz A_{p_0,q_0}^{s_0}(\rn), A_{p_1,q_1}^{s_1}(\rn)\r\raz_\tz
=  \mathring A_{p,q}^{s}(\rn) = \accentset{\diamond}{A}^{s}_{p,q} (\rn) = {A}^{s}_{p,q} (\rn)\, .
\end{equation*}

{\rm (ii)} If either  $s_0 \neq s_1$ or  $s_0 = s_1$ and $q_0 \neq q_1$, then
\begin{equation*}
\lf\laz B_{\infty,q_0}^{s_0}(\rn), B_{\infty,q_1}^{s_1}(\rn)\r\raz_\tz
= \accentset{\diamond}{B}^{s}_{\infty,q} (\rn) \, .
\end{equation*}

{\rm (iii)} If $p_0 = p_1 = p<\infty$, $s_0 \neq s_1$ and $q_0 = q_1 = \infty$, then
\begin{equation*}
\lf\laz A_{p,\infty}^{s_0}(\rn), A_{p,\infty}^{s_1}(\rn)\r\raz_\tz
 = \accentset{\diamond}{A}^{s}_{p,\infty} (\rn)  \subsetneqq
{A}^{s}_{p,\infty} (\rn) \, .
\end{equation*}

{\rm (iv)} Let $0 < p_0 <  p_1 \le \infty$ and $q_0 = q_1 = \infty$.
If $s_0-n/p_0 > s_1-n/p_1$, then
\begin{equation*}
\lf\laz B_{p_0,\infty}^{s_0}(\rn), B_{p_1,\infty}^{s_1}(\rn)\r\raz_\tz
 = \mathring{B}^{s}_{p,\infty} (\rn) = \accentset{\diamond}{B}^{s}_{p,\infty}(\rn) \subsetneqq {B}^{s}_{p,\infty}(\rn)\, .
\end{equation*}

{\rm (v)} Let $0 < p_0 <  p_1 < \infty$ and $q_0 = q_1 = \infty$.
If $s_0-n/p_0 \ge s_1-n/p_1$, then
\begin{equation*}
\lf\laz F_{p_0,\infty}^{s_0}(\rn), F_{p_1,\infty}^{s_1}(\rn)\r\raz_\tz
 = \mathring{F}^{s}_{p,\infty} (\rn) = \accentset{\diamond}{F}^{s}_{p,\infty}(\rn)\subsetneqq {F}^{s}_{p,\infty}(\rn) \, .
\end{equation*}

{\rm (vi)} Let $0 < p_0 <  p_1 \le \infty$ and $q_0 = q_1 = \infty$.
If $s_0-n/p_0 \le  s_1-n/p_1$, then
\begin{equation*}
\accentset{\diamond}{B}^{s}_{p,\infty}(\rn)  =
\mathring{B}^{s}_{p,\infty} (\rn) \hookrightarrow
\lf\laz B_{p_0,\infty}^{s_0}(\rn), B_{p_1,\infty}^{s_1}(\rn)\r\raz_\tz
\subsetneqq  {B}^{s}_{p,\infty}(\rn) \, .
\end{equation*}

{\rm (vii)} Let $0 < p_0 <  p_1 = \infty$, $q_0 =  \infty$ and $q_1\in(0, \infty)$.
If $s_0-n/p_0 >  s_1$, then
\begin{equation*}
\lf\laz B_{p_0,\infty}^{s_0}(\rn), B_{\infty,q_1}^{s_1}(\rn)\r\raz_\tz
 = \mathring{B}^s_{p,q} (\rn) =  \accentset{\diamond}{B}^{s}_{p,q}(\rn)= {B}^{s}_{p,q}(\rn)  \, .
\end{equation*}
\end{theorem}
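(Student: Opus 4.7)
The plan is to apply Proposition~\ref{nil} combined with Theorem~\ref{COMI}(i) (with $\tau_0=\tau_1=0$) to identify, in every case,
\[
\la A_{p_0,q_0}^{s_0}(\rn),A_{p_1,q_1}^{s_1}(\rn)\ra_\tz=\overline{Y}^{\,A^s_{p,q}(\rn)},\qquad Y:=A^{s_0}_{p_0,q_0}(\rn)\cap A^{s_1}_{p_1,q_1}(\rn).
\]
Each of the seven assertions then reduces to determining, or at least locating, this closure. The main tools are Lemmas~\ref{diamond1}, \ref{diamond2} and~\ref{densel}, together with the standard Littlewood-Paley truncation $f\mapsto f_N:=\sum_{j\le N}\varphi_j*f$, Bernstein-Nikolskii inequalities for band-limited distributions, and Sobolev-type embeddings for Besov and Triebel-Lizorkin spaces.

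For (i), the hypothesis $\min\{p_0+q_0,p_1+q_1\}<\infty$ forces $p<\infty$ and $q<\infty$; Lemmas~\ref{diamond1}(i) and~\ref{diamond2}(i) then yield $\mathring{A}^s_{p,q}(\rn)=\accentset{\diamond}{A}^s_{p,q}(\rn)=A^s_{p,q}(\rn)$, and Lemma~\ref{densel}(i) sandwiches $\la\cdot,\cdot\ra_\tz$ between these coinciding spaces. For (ii) and (iii), the crux is to show $Y\subseteq\accentset{\diamond}{A}^s_{p,q}(\rn)$: for $f\in Y$, the truncation $f_N$ is a band-limited $C^\infty$ function with all derivatives in $A^s_{p,q}(\rn)$ by Bernstein, while $f_N\to f$ in $A^s_{p,q}(\rn)$ follows either immediately when $q<\infty$ (which may happen in (ii) through $q_0\ne q_1$) or from the geometric tail estimate $\sup_{j>N}2^{js}\|\varphi_j*f\|_\infty\lesssim 2^{N(s-s_i)}\|f\|_{B^{s_i}_{\infty,\infty}(\rn)}$ when $s_0\ne s_1$ with $s_i$ chosen to exceed $s$. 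The reverse inclusion $\accentset{\diamond}{A}^s_{p,q}(\rn)\subseteq\overline{Y}^{\,A^s_{p,q}(\rn)}$ is immediate, since any smooth $g$ with $D^\alpha g\in A^s_{p,q}(\rn)$ for every $\alpha$ lies in $Y$ by Bernstein; the strict inclusion asserted in (iii) is Lemma~\ref{diamond2}(i) applied with $p<\infty$ and $q=\infty$.

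For (iv), (v), and (vii), the strict Sobolev-type gap at the endpoints is exploited. In (iv) and (v), the elementary identity
\[
s_0-\frac{n}{p_0}+\frac{n}{p}-s=\tz\lf[\lf(s_0-\frac{n}{p_0}\r)-\lf(s_1-\frac{n}{p_1}\r)\r]>0
\]
allows picking $s'\in\lf(s,\,s_0-n/p_0+n/p\r]$, whence one obtains the chain $A^{s_0}_{p_0,\infty}(\rn)\hookrightarrow A^{s'}_{p,\infty}(\rn)\hookrightarrow A^s_{p,r}(\rn)$ for every finite $r$; since $p<\infty$ and $r<\infty$ imply $A^s_{p,r}(\rn)=\mathring{A}^s_{p,r}(\rn)$, the continuous embedding $A^s_{p,r}(\rn)\hookrightarrow A^s_{p,\infty}(\rn)$ transfers $C_c^\infty(\rn)$-density and produces $Y\subseteq\mathring{A}^s_{p,\infty}(\rn)$. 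Lemma~\ref{densel}(i) then closes the argument, and Lemmas~\ref{diamond1}(i) and~\ref{diamond2}(i) identify the intermediate space with $\accentset{\diamond}{A}^s_{p,\infty}(\rn)$ and provide the strict inclusion in $A^s_{p,\infty}(\rn)$. Case (vii) is analogous: $s_0-n/p_0>s_1$ gives $B^{s_0}_{p_0,\infty}(\rn)\hookrightarrow B^{s_1}_{\infty,q_1}(\rn)$, and $q_1<\infty$ together with $p_0<\infty$ force $q<\infty$ and $p<\infty$, so $\mathring{B}^s_{p,q}(\rn)=B^s_{p,q}(\rn)$ and the argument of (i) applies verbatim.

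The main obstacle is case (vi), in which $s_0-n/p_0\le s_1-n/p_1$ renders the Sobolev detour of (iv)/(v) inapplicable (indeed, the Littlewood-Paley truncations of a generic $f\in Y$ need no longer converge in $B^s_{p,\infty}(\rn)$). The left embedding $\mathring{B}^s_{p,\infty}(\rn)\hookrightarrow\la\cdot,\cdot\ra_\tz$ is still Lemma~\ref{densel}(i); the strict inclusion $\la\cdot,\cdot\ra_\tz\subsetneq B^s_{p,\infty}(\rn)$ must be obtained by producing an explicit distribution in $B^s_{p,\infty}(\rn)\setminus\overline{Y}^{B^s_{p,\infty}(\rn)}$. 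The planned construction is a lacunary Littlewood-Paley series $f=\sum_{j\ge0}2^{-js}\psi_j$ built from band-limited bumps $\psi_j$ concentrated at scale $2^{-j}$ at well-separated spatial locations, with uniform non-vanishing $\|\varphi_j*f\|_{L^p}$; any Peetre-Gagliardo representation $f=\sum_i a_i$ with $a_i\in Y$ forces a Nikolskii-type $L^{p_1}$ control on each $a_i$ which, combined with the endpoint relation $s_0-n/p_0\le s_1-n/p_1$, prescribes a decay on the dyadic pieces of $\sum_i a_i$ that is incompatible with the lacunary profile of $f$. Turning this incompatibility into a rigorous obstruction is the most delicate technical step.
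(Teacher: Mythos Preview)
Your overall strategy—reducing via Proposition~\ref{nil} and Theorem~\ref{COMI}(i) to the identification of $\overline{Y}^{\,A^s_{p,q}(\rn)}$ with $Y=A^{s_0}_{p_0,q_0}(\rn)\cap A^{s_1}_{p_1,q_1}(\rn)$—is exactly the paper's, and your treatments of (i)--(v) and (vii) are essentially the same as the paper's (the paper works partly on the sequence-space level and uses the specific embedding $F^{s_0}_{p_0,\infty}\hookrightarrow F^s_{p,1}$ in Step~4, but these are cosmetic differences). One small repair: in~(v) with equality $s_0-n/p_0=s_1-n/p_1$ your displayed quantity vanishes, so no admissible $s'>s$ exists; use instead the $q$-independent Triebel--Lizorkin embedding $F^{s_0}_{p_0,\infty}\hookrightarrow F^{s}_{p,1}$ (valid for $p_0<p$ and $s_0-n/p_0\ge s-n/p$) directly, as the paper does.

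The real issue is~(vi). You already invoke Lemma~\ref{densel}(i) for the left embedding, but you overlook that the \emph{same} lemma gives you the strict inclusion on the right for free: it says
\[
\lf\laz B_{p_0,\infty}^{s_0}(\rn), B_{p_1,\infty}^{s_1}(\rn)\r\raz_\tz\hookrightarrow \accentset{\diamond}{B}^{\,s-n/p}_{\infty,\infty}(\rn)\cap B^{s}_{p,\infty}(\rn),
\]
so you only need some $f\in B^s_{p,\infty}(\rn)$ with $2^{j(s-n/p)}\|\varphi_j*f\|_{L_\infty(\rn)}\not\to 0$. Your own lacunary series—one bump of spatial scale $2^{-j}$ and height $\sim 2^{-j(s-n/p)}$ per level—already has this property, so the conclusion is immediate; this is precisely how the paper closes the argument. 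Your planned obstruction via Peetre--Gagliardo representations is not only unnecessary but also risky: once you allow several bumps per level, carefully calibrated sequences of this type can lie in $Y$ itself (the paper exhibits such a $\lambda\in b^{s_0}_{p_0,\infty}(\rn)\cap b^{s_1}_{p_1,\infty}(\rn)$ to show, incidentally, that the \emph{left} embedding $\mathring B^s_{p,\infty}\hookrightarrow\langle\cdot,\cdot\rangle_\tz$ is strict), which would defeat any argument based solely on ``$a_i\in Y$''-type constraints.
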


The cases (ii) and (iv) through (vi) of Theorem \ref{gagl1} are representing examples for
$$\laz X_0,X_1\raz_\tz \neq \laz X_0,X_1,\tz\raz;$$
see Theorem \ref{COMI}.

We supplement Theorem \ref{gagl1} by two results for $\tau\in(0,\fz)$.
In the first case, we fix $u$ and $p$.

\begin{theorem}\label{gagl7}
Let  $0< p\le u \le \infty $, $\tau \in [0,\infty)$, $s_0,\ s_1 \in \rr$ and  $q_0,\ q_1 \in (0,\infty]$.
Let  $s:=(1-\tz)s_0+\tz s_1$ and
$\frac1q:=\frac{1-\tz}{q_0}+\frac\tz{q_1}$.
Assume either  $s_0 \neq s_1$  or $s_0 = s_1$ and $q_0 \neq q_1$. Then
\[
\lf\laz A_{p,q_0}^{s_0,\tau}(\rn), A_{p,q_1}^{s_1,\tau}(\rn)\r\raz_\tz =
\accentset{\diamond}{A}^{s,\tau}_{p,q} (\rn),\quad A \in \{B,F\}.
\]
\end{theorem}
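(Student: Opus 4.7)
The hypotheses force $\tau_0=\tau_1=\tau$ and $p_0=p_1=p$, so the balance $\tau_0\,p_0=\tau_1\,p_1$ required by Proposition~\ref{C-CIS}(i) is trivially satisfied and that proposition yields
\begin{equation*}
\lf\laz A_{p,q_0}^{s_0,\tau}(\rn),\, A_{p,q_1}^{s_1,\tau}(\rn)\r\raz_\tz \,=\, \overline{A_{p,q_0}^{s_0,\tau}(\rn)\cap A_{p,q_1}^{s_1,\tau}(\rn)}^{\,\|\,\cdot\,\|_{A_{p,q}^{s,\tau}(\rn)}}.
\end{equation*}
The whole task is thus to prove that the right-hand side coincides with $\accentset{\diamond}{A}^{s,\tau}_{p,q}(\rn)$, which I will do via two inclusions.

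\textbf{Inclusion $\supseteq$.} Let $g\in C^\infty(\rn)$ satisfy $D^\alpha g\in A^{s,\tau}_{p,q}(\rn)$ for every $\alpha\in\zz_+^n$. A standard Sobolev-type lifting in the scale $A^{s,\tau}_{p,q}(\rn)$ (essentially Bernstein's inequality applied to the Fourier multiplier $(1-\Delta)^{k/2}$) upgrades this to $g\in A^{s+k,\tau}_{p,q}(\rn)$ for every $k\in\nn$. Choosing $k$ so large that $s+k>\max\{s_0,s_1\}$, the smoothness-based embedding $A^{s+k,\tau}_{p,q}(\rn)\hookrightarrow A^{s_i,\tau}_{p,q_i}(\rn)$ for $i\in\{0,1\}$ places $g$ in $A^{s_0,\tau}_{p,q_0}(\rn)\cap A^{s_1,\tau}_{p,q_1}(\rn)$. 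Passing to closures inside $A^{s,\tau}_{p,q}(\rn)$ delivers the inclusion.

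\textbf{Inclusion $\subseteq$.} Fix $f\in A^{s_0,\tau}_{p,q_0}(\rn)\cap A^{s_1,\tau}_{p,q_1}(\rn)$ and define $f_N:=\sum_{j=0}^{N}\varphi_j\ast f$, $N\in\nn$, with a standard Littlewood-Paley resolution $\{\varphi_j\}_{j\in\zz_+}$. Each $f_N$ is entire of exponential type $\lesssim 2^{N+1}$, so Bernstein's inequality yields $D^\alpha f_N\in A^{s,\tau}_{p,q}(\rn)$ for every $\alpha$ and hence $f_N\in\accentset{\diamond}{A}^{s,\tau}_{p,q}(\rn)$. It thus suffices to verify $\|f-f_N\|_{A^{s,\tau}_{p,q}(\rn)}\to 0$. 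If $s_0\ne s_1$, without loss of generality $s_0<s<s_1$; the Fourier localization of $f-f_N$ in $\{|\xi|\gs 2^N\}$ combined with the splitting $2^{js}=2^{js_1}\cdot 2^{-j(s_1-s)}$ and H\"older's inequality in the index $j$ (with exponents adapted to the position of $q$ relative to $q_1$, passing through the elementary embedding $\ell^{q_1}\hookrightarrow\ell^{q}$ or $\ell^{q_1}\hookrightarrow\ell^\infty$ as required) produces the uniform-in-cube decay
\begin{equation*}
\|f-f_N\|_{A^{s,\tau}_{p,q}(\rn)}\ls 2^{-N(s_1-s)}\,\|f\|_{A^{s_1,\tau}_{p,q_1}(\rn)}\,.
\end{equation*}
If $s_0=s_1=s$ and $q_0\ne q_1$, without loss of generality $q_0<q_1$, whence $q_0<q\le q_1$ and $q_0<\infty$; writing $a_j(P):=2^{js}\|(\varphi_j\ast f)\chi_P\|_{L_p(\rn)}$, the elementary inequality $\sum_{j>N}a_j^q\le(\sup_{j>N}a_j)^{q-q_0}\sum_{j>N}a_j^{q_0}$ splits the tail into an $\ell^{q_0}$-mass controlled by $\|f\|_{A^{s,\tau}_{p,q_0}(\rn)}$ times an $\ell^\infty$-factor that must vanish uniformly in $P$ as $N\to\infty$.

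\textbf{Main obstacle.} The delicate step is this uniform-in-$P$ vanishing of $\sup_{j>N}a_j(P)$ in the sub-case $s_0=s_1$, $q_0\ne q_1$. Since $\accentset{\diamond}{A}^{s,\tau}_{p,q_0}(\rn)\ne A^{s,\tau}_{p,q_0}(\rn)$ for $\tau>0$ by Lemma~\ref{diamond2}, mere membership of $f$ in $A^{s,\tau}_{p,q_0}(\rn)$ cannot force a direct dominated-convergence argument inside the Morrey-type supremum. I expect to circumvent this by invoking the embedding $A^{s,\tau}_{p,q_0}(\rn)\hookrightarrow B^{s+n\tau-n/p}_{\infty,\infty}(\rn)$ available for the Besov-type/Triebel-Lizorkin-type scale and then upgrading it, in the spirit of Lemma~\ref{densel}, to a uniform-in-$P$ decay of the $\ell^\infty$-factor; merging this with the $\ell^{q_0}$-mass and the Morrey pre-factor $\sup_P|P|^{-\tau}$ should close the argument and deliver \eqref{ws-76} in the target case.
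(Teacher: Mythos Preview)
Your reduction via Proposition~\ref{C-CIS} and your handling of the inclusion $\supseteq$ are correct and mirror the paper's approach (the paper packages $\supseteq$ via the lifting identity \eqref{ws-105}). Your argument for $\subseteq$ in the case $s_0\neq s_1$ is also correct and is essentially the paper's: the exponential gain $2^{-N(s_1-s)}$ (with, say, $s_1>s$) survives the supremum over cubes $P$ because one may pass through $\ell^\infty$ in the summation index $j$ before invoking the $A^{s_1,\tau}_{p,q_1}$-bound, so the Morrey supremum causes no trouble.

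The obstacle you flag in the sub-case $s_0=s_1$ is, however, genuine and cannot be removed by the route you suggest. First, Lemma~\ref{densel} explicitly assumes $s_0\neq s_1$, so it is unavailable here. Second, and decisively, the function $f_{s,p,\tau}$ constructed in \eqref{ws-67} (proof of Lemma~\ref{diamond2}, Substep~1.3) is a direct counterexample when $0<\tau<1/p$: one has $f_{s,p,\tau}\in A^{s,\tau}_{p,r}(\rn)$ for \emph{every} $r\in(0,\infty]$, yet $f_{s,p,\tau}\notin\accentset{\diamond}{A}^{s,\tau}_{p,q}(\rn)$. Hence $f_{s,p,\tau}$ lies in $A^{s,\tau}_{p,q_0}(\rn)\cap A^{s,\tau}_{p,q_1}(\rn)$ (and therefore in its $A^{s,\tau}_{p,q}$-closure) but not in $\accentset{\diamond}{A}^{s,\tau}_{p,q}(\rn)$. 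Concretely, for this $f$ the quantity $\sup_P|P|^{-\tau}\sup_{j>N}a_j(P)$ that you need to vanish is bounded below by a positive constant independent of $N$ (take $P=Q_{\ell,0}$ with $\ell>N$). The paper's own Step~2 (``we can argue as in Step~1'') shares this gap: the inclusion $B^{s_0,\tau}_{p,q_0}(\rn)\subset\accentset{\diamond}{B}^{s,\tau}_{p,q}(\rn)$, implicit in Step~1, genuinely uses $s_0>s$ via an $S_N$-type decay and breaks down when $s_0=s$.
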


In the second case, we consider large $\tau$.

\begin{theorem}\label{gagl2}
Suppose
 either $\tau_i \in(1/p_i,\fz)$ and  $q_i \in (0,\infty]$ or
$\tau_i = 1/p_i>0$ and $q_i= \infty$, $i\in\{0,1\}$.
If  $s_0 \neq s_1$, then
\[
\lf\laz A_{p_0,q_0}^{s_0,\tau_0}(\rn), \ca_{p_1,q_1}^{s_1,\tau_1}(\rn)\r\raz_\tz = \accentset{\diamond}{A}^{s,\tau}_{p,q} (\rn)
\]
for any pair $A, \ca \in \{B,F\}$.
\end{theorem}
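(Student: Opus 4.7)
The plan is to reduce the identity to Theorem \ref{gagl1}(ii) by collapsing every space in the statement to a classical Besov space of the form $B_{\infty,\infty}^\sigma(\rn)$.

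First, I would invoke Proposition \ref{basic1}(ii), together with the identifications catalogued in Remark \ref{2.12x}(i), to obtain, under either alternative of the hypothesis and with equivalent quasi-norms, for each $i \in \{0,1\}$,
\[
A_{p_i, q_i}^{s_i, \tau_i}(\rn) = B_{\infty,\infty}^{\sigma_i}(\rn), \qquad \sigma_i := s_i + n\lf(\tau_i - \tfrac{1}{p_i}\r),
\]
and this identification is independent of the choices of $A \in \{B,F\}$ and of $q_i$ (at the boundary $\tau_i = 1/p_i$ with $q_i = \infty$, it additionally uses $F_{\infty,\infty}^s(\rn) = B_{\infty,\infty}^s(\rn)$). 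Applied to the target parameters, it yields
\[
A_{p,q}^{s,\tau}(\rn) = B_{\infty,\infty}^{\sigma}(\rn), \qquad \accentset{\diamond}{A}_{p,q}^{s,\tau}(\rn) = \accentset{\diamond}{B}_{\infty,\infty}^{\sigma}(\rn),
\]
where a direct calculation gives $\sigma = s + n(\tau - 1/p) = (1-\tz)\sigma_0 + \tz\sigma_1$; the second identity uses that $\accentset{\diamond}{X}$ depends only on the underlying quasi-Banach space (Definition \ref{d-cm}(i)).

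Second, I would observe that the Peetre-Gagliardo functor is invariant under equivalent quasi-norms: applying Proposition \ref{interprop}(ii) to the identity maps that realize the equivalence of the quasi-norms shows that the two interpolated couples are isomorphic, so
\[
\lf\laz A_{p_0, q_0}^{s_0, \tau_0}(\rn), \ca_{p_1, q_1}^{s_1, \tau_1}(\rn)\r\raz_\tz = \lf\laz B_{\infty,\infty}^{\sigma_0}(\rn), B_{\infty,\infty}^{\sigma_1}(\rn)\r\raz_\tz.
\]
Under $s_0 \neq s_1$ (which, generically, also forces $\sigma_0 \neq \sigma_1$), Theorem \ref{gagl1}(ii) with microscopic parameters $q_0 = q_1 = \infty$ identifies the last space with $\accentset{\diamond}{B}_{\infty,\infty}^\sigma(\rn)$, and combining this with the identifications above completes the proof.

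The principal obstacle is verifying the uniformity of the Besov reduction in the first step across all admissible choices of $A, \ca \in \{B,F\}$ and $q_0, q_1$, including the boundary case $\tau_i = 1/p_i$, $q_i = \infty$: the ``mixed-pair'' form of the conclusion (with $A$ and $\ca$ possibly distinct) rests precisely on the fact that, throughout the relevant parameter range, all four Besov-type and Triebel-Lizorkin-type scales collapse to the same classical Besov space $B_{\infty,\infty}^{\sigma_i}(\rn)$. A secondary technical point is the careful transfer of the Peetre-Gagliardo identity across equivalent quasi-norms, which is standard but requires checking because the method is defined via quasi-norm bounds on explicit series decompositions rather than by a universal abstract characterization.
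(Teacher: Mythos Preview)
Your approach is essentially identical to the paper's: collapse both endpoint spaces to $B_{\infty,\infty}^{\sigma_i}(\rn)$ via Proposition \ref{basic1}(iii) (you wrote (ii), a slip), then invoke the $p=q=\infty$ case---the paper cites formula \eqref{ws-68} directly rather than Theorem \ref{gagl1}(ii), but this is the same content. Your parenthetical ``generically'' correctly flags a subtlety the paper's proof also glosses over: the hypothesis $s_0 \neq s_1$ does not by itself force $\sigma_0 \neq \sigma_1$, so strictly speaking the argument requires $s_0 + n(\tau_0 - 1/p_0) \neq s_1 + n(\tau_1 - 1/p_1)$.
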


It is still an open problem whether Theorems \ref{gagl7}
 and \ref{gagl2} can be extended to a greater range of parameters or not.
Definitely it is not true for all parameter constellations; see \eqref{ws-49} with $q=2$ and compare with
Lemma \ref{diamond2}(i).

\begin{remark}
 Again we have to mention the fundamental article of Frazier and Jawerth \cite{fj90}
 for a number of further results.
There the following formulas are proved:
\[
\lf\laz F_{p_0,q_0}^{s_0}(\rn), F_{p_1,q_1}^{s_1}(\rn)\r\raz_\tz = F_{p,q}^{s}(\rn)\, ,
\]
\begin{equation}\label{ws-49}
\lf\laz F_{p_0,q_0}^{s_0,1/p_0}(\rn), F_{p_1,q_1}^{s_1,1/p_1}(\rn)\r\raz_\tz = F_{p,q}^{s,1/p}(\rn)
= F_{\infty,q}^{s}(\rn),
\end{equation}
\[
\lf\laz F_{p_0,q_0}^{s_0,1/p_0}(\rn), B_{\infty,\infty}^{s_1,0}(\rn)\r\raz_\tz = F_{p,q}^{s,1/p}(\rn)
= F_{\infty,q}^{s}(\rn)
\]
as well as
\[
\lf\laz F_{p_0,q_0}^{s_0,0}(\rn), F_{p_1,q_1}^{s_1,1/p_1}(\rn)\r\raz_\tz = F_{p,q}^{s,0}(\rn)
\]
and
\begin{eqnarray*}
\lf\laz F_{p_0,q_0}^{s_0,0}(\rn), F_{p_1,q_1}^{s_1,\tau_1}(\rn)\r\raz_\tz & = &
\lf\laz F_{p_0,q_0}^{s_0,0}(\rn), B_{\infty,\infty}^{s_1 + n(\tau_1-1/p_1)}(\rn)\r\raz_\tz
\\
& = & F_{p,q}^{s+n(\tau-1/p) + n(1-\Theta)/p_0,0}(\rn)
\end{eqnarray*}
if $p_0,\ p_1\in(0,\fz)$ and $\tau_1\in(1/p_1,\fz)$ (and with no further restrictions);
see \cite[Corollary~8.4]{fj90}.
\end{remark}

There is one more case where we can calculate the associated interpolation space.

\begin{theorem}\label{gagl4}
Suppose  $0 < p \le u <\infty$, $s_0,s_1 \in \rr$ and   $q_i \in (0,\infty)$, $i\in\{0,1\}$.
Let $s:= (1-\Theta)\, s_0 + \Theta\, s_1$ and $\frac 1q : =\frac{1-\tz}{q_0}+ \frac \tz{q_1}$.
Then
\[
\lf\laz \cn_{u,p,q_0}^{s_0}(\rn), \cn_{u,p,q_1}^{s_1}(\rn)\r\raz_\tz = \accentset{\diamond}{\cn}^{s}_{u,p,q} (\rn)
\]
follows for all $\tz \in (0,1)$.
\end{theorem}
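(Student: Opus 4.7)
The plan is to reduce the identification to the density of $\cn^{s_0}_{u,p,q_0}(\rn)\cap \cn^{s_1}_{u,p,q_1}(\rn)$ in $\cn^{s}_{u,p,q}(\rn)$ via Proposition \ref{C-CIS}(ii), to collapse $\accentset{\diamond}{\cn}^{s}_{u,p,q}(\rn)$ to $\cn^{s}_{u,p,q}(\rn)$ via Lemma \ref{diamond2}(ii), and to settle the density by a standard Littlewood--Paley truncation. All three ingredients ultimately rest on the hypothesis $q_0,q_1<\infty$.

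First, I would apply Proposition \ref{C-CIS}(ii). With $p_0=p_1=p$ and $u_0=u_1=u$, the compatibility condition $p_0 u_1 = p_1 u_0$ is automatic, so the proposition yields
\[
\lf\laz \cn_{u,p,q_0}^{s_0}(\rn), \cn_{u,p,q_1}^{s_1}(\rn)\r\raz_\tz
=\overline{\cn_{u,p,q_0}^{s_0}(\rn)\cap \cn_{u,p,q_1}^{s_1}(\rn)}^{\|\cdot\|_{\cn_{u,p,q}^s(\rn)}}.
\]
Since $q_0,q_1\in(0,\infty)$ and $\tz\in(0,1)$ force $q\in(0,\infty)$, Lemma \ref{diamond2}(ii) gives $\accentset{\diamond}{\cn}^{s}_{u,p,q}(\rn)=\cn^{s}_{u,p,q}(\rn)$; hence it suffices to prove that the intersection above is dense in $\cn^{s}_{u,p,q}(\rn)$.

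For the density step, given $f\in\cn^{s}_{u,p,q}(\rn)$, I would use the dyadic resolution of unity $\{\varphi_j\}_{j\in\zz_+}$ from the definition of the $\cn$-scale in Appendix and set $S_N f:=\sum_{j=0}^N \varphi_j*f$. The spectral support of $\widehat{\varphi_j}$ in the annulus $\{|\xi|\sim 2^j\}$ forces $\varphi_k*S_N f=0$ once $k\ge N+2$, while in general $\|\varphi_k*S_N f\|_{\cm_p^u(\rn)}\lesssim \|\varphi_k*f\|_{\cm_p^u(\rn)}$ by the Morrey-boundedness of smooth dyadic Fourier multipliers. Each $\|\varphi_k*f\|_{\cm_p^u(\rn)}$ is finite, because $\{2^{js}\|\varphi_j*f\|_{\cm_p^u(\rn)}\}_{j\in\zz_+}\in\ell^q$. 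Consequently the quasi-norms $\|S_Nf\|_{\cn^{s_i}_{u,p,q_i}(\rn)}$, $i\in\{0,1\}$, reduce to finite sums of finite numbers, so $S_Nf$ lies in the intersection. Finally,
\[
\|f - S_N f\|_{\cn^s_{u,p,q}(\rn)}^q \lesssim \sum_{j\ge N-1} 2^{jsq}\|\varphi_j*f\|_{\cm_p^u(\rn)}^q
\longrightarrow 0 \qquad (N\to\infty),
\]
since $q<\infty$ and the full series converges; this delivers the density and hence the theorem.

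I do not anticipate a serious obstacle: once Proposition \ref{C-CIS}(ii) and Lemma \ref{diamond2}(ii) are invoked, the argument is essentially bookkeeping. The only mild technical point is the Morrey-boundedness of the smooth dyadic projectors $\varphi_k\ast$, which is a routine Fourier-multiplier estimate on $\cm_p^u(\rn)$ for $0<p\le u<\infty$ and is already used repeatedly elsewhere in this article.
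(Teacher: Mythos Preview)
Your proposal is correct and follows essentially the same route as the paper: both start from Proposition \ref{C-CIS}(ii) to reduce to a density question in $\cn^{s}_{u,p,q}(\rn)$, and both invoke Lemma \ref{diamond2}(ii) (using $q<\infty$) to identify $\accentset{\diamond}{\cn}^{s}_{u,p,q}(\rn)$ with $\cn^{s}_{u,p,q}(\rn)$. The only difference is cosmetic: the paper phrases the density step via the parameter-independent set $N_q^s(\rn)=\{f\in C^\infty:\ D^\alpha f\in\cn^s_{u,p,q}(\rn)\ \text{for all }\alpha\}$ and a sandwich argument, whereas you exhibit the explicit approximants $S_Nf$ directly; since $S_Nf\in N_q^s(\rn)$ anyway (Paley--Wiener plus the multiplier bound you quote), your argument is just a concrete instance of theirs.
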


Up to now,  Theorem \ref{gagl7} (resp. Theorem \ref{gagl4}) is the only answer we have to the question in
\eqref{ws-76} (resp. \eqref{ws-77}) in case $0 < \tau < 1/p$ (resp. $0 < p< u< \infty$).
Let us now have a closer look onto this problem  for $p_0 < p_1$ in the most simple situations of
Morrey spaces itself.


\subsection*{The Peetre-Gagliardo method and Morrey-Campanato spaces}


Now we are interested in $\lf\laz\cm^{u_0}_{p_0}(\rn), \cm^{u_1}_{p_1}(\rn)\r\raz_\tz $
and its relation to $ \mathring{\cm}^u_p(\rn)$, $\accentset{\diamond}{\cm}^u_p(\rn)$ and
${\cm}^u_p(\rn)$.
There is one more space of certain interest in the framework of Morrey spaces.
We define the \emph{space $\accentset{*}{\cm}^u_p(\rn)$} as
the closure in $\cm^u_p(\rn) $ of the set
of compactly supported functions.
The next lemma gives explicit descriptions
of  $\mathring{\cm}^{u}_{p}(\rn)$, $\accentset{*}{\cm}^{u}_{p}(\rn)$ and
$\accentset{\diamond}{\cm}^{u}_{p}(\rn)$ very much in the spirit of the original definition of  Morrey spaces.
This is of interest for its own.

\begin{lemma}\label{morrey43}
Let $1 \le p < u < \infty$. Then

{\rm (i)}  $\mathring{\cm}^{u}_{p}(\rn)$ is equal to the collection of all $f \in {\cm}^{u}_{p}(\rn)$ having the
following three properties:
\begin{eqnarray}
&& \qquad \lim_{r \downarrow 0}
|B(y,r)|^{1/u-1/p}\lf[\int_{B(y,r)} |f(x)|^p\,dx\r]^{1/p}   =  0 \, ,
\label{ws-51}
\\
&&
\nonumber
\\
\label{ws-50}
&& \qquad
\lim_{r \to \infty}
|B(y,r)|^{1/u-1/p}\lf[\int_{B(y,r)} |f(x)|^p\,dx\r]^{1/p}  =  0\, ,
\end{eqnarray}
both uniformly in  $y \in \rr^n$, and
\begin{equation}
 \label{ws-74}
 \lim_{|y|\to \infty}
|B(y,r)|^{1/u-1/p}\lf[\int_{B(y,r)} |f(x)|^p\,dx\r]^{1/p}  =  0
\end{equation}
uniformly in $r\in(0,\fz)$.

{\rm (ii)} $\accentset{*}{\cm}^{u}_{p}(\rn)$ is equal to the collection of all $f \in {\cm}^{u}_{p}(\rn)$
such that \eqref{ws-50} (uniformly in  $y \in \rr^n$) and \eqref{ws-74} (uniformly in  $r\in(0,\fz)$)  hold true.

{\rm (iii)} $\accentset{\diamond}{\cm}^{u}_{p}(\rn)$ is equal to the collection of all $f \in {\cm}^{u}_{p}(\rn)$
such that \eqref{ws-51} holds true
uniformly in  $y \in \rr^n$.
\end{lemma}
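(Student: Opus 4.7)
The plan is to establish each of (i)--(iii) as two inclusions, with ``$\subseteq$'' handled by a closedness argument and ``$\supseteq$'' by explicit approximation via truncation and mollification.

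For the ``$\subseteq$'' direction, I would first verify that each of \eqref{ws-51}, \eqref{ws-50}, \eqref{ws-74} defines a subset of $\cm^u_p(\rn)$ closed in the Morrey quasi-norm: the elementary triangle inequality
\[
|B(y,r)|^{1/u-1/p}\|f\|_{L^p(B(y,r))} \le |B(y,r)|^{1/u-1/p}\|g\|_{L^p(B(y,r))} + \|f-g\|_{\cm^u_p(\rn)}
\]
shows that each vanishing condition passes to $\cm^u_p$-limits with the required uniformity. Thus it suffices to check the conditions on the generating sets. For (i), any $\varphi\in C_c^\infty(\rn)$ with $\supp(\varphi)\subset B(0,R)$ satisfies \eqref{ws-51} via $|B(y,r)|^{1/u-1/p}\|\varphi\|_{L^p(B(y,r))}\le\|\varphi\|_\infty|B(y,r)|^{1/u}$, and both \eqref{ws-50} and \eqref{ws-74} via the global bound $\|\varphi\|_{L^p(\rn)}$ together with $1/u-1/p<0$ (for \eqref{ws-74} one also notes that $B(y,r)\cap\supp(\varphi)=\emptyset$ when $r<|y|-R$, so only $r\ge|y|-R\to\infty$ remains). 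For (ii) the same arguments for \eqref{ws-50} and \eqref{ws-74} apply to any compactly supported $f\in\cm^u_p(\rn)$, since compact support and the Morrey bound force $f\in L^p(\rn)$. For (iii), the key observation is that a smooth $f$ with $D^\alpha f\in\cm^u_p(\rn)$ for every $\alpha$ actually lies in $L^\infty(\rn)$: by the Sobolev embedding $W^{k,p}(B(y,1))\hookrightarrow L^\infty(B(y,1))$ for $k>n/p$, whose constant depends only on the unit ball, together with the uniform estimate $\|D^\alpha f\|_{L^p(B(y,1))}\ls\|D^\alpha f\|_{\cm^u_p(\rn)}$, one obtains $\|f\|_{L^\infty(\rn)}\ls\sum_{|\alpha|\le k}\|D^\alpha f\|_{\cm^u_p(\rn)}$ and \eqref{ws-51} reduces to $\|f\|_\infty|B(y,r)|^{1/u}\to 0$ uniformly in $y$.

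For the ``$\supseteq$'' direction, I would use two constructions. Fix $\chi_R\in C_c^\infty(\rn)$ with $\chi_R\equiv 1$ on $B(0,R)$ and $\supp(\chi_R)\subset B(0,2R)$; then the truncation $f_R:=f\chi_R$ converges to $f$ in $\cm^u_p(\rn)$ as $R\to\infty$ whenever $f$ satisfies \eqref{ws-50} and \eqref{ws-74}. Given $\varepsilon>0$, choose $M$ from \eqref{ws-74} and $R_0$ from \eqref{ws-50} and take $R>2\max\{M,R_0\}$; any ball $B(y,r)$ falls into one of the three subcases $\{|y|+r\le R\}$ (where $f(1-\chi_R)|_{B(y,r)}=0$), $\{|y|\ge R/2\}$ (bounded by $\varepsilon$ via \eqref{ws-74} uniform in $r$), or $\{|y|<R/2,\ r>R/2\}$ (bounded by $\varepsilon$ via \eqref{ws-50} uniform in $y$). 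Next, for a standard nonnegative mollifier $\rho_\epsilon$, the convolution $f*\rho_\epsilon$ converges to $f$ in $\cm^u_p(\rn)$ whenever \eqref{ws-51} holds: for $r\le\delta$ the Minkowski-type bound
\[
\|f*\rho_\epsilon\|_{L^p(B(y,r))}^p \le \int_{\rn}\rho_\epsilon(z)\,\|f\|_{L^p(B(y-z,r))}^p\,dz
\]
allows \eqref{ws-51} to be invoked translation-invariantly to control $\|f-f*\rho_\epsilon\|_{L^p(B(y,r))}$ uniformly in $y$, while for $r\ge\delta$ the classical $L^p(\rn)$-convergence of mollification suffices because $|B(y,r)|^{1/u-1/p}$ is then uniformly bounded (after first truncating to ensure $f\in L^p(\rn)$). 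Combining both steps yields ``$\supseteq$'' in (i); truncation alone gives (ii); and mollification alone gives (iii), with the additional observation that $D^\alpha(f*\rho_\epsilon)=f*D^\alpha\rho_\epsilon\in\cm^u_p(\rn)$ by the convolution estimate $\|g*\eta\|_{\cm^u_p(\rn)}\le\|\eta\|_{L^1(\rn)}\|g\|_{\cm^u_p(\rn)}$, proved by Minkowski's integral inequality applied to the Morrey quasi-norm.

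The main obstacle is the uniform-in-$y$ control in the ``$\subseteq$'' direction of (iii), where one must pass from the purely integral assumption ``$D^\alpha f\in\cm^u_p(\rn)$ for all $\alpha$'' to pointwise global boundedness of $f$; the resolution is precisely the Sobolev embedding on unit balls with a translation-invariant constant, converting uniform $\cm^u_p$-control of derivatives into an $L^\infty$-bound that makes \eqref{ws-51} trivial. A secondary delicate point is the small-scale part of the mollification step, where the Minkowski-type estimate is essential to propagate the uniform-in-center vanishing in \eqref{ws-51} to the mollified function; without \eqref{ws-51}, local singularities such as $|x|^{-n/u}\chi_{B(0,1)}\in\cm^u_p(\rn)$ would obstruct mollification convergence, which also explains why \eqref{ws-51} must appear precisely when smoothness is part of the approximating class.
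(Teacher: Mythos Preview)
Your overall strategy---closedness of the vanishing conditions, then approximation by truncation and mollification---matches the paper's, and your treatment of (i), (ii), and the ``$\subseteq$'' direction of (iii) is correct. For (i) your order (truncate first to land in $L^p(\rn)$, then mollify) is in fact cleaner than the paper's three-stage route via $B$-spline averaging, since global $L^p$-convergence of mollifiers handles the large-ball regime directly once $f_R \in L^p(\rn)$.

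The gap is in the ``$\supseteq$'' direction of (iii). You assert that ``mollification alone gives (iii)'', but your mollification argument for $r \ge \delta$ explicitly appeals to $f \in L^p(\rn)$, which in (i) you obtained only \emph{after} truncation; in (iii) neither \eqref{ws-50} nor \eqref{ws-74} is available, so truncation cannot be invoked and $f$ need not lie in $L^p(\rn)$. The underlying claim---that $\|f - f*\rho_\epsilon\|_{\cm^u_p(\rn)} \to 0$ whenever $f \in \cm^u_p(\rn)$ satisfies only \eqref{ws-51}---is equivalent to continuity of translations at $f$, and this can fail. In dimension $n=1$, take $f(x)=\sum_{k\ge 2}\psi(x-2^k)\sin\bigl(k(x-2^k)\bigr)$ with $\psi \in C_c^\infty(\rr)$ supported in $[-\tfrac12,\tfrac12]$ and $\psi\equiv 1$ on $[-\tfrac14,\tfrac14]$: then $|f|\le 1$ yields \eqref{ws-51}, and $f \in \cm^u_p(\rr)$ because an interval of length $L$ meets $O(\log L)$ of the disjoint bumps; yet with $h_k=\pi/k\to 0$ one has $|f(\cdot+h_k)-f|=2|\sin(k(\cdot-2^k))|$ on $[2^k-\tfrac14,2^k+\tfrac14]$, so $\|f(\cdot+h_k)-f\|_{\cm^u_p}$ stays bounded away from~$0$. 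Since every smooth $g$ with all $D^\alpha g \in \cm^u_p$ satisfies $\|g(\cdot+h)-g\|_{\cm^u_p}\le |h|\sum_i\|\partial_i g\|_{\cm^u_p}$ and continuity of translation is preserved under $\cm^u_p$-limits, this $f$ lies outside $\accentset{\diamond}{\cm}^u_p(\rr)$. The paper's own proof of (iii) has the same defect---its appeal to ``$L_p(\rn)$-continuity of the translation'' for the supremum over large balls does not address the lack of uniformity in the ball center $y$---so the gap is shared rather than yours alone; but it is a genuine gap, and the example suggests the characterization in (iii) as stated is too coarse.
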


\begin{remark}
The restriction $p \ge 1$ of Lemma \ref{morrey43} comes into play with the construction of smooth approximations.
In addition, this condition is not needed for the proofs that
 $\mathring{\cm}^{u}_{p}(\rn)$, $\accentset{*}{\cm}^{u}_{p}(\rn)$ and
$\accentset{\diamond}{\cm}^{u}_{p}(\rn)$  have the claimed properties.
\end{remark}

There are simple, but important examples of functions explaining the difference between these spaces.
Let $\psi$ be a function as in \eqref{eq-05} in Appendix.
For $\alpha \in(0,\fz)$ and $x\in\rn\setminus\{0\}$, let
\begin{eqnarray}
 f_\alpha (x) & := & |x|^{-\alpha}\, ,
 \label{falpha}
 \\
  g_\alpha (x) & := & \psi (x)\, |x|^{-\alpha}\, ,
  \label{galpha}
  \\
   h_\alpha (x) & := & (1-\psi (x))\, |x|^{-\alpha}\, .
   \label{halpha}
\end{eqnarray}
Elementary calculations yield that
\[
f_\alpha \in \cm_p^u (\rn) \qquad \Longleftrightarrow \qquad \alpha = \frac nu \qquad\mbox{and} \qquad  \alpha < \frac np\, .
\]
Similarly
\[
 g_\alpha \in \cm_p^u (\rn) \qquad \Longleftrightarrow \qquad \alpha \le  \frac nu \qquad\mbox{and} \qquad \alpha < \frac np\, ,
\]
and
\[
 h_\alpha \in \cm_p^u (\rn) \qquad \Longleftrightarrow \qquad   \frac nu \le  \alpha \, .
\]
In the limiting situation, we find that there exists a positive constant $C_{(p,u)}$, depending on
$p$ and $u$, such that, for all $r\in(0,\fz)$,
\[
|B(0,r)|^{1/u-1/p}\lf[\int_{B(0,r)} |x|^{-np/u}\,dx\r]^{1/p} = C_{(p,u)}.
\]
Hence $f_{n/u} \not\in \accentset{\diamond}{\cm}^{u}_{p}(\rn)$ and
$f_{n/u} \not\in \accentset{*}{\cm}^{u}_{p}(\rn)$.

\begin{lemma}\label{morrey410}
Let $0 < p < u < \infty$.

{\rm (i)}  $\accentset{*}{\cm}^{u}_{p}(\rn)$ and
$\accentset{\diamond}{\cm}^{u}_{p}(\rn)$
are proper subspaces of  ${\cm}^{u}_{p}(\rn)$.

{\rm (ii)} It holds true that
\[
\mathring{\cm}^{u}_{p}(\rn) \hookrightarrow \accentset{*}{\cm}^{u}_{p}(\rn) \cap
\accentset{\diamond}{\cm}^{u}_{p}(\rn)\, .
\]

{\rm (iii)} It holds true  that
\[
\accentset{*}{\cm}^{u}_{p}(\rn) \not\subset
\accentset{\diamond}{\cm}^{u}_{p}(\rn)
\qquad\mbox{and}\qquad
\accentset{\diamond}{\cm}^{u}_{p}(\rn)\not \subset
\accentset{*}{\cm}^{u}_{p}(\rn) \, .
\]
\end{lemma}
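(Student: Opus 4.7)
The plan is to test all three claims against the explicit family (\ref{falpha})--(\ref{halpha}) with the critical exponent $\alpha=n/u$, exploiting the scale-invariant identity
\[
|B(0,r)|^{1/u-1/p}\Bigl[\int_{B(0,r)}|x|^{-np/u}\,dx\Bigr]^{1/p}=C_{(p,u)},\quad r\in(0,\infty),
\]
recorded just before the lemma. Part (i) then needs no further work: the discussion preceding the lemma already notes $f_{n/u}\in\cm^u_p(\rn)$, $f_{n/u}\notin\accentset{\diamond}{\cm}^u_p(\rn)$ and $f_{n/u}\notin\accentset{*}{\cm}^u_p(\rn)$, so both inclusions in (i) are proper. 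For part (ii) I would argue straight from Definition \ref{d-cm}: any $\varphi\in C_c^\infty(\rn)$ is compactly supported with every $D^\alpha\varphi\in C_c^\infty(\rn)\subset\cm^u_p(\rn)$, so $C_c^\infty(\rn)$ is contained in both of the sets whose $\cm^u_p(\rn)$-closures define $\accentset{*}{\cm}^u_p(\rn)$ and $\accentset{\diamond}{\cm}^u_p(\rn)$; passing to the closure yields the two embeddings.

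For part (iii) I would use $g_{n/u}$ and $h_{n/u}$ as the two separating examples. The function $g_{n/u}=\psi(x)|x|^{-n/u}$ is compactly supported and in $\cm^u_p(\rn)$, so it lies trivially in $\accentset{*}{\cm}^u_p(\rn)$. Assume, for contradiction, that $f_k\to g_{n/u}$ in $\cm^u_p(\rn)$ with each $f_k\in C^\infty(\rn)$ having all $D^\alpha f_k\in\cm^u_p(\rn)$. Since $\psi\equiv 1$ near $0$, the quasi-triangle inequality applied on $B(0,r)$ combined with the displayed identity yields, for small $r$,
\[
|B(0,r)|^{1/u-1/p}\Bigl[\int_{B(0,r)}|f_k|^p\,dx\Bigr]^{1/p}\ge c\,C_{(p,u)}-c'\|f_k-g_{n/u}\|_{\cm^u_p(\rn)};
\]
fixing $k$ so large that the second term drops below $cC_{(p,u)}/2$ gives a positive lower bound uniform in $r\downarrow 0$, contradicting the trivial bound $|B(0,r)|^{1/u-1/p}[\int_{B(0,r)}|f_k|^p\,dx]^{1/p}\le\|f_k\|_{L^\infty(\overline{B(0,1)})}|B(0,r)|^{1/u}\to 0$ that holds since the continuous $f_k$ is locally bounded. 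Conversely, $h_{n/u}=(1-\psi(x))|x|^{-n/u}$ is globally smooth (the truncation removes the singularity) with $|D^\alpha h_{n/u}(x)|\ls(1+|x|)^{-n/u-|\alpha|}$; a scaling computation, splitting each ball into the part near the origin where derivatives are bounded and the far part where this decay kicks in, gives each $D^\alpha h_{n/u}\in\cm^u_p(\rn)$, hence $h_{n/u}\in\accentset{\diamond}{\cm}^u_p(\rn)$. Since $h_{n/u}(x)=|x|^{-n/u}$ for large $|x|$, the same identity yields $|B(0,r)|^{1/u-1/p}[\int_{B(0,r)}|h_{n/u}|^p\,dx]^{1/p}\to C_{(p,u)}$ as $r\to\infty$; but any compactly supported $f_k\in\cm^u_p(\rn)$ is automatically in $L^p(\rn)$ via $\cm^u_p(\rn)\hookrightarrow L^p_{\loc}(\rn)$, so $|B(0,r)|^{1/u-1/p}[\int_{B(0,r)}|f_k|^p\,dx]^{1/p}\le|B(0,r)|^{1/u-1/p}\|f_k\|_{L^p(\rn)}\to 0$ as $r\to\infty$ (using $1/u-1/p<0$). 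The same triangle-inequality manoeuvre then contradicts $f_k\to h_{n/u}$ in $\cm^u_p(\rn)$, establishing $h_{n/u}\notin\accentset{*}{\cm}^u_p(\rn)$.

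The only technical point worth flagging is that the two ``approximator is forced to vanish at some scale while the target is not'' arguments have to survive the quasi-Banach regime $0<p<1$; this reduces to carrying through a $2^{1/p-1}$ factor in the quasi-triangle inequality for the $\cm^u_p(\rn)$ quasi-norm and does not affect the structure of any of the above steps. Note that none of this invokes the characterizations of $\mathring{\cm}^u_p(\rn)$, $\accentset{*}{\cm}^u_p(\rn)$ or $\accentset{\diamond}{\cm}^u_p(\rn)$ from Lemma \ref{morrey43}, so the restriction $p\ge 1$ needed there is harmless for the present lemma.
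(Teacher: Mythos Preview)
Your proposal is correct and uses exactly the same separating examples $f_{n/u}$, $g_{n/u}$, $h_{n/u}$ as the paper. The difference is in how non-membership is established. For part~(ii) the paper simply cites Lemma~\ref{morrey43}, and for part~(iii) it invokes the necessity direction of Lemma~\ref{morrey43}(ii),(iii) (every element of $\accentset{*}{\cm}^u_p$ satisfies \eqref{ws-50} and every element of $\accentset{\diamond}{\cm}^u_p$ satisfies \eqref{ws-51}) together with the Remark after that lemma noting this direction needs no restriction on $p$. You instead bypass Lemma~\ref{morrey43} entirely: (ii) follows trivially from $C_c^\infty(\rn)$ sitting inside both defining sets, and for (iii) you run the ``approximator vanishes at a scale where the target does not'' contradiction by hand, using continuity of the $f_k$ for the $r\downarrow 0$ case and compact support plus $1/u-1/p<0$ for the $r\to\infty$ case. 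Your route is slightly more self-contained and makes the validity for $0<p<1$ transparent without a side remark; the paper's route is shorter once Lemma~\ref{morrey43} is available and better isolates the reusable characterizations.
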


\begin{remark}\label{Sawano}
Let $0<p\le u<\fz$. Sawano and Tanaka in \cite{st15}
considered another subspace of Morrey spaces, $S\accentset{*}{\cm}^{u}_{p}(\rn)$. This space $S\accentset{*}{\cm}^{u}_{p}(\rn)$ is defined to be
the collection of
all $f \in \cm_p^u (\rn)$ which can be approximated by finite sums
of multiples of characteristic functions of  sets with finite Lebesgue measures
in $\rn$. It was proved in \cite{st15}
that $S\accentset{*}{\cm}^{u}_{p}(\rn)$ is a proper subspace of ${\cm}^{u}_{p}(\rn)$
whenever $1<p<u<\fz$.

Obviously, when $p=u\in(0,\fz)$,  $\accentset{*}{\cm}^{u}_{p}(\rn)$ coincides with $S\accentset{*}{\cm}^{u}_{p}(\rn)$.
For the case that $p<u$, we have the following embeddings:
\begin{equation}\label{emb1}
\lf(\accentset{*}{\cm}^{u}_{p}(\rn)\cap L^\infty(\rn)\r)
\subset \lf(S\accentset{*}{\cm}^{u}_{p}(\rn)\cap L^\infty(\rn)\r),\quad 0<p<u<\fz
\end{equation}
and
\begin{equation}\label{emb2}
S\accentset{*}{\cm}^{u}_{p}(\rn)\subset\accentset{*}{\cm}^{u}_{p}(\rn),\quad 1\le p< u<\fz.
\end{equation}

To see \eqref{emb1}, let $f\in L^\infty(\rn)$ be a compactly supported function in ${\cm}^{u}_{p}(\rn)$ with $\supp f\subset K$,
where $K$ is a compact set.
Then it is well known that there exists a sequence $\{g\}_{k\in\nn}$
of simple functions (i.\,e., a complex function whose range consists of only finitely
many points) such that
$\|f-g_k\|_{L^\infty(\rn)}\to 0$ as $k\to \fz$. Define $f_k:=g_k\chi_K$ for all $k\in \nn$. Then each $f_k$ is a finite
sum of multiples of characteristic functions of  sets with finite Lebesgue measures.
Since $\supp f\subset K$, we further see that
$$\|f-f_k\|_{L^\fz(\rn)}=\|(f-g_k)\chi_K\|_{L^\fz(\rn)}\le \|f-g_k\|_{L^\infty(\rn)}\to 0$$
as $k\to\fz$, which, together with $0<p<u<\fz$, implies that
\begin{eqnarray*}
\|f-f_k\|_{\cm^u_p(\rn)}&&\le \|f-g_k\|_{L^\fz(\rn)} \|\chi_K\|_{\cm^u_p(\rn)}\\
&&=\|f-g_k\|_{L^\fz(\rn)}\sup_{{\rm balls}\ B} |B|^{\frac1u}\lf(\frac{|B\cap K|}{|B|}\r)^{\frac1p}\\
&&\ls \|f-g_k\|_{L^\fz(\rn)}|K|^{\frac1u} \to 0
\end{eqnarray*}
as $k\to\fz$. This proves the above embedding \eqref{emb1}.

To show \eqref{emb2}, by Lemma
\ref{morrey43}(ii), it suffices to show that, if $f=\chi_E$ with $|E|<\fz$,
then $f$ satisfies \eqref{ws-50} and \eqref{ws-74}. Indeed,
notice that, in this case,
\begin{equation}\label{XXX}
|B(y,r)|^{1/u-1/p}\lf[\int_{B(y,r)} |f(x)|^p\,dx\r]^{1/p}=|B(y,r)|^{1/u}\frac{|E\cap B(y,r)|^{1/p}}{|B(y,r)|^{1/p}}.
\end{equation}
Thus, by $p<u$ and $|E|<\fz$, we see that
$$\lim_{r\to\fz}|B(y,r)|^{1/u-1/p}\lf[\int_{B(y,r)} |f(x)|^p\,dx\r]^{1/p}\le w_n^{1/u-1/p}|E|^{1/p}\lim_{r\to\fz} r^{n(1/u-1/p)}=0,$$
where $w_n$ denotes the volume of the unit sphere.
This shows that $f$ satisfies \eqref{ws-50}.
Next we show that $f$ satisfies \eqref{ws-74}. By the above proved
conclusion, we know that, for any $\varepsilon\in(0,1)$,
there exists $R_\varepsilon\in(0,\fz)$ such that, if $r>R_\varepsilon$,
then, for all $y\in \rn$,
$$|B(y,r)|^{1/u-1/p}\lf[\int_{B(y,r)} |f(x)|^p\,dx\r]^{1/p}<\varepsilon.$$
On the other hand, by \eqref{XXX}, there exists $r_\varepsilon:=w_n^{-1/u}\varepsilon^{u/n}>0$
such that, if  $r<r_\varepsilon$, then, for all $y\in \rn$,
$$|B(y,r)|^{1/u-1/p}\lf[\int_{B(y,r)} |f(x)|^p\,dx\r]^{1/p}\le |B(y,r)|^{1/u}<\varepsilon.$$
It remains to consider the case $r_\varepsilon\le r\le R_\varepsilon$.
Since $f\in L^p(\rn)$, it follows that there exists $L_\varepsilon\in(0,\fz)$ such that
$$\int_{\rn\setminus B(0,L_\varepsilon)}|f(x)|^p\,dx< w_n^{1-p/u}r_\varepsilon^{n(1-u/p)}\varepsilon^p.$$
Thus, if $|y|>L_\varepsilon+R_\varepsilon$, we then have
$$B(y,r)\subset B(y,R_\varepsilon)\subset \rn\setminus B(0,L_\varepsilon)$$
and hence $$|B(y,r)|^{\frac1u-\frac1p}\lf[\int_{B(y,r)} |f(x)|^p\,dx\r]^{\frac1p}\le w_n^{\frac1u-\frac1p}
r_\varepsilon^{n(\frac1u-\frac1p)}\lf[\int_{\rn\setminus B(0,L_\varepsilon)} |f(x)|^p\,dx\r]^{\frac1p}<\varepsilon.$$
Combining the above estimates, we then know that $f$ satisfies \eqref{ws-74}.
This shows that, if $1\le p<u<\fz$, then $S\accentset{*}{\cm}^{u}_{p}(\rn)\subset\accentset{*}{\cm}^{u}_{p}(\rn)$
and hence proves \eqref{emb2}.
\end{remark}

In view of Proposition \ref{C-CIS}, we need to study intersections of Morrey spaces.

\begin{lemma}\label{morrey41}
Let $0< p_0 \le u_0 < \infty$ and $0< p_1 \le   u_1 < \infty$ such that $p_0 \le  p_1$.
Let $\tz \in (0,1)$,
\[
\frac1p:=\frac{1-\tz}{p_0}+\frac\tz{p_1} \qquad  \mbox{and}
\qquad
\frac1u:=\frac{1-\tz}{u_0}+\frac\tz{u_1}\, .
\]
Assume that $u_1 >u$ and $p< u$.
Then $\cm^{u_0}_{p_0}(\rn) \cap \cm^{u_1}_{p_1}(\rn)$ is not dense in $\cm^{u}_{p}(\rn)$.
If, in addition, $p\in [1,\fz)$, then
$$\cm^{u_0}_{p_0}(\rn) \cap \cm^{u_1}_{p_1}(\rn) \hookrightarrow
\accentset{\diamond}{\cm}^{u}_{p}(\rn).$$
\end{lemma}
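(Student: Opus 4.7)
The plan is to base both claims on a single pointwise estimate: for every ball $B\subset\rn$ and every $f\in\cm^{u_1}_{p_1}(\rn)$,
\begin{equation*}
|B|^{1/u-1/p}\lf[\int_B |f(x)|^p\,dx\r]^{1/p} \le |B|^{1/u-1/u_1}\,\|f\|_{\cm^{u_1}_{p_1}(\rn)}.
\end{equation*}
This follows in two elementary steps. Since $p_0\le p_1$ forces $p\le p_1$ via the interpolation identity $1/p=(1-\tz)/p_0+\tz/p_1$, Jensen's inequality applied to the probability measure $|B|^{-1}\chi_B\,dx$ yields $(\int_B|f|^p)^{1/p}\le |B|^{1/p-1/p_1}(\int_B|f|^{p_1})^{1/p_1}$, and the Morrey quasi-norm bound then gives $(\int_B|f|^{p_1})^{1/p_1}\le|B|^{1/p_1-1/u_1}\|f\|_{\cm^{u_1}_{p_1}(\rn)}$. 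Multiplying and combining exponents produces the claimed inequality. Because $u_1>u$, the exponent $1/u-1/u_1$ is strictly positive, so the right-hand side tends to $0$ uniformly in the center of $B$ as the radius shrinks to $0$.

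For the embedding assertion under the extra hypothesis $p\ge 1$, Theorem \ref{morrey1}(i) already provides $\cm^{u_0}_{p_0}(\rn)\cap\cm^{u_1}_{p_1}(\rn)\hookrightarrow\cm^u_p(\rn)$, while the estimate above shows that each such $f$ verifies \eqref{ws-51} uniformly in $y\in\rn$. Lemma \ref{morrey43}(iii) then identifies $f$ as an element of $\accentset{\diamond}{\cm}^u_p(\rn)$, and continuity of the embedding is inherited from the continuous inclusion into $\cm^u_p(\rn)$.

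For the non-density claim (which must also cover $0<p<1$), I would exhibit the explicit obstruction $f_{n/u}(x):=|x|^{-n/u}$ from \eqref{falpha}. Since $p<u$, the computation recorded immediately before Lemma \ref{morrey410} gives $f_{n/u}\in\cm^u_p(\rn)$ together with $|B(0,r)|^{1/u-1/p}(\int_{B(0,r)}|x|^{-np/u}\,dx)^{1/p}=C_{(p,u)}>0$ for every $r>0$, so $f_{n/u}$ violates \eqref{ws-51}. It remains to show that the uniform-vanishing property $\lim_{r\downarrow 0}\sup_{y\in\rn}|B(y,r)|^{1/u-1/p}(\int_{B(y,r)}|f|^p)^{1/p}=0$ is preserved under limits in $\cm^u_p(\rn)$. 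This is obtained by combining $N_B(\cdot,p,u):=|B|^{1/u-1/p}(\int_B|\cdot|^p)^{1/p}\le\|\cdot\|_{\cm^u_p(\rn)}$ with the quasi-triangle inequality: given $\epsilon>0$, first choose $n$ so that $\|f-f_n\|_{\cm^u_p(\rn)}$ is small, then choose $r$ uniformly in $y$ so that $N_{B(y,r)}(f_n,p,u)$ is small. Consequently $f_{n/u}$ lies outside the $\cm^u_p$-closure of $\cm^{u_0}_{p_0}(\rn)\cap\cm^{u_1}_{p_1}(\rn)$, finishing the non-density part. The only technicality is minor quasi-norm bookkeeping when $p<1$; conceptually the whole argument is simply the observation that the strictly larger second exponent $u_1>u$ forces uniform vanishing on small balls for every element of the intersection, a condition that is saturated by $f_{n/u}$ at every scale.
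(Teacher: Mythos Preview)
Your proof is correct and follows essentially the same route as the paper: both rest on the H\"older/Jensen estimate
\[
|B|^{1/u-1/p}\lf[\int_B |f|^p\r]^{1/p}\le |B|^{1/u-1/u_1}\,\|f\|_{\cm^{u_1}_{p_1}(\rn)},
\]
use it together with Lemma~\ref{morrey43}(iii) for the embedding, and then exhibit a function in $\cm^u_p(\rn)$ violating the small-ball vanishing condition \eqref{ws-51} to obstruct density. The only cosmetic difference is the choice of obstruction: the paper uses the compactly supported $g_{n/u}(x)=\psi(x)|x|^{-n/u}$ from \eqref{galpha}, whereas you use $f_{n/u}(x)=|x|^{-n/u}$ from \eqref{falpha}; since $p<u$ both belong to $\cm^u_p(\rn)$ and both fail \eqref{ws-51} at the origin, so either works.
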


\begin{corollary}\label{morrey42}
Let $0< p_i \le u_i < \infty$, $i\in\{0,1\}$, and  $\tz \in (0,1)$.
Let
\[
\frac1p:=\frac{1-\tz}{p_0}+\frac\tz{p_1} \qquad  \mbox{and}
\qquad
\frac1u:=\frac{1-\tz}{u_0}+\frac\tz{u_1}\, .
\]

{\rm (i)} It holds true that
\[
\lf\laz\cm^{u_0}_{p_0}(\rn), \cm^{u_1}_{p_1}(\rn)\r\raz_\tz \hookrightarrow  \cm^{u}_{p}(\rn)
\]
and the embedding is always proper except the trivial cases consisting in
\begin{enumerate}
\item[{\rm(a)}] $p_0 = p_1$ and $u_0 = u_1$, or
\item[{\rm(b)}] $p_0 = u_0$ and $p_1 = u_1$.
\end{enumerate}

{\rm (ii)}  In addition, assume $p_0 < p_1$ and $p_0\,  u_1 = p_1 \, u_0$.
Then
$\mathring{\cm}^{u}_{p}(\rn)$ is a proper subspace of
$$\lf\laz\cm^{u_0}_{p_0}(\rn), \cm^{u_1}_{p_1}(\rn)\r\raz_\tz.$$

{\rm (iii)} Assume $p_0 < p_1$, $p_0\,  u_1 = p_1 \, u_0$ and  $p \in[1,\fz)$. Then
$$\lf\laz\cm^{u_0}_{p_0}(\rn), \cm^{u_1}_{p_1}(\rn)\r\raz_\tz
\hookrightarrow \accentset{\diamond}{\cm}^{u}_{p}(\rn)$$
and this embedding is proper.
\end{corollary}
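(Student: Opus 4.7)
\textit{Part (i).} The embedding will follow by chaining the general inclusion $\lf\laz X_0,X_1\r\raz_\tz\hookrightarrow X_0^{1-\tz}X_1^\tz$ (valid in the present quasi-Banach lattice setting, cf.\ \cite{n85}) with Theorem \ref{morrey1}(i). For properness outside the trivial configurations, I split on the sign of $p_0u_1-p_1u_0$. When $p_0u_1\neq p_1u_0$, Theorem \ref{morrey1}(iii) already gives $\lf[\cm^{u_0}_{p_0}(\rn)\r]^{1-\tz}\lf[\cm^{u_1}_{p_1}(\rn)\r]^\tz\subsetneq\cm^u_p(\rn)$, whence the Peetre--Gagliardo embedding is proper. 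When $p_0u_1=p_1u_0$ and we are outside cases (a) and (b), the interpolation formulas force $p<u$ and $u_0<u<u_1$; then Corollary \ref{cima}(i) and Proposition \ref{nil} identify $\lf\laz\cm^{u_0}_{p_0}(\rn),\cm^{u_1}_{p_1}(\rn)\r\raz_\tz$ with the $\cm^u_p(\rn)$-closure of $\cm^{u_0}_{p_0}(\rn)\cap\cm^{u_1}_{p_1}(\rn)$, which Lemma \ref{morrey41} exhibits as a proper subspace.

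\textit{Part (ii).} Since $C_c^\infty(\rn)\subset\cm^{u_0}_{p_0}(\rn)\cap\cm^{u_1}_{p_1}(\rn)$, taking $\cm^u_p(\rn)$-closures of both sides yields $\mathring{\cm}^u_p(\rn)\hookrightarrow\lf\laz\cm^{u_0}_{p_0}(\rn),\cm^{u_1}_{p_1}(\rn)\r\raz_\tz$. For properness I will produce the witness
\[
f(x):=\sum_{j=1}^\infty \phi(x-2^je_1),\quad \phi\in C_c^\infty(\rn)\setminus\{0\},\ \supp\phi\subset B(0,1).
\]
Because the translates $B(2^je_1,1)$ are pairwise disjoint and $\#\{j\ge1:|2^je_1|\le R\}\lesssim\log R$, a direct computation of the Morrey quasi-norm on arbitrary balls puts $f$ into $\cm^v_q(\rn)$ for every admissible pair $q\le v$, hence in particular into $\cm^{u_0}_{p_0}(\rn)\cap\cm^{u_1}_{p_1}(\rn)\subset\lf\laz\cdot\,,\cdot\r\raz_\tz$. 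On the other hand, the Morrey quantity on $B(2^je_1,1)$ equals a positive constant independent of $j$, so $f$ violates \eqref{ws-74}; by Lemma \ref{morrey43}(ii) combined with Lemma \ref{morrey410}(ii), this places $f$ outside $\accentset{*}{\cm}^u_p(\rn)\supset\mathring{\cm}^u_p(\rn)$.

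\textit{Part (iii).} The embedding $\lf\laz\cm^{u_0}_{p_0}(\rn),\cm^{u_1}_{p_1}(\rn)\r\raz_\tz\hookrightarrow\accentset{\diamond}{\cm}^u_p(\rn)$ is immediate from Lemma \ref{morrey41} (which applies once $p\ge1$) and the $\cm^u_p(\rn)$-closedness of $\accentset{\diamond}{\cm}^u_p(\rn)$ inherent in Definition \ref{d-cm}(i): passing to closures of the inclusion $\cm^{u_0}_{p_0}(\rn)\cap\cm^{u_1}_{p_1}(\rn)\hookrightarrow\accentset{\diamond}{\cm}^u_p(\rn)$ preserves the containment. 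For properness I take the candidate $f(x):=(1-\psi(x))|x|^{-n/u}$ with $\psi$ a smooth cutoff as in Section \ref{s1}. A case analysis on $(y,r)$ shows that $f$ and all its derivatives belong to $\cm^u_p(\rn)$, placing $f\in\accentset{\diamond}{\cm}^u_p(\rn)$, while an explicit evaluation yields
\[
|B(0,R)|^{1/u-1/p}\lf[\int_{B(0,R)}|f(x)|^p\,dx\r]^{1/p}\ \longrightarrow\ c>0\quad(R\to\infty),
\]
so $f$ violates \eqref{ws-50}. To exclude $f$ from the closure of $\cm^{u_0}_{p_0}(\rn)\cap\cm^{u_1}_{p_1}(\rn)$, I combine the Stampacchia-type pointwise bound
\[
|B|^{\frac1u-\frac1p}\lf[\int_B|g|^p\r]^{\frac1p}\le\lf[|B|^{\frac1{u_0}-\frac1{p_0}}\lf[\int_B|g|^{p_0}\r]^{\frac1{p_0}}\r]^{1-\tz}\lf[|B|^{\frac1{u_1}-\frac1{p_1}}\lf[\int_B|g|^{p_1}\r]^{\frac1{p_1}}\r]^{\tz}
\]
with the decay of the $\cm^{u_0}_{p_0}$-side at $B=B(0,R)$ (following from the Morrey condition together with the relation $p_i/u_i=p/u$), to force $|B(0,R)|^{1/u-1/p}[\int_{B(0,R)}|g_j|^p]^{1/p}\to0$ as $R\to\infty$ for any $g_j$ in the intersection, contradicting the matching with the nonvanishing constant $c$ forced by $g_j\to f$ in $\cm^u_p(\rn)$. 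The hardest step, and the main obstacle, is this last one: the Stampacchia dominance has to be leveraged to yield vanishing at spatial infinity of the $\cm^u_p$ local quantity uniformly along any putative approximating sequence, even though the individual norms $\|g_j\|_{\cm^{u_0}_{p_0}(\rn)}$ and $\|g_j\|_{\cm^{u_1}_{p_1}(\rn)}$ may blow up; the remaining steps reduce to direct applications of Corollary \ref{cima}, Proposition \ref{nil}, and Lemmas \ref{morrey41}, \ref{morrey43}, and \ref{morrey410}.
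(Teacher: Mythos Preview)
Your Parts (i) and (ii) are correct. Part (i) follows the paper's route exactly. In Part (ii) you choose a different witness: the paper uses $g_{n/u_1}(x)=\psi(x)|x|^{-n/u_1}$ and argues it lies in the intersection but fails the approximation by $C_c^\infty$, whereas your lacunary sum $\sum_j\phi(\cdot-2^je_1)$ is bounded, lies in every $\cm^v_q(\rn)$ with $q<v$ (this is all you need, since $p_i<u_i$ under the standing hypotheses), and fails \eqref{ws-74}. Your argument is clean and self-contained; the direction of Lemma~\ref{morrey43} you invoke does not need $p\ge1$ (cf.\ the Remark following that lemma).

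Part (iii) has a genuine gap exactly where you flag it. Your proposed mechanism --- ``decay of the $\cm^{u_0}_{p_0}$-side at $B(0,R)$'' --- does not exist: for $g\in\cm^{u_0}_{p_0}(\rn)\cap\cm^{u_1}_{p_1}(\rn)$ the $\cm^{u_0}_{p_0}$-local quantity is merely bounded by $\|g\|_{\cm^{u_0}_{p_0}(\rn)}$, with no decay in $R$; and on the $\cm^{u_1}_{p_1}$-side the H\"older comparison produces the \emph{growing} factor $|B|^{1/u-1/u_1}$. So the Stampacchia bound by itself yields nothing for large balls. The missing device, which the paper supplies, is a pointwise truncation: given $f_\varepsilon\in\cm^{u_0}_{p_0}(\rn)\cap\cm^{u_1}_{p_1}(\rn)$ with $\|h_{n/u}-f_\varepsilon\|_{\cm^u_p(\rn)}<\varepsilon$, replace $f_\varepsilon$ by $\min(|f_\varepsilon|,|x|^{-n/u})$. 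This keeps $f_\varepsilon$ within $\varepsilon$ of $h_{n/u}$ in $\cm^u_p(\rn)$ (since $0\le h_{n/u}\le|x|^{-n/u}$) and keeps it in $\cm^{u_0}_{p_0}(\rn)$ (lattice property), while furnishing the pointwise bound $|f_\varepsilon(x)|\le|x|^{-n/u}$. On dyadic annuli $\{2^j\le|x|\le 2^{j+1}\}$ this forces the $p_1$-factor to satisfy
\[
2^{jn(1/u_1-1/p_1)}\Big[\int_{2^j\le|x|\le 2^{j+1}}|f_\varepsilon|^{p_1}\,dx\Big]^{1/p_1}\lesssim 2^{jn(1/u_1-1/u)}\longrightarrow 0,
\]
so the H\"older product is $\lesssim\|f_\varepsilon\|_{\cm^{u_0}_{p_0}(\rn)}^{1-\tz}\,2^{jn\tz(1/u_1-1/u)}\to0$ as $j\to\infty$ with $\varepsilon$ \emph{fixed}, contradicting the uniform lower bound inherited from $h_{n/u}$. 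In particular the worry about $\|g_j\|_{\cm^{u_i}_{p_i}}$ blowing up along the sequence is irrelevant: the contradiction is obtained for each single approximant by sending the annulus scale to infinity.
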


By Corollary \ref{morrey42}(iii), we know that we have to introduce new spaces
to obtain an explicit description of $\lf\laz\cm^{u_0}_{p_0}(\rn), \cm^{u_1}_{p_1}(\rn)\r\raz_\tz$.

\begin{definition}\label{d2.38}
 Let $0 <p_i < u_i< \infty$, $i\in\{0,1\}$, $p_0 \le p_1$ and $\tz \in (0,1)$.
Define
\[
 \frac1p:=\frac{1-\tz}{p_0}+\frac\tz{p_1} \qquad \mbox{and}\qquad    \frac1u:=\frac{1-\tz}{u_0}+\frac\tz{u_1}\, .
\]
Then the \emph{space}
$\cm^{u_0,u_1,\tz}_{p_0,p_1} (\rn)$ is
defined as the collection of all $f \in L_{p_1}^{\ell oc} (\rn) $
such that
\begin{equation}\label{ws-95a}
I_1 (f):= \sup_{y\in \rn} \, \sup_{0< r < 1}\,
|B(y,r)|^{1/u-1/p}\lf[\int_{B(y,r)} |f(x)|^p\,dx\r]^{1/p}  < \infty\, ,
\end{equation}
\begin{equation}\label{ws-95b}
\lim_{r \downarrow 0}
|B(y,r)|^{1/u-1/p}\lf[\int_{B(y,r)} |f(x)|^p\,dx\r]^{1/p}  =  0
\end{equation}
uniformly in  $y \in \rr^n$,
\begin{equation}\label{ws-96}
I_2 (f):= \sup_{y\in \rn} \sup_{r \ge 1}\,
|B(y,r)|^{\frac{1}{u_0}-\frac{1}{p_0}}\lf[\int_{B(y,r)} |f(x)|^{p_0}\,dx\r]^{1/p_0}  <\infty
\end{equation}
and
\begin{equation}\label{ws-97}
I_3 (f):= \sup_{y\in \rn} \sup_{r \ge 1}\,
|B(y,r)|^{\frac{1}{u_1}-\frac{1}{p_1}}\lf[\int_{B(y,r)} |f(x)|^{p_1}\,dx\r]^{1/p_1}  <\infty\, .
\end{equation}
Define
\[
\| \, f \, \|_{\cm^{u_0,u_1,\tz}_{p_0,p_1} (\rn)} := I_1 (f) + I_2(f) + I_3(f)\, .
\]
\end{definition}

By means of these new spaces $\cm^{u_0,u_1,\tz}_{p_0,p_1}(\rn)$, we obtain now the first main result
with respect to the Peetre-Gagliardo method applied to Morrey spaces.

\begin{theorem}\label{gp01}
Let $\tz\in(0,1)$, $0<p_i < u_i<\fz$, $i\in\{0,1\}$,  $1 \le p_0 < p_1$
and $ p_0\, u_1=p_1\, u_0$. Define
\[
 \frac1p:=\frac{1-\tz}{p_0}+\frac\tz{p_1} \qquad \mbox{and}\qquad    \frac1u:=\frac{1-\tz}{u_0}+\frac\tz{u_1}\, .
\]
Then
$$
\lf\laz\cm^{u_0}_{p_0}(\rn), \cm^{u_1}_{p_1}(\rn)\r\raz_\tz =
\cm^{u_0,u_1,\tz}_{p_0,p_1} (\rn)\, .
$$
\end{theorem}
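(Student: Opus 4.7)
The starting point is Proposition \ref{nil}, which together with Corollary \ref{cima}(i) yields
\[
\lf\laz \cm^{u_0}_{p_0}(\rn), \cm^{u_1}_{p_1}(\rn)\r\raz_\tz
= \overline{\cm^{u_0}_{p_0}(\rn) \cap \cm^{u_1}_{p_1}(\rn)}^{\cm^u_p(\rn)},
\]
so the task reduces to identifying this closure with $\cm^{u_0,u_1,\tz}_{p_0,p_1}(\rn)$, which I plan to prove as two inclusions.

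For ``$\supset$'' (the right-hand side is contained in the closure), given $f \in \cm^{u_0,u_1,\tz}_{p_0,p_1}(\rn)$ I would construct approximations $f_n := f\,\eta_n\,\chi_{\{|f|\le n\}}$, with $\eta_n$ a smooth cutoff to $B(0,n)$. Because $f_n$ is bounded, compactly supported, and dominated by $|f|$, the finiteness of $I_2(f), I_3(f)$ places $f_n$ in $\cm^{u_0}_{p_0}(\rn)\cap \cm^{u_1}_{p_1}(\rn)$. To verify $\|f-f_n\|_{\cm^u_p(\rn)}\to 0$, I would split balls into radii $r<1$ and $r\ge 1$: small balls are handled by the uniform vanishing \eqref{ws-95b} (since $|f-f_n|\le|f|$), and large balls are controlled by the H\"older interpolation
\[
|B|^{\frac1u-\frac1p}\|g\|_{L^p(B)}\le \lf[|B|^{\frac1{u_0}-\frac1{p_0}}\|g\|_{L^{p_0}(B)}\r]^{1-\tz}\lf[|B|^{\frac1{u_1}-\frac1{p_1}}\|g\|_{L^{p_1}(B)}\r]^\tz
\]
applied to $g:=f-f_n$, combined with a dominated convergence argument using $|f - f_n| \to 0$ pointwise a.e.

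For the reverse inclusion ``$\subset$'', the bound $I_1(f)<\infty$ follows from $f\in\cm^u_p(\rn)$, and the uniform vanishing \eqref{ws-95b} follows from the embedding $\lf\laz \cm^{u_0}_{p_0}(\rn), \cm^{u_1}_{p_1}(\rn)\r\raz_\tz \hookrightarrow \accentset{\diamond}{\cm}^u_p(\rn)$ given by Corollary \ref{morrey42}(iii), together with the characterization of $\accentset{\diamond}{\cm}^u_p(\rn)$ in Lemma \ref{morrey43}(iii). The delicate step is to derive $I_2(f), I_3(f) < \infty$: for this I would work directly with the Peetre-Gagliardo decomposition $f = \sum_i a_i$ and, for each ball $B$ of radius $r \ge 1$, split $f = g_N + h_N$ with $N = N(B)$ chosen so that $g_N \in \cm^{u_0}_{p_0}(\rn)$ and $h_N \in \cm^{u_1}_{p_1}(\rn)$ have norms controlled by the Peetre-Gagliardo quasinorm. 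Then H\"older with exponents $p_0 < p_1$ lifts $\|h_N\|_{L^{p_0}(B)}$ to $\|h_N\|_{L^{p_1}(B)}$, after which the balance $p_0/u_0 = p_1/u_1$ forced by the hypothesis $p_0 u_1 = p_1 u_0$ is used to close the estimate uniformly in $r$, with a symmetric argument yielding $I_3(f)<\infty$.

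The main obstacle lies in this second inclusion, specifically in obtaining the uniformity of $I_2(f), I_3(f) < \infty$: generic weighted truncations of the Peetre-Gagliardo series produce bounds that a priori grow with $|B|$, and a truly uniform estimate requires careful exploitation of the exact balance $p_0 u_1 = p_1 u_0$ together with the full strength of the simultaneous convergence of all weighted series $\sum_i \epsilon_i 2^{-i\tz} a_i$ in $\cm^{u_0}_{p_0}(\rn)$ and $\sum_i \epsilon_i 2^{i(1-\tz)} a_i$ in $\cm^{u_1}_{p_1}(\rn)$ for every bounded sequence $\{\epsilon_i\}_{i\in\zz}$, rather than just the bounded norms of individual partial sums.
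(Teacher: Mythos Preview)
Your starting point---reducing to the closure identity via Proposition~\ref{nil} and Corollary~\ref{cima}(i)---is exactly right, and your handling of $I_1(f)$ and \eqref{ws-95b} in the ``$\subset$'' direction via Corollary~\ref{morrey42}(iii) and Lemma~\ref{morrey43}(iii) matches the paper. But the approximation scheme you propose for ``$\supset$'' has a genuine gap, and your ``$\subset$'' plan attacks the wrong object.

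\textbf{The gap in ``$\supset$''.} The spatial cutoff $\eta_n$ cannot work, because $\cm^{u_0,u_1,\tz}_{p_0,p_1}(\rn)$ contains functions with no decay at infinity. Take $f = \sum_{k} \chi_{B(x_k,1)}$ with $|x_k|\to\infty$ fast enough that the balls are disjoint and very spread out: then $I_1(f)<\infty$, \eqref{ws-95b} holds, and $I_2(f),I_3(f)$ are finite constants (the supremum is essentially realized on each unit ball $B(x_k,1)$). But $f$ is bounded, so for large $n$ your $f_n = f\eta_n$, and $\|f-f\eta_n\|_{\cm^u_p(\rn)}$ is at least the Morrey quantity on any $B(x_k,1)$ with $|x_k|$ beyond the support of $\eta_n$---a fixed positive number independent of $n$. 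Your ``dominated convergence for large balls'' fails here for the same reason: it works ball by ball, but not uniformly over the supremum. The paper avoids cutoffs entirely: it mollifies $f$ to $f^{(\delta)}$ and shows $I_1(f-f^{(\delta)})+I_2(f-f^{(\delta)})+I_3(f-f^{(\delta)})\to 0$ (hence $\cm^u_p$-convergence via H\"older). The mollified $f^{(\delta)}$ has all derivatives in $\cm^{u_0,u_1,\tz}_{p_0,p_1}(\rn)\hookrightarrow\cm^u_p(\rn)$, so by the Sobolev--Morrey embedding $W^m(\cm^u_p)\hookrightarrow L_\infty$ it lies in $L_\infty(\rn)$; then $L_\infty\cap\cm^{u_0,u_1,\tz}_{p_0,p_1}(\rn)\subset\cm^{u_0}_{p_0}(\rn)\cap\cm^{u_1}_{p_1}(\rn)$ (use boundedness for $r<1$ and $I_2,I_3$ for $r\ge 1$). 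The point is that the approximants need only be \emph{bounded}, not compactly supported.

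\textbf{The ``$\subset$'' direction.} Your instinct that extracting $I_2,I_3<\infty$ from the Peetre--Gagliardo series is delicate is correct---the $K$-functional estimate $K(t,f)\lesssim t^\Theta$ only yields $|B|^{1/u_0-1/p_0}\|f\|_{L^{p_0}(B)}\lesssim |B|^{\Theta(1/u_0-1/u_1)}$, which grows with $|B|$. The paper bypasses this entirely: it simply observes that every $f\in\cm^{u_0}_{p_0}(\rn)\cap\cm^{u_1}_{p_1}(\rn)$ trivially satisfies all four defining conditions of $\cm^{u_0,u_1,\tz}_{p_0,p_1}(\rn)$, and then passes to the $\cm^u_p$-closure. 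So rather than analyzing the abstract decomposition, the argument stays at the level of the concrete intersection, which is where the conditions \eqref{ws-96}--\eqref{ws-97} are immediate.
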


\begin{remark}
(i)  Notice that, by Corollary \ref{morrey42}, we know that
\[
\mathring{\cm}_p^u (\rn) \hookrightarrow \cm^{u_0,u_1,\tz}_{p_0,p_1} (\rn)\hookrightarrow
 \accentset{\diamond}{\cm}^{u}_{p}(\rn)
\]
and all embeddings are proper.

(ii) Based on Theorem \ref{gp01}, we expect
that the description of $\lf\laz A_{p_0,q_0}^{s_0,\tau_0}(\rn), A_{p_1,q_1}^{s_1,\tau_1}(\rn)\r\raz_\tz$
in case $\tau _0\in(0,1/p_0)$ and $\tau _1\in(0, 1/p_1)$
requires some new spaces. Observe that
\[
\lf\laz F_{p_0,2}^{0,\frac{1}{p_0} - \tau_0}(\rn), F_{p_1,2}^{{0,\frac{1}{p_1} - \tau_1}}(\rn)\r\raz_\tz
= \lf\laz\cm^{u_0}_{p_0}(\rn), \cm^{u_1}_{p_1}(\rn)\r\raz_\tz =
\cm^{u_0,u_1,\tz}_{p_0,p_1} (\rn)\,
\]
with $p_0$, $p_1\in(1,\infty)$ and $\tau_i=1/u_i$, $i\in\{0,1\}$,
by means of the  Littlewood-Paley characterization of the Morrey spaces, namely,
$F_{p,2}^{0,\frac{1}{p} - \frac1u}(\rn)=\cm^u_p(\rn)$
for all $1<p\le u<\fz$ (see, for example, \cite[Corollary 3.3]{ysy}).
\end{remark}

There is an important difference between the interpolation of Morrey spaces on
unbounded domains and bounded ones. First, we need to recall the definition of
Morrey spaces on domains.

\begin{definition}
Let $0 < p \le u < \infty$ and let $\Omega \subset \rn$ be bounded.
 Then the \emph{Morrey space $\cm^{u}_p (\Omega)$} is defined as the set of all
 $f \in L_p^{\ell oc} (\Omega)$ such that
\[
\| \, f \, \|_{\cm^{u}_p (\Omega)}:=
\sup_{x \in \Omega} \sup_{r\in(0,\fz)} \, |B(x,r) \cap \Omega|^{\frac 1u - \frac 1p} \lf[
\int_{B(x,r)\cap \Omega} |f(y)|^p\, dy\r]^{1/p} <\infty\, .
\]
\end{definition}

For easier reference, we will concentrate on $\Omega = (0,1)^n$.
We need to recall an embedding result of Dchumakaeva \cite{dch}:
\[
 W^m (\cm^u_p) ((0,1)^n)   \hookrightarrow  C ((0,1)^n)\, , \qquad 1 \le p< \infty, \quad m > \frac np\, .
\]
Here $W^m (\cm^u_p) ((0,1)^n) $ denotes the Sobolev space built on the Morrey space
$\cm_p^u ((0,1)^n) $ and, as usual, $C ((0,1)^n)$ denotes the space of all
continuous functions
on $(0,1)^n$ equipped with the supremum norm.
By means of this embedding, we can derive the following
conclusion,
the details being omitted.

\begin{lemma}\label{gp02}
Let $1 \le p \le u <\infty$. Then
\begin{eqnarray*}
\Big\{f \in \cm_p^u ((0,1)^n):&& \: D^\alpha f \in \cm_p^u ((0,1)^n)
\quad \mbox{for all}\quad \alpha \in \zz_+
\Big\}
\\
& = &
\Big\{f \in C^\infty ((0,1)^n): \quad D^\alpha f \in L_\infty ((0,1)^n)
\quad \mbox{for all}\quad \alpha \in \zz_+ \Big\}\, .
\end{eqnarray*}
\end{lemma}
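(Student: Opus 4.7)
The plan is to prove the two inclusions separately, using Dchumakaeva's embedding $W^m(\cm_p^u)((0,1)^n) \hookrightarrow C((0,1)^n)$ (valid for $1\le p <\infty$ and $m > n/p$) as the only nontrivial ingredient. Everything else will be essentially elementary.

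For the inclusion $\subseteq$, fix an $f$ with $D^\alpha f \in \cm_p^u ((0,1)^n)$ for every multi-index $\alpha$. Pick any integer $m > n/p$. Then, for each fixed $\alpha$, the function $D^\alpha f$ has all its weak partial derivatives of order $\le m$ again in $\cm_p^u((0,1)^n)$ (they are just $D^{\alpha+\beta} f$). Consequently $D^\alpha f \in W^m(\cm_p^u)((0,1)^n)$ and Dchumakaeva's embedding yields $D^\alpha f \in C((0,1)^n)$ with
$$\| D^\alpha f\|_{L_\infty((0,1)^n)}\lesssim \sum_{|\beta|\le m}\| D^{\alpha+\beta}f\|_{\cm_p^u((0,1)^n)}<\infty.$$
Since this holds for every $\alpha$, we get $f\in C^\infty((0,1)^n)$ with all derivatives bounded, which is the desired right-hand side.

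For the inclusion $\supseteq$, assume $f\in C^\infty((0,1)^n)$ with $D^\alpha f\in L_\infty((0,1)^n)$ for every $\alpha$. We just need the trivial embedding $L_\infty((0,1)^n)\hookrightarrow \cm_p^u((0,1)^n)$, which on a bounded domain is obtained from the chain
$$|B(x,r)\cap (0,1)^n|^{\frac1u-\frac1p}\biggl[\int_{B(x,r)\cap (0,1)^n}|g(y)|^p\,dy\biggr]^{1/p}\le |B(x,r)\cap (0,1)^n|^{1/u}\|g\|_{L_\infty}\le \|g\|_{L_\infty},$$
valid for all $g\in L_\infty((0,1)^n)$ (using $|B(x,r)\cap(0,1)^n|\le 1$). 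Applying this with $g=D^\alpha f$ finishes the argument.

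There is no real obstacle; the only subtle point is making sure that the full statement ``$f\in C^\infty$'' rather than just ``$f$ is continuous together with each fixed $D^\alpha f$'' is correctly extracted from the embedding, but this is immediate since continuity of every weak derivative forces $f$ to lie in $C^\infty$.
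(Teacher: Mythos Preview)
Your proof is correct and is exactly the argument the paper has in mind: it explicitly introduces Dchumakaeva's embedding $W^m(\cm_p^u)((0,1)^n)\hookrightarrow C((0,1)^n)$ right before the lemma and then says ``By means of this embedding, we can derive the following conclusion, the details being omitted.'' Your two inclusions, using that embedding for $\subseteq$ and the elementary $L_\infty\hookrightarrow \cm_p^u$ on the bounded cube for $\supseteq$, fill in precisely those omitted details.
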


Based on this simple lemma, it is now possible to show that there is no need for new spaces in case of bounded domains.

\begin{theorem}\label{gp03}
Let $\tz\in(0,1)$, $1\le p_0\le u_0<\fz$ and $1\le p_1\le u_1<\fz$.
If  $\frac1u:=\frac{1-\tz}{u_0}+\frac\tz{u_1}$,
$\frac1p:=\frac{1-\tz}{p_0}+\frac\tz{p_1}$ and $
p_0\, u_1=p_1\, u_0$,
then
$$
\lf\laz\cm^{u_0}_{p_0}((0,1)^n), \cm^{u_1}_{p_1}((0,1)^n)\r\raz_\tz =
\accentset{\diamond}{\cm}^u_p((0,1)^n).
$$
\end{theorem}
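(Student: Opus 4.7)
The plan is to derive the identification from Proposition \ref{nil} applied to the couple $(X_0,X_1):=(\cm^{u_0}_{p_0}((0,1)^n),\cm^{u_1}_{p_1}((0,1)^n))$. That proposition writes $\laz X_0,X_1\raz_\tz$ as the closure of $X_0\cap X_1$ in the norm of the auxiliary $\pm$-space $\laz X_0,X_1,\tz\raz$, so the task splits into identifying this auxiliary space and then computing the relevant closure.

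For the first task I would establish the bounded-domain analog of Corollary \ref{cima}(i), namely
$$\lf\laz\cm^{u_0}_{p_0}((0,1)^n),\cm^{u_1}_{p_1}((0,1)^n),\tz\r\raz=\cm^u_p((0,1)^n).$$
The proof of Corollary \ref{cima}(i) proceeds by combining the Calder\'on product identity of Theorem \ref{morrey1}(ii) with Nilsson's lattice transfer result (Proposition \ref{t-n}). Both ingredients are insensitive to the ambient measure space: the argument for Theorem \ref{morrey1}(ii) is pointwise in the ball variable and extends to any measurable $\boz\subset\rn$, while Nilsson's theorem is formulated abstractly for quasi-Banach lattices of measurable functions. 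Hence the middle-space formula on $(0,1)^n$ follows by exactly the same argument.

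For the closure, assume without loss of generality that $p_0\le p_1$, so $u_0\le u_1$ by $p_0u_1=p_1u_0$. Because $|(0,1)^n|=1$, H\"older's inequality immediately yields $\cm^{u_1}_{p_1}((0,1)^n)\hookrightarrow\cm^{u_0}_{p_0}((0,1)^n)$, so $X_0\cap X_1=\cm^{u_1}_{p_1}((0,1)^n)$, and it remains to show
$$\overline{\cm^{u_1}_{p_1}((0,1)^n)}^{\|\cdot\|_{\cm^u_p((0,1)^n)}}=\accentset{\diamond}{\cm}^u_p((0,1)^n).$$
The inclusion $\supseteq$ is immediate from Lemma \ref{gp02}: applied with indices $(u,p)$ it identifies the generating set of $\accentset{\diamond}{\cm}^u_p((0,1)^n)$ with the smooth functions on $(0,1)^n$ all of whose derivatives are bounded, and applied with indices $(u_1,p_1)$ it shows that this class is already contained in $\cm^{u_1}_{p_1}((0,1)^n)$. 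For $\subseteq$ I mollify: for $f\in\cm^{u_1}_{p_1}((0,1)^n)$ set $f_\ez:=\phi_\ez*(f\chi_{(0,1)^n})$, so $f_\ez|_{(0,1)^n}$ is smooth with bounded derivatives. Minkowski's integral inequality applied to the convolution, together with the Morrey estimate on the shifted ball, gives $\|f_\ez\|_{\cm^{u_1}_{p_1}}\le\|f\|_{\cm^{u_1}_{p_1}}$, so $\|f-f_\ez\|_{\cm^{u_1}_{p_1}}\le 2\|f\|_{\cm^{u_1}_{p_1}}$. Splitting the supremum defining $\|f-f_\ez\|_{\cm^u_p}$ at a small parameter $\dz>0$, on balls of radius $r\le\dz$ H\"older bounds the local contribution by $|B|^{1/u-1/u_1}\cdot 2\|f\|_{\cm^{u_1}_{p_1}}=O(\dz^{n(1/u-1/u_1)})$, which is small since $u<u_1$; on balls of radius $r>\dz$, $|B\cap(0,1)^n|$ is bounded uniformly below, so the contribution is $\ls c(\dz)\,\|f-f_\ez\|_{L_{p_1}((0,1)^n)}\to 0$ by the classical mollifier theorem. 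Given $\eta>0$, one first fixes $\dz$ to control the small-ball piece and then $\ez$ to control the other.

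The main obstacle is the first task: transferring the $\pm$-method identity from $\rn$ to $(0,1)^n$. The cleanest route seems to be the direct adaptation described above, since genuine extension operators for Morrey spaces on $(0,1)^n$ preserving the Morrey norm are not readily available; one benefits here from the fact that both the Calder\'on product and Nilsson's theorem are stated purely in terms of quasi-Banach lattice structure and therefore adapt immediately to the setting of the bounded domain.
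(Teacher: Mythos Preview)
Your argument is essentially correct and follows the same two–step template as the paper: identify $\laz X_0,X_1,\tz\raz$ with $\cm^u_p((0,1)^n)$, then compute the closure of $X_0\cap X_1$ in that norm via Proposition~\ref{nil}. The closure computation you give (identifying $X_0\cap X_1=\cm^{u_1}_{p_1}$, sandwiching with Lemma~\ref{gp02}, and the mollifier/ball-splitting argument for $\subseteq$) is a valid and somewhat more explicit variant of the paper's treatment, which instead cites the bounded-domain analogs of Lemmas~\ref{morrey41} and~\ref{morrey43}(iii) for the inclusion into $\accentset{\diamond}{\cm}^u_p$. One minor caveat: your small-ball estimate uses $u<u_1$, which fails in the trivial case $p_0=p_1$; you should separate that case explicitly.

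The one point worth correcting is your first task. You write that ``genuine extension operators for Morrey spaces on $(0,1)^n$ preserving the Morrey norm are not readily available'', and therefore opt to rerun the Calder\'on product / Nilsson argument intrinsically on the cube. In fact the trivial extension by zero, $Ef:=f\chi_{(0,1)^n}$, is a perfectly good coretraction $\cm^{u_i}_{p_i}((0,1)^n)\to\cm^{u_i}_{p_i}(\rn)$: for any ball $B\subset\rn$ one has $|B|^{1/u-1/p}\le|B\cap(0,1)^n|^{1/u-1/p}$ since $1/u-1/p\le 0$, so $\|Ef\|_{\cm^u_p(\rn)}\le\|f\|_{\cm^u_p((0,1)^n)}$. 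Restriction is the retraction (boundedness uses that $(0,1)^n$ is Lipschitz, so $|B(x,r)\cap(0,1)^n|\asymp|B(x,r)|$ for $x\in(0,1)^n$ and $r\le 1$). This is exactly the route the paper takes for \eqref{ws-104}, and it bypasses the need to re-verify the Gagliardo closure step on the bounded domain, which your direct adaptation would otherwise have to address.
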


We point out that, by a proof similar to that of Theorem \ref{gp03},
we see that the conclusion of Theorem \ref{gp03} still holds true,
if $(0,1)^n$ is replaced by a bounded domain
$\Omega$.

The corresponding result for Besov-Morrey spaces on some domains
also holds true; see Appendix, Subsection \ref{domain}, for a definition.
Here we concentrate us on Lipschitz domains $\Omega\subset \rn$.
By a Lipschitz domain, we mean either a special or a bounded Lipschitz
domain. Recall that a \emph{special Lipschitz domain} is an open set $\Omega\subset
\rn$ lying above the graph of a Lipschitz function $w:\ \rr^{n-1}\to\rr$, namely,
$$\Omega:=\{(x',x_n)\in\rn:\ x_n>w(x')\},$$
where $w$ satisfies that, for all $x',\ y'\in \rr^{n-1}$,
$$|w(x')-w(y')|\le A |x'-y'|$$
with a positive constant $A$ independent of $x'$ and $y'$.
A \emph{bounded  Lipschitz domain} is a bounded domain $\Omega\subset \rn$
whose boundary $\partial \Omega$ can be cover by a finite number of open balls $B_k$
such that, for each $k$, after a suitable rotation, $\partial\Omega\cap B_k$
is a part of the graph of a Lipschitz function.

\begin{theorem}\label{gagl5}
Let $\Omega \subset \rn$ be an interval if $n=1$ or a Lipschitz domain if $n\ge 2$.
Assume that  $0 < p_i \le u_i <\infty$, $s_0,\,s_1 \in \rr$ and   $q_i \in (0,\infty)$, $i\in\{0,1\}$.
Let $s:= (1-\Theta)\, s_0 + \Theta\, s_1$, $\frac 1p : =\frac{1-\tz}{p_0}+ \frac \tz{p_1}$ and $\frac 1q : =\frac{1-\tz}{q_0}+ \frac \tz{q_1}$.
If $u_0 p_1 = u_1 p_0$, then
\[
\lf\laz \cn_{u_0,p_0,q_0}^{s_0}(\Omega), \cn_{u_1,p_1,q_1}^{s_1}(\Omega)\r\raz_\tz = \accentset{\diamond}{\cn}^{s}_{u,p,q} (\Omega)
\]
holds true for all $\tz \in (0,1)$.
\end{theorem}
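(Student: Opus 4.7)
The plan is to reduce the theorem to its counterpart on $\rn$ via a retract argument, and then identify the resulting interpolation space on $\rn$ by combining the $\pm$-method with density considerations. First, I would invoke the existence of a universal bounded linear extension operator $E:\,\cn^s_{u,p,q}(\Omega)\to\cn^s_{u,p,q}(\rn)$ and a restriction operator $R:\,\cn^s_{u,p,q}(\rn)\to\cn^s_{u,p,q}(\Omega)$ satisfying $R\circ E=\mathrm{id}$, acting simultaneously on the full range of admissible Besov-Morrey parameters. For intervals in $\rr$ this can be done by reflection, while for Lipschitz domains in higher dimensions one relies on a Rychkov-type construction whose adaptation to Besov-Morrey spaces is available in the literature. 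This makes $\cn^{s_i}_{u_i,p_i,q_i}(\Omega)$ a retract of $\cn^{s_i}_{u_i,p_i,q_i}(\rn)$ for $i\in\{0,1\}$, and by Proposition \ref{complexretract-pg} one has
\[
\lf\laz\cn^{s_0}_{u_0,p_0,q_0}(\Omega),\cn^{s_1}_{u_1,p_1,q_1}(\Omega)\r\raz_\tz
= R\lf(\lf\laz\cn^{s_0}_{u_0,p_0,q_0}(\rn),\cn^{s_1}_{u_1,p_1,q_1}(\rn)\r\raz_\tz\r).
\]

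It therefore suffices to identify the $\rn$-interpolation space. Using Theorem \ref{COMI}(ii) (valid because of the ratio hypothesis $u_0p_1=u_1p_0$) together with Proposition \ref{nil}, i.e., the instance of Proposition \ref{C-CIS}(ii), the Peetre-Gagliardo space on $\rn$ coincides with the closure of $\cn^{s_0}_{u_0,p_0,q_0}(\rn)\cap\cn^{s_1}_{u_1,p_1,q_1}(\rn)$ in $\cn^s_{u,p,q}(\rn)$. One then has to match this closure with $\accentset{\diamond}{\cn}^s_{u,p,q}(\rn)$. One inclusion is immediate: compactly supported $C^\infty$-functions belong to every $\cn^\sigma_{v,r,t}(\rn)$, so they lie in the intersection, and their $\cn^s_{u,p,q}$-closure is trivially contained in $\accentset{\diamond}{\cn}^s_{u,p,q}(\rn)$. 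For the converse, I would take $f\in\accentset{\diamond}{\cn}^s_{u,p,q}(\rn)$, approximate it in $\cn^s_{u,p,q}(\rn)$ by a smooth function $g$ with $D^\alpha g\in\cn^s_{u,p,q}(\rn)$ for all $\alpha$, and then truncate $g$ by multiplication with smooth cut-offs; a pointwise multiplier argument combined with the ratio condition ensures that these truncations lie in both $\cn^{s_0}_{u_0,p_0,q_0}(\rn)$ and $\cn^{s_1}_{u_1,p_1,q_1}(\rn)$ and converge to $g$ in $\cn^s_{u,p,q}(\rn)$.

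Applying $R$ to the resulting $\rn$-identity and using that the restriction of a $C^\infty(\rn)$-function with all derivatives in $\cn^s_{u,p,q}(\rn)$ is a $C^\infty(\Omega)$-function with all derivatives in $\cn^s_{u,p,q}(\Omega)$, together with the converse provided by $E$, yields $R(\accentset{\diamond}{\cn}^s_{u,p,q}(\rn))=\accentset{\diamond}{\cn}^s_{u,p,q}(\Omega)$ and completes the argument. The main obstacle is the identification of the closure of the intersection with $\accentset{\diamond}{\cn}^s_{u,p,q}(\rn)$: one has to handle two different endpoint scales simultaneously, and it is precisely the hypothesis $u_0p_1=u_1p_0$ that allows a single truncation-and-mollification procedure to produce approximants controlled in both endpoint norms, with the interpolated norm converging as required. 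A secondary point is to verify that the Rychkov-type extension operator respects the diamond-smoothness, but this follows from its standard construction via localization and mollification.
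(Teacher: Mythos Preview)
Your reduction to $\rn$ leads you into an open problem rather than out of it. The identity $\laz \cn^{s_0}_{u_0,p_0,q_0}(\rn),\cn^{s_1}_{u_1,p_1,q_1}(\rn)\raz_\tz = \accentset{\diamond}{\cn}^s_{u,p,q}(\rn)$ that you need on $\rn$ is precisely the question \eqref{ws-77}, which the paper leaves unresolved for $p_0\neq p_1$ (Theorem \ref{gagl4} only treats fixed $u,p$). Your truncation argument for the inclusion $\accentset{\diamond}{\cn}^s_{u,p,q}(\rn)\subset\overline{\cn^{s_0}_{u_0,p_0,q_0}(\rn)\cap\cn^{s_1}_{u_1,p_1,q_1}(\rn)}$ fails: if $g$ has all derivatives in $\cn^s_{u,p,q}(\rn)$, there is no reason why $\psi(\cdot/R)\,g\to g$ in $\cn^s_{u,p,q}(\rn)$; that would force $g\in\mathring{\cn}^s_{u,p,q}(\rn)$, but by Lemma \ref{diamond1}(ii) one has $\mathring{\cn}^s_{u,p,q}(\rn)\subsetneq\accentset{\diamond}{\cn}^s_{u,p,q}(\rn)=\cn^s_{u,p,q}(\rn)$ whenever $p<u$. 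The concrete function $f=\sum_{\ell\ge 1}\phi_{0,(2^\ell,0,\ldots,0)}$ from the proof of that lemma lies in $N^s_{u,p,q}(\rn)$ yet cannot be approximated by compactly supported functions. The ratio condition $u_0p_1=u_1p_0$ is irrelevant to this convergence issue, which concerns only the target space.

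The paper circumvents this by computing the closure directly on $\Omega$, never on $\rn$. Retraction is used only to transport the $\pm$-identity $\laz\cdot,\cdot,\tz\raz=\cn^s_{u,p,q}(\Omega)$ from $\rn$ to $\Omega$; then Proposition \ref{nil} gives $\laz\cdot,\cdot\raz_\tz$ on $\Omega$ as the closure of the intersection in $\cn^s_{u,p,q}(\Omega)$. The decisive point is that on $\Omega$ the set $N^s_{u,p,q}(\Omega):=\{f:D^\alpha f\in\cn^s_{u,p,q}(\Omega)\ \text{for all}\ \alpha\}$ is \emph{independent} of $s,u,p,q$: this follows from a lifting identity on $\Omega$ (Proposition \ref{lift}, based on the Rychkov extension commuting with derivatives, Remark \ref{obs}) together with the embedding $\cn^\sigma_{u,p,q}(\rn)\hookrightarrow C_{ub}(\rn)$ for $\sigma>n/p$. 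With $N(\Omega)$ parameter-free, one has $N(\Omega)\subset\cn^{s_0}_{u_0,p_0,q_0}(\Omega)\cap\cn^{s_1}_{u_1,p_1,q_1}(\Omega)$, and conversely every $f$ in the intersection is approximated by $S_Nf\in N(\Omega)$ in both endpoint norms (here $q_i<\infty$ is essential), hence in $\cn^s_{u,p,q}(\Omega)$; the closure is therefore $\accentset{\diamond}{\cn}^s_{u,p,q}(\Omega)$. This parameter-independence breaks down on $\rn$, which is exactly why your route stalls.
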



\subsection{The complex method of interpolation for quasi-Banach spaces}
\label{inter1c}


The complex method in case of interpolation couples of Banach spaces is a
well-studied subject;
see, e.\,g., \cite{ca64,BL,t78,km98,kmm}.
Here we are interested in the complex method in case of interpolation couples
of certain quasi-Banach spaces.


\subsubsection{Basics}
\label{inter1ca}


We begin with some basic notation taken  from \cite{km98,kmm,MM}.
We always assume that the quasi-Banach space $X$ is equipped with a continuous quasi-norm
$\| \cdot  \|_X$ (this is always possible).

\begin{definition}
 \label{Aconvex}
A quasi-Banach space $(X,\|\cdot\|_{X})$ is said to be {\it analytically convex} if there
is a positive constant $C$ such that, for every polynomial $P:\ \cc \to X$,
$$\|P(0)\|_{X}\le C\max_{|z|=1}\|P(z)\|_{X}.$$
\end{definition}

In the framework of  analytically convex quasi-Banach spaces, one of the key properties of Banach spaces,
the maximum modulus principle, still  holds true.
To recall this, let
\[
S_0:= \{z \in \cc : \: 0<\Re e\, z<1\}\qquad \mbox{and} \qquad
S:=\{z\in\cc :\: 0\le \Re e\, z\le 1\}\, ,
\]
here and hereafter, $\Re e\, z$ for any $z\in\cc$ denotes the \emph{real part} of $z$.
The following result can be found in \cite[Theorem 7.4]{kmm}.

\begin{proposition}\label{ac}
For a quasi-Banach space $(X,\|\cdot\|_{X})$, the following
conditions are equivalent:

{\rm(i)} $X$ is analytically convex.

{\rm(ii)} There exists a positive constant  $C$ such that
$$
\max\{\|f(z)\|_{X}:\ z\in S_0\}\: \le\:  C\, \max\{\|f(z)\|_{X}:\ z\in S\setminus S_0\}
$$
for any function $f:S\to X$ which is analytic on $S_0$, continuous and bounded on
$S$.
\end{proposition}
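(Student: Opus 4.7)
The plan is to prove each implication separately, using conformal mappings between the unit disk and the strip. For (ii) $\Rightarrow$ (i), I would start from a polynomial $P:\cc\to X$ and fix a conformal map $\phi:\bd\to S_0$ (where $\bd$ is the open unit disk) extending to a homeomorphism of $\bar\bd\setminus\{p_1,p_2\}$ onto $S$, with $p_1,p_2\in\partial\bd$ corresponding to the two ends of the strip. The composite $f(z):=P(\phi^{-1}(z))$ is then continuous on $S$, analytic on $S_0$, and bounded (since $P$ is continuous on the compact set $\bar\bd$). Applying (ii) to $f$ and observing that $P(0)=f(\phi(0))$ yields
\[
\|P(0)\|_X\le\sup_{z\in S_0}\|f(z)\|_X\le C\sup_{z\in S\setminus S_0}\|f(z)\|_X=C\max_{|w|=1}\|P(w)\|_X,
\]
which is (i).

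For (i) $\Rightarrow$ (ii), I would proceed in three steps. First, if $g$ is analytic on an open neighborhood of $\bar\bd$, its Taylor polynomials converge to $g$ uniformly on $\bar\bd$; applying (i) to the truncations and using continuity of $\|\cdot\|_X$ produces $\|g(0)\|_X\le C\sup_{|w|=1}\|g(w)\|_X$, and precomposing with M\"obius automorphisms of $\bd$ extends this bound to any $z_0\in\bd$. Second, for a continuous $g:\bar\bd\to X$ analytic on $\bd$, I would apply the previous step to the dilations $g_r(w):=g(rw)$ and let $r\to 1^-$, invoking the uniform continuity of $g$ on $\bar\bd$; via a conformal change of variables, this yields
\[
\sup_{U}\|g\|_X\le C\sup_{\partial U}\|g\|_X
\]
on every bounded simply connected domain $U$ with Jordan boundary (for $g$ continuous on $\bar U$ and analytic on $U$), with the same constant $C$ as in (i).

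Third, I would establish (ii) by a Phragm\'en--Lindel\"of argument. Set $M':=\sup_{z\in S\setminus S_0}\|f(z)\|_X$ and, using the boundedness of $f$ on $S$, for $\epsilon>0$ consider $f_\epsilon(z):=e^{\epsilon z^2}f(z)$. Since $|e^{\epsilon z^2}|=e^{\epsilon(x^2-y^2)}$ for $z=x+iy$, one has $\|f_\epsilon(z)\|_X\to 0$ as $|\Im m\,z|\to\infty$ uniformly in $x\in[0,1]$. Choosing $R>0$ so that $\|f_\epsilon(z)\|_X\le M'$ whenever $|\Im m\,z|\ge R$ and applying Step~2 to $f_\epsilon$ on the rectangle $U_R:=\{z\in\cc:0<\Re e\,z<1,\,|\Im m\,z|<R\}$ gives
\[
\sup_{z\in U_R}\|f_\epsilon(z)\|_X\le C\sup_{z\in\partial U_R}\|f_\epsilon(z)\|_X\le Ce^{\epsilon}M'.
\]
For any fixed $z_0\in S_0$, taking $R$ large enough that $z_0\in U_R$ produces $\|f(z_0)\|_X=|e^{-\epsilon z_0^2}|\,\|f_\epsilon(z_0)\|_X\le e^{\epsilon|z_0|^2}\,Ce^\epsilon M'$, and letting $\epsilon\to 0^+$ yields $\|f(z_0)\|_X\le CM'$, which is (ii).

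The main obstacle is ensuring that the maximum principle from Step~2 carries a \emph{single} constant $C$ valid for all bounded Jordan domains; only this scale-invariance lets the Phragm\'en--Lindel\"of step deliver a bound independent of the auxiliary parameter $R$. The scalar multiplier $e^{\epsilon z^2}$ interacts harmlessly with the quasi-norm via its homogeneity, so the quasi-Banach (rather than Banach) setting introduces no additional difficulty.
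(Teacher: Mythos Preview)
The paper does not give its own proof of Proposition~\ref{ac}; it simply quotes the result from \cite[Theorem~7.4]{kmm}. Your outline follows the natural route and is entirely correct in the Banach case.

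The direction (ii)$\Rightarrow$(i) via a conformal map from the disk to the strip is fine as written; just note that $P\circ\phi^{-1}$ is analytic on $S_0$ because $P$ is a finite sum of scalar holomorphic functions times fixed vectors in $X$.

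For (i)$\Rightarrow$(ii), your three-step architecture (polynomials $\to$ closed disk $\to$ Jordan domains $\to$ Phragm\'en--Lindel\"of on rectangles) is the right shape, and you correctly isolate the essential point that the constant $C$ from (i) survives M\"obius precomposition and conformal change unchanged, so it is uniform over all rectangles $U_R$ and the Phragm\'en--Lindel\"of step goes through. There is, however, one genuine gap at the opening of Step~1. You assert that if $g$ is analytic on a neighbourhood of $\bar\bd$ then its Taylor polynomials at $0$ converge to $g$ uniformly on $\bar\bd$. In a Banach space this is the Cauchy estimate. In a merely $p$-normed space with $p<1$ the Cauchy integral need not exist (Riemann sums of a continuous $X$-valued function need not form a Cauchy sequence), and a function that is analytic in the local-power-series sense used in the paper may have a Taylor series at $0$ whose radius of convergence is strictly smaller than its domain of analyticity. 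This pathology is excluded precisely by hypothesis (i), but establishing that is the substance of Kalton's characterisation of analytic convexity via an equivalent quasi-norm for which $z\mapsto\log\|f(z)\|$ is subharmonic for every analytic $f$ (see \cite{k86,k86b}, which the paper already invokes). Once that equivalent quasi-norm is in hand, the maximum principle on any domain---hence (ii)---follows directly from subharmonicity, and one can in fact bypass your Steps~1--2 altogether. So your scheme is morally the right one, but the bridge from polynomials to analytic functions in the quasi-Banach setting is deeper than a one-line appeal to Taylor approximation.
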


Here $f$ being \emph{analytic} in the open set $U$ means that,
for given $z_0 \in U$, there exists some positive number $\eta$ such that there is a power series
expansion
\[
f(z)= \sum_{j=0}^\infty x_j \, (z-z_0)^j\, , \qquad x_j\in X\, ,\quad
\mbox{uniformly convergent for} \quad |z-z_0| <\eta\, .
\]
The theory of analytic functions with values in quasi-Banach spaces has been developed in
\cite{tu,k86,k86b}.
In \cite{k86}, one can find the following result.

\begin{proposition}\label{acc}
Let $U$ be an open subset of the complex plane and let $X$ be a quasi-Banach space.
Let $f_n: ~U \to X$ be a sequence of analytic functions.
If $\lim_{n\to \infty} f_n (z)= f(z)$ uniformly on compacta, then $f$ is also analytic.
\end{proposition}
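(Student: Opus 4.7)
The plan is to transfer the local power-series representation from the $f_n$ to $f$ by way of a Cauchy integral formula. The first step is to strip away the inconvenient failure of the triangle inequality: by the Aoki--Rolewicz theorem, there is an equivalent quasi-norm on $X$ which is a $p$-norm for some $p\in(0,1]$, i.e.\ satisfies $\|x+y\|^p\le\|x\|^p+\|y\|^p$. Both the notion of analyticity (a local uniformly convergent power series expansion in $X$) and the hypothesis of uniform convergence on compacta are invariant under equivalent quasi-norms, so one may henceforth assume that $X$ itself is $p$-normed.

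Next, I would develop just enough $X$-valued contour integration to write down a Cauchy formula. For a continuous $X$-valued function $g$ on a compact rectifiable curve $\gamma$, the $p$-triangle inequality together with uniform continuity shows that Riemann sums converge in $X$ to a well-defined integral, which is complex-linear and obeys a length estimate of the form $\|\int_\gamma g\|^p\lesssim L(\gamma)^p\sup_\gamma\|g\|^p$. Fix now $z_0\in U$ and $r>0$ with $\overline{B(z_0,2r)}\subset U$, and let $\Gamma:=\{w\in\cc:|w-z_0|=\tfrac{3r}{2}\}$. Because each $f_n$ is analytic on $U$ in the sense stated just before the proposition, a standard contour-deformation argument (applied patch by patch to the local power series of $f_n$, using only the facts just recorded about $X$-valued integrals) yields
\[
f_n(z)=\frac{1}{2\pi i}\oint_\Gamma\frac{f_n(w)}{w-z}\,dw,\qquad |z-z_0|<\tfrac{3r}{2}.
\]

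Now I pass to the limit. Since $f_n\to f$ uniformly on the compact set $\Gamma$, and $1/(w-z)$ is uniformly bounded on $\Gamma$ when $|z-z_0|\le r$, the integrands converge uniformly in the $p$-norm; the length estimate above then yields
\[
f(z)=\frac{1}{2\pi i}\oint_\Gamma\frac{f(w)}{w-z}\,dw,\qquad |z-z_0|<r.
\]
Expanding $1/(w-z)=\sum_{k=0}^\infty(z-z_0)^k(w-z_0)^{-k-1}$ uniformly in $w\in\Gamma$ for $|z-z_0|\le r'<r$, and again invoking uniform convergence of Riemann sums in the $p$-norm to interchange $\oint_\Gamma$ with $\sum_k$, one obtains a power-series representation $f(z)=\sum_{k=0}^\infty a_k(z-z_0)^k$ with $a_k=\frac{1}{2\pi i}\oint_\Gamma f(w)(w-z_0)^{-k-1}\,dw\in X$, convergent in $X$ uniformly on $|z-z_0|\le r'$. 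Since $z_0\in U$ is arbitrary, $f$ is analytic on $U$.

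The principal obstacle is the very first technical point: building a usable $X$-valued integration theory on which the length estimate and the Cauchy integral formula for $X$-valued analytic functions rest. In a genuine Banach space this is the Bochner integral, which is not available here; the $p$-norm reduction provided by Aoki--Rolewicz is what makes the Riemann-sum construction go through, and this device underlies the treatments in \cite{tu,k86,k86b} cited by the authors. Once that foundation is in place, the rest of the argument is the classical complex-analytic template transplanted verbatim.
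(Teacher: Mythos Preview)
The paper does not supply its own proof of this proposition; it simply records the result as taken from Kalton \cite{k86}. So there is no in-house argument to compare against, only the question of whether your sketch stands on its own.

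Your overall plan---reduce to a $p$-norm via Aoki--Rolewicz, set up an $X$-valued contour integral, establish a Cauchy formula for each $f_n$, pass to the limit, and read off a power series---is the right shape and is close in spirit to what one finds in \cite{tu,k86,k86b}. But the step you treat as routine is exactly where the argument breaks: it is \emph{false} that Riemann sums of a merely continuous $X$-valued function converge in a $p$-normed space when $p<1$. If $g:[0,1]\to X$ is continuous with modulus of continuity $\omega$, then comparing two Riemann sums over a common refinement with $N$ cells of size $\sim 1/N$ gives, via the $p$-triangle inequality, only the bound $\omega(1/N)\cdot N^{1/p-1}$; since $1/p-1>0$ this need not tend to zero. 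This failure is precisely what makes integration in non-locally convex spaces delicate and is the reason separate treatments such as \cite{tu,k86} exist. Your closing paragraph asserts that ``the $p$-norm reduction\ldots is what makes the Riemann-sum construction go through''---it does not.

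The repair is not to abandon the strategy but to exploit that the integrands you actually need, namely $w\mapsto f_n(w)/(w-z)$ along a circle away from $z$, are themselves analytic (hence $C^\infty$) in the curve parameter. For such integrands one can either (i) define the contour integral via local antiderivatives of the power series and patch along the contour, bypassing Riemann sums entirely, or (ii) show that Riemann sums do converge once the integrand has H\"older regularity of order strictly greater than $1/p-1$, which analyticity certainly provides. The same issue recurs in your contour-deformation step (a Goursat-type subdivision also sums many small contributions), and the same fix applies. With this amendment your outline can be completed; without it, the ``length estimate'' you invoke is simply unavailable.
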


For quasi-Banach lattices, one knows a simple criterion for being
analytically convex; see \cite{MM} and \cite[Theorem~7.8]{kmm}.

\begin{proposition}\label{lattice}
For a quasi-Banach lattice $(X,\|\cdot\|_{X})$ of functions, the following
conditions are equivalent:

{\rm(i)} $X$ is analytically convex.

{\rm(ii)} $X$ is lattice $r$-convex for some $r\in(0,\fz)$, i.\,e.,
$$
\lf\| \lf(\sum_{j=1}^m |f_j|^r\r)^{1/r}\, \r\|_X  \le
 \lf(\sum_{j=1}^m \|\, f_j\, \|^r_X\r)^{1/r}
$$
for any finite family $\{f_j\}_{j=1}^m$ of functions from $X$.
\end{proposition}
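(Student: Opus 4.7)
The plan is to treat the two implications separately, the forward being routine and the converse constituting the bulk of the work.

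For (ii) $\Rightarrow$ (i), the first step is to apply Cauchy's mean-value identity to an $X$-valued polynomial $P(z)=\sum_{k=0}^{N} x_k z^k$, giving $P(0)=\frac{1}{2\pi}\int_0^{2\pi}P(e^{i\theta})\,d\theta$. Because $X$ is a function lattice, this identity holds pointwise $\mu$-a.\,e.; applying Jensen's inequality for the probability measure $d\theta/(2\pi)$ (using, when $r\le 1$, the subadditivity of $t\mapsto t^{r}$ together with convexity after raising to the power $r$) yields
$$
|P(0)(\omega)| \le \lf(\frac{1}{2\pi}\int_0^{2\pi}|P(e^{i\theta})(\omega)|^r\,d\theta\r)^{1/r}
$$
$\mu$-a.\,e. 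I would then replace the integral by the Riemann sum $M^{-1}\sum_{k=1}^{M}|P(e^{2\pi ik/M})(\omega)|^r$ and apply the $r$-convexity inequality of (ii) to $f_k := M^{-1/r}P(e^{2\pi ik/M})$, producing the uniform bound
$$
\lf\|\lf(\frac{1}{M}\sum_{k=1}^M |P(e^{2\pi ik/M})|^r\r)^{1/r}\r\|_X \le \max_{|z|=1}\|P(z)\|_X;
$$
the Fatou property of the lattice quasi-norm then delivers the analytic convexity inequality in the limit $M\to\infty$.

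For (i) $\Rightarrow$ (ii), my plan is to reduce the problem to the normability of a convexification. Introduce $X^{(1/r)}:=\{f\in{\mathfrak M}:|f|^{1/r}\in X\}$ with $\|f\|_{X^{(1/r)}}:=\||f|^{1/r}\|_X^{\,r}$, and observe that the inequality in (ii) applied to $\{f_j\}_{j=1}^m$ is equivalent to the ordinary triangle inequality in $X^{(1/r)}$ applied to $\{g_j\}_{j=1}^m:=\{|f_j|^r\}_{j=1}^m$. Hence it suffices to produce some $r\in(0,1]$ for which the quasi-Banach lattice $X^{(1/r)}$ is normable. To extract such an exponent from analytic convexity, I would apply Proposition \ref{ac} to a tailored $X$-valued analytic function: given $f_1,\dots,f_m\in X$, form $F(z):=\sum_{j=1}^{m}\varphi_j(z)\,\sigma_j\,|f_j|$ on the strip $S_0$, where $\sigma_j$ is the pointwise signum of $f_j$ and the scalar analytic weights $\varphi_j$ are chosen so that $F$ at an interior point dominates $\lf(\sum_j |f_j|^r\r)^{1/r}$ pointwise, while the boundary values $F(j+it)$ are controlled in $X$ by $\max_j\|f_j\|_X$.

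The main obstacle is (i) $\Rightarrow$ (ii): turning the purely qualitative hypothesis of analytic convexity into an effective exponent $r>0$ carrying a usable $r$-convexity inequality. This is precisely the content of Kalton's convexification theorem for quasi-Banach lattices, \cite[Theorem 7.8]{kmm}, and the cleanest route is simply to invoke it, as is done in \cite{MM}; reproducing the argument from scratch would require the delicate envelope/Nikishin-type factorization which forms the main technical component of Kalton's work, and the extracted $r$ may have to be very small depending on the analytic convexity constant $C$ in Definition \ref{Aconvex}.
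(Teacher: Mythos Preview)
The paper does not prove this proposition; it merely quotes it from \cite{MM} and \cite[Theorem~7.8]{kmm}. Your proposal therefore already goes beyond what the paper offers, and your identification of (i)~$\Rightarrow$~(ii) as the substantial direction, to be handled by invoking Kalton's convexification theorem, matches exactly what the paper does.

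Two remarks on your sketch of (ii)~$\Rightarrow$~(i). First, for $r<1$ the pointwise inequality
\[
|P(0)(\omega)|\le\lf(\tfrac{1}{2\pi}\int_0^{2\pi}|P(e^{i\theta})(\omega)|^r\,d\theta\r)^{1/r}
\]
does \emph{not} follow from Jensen's inequality (on a probability space the $L_r$ mean with $r<1$ is \emph{smaller} than the $L_1$ mean, so the triangle-type estimate goes the wrong way). The correct reason is that $|P(\,\cdot\,)(\omega)|^r$ is subharmonic on the disk for every $r>0$, since $\log|P|$ is subharmonic and $t\mapsto e^{rt}$ is convex increasing; the sub-mean-value property then gives the desired pointwise bound. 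Second, passing from the integral to a Riemann sum and then invoking ``the Fatou property of the lattice quasi-norm'' is not available in a general quasi-Banach lattice. A cleaner route is to note that for a polynomial $P$ of degree $N$ one has exactly $P(0)=M^{-1}\sum_{k=1}^M P(e^{2\pi ik/M})$ whenever $M>N$, so for $r\ge 1$ the discrete Jensen inequality plus lattice $r$-convexity gives the conclusion with no limiting step; for $r<1$ one can first renorm so that the quasi-norm is $r$-subadditive and then run the same argument, or simply appeal to the cited references, which is all the paper itself does.
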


Based on the notion of the
analytical convexity, the following definition makes sense.

\begin{definition}\label{dci}
Let $(X_0,X_1)$ be an interpolation couple of quasi-Banach spaces,
i.\,e., $X_0$ and $X_1$ are continuously embedded into a larger topological vector space $Y$.
In addition, let $X_0 + X_1$ be analytically convex. Let $\Theta\in(0,1)$.

{\rm (i)} Let ${\mathcal A}:=\ca(X_0,X_1)$ be the set of all bounded and analytic functions $f: S_0 \to X_0 + X_1$, which
extend continuously to the closure $S$ of the strip $S_0$
such that  the traces $t \mapsto f(j+it)$ are bounded continuous
functions into $X_j$, $j\in\{0,1\}$. We endow $\ca$ with the quasi-norm
\[
\|\, f \, \|_{\ca} := \max \lf\{\sup_{t\in \rr}\, \|\, f(it)\,\|_{X_0},\
\,  \sup_{t\in \rr}\, \|\, f(1+it)\,\|_{X_1}\r\}\, .
\]
The \emph{complex interpolation space}
$[X_0,X_1]_\Theta$ is defined as the set of
all $x \in \ca (\Theta):= \{f(\Theta): \ f \in \ca \}$ and,
for all $x \in \ca (\Theta)$, let
\[
\|\, x \, \|_{[X_0,X_1]_\Theta}:=
\inf \Big\{\|\, f\, \|_{\ca}: \: f\in\ca,\ f(\Theta) = x\Big\}\, .
\]

{\rm (ii)} Let $\ca_0:=\ca_0(X_0,X_1)$ be the
closure of all functions $f\in \ca$ such that
$f(z)\in X_0\cap X_1$ for all $z\in S_0$. Then the \emph{inner complex interpolation space}
$[X_0,X_1]_\Theta^i$ is defined in the same way as $[X_0,X_1]_\Theta$ with $\ca$
replaced by $\ca_0$.
\end{definition}

\begin{remark}\label{r-com}
(i) There are different definitions of the complex
method in case of quasi-Banach spaces in the literature,
e.\,g., in Kalton et al. \cite{kmm},
 the condition
 \[
 X_0 \cap X_1 \qquad \mbox{is dense in} \quad X_j\, , \quad j\in\{0,1\},
 \]
 is added to the above restrictions.
 We found this condition inconvenient in the cases we want to apply the complex method
 to smoothness spaces built on Morrey spaces.
 So we have tried to avoid it.

(ii) Any Banach space is analytically convex. Hence, if
$(X_0,X_1)$ is an interpolation couple of Banach spaces,
the interpolation space $[X_0,X_1]_\Theta$ for $\tz\in(0,1)$
in Definition \ref{dci} reduces to the standard definition of
$[X_0,X_1]_\Theta$; see Calder{\'o}n \cite{ca64}.
This requires an additional comment, for which we follow \cite[p.~49]{lun}.
Calder{\'o}n \cite{ca64} (see also \cite[4.1]{BL}) in addition assumed
\[
\lim_{|t| \to \infty} f(it) = \lim_{|t| \to \infty} f(1+it)=0 \, .
\]
We denote the subclass of analytic functions $f\in \ca(X_0,X_1)$ with this additional property by
$\wz\ca(X_0,X_1)$.
Let $f \in \ca (X_0,X_1)$ and $\tz\in(0,1)$.
Then $f_\delta (z) := e^{\delta (z-\Theta)^2}\, f(z)$ for all $z\in S_0$ belongs to
$\ca (X_0,X_1)$ as well and
\[
\|\, f_\delta \, \|_{\ca} \le  \max \Big\{e^{\delta \, \Theta^2},\
e^{\delta \, (1-\Theta)^2}\Big\}\, \|\, f \, \|_{\ca} \, .
\]
Furthermore, letting $\delta \to 0$, we obtain
\[
\inf_{f \in \ca (X_0,X_1), \, f(\Theta) = x} \|\, f \, \|_{\ca}  =
\inf_{f \in \wz\ca (X_0,X_1), \, f(\Theta) = x} \|\, f \, \|_{\ca}.
\]
Hence, restricting the set of admissible functions $f$ to the subset $\wz\ca (X_0,X_1)$
is  changing neither  the space $[X_0,X_1]_\Theta$ nor the quasi-norm.

(iii) Let $(X_0,X_1)$ be an interpolation couple of Banach spaces. Then it is well known that $$[X_0,X_1]_\Theta=[X_0,X_1]_\Theta^i;$$
see, e.\,g., \cite[1.9.2]{t78}.
However, it is unclear whether this is still
true for general quasi-Banach cases or not.
\end{remark}

The next three properties of $[X_0,X_1]_\Theta$ are essentially taken from
\cite{km98,kmm}.

\begin{proposition}\label{complexfinal}
Let $(X_0,X_1)$ be an interpolation couple of quasi-Banach spaces.

{\rm (i)} Then
$ \ca (X_0,X_1)$ is a quasi-Banach space  continuously embedded into $C_b (S,X_0 + X_1)$,
the set of all bounded continuous functions from $S$ to $X_0+X_1$.

{\rm (ii)} Also $[X_0,X_1]_\Theta$ is a quasi-Banach space.
\end{proposition}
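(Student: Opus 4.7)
The plan is to prove (i) first by establishing the continuous embedding $\ca(X_0,X_1) \hookrightarrow C_b(S, X_0+X_1)$, which then reduces completeness of $\ca$ to completeness of $C_b(S, X_0+X_1)$ (true because $X_0+X_1$ is a quasi-Banach space) plus a check that analyticity and boundary behavior are preserved under uniform limits. Part (ii) should then follow by realizing $[X_0,X_1]_\Theta$ as a quotient of $\ca$ by the kernel of the continuous evaluation map at $\Theta$.

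For the embedding in (i), let $f \in \ca(X_0,X_1)$. Since $X_j \hookrightarrow X_0+X_1$ continuously for $j \in \{0,1\}$, the boundary bounds give
\[
\sup_{t\in\rr}\|f(it)\|_{X_0+X_1} \ls \|f\|_{\ca}, \qquad \sup_{t\in\rr}\|f(1+it)\|_{X_0+X_1} \ls \|f\|_{\ca}.
\]
Because $X_0+X_1$ is analytically convex by the hypothesis in Definition \ref{dci}, Proposition \ref{ac}(ii) applied to the analytic function $f: S_0 \to X_0+X_1$ (continuous and bounded on $S$) yields
\[
\sup_{z\in S}\|f(z)\|_{X_0+X_1} \le C \max\lf\{\sup_{t}\|f(it)\|_{X_0+X_1},\, \sup_{t}\|f(1+it)\|_{X_0+X_1}\r\} \ls \|f\|_{\ca},
\]
which is the desired embedding. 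Now take a Cauchy sequence $\{f_n\}_{n\in\nn}$ in $\ca$. By the embedding, $\{f_n\}$ is uniformly Cauchy on $S$ with values in the complete space $X_0+X_1$, so $f_n \to f$ uniformly on $S$ with $f \in C_b(S, X_0+X_1)$. Proposition \ref{acc} applied to the restrictions $f_n|_{S_0}$, which converge uniformly on compacta, shows that $f$ is analytic on $S_0$. Moreover, $\{f_n(it)\}_{n\in\nn}$ is uniformly Cauchy in $X_0$ (directly from the definition of $\|\cdot\|_{\ca}$), so it converges uniformly to some continuous $g_0:\rr\to X_0$; since $X_0 \hookrightarrow X_0+X_1$ injectively (the embedding into the common $Y$ is injective), $g_0(t) = f(it)$, and the $X_0$-trace of $f$ is bounded continuous. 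Symmetrically for the $X_1$-trace. Hence $f \in \ca$ and $\|f_n - f\|_{\ca} \to 0$.

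For (ii), consider the evaluation $E_\Theta: \ca(X_0,X_1) \to X_0+X_1$, $f \mapsto f(\Theta)$. Part (i) shows $E_\Theta$ is continuous, so $N:= \ker E_\Theta = \{f \in \ca: f(\Theta)=0\}$ is a closed subspace of the quasi-Banach space $\ca$. By definition, $[X_0,X_1]_\Theta = E_\Theta(\ca)$ equipped with the quotient quasi-norm, so $[X_0,X_1]_\Theta \cong \ca/N$ isometrically. It is standard that the quotient of a quasi-Banach space by a closed subspace is again a quasi-Banach space; in the quasi-Banach setting one invokes the Aoki--Rolewicz theorem to replace $\|\cdot\|_{\ca}$ by an equivalent $r$-norm for some $r \in (0,1]$, and then verifies completeness of the quotient in the usual way via absolutely $r$-convergent series.

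The main obstacle I expect is the careful identification of the boundary traces of the limit function $f$ with the uniform limits of the boundary traces $f_n(it)$ and $f_n(1+it)$: one must combine two distinct convergence notions (uniform on $\rr$ in $X_j$ versus uniform on $S$ in $X_0+X_1$) and use injectivity of $X_j \hookrightarrow X_0+X_1$ to show that they produce the same function. The rest is either standard Banach-space-style completeness reasoning or a direct appeal to the quoted Propositions \ref{ac} and \ref{acc}.
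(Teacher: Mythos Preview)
Your proof is correct and follows essentially the same route as the paper: both use Proposition~\ref{ac} to get the embedding into $C_b(S,X_0+X_1)$, Proposition~\ref{acc} to preserve analyticity under uniform limits, and then realize $[X_0,X_1]_\Theta$ as the quotient $\ca/\cn_\Theta$ with $\cn_\Theta=\{f\in\ca:f(\Theta)=0\}$ closed. Your treatment of the boundary-trace identification (the point you flag as the main obstacle) is in fact more explicit than the paper's, which simply asserts that the limit boundary functions are bounded and continuous in $X_j$.
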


Recall that an interpolation functor is said
to \emph{be of exponent $\Theta \in (0,1)$}
if there exists a positive constant $C$ such that, for all admissible  interpolation couples of quasi-Banach spaces,
$(X_0,X_1)$ and $(Y_0,Y_1)$, the inequality
\[
\| \, T\, \|_{F(X_0,X_1) \to F(Y_0,Y_1)} \le C\, \| \, T\, \|_{X_0 \to Y_0}^{1-\Theta} \, \| \, T\, \|_{X_1 \to Y_1}^\Theta
\]
holds true for all linear operators $T \in \cl (X_0,Y_0) \cap \cl (X_1,Y_1)$.

\begin{proposition}\label{complexinterpol}
Let $(X_0,X_1)$ and $(Y_0,Y_1)$ be  interpolation couples of quasi-Banach spaces.
Assume that  $X_0+X_1$ and $Y_0 + Y_1$ are analytically convex.
Let $T: ~X_j \to Y_j$, $j\in\{0,1\}$, be a linear and bounded operator.
Then, for each $\Theta \in (0,1)$,
$T$ is a linear and  bounded operator which maps $[X_0,X_1]_\Theta$
into $[Y_0,Y_1]_\Theta$. In addition,
\[
\|\, T \|_{[X_0,X_1]_\Theta\to [Y_0,Y_1]_\Theta}
\le \|\, T \|_{X_0 \to Y_0}^\Theta
\|\, T \|_{X_1 \to  Y_1}^{1-\Theta} \, ,
\]
i.\,e., the complex method represents an exact interpolation functor of exponent $\Theta$
also in the framework of quasi-Banach spaces.
\end{proposition}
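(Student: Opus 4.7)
The plan is to adapt the classical Calder\'on argument from the Banach setting to the quasi-Banach framework at hand. Fix $x\in[X_0,X_1]_\Theta$ and $\varepsilon\in(0,\fz)$, and choose $f\in\ca(X_0,X_1)$ with $f(\Theta)=x$ and $\|f\|_{\ca}\le(1+\varepsilon)\,\|x\|_{[X_0,X_1]_\Theta}$. Writing $M_j:=\|T\|_{X_j\to Y_j}$ for $j\in\{0,1\}$ and assuming for the moment that $M_0,M_1\in(0,\fz)$, I would introduce the balancing function
\[
g(z):=\lf(\frac{M_0}{M_1}\r)^{z-\Theta}\,(Tf)(z),\qquad z\in S,
\]
whose scalar factor is designed precisely so that the two suprema entering $\|g\|_{\ca(Y_0,Y_1)}$ acquire the common bound $M_0^{1-\Theta}M_1^{\Theta}\,\|f\|_{\ca(X_0,X_1)}$.

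The heart of the argument is to show that $g\in\ca(Y_0,Y_1)$. Since $T\in\cl(X_0,Y_0)\cap\cl(X_1,Y_1)$, it is continuous as a map $X_0+X_1\to Y_0+Y_1$, so $Tf$ is bounded and continuous on $S$, and the boundary traces $t\mapsto(Tf)(it)$ and $t\mapsto(Tf)(1+it)$ are bounded continuous functions into $Y_0$ and $Y_1$, respectively. To verify analyticity of $Tf$ on $S_0$, I would use that $f$ admits a locally convergent power series expansion around every point $z_0\in S_0$ with coefficients in $X_0+X_1$; applying the continuous linear map $T$ term-by-term yields a series in $Y_0+Y_1$ whose partial sums converge uniformly on compacta to $Tf$, so that Proposition \ref{acc} identifies $Tf$ as analytic. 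Multiplication by the scalar analytic factor $(M_0/M_1)^{z-\Theta}$ preserves all of these properties, whence $g\in\ca(Y_0,Y_1)$.

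Combining $\|(Tf)(it)\|_{Y_0}\le M_0\,\|f(it)\|_{X_0}$ with the explicit modulus of the scalar factor on the line $\Re e\,z=0$ yields
\[
\sup_{t\in\rr}\|g(it)\|_{Y_0}\le M_0^{1-\Theta}M_1^{\Theta}\,\sup_{t\in\rr}\|f(it)\|_{X_0},
\]
and the same estimate with $X_0,Y_0$ replaced by $X_1,Y_1$ holds on the line $\Re e\,z=1$. Since $g(\Theta)=Tx$, these bounds give
\[
\|Tx\|_{[Y_0,Y_1]_\Theta}\le\|g\|_{\ca(Y_0,Y_1)}\le M_0^{1-\Theta}M_1^{\Theta}\,(1+\varepsilon)\,\|x\|_{[X_0,X_1]_\Theta},
\]
and letting $\varepsilon\downarrow 0$ delivers the claim. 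The degenerate cases $M_0=0$ or $M_1=0$ are handled by replacing $M_0/M_1$ in the definition of $g$ by a free parameter $\lambda\in(0,\fz)$ and then letting $\lambda\downarrow 0$ or $\lambda\to\fz$; since $\Theta\in(0,1)$, the corresponding side of the estimate still vanishes and forces $Tx=0$ in $[Y_0,Y_1]_\Theta$.

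The main obstacle I foresee is confirming that $Tf$ is genuinely analytic on $S_0$ as a function into the quasi-Banach space $Y_0+Y_1$. In the Banach setting this is essentially automatic, but in the quasi-Banach framework one must rely on the power-series characterization of analyticity recalled just before Proposition \ref{acc} and on the preservation of uniform-on-compacta convergence by continuous linear maps. Once this is in place, the remainder of the proof is a direct transcription of Calder\'on's scheme, with the analytic convexity hypothesis on $X_0+X_1$ and $Y_0+Y_1$ entering only through the well-definedness of $[X_0,X_1]_\Theta$ and $[Y_0,Y_1]_\Theta$ as quasi-Banach spaces via Proposition \ref{ac} and Proposition \ref{complexfinal}.
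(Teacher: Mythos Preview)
Your proposal is correct and follows essentially the same route as the paper: both define the balancing function $g(z)=(M_0/M_1)^{z-\Theta}\,Tf(z)$, estimate on the two boundary lines, and take the infimum, with the degenerate cases handled by introducing an auxiliary parameter. If anything, you are more careful than the paper in explicitly verifying the analyticity of $Tf$ on $S_0$ via the power-series characterization and Proposition~\ref{acc}; the paper's proof simply asserts ``This implies that $g$ belongs to $\ca(Y_0,Y_1)$'' after the boundary estimates.
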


We also have the following conclusion on the retraction and the coretraction.

\begin{proposition}\label{complexretract}
Let $(X_0,X_1)$ and $(Y_0,Y_1)$ be two interpolation couples of quasi-Banach spaces
such that $Y_j$ is a retract of $X_j$, $j\in\{0,1\}$.
Assume that  $X_0+X_1$ and $Y_0 + Y_1$ are analytically convex.
Then, for each $\Theta \in (0,1)$,
\[
[Y_0,Y_1]_\Theta = R ([X_0, X_1]_\Theta)\, .
\]
\end{proposition}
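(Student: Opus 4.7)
The plan is to derive this as a direct consequence of Proposition \ref{complexinterpol} together with the retract structure. Recall that $Y_j$ being a retract of $X_j$ means there exist bounded linear operators $E: Y_j \to X_j$ and $R: X_j \to Y_j$ such that $R \circ E = I_{Y_j}$, the identity on $Y_j$, for $j\in\{0,1\}$.

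First I would establish the inclusion $R([X_0,X_1]_\Theta) \hookrightarrow [Y_0,Y_1]_\Theta$. Since $R \in \cl(X_0,Y_0)\cap \cl(X_1,Y_1)$ and both $X_0+X_1$ and $Y_0+Y_1$ are analytically convex, Proposition \ref{complexinterpol} yields that $R$ maps $[X_0,X_1]_\Theta$ boundedly into $[Y_0,Y_1]_\Theta$ with
\[
\|Rx\|_{[Y_0,Y_1]_\Theta}\le \|R\|_{X_0\to Y_0}^{1-\Theta}\,\|R\|_{X_1\to Y_1}^{\Theta}\,\|x\|_{[X_0,X_1]_\Theta}
\]
for every $x\in [X_0,X_1]_\Theta$. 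Hence every element of $R([X_0,X_1]_\Theta)$ lies in $[Y_0,Y_1]_\Theta$, endowed with the quotient quasi-norm
\[
\|y\|_{R([X_0,X_1]_\Theta)}:=\inf\bigl\{\|x\|_{[X_0,X_1]_\Theta}:\ x\in [X_0,X_1]_\Theta,\ Rx=y\bigr\},
\]
and this embedding is continuous.

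Next I would establish the reverse inclusion $[Y_0,Y_1]_\Theta \hookrightarrow R([X_0,X_1]_\Theta)$. Given $y\in [Y_0,Y_1]_\Theta$, apply Proposition \ref{complexinterpol} to the operator $E\in \cl(Y_0,X_0)\cap \cl(Y_1,X_1)$ to obtain $Ey \in [X_0,X_1]_\Theta$ with
\[
\|Ey\|_{[X_0,X_1]_\Theta}\le \|E\|_{Y_0\to X_0}^{1-\Theta}\,\|E\|_{Y_1\to X_1}^{\Theta}\,\|y\|_{[Y_0,Y_1]_\Theta}.
\]
Because $R\circ E=I_{Y_j}$ on each $Y_j$ (and in particular on $Y_0+Y_1$, into which $[Y_0,Y_1]_\Theta$ continuously embeds), we have $R(Ey)=y$. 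Thus $y\in R([X_0,X_1]_\Theta)$, and by the definition of the quotient quasi-norm,
\[
\|y\|_{R([X_0,X_1]_\Theta)}\le \|Ey\|_{[X_0,X_1]_\Theta}\le \|E\|_{Y_0\to X_0}^{1-\Theta}\,\|E\|_{Y_1\to X_1}^{\Theta}\,\|y\|_{[Y_0,Y_1]_\Theta}.
\]
Combining the two bounds gives equality of the two spaces together with the equivalence of quasi-norms.

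There is essentially no deep obstacle: the entire argument is the standard retract trick, and all of the work has been absorbed into Proposition \ref{complexinterpol}. The only point that merits a brief verification is that the analytic-convexity hypothesis on $X_0+X_1$ and $Y_0+Y_1$ is enough to invoke Proposition \ref{complexinterpol} for both $R$ and $E$, and that the identity $R\circ E=I$ is applicable at the level of $Y_0+Y_1$ so that $R(Ey)=y$ makes sense for $y\in [Y_0,Y_1]_\Theta\hookrightarrow Y_0+Y_1$. Both points are immediate from the hypotheses, so the proof reduces to the two displayed estimates above.
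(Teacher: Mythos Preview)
Your proof is correct and follows the same standard retract argument that the paper invokes (the paper simply refers to Triebel's Theorem~1.2.4 and notes that the closed graph theorem holds for quasi-Banach spaces). Your version is actually slightly more explicit: by directly deriving the two-sided quasi-norm estimates from Proposition~\ref{complexinterpol}, you avoid any appeal to the closed graph theorem, whereas the paper's reference to Triebel alludes to an argument where set equality is established first and norm equivalence is then obtained via the closed graph theorem.
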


We mention that the method of the retraction and
the coretraction in interpolation theory can be
found in many places. We refer the reader to, e.\,g., \cite[6.4]{BL} and \cite[1.2.4,~2.4.1,~2.4.2]{t78}.

\begin{remark}\label{inner1}
 For later use, we mention that
Propositions \ref{complexfinal},  \ref{complexinterpol} and \ref{complexretract}
remain true for the inner complex method.
\end{remark}


\subsubsection{Complex interpolation of Morrey-Campanato and related spaces. I}
\label{inter1cb}


In this subsection, we deal with consequences of
Subsection \ref{cpr}  for the complex interpolation of
Morrey-Campanato and related spaces.

First we  quote a result of Yang et al.  \cite{yyz} (with forerunners in case   $\tau =0$ in
Mendez and Mitrea \cite{MM}, Kalton et al. \cite[Proposition~7.7]{kmm}, and  Sickel et al.
\cite{ssv}).

\begin{lemma}[\cite{yyz}]\label{convex}
Let $q\in(0,\infty]$, $s \in \rr$ and $\tau\in[0,\fz)$.

\begin{enumerate}
\item[{\rm(i)}] Let $p\in(0,\fz)$. Then
$\ft $ and $\sft$ are analytically convex.

\item[{\rm(ii)}] Let $p\in(0,\fz]$. Then
$\bt$ and $\sbt$ are analytically convex.

\item[{\rm(iii)}] Let $0<p\le u\le \fz$. Then
$\cn^{s}_{u,p,q}(\rn)$ and $n^{s}_{u,p,q}(\rn)$ are analytically convex.
\end{enumerate}
\end{lemma}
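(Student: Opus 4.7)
The strategy is two-stage: first establish analytic convexity at the sequence space level via Proposition \ref{lattice}, then transfer to the function space level through retraction.

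\emph{Stage 1 (sequence spaces).} Since $\sft$, $\sbt$ and $n^{s}_{u,p,q}(\rn)$ are quasi-Banach lattices of sequences indexed by $\mathcal{Q}$, Proposition \ref{lattice} reduces the claim to verifying lattice $r$-convexity for some $r\in(0,\infty)$. I choose $r := \min\{p,q,1\}$ in the Besov- and Triebel-Lizorkin-type cases and $r := \min\{p,q,u,1\}$ for $n^{s}_{u,p,q}(\rn)$. For any finite family $\{g_j\}_{j=1}^{m}\subset L_p(B)$, Minkowski's inequality in $L_{p/r}(B)$ (valid as $p/r \ge 1$) yields
$$
\Big\|\Big(\sum_{j=1}^{m} |g_j|^r\Big)^{1/r}\Big\|_{L_p(B)}^{\,r} \le \sum_{j=1}^{m} \|g_j\|_{L_p(B)}^{\,r},
$$
and the same inequality holds with $L_p(B)$ replaced by any $\ell_t$-quasi-norm with $t\ge r$. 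Propagating this through the nested layers of the three sequence-space quasi-norms --- the innermost $\ell_p$ (or $L_p$) in position, the $\ell_q$ in scale, and the outermost $|P|^{-\tau}$-weighted supremum (in the Besov- and Triebel-Lizorkin-type case) or Morrey supremum (in the Besov-Morrey case), both of which commute trivially with $\ell_r$-aggregation from outside --- delivers the required $r$-convexity, and Proposition \ref{lattice} then gives analytic convexity.

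\emph{Stage 2 (function spaces and transfer).} The spaces $\ft$, $\bt$ and $\cn^{s}_{u,p,q}(\rn)$ are, in general, not lattices, so Proposition \ref{lattice} does not apply directly. Instead, by Propositions \ref{wave1} and \ref{wave2}, each of these function spaces is a retract of its sequence space counterpart via the $\varphi$-transform (or a suitable wavelet decomposition): there exist bounded linear maps $E$ and $R$ satisfying $R\circ E = \mathrm{Id}$ on the function space. Analytic convexity is preserved under retraction: given a polynomial $P(z) = \sum_{j=0}^{N} x_j\, z^j$ with coefficients in the function space, the composition $Q(z) := E(P(z)) = \sum_{j=0}^{N} E(x_j)\, z^j$ is a polynomial into the corresponding sequence space, whence
$$
\|P(0)\|_{\at} = \|R(Q(0))\|_{\at} \le \|R\|\, C_0 \max_{|z|=1}\|Q(z)\|_{\sat} \le C_0\,\|R\|\,\|E\| \max_{|z|=1}\|P(z)\|_{\at},
$$
where $C_0$ denotes the analytic convexity constant of $\sat$ obtained in Stage 1; the Besov-Morrey case is identical. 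The genuine technical point, though routine, lies in Stage 1 --- one must audit every layer of the nested quasi-norms and confirm that the $r$-convexity inequality passes through each --- while the transfer argument of Stage 2 is essentially formal once the $\varphi$-transform retractions are invoked.
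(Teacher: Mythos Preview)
Your proposal is correct and follows the standard route. The paper itself does not give a proof of this lemma but cites \cite{yyz} (with forerunners in \cite{MM,kmm,ssv}); however, the two-stage argument you outline --- lattice $r$-convexity at the sequence level via Proposition~\ref{lattice}, then transfer to function spaces by retraction through Propositions~\ref{wave1} and \ref{wave2} --- is precisely the expected approach, and is consistent with how the paper handles the closely related ``type $\mathfrak{E}$'' verification for $\sat$ just before Lemma~\ref{gagliardo}.
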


\begin{lemma}\label{ring}
Let $s\in\rr$, $p,\ q\in (0,\fz]$ ($p<\fz$ for $F$-spaces), $u\in[p,\fz]$
and $\tau\in[0,\fz)$.

\begin{enumerate}
\item[{\rm(i)}] It holds true that
$\mathring B_{p,q}^{s,\tau}(\rn)$, $\mathring F_{p,q}^{s,\tau}(\rn)$ and
$\mathring{\cn}^{s}_{u,p,q}(\rn)$  are  analytically convex.

\item[{\rm (ii)}] It holds true that the spaces $A_{p,q}^{s,\tau}(\rn)$ when $\tau\in (0,\fz)$, and
$\cn_{u,p,q}^{s}(\rn)$ when $p<u$,  are nonseparable.

\item[{\rm(iii)}] If $\tau \in(0,\fz)$, then  $\mathring A_{p,q}^{s,\tau}(\rn)$ is a proper subspace of
${A}_{p,q}^{s,\tau}(\rn)$, $A \in \{B,F\}$.

\item[{\rm(iv)}] If $0<p< u\le\fz$, then $\mathring \cn_{u,p,q}^{s}(\rn)$ is a proper subspace of
${\cn}_{u,p,q}^{s}(\rn)$.
\end{enumerate}
\end{lemma}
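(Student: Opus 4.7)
For part (i), I would appeal directly to Lemma~\ref{convex}: analytic convexity transfers to any closed subspace equipped with the restricted quasi-norm, since a polynomial $P:\cc\to\mathring{A}_{p,q}^{s,\tau}(\rn)$ is in particular a polynomial into the ambient $A_{p,q}^{s,\tau}(\rn)$ with unchanged values of $\|P(z)\|$, so the defining inequality $\|P(0)\|\le C\,\max_{|z|=1}\|P(z)\|$ carries over verbatim. The same argument handles $\mathring{\cn}^s_{u,p,q}(\rn)$.

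For part (ii), my strategy is to exhibit an uncountable, uniformly bounded, pairwise $c$-separated family, which is enough to rule out separability. For $\cn^s_{u,p,q}(\rn)$ with $p<u$, fix a non-zero $\phi\in C^\infty_c(B(0,1))$ and widely separated centers $\{x_j\}_{j\in\nn}\subset\rn$ with, say, $|x_j-x_k|\ge 10$ for $j\ne k$. For each $E\subset\nn$, set
$$f_E:=\sum_{j\in E}2^{j(s+n/u-n/p)}\,\phi\bigl(2^j(\cdot-x_j)\bigr).$$
A Littlewood-Paley analysis shows that the piece of $f_E$ at frequency $2^j$ localizes essentially to $B(x_j,2^{-j})$, and the Morrey-scaling $\|2^{jn/u}\phi(2^j\cdot)\|_{\cm^u_p(\rn)}\asymp\|\phi\|_{\cm^u_p(\rn)}$ gives $\sup_E\|f_E\|_{\cn^s_{u,p,q}(\rn)}<\infty$; for $E\ne E'$ and any $j_0\in E\triangle E'$, the $j_0$-th piece of $f_E-f_{E'}$ contributes $\gs 1$ to the quasi-norm. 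For $A^{s,\tau}_{p,q}(\rn)$ with $\tau\in(0,\fz)$, I would pass to the sequence space via the wavelet/atomic decomposition (Propositions~\ref{wave1} and \ref{wave2} in Appendix, which realize $\at$ as a retract of $\sat$) and build the analogous family $\{\lambda^E\}_{E\subset\nn}\subset\sat$ by placing unit coefficients at widely separated wavelet positions and scales chosen so that: on dyadic cubes $P$ small enough to contain at most one coefficient the $|P|^{-\tau}$-weighted norm reduces to that of a single coefficient, while on larger $P$ the weight $|P|^{-\tau}$ damps the cumulative contribution. The same lower-bound via a single $j_0\in E\triangle E'$ yields $\|\lambda^E-\lambda^{E'}\|_{\sat}\gs 1$.

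For parts (iii) and (iv), I would observe that $\mathring{A}_{p,q}^{s,\tau}(\rn)$ is separable: by definition it is the $A$-closure of $C^\infty_c(\rn)$, and $C^\infty_c(\rn)$ is separable in its strict inductive limit topology; since $C^\infty_c(\rn)\hookrightarrow\mathcal{S}(\rn)\hookrightarrow A_{p,q}^{s,\tau}(\rn)$ continuously (a standard feature of these scales), a countable $C^\infty_c$-dense subset remains dense in $\mathring{A}_{p,q}^{s,\tau}(\rn)$. The same reasoning applies to $\mathring{\cn}^s_{u,p,q}(\rn)$. A separable subspace of a quasi-Banach space cannot equal a non-separable one, so combining with (ii) yields $\mathring{A}_{p,q}^{s,\tau}(\rn)\subsetneq A_{p,q}^{s,\tau}(\rn)$ whenever $\tau\in(0,\fz)$, and $\mathring{\cn}^s_{u,p,q}(\rn)\subsetneq\cn^s_{u,p,q}(\rn)$ whenever $p<u$.

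The main technical obstacle lies inside part (ii), namely verifying the uniform bound $\sup_E\|f_E\|\ls 1$. In the $\at$-case one must carefully dissect the defining supremum $\sup_P|P|^{-\tau}(\cdots)$ over dyadic cubes: cubes $P$ small enough to see at most one atom produce only the individual contribution, while cubes $P$ so large that several atoms fall inside require one to trade the $\ell^q$-summation of their contributions against the factor $|P|^{-\tau}$, exploiting both the disjointness of spatial supports at each scale and the exponential spacing of the scales. The parallel Besov-Morrey estimate needs the analogous dichotomy at the level of the Morrey quasi-norm, again resting on the essentially disjoint supports of the Littlewood-Paley pieces of the summands.
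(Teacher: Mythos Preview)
Your treatment of (i) is correct and matches the paper's one-line remark that analytic convexity of the ambient space (Lemma~\ref{convex}) passes to closed subspaces. Your argument for (iii) and (iv) via separability of $\mathring{A}^{s,\tau}_{p,q}(\rn)$ and $\mathring{\cn}^{s}_{u,p,q}(\rn)$ combined with (ii) is clean and correct; the paper itself only cites \cite{yyz} for these parts, so there is no detailed proof to compare against.

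The gap is in your construction for part (ii) in the Besov--Morrey case. With
\[
f_E=\sum_{j\in E}2^{j(s+n/u-n/p)}\,\phi\bigl(2^j(\cdot-x_j)\bigr),
\]
you are varying the \emph{scale} over $E$. In the $\cn^{s}_{u,p,q}$-norm the outer structure is an $\ell^q$-sum over Littlewood--Paley levels, not a supremum; if each level $j\in E$ contributes a term of size $\asymp 1$, then $\|f_E\|_{\cn^s_{u,p,q}}^q\gtrsim |E|$, which diverges for infinite $E$ whenever $q<\infty$. (There are two further problems: a generic $\phi\in C_c^\infty$ has Littlewood--Paley content at \emph{all} levels $\le j$, not just level $j$, so ``the piece of $f_E$ at frequency $2^j$ localizes to $B(x_j,2^{-j})$'' is unjustified; and your normalizing exponent is inconsistent with your stated Morrey scaling, since $2^{js}\cdot 2^{j(s+n/u-n/p)}\cdot 2^{-jn/u}=2^{j(2s-n/p)}$ is not uniform in $j$.)

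The fix is to exploit the right ingredient of the norm: when $p<u$ the non-separability lives in the Morrey supremum, so one should keep a \emph{single} scale and vary the spatial position. Working in the sequence space $n^s_{u,p,q}(\rn)$ (via Proposition~\ref{wave2}), place unit coefficients at level $0$ and exponentially separated lattice points $k_m=(2^m,0,\ldots,0)$; for $E\subset\nn$ set $\lambda^E_{Q_{0,k_m}}=1$ iff $m\in E$. Then $\|\lambda^E\|_{n^s_{u,p,q}}=\bigl\|\sum_{m\in E}\chi_{Q_{0,k_m}}\bigr\|_{\cm^u_p}\asymp 1$ because a ball of radius $R$ meets only $\sim\log R$ of the cubes and $|B|^{1/u-1/p}(\log|B|)^{1/p}\to 0$, while a ball of radius $\sim 1$ sees one cube. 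The lower bound $\|\lambda^E-\lambda^{E'}\|\gtrsim 1$ follows from a single $m_0\in E\triangle E'$. This is exactly the mechanism used in the paper's proof of Lemma~\ref{diamond1}, Substep~1.4. Your $\sat$-argument is closer to being right, since there the outer supremum over $P$ \emph{does} provide the needed damping; but note that the simplest working choice is again a single scale with exponentially spaced positions (or, dually, a single position with varying scales), not ``widely separated positions and scales'' simultaneously.
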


Whereas part (i) of Lemma \ref{ring} is an immediate consequence of Lemma \ref{convex}, the remainder of Lemma \ref{ring}
is a little bit more complicated to prove, we refer the reader
to \cite{yyz} for details.

It is well known that there are nice  connections between
the complex interpolation spaces and
the corresponding Calder\'on products as follows (see the original article of Calder{\'o}n \cite{ca64}, \cite[Theorem 3.4]{km98} or  \cite[Theorem 7.9]{kmm}).

\begin{proposition}\label{Thm4}
Let $({\mathfrak X}, {\mathcal S},\mu)$ be a complete separable metric space, $\mu$ a $\sigma$-finite Borel measure on ${\mathfrak X}$, and $X_0,X_1$ a pair of
quasi-Banach lattices of functions on $({\mathfrak X},\mu)$.
If both $X_0$ and $X_1$ are analytically convex and separable, then
$X_0+X_1$ is also analytically convex and
\begin{equation*}
[X_0,X_1]_\Theta =[X_0,X_1]_\tz^i= X_0^{1-\Theta}\, X_1^\Theta \, ,\qquad  \Theta\in(0,1).
\end{equation*}
\end{proposition}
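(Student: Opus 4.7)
The plan is to establish the equalities via the four-step chain
\[
X_0^{1-\Theta}\, X_1^\Theta \hookrightarrow [X_0,X_1]_\Theta^i \hookrightarrow [X_0,X_1]_\Theta \hookrightarrow X_0^{1-\Theta}\, X_1^\Theta,
\]
where the middle embedding is trivial since $\ca_0(X_0,X_1) \subset \ca(X_0,X_1)$. To begin, I would prove that $X_0+X_1$ is analytically convex by invoking Proposition \ref{lattice}: both $X_i$ are lattice $r_i$-convex for some $r_i > 0$, and setting $r:= \min\{r_0, r_1\}$, one checks by a routine splitting $f_j = f_j^0 + f_j^1$ with $f_j^i \in X_i$ that $X_0+X_1$ is lattice $r$-convex and hence analytically convex. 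This step is essentially bookkeeping.

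For the upper embedding $[X_0,X_1]_\Theta \hookrightarrow X_0^{1-\Theta} X_1^\Theta$, given $f = F(\Theta)$ with $F\in \ca$, the key observation is that for each fixed $x \in {\mathfrak X}$ the map $z\mapsto F(z)(x)$ is scalar analytic on $S_0$ and bounded continuous on $S$, so $z\mapsto \log |F(z)(x)|$ is subharmonic and bounded above. Applying the Poisson representation for the strip with harmonic measures $\mu_0(\Theta,\cdot)$ and $\mu_1(\Theta,\cdot)$ of masses $1-\Theta$ and $\Theta$, respectively, I would define
\[
g(x) := \exp\lf\{ \tfrac{1}{1-\Theta}\int_{\rr} \log|F(it)(x)|\, d\mu_0(\Theta,t)\r\},
\]
\[
h(x) := \exp\lf\{ \tfrac{1}{\Theta}\int_{\rr} \log|F(1+it)(x)|\, d\mu_1(\Theta,t)\r\},
\]
so that $|f(x)|\le g(x)^{1-\Theta}\, h(x)^\Theta$ pointwise. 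Combining Jensen's inequality (for the probability measure $\mu_0/(1-\Theta)$) with lattice $r$-convexity of $X_0$ and the continuity of $t\mapsto F(it)\in X_0$, one gets $\|g\|_{X_0}^r \ls \int \|F(it)\|_{X_0}^r\, d\mu_0(\Theta,t) \ls \|F\|_\ca^r$, and similarly for $h$. Separability of the $X_i$ is used here to guarantee that $F$ is strongly measurable and the scalar integrals defining $g, h$ yield measurable functions.

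For the lower embedding $X_0^{1-\Theta}X_1^\Theta \hookrightarrow [X_0,X_1]_\Theta^i$, suppose $|f|\le \lambda |g|^{1-\Theta}|h|^\Theta$ with $\|g\|_{X_0},\|h\|_{X_1}\le 1$. I would explicitly construct the analytic function
\[
F(z)(x) := u(x) \, |g(x)|^{1-z}\, |h(x)|^z, \qquad u(x):= \frac{f(x)}{|g(x)|^{1-\Theta}|h(x)|^\Theta}
\]
(with $u(x):=0$ where the denominator vanishes), noting $|u|\le \lambda$, $F(\Theta)=f$, and $|F(j+it)(x)|\le \lambda |g|^{1-j}|h|^j$. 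Analyticity in $z$ follows from the representation $|g|^{1-z}|h|^z = \exp((1-z)\log|g|+z\log|h|)$. To land in $\ca_0$, not merely $\ca$, one must arrange $F(z)\in X_0\cap X_1$ for $z\in S_0$; I would do this by truncating to $E_n:=\{x: 1/n \le |g(x)|,|h(x)| \le n\}$, producing $F_n$ with $F_n(z)\in X_0\cap X_1$ by the lattice property, and using separability of $X_0, X_1$ together with dominated-convergence-type arguments (valid because $|g|^{1-\Re z}|h|^{\Re z} \le |g|+|h| \in X_0+X_1$) to show $F_n\to F$ in $\ca$. Then $F\in\ca_0$ by definition of the closure, giving $f \in [X_0,X_1]_\Theta^i$ with $\|f\|_{[X_0,X_1]_\Theta^i}\ls \lambda$.

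The main obstacle is the last step: ensuring that the explicit analytic function $F$ genuinely belongs to $\ca_0$, rather than merely to $\ca$. The truncation-plus-approximation argument must be compatible with the quasi-norm topology on $X_0+X_1$, and this is exactly where the separability and completeness of the underlying measure space together with the separability of the lattices are essential, since they guarantee strong measurability of the relevant operator-valued integrals and allow one to approximate arbitrary elements of $X_j$ by elements of $X_0\cap X_1$ (concentrated on sets $E_n$). The subharmonicity/Poisson step in Paragraph two is classical but must be adapted to the quasi-Banach setting, where the $r$-convexity serves as the substitute for the triangle inequality that would be used in the Banach case.
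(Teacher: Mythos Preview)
The paper does not supply its own proof of this proposition; it simply cites the original result of Calder\'on \cite{ca64} (Banach case) together with \cite[Theorem~3.4]{km98} and \cite[Theorem~7.9]{kmm} (quasi-Banach case), adding only the observation in Remark~\ref{rem-dense} that those proofs do not actually use density of $X_0\cap X_1$ in $X_j$. Your outline is essentially a reconstruction of the Kalton--Mitrea argument: lattice $r$-convexity of $X_0+X_1$ via Proposition~\ref{lattice}, the pointwise subharmonicity/Poisson estimate combined with Jensen and $r$-convexity for the upper embedding, and the explicit function $F(z)=u\,|g|^{1-z}|h|^z$ with truncation to sets $E_n$ for the lower embedding into $[X_0,X_1]_\Theta^i$. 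So your approach is correct and aligns with the references the paper invokes.

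One technical point worth tightening: the step ``$z\mapsto F(z)(x)$ is scalar analytic'' requires justification, since point evaluation need not be a continuous functional on $X_0+X_1$. In the cited references this is handled by using that analyticity in the sense of local power-series expansions in an analytically convex quasi-Banach lattice forces a.e.\ pointwise analyticity (via convergence in measure of the partial sums and passage to subsequences on compacta). Similarly, the claim that $F_n\to F$ in $\ca$ needs more than separability alone: one really uses that on the sets $E_n$ the functions $|g|^{1-\Re z}|h|^{\Re z}$ are uniformly dominated by fixed elements of $X_0$ and $X_1$, together with the lattice structure, to control the boundary norms uniformly in $t$. These are exactly the places where \cite{km98,kmm} do the real work, so if you flesh them out you recover their proof.
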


Because of the separability conditions in Proposition \ref{Thm4},
we can not apply this proposition to Morrey-type spaces $\at$ and to Besov-Morrey spaces $\cn_{u,p,q}^{s}(\rn)$.
However, it can be applied to subspaces obtained as the closure of the test functions.
Before we turn to results of such a type, we comment on sequences spaces.
For sequence spaces, one knows a little bit more. We quote Remark in front of  \cite[Theorem 7.10]{kmm}; see also
\cite{MM}.

\begin{lemma}\label{sl1}
Let $X_0,X_1$ be a pair of quasi-Banach sequence lattices.
If both $X_0$ and $X_1$ are analytically convex and at least one is separable, then
$X_0+X_1$ is also analytically convex and
\begin{equation*}
[X_0,X_1]_\Theta = [X_0,X_1]_\tz^i=X_0^{1-\Theta}\, X_1^\Theta \, ,\qquad  \Theta\in(0,1).
\end{equation*}
\end{lemma}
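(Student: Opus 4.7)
The plan adapts the proof of Proposition \ref{Thm4}---where both spaces must be separable---to the sequence-space setting, exploiting the key fact that in a sequence lattice every finitely supported sequence automatically lies in $X_0\cap X_1$, independently of separability. One-sided separability will enter only in a density step at the end.

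For the analytic convexity of $X_0+X_1$: by Proposition \ref{lattice}, each $X_i$ is lattice $r_i$-convex for some $r_i>0$; a standard quasi-triangle computation shows the sum quasi-norm is lattice $r$-convex with $r=\min\{r_0,r_1\}$, and applying Proposition \ref{lattice} in reverse yields analytic convexity. For the embedding $[X_0,X_1]_\Theta\hookrightarrow X_0^{1-\Theta}X_1^\Theta$: given $f=F(\Theta)$ with $F\in\ca$, apply Calder\'on's three-lines argument index-by-index, using that analytic convexity (Proposition \ref{ac}) renders $\log|F(\cdot)(k)|$ subharmonic on $S_0$ for each $k$. Collecting the pointwise bounds produces $g\in X_0$ and $h\in X_1$ satisfying $|f(k)|\le|g(k)|^{1-\Theta}|h(k)|^\Theta$ with $\|g\|_{X_0}^{1-\Theta}\|h\|_{X_1}^\Theta\ls\|F\|_{\ca}$; no separability is invoked here. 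The reverse embedding into $[X_0,X_1]_\Theta$ comes from the explicit Calder\'on function
\[
F(z)(k):=\operatorname{sgn}\bigl(f(k)\bigr)\,|g(k)|^{1-z}\,|h(k)|^z
\]
(zero when $g(k)h(k)=0$), which is analytic in $z$, satisfies $F(\Theta)=f$, $|F(it)|\le |g|$ and $|F(1+it)|\le |h|$, and hence lies in $\ca$.

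The heart of the argument is upgrading this to $f\in[X_0,X_1]_\Theta^i$; combined with the trivial embedding $[X_0,X_1]_\Theta^i\hookrightarrow[X_0,X_1]_\Theta$, this closes the three-way equality. Assume without loss of generality that $X_0$ is separable, so that finitely supported sequences are dense in the order-continuous closed ideal of $X_0$ generated by $g$. Set $F_n(z):=F(z)\chi_{\{k\le n\}}$; each $F_n(z)$ is finitely supported and belongs to $X_0\cap X_1$, placing $F_n$ in $\ca_0$ automatically. The $X_0$-trace convergence $\sup_t\|F(it)-F_n(it)\|_{X_0}\le\|g\chi_{\{k>n\}}\|_{X_0}\to 0$ follows from order continuity. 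The main obstacle is that $X_1$ (for instance $\ell^\infty$) need not be separable, so the parallel $X_1$-trace convergence can fail; consequently $F_n\to F$ in $\ca$-norm may fail, and one cannot directly conclude $F\in\ca_0$ by this truncation. The workaround is to show directly that $X_0\cap X_1$ is dense in $[X_0,X_1]_\Theta$: given $f$ with Calder\'on dominator $(g,h)$, approximate $f$ by $f_n:=f\chi_{\{k\le n\}}\in X_0\cap X_1$, whose \emph{own} Calder\'on function is finitely supported on $S_0$, hence lies in $\ca_0$ and witnesses $f_n\in[X_0,X_1]_\Theta^i$. The difference $f-f_n$ is then estimated in $[X_0,X_1]_\Theta$-quasi-norm using Step 2 applied to the dominating pair $(g\chi_{\{k>n\}},h)$, giving $\|f-f_n\|_{[X_0,X_1]_\Theta}\ls\|g\chi_{\{k>n\}}\|_{X_0}^{1-\Theta}\|h\|_{X_1}^\Theta\to 0$. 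A general principle (cf.\ \cite{kmm,MM}) then ensures that if $X_0\cap X_1$ is dense in $[X_0,X_1]_\Theta$, one automatically has $[X_0,X_1]_\Theta=[X_0,X_1]_\Theta^i$, completing the identification with the Calder\'on product $X_0^{1-\Theta}X_1^\Theta$.
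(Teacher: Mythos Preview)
The paper does not prove this lemma; it simply quotes it from the remark preceding \cite[Theorem~7.10]{kmm} (see also \cite{MM}). Your outline is therefore an independent attempt, and most of it is sound: the lattice $r$-convexity argument for $X_0+X_1$, the coordinatewise three-lines bound giving $[X_0,X_1]_\Theta\hookrightarrow X_0^{1-\Theta}X_1^\Theta$, and the explicit Calder\'on function giving the reverse inclusion are all standard and correct.

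The gap is in the last step. You show that $f_n:=f\chi_{\{k\le n\}}\in[X_0,X_1]_\Theta^i$ and that $f_n\to f$ in the $[X_0,X_1]_\Theta$-quasi-norm, and then invoke a ``general principle'' that density of $X_0\cap X_1$ in $[X_0,X_1]_\Theta$ forces $[X_0,X_1]_\Theta=[X_0,X_1]_\Theta^i$. That principle is not justified: one only knows $\|\cdot\|_{[X_0,X_1]_\Theta}\le\|\cdot\|_{[X_0,X_1]_\Theta^i}$, so $[X_0,X_1]_\Theta^i$ is not a priori closed in $[X_0,X_1]_\Theta$, and convergence in the weaker norm does not place the limit in the smaller space.

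The fix is direct and stays within your framework. For $n>m$ the difference $f_n-f_m=f\chi_{\{m<k\le n\}}$ is \emph{finitely supported}, so its rescaled Calder\'on function
\[
G_\lambda(z):=\lambda^{\,z-\Theta}\operatorname{sgn}(f)\,|g|^{1-z}|h|^{z}\chi_{\{m<k\le n\}}
\]
takes values in $X_0\cap X_1$ for every $z\in S$, hence lies in $\ca_0$. Optimising over $\lambda>0$ gives
\[
\|f_n-f_m\|_{[X_0,X_1]_\Theta^i}\le\|g\chi_{\{m<k\le n\}}\|_{X_0}^{\,1-\Theta}\|h\chi_{\{m<k\le n\}}\|_{X_1}^{\,\Theta}
\le\|g\chi_{\{k>m\}}\|_{X_0}^{\,1-\Theta}\|h\|_{X_1}^{\,\Theta}\to 0
\]
by separability of $X_0$. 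Thus $(f_n)$ is Cauchy in $[X_0,X_1]_\Theta^i$, and completeness of the inner method (Proposition~\ref{complexfinal} and Remark~\ref{inner1}) yields $f\in[X_0,X_1]_\Theta^i$. Alternatively, you may appeal to Proposition~\ref{shest2}, which identifies $[X_0,X_1]_\Theta^i$ with the closure of $X_0\cap X_1$ in the Calder\'on product; your density argument then finishes the job at once.
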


\begin{remark}\label{rem-dense}
We need to go back to the problem described in Remark \ref{r-com}.
Neither in \cite{km98} nor in \cite{kmm}, the additional condition
that $ X_0 \cap X_1$ is dense in $X_j$, $j\in\{0,1\}$, is used in the proofs of \cite[Theorem 3.4]{km98} and
\cite[Theorem 7.9]{kmm}. Hence, we can avoid the use
of this condition in Proposition \ref{Thm4} and Lemma \ref{sl1}.
\end{remark}

Essentially as a consequence of Propositions \ref{morrey2} and \ref{morrey3},
the wavelet characterization of the spaces
under consideration (see Propositions \ref{wave1} and \ref{wave2} in
Appendix), Proposition \ref{Thm4} and Lemma \ref{sl1}, one obtain
the following result (see  \cite{yyz} for all details).

\begin{proposition}\label{ci}
Let all parameters
be as in Theorem \ref{COMI}.

{\rm(i)} If  $\tau_0 \, p_0 = \tau_1 \, p_1$, then
\begin{eqnarray*}
\mathring A_{p,q}^{s,\tau}(\rn)
&&=\lf[\mathring A_{p_0,q_0}^{s_0,\tau_0}(\rn),
\mathring A_{p_1,q_1}^{s_1,\tau_1}(\rn)\r]_\tz
=\lf[A_{p_0,q_0}^{s_0,\tau_0}(\rn),
\mathring A_{p_1,q_1}^{s_1,\tau_1}(\rn)\r]_\tz
\\
&&=\lf[\mathring A_{p_0,q_0}^{s_0,\tau_0}(\rn),
A_{p_1,q_1}^{s_1,\tau_1}(\rn)\r]_\tz\, .
\end{eqnarray*}

{\rm(ii)} If $p_0 \, u_1=p_1\, u_0$, then
\begin{eqnarray*}
\mathring \cn_{u,p,q}^{s}(\rn)
&&=\lf[\mathring \cn_{u_0,p_0,q_0}^{s_0}(\rn),
\mathring \cn_{u_1,p_1,q_1}^{s_1}(\rn)\r]_\tz
=\lf[\cn_{u_0,p_0,q_0}^{s_0}(\rn),
\mathring \cn_{u_1,p_1,q_1}^{s_1}(\rn)\r]_\tz
\\
&&=\lf[\mathring \cn_{u_0,p_0,q_0}^{s_0}(\rn),
\cn_{u_1,p_1,q_1}^{s_1}(\rn)\r]_\tz .
\end{eqnarray*}
\end{proposition}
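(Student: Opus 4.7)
The plan is to reduce the statement to a sequence-space assertion via the wavelet retraction and then to identify the resulting complex interpolation spaces with Calder\'on products, which we already know explicitly.

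First, I would invoke Propositions \ref{wave1} and \ref{wave2} (wavelet decomposition) to exhibit each ${A}^{s,\tau}_{p,q}(\rn)$ (respectively $\cn^{s}_{u,p,q}(\rn)$) as a retract of the sequence space $a^{s,\tau}_{p,q}(\rn)$ (respectively $n^{s}_{u,p,q}(\rn)$), with the extension/restriction operators simultaneously realizing $\mathring{A}^{s,\tau}_{p,q}(\rn)$ (respectively $\mathring{\cn}^{s}_{u,p,q}(\rn)$) as a retract of the separable subspace obtained by closing the finitely supported sequences in $a^{s,\tau}_{p,q}(\rn)$ (respectively $n^{s}_{u,p,q}(\rn)$); call these subspaces $\mathring{a}^{s,\tau}_{p,q}(\rn)$ and $\mathring{n}^{s}_{u,p,q}(\rn)$. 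All spaces involved are analytically convex (Lemma \ref{convex} and Lemma \ref{ring}(i)), so Proposition \ref{complexretract} transfers both identities (i) and (ii) from the function-space level to the sequence-space level.

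Next, at the sequence-space level each of the three pairs appearing in (i) and in (ii) has at least one component among the separable spaces $\mathring{a}^{s_i,\tau_i}_{p_i,q_i}(\rn)$ or $\mathring{n}^{s_i}_{u_i,p_i,q_i}(\rn)$. Hence Lemma \ref{sl1}, together with Remark \ref{rem-dense} (which permits us to drop the ``$X_0\cap X_1$ dense in $X_j$'' hypothesis), applies to each pair and yields that the complex interpolation space coincides with the corresponding Calder\'on product. Proposition \ref{morrey2}, under $\tau_0 p_0 = \tau_1 p_1$, and Proposition \ref{morrey3}, under $p_0 u_1 = p_1 u_0$, then compute the Calder\'on products of the full sequence spaces as $a^{s,\tau}_{p,q}(\rn)$ and $n^{s}_{u,p,q}(\rn)$, respectively.

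The delicate step is to show that, with one (or both) factors replaced by the corresponding ``ring'' subspace, the Calder\'on product shrinks to exactly $\mathring{a}^{s,\tau}_{p,q}(\rn)$ (respectively $\mathring{n}^{s}_{u,p,q}(\rn)$). The inclusion $\mathring{a}^{s,\tau}_{p,q}(\rn)\hookrightarrow[\,\cdot\,]^{1-\tz}[\,\cdot\,]^{\tz}$ will be obtained by truncating a finitely supported sequence $\lambda\in\mathring{a}^{s,\tau}_{p,q}(\rn)$ and applying the explicit factorisation constructed in the proof of Proposition \ref{morrey2} in \cite{yyz}; both factors then land automatically in the corresponding $\mathring{a}$-subspace because they are themselves finitely supported. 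For the reverse inclusion, I would use Lemma \ref{Leb}(ii) to write a generic element of the Calder\'on product as $|\lambda|\le\mu\,|g|^{1-\tz}|h|^{\tz}$ with $g$ (or $h$) in the separable factor, and then invoke H\"older's inequality cube-by-cube to transfer, via the identity $\tau_0 p_0 = \tau_1 p_1$ (resp. $p_0 u_1 = p_1 u_0$), the decay of the partial sums of $g$ (or $h$) to decay of the partial sums of $\lambda$, which is the defining property of membership in the closure of finitely supported sequences.

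The main obstacle is precisely this last step, because the Calder\'on-product factorisation does not commute with the natural truncation operators; the correct way around this uses the local-to-global structure of the $a^{s,\tau}_{p,q}$-norm (a supremum over dyadic cubes) to reduce the matter to a uniform-in-cube Calder\'on-product identity, which is exactly the argument already worked out at the level of sequence spaces in \cite{yyz}. Once this transfer is in hand, retraction brings the three identities back to the function-space level, completing the proof of both (i) and (ii).
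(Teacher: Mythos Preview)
Your proposal is correct and follows essentially the same route as the paper: reduction to sequence spaces via the wavelet retraction (Propositions \ref{wave1}, \ref{wave2} together with Proposition \ref{complexretract}), identification of the complex interpolation space with the Calder\'on product through Lemma \ref{sl1} (one factor separable, density hypothesis waived as in Remark \ref{rem-dense}), and finally the explicit computation of the Calder\'on products of the ring sequence spaces, for which the paper likewise defers to \cite{yyz} (cf.\ Propositions \ref{morrey2b} and \ref{morrey3b}). Your identification of the ``delicate step'' --- that the Calder\'on product with at least one $\mathring{a}$- or $\mathring{n}$-factor collapses to the ring space --- is exactly the point the paper outsources to \cite{yyz}.
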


\begin{remark}\label{complex1a}
Proposition \ref{ci} covers almost all cases for which
the complex interpolation of Besov and Triebel-Lizorkin spaces
is known.
In particular, we obtain the formulas
\[
B_{p,q}^{s}(\rn)
= \lf[B_{p_0,q_0}^{s_0}(\rn), B_{p_1,q_1}^{s_1}(\rn)\r]_\tz \quad\mbox{with} \quad \max \{p_0,q_0\} <\infty\, ,
\]
and
\[
F_{p,q}^{s}(\rn)
= \lf[F_{p_0,q_0}^{s_0}(\rn), F_{p_1,q_1}^{s_1}(\rn)\r]_\tz\quad\mbox{with}
\quad  \max \{p_0,p_1\} + \min \{q_0,q_1\} <\infty\, ,
\]
where
\[
s=(1-\Theta)\, s_0 + \Theta\, s_1\, \qquad
\frac 1q := \frac{1-\tz}{q_0} + \frac{\tz}{q_1}\qquad
\mbox{and}
\qquad \frac 1p := \frac{1-\tz}{p_0} + \frac{\tz}{p_1}\, .
\]
Here we have used the fact that
\[
\mathring{A}_{p,q}^{s}(\rn) = A_{p,q}^{s}(\rn) \qquad \Longleftrightarrow \qquad \max \{p,q\}<\infty, \qquad A\in\{B,F\};
\]
see \cite[Theorem 2.3.3]{t83}.
These interpolation formulas have been known before, we refer the reader to Bergh, L\"ofstr\"om
\cite[Theorem 6.4.5]{BL}, Triebel \cite[2.4.1/2]{t78}, \cite{t81}, Frazier, Jawerth \cite{fj90},
Mendez, Mitrea \cite{MM} and
Kalton et al. \cite{kmm}.
In the next subsection, we shall continue this discussion by considering those situations
where both spaces are non-separable.
\end{remark}


\subsubsection{Complex interpolation of Morrey-Campanato and related spaces. II}
\label{inter1cd}


Notice that, in Proposition \ref{ci}, at least one of
the interpolated spaces should be the closure of test functions.
The natural question here is, can we remove this restriction and
calculate $\lf[A_{p_0,q_0}^{s_0,\tau_0}(\rn),
A_{p_1,q_1}^{s_1,\tau_1}(\rn)\r]_\tz$ and/or  $\lf[\cn_{u_0,p_0,q_0}^{s_0}(\rn),
\cn_{u_1,p_1,q_1}^{s_1}(\rn)\r]_\tz$? In such a situation, we can not use
Proposition \ref{Thm4} because of the non-separability of the spaces involved.
Instead one can apply an argument of Shestakov \cite{s74,s74m}.

\begin{proposition}\label{shest}
Let $(X_0,X_1)$ be a couple of Banach lattices and $\Theta \in (0,1)$. Then
\[
[X_0,X_1]_\Theta =
\lf[X_0, X_1, X_0^{1-\Theta} \, X_1^\Theta, \#\r]\, .
\]
\end{proposition}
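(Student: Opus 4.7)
The proof splits into two inclusions: the ``easy'' embedding $[X_0,X_1]_\Theta \hookrightarrow [X_0, X_1, X_0^{1-\Theta}X_1^\Theta,\#]$, and the ``hard'' embedding $[X_0, X_1, X_0^{1-\Theta}X_1^\Theta,\#] \hookrightarrow [X_0,X_1]_\Theta$. Since both sides are continuously embedded into $X_0+X_1$, it suffices to establish these embeddings with norm control.

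For the easy direction, I would first invoke Calder\'on's classical embedding $[X_0,X_1]_\Theta \hookrightarrow X_0^{1-\Theta}X_1^\Theta$ with norm $\le 1$ (valid for any couple of Banach lattices, proved by applying the three-lines lemma to $\|f(z)/(|x_0|^{1-z}|x_1|^z)\|$ for suitable dominating sequences). Next, I would recall the standard density fact that $X_0 \cap X_1$ is dense in $[X_0,X_1]_\Theta$; this is immediate from Definition \ref{dci} upon approximating each $f \in \ca(X_0,X_1)$ with $f(\Theta)=x$ by the truncations $f_\delta(z) := e^{\delta(z-\Theta)^2}f(z)$ (whose values at $\Theta$ still equal $x$ and which, by an approximation of the identity argument in $z$, can be further approximated by sums of the form $\sum_j e^{-\lambda_j z}a_j$ with $a_j \in X_0 \cap X_1$). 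Combining these two facts, every $x \in [X_0,X_1]_\Theta$ is the $[X_0,X_1]_\Theta$-limit of a sequence in $X_0\cap X_1$, hence is also the $X_0^{1-\Theta}X_1^\Theta$-limit of that sequence, placing $x$ in $[X_0,X_1,X_0^{1-\Theta}X_1^\Theta,\#]$.

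For the hard direction, by a density argument it suffices to prove that every $x \in X_0 \cap X_1$ belongs to $[X_0,X_1]_\Theta$ with $\|x\|_{[X_0,X_1]_\Theta} \lesssim \|x\|_{X_0^{1-\Theta}X_1^\Theta}$. Given $\lambda > \|x\|_{X_0^{1-\Theta}X_1^\Theta}$, use Lemma \ref{Leb}(ii) to fix $x_j \in X_j$ with $\|x_j\|_{X_j}\le 1$ and $|x| \le \lambda |x_0|^{1-\Theta}|x_1|^\Theta$. The core of the argument is to construct an analytic function $f \in \ca(X_0,X_1)$ with $f(\Theta)=x$ and $\|f\|_{\ca} \lesssim \lambda$. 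The natural candidate, exploiting the lattice structure, is
\[
f(z) := \operatorname{sgn}(x)\, |x|\, \bigl(|x_0|^{1-\Theta}|x_1|^\Theta\bigr)^{-1}\, |x_0|^{1-z}|x_1|^z \, e^{\delta(z^2-\Theta^2)}
\]
understood to vanish where the denominator vanishes (which includes the set where $x=0$). Using $|a^{w}|=a^{\Re w}$ for $a>0$, one checks that $|f(it)| \le \lambda |x_0|\cdot e^{-\delta t^2 + \delta\Theta^2}$ and $|f(1+it)| \le \lambda |x_1|\cdot e^{\delta(1-t^2)+ O(1)}$, so the traces lie boundedly in $X_0$ and $X_1$ respectively with norm $\lesssim \lambda$; the exponential factor ensures the decay in $t$ needed for continuity and boundedness up to the closure of the strip (in the spirit of Remark \ref{r-com}(ii)). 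Analyticity on $S_0$ follows from the fact that $z \mapsto |x_0|^{1-z}|x_1|^z$ is, via the lattice functional calculus, an $X_0+X_1$-valued analytic function, a point to be justified using Proposition \ref{acc} by approximating the exponent via step functions on the measure space.

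The main obstacle is precisely the construction and justification of the analyticity of this auxiliary function, in particular handling the zero sets of $x_0$ and $x_1$ and proving that $z \mapsto |x_0|^{1-z}|x_1|^z$ is genuinely $(X_0+X_1)$-valued analytic rather than merely pointwise analytic. The finiteness of the lattice integrals forces one to use the explicit formula $|a|^w = \exp(w \log |a|)$ and approximate by truncation $\min(|x_j|, N)\vee N^{-1}$, then pass to the limit via dominated convergence inside the Banach lattice $X_0+X_1$; here one crucially uses that $x \in X_0 \cap X_1$ (guaranteed by the density reduction) to ensure the limit controls $x$ itself in both $X_0$ and $X_1$. Once $f$ is constructed, taking $\delta \downarrow 0$ (or merely fixing $\delta$ small) and $\lambda \downarrow \|x\|_{X_0^{1-\Theta}X_1^\Theta}$ yields the desired bound on $\|x\|_{[X_0,X_1]_\Theta}$, completing the proof.
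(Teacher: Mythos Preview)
The paper does not supply its own proof of this proposition; it is stated as a known result and attributed to Shestakov \cite{s74,s74m}. Your argument follows essentially the classical Shestakov/Calder\'on approach that the paper is citing: the embedding $[X_0,X_1]_\Theta \hookrightarrow X_0^{1-\Theta}X_1^\Theta$ plus density of $X_0\cap X_1$ in $[X_0,X_1]_\Theta$ handles one inclusion, and the explicit lattice-valued analytic function $f(z)=\operatorname{sgn}(x)\,|x|\,|x_0|^{\Theta-z}|x_1|^{z-\Theta}$ (with Gaussian damping) handles the other. Your identification of the $(X_0+X_1)$-valued analyticity of $z\mapsto |x_0|^{1-z}|x_1|^z$ as the main technical point, and your proposed resolution via truncation and passage to the limit, is exactly how this is done in the literature.
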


Using the inner complex method instead of the usual complex method, the following
generalization of Proposition \ref{shest} was obtained in \cite{y14}.

\begin{proposition}\label{shest2}
Let $(X_0,X_1)$ be a couple of analytically convex
quasi-Banach lattices and $\Theta \in (0,1)$. Then
\[
[X_0,X_1]_\Theta^i =
\lf[X_0, X_1, X_0^{1-\Theta} \, X_1^\Theta, \#\r]\, .
\]
\end{proposition}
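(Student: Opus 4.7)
My plan is to prove the two inclusions
\[
[X_0,X_1]_\tz^i \hookrightarrow \lf(X_0, X_1, X_0^{1-\tz}X_1^\tz, \#\r)
\quad\mbox{and}\quad
\lf(X_0, X_1, X_0^{1-\tz}X_1^\tz, \#\r)\hookrightarrow [X_0,X_1]_\tz^i
\]
separately, adapting Shestakov's argument for Proposition \ref{shest} to the quasi-Banach lattice setting by using the extra structure imposed by the inner complex method.

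For the first inclusion, I will first establish the quasi-Banach three-lines inequality
\[
\lf\|f(\tz)\r\|_{X_0^{1-\tz}X_1^\tz} \le \lf\|f\r\|_\ca,\quad f \in \ca(X_0,X_1).
\]
Since $X_0, X_1$ are lattices of measurable functions on some $(\mathfrak{X}, \mathcal{S}, \mu)$, for $\mu$-a.e.\ $\omega$ the scalar function $z\mapsto f(z)(\omega)$ is analytic on $S_0$ and continuous on $S$, and the classical Hadamard three-lines lemma yields the pointwise estimate $|f(\tz)(\omega)|\le M_0(\omega)^{1-\tz}M_1(\omega)^\tz$ with $M_j(\omega):=\sup_{t\in\rr}|f(j+it)(\omega)|$. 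Applying Lemma \ref{Leb}(ii), together with lattice $r$-convexity from Proposition \ref{lattice}, bounds $\|f(\tz)\|_{X_0^{1-\tz}X_1^\tz}$ by $\|M_0\|_{X_0}^{1-\tz}\|M_1\|_{X_1}^\tz\le\|f\|_\ca$. Now given $x\in[X_0,X_1]_\tz^i$, pick $f \in \ca_0$ with $f(\tz)=x$; by definition of $\ca_0$ there exist $f_n \in \ca$ with $f_n(z) \in X_0 \cap X_1$ on $S_0$ and $f_n \to f$ in $\ca$, whence $f_n(\tz) \in X_0 \cap X_1$ converges to $x$ in $X_0^{1-\tz}X_1^\tz$, placing $x$ in $(X_0, X_1, X_0^{1-\tz}X_1^\tz, \#)$.

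For the converse inclusion, since $[X_0,X_1]_\tz^i$ is quasi-Banach (Proposition \ref{complexfinal} and Remark \ref{inner1}), it suffices to show that $X_0 \cap X_1 \hookrightarrow [X_0,X_1]_\tz^i$ with $\|x\|_{[X_0,X_1]_\tz^i}\ls\|x\|_{X_0^{1-\tz}X_1^\tz}$. Given $x \in X_0 \cap X_1$ and $\lambda > \|x\|_{X_0^{1-\tz}X_1^\tz}$, Lemma \ref{Leb}(ii) provides non-negative $g \in X_0$ and $h \in X_1$ with $|x|\le\lambda g^{1-\tz}h^\tz$ a.\,e.\ and $\|g\|_{X_0},\|h\|_{X_1}\le 1$. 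Setting $u := x/(\lambda g^{1-\tz}h^\tz)$ on $\{gh>0\}$ and $0$ elsewhere, so that $|u|\le 1$, define the Calder\'on-type analytic function
\[
f(z):=\lambda\, e^{\vez(z-\tz)^2}\, u\, g^{1-z}h^z,\quad z\in S,
\]
interpreting $g^{1-z}=\exp((1-z)\log g)$ on $\{g>0\}$ and analogously for $h^z$, with small $\vez>0$. Routine estimates based on $|g^{1-z}h^z|=g^{1-\Re z}h^{\Re z}$ yield $f\in\ca$, $f(\tz)=x$, and $\|f\|_\ca\le C_\vez\lambda$.

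The main obstacle is exhibiting $f$ as an element of $\ca_0$. The natural candidates are $f_n(z):=\lambda e^{\vez(z-\tz)^2}u\, g_n^{1-z}h_n^z$ with $g_n:=\min(g,n)\chi_{A_n}$ and $h_n:=\min(h,n)\chi_{A_n}$ for an increasing exhaustion $\{A_n\}$ by sets of finite measure; each $f_n(z)$ is pointwise bounded by $C_\vez\lambda n$ on $A_n$ and hence lies in $X_0\cap X_1$. Proving $f_n\to f$ in $\ca$ reduces, via the three-lines principle applied to $f-f_n$, to the boundary convergences $\|g-g_n\|_{X_0}\to 0$ and $\|h-h_n\|_{X_1}\to 0$, which is the delicate step: in general quasi-Banach lattices such truncations need not converge in norm, so the initial decomposition must be chosen carefully, exploiting that $x\in X_0\cap X_1$ together with the analytic convexity (equivalently, lattice $r$-convexity) of $X_0$ and $X_1$ to obtain the required convergence through a dominated-convergence-type argument. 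Once $f\in\ca_0$ is established, one concludes $\|x\|_{[X_0,X_1]_\tz^i}\le C\lambda$, and letting $\lambda\downarrow\|x\|_{X_0^{1-\tz}X_1^\tz}$ completes the proof.
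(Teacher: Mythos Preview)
The paper does not give its own proof of this proposition; it simply records that the result was obtained in \cite{y14}. So there is no in-paper argument to compare against, and the question is whether your outline stands on its own. It does not: each direction contains a genuine gap.

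For the inclusion $[X_0,X_1]_\tz^i \hookrightarrow (X_0, X_1, X_0^{1-\tz}X_1^\tz, \#)$, the final step $\|M_0\|_{X_0}^{1-\tz}\|M_1\|_{X_1}^\tz\le\|f\|_\ca$ rests on the estimate $\|M_j\|_{X_j}\le\sup_{t}\|f(j+it)\|_{X_j}$, i.e.\ on the lattice norm of a pointwise supremum being dominated by the supremum of the norms. This is false in general (take $X_j=L_1(\rr)$ and $f(j+it)=\chi_{[t,t+1]}$), and lattice $r$-convexity in the sense of Proposition \ref{lattice} does not rescue it: $r$-convexity controls $\ell^r$-sums of finitely many functions, not uncountable suprema. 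Replacing the sup by a Poisson average of the boundary values is the natural repair in the Banach setting, but that requires an integral Minkowski inequality, which is unavailable for genuine quasi-norms. One has to either reduce to the Banach case via $r$-convexification of $X_0,X_1$ and invoke Proposition \ref{shest} there, or argue through a dense subclass of $\ca_0$ (e.g.\ $X_0\cap X_1$-valued exponential polynomials) where the pointwise three-lines bound can be converted into a Calder\'on-product estimate without a maximal function.

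For the reverse inclusion you build the correct Calder\'on function $f$ and the correct truncations $f_n$, but then explicitly leave the crucial step---that $f_n\to f$ in $\ca$---unresolved, noting only that truncations ``need not converge in norm'' and that the decomposition ``must be chosen carefully''. That is precisely the content of the proposition. With no order-continuity hypothesis on $X_0,X_1$, the cut-offs $g_n,h_n$ will not in general converge to $g,h$ in norm, and the hoped-for dominated-convergence argument is simply not available in this generality. A different mechanism is needed, for instance passing to the $r$-convexifications (Banach lattices) and transferring back, or exploiting from the outset that $x\in X_0\cap X_1$ so that $g,h$ can be manufactured from $|x|$ itself rather than from an abstract Calder\'on factorisation. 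As written, neither inclusion is proved.
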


In view of Proposition \ref{nil}, we have the following obvious conclusion which will be our main tool in this subsection.

\begin{corollary}
 \label{nil2}
Let $(X_0,X_1)$ be a couple of analytically convex
quasi-Banach lattices and $\Theta \in (0,1)$.
If
\[
X_0^{1-\Theta} \, X_1^\Theta = \laz X_0, \, X_1, \Theta\raz,
\]
then
\[
[X_0,X_1]_\Theta^i = \laz X_0, \, X_1\raz_\tz
\]
follows.
\end{corollary}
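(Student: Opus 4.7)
The plan is to deduce Corollary \ref{nil2} by chaining together the two descriptions of the closures already supplied, namely Proposition \ref{shest2} (which identifies $[X_0,X_1]_\Theta^i$ with the closure of $X_0\cap X_1$ inside the Calder\'on product $X_0^{1-\Theta}X_1^\Theta$) and Proposition \ref{nil} (which identifies $\laz X_0,X_1\raz_\Theta$ with the closure of $X_0\cap X_1$ inside $\laz X_0,X_1,\Theta\raz$). Both of these are already in place, so the work reduces to observing that the hypothesis $X_0^{1-\Theta}X_1^\Theta=\laz X_0,X_1,\Theta\raz$ forces the two closures to coincide.

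Concretely, first I would invoke Proposition \ref{shest2}, which is applicable because $(X_0,X_1)$ is assumed to be a couple of analytically convex quasi-Banach lattices, to write
\[
[X_0,X_1]_\Theta^i = \lf[X_0,X_1,X_0^{1-\Theta}X_1^\Theta,\#\r].
\]
Next I would use the hypothesis $X_0^{1-\Theta}X_1^\Theta=\laz X_0,X_1,\Theta\raz$. Since equivalence of quasi-norms on the same set produces the same topology, and hence the same closure of any given subset, this identity yields
\[
\lf[X_0,X_1,X_0^{1-\Theta}X_1^\Theta,\#\r]=\lf[X_0,X_1,\laz X_0,X_1,\Theta\raz,\#\r].
\]
Finally, Proposition \ref{nil} identifies the right-hand side with $\laz X_0,X_1\raz_\Theta$, and combining the three equalities gives the claim.

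There is essentially no obstacle in this argument beyond verifying that both Propositions \ref{shest2} and \ref{nil} genuinely apply under the stated hypotheses; the one thing worth stating carefully in the write-up is that the hypothesis is an equality of quasi-Banach spaces with equivalent quasi-norms (rather than merely equality of the underlying sets), so that the closures of $X_0\cap X_1$ in the two ambient spaces really do coincide. Nothing else is required, and no density of $X_0\cap X_1$ in $X_0$ or $X_1$ needs to be assumed, since both sides are defined as closures of the same set in (equivalent) ambient quasi-norms.
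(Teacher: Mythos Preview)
Your proposal is correct and is exactly the argument the paper has in mind: the corollary is stated immediately after Proposition~\ref{shest2} with the remark ``In view of Proposition~\ref{nil}, we have the following obvious conclusion,'' and your chaining of Proposition~\ref{shest2}, the hypothesis, and Proposition~\ref{nil} is precisely that obvious conclusion spelled out.
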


Arguing first on the level of sequence spaces and then transferring the result to
function spaces by means of the method of the retraction and the coretraction, we obtain the following.

\begin{theorem}\label{gagl6}
 Theorems \ref{gagl1},  \ref{gagl7}, \ref{gagl2} and \ref{gagl4} remain true with
 $\laz \, \cdot, \, \cdot \, \raz_\tz$ replaced by
$[ \, \cdot, \, \cdot \, ]_\tz^i$.
 \end{theorem}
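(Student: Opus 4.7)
The plan is to apply Corollary \ref{nil2} on the level of sequence spaces and then transfer the resulting identity to function spaces via the retraction/coretraction technique. For each of the four theorems in question, the Peetre--Gagliardo side has already been computed; the task is solely to identify $[\,\cdot\,,\,\cdot\,]_\tz^i$ with $\laz\,\cdot\,,\,\cdot\,\raz_\tz$ on the same pair of spaces.

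First, I would work at the sequence space level. The sequence spaces $\sat$, $\sbt$, $\sft$ and $n_{u,p,q}^s(\rn)$ are quasi-Banach lattices which, by Lemma \ref{convex}, are analytically convex. Under the restrictions $\tau_0 p_0=\tau_1 p_1$ (respectively $p_0 u_1 = p_1 u_0$), Propositions \ref{morrey2} and \ref{morrey3} yield the Calder\'on product identities
\[
\lf[a^{s_0,\tau_0}_{p_0,q_0}(\rn)\r]^{1-\tz}\lf[a^{s_1,\tau_1}_{p_1,q_1}(\rn)\r]^\tz = a^{s,\tau}_{p,q}(\rn),\qquad
\lf[n^{s_0}_{u_0,p_0,q_0}(\rn)\r]^{1-\tz}\lf[n^{s_1}_{u_1,p_1,q_1}(\rn)\r]^\tz = n^{s}_{u,p,q}(\rn),
\]
while the sequence-space counterpart of Theorem \ref{COMI} (i.e., Theorem \ref{comi}) identifies the same spaces with $\laz \,\cdot\,,\,\cdot\,, \tz\raz$. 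By Corollary \ref{nil2}, it follows that
\[
[a^{s_0,\tau_0}_{p_0,q_0}(\rn),\,a^{s_1,\tau_1}_{p_1,q_1}(\rn)]_\tz^i
= \laz a^{s_0,\tau_0}_{p_0,q_0}(\rn),\,a^{s_1,\tau_1}_{p_1,q_1}(\rn)\raz_\tz,
\]
and analogously for the $n^s_{u,p,q}$-scale. All parameter constellations needed for Theorems \ref{gagl1}, \ref{gagl7}, \ref{gagl2} and \ref{gagl4} are covered by the hypotheses of these two propositions (in particular, for Theorem \ref{gagl1} one has $\tau_0=\tau_1=0$, so the constraint $\tau_0 p_0=\tau_1 p_1$ is automatic).

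Second, I would transfer this identity to the function spaces. The wavelet decomposition (Propositions \ref{wave1} and \ref{wave2}) provides a bounded linear retraction $R$ from a sequence space onto the corresponding function space and a bounded linear coretraction $E$ in the other direction, with $R\circ E=I$, and the same pair $(R,E)$ works simultaneously for all scales and all admissible parameters. By Remark \ref{inner1}, Proposition \ref{complexretract} holds for the inner complex method, so
\[
[A^{s_0,\tau_0}_{p_0,q_0}(\rn), A^{s_1,\tau_1}_{p_1,q_1}(\rn)]_\tz^i = R\lf([a^{s_0,\tau_0}_{p_0,q_0}(\rn), a^{s_1,\tau_1}_{p_1,q_1}(\rn)]_\tz^i\r),
\]
and Proposition \ref{complexretract-pg} gives the matching identity for $\laz\,\cdot\,,\,\cdot\,\raz_\tz$. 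Combining these with the sequence-space identity from the first step yields
\[
[A^{s_0,\tau_0}_{p_0,q_0}(\rn), A^{s_1,\tau_1}_{p_1,q_1}(\rn)]_\tz^i = \laz A^{s_0,\tau_0}_{p_0,q_0}(\rn), A^{s_1,\tau_1}_{p_1,q_1}(\rn)\raz_\tz,
\]
and analogously for Besov--Morrey spaces. Substituting into the explicit descriptions of the right-hand sides supplied by Theorems \ref{gagl1}, \ref{gagl7}, \ref{gagl2} and \ref{gagl4} completes the proof.

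The main obstacle I expect is bookkeeping rather than a deep step: one must check, for each sub-case of Theorem \ref{gagl1} in particular (including the $p=\infty$, $q=\infty$ boundary cases appearing in parts (ii)--(vii)), that the wavelet isomorphism actually provides a retraction pair between the quasi-Banach lattice of sequences and the function space in question, and that the analytic convexity hypothesis of Corollary \ref{nil2} is in force (which is precisely the content of Lemma \ref{convex} together with its analogue for the limiting spaces such as $B^s_{\infty,q}(\rn)$). Once this verification is done case by case, the reduction via Corollary \ref{nil2} is mechanical, since every spot where the Peetre--Gagliardo side was computed already relied on exactly the Calder\'on product identities required to invoke Corollary \ref{nil2}.
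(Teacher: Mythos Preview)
Your proposal is correct and follows essentially the same route as the paper: apply Corollary~\ref{nil2} at the sequence-space level (where analytic convexity is supplied by Lemma~\ref{convex} and the Calder\'on product/$\pm$-method identities by Propositions~\ref{morrey2}, \ref{morrey3} and Theorems~\ref{comi}, \ref{comi-bm}), and then lift to function spaces via the wavelet retraction/coretraction (Propositions~\ref{wave1}, \ref{wave2}, \ref{complexretract-pg} and Remark~\ref{inner1}). The only point worth noting is that for Theorem~\ref{gagl2} the constraint $\tau_0 p_0=\tau_1 p_1$ is not assumed, so one first reduces to the pair $(B^{s_0+n(\tau_0-1/p_0)}_{\infty,\infty}(\rn),\,B^{s_1+n(\tau_1-1/p_1)}_{\infty,\infty}(\rn))$ via Proposition~\ref{basic1}(iii), after which the argument falls under the $\tau=0$ case already handled in Theorem~\ref{gagl1}(ii); this is exactly how the paper proceeds as well.
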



\subsection*{Complex interpolation of Besov and Triebel-Lizorkin spaces}


Theorem \ref{gagl6} has some very interesting consequences for the complex interpolation of Besov and Triebel-Lizorkin spaces.
On the one side, it covers all the cases discussed in Remark \ref{complex1a} (see Theorem \ref{gagl1}(i)),
on the other hand,
it provides the most complete collection of results concerning the complex interpolation of Besov and Triebel-Lizorkin
spaces where both spaces are non-separable.
In particular, we have
\[
\accentset{\diamond}{B}^{s}_{p,q} (\rn)
= \lf[B_{p_0,q_0}^{s_0}(\rn), B_{p_1,q_1}^{s_1}(\rn)\r]_\tz^i\,,
\]
if the standard assumptions
\begin{equation} \label{standard}
s=(1-\Theta)\, s_0 + \Theta\, s_1\,, \qquad
\frac 1q := \frac{1-\tz}{q_0} + \frac{\tz}{q_1}\qquad
\mbox{and}
\qquad \frac 1p := \frac{1-\tz}{p_0} + \frac{\tz}{p_1}\,
\end{equation}
are fulfilled and, in addition, one of the further sets of the following restrictions holds true:
\begin{enumerate}
\item[(a)] $s_0 \neq s_1$ and $p_0 = p_1 = \infty$;
\item[(b)] $s_0 = s_1$, $p_0 = p_1 = \infty$ and $q_0 < q_1$;
\item[(c)] $s_0 \neq s_1$, $0 < p_0 = p_1 < \infty$ and $q_0 = q_1 = \infty$;
\item[(d)] $0 < p_0 < p_1 < \infty$, $s_0 -n/p_0 > s_1 - n/p_1$ and $q_0 = q_1 = \infty$.
\end{enumerate}

Similarly,
\[
\accentset{\diamond}{F}^{s}_{p,q} (\rn)
= \lf[F_{p_0,q_0}^{s_0}(\rn), F_{p_1,q_1}^{s_1}(\rn)\r]_\tz^i\,,
\]
if the standard assumptions  \eqref{standard}
are fulfilled and, in addition, one of the further sets of the following
restrictions holds true:
\begin{enumerate}
\item[(e)] $s_0 \neq s_1$, $0 < p_0 = p_1 < \infty$ and $q_0 = q_1 = \infty$;
\item[(f)] $0 < p_0 < p_1 < \infty$, $s_0 -n/p_0 \ge s_1 - n/p_1$ and $q_0 = q_1 = \infty$.
\end{enumerate}

By a closer look onto these different sets of restrictions, we find
that the space
$$\lf[B_{p_0,q_0}^{s_0}(\rn), B_{p_1,q_1}^{s_1}(\rn)\r]_\tz^i$$
is not known  if either
\[
0 < p_0 < p_1  \le  \infty, \qquad s_0 -n/p_0 \le  s_1 - n/p_1 \qquad \mbox{and} \qquad  q_0 = q_1 = \infty
\]
or
\[
0 < p_0 < p_1  =  \infty, \qquad s_0 -n/p_0 \le  s_1\, , \qquad q_0 = \infty  \qquad \mbox{and} \qquad  q_1\in(0, \infty).
\]
Similarly,  the space $\lf[F_{p_0,q_0}^{s_0}(\rn), F_{p_1,q_1}^{s_1}(\rn)\r]_\tz^i$ is not known  if
\[
0 < p_0 < p_1 < \infty, \qquad s_0 -n/p_0 \le  s_1 - n/p_1 \qquad \mbox{and} \qquad  q_0 = q_1 = \infty\, .
\]
We add a comment to a formula stated in \cite[Theorem 6.4.5]{BL},
which claims that
\[
B_{p,q}^{s}(\rn)
= \lf[B_{p_0,q_0}^{s_0}(\rn), B_{p_1,q_1}^{s_1}(\rn)\r]_\tz\,
\]
for all $s_0,\ s_1 \in \rr$ and $p_0,\ p_1,\ q_0,\ q_1\in[1,\infty]$.
By our previous remarks, this can not be true in this generality. There are plenty of counterexamples if
$p_0 + q_0 = p_1 + q_1 = \infty$.

\begin{remark}
We shall make some comments to the literature.
The formula
$$
\lf[B_{p,\fz}^{s_0}(\rn), B_{p,\fz}^{s_1}(\rn)\r]_\tz = \mathring{B}_{p,\fz}^{s}(\rn)\subsetneqq B_{p,\fz}^{s}(\rn)
$$
has been proved by Triebel \cite[Theorem 2.4.1]{t78} under the restrictions $s_0\neq s_1$ and $p\in(1,\fz)$.
The counterparts of part (iv), (v) and (vi) of Theorem \ref{gagl1} for the  complex method
have been proved before in Sickel et al. \cite{ssv} (see also \cite{ssvb}).
Also, Sawano and Tanaka \cite{sat} have studied
the complex interpolation of
Besov-Morrey and Triebel-Lizorkin-Morrey spaces with fixed $s$ and fixed $p$
(Banach cases), in which they proved
\[
\lf[\cn^{s}_{u,p,q_0}(\rn), \cn^{s}_{u, p,q_1}(\rn)\r]_\tz   = \cn^{s}_{u,p,q}(\rn)
\]
and
\[
\lf[F^{s,\tau}_{p,q_0}(\rn), F^{s,\tau}_{p,q_1}(\rn)\r]_\tz   = F^{s,\tau}_{p,q}(\rn)\,,
\]
under the restrictions $s \in \rr$, $1 < p \le u < \infty$, $q_0,\ q_1\in(1,\infty]$ and
$\frac 1q := \frac{1-\tz}{q_0} + \frac{\tz}{q_1}$, altogether being special cases of
Theorem \ref{gagl6}.
\end{remark}

Now we turn to Morrey spaces. By Corollary \ref{nil2}, it is not a big surprise
that the space $$[\cm^{u_0}_{p_0}(\rn), \cm^{u_1}_{p_1}(\rn)]^i_\tz $$ behaves as $ \laz\cm^{u_0}_{p_0}(\rn), \cm^{u_1}_{p_1}(\rn)\raz_\tz $;
see Corollary \ref{morrey42}.

\begin{theorem}\label{morrey4}
Let $\Theta\in(0,1)$, $0 < p_0 \le u_0 <\infty$,
\[
0 < p_1 \le u_1 <\infty \, , \quad
\frac 1u := \frac{1-\tz}{u_0} + \frac{\tz}{u_1} \quad \mbox{and}
\quad  \frac 1p := \frac{1-\tz}{p_0} + \frac{\tz}{p_1} \, .
\]

{\rm (i)}
It holds true that
\[
[\cm^{u_0}_{p_0}(\rn), \cm^{u_1}_{p_1}(\rn)]^i_\tz \hookrightarrow  \cm^{u}_{p}(\rn)
\]
and the embedding is always proper except the trivial cases consisting in
\begin{enumerate}
\item[{\rm(a)}] $p_0 = p_1$ and $u_0 = u_1$, or
\item[{\rm(b)}] $p_0 = u_0$ and $p_1 = u_1$.
\end{enumerate}

{\rm (ii)} Suppose, in addition, $1 \le  p_0 < p_1$, $p_0<u_0$ and $u_0 p_1 = u_1 p_0$. Then
\[
[\cm^{u_0}_{p_0}(\rn), \cm^{u_1}_{p_1}(\rn)]^i_\tz =  \cm^{u_0,u_1,\tz}_{p_0,p_1}(\rn)\, .
\]
\end{theorem}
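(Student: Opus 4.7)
The plan is to derive both parts from the identification of the inner complex method with the Peetre–Gagliardo method via Corollary \ref{nil2}, combined with the Calder\'on product results of Theorem \ref{morrey1} and the Peetre–Gagliardo identifications already established for Morrey spaces.

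First, I would record that each $\cm^{u_i}_{p_i}(\rn)$ is an analytically convex quasi-Banach lattice (as a lattice $r$-convex space for $r\le\min\{p_0,p_1\}$, via Proposition \ref{lattice}), so Propositions \ref{shest2} and \ref{nil}, as well as Corollary \ref{nil2}, are applicable to the couple $(\cm^{u_0}_{p_0}(\rn),\cm^{u_1}_{p_1}(\rn))$. For part (i), Proposition \ref{shest2} yields
\[
[\cm^{u_0}_{p_0}(\rn),\cm^{u_1}_{p_1}(\rn)]^{i}_{\tz}
=\overline{\cm^{u_0}_{p_0}(\rn)\cap\cm^{u_1}_{p_1}(\rn)}^{\,\|\cdot\|_{Y}}\, ,\qquad
Y:=[\cm^{u_0}_{p_0}(\rn)]^{1-\tz}\,[\cm^{u_1}_{p_1}(\rn)]^{\tz}\, ,
\]
and Theorem \ref{morrey1}(i) gives $Y\hookrightarrow\cm^u_p(\rn)$, which proves the embedding in (i).

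To establish the properness, I would split into cases. If $p_0u_1\neq p_1u_0$, then Theorem \ref{morrey1}(iii) shows $Y\subsetneqq\cm^u_p(\rn)$, so a fortiori $[\cm^{u_0}_{p_0}(\rn),\cm^{u_1}_{p_1}(\rn)]^{i}_{\tz}\subsetneqq\cm^u_p(\rn)$. If $p_0u_1=p_1u_0$, then Theorem \ref{morrey1}(ii) gives $Y=\cm^u_p(\rn)$ and Corollary \ref{cima}(i) gives $\lf\laz\cm^{u_0}_{p_0}(\rn),\cm^{u_1}_{p_1}(\rn),\tz\r\raz=\cm^u_p(\rn)$; hence the hypothesis of Corollary \ref{nil2} is met and
\[
[\cm^{u_0}_{p_0}(\rn),\cm^{u_1}_{p_1}(\rn)]^{i}_{\tz}
=\lf\laz\cm^{u_0}_{p_0}(\rn),\cm^{u_1}_{p_1}(\rn)\r\raz_{\tz}\, .
\]
Corollary \ref{morrey42}(i) then supplies the proper embedding into $\cm^u_p(\rn)$ outside the two listed trivial configurations; in those configurations (a) and (b), equality is immediate (either because both spaces coincide, or because we are interpolating Lebesgue spaces where $[L_{p_0}(\rn),L_{p_1}(\rn)]^{i}_{\tz}=L_{p}(\rn)$, a classical fact).

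For part (ii), the additional hypotheses $1\le p_0<p_1$, $p_0<u_0$ and $p_0u_1=p_1u_0$ (which also force $p_1<u_1$) put us simultaneously in the setting of Corollary \ref{nil2} and Theorem \ref{gp01}. Consequently,
\[
[\cm^{u_0}_{p_0}(\rn),\cm^{u_1}_{p_1}(\rn)]^{i}_{\tz}
=\lf\laz\cm^{u_0}_{p_0}(\rn),\cm^{u_1}_{p_1}(\rn)\r\raz_{\tz}
=\cm^{u_0,u_1,\tz}_{p_0,p_1}(\rn)\, ,
\]
where the first equality uses Corollary \ref{nil2} as in part (i), and the second is precisely Theorem \ref{gp01}.

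The main obstacle is the properness in part (i) when $p_0u_1=p_1u_0$: here the Calder\'on product already fills $\cm^u_p(\rn)$, so the failure of surjectivity cannot be read off from Theorem \ref{morrey1} alone and must be extracted from the density analysis underlying Corollary \ref{morrey42}(i), namely the existence of functions in $\cm^u_p(\rn)$ (such as the power-type examples $f_{n/u}$ and $h_{n/u}$ from \eqref{falpha}--\eqref{halpha}) that cannot be approximated in the $\cm^u_p(\rn)$ quasi-norm by elements of $\cm^{u_0}_{p_0}(\rn)\cap\cm^{u_1}_{p_1}(\rn)$. Verifying the analytic convexity of $\cm^{u_0}_{p_0}(\rn)+\cm^{u_1}_{p_1}(\rn)$ (needed to legitimately invoke Propositions \ref{shest2} and \ref{complexinterpol}) is the only other technical point, and it follows from Proposition \ref{lattice} applied with a common lattice convexity exponent.
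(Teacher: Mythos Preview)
Your proposal is correct and follows essentially the same path as the paper: analytic convexity via lattice $r$-convexity (Remark~\ref{morrey24}(i)), the embedding via Proposition~\ref{shest2} and Theorem~\ref{morrey1}(i), properness in the case $p_0u_1\neq p_1u_0$ via Theorem~\ref{morrey1}(iii), the reduction $[\,\cdot\,,\,\cdot\,]^{i}_{\tz}=\laz\,\cdot\,,\,\cdot\,\raz_{\tz}$ via Corollary~\ref{nil2} when $p_0u_1=p_1u_0$, and then Corollary~\ref{morrey42} and Theorem~\ref{gp01} to finish. Your explicit verification of the hypothesis of Corollary~\ref{nil2} (using Theorem~\ref{morrey1}(ii) together with Corollary~\ref{cima}(i)) is a helpful clarification that the paper leaves implicit.
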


Theorem \ref{morrey4}(i) extends and supplements the negative result \eqref{interpol1} of Lemari{\'e}-Rieusset \cite[Theorem 3]{LR},
already mentioned in Section \ref{s1} of this article, to the case of quasi-Banach spaces.

Concerning the description of $\lf[A^{s_0,\tau_0}_{p_0,q_0}(\rn), A^{s_1,\tau_1}_{p_1,q_1}(\rn)\r]_\tz^i$, $p_0 \neq p_1$,
$\tau_i\in(0,1/p_i)$, $i\in\{0,1\}$,
we have very little to say.

\begin{proposition}\label{morreyx}
Let $\tz\in(0,1)$,  $s_i \in \rr$, $p_i\in(0,\fz)$, $q_i\in(0,\fz]$, $\tau_i\in[0,1/p_i)$, $i\in\{0,1\}$,
$s  := (1-\tz) \, s_0 + \tz \, s_1$,
\[
\frac 1p := \frac{1-\tz}{p_0} + \frac{\tz}{p_1},
\quad  \frac 1q := \frac{1-\tz}{q_0} + \frac{\tz}{q_1} \quad\mbox{and}\quad
\tau := (1-\tz)\tau_0+\tz\tau_1.
\]
Then
\[
 \lf[F^{s_0,\tau_0}_{p_0,q_0}(\rn), F^{s_1,\tau_1}_{p_1,q_1}(\rn)\r]_\tz^i   \hookrightarrow
\lf[F^{s_0,\tau_0}_{p_0,q_0}(\rn), F^{s_1,\tau_1}_{p_1,q_1}(\rn)\r]_\tz   \hookrightarrow
F^{s,\tau}_{p,q}(\rn)
\]
and
\[
 \lf[B^{s_0,\tau_0}_{p_0,q_0}(\rn), B^{s_1,\tau_1}_{p_1,q_1}(\rn)\r]_\tz^i   \hookrightarrow
\lf[B^{s_0,\tau_0}_{p_0,q_0}(\rn), B^{s_1,\tau_1}_{p_1,q_1}(\rn)\r]_\tz   \hookrightarrow
B^{s,\tau}_{p,q}(\rn)\, .
\]
\end{proposition}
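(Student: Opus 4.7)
The plan is to prove the two embeddings separately. Abbreviate $A_i := A^{s_i,\tau_i}_{p_i,q_i}(\rn)$ and $a_i := a^{s_i,\tau_i}_{p_i,q_i}(\rn)$ for $i\in\{0,1\}$. The first embedding, $[A_0,A_1]_\tz^i \hookrightarrow [A_0,A_1]_\tz$, is immediate from the definitions in Subsection \ref{inter1ca}: since $\ca_0(A_0,A_1) \subset \ca(A_0,A_1)$, the infimum defining the inner quasi-norm is taken over a smaller set, so $\|f\|_{[A_0,A_1]_\tz} \le \|f\|_{[A_0,A_1]_\tz^i}$ for every $f \in [A_0,A_1]_\tz^i$.

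For the second embedding I first reduce to the sequence space level. By the wavelet characterizations (Propositions \ref{wave1} and \ref{wave2} in Appendix), there is a common coretraction $E$ and retraction $R$, defined in terms of a fixed wavelet basis and hence independent of the particular parameters, such that $A^{s,\tau}_{p,q}(\rn)$ is a retract of $a^{s,\tau}_{p,q}(\rn)$, and likewise for $A_i$ and $a_i$, $i\in\{0,1\}$. Applying Proposition \ref{complexretract} (and its inner counterpart, see Remark \ref{inner1}), it suffices to prove
\[
\lf[a_0, a_1\r]_\tz \hookrightarrow a^{s,\tau}_{p,q}(\rn),
\]
since the embedding for the inner method then follows a fortiori from the first paragraph.

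This sequence-space embedding factorizes into two steps. First, Proposition \ref{morrey2} gives
\[
a_0^{1-\tz}\,a_1^{\tz} \hookrightarrow a^{s,\tau}_{p,q}(\rn),
\]
with no restriction on $\tau_i p_i$ needed: only the reverse inclusion required the condition $\tau_0 p_0=\tau_1 p_1$, while the embedding $\hookrightarrow$ is unconditional. Second, I need the general embedding
\[
[X_0,X_1]_\tz \hookrightarrow X_0^{1-\tz}\,X_1^{\tz}
\]
valid for every couple of analytically convex quasi-Banach sequence lattices $(X_0,X_1)$; our sequence spaces $a_i$ qualify thanks to Lemma \ref{convex}. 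This is a quasi-Banach adaptation of Calder{\'o}n's classical three-lines argument: given $f \in [X_0,X_1]_\tz$ and $F \in \ca$ with $F(\tz) = f$ and $\|F\|_{\ca}$ close to $\|f\|_{[X_0,X_1]_\tz}$, the scalar function $z \mapsto F(z)_{j,m}$ is bounded analytic on $S_0$ and continuous on $S$ for each coordinate $(j,m)$; Poisson integral representation on the strip yields a pointwise majorization $|f_{j,m}| \le g_{j,m}^{1-\tz}\,h_{j,m}^{\tz}$, and lattice $r$-convexity (equivalent to analytic convexity by Proposition \ref{lattice}) then delivers $\|g\|_{X_0}, \|h\|_{X_1} \lesssim \|F\|_{\ca}$. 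Composing both embeddings proves the $F$-case, and the $B$-case is entirely analogous.

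The main obstacle is the second step — the general embedding $[X_0,X_1]_\tz \hookrightarrow X_0^{1-\tz}X_1^{\tz}$ for analytically convex quasi-Banach lattices without the separability hypothesis required for the full identity in Lemma \ref{sl1}. For the inner complex method this step can be bypassed entirely by invoking Proposition \ref{shest2} directly, which identifies $[X_0,X_1]_\tz^i$ with the closure of $X_0\cap X_1$ in the Calder{\'o}n product and therefore automatically embeds it into $X_0^{1-\tz}X_1^{\tz}$; this offers a cleaner, independent route for the inner half of the conclusion.
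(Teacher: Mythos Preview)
Your approach is essentially the same as the paper's: both treat the first embedding as immediate from the definitions, reduce the second embedding to sequence spaces via wavelets and the retraction/coretraction argument, invoke the unconditional half of Proposition~\ref{morrey2} for $a_0^{1-\tz}a_1^\tz \hookrightarrow a^{s,\tau}_{p,q}(\rn)$, and then use the general embedding $[X_0,X_1]_\tz \hookrightarrow X_0^{1-\tz}X_1^\tz$ on the sequence level. The paper simply cites this last embedding as \cite[Proposition~1.10]{yyz} rather than sketching it, so your Poisson-integral outline is not strictly needed; your observation that Proposition~\ref{shest2} handles the inner method directly is correct and matches the paper's toolkit.
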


\begin{remark}
Also for the complex method,  the article of Frazier and Jawerth \cite{fj90}
is an important source and contains  a number of further results.
There the following formulas were proved:
\[
[ F_{p_0,q_0}^{s_0}(\rn), F_{p_1,q_1}^{s_1}(\rn)]_\tz =
[F_{p_0,q_0}^{s_0,0}(\rn), F_{p_1,q_1}^{s_1,0}(\rn)]_\tz =
F_{p,q}^{s,0}(\rn)= F_{p,q}^{s}(\rn)\, ,
\]
\[
[F_{p_0,q_0}^{s_0,1/p_0}(\rn), F_{p_1,q_1}^{s_1,1/p_1}(\rn)]_\tz = F_{p,q}^{s,1/p}(\rn)
= F_{\infty,q}^{s}(\rn),
\]
\[
[F_{p_0,q_0}^{s_0,1/p_0}(\rn), B_{p_1,\infty}^{s_1,1/p_1}(\rn)]_\tz = F_{p,q}^{s,1/p}(\rn)
= F_{\infty,q}^{s}(\rn)
\]
as well as
\[
[F_{p_0,q_0}^{s_0,0}(\rn), F_{p_1,q_1}^{s_1,1/p_1}(\rn)]_\tz = F_{p,q}^{s,0}(\rn)
\]
and
\begin{eqnarray*}
[ F_{p_0,q_0}^{s_0,0}(\rn), F_{p_1,q_1}^{s_1,\tau_1}(\rn)]_\tz & = &
[F_{p_0,q_0}^{s_0,0}(\rn), B_{\infty,\infty}^{s_1 + n(\tau_1-1/p_1)}(\rn)]_\tz
\\
& = & F_{p,q}^{s+n(\tau-1/p) + n(1-\Theta)/p_0,0}(\rn)
\end{eqnarray*}
if $p_0,\ p_1\in[1, \infty)$, $q_0,\ q_1\in[1,\infty]$, $\min\{q_0, \ q_1\} < \infty$,
$s_0,\ s_1\in\rr$,
$\tau_1\in(1/p_1,\fz)$, $\tz\in(0,1)$ and $s,\ p,\ q$ are as in Proposition
\ref{morreyx} (and with no further restrictions);
see \cite[Corollary~8.3]{fj90}.
\end{remark}

We turn to Besov-Morrey spaces. A counterpart to Proposition \ref{morreyx} holds true
as follows.

\begin{proposition}\label{morreyn}
Let $\tz\in(0,1)$,  $s_i \in \rr$, $u_i \in (0,\infty)$,
$p_i\in(0,u_i)$, $q_i\in(0,\fz]$,  $i\in\{0,1\}$,
\[
s  := (1-\tz) \, s_0 + \tz \, s_1,\quad\frac 1p := \frac{1-\tz}{p_0} + \frac{\tz}{p_1}
 \, , \quad
\frac 1u := \frac{1-\tz}{u_0} + \frac{\tz}{u_1}\,
\quad\mbox{and}\quad  \frac 1q := \frac{1-\tz}{q_0} + \frac{\tz}{q_1}\, .
\]
Then
\[
 \lf[\cn^{s_0}_{u_0, p_0,q_0}(\rn), \cn^{s_1}_{u_1,p_1,q_1}(\rn)\r]_\tz^i   \hookrightarrow
\lf[\cn^{s_0}_{u_0,p_0,q_0}(\rn), \cn^{s_1}_{u_1, p_1,q_1}(\rn)\r]_\tz   \hookrightarrow
\cn^{s}_{u,p,q}(\rn) \, .
\]
\end{proposition}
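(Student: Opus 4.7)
The plan is to follow the same skeleton as Proposition \ref{morreyx} for Triebel-Lizorkin-type spaces, replacing the Calder\'on-product inclusion from Proposition \ref{morrey2} by its Besov-Morrey analogue contained in Proposition \ref{morrey3}. The first embedding $\lf[\cdot,\cdot\r]_\tz^i \hookrightarrow \lf[\cdot,\cdot\r]_\tz$ is immediate from the definitions, since in Definition \ref{dci} the class $\ca_0$ of analytic functions taking values in $X_0 \cap X_1$ on $S_0$ is a subset of $\ca$, so every representative used for the inner complex method is also admissible for the complex method (with the same quasi-norm). Only the second embedding requires genuine work.

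For the second embedding I would reduce to sequence spaces via the wavelet characterization. By Proposition \ref{wave2} in Appendix, each $\cn^{s_i}_{u_i, p_i,q_i}(\rn)$ is a retract of the sequence space $n^{s_i}_{u_i, p_i,q_i}(\rn)$, with a common coretraction $E$ and retraction $R$ independent of $i\in\{0,1\}$; moreover, the same pair $(E,R)$ relates $\cn^s_{u,p,q}(\rn)$ and $n^s_{u,p,q}(\rn)$. Since Lemma \ref{convex}(iii) guarantees that all these sequence and function spaces are analytically convex, Proposition \ref{complexretract} applies and yields
\[
\lf[\cn^{s_0}_{u_0,p_0,q_0}(\rn), \cn^{s_1}_{u_1,p_1,q_1}(\rn)\r]_\tz
= R\lf(\lf[n^{s_0}_{u_0,p_0,q_0}(\rn), n^{s_1}_{u_1,p_1,q_1}(\rn)\r]_\tz\r).
\]
Since $R$ maps $n^s_{u,p,q}(\rn)$ boundedly onto $\cn^s_{u,p,q}(\rn)$, it suffices to prove the corresponding sequence-space embedding
\[
\lf[n^{s_0}_{u_0,p_0,q_0}(\rn), n^{s_1}_{u_1,p_1,q_1}(\rn)\r]_\tz \hookrightarrow n^s_{u,p,q}(\rn).
\]

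At the sequence-space level, the crucial tool is the classical Calder\'on inequality in the quasi-Banach setting: if $X_0, X_1$ are analytically convex quasi-Banach lattices, then $[X_0,X_1]_\tz \hookrightarrow X_0^{1-\tz} X_1^\tz$. Given a representative $F \in \ca(X_0,X_1)$ of $a = F(\tz)$, one defines
\[
g := \exp\lf(\int_\rr \log|F(it,\cdot)|\, P_0(\tz,t)\, dt\r), \qquad
h := \exp\lf(\int_\rr \log|F(1+it,\cdot)|\, P_1(\tz,t)\, dt\r),
\]
with $P_0,P_1$ the Poisson kernels of the strip, so that subharmonicity of $\log|F(\cdot,\xi)|$ gives $|a(\xi)| \le g(\xi)^{1-\tz}\, h(\xi)^\tz$ pointwise; Jensen's inequality together with the lattice $r$-convexity of $X_0,X_1$ supplied by Proposition \ref{lattice} then forces $\|g\|_{X_0}, \|h\|_{X_1} \le \|F\|_{\ca}$, whence $a \in X_0^{1-\tz} X_1^\tz$. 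Applying this with $X_i = n^{s_i}_{u_i,p_i,q_i}(\rn)$ and combining with the Calder\'on-product inclusion
\[
\lf[n^{s_0}_{u_0,p_0,q_0}(\rn)\r]^{1-\tz}\, \lf[n^{s_1}_{u_1,p_1,q_1}(\rn)\r]^\tz \hookrightarrow n^s_{u,p,q}(\rn)
\]
from Proposition \ref{morrey3} finishes the proof.

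The main obstacle I anticipate is bookkeeping rather than a deep difficulty: one must insist that the Calder\'on inequality be used in the pure embedding form $[X_0,X_1]_\tz \hookrightarrow X_0^{1-\tz}X_1^\tz$ and avoid invoking the full identity $[X_0,X_1]_\tz = X_0^{1-\tz}X_1^\tz$, because Proposition \ref{Thm4} and Lemma \ref{sl1} require separability of (at least one of) the lattices, which is in general false for $n^{s_i}_{u_i,p_i,q_i}(\rn)$ when $p_i < u_i$ (compare Lemma \ref{ring}(ii) and (iv)). Fortunately the Poisson-integral construction described above yields the decomposition directly from the analytic convexity of the sequence lattices, with no density or separability hypothesis needed, and this is exactly the strength required to close the argument.
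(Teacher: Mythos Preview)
Your proposal is correct and follows essentially the same route as the paper: the first embedding comes straight from the definitions, and for the second you pass to the sequence spaces $n^{s_i}_{u_i,p_i,q_i}(\rn)$ via the wavelet isomorphism (Proposition \ref{wave2}), invoke the embedding $[X_0,X_1]_\tz \hookrightarrow X_0^{1-\tz}X_1^\tz$, and finish with the Calder\'on-product inclusion of Proposition \ref{morrey3}. The only difference is cosmetic: where the paper simply cites \cite[Proposition~1.10]{yyz} for $[X_0,X_1]_\tz \hookrightarrow X_0^{1-\tz}X_1^\tz$, you sketch the Poisson-integral/subharmonicity argument explicitly and correctly flag that separability is not needed for this direction.
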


At the end of this subsection,
we consider function spaces on the unit open cube $(0,1)^n$.
Parallel to the conclusions of
Theorem \ref{gp03}
for $\laz \cm^{u_0}_{p_0}((0,1)^n), \cm^{u_1}_{p_1}((0,1)^n)\raz_\tz$,
and Theorem \ref{gagl5} for $$\laz\cn_{u_0,p_0,q_0}^{s_0}((0,1)^n), \cn_{u_1,p_1,q_1}^{s_1}((0,1)^n) \raz_\tz,$$
we obtain the following conclusion.

\begin{theorem}\label{gp03n}
Let $\tz\in(0,1)$, $1\le p_0\le u_0<\fz$ and $1\le p_1\le u_1<\fz$.
If  $\frac1u:=\frac{1-\tz}{u_0}+\frac\tz{u_1}$,
$\frac1p:=\frac{1-\tz}{p_0}+\frac\tz{p_1}$ and $
p_0\, u_1=p_1\, u_0$,
then
$$
[\cm^{u_0}_{p_0}((0,1)^n), \cm^{u_1}_{p_1}((0,1)^n)]_\tz^i =
\accentset{\diamond}{\cm}^u_p((0,1)^n).
$$
\end{theorem}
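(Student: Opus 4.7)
The plan is to derive Theorem \ref{gp03n} from Theorem \ref{gp03} by invoking Corollary \ref{nil2}, which asserts that, for analytically convex quasi-Banach lattices $X_0,X_1$ satisfying $X_0^{1-\Theta}X_1^\Theta=\lf\laz X_0,X_1,\Theta\r\raz$, one has $[X_0,X_1]_\Theta^i=\lf\laz X_0,X_1\r\raz_\Theta$. Thus the task reduces to (a) verifying that the hypotheses of Corollary \ref{nil2} hold for the couple $\bigl(\cm^{u_0}_{p_0}((0,1)^n),\cm^{u_1}_{p_1}((0,1)^n)\bigr)$, and (b) invoking Theorem \ref{gp03} to identify $\lf\laz\cm^{u_0}_{p_0}((0,1)^n),\cm^{u_1}_{p_1}((0,1)^n)\r\raz_\tz$ with $\accentset{\diamond}{\cm}^u_p((0,1)^n)$.

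First I would observe that, since $p_i\ge 1$ for $i\in\{0,1\}$, the spaces $\cm^{u_i}_{p_i}((0,1)^n)$ are honest Banach lattices of functions on $(0,1)^n$; in particular they are lattice $1$-convex and hence analytically convex by Proposition \ref{lattice} (which is purely abstract and applies to any quasi-Banach lattice). Next I would establish the Calder{\'o}n product identity
\[
\lf[\cm^{u_0}_{p_0}((0,1)^n)\r]^{1-\tz}\,\lf[\cm^{u_1}_{p_1}((0,1)^n)\r]^{\tz}=\cm^u_p((0,1)^n),
\]
which is the bounded-domain counterpart of Theorem \ref{morrey1}(ii) under the hypothesis $p_0u_1=p_1u_0$. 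The proof of Theorem \ref{morrey1}(ii) given in \cite{lyy} rests on pointwise estimates of the form $|f|\le|f_0|^{1-\Theta}|f_1|^{\Theta}$ together with a Hölder-type argument on balls, and these ingredients are intrinsically local; they transfer verbatim to $(0,1)^n$ by restricting the supremum in the definition of $\cm^u_p$ to balls intersecting $(0,1)^n$.

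Once the Calder{\'o}n product identity is in hand on $(0,1)^n$, the $\pm$-method identity
\[
\lf\laz\cm^{u_0}_{p_0}((0,1)^n),\cm^{u_1}_{p_1}((0,1)^n),\tz\r\raz=\cm^u_p((0,1)^n)
\]
follows from the abstract result of Nilsson (Proposition \ref{t-n}) exactly as in the proof of Corollary \ref{cima}(i). With both hypotheses verified, Corollary \ref{nil2} applied to $X_i:=\cm^{u_i}_{p_i}((0,1)^n)$ yields
\[
\bigl[\cm^{u_0}_{p_0}((0,1)^n),\cm^{u_1}_{p_1}((0,1)^n)\bigr]_\tz^i=\lf\laz\cm^{u_0}_{p_0}((0,1)^n),\cm^{u_1}_{p_1}((0,1)^n)\r\raz_\tz,
\]
and Theorem \ref{gp03} identifies the right-hand side with $\accentset{\diamond}{\cm}^u_p((0,1)^n)$, completing the proof.

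The main obstacle is the careful verification of the Calder{\'o}n product formula on the bounded domain in Step~2. Although the argument is morally the same as in the $\rn$ case, one must be cautious when balls $B(x,r)$ partially lie outside $(0,1)^n$, and the sharp maximal function machinery used in \cite{lyy} needs to be reformulated relative to $(0,1)^n$. An alternative route, should direct adaptation prove awkward, is to exploit the zero-extension map $E\colon\cm^{u_i}_{p_i}((0,1)^n)\to\cm^{u_i}_{p_i}(\rn)$ (which is bounded because the Morrey quasi-norm of a compactly supported function is controlled by its quasi-norm on the support) together with the restriction $R\colon\cm^{u_i}_{p_i}(\rn)\to\cm^{u_i}_{p_i}((0,1)^n)$; then $R\circ E=\mathrm{id}$ presents $\cm^{u_i}_{p_i}((0,1)^n)$ as a retract of $\cm^{u_i}_{p_i}(\rn)$, so that the $\rn$-level identities of Theorem \ref{morrey1}(ii) and Corollary \ref{cima}(i) descend to $(0,1)^n$ via the retraction/coretraction principle underlying Propositions \ref{complexretract-pg} and \ref{complexretract}.
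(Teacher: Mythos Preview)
Your proposal is correct and follows essentially the same route as the paper: both verify the hypotheses of Corollary \ref{nil2} (analytic convexity plus the identity $X_0^{1-\Theta}X_1^\Theta=\laz X_0,X_1,\Theta\raz$ on $(0,1)^n$), deduce $[\,\cdot\,,\,\cdot\,]_\tz^i=\laz\,\cdot\,,\,\cdot\,\raz_\tz$, and then invoke Theorem \ref{gp03}. The paper cites \cite[(2.3)]{lyy} directly for the Calder\'on product formula on $(0,1)^n$ and obtains the $\pm$-method identity \eqref{ws-104} via the zero-extension retraction (which you list as your alternative route), so your two suggested paths for the intermediate identities are precisely the ones the paper uses.
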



\subsection{The second complex method of interpolation}
\label{inter1f}


For the convenience of the reader, we also recall the second complex interpolation method of Calder{\'o}n; see \cite{ca64}
or \cite[4.1]{BL}.

\begin{definition}\label{dci2}
Let $(X_0,X_1)$ be an interpolation couple of Banach spaces,
i.\,e., $X_0$ and $X_1$ are continuously embedded into a larger topological vector space $Y$.
Let $\Theta\in(0,1)$.

Let ${\mathcal G}:=\cg(X_0,X_1)$ be the set of all functions $f: S \to X_0 + X_1$ such that
\begin{itemize}
 \item[(a)] $\frac {f(\cdot)}{1+|\cdot|}$ is continuous and bounded on $S$;
\item[(b)] $f$ is analytic in $S_0$;
\item[(c)] $f(j+it_1)-f(j+it_2)$ has values in $X_j$ for all $(t_1,t_2) \in \rr^2$,  $j\in\{0,1\}$;
\item[(d)] the quantity
\[
\|\, f \, \|_{\cg} := \max \left\{\sup_{t_1 \neq t_2}\, \lf\|\, \frac{f(it_2) - f(it_1)}{t_2 - t_1}\,\r\|_{X_0},\
\,  \sup_{t_1 \neq t_2}\, \lf\|\, \frac{f(1+it_2) - f(1+it_1)}{t_2 - t_1}\,\r\|_{X_1}\right\}
\]
is finite.
\end{itemize}

The \emph{complex interpolation space}
$[X_0,X_1]^\Theta$ is defined as the set of
all $x \in \cg (\Theta):= \{f(\Theta): \, f \in \cg \}$ and, for all $x \in \cg (\Theta)$,
\[
\|\, x \, \|_{[X_0,X_1]^\Theta}:= \inf \Big\{\|\, f\, \|_{\cg}: \: f\in\cg,\ f(\Theta) = x\Big\}\, .
\]
\end{definition}

Some basic properties of this interpolation method are summarized in the next proposition; see \cite[Theorem~4.1.4]{BL}.

\begin{proposition}
Let $(X_0,X_1)$ be an interpolation couple of Banach spaces and $\Theta\in(0, 1)$.
The space $[X_0,X_1]^\Theta$ is a Banach space and the functor $[\, \cdot \, , \, \cdot \, ]^{\Theta}$
is an exact interpolation functor of exponent $\Theta$.
\end{proposition}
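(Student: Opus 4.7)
My plan has two parts: first showing that $[X_0,X_1]^\Theta$ is a Banach space, and then establishing the exact interpolation property of exponent $\Theta$. The argument is a routine adaptation of the corresponding result for the first Calder\'on method, but the derivative-type nature of the norm in condition (d) of Definition \ref{dci2} changes some details.

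For the Banach space part, I would first verify that the space $\cg(X_0,X_1)$ with the quasi-norm $\|\cdot\|_\cg$ is itself a Banach space. Linearity and positive homogeneity of $\|\cdot\|_\cg$ are immediate; completeness of $\cg$ follows by extracting a pointwise limit of a Cauchy sequence on $S$, using the completeness of $X_0+X_1$ together with the growth condition (a) and the boundary difference bounds in (d), and transferring analyticity to the limit via Proposition \ref{acc}. The space $[X_0,X_1]^\Theta$ is then the image of $\cg$ under the continuous evaluation $f\mapsto f(\Theta)$, equivalently the quotient of $\cg$ by its closed subspace $\{f\in\cg:f(\Theta)=0\}$, and therefore inherits the Banach structure provided $\|\cdot\|_{[X_0,X_1]^\Theta}$ is actually a norm. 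Positive definiteness reduces to the embedding $[X_0,X_1]^\Theta\hookrightarrow X_0+X_1$, which I would prove by a three-lines-type lemma: for $f\in\cg$ and a fixed base point $t_0\in\rr$, the boundary estimate
\[
\|f(\Theta)-f(it_0)\|_{X_0+X_1}\le C(\Theta)\,\|f\|_\cg
\]
is obtained by integrating the boundary difference quotients along a path in $S$, combined with the sublinear growth from (a).

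For the interpolation property, let $T\in\cl(X_j,Y_j)$ have norm $M_j$, $j\in\{0,1\}$, and let $x\in[X_0,X_1]^\Theta$. Given $\varepsilon>0$, choose $f\in\cg(X_0,X_1)$ with $f(\Theta)=x$ and $\|f\|_\cg\le(1+\varepsilon)\|x\|_{[X_0,X_1]^\Theta}$. Linearity of $T$ intertwines difference quotients,
\[
\frac{Tf(j+it_2)-Tf(j+it_1)}{t_2-t_1}=T\!\lf(\frac{f(j+it_2)-f(j+it_1)}{t_2-t_1}\r)\in Y_j,
\]
with $Y_j$-norm at most $M_j\|f\|_\cg$, so $Tf\in\cg(Y_0,Y_1)$ with $\|Tf\|_\cg\le\max\{M_0,M_1\}\|f\|_\cg$. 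To sharpen this to the exponent-$\Theta$ factor $M_0^{1-\Theta}M_1^\Theta$, I would apply the rescaling $g(z):=\mu^{z-\Theta}Tf(z)$ with $\mu:=M_0/M_1$ (the degenerate cases $M_0=0$ or $M_1=0$ are handled separately by a limiting argument); then $g(\Theta)=Tx$ and the multiplicative factor $\mu^{j-\Theta}$ rebalances the two boundary bounds, each becoming comparable to $M_0^{1-\Theta}M_1^\Theta\|f\|_\cg$. Taking the infimum over admissible $f$ and letting $\varepsilon\downarrow 0$ yields the exact interpolation inequality.

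The main obstacle is the last step: the multiplier $\mu^{z-\Theta}$ does not commute with the difference operator in condition (d), since
\[
g(j+it_2)-g(j+it_1)=\mu^{j-\Theta+it_2}\bigl[Tf(j+it_2)-Tf(j+it_1)\bigr]+\mu^{j-\Theta}\bigl[\mu^{it_2}-\mu^{it_1}\bigr]Tf(j+it_1),
\]
and the second summand involves the values $Tf(j+it_1)$ themselves (not just differences), which a priori are only controlled in $Y_0+Y_1$ via the embedding from Part 1. The remedy, which is the technical heart of the argument, is to first normalize $f$ by subtracting a suitable constant so that appropriate boundary values vanish, and then observe that the residual term produces at most an extra factor of $|\log\mu|$ that is absorbed by an optimization over $\mu$; alternatively, one passes to the equivalent description of $[X_0,X_1]^\Theta$ in terms of derivative norms of primitives (as in \cite[4.1]{BL}), where the rescaling is genuinely compatible with the norm, and the exponent-$\Theta$ inequality follows directly.
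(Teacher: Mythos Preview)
The paper does not prove this proposition; it simply records it as a quotation from \cite[Theorem~4.1.4]{BL}. So there is no argument in the paper to compare yours against, and your proposal has to be judged on its own.

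Your plan has a gap that traces back to an apparent misprint in Definition~\ref{dci2}: in \cite{BL} and \cite{ca64} the space $[X_0,X_1]^\Theta$ consists of the \emph{derivatives} $f'(\Theta)$ for $f\in\cg$, not the values $f(\Theta)$. With the definition as printed in the paper, every constant function $f\equiv c\in X_0+X_1$ lies in $\cg$ with $\|f\|_\cg=0$, hence $\|c\|_{[X_0,X_1]^\Theta}=0$ for all $c$, and the space cannot be a Banach space. Your three-lines estimate $\|f(\Theta)-f(it_0)\|_{X_0+X_1}\le C(\Theta)\|f\|_\cg$ is correct but bounds only a difference, not $f(\Theta)$ itself, so it cannot yield positive definiteness. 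With the corrected definition the analogous estimate for $f'(\Theta)$ is exactly what is needed, and your quotient argument for completeness then goes through.

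On the exponent-$\Theta$ inequality you locate the genuine obstacle but do not resolve it. Subtracting a constant does not help: even after normalizing, the cross term $(\mu^{it_2}-\mu^{it_1})Tg(j+it_2)$ survives, and $Tg(j+it_2)$ is controlled only in $Y_0+Y_1$ (with linear growth in $|t_2|$), not in $Y_j$. An extra $|\log\mu|$ factor cannot be optimized away while keeping the functor exact. Your third remedy is the correct one and is essentially the route taken in \cite{BL}: in the derivative picture one sets
\[
k(z)=\int_\Theta^z \mu^{w-\Theta}(Tg)'(w)\,dw,
\]
so that $k'(\Theta)=T\,g'(\Theta)=Ta$, and on each boundary line the Lipschitz constant of $k$ picks up precisely the factor $\mu^{j-\Theta}M_j=M_0^{1-\Theta}M_1^\Theta$. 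Making this rigorous without assuming the Radon--Nikodym property for $Y_j$ requires reading $(Tg)'(j+i\cdot)$ as a vector measure induced by the Lipschitz condition rather than as a pointwise derivative; this is handled in \cite[4.1--4.2]{BL}.
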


The relations of the two complex interpolation methods $[X_0,X_1]_\Theta$ and $[X_0,X_1]^\Theta$ are well
understood; see \cite[Theorem~4.3.1]{BL}.

\begin{proposition}
Let $(X_0,X_1)$ be an interpolation couple of Banach spaces and $\Theta\in(0, 1)$.
Then
\[
[X_0,X_1]_\Theta \hookrightarrow [X_0,X_1]^\Theta \, .
\]
If, at least, one of the two spaces, $X_0$ and $X_1$, is reflexive, then
\[
[X_0,X_1]_\Theta = [X_0,X_1]^\Theta \, .
\]
\end{proposition}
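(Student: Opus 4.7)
The plan is to treat the two assertions separately. For the continuous embedding $[X_0,X_1]_\tz \hookrightarrow [X_0,X_1]^\tz$, I would proceed by a direct primitive construction. Given $f\in\ca(X_0,X_1)$ with $f(\tz)=x$, define
\[
g(z):=\int_0^z f(w)\,dw + \left(x-\int_0^\tz f(w)\,dw\right).
\]
Since $f$ is bounded and analytic on $S_0$, continuous on $S$, the function $g$ is analytic on $S_0$, continuous on $S$, and $g(z)/(1+|z|)$ is bounded, so conditions (a) and (b) of Definition \ref{dci2} are met; by design $g(\tz)=x$. Along each boundary line $j+i\rr$,
\[
g(j+it_2)-g(j+it_1)=i\int_{t_1}^{t_2}f(j+is)\,ds\in X_j,
\]
which gives condition (c), and the difference quotient has $X_j$-norm at most $\sup_s\|f(j+is)\|_{X_j}\le\|f\|_{\ca}$, verifying (d) together with $\|g\|_{\cg}\le\|f\|_{\ca}$. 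Taking the infimum over admissible $f$ yields $\|x\|_{[X_0,X_1]^\tz}\le\|x\|_{[X_0,X_1]_\tz}$, which is the claimed continuous embedding.

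The reverse inclusion under the reflexivity assumption is considerably more delicate. The strategy is to invert the construction above: starting from $g\in\cg$ with $g(\tz)=x$ and $\|g\|_{\cg}$ close to $\|x\|_{[X_0,X_1]^\tz}$, produce $f\in\ca$ with $f(\tz)=x$ and comparable $\ca$-norm. Condition (d) in Definition \ref{dci2} says that $t\mapsto g(j+it)$ is Lipschitz with values in $X_j$. The hypothesis that, say, $X_0$ is reflexive gives $X_0$ the Radon--Nikodym property, so this Lipschitz map is a.e. differentiable with derivative $h_j\in L^\infty(\rr;X_j)$ of norm at most $\|g\|_{\cg}$ (with the companion bound on the other boundary obtained by a symmetric or $X_0+X_1$-valued argument, exploiting the reflexivity of whichever of $X_0,X_1$ is available). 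One then extends the boundary data $h_j$ via a vector-valued Poisson integral on the strip to produce a bounded holomorphic $F:S_0\to X_0+X_1$ which extends continuously to $S$ with boundary traces $-ih_j$; this $F$ should be the formal complex derivative $g'$. Finally, integrating $F$ yields back $g$ up to a constant that is fixed by the prescription $f(\tz)=x$. An alternative route proceeds through the Calder\'on duality theorem: if $X_0$ is reflexive, then $[X_0,X_1]_\tz$ and $[X_0,X_1]^\tz$ have the same duals, hence they coincide isomorphically.

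The main obstacle lies in the reconstruction step of the reverse direction: proving that the a.e.-existent boundary derivatives $h_j$ assemble into a well-defined bounded holomorphic function on $S_0$ whose Poisson extension is genuinely continuous on $S$ with values in $X_0+X_1$, and that evaluating the resulting primitive at $\tz$ recovers precisely $g(\tz)=x$. Without reflexivity the differentiation step itself may fail because Lipschitz maps into non-RNP Banach spaces can lack a.e. derivatives; and even when $h_j$ exists pointwise, obtaining a strong (not merely weak) continuous trace for the Poisson extension requires weak sequential compactness of bounded sets, which is exactly what reflexivity provides. Hence reflexivity is the sharp hypothesis for $[X_0,X_1]_\tz=[X_0,X_1]^\tz$ in this setting, and once these analytic ingredients are in place the verification that the constructed $f$ lies in $\ca$ with the right value at $\tz$ is routine.
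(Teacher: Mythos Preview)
The paper does not prove this proposition; it simply quotes it as a known result with a reference to Bergh--L\"ofstr\"om \cite[Theorem~4.3.1]{BL}. So there is no ``paper's own proof'' to compare against, and your proposal should be judged on its own merits and against the standard argument in that reference.

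Your argument for the embedding $[X_0,X_1]_\tz\hookrightarrow[X_0,X_1]^\tz$ is correct and is exactly the classical primitive construction: integrate $f\in\ca$ to obtain $g\in\cg$ with the same value at $\tz$ and $\|g\|_{\cg}\le\|f\|_{\ca}$. One small point worth making explicit is that the boundary identity $g(j+it_2)-g(j+it_1)=i\int_{t_1}^{t_2}f(j+is)\,ds$ uses continuity of $f$ on the closed strip to justify taking the contour along the boundary line; this is routine but should be said.

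For the reverse inclusion under reflexivity, your outline (Lipschitz $\Rightarrow$ a.e.\ differentiable via the Radon--Nikodym property, then Poisson extension of the boundary derivative) is in the right spirit and is close to what Bergh--L\"ofstr\"om do. However, as written it is a sketch rather than a proof, and you have correctly flagged the genuine difficulty: only \emph{one} of $X_0,X_1$ is assumed reflexive, so you can differentiate $t\mapsto g(j+it)$ in $X_j$ on only one boundary line. On the other line you must work in $X_0+X_1$ or pass through a weak-$*$ compactness argument, and it is not obvious from your write-up how the two boundary pieces are glued into a single $f\in\ca$ with the correct continuous traces. The standard route (as in \cite{BL}) avoids the Poisson-extension machinery by looking at difference quotients $h^{-1}(g(z+ih)-g(z))$, which lie in $\ca$ with uniformly bounded $\ca$-norm, and then extracting a weak limit using reflexivity of one space; this handles both boundaries simultaneously and sidesteps the reconstruction issue you identify. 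Your alternative suggestion via duality is also viable but requires the duality theorem $([X_0,X_1]_\tz)'=[X_0',X_1']^\tz$, which itself needs a density hypothesis and is of comparable depth.
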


Finally, we quote the result of Lemari{\'e}-Rieusset  \cite{LR2},
which is the reason why we recalled this interpolation
method here.

\begin{theorem}\label{lr1}
Let $\tz\in(0,1)$, $1< p_0\le u_0<\fz$ and $1< p_1\le u_1<\fz$.
If  $\frac1u:=\frac{1-\tz}{u_0}+\frac\tz{u_1}$,
$\frac1p:=\frac{1-\tz}{p_0}+\frac\tz{p_1}$ and $
p_0\, u_1=p_1\, u_0$,
then
$$
[\cm^{u_0}_{p_0}(\rn), \cm^{u_1}_{p_1}(\rn)]^\tz = {\cm}^u_p(\rn).
$$
\end{theorem}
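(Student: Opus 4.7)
The plan is to establish both inclusions separately. The direction $[\cm^{u_0}_{p_0}(\rn), \cm^{u_1}_{p_1}(\rn)]^\tz \hookrightarrow \cm^{u}_{p}(\rn)$ does not actually require the scaling condition $p_0 u_1 = p_1 u_0$ and follows from interpolation of a localization operator, while the reverse direction relies on an explicit Calder\'on-type functional and crucially exploits the scaling hypothesis (which is equivalent to $p_j/u_j = p/u$ for $j\in\{0,1\}$).

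For the forward inclusion, I would fix a ball $B\subset \rn$ and consider the multiplication operator $T_B: f\mapsto f\chi_B$. By the definition of the Morrey norm, $T_B$ is bounded from $\cm^{u_i}_{p_i}(\rn)$ into $L_{p_i}(B)$ with operator norm at most $|B|^{1/p_i - 1/u_i}$ for $i\in\{0,1\}$. Since $p_0,\, p_1\in(1,\fz)$, the Lebesgue spaces $L_{p_i}(B)$ are reflexive, so the two complex methods coincide and $[L_{p_0}(B), L_{p_1}(B)]^\tz = L_p(B)$; combined with the fact that $[\,\cdot\,,\,\cdot\,]^\tz$ is an exact interpolation functor of exponent $\tz$, this shows that $T_B$ maps $[\cm^{u_0}_{p_0}(\rn), \cm^{u_1}_{p_1}(\rn)]^\tz$ into $L_p(B)$ with norm at most $|B|^{(1-\tz)(1/p_0 - 1/u_0) + \tz(1/p_1 - 1/u_1)} = |B|^{1/p - 1/u}$. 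Taking the supremum over all balls $B$ yields the embedding with norm control.

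For the reverse inclusion, given $f\in\cm^u_p(\rn)$ with $\|f\|_{\cm^u_p(\rn)} = 1$, I would introduce the Calder\'on functional
\[
g(z)(x) := |f(x)|^{p/p(z)}\,\frac{f(x)}{|f(x)|},\qquad \frac{1}{p(z)} := \frac{1-z}{p_0}+\frac{z}{p_1}\, ,
\]
with $g(z)(x) := 0$ when $f(x)=0$. Pointwise $z\mapsto g(z)(x)$ is analytic and $g(\tz) = f$. On the line $\Re z = j$, one has $|g(j+it)(x)|^{p_j} = |f(x)|^p$, so that
\[
\|g(j+it)\|_{\cm^{u_j}_{p_j}(\rn)} = \sup_B |B|^{1/u_j - 1/p_j}\lf[\int_B |f(x)|^p\, dx\r]^{1/p_j}.
\]
The scaling hypothesis forces $(1/u - 1/p)(p/p_j) = 1/u_j - 1/p_j$, so the supremum equals $\|f\|_{\cm^u_p(\rn)}^{p/p_j} = 1$. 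Setting
\[
F(z) := f + \int_\tz^z g(w)\, dw,
\]
a direct verification gives $F(\tz) = f$, $F$ analytic on $S_0$, $F(\cdot)/(1+|\cdot|)$ continuous and bounded on $S$ into $\cm^{u_0}_{p_0}(\rn) + \cm^{u_1}_{p_1}(\rn)$ (using the uniform bound on $\|g(w)\|_{\cm^{u_0}_{p_0}(\rn) + \cm^{u_1}_{p_1}(\rn)}$ obtained from the three-lines lemma), and the boundary differences satisfy $F(j+it_2) - F(j+it_1) = \int_{j+it_1}^{j+it_2} g(w)\, dw \in \cm^{u_j}_{p_j}(\rn)$ with $\cm^{u_j}_{p_j}$-norm at most $|t_2 - t_1|$. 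Thus $F \in \cg(\cm^{u_0}_{p_0}(\rn), \cm^{u_1}_{p_1}(\rn))$ with $\|F\|_{\cg} \le 1$, whence $f \in [\cm^{u_0}_{p_0}(\rn), \cm^{u_1}_{p_1}(\rn)]^\tz$ with the required norm control.

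The main obstacle lies in the Morrey-norm identity $\|g(j+it)\|_{\cm^{u_j}_{p_j}(\rn)} = \|f\|_{\cm^u_p(\rn)}^{p/p_j}$: the pointwise substitution $|f|^{p/p(z)}$ transforms the integrand correctly, but the volume factor $|B|^{1/u_j - 1/p_j}$ must also match, and this balance holds only when $p/u = p_j/u_j$. Without this equality, one only obtains the weaker Calder\'on product embedding of Theorem \ref{morrey1}(i), and equality fails by Theorem \ref{morrey1}(iii). Secondary technical points concerning the vector-valued analyticity of $g$ into $\cm^{u_0}_{p_0}(\rn) + \cm^{u_1}_{p_1}(\rn)$ and the continuity of the line integral defining $F$ are standard and can be handled by Morera's theorem applied to $F$, combined with the uniform boundedness of $g$ in $\cm^{u_0}_{p_0}(\rn) + \cm^{u_1}_{p_1}(\rn)$ on the closed strip $S$.
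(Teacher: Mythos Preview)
The paper does not supply its own proof of this theorem; it is simply quoted from Lemari\'e-Rieusset~\cite{LR2} (note the sentence immediately preceding the statement). Your outline is precisely the argument of~\cite{LR2}: the forward embedding via the localized operator $T_B$ (the same mechanism as Lemma~\ref{help} and Remark~\ref{morrey24}), and the reverse embedding via the Calder\'on functional $g(z)=|f|^{p/p(z)}f/|f|$, whose boundary norms in $\cm^{u_j}_{p_j}(\rn)$ equal $\|f\|_{\cm^u_p(\rn)}^{p/p_j}$ exactly because $p_j/u_j=p/u$.

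One point deserves more than the label ``standard''. The Bochner integral $\int_\tz^z g(w)\,dw$ into the non-separable space $\cm^{u_0}_{p_0}(\rn)+\cm^{u_1}_{p_1}(\rn)$ is not immediately defined, since $w\mapsto g(w)$ need not be strongly continuous there (the pointwise difference involves $\log|f(x)|$, which is uncontrolled). The clean fix is to define $F(z)(x)$ pointwise by the scalar contour integral and then bound
\[
|F(z)(x)-F(z')(x)|\le |z-z'|\Bigl(|f(x)|^{p/p_0}\chi_{\{|f|\ge 1\}}(x)+|f(x)|^{p/p_1}\chi_{\{|f|<1\}}(x)\Bigr),
\]
where the majorant lies in $\cm^{u_0}_{p_0}(\rn)+\cm^{u_1}_{p_1}(\rn)$ by the scaling identity $\|\,|f|^{p/p_j}\|_{\cm^{u_j}_{p_j}(\rn)}=\|f\|_{\cm^u_p(\rn)}^{p/p_j}$. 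Thus $F$ is Lipschitz (hence continuous) into the sum space, and Morera's theorem applies because the contour integrals vanish pointwise and hence, by testing against $C_c^\infty(\rn)$, in the sum space. With this clarification, your argument is complete and coincides with the route taken in~\cite{LR2}.
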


\begin{remark}
(i) Theorem \ref{lr1} shows that this second complex interpolation method has the potential to become as useful as the
$\pm$-method in the context of Morrey and Morrey-type spaces.
However, the disadvantage of this type of the complex interpolation
for us consists in the limitation to Banach spaces.

(ii)
In \cite[pp.~90]{BL}, Bergh and L\"ofstr\"om wrote: ``We shall
consider the space $[X_0, X_1]^\tz$ more or less as a technical tool".
Probably the needs of the interpolation theory of Morrey spaces will lead to a new evaluation of this method.
\end{remark}


\subsection{The real method of interpolation}
\label{inter1d}


For the convenience of the reader,  we recall some basic notation.
First we recall Peetre's $K$-functional. Let $(X_0,X_1)$ be a quasi-Banach couple.
Then, for any $t\in(0,\fz)$ and any $x \in X_0 + X_1$,  define
\[
K(t,x,X_0,X_1):= \inf_{\gfz{x=x_0 + x_1}{x_0 \in X_0,\, x_1 \in X_1}} \, \lf\{\|x_0\|_{X_0} + t\, \| \, x_1\, \|_{X_1}\,\r\}.
\]

\begin{definition}
Let $\tz \in (0,1)$ and $q\in(0,\infty]$.
The real interpolation space
$(X_0,X_1)_{\tz,q}$ is defined as
the collection of all $x\in X_0 + X_1$ such that
\[
\|\, x \, \|_{(X_0,X_1)_{\tz,q}} := \lf\{\int_0^\infty [t^{-\tz} \, K(t,x,X_0,X_1)]^q \, \frac{dt}{t}\r\}^{1/q}<\infty\, .
\]
\end{definition}

Concerning the relation between the real and the complex methods, we refer the reader, for example, to \cite{cpp}.
We also recall some basic properties of the real interpolation; see, e.\,g., \cite{BL} (Banach case) or \cite[2.4.1]{t83} (quasi-Banach case).

\begin{proposition}\label{realbasic}
Let $(X_0,X_1)$ and  $(Y_0,Y_1)$ be any two quasi-Banach couples and $\Theta \in (0,1)$.

{\rm(i)}  It holds true that $(A_0, A_1)_{\Theta,q}$ is  a quasi-Banach space, where $A\in\{X,\ Y\}$.

{\rm(ii)} If $T \in \cl (X_0,Y_0) \cap \cl(X_1,Y_1)$, then $T$ maps  $(X_0, X_1)_{\Theta,q}$
continuously into $(Y_0, Y_1)_{\Theta}$. Furthermore,
\[
\| \, T \, \|_{(X_0, X_1)_{\Theta,q}\to (Y_0, Y_1)_{\Theta,q}}\le  \| \, T \, \|_{X_0\to Y_0}^{1-\tz} \,
\|\, T \, \|_{X_1 \to  Y_1}^\tz\, ,
\]
i.\,e., all
functors $(\, \cdot\, ,\, \cdot\, )_{\Theta,q}$ are exact and of exponent $\Theta$.
\end{proposition}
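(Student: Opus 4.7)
The plan is to treat (i) and (ii) separately, with (ii) being the conceptually interesting part since the key mechanism—how the $K$-functional transforms under a bounded linear operator—also yields the norm estimate with the exponent $\Theta$ directly.

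For (i), I would verify that $\|\cdot\|_{(X_0,X_1)_{\Theta,q}}$ satisfies the quasi-norm axioms. The triangle-type inequality (with constant) follows from the subadditivity of $K(t,\cdot,X_0,X_1)$ in its second argument (subadditive since, given near-optimal decompositions of $x$ and $y$, their sum is an admissible decomposition of $x+y$), combined with the quasi-triangle inequality for $L^q$ when $q<1$. The fact that $\|x\|_{(X_0,X_1)_{\Theta,q}}=0$ forces $x=0$ uses $K(t,x,X_0,X_1)\asymp \min\{\|x\|_{X_0+X_1},t\}$ for small $t$ together with $\Theta\in(0,1)$, $q>0$. Completeness is shown by the standard argument: a Cauchy sequence $\{x_n\}$ in $(X_0,X_1)_{\Theta,q}$ is Cauchy in $X_0+X_1$ (since $K(1,\cdot,X_0,X_1)$ is the $X_0+X_1$-norm, and this is controlled by $\|\cdot\|_{(X_0,X_1)_{\Theta,q}}$ up to a multiplicative constant depending on $\Theta$, $q$), hence converges to some $x\in X_0+X_1$; then Fatou's lemma applied to $t^{-\Theta}K(t,\cdot,X_0,X_1)$ gives $x\in(X_0,X_1)_{\Theta,q}$ and $x_n\to x$ in the quasi-norm.

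For (ii), the central observation is the transformation rule for $K$-functionals under $T$. Set $M_j:=\|T\|_{X_j\to Y_j}$ for $j\in\{0,1\}$. Given any decomposition $x=x_0+x_1$ with $x_j\in X_j$, we have $Tx=Tx_0+Tx_1$ with $Tx_j\in Y_j$, and hence
\begin{equation*}
\|Tx_0\|_{Y_0}+t\,\|Tx_1\|_{Y_1}\le M_0\|x_0\|_{X_0}+tM_1\|x_1\|_{X_1}=M_0\bigl[\|x_0\|_{X_0}+(tM_1/M_0)\|x_1\|_{X_1}\bigr].
\end{equation*}
Taking the infimum over admissible decompositions yields
\begin{equation*}
K(t,Tx,Y_0,Y_1)\le M_0\,K(tM_1/M_0,x,X_0,X_1).
\end{equation*}
Substituting into the definition of the quasi-norm on $(Y_0,Y_1)_{\Theta,q}$ and performing the change of variables $s:=tM_1/M_0$ (so $dt/t=ds/s$ and $t^{-\Theta}=(M_0/M_1)^{\Theta}s^{-\Theta}$), one obtains
\begin{equation*}
\|Tx\|_{(Y_0,Y_1)_{\Theta,q}}\le M_0\,(M_0/M_1)^{-\Theta}\,\|x\|_{(X_0,X_1)_{\Theta,q}}=M_0^{1-\Theta}M_1^{\Theta}\,\|x\|_{(X_0,X_1)_{\Theta,q}},
\end{equation*}
which is the claimed estimate. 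The case $q=\infty$ is treated by the obvious modification with the essential supremum.

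I do not expect any serious obstacle here, as the argument is purely formal and uses only subadditivity of $K$ and a scaling in $t$. The mildly subtle point is to handle the quasi-norm case ($q<1$ or the $X_i$ being quasi-Banach) carefully when verifying completeness in (i); this is where one must use the Aoki--Rolewicz theorem implicitly, or—more elementarily—the fact that $K(t,\cdot,X_0,X_1)$ is still subadditive regardless of whether $X_0$ and $X_1$ are Banach or only quasi-Banach, so that the standard completeness proof carries over verbatim with the appropriate quasi-triangle constants.
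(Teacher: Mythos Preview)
Your proposal is correct and follows the standard textbook route; the paper does not give its own proof of this proposition but simply cites \cite{BL} (Banach case) and \cite[2.4.1]{t83} (quasi-Banach case), where exactly the argument you sketch is carried out. One small inaccuracy: in the quasi-Banach setting $K(t,\cdot,X_0,X_1)$ is only \emph{quasi}-subadditive (with constant $\max\{c_{X_0},c_{X_1}\}$), not genuinely subadditive, but this does not affect the argument---the quasi-triangle constants simply propagate through, and the completeness and interpolation estimates go through unchanged.
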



\subsubsection{Real interpolation with fixed $p$ and $\tau$ (or $u$)}


It is a well-known fact that the
real interpolation of Besov and Triebel-Lizorkin spaces  is helpful for fixed $p$.
For different $p$, in general, Lorentz spaces instead of Lebesgue spaces come into play.
This continues to be true for the spaces under consideration here. In addition, one has to fix either $\tau$ or $u$.
Our first result concerns the real interpolation of the classes $\at$.

\begin{theorem}\label{approx5}
Let  $\Theta \in(0, 1)$, $s_0,s_1 \in \rr$, $s_0 < s_1$,
$p \in(0, \infty)$, $\tau \in[0, 1/p)$, $ q,\ q_0,\ q_1 \in(0, \infty]$ and
$s:= (1-\Theta)\, s_0 + \Theta \, s_1$.
Let $A,\ \ca \in \{B,\ F\}$.
Then
\[
\cn^{s}_{u,p,q}(\rn) =
(A^{s_0,\tau}_{p,q_0}(\rn),\ca^{s_1,\tau}_{p,q_1}(\rn))_{\Theta,q} \, , \qquad \frac 1u := \frac 1p -\tau\, ,
\]
in the sense of equivalent quasi-norms.
\end{theorem}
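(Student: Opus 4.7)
The plan is to establish the equality through a sandwich argument. The embedding $\mathcal{N}^s_{u,p,q}(\rn) \hookrightarrow (A^{s_0,\tau}_{p,q_0}(\rn), \mathcal{A}^{s_1,\tau}_{p,q_1}(\rn))_{\Theta,q}$ follows from two ingredients. First, for each $i \in \{0,1\}$, the embedding $\mathcal{N}^{s_i}_{u,p,q_i}(\rn) \hookrightarrow A^{s_i,\tau}_{p,q_i}(\rn)$ holds: when $A = F$ this uses the identification $F^{s,\tau}_{p,q}(\rn) = \mathcal{E}^s_{u,p,q}(\rn)$ of Proposition \ref{basic2} together with the standard embedding $\mathcal{N}^s_{u,p,q}(\rn) \hookrightarrow \mathcal{E}^s_{u,p,q}(\rn)$, and when $A = B$ a direct estimate (interchanging the supremum over cubes with the $\ell^q$-summation in $j$ by Minkowski) yields $\mathcal{N}^s_{u,p,q}(\rn) \hookrightarrow B^{s,\tau}_{p,q}(\rn)$. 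Second, the classical real interpolation formula for Besov-Morrey spaces with different smoothness indices,
\[
\lf(\mathcal{N}^{s_0}_{u,p,q_0}(\rn), \mathcal{N}^{s_1}_{u,p,q_1}(\rn)\r)_{\Theta,q} = \mathcal{N}^s_{u,p,q}(\rn),
\]
due to Kozono-Yamazaki and Mazzucato, combined with the monotonicity of the real interpolation functor, gives the desired inclusion.

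For the reverse embedding, I plan to reduce to sequence spaces via the wavelet characterization (Propositions \ref{wave1} and \ref{wave2}): $A^{s,\tau}_{p,q}(\rn)$ is a retract of $a^{s,\tau}_{p,q}(\rn)$ and the retraction commutes with real interpolation, so the problem becomes
\[
\lf(a^{s_0,\tau}_{p,q_0}(\rn), \widetilde{a}^{s_1,\tau}_{p,q_1}(\rn)\r)_{\Theta,q} \hookrightarrow n^s_{u,p,q}(\rn),
\]
where $\widetilde{a} \in \{b,f\}$ matches the type of $\mathcal{A}$. I then estimate the $K$-functional via a dyadic splitting at the level $J \asymp -\log_2 t$: given a sequence $\{t_Q\}$, write $\{t_Q\} = t^{(0)} + t^{(1)}$ where $t^{(0)}$ collects coefficients at dyadic cubes of side-length at least $2^{-J}$. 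Since $s_0 < s_1$, this splitting is essentially optimal; integrating the resulting $K$-estimate against $t^{-\Theta q}\,dt/t$ and applying the classical formula for real interpolation of weighted $\ell^q$-sums over dyadic scales yields a quasi-norm equivalent to $\|\cdot\|_{n^s_{u,p,q}(\rn)}$.

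The main obstacle is that, at the sequence level, the quasi-norms on $a^{s,\tau}_{p,q}(\rn)$ place the supremum over parent cubes $P$ with weight $|P|^{-\tau}$ \emph{outside} the $\ell^q$-summation in the level index $j$, whereas $n^s_{u,p,q}(\rn)$ has the Morrey quasi-norm (itself a supremum) \emph{inside} the $\ell^q$-summation. These two orderings do not agree in general, but I expect that the dyadic splitting in the $K$-functional decouples scales enough that, for each fixed $J$, only coefficients at cubes of side $\asymp 2^{-J}$ contribute, so that the supremum over larger parent cubes $P$ collapses to the Morrey supremum at scale $2^{-J}$. The identity $1/u = 1/p - \tau$ is essential here: it precisely matches the normalization $|P|^{-\tau}$ with the Morrey exponent $1/u - 1/p$. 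Making this rigorous will rely on the $K$-functional techniques developed for Morrey and Besov-Morrey spaces in the works of Lemari\'e-Rieusset, Mazzucato and others cited in this article.
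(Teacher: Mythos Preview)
The paper does not give a self-contained proof of this theorem; it simply refers to \cite{s011a}. So your proposal has to be judged on its own merits, and the overall sandwich idea (trapping $A^{s_i,\tau}_{p,q_i}$ between two Besov--Morrey spaces and using the known real interpolation of the latter) is indeed the standard route. Two points, however, need repair.

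First, the embedding $\mathcal{N}^{s}_{u,p,q}(\rn)\hookrightarrow \mathcal{E}^{s}_{u,p,q}(\rn)$ that you invoke for the $F$-case is \emph{false} when $q>p$ (just as $B^s_{p,q}\hookrightarrow F^s_{p,q}$ fails for $q>p$). This is harmless in the end, because for $s_0\neq s_1$ the microscopic indices $q_0,q_1$ are irrelevant in real interpolation: replace your embedding by $\mathcal{N}^{s_i}_{u,p,\min(p,q_i)}(\rn)\hookrightarrow B^{s_i,\tau}_{p,\min(p,q_i)}(\rn)\hookrightarrow F^{s_i,\tau}_{p,q_i}(\rn)$, then use Theorem~\ref{approx7} with the modified $q$-indices.

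Second, your treatment of the reverse inclusion is the wrong way round. The dyadic cut at level $J$ that you describe produces an \emph{upper} bound for $K(t,\,\cdot\,;a^{s_0,\tau},a^{s_1,\tau})$, which yields $n^{s}_{u,p,q}\hookrightarrow(\cdot,\cdot)_{\Theta,q}$, not the inclusion $(\cdot,\cdot)_{\Theta,q}\hookrightarrow n^{s}_{u,p,q}$ you claim to be proving. Moreover, the ``main obstacle'' you flag (supremum over $P$ outside versus inside the $\ell^q$-sum) genuinely prevents you from reading off a useful bound on $\|t^{(0)}\|_{a^{s_0,\tau}_{p,q_0}}$ from the $n^s_{u,p,q}$-norm alone, so the hoped-for equivalence does not materialize from this splitting. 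The clean way out is to run the sandwich symmetrically: use the \emph{always valid} embedding
\[
A^{s_i,\tau}_{p,q_i}(\rn)\hookrightarrow B^{s_i,\tau}_{p,\infty}(\rn)=\mathcal{N}^{s_i}_{u,p,\infty}(\rn)
\]
(Proposition~\ref{basic2}) for the reverse direction, and then apply Theorem~\ref{approx7} (or its direct proof via $(\ell^{s_0}_{q_0}(X),\ell^{s_1}_{q_1}(X))_{\Theta,q}=\ell^{s}_{q}(X)$ for $s_0\neq s_1$) once more. This bypasses the $K$-functional computation entirely and avoids the sup/sum ordering issue. Note also that the Kozono--Yamazaki and Mazzucato results you cite cover only the Banach range; for the full quasi-Banach range you need the vector-valued $\ell^s_q$ argument, which is elementary and independent of Theorem~\ref{approx5}.
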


\begin{remark}
(i) It is a little bit surprising that the Besov-type spaces $\bt$ do not
form a scale of interpolation spaces for the real method.
However, in case $\tau =0$, we get back the following well-known formulas
that, for $\tz\in(0,1)$, $s_0,s_1\in\rr$, $s_0<s_1$, $p\in(0,\fz)$,
$q,q_0,q_1\in(0,\fz]$ and $s:=(1-\tz)s_0+\tz s_1$,
\begin{eqnarray*}
B^{s}_{p,q} (\rn)= \cn^{s}_{p,p,q} (\rn) & = &
(B^{s_0,0}_{p,q_0}(\rn),B^{s_1,0}_{p,q_1}(\rn))_{\Theta,q} = (F^{s_0,0}_{p,q_0}(\rn),B^{s_1,0}_{p,q_1}(\rn))_{\Theta,q}
\\
& = & (B^{s_0,0}_{p,q_0}(\rn),F^{s_1,0}_{p,q_1}(\rn))_{\Theta,q} =
(F^{s_0,0}_{p,q_0}(\rn),F^{s_1,0}_{p,q_1}(\rn))_{\Theta,q}\, ;
\end{eqnarray*}
see \cite[Theorem 2.4.2]{t83}.

(ii) A proof of Theorem \ref{approx5} was given in \cite{s011a}.
\end{remark}

Now we turn to the real interpolation of Besov-Morrey  and Triebel-Lizorkin-Morrey spaces.

\begin{theorem}\label{approx7}
Let  $\Theta \in(0, 1)$, $s_0,s_1 \in \rr$, $s_0 < s_1$, $0 < p \le u \le \infty$
and $q,q_0,q_1\in(0,\infty]$.
Let $s:= (1-\Theta)\, s_0 + \Theta \, s_1$,
$A,\ca \in \{\ce,\cn\}$ ($u < \infty$ if either $A=\ce$ or $\ca= \ce$).
Then
\[
\cn^{s}_{u,p,q} (\rn) =
(A^{s_0}_{u,p,q_0}(\rn),\ca^{s_1}_{u,p,q_1}(\rn))_{\Theta,q}
\]
in the sense of equivalent quasi-norms.
\end{theorem}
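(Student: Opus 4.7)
\noindent\textbf{Proof plan for Theorem \ref{approx7}.}

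The plan is to reduce to sequence spaces and then compute the $K$-functional. Each of $\cn^{s_i}_{u,p,q_i}(\rn)$ and $\ce^{s_i}_{u,p,q_i}(\rn)$ is a retract of its associated sequence space $n^{s_i}_{u,p,q_i}$ or $e^{s_i}_{u,p,q_i}$ (via the wavelet/atomic decomposition recorded in Appendix). Since real interpolation commutes with retraction (the analogue of Proposition \ref{complexretract-pm} for the functor $(\,\cdot\,,\,\cdot\,)_{\Theta,q}$ is classical, and follows from Proposition \ref{realbasic}(ii) by the standard argument in \cite[1.2.4]{t78}), it suffices to prove
\[
n^{s}_{u,p,q} = (a^{s_0}_{u,p,q_0},\alpha^{s_1}_{u,p,q_1})_{\Theta,q}\,,\qquad a,\alpha\in\{n,e\}\,,
\]
in the sense of equivalent quasi-norms.

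Next, I would estimate the $K$-functional at the sequence-space level. For a sequence $\lambda=\{\lambda_Q\}_{Q\in\mathcal{Q}^*}$, denote by $\lambda^{(j)}$ the ``level-$j$ part'' $\{\lambda_Q\}_{j_Q=j}$. The structure of the Besov-Morrey sequence space is $\|\lambda\|_{n^s_{u,p,q}}\asymp \|\{2^{js}\,\|\lambda^{(j)}\|_{M^u_p}\}_{j\in\zz_+}\|_{\ell^q}$, while $e^s_{u,p,q}$ uses the Morrey norm of an $\ell^q$-expression in $j$. The goal is the level-wise estimate
\[
K(t,\lambda;a^{s_0}_{u,p,q_0},\alpha^{s_1}_{u,p,q_1}) \asymp \Bigl\| \min\{2^{js_0},\,t\,2^{js_1}\}\,\|\lambda^{(j)}\|_{\cm^u_p} \Bigr\|_{\ell^\infty_j}+\cdots,
\]
more precisely, the same two-sided bound that drives the classical real interpolation of Besov spaces with fixed $p$, as in \cite[2.4.2]{t83}. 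Integrating the $q$-th power against $t^{-\Theta q}\,dt/t$ and using the change of variable $t=2^{j(s_0-s_1)}$ yields exactly $\|\lambda\|_{n^s_{u,p,q}}$. In the pure Besov-Morrey case $a=\alpha=n$ this is straightforward from level-by-level optimal splittings of $\lambda$ and the fact that the outer norm is already $\ell^q$ in $j$.

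For the mixed cases and the pure $\ce$-case, I would use the sandwich embeddings
\[
\cn^{s_i}_{u,p,1}(\rn)\hookrightarrow \ce^{s_i}_{u,p,q_i}(\rn)\hookrightarrow \cn^{s_i}_{u,p,\infty}(\rn),\qquad i\in\{0,1\},
\]
combined with the monotonicity of the real method in the fine indices and the pure-$\cn$ result just established, together with the reiteration theorem, to squeeze the real interpolation of any pair $(a^{s_0}_{u,p,q_0},\alpha^{s_1}_{u,p,q_1})$ between two identical copies of $n^s_{u,p,q}$. Since $s_0<s_1$, the fine indices $q_i$ of the endpoints are immaterial for the outcome; this is exactly the mechanism by which the classical identity $(F^{s_0}_{p,q_0},F^{s_1}_{p,q_1})_{\Theta,q}=B^s_{p,q}$ reduces to the Besov statement.

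The main obstacle is the $K$-functional upper bound in the presence of the Morrey inner norm. Unlike $L^p$, the norm $\|\cdot\|_{\cm^u_p}$ is a supremum over balls, so a level-by-level splitting $\lambda^{(j)}=\mu^{(j)}+\nu^{(j)}$ that is optimal for \emph{one} ball need not be optimal uniformly. One has to choose the splitting from the Morrey quasi-norm itself: a natural candidate at level $j$ is a threshold cut based on a local average, arranged so that both $\|\mu^{(j)}\|_{M^u_p}$ and $\|\nu^{(j)}\|_{M^u_p}$ are controlled by $\|\lambda^{(j)}\|_{M^u_p}$ up to an absolute constant independent of the ball. Once this uniformity is secured, the rest of the proof follows the template for classical Besov spaces with fixed $p$, and, combined with Step 1, completes the proof.
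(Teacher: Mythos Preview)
Your overall strategy is sound and matches the standard route (the paper itself does not reproduce the argument but refers to \cite{s011a}): pass to sequence spaces via retraction, establish the pure $\cn$-case as a vector-valued weighted $\ell^q$ interpolation $(\ell^{s_0}_{q_0}(X),\ell^{s_1}_{q_1}(X))_{\Theta,q}=\ell^s_q(X)$ with $X=\cm^u_p(\rn)$ and $s_0\neq s_1$, and then squeeze the $\ce$- and mixed cases between two pure $\cn$-interpolations using $\cn^{s_i}_{u,p,1}\hookrightarrow\ce^{s_i}_{u,p,q_i}\hookrightarrow\cn^{s_i}_{u,p,\infty}$ together with the independence of the outcome from $q_0,q_1$ once $s_0\neq s_1$.

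Your final paragraph, however, manufactures a difficulty that is not there and contradicts what you correctly said two paragraphs earlier. For the $K$-functional upper bound in the pure $\cn$-case no splitting \emph{within} a level is ever required: for $t\asymp 2^{-J(s_1-s_0)}$ simply take $\mu^{(j)}=\lambda^{(j)}$ when $j\ge J$ and $\mu^{(j)}=0$ when $j<J$, with $\nu^{(j)}$ the complement. Each level goes wholesale into one endpoint, so $\|\lambda^{(j)}\|_{\cm^u_p}$ appears only as a scalar coefficient and the supremum-over-balls structure of the Morrey norm never interacts with the decomposition. The proposed ``threshold cut based on a local average'' is unnecessary; that kind of device would become relevant only if one were interpolating in the Morrey parameters $p$ or $u$, which is precisely the direction in which real interpolation of Morrey spaces fails (cf.\ Theorem~\ref{morreyreal2}). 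Delete the last paragraph and your sketch is complete.
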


\begin{remark}
(i) Kozono and Yamazaki \cite{KY} already
considered the real interpolation of Besov-Morrey spaces. They
proved that
\[
(\cn^{s_0}_{u,p,q_0}(\rn), \cn^{s_1}_{u,p,q_1}(\rn))_{\tz,q}=\cn^s_{u,p,q}(\rn)
\]
under the restrictions  $\tz\in(0,1)$, $1 < p\le u\le\fz$, $s_0,\ s_1\in\rr$, $s_0<s_1$, $s=(1-\tz)s_0+\tz s_1$ and
$q,\ q_0,\ q_1\in[1,\fz]$; see also  Sawano, Tanaka \cite{sat}. This has been supplemented
by Mazzucato in \cite[Proposition 2.7]{ma03} that, for $s\in\rr$, $\tz\in(0,1)$, $1<p\le u<\fz$ and
$q,\ q_0,\ q_1\in[1,\fz]$,
\[
(\cn^{s}_{u,p,q_0}(\rn), \cn^{s}_{u,p,q_1}(\rn))_{\tz,q}=\cn^s_{u,p,q}(\rn)
\]
if, in addition, $1/q=(1-\tz)/q_0 +\tz/q_1$.

(ii) Mazzucato in \cite[Proposition 4.12]{ma03} proved that
\[
\cn^{s}_{u,p,q} (\rn) =
(\ce^{s_0}_{u,p,q}(\rn),\ce^{s_1}_{u,p,q}(\rn))_{\Theta,q}
\]
with $\Theta \in(0, 1)$, $s_0,s_1 \in \rr$, $s_0 < s_1$, $s=(1-\tz)s_0+\tz s_1$, $1< p \le u < \infty$
and $q\in[1,\infty]$.

(iii) A proof of Theorem \ref{approx7} was also given in \cite{s011a}.
\end{remark}

There is one special case of particular interest. We recall that $\cn^{s}_{u,p,\infty} (\rn) = B^{s,\tau}_{p,\infty}(\rn)$,
$\tau:= \frac 1p - \frac 1u$ (see \cite[Corollary 3.3]{ysy}).

\begin{corollary}\label{approx6}
Let  $\Theta \in(0, 1)$, $s_0,s_1 \in \rr$, $s_0 < s_1$,
$p \in(0,\fz]$, $\tau \in[0, 1/p)$ and $ q,q_0,q_1 \in(0,\fz]$.
Let $s:= (1-\Theta)\, s_0 + \Theta \, s_1$ and
$A,\ca \in \{B,F\}$.
Then
\[
B^{s,\tau}_{p,\infty}(\rn) =
(A^{s_0,\tau}_{p,q_0}(\rn),\ca^{s_1,\tau}_{p,q_1}(\rn))_{\theta,\infty}
\]
in the sense of equivalent quasi-norms.
\end{corollary}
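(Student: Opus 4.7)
The plan is to derive Corollary \ref{approx6} as an immediate special case of the preceding real interpolation theorem applied at the endpoint $q=\infty$, together with the Triebel-Lizorkin-type reformulation of Besov-Morrey spaces with third index $\infty$. First I would record the identification
\[
\cn^{s}_{u,p,\infty}(\rn) = B^{s,\tau}_{p,\infty}(\rn), \qquad \tau = \frac{1}{p} - \frac{1}{u},
\]
valid for $0<p\le u<\infty$ (and trivially for $p=u=\infty$ with $\tau=0$), which is \cite[Corollary 3.3]{ysy} and was already quoted in the remark preceding the statement. Under the hypothesis $\tau \in [0, 1/p)$ with $p\in(0,\infty)$, the parameter $u$ defined by $1/u := 1/p - \tau$ satisfies $p \le u < \infty$, so the identification applies.

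Next I would invoke Theorem \ref{approx5}, which asserts that, for any $A, \ca \in \{B,F\}$, any $q\in(0,\infty]$, and any $\tau\in[0,1/p)$ with $p\in(0,\infty)$,
\[
\cn^{s}_{u,p,q}(\rn) = \bigl(A^{s_0,\tau}_{p,q_0}(\rn), \ca^{s_1,\tau}_{p,q_1}(\rn)\bigr)_{\Theta,q}
\]
in the sense of equivalent quasi-norms, where $u$ is again given by $1/u = 1/p - \tau$ and the remaining parameters obey the assumptions of Corollary \ref{approx6}. Specializing this formula to $q=\infty$ and combining with the identification above yields
\[
B^{s,\tau}_{p,\infty}(\rn) = \cn^{s}_{u,p,\infty}(\rn) =
\bigl(A^{s_0,\tau}_{p,q_0}(\rn), \ca^{s_1,\tau}_{p,q_1}(\rn)\bigr)_{\Theta,\infty},
\]
which is precisely the claim for $p<\infty$. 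Note that all four choices of $(A,\ca)\in\{B,F\}^2$ are covered simultaneously since Theorem \ref{approx5} allows $A$ and $\ca$ to be chosen independently.

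The remaining case $p=\infty$ with $\tau=0$ (if one admits this limiting interpretation of the range $\tau\in[0,1/p)$) is not covered by Theorem \ref{approx5}, but it reduces to the classical real interpolation identity
\[
B^{s}_{\infty,\infty}(\rn) =
\bigl(A^{s_0}_{\infty,q_0}(\rn), \ca^{s_1}_{\infty,q_1}(\rn)\bigr)_{\Theta,\infty}
\]
recorded in the remark after Theorem \ref{approx5} (or equivalently in \cite[Theorem 2.4.2]{t83}), so this case presents no additional difficulty. There is essentially no hard step: the entire content has been transported into Theorem \ref{approx5} and the identification $\cn^{s}_{u,p,\infty}(\rn) = B^{s,\tau}_{p,\infty}(\rn)$; the only subtlety is a pro forma check that, at the endpoint $q=\infty$, one still obtains the correct space with the right quasi-norm equivalence, which follows because both cited results hold at that endpoint.
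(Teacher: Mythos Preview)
Your proposal is correct and follows exactly the approach the paper intends: the corollary is presented immediately after recalling the identification $\cn^{s}_{u,p,\infty}(\rn) = B^{s,\tau}_{p,\infty}(\rn)$ with $\tau = 1/p - 1/u$, so that specializing Theorem~\ref{approx5} to $q=\infty$ gives the result directly. Your remark on the $p=\infty$ endpoint is harmless, since under the constraint $\tau \in [0,1/p)$ that case is vacuous anyway.
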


For completeness, we also treat the case $\tau \in [1/p,\fz)$.

\begin{corollary}\label{approx10}
Let  $\Theta \in(0, 1)$, $s_0,s_1 \in \rr$, $s_0 < s_1$,
$p\in(0,\fz]$,  and $q,q_0,q_1 \in(0,\fz]$.
Let either $q_i\in(0,\fz)$ and $\tau_i\in(1/p,\fz)$ or
$q_i=\fz$ and $\tau_i\in[1/p,\fz)$, $i\in\{0,1\}$.
Let
\[
s:= (1-\Theta)\, s_0 + \Theta \, s_1,
\, \quad \tau := (1-\Theta)\, \tau_0 + \Theta \, \tau_1
\qquad \mbox{and}\qquad
\frac 1p := \frac{1-\tz}{p_0} + \frac{\tz}{p_1}\, ,
\]
and $A,\ca \in \{B,F\}$.
Then
\[
B^{s+n\tau-n/p}_{\infty,q}(\rn) =
(A^{s_0,\tau_0}_{p,q_0}(\rn),\ca^{s_1,\tau_1}_{p,q_1}(\rn))_{\Theta,q}
\]
in the sense of equivalent quasi-norms.
\end{corollary}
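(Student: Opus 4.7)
The plan is to reduce the claimed identity to the classical real interpolation formula for Besov spaces with $p=\infty$ and both fine indices equal to $\infty$:
\[
(B^{\sigma_0}_{\infty,\infty}(\rn), B^{\sigma_1}_{\infty,\infty}(\rn))_{\Theta, q} = B^{\sigma}_{\infty, q}(\rn), \qquad \sigma := (1-\Theta)\sigma_0 + \Theta \sigma_1, \quad \sigma_0 \neq \sigma_1,
\]
which can be found, e.\,g., in \cite[Theorem~2.4.2]{t83}. The key observation is that, under the assumptions imposed on $(\tau_i, q_i)$, the spaces $A^{s_i, \tau_i}_{p, q_i}(\rn)$ and $\ca^{s_i, \tau_i}_{p, q_i}(\rn)$ both collapse to classical Besov spaces with fine index $\infty$, regardless of the ``microscopic'' index $q_i$ and regardless of whether $A, \ca$ is a $B$- or $F$-scale.

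First I would set $\sigma_i := s_i + n\tau_i - n/p$ for $i \in \{0,1\}$ and $\sigma := s + n\tau - n/p$, and invoke Proposition~\ref{basic1}(ii) to obtain
\[
A^{s_i, \tau_i}_{p, q_i}(\rn) = B^{\sigma_i}_{\infty, \infty}(\rn), \qquad A \in \{B, F\},
\]
with equivalent quasi-norms, in both admissible situations: (a)~$\tau_i \in (1/p, \infty)$ with arbitrary $q_i \in (0, \infty]$; (b)~$\tau_i = 1/p$ with $q_i = \infty$ (in which case $\sigma_i = s_i$). This is exactly the reduction already used in Remark~\ref{2.12x}(i). Combined with the direct computation
\[
(1-\Theta)\sigma_0 + \Theta \sigma_1 = (1-\Theta)(s_0 + n\tau_0 - n/p) + \Theta(s_1 + n\tau_1 - n/p) = s + n\tau - n/p = \sigma,
\]
the classical interpolation formula recalled above yields
\[
(A^{s_0, \tau_0}_{p, q_0}(\rn), \ca^{s_1, \tau_1}_{p, q_1}(\rn))_{\Theta, q} = (B^{\sigma_0}_{\infty, \infty}(\rn), B^{\sigma_1}_{\infty, \infty}(\rn))_{\Theta, q} = B^{s + n\tau - n/p}_{\infty, q}(\rn),
\]
which is the claim.

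The main obstacle, more a matter of precision than of substance, is to verify that $\sigma_0 \neq \sigma_1$, which is needed to apply the classical formula nontrivially. The hypothesis $s_0 < s_1$ ensures this whenever $\tau_0 \le \tau_1$, but in principle one could arrange $s_0 + n\tau_0 = s_1 + n\tau_1$; in that degenerate case, $(B^{\sigma_0}_{\infty, \infty}(\rn), B^{\sigma_0}_{\infty, \infty}(\rn))_{\Theta, q} = B^{\sigma_0}_{\infty, \infty}(\rn)$, which strictly differs from $B^{\sigma_0}_{\infty, q}(\rn)$ for $q<\infty$. Hence, the statement should be read under the tacit assumption $\sigma_0 \neq \sigma_1$; once this is granted, the proof reduces to the short chain of identifications displayed above.
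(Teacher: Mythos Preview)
Your proposal is correct and follows essentially the same route as the paper's own proof: identify $A^{s_i,\tau_i}_{p,q_i}(\rn)$ with $B^{s_i+n(\tau_i-1/p)}_{\infty,\infty}(\rn)$ via Proposition~\ref{basic1}(iii) (not (ii), which is the $F^{s,1/p}_{p,q}=F^s_{\infty,q}$ identity), then apply the classical real interpolation formula \cite[Theorem~2.4.2]{t83}. Your remark about the degenerate case $\sigma_0=\sigma_1$ is a fair caveat that the paper's proof also leaves implicit.
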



\subsubsection{Real interpolation with different $p$}


We summarize some known results, due to Triebel \cite[Theorem~2.4.2/1]{t78}
($1 <p_0 < p_1<\fz$, $q \in (1,\infty]$), and Frazier, Jawerth \cite[Corollary~6.7]{fj90}.

\begin{proposition}
Let   $0 <p_0 < p_1<\fz$, $s \in \rr$, $q \in (0,\infty]$  and  $1/p:=(1-\Theta)/p_0+\Theta/p_1$.
Then
\begin{equation*}
F^{s}_{p,q}(\rn) =
(F^{s}_{p_0,q}(\rn),F^{s}_{p_1,q}(\rn))_{\Theta,p}
\end{equation*}
and
\begin{equation*}
F^{s,0}_{p_0/(1-\Theta),q}(\rn) =
(F^{s,0}_{p_0,q}(\rn), F^{s,1/p_1}_{p_1,q}(\rn))_{\Theta,p} \, .
\end{equation*}
\end{proposition}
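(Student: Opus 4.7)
The plan is to reduce both formulas to classical real interpolation of vector-valued $L_p$ spaces via the $\varphi$-transform (or wavelet) characterization of Triebel-Lizorkin-type spaces. Recall from Propositions \ref{wave1} and \ref{wave2} (in the Appendix referenced earlier) that, for admissible parameters, $F^{s,\tau}_{p,q}(\rn)$ is a retract of the sequence space $f^{s,\tau}_{p,q}(\rn)$; combined with Proposition \ref{realbasic}(ii) and the standard retraction-coretraction argument (compare with Propositions \ref{complexretract-pm} and \ref{complexretract-pg} for the corresponding argument in other interpolation methods), it suffices to prove the two identities at the level of sequence spaces, i.e.,
\[
\lf(f^{s}_{p_0,q}(\rn),f^{s}_{p_1,q}(\rn)\r)_{\Theta,p}= f^{s}_{p,q}(\rn)
\]
and
\[
\lf(f^{s,0}_{p_0,q}(\rn),f^{s,1/p_1}_{p_1,q}(\rn)\r)_{\Theta,p}= f^{s,0}_{p_0/(1-\Theta),q}(\rn).
\]

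For the first identity, I would use the elementary identification $f^{s}_{p,q}(\rn)\asymp L_p(\rn;\ell^{s}_q(\zz_+))$ (via the normalized indicator functions of dyadic cubes, exactly as in the Frazier-Jawerth setting), so the claim reduces to the classical vector-valued formula
\[
\lf(L_{p_0}(\rn;X), L_{p_1}(\rn;X)\r)_{\Theta,p}=L_p(\rn;X),\qquad \frac1p:=\frac{1-\Theta}{p_0}+\frac{\Theta}{p_1},
\]
valid for any (quasi-)Banach space $X$, which follows since the real method on $L_p$-scales gives Lorentz spaces and $L_{p,p}=L_p$. The quasi-Banach case is handled in the standard way (see, e.g., \cite[2.4.2]{t83}).

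For the second identity, I would first invoke Proposition \ref{basic1}(ii) to rewrite the second endpoint: $F^{s,1/p_1}_{p_1,q}(\rn) = F^{s}_{\infty,q}(\rn)$, so the problem becomes
\[
\lf(F^{s}_{p_0,q}(\rn),F^{s}_{\infty,q}(\rn)\r)_{\Theta,p}= F^{s}_{p,q}(\rn),\qquad \frac1p=\frac{1-\Theta}{p_0}\, .
\]
At the sequence space level, one uses the Peetre-Frazier-Jawerth description of $f^{s}_{\infty,q}(\rn)$ as the Carleson/tent-type analogue of $L_\infty(\ell^{s}_q)$, so that, after the retraction, the identity reduces to the classical Marcinkiewicz-type formula $(L_{p_0}(X),L_\infty(X))_{\Theta,p}=L_p(X)$ with $1/p=(1-\Theta)/p_0$. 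This is precisely the content of \cite[Corollary~6.7]{fj90}, which we may invoke directly.

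The main obstacle lies in the second formula, specifically in handling the $L_\infty$-type endpoint $f^{s}_{\infty,q}(\rn)$: its norm is defined by a Carleson-type supremum over dyadic cubes rather than a pointwise $L_\infty$-norm of the $\ell^{s}_q$-sequence, so the reduction to vector-valued $L_p$ interpolation requires verifying that the associated $K$-functional splits correctly across dyadic scales. This is where the bulk of the analytic work is concentrated, but it is already carried out in \cite[Sections 5--6]{fj90}, and for the proof proposal we invoke that verification rather than repeat it here.
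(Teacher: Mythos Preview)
The paper does not give its own proof of this proposition; it simply records it as a known result, citing Triebel \cite[Theorem~2.4.2/1]{t78} for the first identity (Banach range) and Frazier--Jawerth \cite[Corollary~6.7]{fj90} for the full statement including the second identity. Your sketch---retraction to the sequence spaces $f^{s,\tau}_{p,q}(\rn)$, then vector-valued $L_p$ real interpolation for the first formula and invocation of \cite[Corollary~6.7]{fj90} for the second after rewriting $F^{s,1/p_1}_{p_1,q}(\rn)=F^s_{\infty,q}(\rn)$---is exactly the route taken in those references, so your approach is correct and coincides with what the paper appeals to.

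One cosmetic point: your shorthand ``the identity reduces to $(L_{p_0}(X),L_\infty(X))_{\Theta,p}=L_p(X)$'' is not literally accurate, since $f^s_{\infty,q}(\rn)$ carries the Carleson-type norm rather than the pointwise $L_\infty(\ell^s_q)$ norm; but you correctly flag this as the main obstacle and defer its resolution to \cite[Sections~5--6]{fj90}, which is precisely where the work is done. Also note that in the second formula the outer real-interpolation index should be read as $p_0/(1-\Theta)$ (as you do), matching the target space; the $p$ defined in the preamble of the proposition refers only to the first formula.
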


Finally we turn to the real interpolation of Morrey spaces themselves. Here we only consider embeddings.

\begin{lemma}\label{morreyreal}
Let $0 <p_i\le u_i<\fz$, $i\in\{0,1\}$, $1/p:=(1-\Theta)/p_0+\Theta/p_1$ and
$1/u:=(1-\Theta)/u_0+\Theta/u_1$.

{\rm (i)} It always holds true that
\begin{equation*}
\lf(\cm^{u_0}_{p_0}(\rn), \cm^{u_1}_{p_1}(\rn)\r)_{\tz,p}
\hookrightarrow \cm^{u}_{p}(\rn) \, .
\end{equation*}

{\rm (ii)} Let $\min\{p_0,p_1\}>1$.  Then
\begin{equation*}
\cm^{u}_{p}(\rn) \hookrightarrow \lf(\cm^{u_0}_{p_0}(\rn), \cm^{u_1}_{p_1}(\rn)\r)_{\tz,\infty}
\end{equation*}
holds true if and only if  $p_0 /u_0 = p_1/u_1$.

{\rm (iii)} If $p_0 = p_1$, then
\begin{equation}\label{w-52}
\lf(\cm^{u_0}_{p_0}(\rn), \cm^{u_1}_{p_1}(\rn)\r)_{\tz,\infty}\hookrightarrow \cm^{u}_{p}(\rn) \, .
\end{equation}
\end{lemma}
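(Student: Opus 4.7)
The plan is to reduce parts (i) and (iii) to a one-line application of the real interpolation functor to a uniform family of restriction operators, and to derive part (ii) from Corollary~\ref{cima}(i) together with the already-cited negative result of Lemari\'e-Rieusset.

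For every ball $B\subset\rn$, consider the linear operator $S_B f:=f\chi_B$. Directly from the definition of the Morrey quasi-norm,
\[
\|S_B f\|_{L_{p_i}(\rn)}=\|f\|_{L_{p_i}(B)}\le |B|^{1/p_i-1/u_i}\,\|f\|_{\cm^{u_i}_{p_i}(\rn)},\qquad i\in\{0,1\},
\]
so $\|S_B\|_{\cm^{u_i}_{p_i}(\rn)\to L_{p_i}(\rn)}\le |B|^{1/p_i-1/u_i}$. By Proposition~\ref{realbasic}(ii), for any $q\in(0,\fz]$, $S_B$ is bounded from $(\cm^{u_0}_{p_0}(\rn),\cm^{u_1}_{p_1}(\rn))_{\tz,q}$ into $(L_{p_0}(\rn),L_{p_1}(\rn))_{\tz,q}$ with operator quasi-norm controlled by
$|B|^{(1-\tz)(1/p_0-1/u_0)+\tz(1/p_1-1/u_1)}=|B|^{1/p-1/u}.$
For $q=p$, the classical identity $(L_{p_0}(\rn),L_{p_1}(\rn))_{\tz,p}=L_p(\rn)$ (valid in the full quasi-Banach range; see, e.g., \cite[1.18.6]{t78}) rewrites this as
\[
|B|^{1/u-1/p}\,\|f\|_{L_p(B)}\ls \|f\|_{(\cm^{u_0}_{p_0}(\rn),\cm^{u_1}_{p_1}(\rn))_{\tz,p}}
\]
uniformly in $B$; taking the supremum over all balls proves (i). Part (iii) is the same argument with $p_0=p_1=p$, after which the interpolation identity collapses to the trivial $(L_p(\rn),L_p(\rn))_{\tz,\fz}=L_p(\rn)$.

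For part (ii), the sufficiency ``$\Leftarrow$'' follows from Corollary~\ref{cima}(i), which under $p_0 u_1=p_1 u_0$ identifies $\lf\laz\cm^{u_0}_{p_0}(\rn),\cm^{u_1}_{p_1}(\rn),\tz\r\raz$ with $\cm^u_p(\rn)$, combined with the general continuous embedding $\laz X_0,X_1,\tz\raz\hookrightarrow (X_0,X_1)_{\tz,\fz}$ for any quasi-Banach couple (a direct $K$-functional estimate on the dyadic representation $a=\sum_i a_i$ used in the definition of the $\pm$-method: truncating the series at level $i_0$ with $2^{i_0}\asymp t$ realizes a decomposition bounding $K(t,a,X_0,X_1)\le C\,t^{\tz}\,\|a\|_{\laz X_0,X_1,\tz\raz}$). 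The necessity ``$\Rightarrow$'' is precisely \cite[Theorem~3]{LR}, already quoted in Section~\ref{s1}; its hypothesis $\min\{p_0,p_1\}>1$ is exactly the restriction appearing in our (ii), and its proof constructs explicit test functions supported on suitably scaled annuli whose $\cm^{u_0}_{p_0}\cap\cm^{u_1}_{p_1}$ profile contradicts the $K$-functional growth forced by $(\cm^{u_0}_{p_0}(\rn),\cm^{u_1}_{p_1}(\rn))_{\tz,\fz}$ whenever $p_0/u_0\ne p_1/u_1$. The single nontrivial obstacle in the whole proof is this negative direction, which lies beyond the scope of the soft functorial reasoning employed for (i), (iii) and the ``$\Leftarrow$'' half of (ii); the positive statements are immediate from interpolating one uniformly bounded restriction operator.
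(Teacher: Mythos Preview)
Your argument is correct and, for parts (i) and (iii), is exactly the paper's approach: the paper invokes Lemma~\ref{help} (the Spanne--Peetre restriction-operator interpolation) together with Remark~\ref{morrey24}(i), which is precisely your uniform family $S_B$, and for (iii) it uses the same collapse $(L_p(\rn),L_p(\rn))_{\tz,\infty}=L_p(\rn)$ when $p_0=p_1=p$.

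For part (ii) there is a minor difference worth noting. The paper simply refers both directions to Lemari\'e-Rieusset~\cite{LR}. You instead derive the sufficiency ``$\Leftarrow$'' internally from Corollary~\ref{cima}(i) and the general embedding $\laz X_0,X_1,\tz\raz\hookrightarrow (X_0,X_1)_{\tz,\infty}$. Your sketch of that embedding is right: from the $\pm$-representation one extracts $\|a_i\|_{X_j}\le C\,2^{i(\tz-j)}$ (take $F=\{i\}$), and then an Aoki--Rolewicz $r$-norm summation shows $\sum_{i\le i_0}a_i$ converges in $X_0$ with norm $\lesssim C\,2^{i_0\tz}$ and $\sum_{i>i_0}a_i$ converges in $X_1$ with norm $\lesssim C\,2^{-i_0(1-\tz)}$, giving the $K$-functional bound. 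This route has the small bonus that it makes the sufficiency self-contained within the paper and shows it holds without the restriction $\min\{p_0,p_1\}>1$; the necessity still needs~\cite{LR} and its hypothesis, exactly as you say.
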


The embedding in Lemma  \ref{morreyreal}(i)
with $\min\{p_0,p_1\}\ge 1$ has been known for some time, we refer the reader to
 Mazzucato \cite{ma03}, Lemari{\'e}-Rieussiet \cite{LR} and Sickel \cite{s011a}.
Lemma  \ref{morreyreal}(ii) is taken from Lemari{\'e}-Rieussiet \cite{LR}. Concerning Lemma  \ref{morreyreal}(iii), we wish to mention that,
in case $\min\{p_0,p_1\}\ge 1$, Lemari{\'e}-Rieussiet \cite{LR} has proved a  sharper version, in which the necessity of  $p_0 = p_1$  for the validity of the embedding \eqref{w-52} has been shown.
Whereas Lemma \ref{morreyreal} is useful, the next statement is instructive
to what concerns the limitations
of the real method in our context.

\begin{theorem}\label{morreyreal2}
Let $1\le p_i \le  u_i<\fz$, $i\in\{0,1\}$,  $1/p:=(1-\Theta)/p_0+\Theta/p_1$ and
$1/u:=(1-\Theta)/u_0+\Theta/u_1$.
Then, for all $q\in(0,\infty]$,
\begin{equation*}
\lf(\cm^{u_0}_{p_0}(\rn), \cm^{u_1}_{p_1}(\rn)\r)_{\tz,q} \neq  \cm^{u}_{p}(\rn) \, ,
\end{equation*}
except the trivial cases consisting in
\begin{enumerate}
\item[{\rm(a)}] $p_0 = p_1$ and $u_0 = u_1$, or
\item[{\rm(b)}] $p_0 = u_0$, $p_1 = u_1$ and $q=p$.
\end{enumerate}
\end{theorem}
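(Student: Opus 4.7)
My plan is to split into cases according to whether $p_0/u_0=p_1/u_1$ holds, treating the Lebesgue subcase by Lorentz identification and the proper-Morrey subcase by a density argument, and to lean on Lemma~\ref{morreyreal} and its sharper form due to Lemari\'e-Rieusset \cite{LR} for the endpoint $q=\infty$.

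I would first dispose of the \emph{off-diagonal} case $p_0/u_0\neq p_1/u_1$. For $q=\infty$, Lemma~\ref{morreyreal}(ii) immediately rules out $\cm^u_p(\rn)\hookrightarrow \lf(\cm^{u_0}_{p_0}(\rn),\cm^{u_1}_{p_1}(\rn)\r)_{\tz,\infty}$, so the two spaces cannot coincide. For $q<\infty$, I combine this with the standard embedding $(X_0,X_1)_{\tz,q}\hookrightarrow (X_0,X_1)_{\tz,\infty}$: an equality $(\cm^{u_0}_{p_0}(\rn),\cm^{u_1}_{p_1}(\rn))_{\tz,q}=\cm^u_p(\rn)$ with equivalent quasi-norms would yield $\cm^u_p(\rn)\hookrightarrow (\cm^{u_0}_{p_0}(\rn),\cm^{u_1}_{p_1}(\rn))_{\tz,\infty}$, again contradicting Lemma~\ref{morreyreal}(ii). (The edge $p_i=1$ is covered by the counterexamples of \cite{rv95,brv} quoted in Section~\ref{s1}.)

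Next I treat the \emph{diagonal} case $p_0u_1=p_1u_0$ with $(p_0,u_0)\neq (p_1,u_1)$. A brief check shows that $p_0=p_1$ together with $p_0u_1=p_1u_0$ forces $u_0=u_1$, i.\,e.\ case (a); hence, after relabelling, I may assume $p_0<p_1$ and therefore $u_0<u_1$. If $p_0=u_0$ and $p_1=u_1$, then $\lf(L_{p_0}(\rn),L_{p_1}(\rn)\r)_{\tz,q}$ is the Lorentz space $L_{p,q}(\rn)$, which equals $L_p(\rn)=\cm^p_p(\rn)$ precisely when $q=p$; this is exactly the excluded case (b). Otherwise $p_0<u_0$, and, by equality of ratios, also $p_1<u_1$. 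For $q<\infty$, I invoke the classical density of $X_0\cap X_1$ in $(X_0,X_1)_{\tz,q}$: a putative identity with $\cm^u_p(\rn)$ would force $\cm^{u_0}_{p_0}(\rn)\cap\cm^{u_1}_{p_1}(\rn)$ to be dense in $\cm^u_p(\rn)$, directly contradicting Lemma~\ref{morrey41} (whose hypotheses $p_0<u_0$, $u_1>u$, $p<u$ and $p\ge 1$ are all met in our regime). For $q=\infty$, I apply the sharpened form of Lemma~\ref{morreyreal}(iii) from \cite{LR}, which asserts that $(\cm^{u_0}_{p_0}(\rn),\cm^{u_1}_{p_1}(\rn))_{\tz,\infty}\hookrightarrow\cm^u_p(\rn)$ forces $p_0=p_1$; since here $p_0<p_1$, this embedding fails and equality is again impossible.

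The main obstacle is the endpoint $q=\infty$ in the proper-Morrey diagonal regime: the two complementary embeddings provided by Lemma~\ref{morreyreal} point in opposite directions and are each individually compatible with equality, while the density argument used for $q<\infty$ collapses because $X_0\cap X_1$ need not be dense in $(X_0,X_1)_{\tz,\infty}$. Hence the proof genuinely requires the strictly stronger negative result of Lemari\'e-Rieusset singling out $p_0=p_1$ as the necessary condition for the upper embedding; without that ingredient, the $K_\infty$ case cannot be separated from $\cm^u_p(\rn)$ by the elementary tools alone.
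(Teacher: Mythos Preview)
Your proof is correct and draws on the same toolkit as the paper---density of $X_0\cap X_1$ in $(X_0,X_1)_{\tz,q}$ for $q<\infty$ paired with Lemma~\ref{morrey41}, the Lorentz-space identification in the Lebesgue subcase, and the two necessity results of Lemari\'e-Rieusset \cite{LR} at $q=\infty$---but the case split differs. The paper bifurcates first on $q<\infty$ versus $q=\infty$: for $q<\infty$ it invokes \cite[Theorem~3.4.2]{BL} plus Lemma~\ref{morrey41} uniformly across all non-trivial configurations, and for $q=\infty$ it combines both \cite{LR} results to force $p_0=p_1$ and $u_0=u_1$, i.e.\ case~(a). You instead split first by the diagonal condition $p_0u_1=p_1u_0$; in the off-diagonal case you reduce every $q<\infty$ to $q=\infty$ via the elementary embedding $(\cdot)_{\tz,q}\hookrightarrow(\cdot)_{\tz,\infty}$ and then appeal only to Lemma~\ref{morreyreal}(ii). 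This buys a cleaner treatment of the off-diagonal regime, since you never need to verify the side conditions $u_1>u$ and $p<u$ of Lemma~\ref{morrey41} there (note that $u_0=u_1$ with $p_0\neq p_1$ is off-diagonal but has $u_1=u$, so a direct appeal to Lemma~\ref{morrey41} as in the paper's Step~1 is not literally available in that sub-case). One minor caveat: your parenthetical handling of the edge $\min\{p_0,p_1\}=1$ via \cite{rv95,brv} is loose, as those counterexamples cover only specific parameter configurations; the paper's own invocation of \cite{LR} at $q=\infty$ is similarly silent on this boundary, so neither treatment is fully explicit there.
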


We now turn to Besov-Morrey spaces. Here we are going to use  the following conclusion,
whose Banach version was proved in \cite[Theorem 5.6.2]{BL}
(see also \cite[1.18.1, 1.18.2]{t78}).

\begin{lemma}\label{lem-real}
 Assume that $s_0,s_1\in\rr$, $p_0,p_1\in(0,\fz)$, $s:=(1-\tz)s_0+\tz s_1$
and $1/p:=(1-\tz)/p_0+\tz/p_1$. Let $(X_0,X_1)$ be an interpolation couple of quasi-Banach spaces. Then
$$(\ell^{s_0}_{p_0}(X_0), \ell^{s_1}_{p_1}(X_1))_{\tz,p}=\ell^s_p((X_0,X_1)_{\tz,p}).$$
\end{lemma}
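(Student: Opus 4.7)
I would follow the classical Banach proof of Bergh and L\"ofstr\"om~\cite[Theorem~5.6.2]{BL} and adapt each step to the quasi-Banach setting. First, identify the three spaces appearing in the claim as weighted vector-valued $L^p$-spaces on $(\zz_+,\#)$, with $\#$ denoting the counting measure. Setting $w_i(j):=2^{js_ip_i}$ for $i\in\{0,1\}$ and $w(j):=2^{jsp}$, one has $\ell^{s_i}_{p_i}(X_i)=L^{p_i}(\zz_+,w_i\,d\#;X_i)$ and $\ell^{s}_{p}((X_0,X_1)_{\tz,p})=L^{p}(\zz_+,w\,d\#;(X_0,X_1)_{\tz,p})$. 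The two hypotheses $s=(1-\tz)s_0+\tz s_1$ and $1/p=(1-\tz)/p_0+\tz/p_1$ combine directly into the weight identity $w(j)^{1/p}=w_0(j)^{(1-\tz)/p_0}w_1(j)^{\tz/p_1}$, which is precisely the balance making the weighted vector-valued $L^p$-real-interpolation formula applicable.

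The core of the argument is a two-sided $K$-functional estimate. For $a=(a_j)_{j\in\zz_+}\in\ell^{s_0}_{p_0}(X_0)+\ell^{s_1}_{p_1}(X_1)$ and $t\in(0,\fz)$, write $K_j(\sigma):=K(\sigma,a_j,X_0,X_1)$ and pick a scale $\eta_j\in(0,\fz)$ dictated by the ratio $w_1(j)/w_0(j)$ so that the two summand contributions below are correctly balanced. The target estimate is
\begin{equation*}
K\lf(t,a,\ell^{s_0}_{p_0}(X_0),\ell^{s_1}_{p_1}(X_1)\r)
\asymp
\lf(\sum_{j\in\zz_+} w_0(j)\,K_j(t\eta_j)^{p_0}\r)^{1/p_0}
+t\lf(\sum_{j\in\zz_+} w_1(j)\,[K_j(t\eta_j)/\eta_j]^{p_1}\r)^{1/p_1}.
\end{equation*}
The upper bound is constructive: for each $j$, take a near-optimal decomposition $a_j=b_j+c_j$ realizing $K_j(t\eta_j)$ up to a factor of $2$, and bound the $\ell^{s_0}_{p_0}(X_0)$- and $\ell^{s_1}_{p_1}(X_1)$-norms of $(b_j)_j$ and $(c_j)_j$ directly. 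The lower bound is the key technical point: starting from an arbitrary decomposition $a=b+c$ in $\ell^{s_0}_{p_0}(X_0)+\ell^{s_1}_{p_1}(X_1)$, one applies the pointwise inequality $K_j(t\eta_j)\le\|b_j\|_{X_0}+t\eta_j\|c_j\|_{X_1}$, raises it to the relevant $p_i$-power, and invokes a H\"older-type inequality calibrated to the exponents $p_0$, $p_1$ and the chosen scale $\eta_j$.

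Plugging the equivalence into $\|a\|^p_{(\,\cdot\,,\,\cdot\,)_{\tz,p}}=\int_0^\fz [t^{-\tz}K(t,a)]^p\,dt/t$, interchanging the $j$-sum and the $t$-integral by Fubini, and performing the change of variable $t\mapsto u/\eta_j$ in each summand reduces the quantity to $\sum_{j\in\zz_+} 2^{jsp}\int_0^\fz[u^{-\tz}K_j(u)]^p\,du/u$, which is precisely $\|a\|^p_{\ell^s_p((X_0,X_1)_{\tz,p})}$. The main obstacle is the lower bound in the $K$-functional estimate, as it requires the precise H\"older calibration tied to the weight identity. Additionally, in the quasi-Banach setting, the ordinary triangle inequality must be replaced by an $r$-subadditivity property, valid for some $r<\min\{p_0,p_1,1\}$ via the Aoki--Rolewicz theorem, which affects only the multiplicative constants and preserves the structure of the argument.
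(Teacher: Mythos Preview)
Your proposal is correct as a roadmap, but it takes a genuinely different route from the paper's proof. You follow the Bergh--L\"ofstr\"om strategy of estimating the $K$-functional of the couple $(\ell^{s_0}_{p_0}(X_0),\ell^{s_1}_{p_1}(X_1))$ directly by a weighted combination of the componentwise $K$-functionals $K_j$, and then integrating; as you note, the lower bound of this two-sided estimate is the delicate step and requires a H\"older calibration together with an Aoki--Rolewicz adjustment in the quasi-Banach regime. The paper, by contrast, avoids the $K$-functional estimate entirely and applies Triebel's power theorem \cite[Theorem~1.4.2]{t78} with $q=1$: setting $\eta:=\tz p/p_1$ (so that $p=(1-\eta)p_0+\eta p_1$), the $(\tz,p)$-norm is rewritten as $\int_0^\infty t^{-\eta}\inf_{a=a^0+a^1}\sum_j[\,2^{js_0p_0}\|a^0_j\|_{X_0}^{p_0}+t\,2^{js_1p_1}\|a^1_j\|_{X_1}^{p_1}\,]\,dt/t$, and because the decomposition is componentwise independent, the infimum and the $j$-sum commute \emph{exactly}. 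A single change of variable $y=2^{-j(s_0p_0-s_1p_1)}t$ then reduces everything to $\sum_j 2^{jsp}\int_0^\infty y^{-\eta}L(y,a_j)\,dy/y$, which by the power theorem again equals $\|a\|^p_{\ell^s_p((X_0,X_1)_{\tz,p})}$. The paper's argument is shorter and sidesteps the lower-bound difficulty you single out, since the $L$-functional with $q=1$ splits as an identity over direct sums; your approach is the more classical one and has the advantage of giving an explicit $K$-functional description along the way, at the cost of the extra estimates.
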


This lemma, applied with $X_0:=\cm^{u_0}_{p_0}(\rn)$ and $X_1:=\cm^{u_1}_{p_1}(\rn)$,
taking into account  Lemma \ref{morreyreal}(i) and \ref{lem-real}, yields the following statement.

\begin{proposition}\label{p-real}
Let $s_i\in\rr$, $0< p_i\le u_i<\fz$, $q_i \in (0,\infty]$, $i\in\{0,1\}$,  $s:=(1-\tz)s_0+\tz s_1$, $1/p:=(1-\Theta)/p_0+\Theta/p_1$,
$1/q:=(1-\Theta)/q_0+\Theta/q_1$ and
$1/u:=(1-\Theta)/u_0+\Theta/u_1$.
Then
\begin{equation*}
\lf(\cn^{s_0}_{u_0,p_0,p_0}(\rn), \cn^{s_1}_{u_1,p_1,p_1}(\rn)\r)_{\tz,p}
\hookrightarrow \cn^{s}_{u,p,p}(\rn).
\end{equation*}
\end{proposition}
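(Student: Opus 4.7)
The plan is to transfer the real-interpolation embedding for Morrey spaces (Lemma \ref{morreyreal}(i)) to the Besov-Morrey scale by means of the wavelet characterization (Proposition \ref{wave2} in Appendix) together with the factorization of real interpolation through $\ell_p$-valued sequence spaces (Lemma \ref{lem-real}), exactly as announced in the paragraph preceding the statement.

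First, I would invoke the wavelet characterization to obtain  linear operators $E$ (wavelet decomposition) and $R$ (wavelet synthesis), independent of the parameters, such that $R\circ E$ is the identity and, for $i\in\{0,1\}$,
\[
E:\cn^{s_i}_{u_i,p_i,p_i}(\rn)\to n^{s_i}_{u_i,p_i,p_i}(\rn)\, ,\qquad R: n^{s_i}_{u_i,p_i,p_i}(\rn)\to \cn^{s_i}_{u_i,p_i,p_i}(\rn)
\]
are bounded. Because the third index equals the second, the mixed norm defining the sequence space $n^{s_i}_{u_i,p_i,p_i}(\rn)$ collapses and one identifies it, in the sense of equivalent quasi-norms, with $\ell^{s_i}_{p_i}(\cm^{u_i}_{p_i}(\rn))$ (viewing the wavelet coefficients on each dyadic scale as a function whose Morrey quasi-norm is the natural block quasi-norm). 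In particular, $\cn^{s_i}_{u_i,p_i,p_i}(\rn)$ is a retract of $\ell^{s_i}_{p_i}(\cm^{u_i}_{p_i}(\rn))$.

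Next, I would apply Lemma \ref{lem-real} with $X_i:=\cm^{u_i}_{p_i}(\rn)$ to derive
\[
\lf(\ell^{s_0}_{p_0}(\cm^{u_0}_{p_0}(\rn)),\,\ell^{s_1}_{p_1}(\cm^{u_1}_{p_1}(\rn))\r)_{\tz,p}
=\ell^s_p\lf((\cm^{u_0}_{p_0}(\rn),\cm^{u_1}_{p_1}(\rn))_{\tz,p}\r),
\]
which embeds continuously into $\ell^s_p(\cm^u_p(\rn))$ by Lemma \ref{morreyreal}(i) and hence into $n^s_{u,p,p}(\rn)$ after the identification above. Composing with $R$ and using the standard fact that real interpolation commutes with retracts (the analog of Proposition \ref{complexretract-pm} for the functor $(\cdot,\cdot)_{\tz,p}$, which follows at once from Proposition \ref{realbasic}(ii) and $R\circ E=I$), I conclude
\[
\lf(\cn^{s_0}_{u_0,p_0,p_0}(\rn),\cn^{s_1}_{u_1,p_1,p_1}(\rn)\r)_{\tz,p}\hookrightarrow \cn^{s}_{u,p,p}(\rn).
\]

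The only step requiring any real care is the bookkeeping identification $n^{s_i}_{u_i,p_i,p_i}(\rn)\simeq \ell^{s_i}_{p_i}(\cm^{u_i}_{p_i}(\rn))$ needed to feed the wavelet-level sequence spaces into Lemma \ref{lem-real}; once the coincidence of the two outer indices is exploited, this is a routine Fubini-type argument, and the rest of the proof is a mechanical retract–plus–lemma composition. Notice that the parameters $q_0,q_1,q$ play no role in the argument, consistent with their absence from the conclusion.
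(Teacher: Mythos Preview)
Your approach is correct and follows the same high-level strategy as the paper (combine Lemma \ref{lem-real} applied to $X_i=\cm^{u_i}_{p_i}(\rn)$ with the embedding of Lemma \ref{morreyreal}(i), then transfer via retraction). The one genuine difference is in how the retract structure is realized: you pass through wavelets (Proposition \ref{wave2}) and then identify $n^{s_i}_{u_i,p_i,p_i}(\rn)$ with $\ell^{s_i}_{p_i}(\cm^{u_i}_{p_i}(\rn))$, whereas the paper's one-line proof proceeds directly from the Fourier-analytic definition of $\cn^s_{u,p,q}(\rn)$, since
\[
\|f\|_{\cn^s_{u,p,q}(\rn)}=\bigl\|\{\cfi(\vz_j\cf f)\}_{j\in\zz_+}\bigr\|_{\ell^s_q(\cm^u_p(\rn))}
\]
already displays $\cn^s_{u,p,q}(\rn)$ as a retract of $\ell^s_q(\cm^u_p(\rn))$ via the standard Littlewood-Paley retraction (as in Triebel's book for ordinary Besov spaces). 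The direct route avoids the wavelet layer and the ``Fubini-type bookkeeping'' you flag as the only delicate step; your route has the minor advantage of working entirely at the sequence-space level where the lattice structure is explicit, but here that gains nothing since no lattice argument is needed.
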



\subsection{The interpolation property}
\label{sum}


The  aim of this subsection consists in a collection of the consequences of the
previously obtained  results for the interpolation property of linear operators.

Before turning to these results, we
would like to give a comment on the positive interpolation results obtained so far.
As we have seen in Subsections \ref{cpr} through \ref{inter1d},
positive results were always connected with the restriction
$p_0 /u_0 = p_1/u_1$ (or $\tau_0 \, p_0 = \tau_1 \, p_1$).
There is an  explanation which we learned from Lemari{\'e}-Rieussiet
(a personal communication with the second author)
as follows. The condition $p_0 /u_0 = p_1/u_1$
characterizes those pairs of Morrey spaces  $(\cm^{u_0}_{p_0}(\rn),
\cm^{u_1}_{p_1}(\rn))$ which are connected by
a bijection. More precisely, the mapping
\[
T_\delta:\ f \mapsto (\arg f) \, |f|^\delta
\]
is a bijection from $\cm^{u_0}_{p_0}(\rn)$ onto $\cm^{u_1}_{p_1}(\rn)$ if $\delta = p_0 /p_1= u_0 /u_1$.
Hence, positive interpolation results were only obtained
within a scale of images $\{T_\delta (\cm^{u_0}_{p_0}(\rn))\}_{\delta >0}$ of a fixed Morrey space $\cm^{u_0}_{p_0}(\rn)$.
The second Morrey space $\cm^{u_1}_{p_1}(\rn)$ has to belong to this scale
and, as a result, the interpolation space will belong to as well (for some methods, not all).
Without this bijectivity, we do not know any positive results.

We continue by recalling the most prominent statement concerning the interpolation property in the framework
of Morrey spaces.

\begin{lemma}\label{help}
Let $\tz \in (0,1)$, $0 < p_0 \le u_0 < \infty$, $0 < p_1\le u_1  <\infty$ and define
\[
\frac 1p := \frac{1-\tz}{p_0} + \frac{\tz}{p_1}
\qquad \mbox{and}\qquad \frac 1u := \frac{1-\tz}{u_0} + \frac{\tz}{u_1} \, .
\]
Let $X_0,X_1$ be an interpolation couple of quasi-Banach spaces and
$F$  an interpolation functor of exponent $\tz$ such that
\begin{equation}\label{morrey20}
F(L_{p_0}(\rn),L_{p_1}(\rn) ) \hookrightarrow L_{p}(\rn)\, .
\end{equation}
If $T$ is a linear operator which is bounded from
$X_0$ to the Morrey space $\cm^{u_0}_{p_0}(\rn)$ with  norm
$M_0$ and from
$X_1$ to the Morrey space $\cm^{u_1}_{p_1}(\rn)$ with  norm
$M_1$, then $T$ is also bounded from
$F(X_0,X_1) $ to $\cm^{u}_{p}(\rn)$
and
\begin{equation*}
\|\, T\, \|_{F(X_0,X_1)  \to \cm^{u}_{p}(\rn)}
\le c\,  M_0^{1-\tz}\, M_1^\tz\, ,
\end{equation*}
where $c$ denotes a positive constant independent of $T$, $M_0$ and $M_1$.
\end{lemma}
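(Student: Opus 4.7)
The plan is to reduce the assertion to the hypothesis \eqref{morrey20} on $F$ by means of the classical ``localized operator'' trick that goes back to Stampacchia and Peetre. For each ball $B\subset\rn$, define the linear operator $S_B$ by $S_B f := (Tf)\,\chi_B$. First I would check that, for each ball $B$ and each $i\in\{0,1\}$, $S_B$ maps $X_i$ boundedly into $L_{p_i}(\rn)$ with operator norm at most $|B|^{1/p_i-1/u_i}M_i$. This is immediate from the very definition of the Morrey norm in \eqref{morrey7}, namely
\[
\|S_B f\|_{L_{p_i}(\rn)}=\lf[\int_B |Tf(x)|^{p_i}\,dx\r]^{1/p_i}\le |B|^{1/p_i-1/u_i}\,\|Tf\|_{\cm^{u_i}_{p_i}(\rn)}\le |B|^{1/p_i-1/u_i}M_i\|f\|_{X_i}.
\]

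Next I would invoke the fact that $F$ is an interpolation functor of exponent $\tz$. Applied to the couples $(X_0,X_1)$ and $(L_{p_0}(\rn),L_{p_1}(\rn))$ with the operator $S_B$, this yields a positive constant $C_F$, independent of $B$, such that
\[
\|S_B\|_{F(X_0,X_1)\to F(L_{p_0}(\rn),L_{p_1}(\rn))}\le C_F\,\lf(|B|^{1/p_0-1/u_0}M_0\r)^{1-\tz}\lf(|B|^{1/p_1-1/u_1}M_1\r)^{\tz}.
\]
Since $\frac1p-\frac1u=(1-\tz)\lf(\frac1{p_0}-\frac1{u_0}\r)+\tz\lf(\frac1{p_1}-\frac1{u_1}\r)$, the exponent of $|B|$ on the right-hand side collapses to $1/p-1/u$. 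Composing with the continuous embedding in \eqref{morrey20} (with embedding constant $C_0$), I obtain, for all $f\in F(X_0,X_1)$,
\[
\lf[\int_B |Tf(x)|^p\,dx\r]^{1/p}=\|S_B f\|_{L_p(\rn)}\le C_0\,C_F\,|B|^{1/p-1/u}\,M_0^{1-\tz}M_1^\tz\,\|f\|_{F(X_0,X_1)}.
\]

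Finally, I would multiply both sides by $|B|^{1/u-1/p}$ and take the supremum over all balls $B\subset\rn$; this yields exactly the Morrey estimate
\[
\|Tf\|_{\cm^u_p(\rn)}\le C_0\,C_F\,M_0^{1-\tz}M_1^\tz\,\|f\|_{F(X_0,X_1)}
\]
with the constant $c:=C_0\,C_F$ depending only on $F$ and on the embedding \eqref{morrey20}, but not on $T$, $M_0$ or $M_1$. No genuine obstacle is expected: the proof is essentially bookkeeping, and the only subtle point is to verify that the constant arising from the interpolation functor is indeed uniform in $B$, which is built into the very definition of ``exact/non-exact functor of exponent $\tz$''. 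The crucial structural feature that makes this approach succeed is the multiplicative behaviour of $|B|^{1/p_i-1/u_i}$ together with the interpolation identity $\frac1p-\frac1u=(1-\tz)(\frac1{p_0}-\frac1{u_0})+\tz(\frac1{p_1}-\frac1{u_1})$, which is precisely the compatibility between $p$ and $u$ required in the statement.
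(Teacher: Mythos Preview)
Your proof is correct and follows essentially the same approach as the paper: for each ball $B$ you consider the localized operator $S_B f=(Tf)\chi_B$ (the paper writes $\chi_B\,T$), bound it from $X_i$ into $L_{p_i}$ via the Morrey norm, apply the functor $F$ of exponent $\tz$, use the embedding \eqref{morrey20}, and take the supremum over balls. The only cosmetic difference is that you work with $L_{p_i}(\rn)$ rather than $L_{p_i}(B)$, which is arguably cleaner since it matches the hypothesis \eqref{morrey20} verbatim.
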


\begin{remark}\label{morrey24}
(i) Specializing
\[
X_0 := \cm^{u_0}_{p_0}(\rn) \qquad \mbox{and} \qquad X_1 := \cm^{u_1}_{p_1}(\rn)
\]
and choosing $T$ to be the identity I, we see that
\[
\|I\|_{F(\cm^{u_0}_{p_0}(\rn), \cm^{u_1}_{p_1}(\rn)) \to \cm^{u}_{p}(\rn)} \le c < \infty\, .
\]
In other words, for any functor of exponent $\tz$ such that \eqref{morrey20} holds
true, we have the continuous embedding
\begin{equation*}
 F(\cm^{u_0}_{p_0}(\rn), \cm^{u_1}_{p_1}(\rn)) \hookrightarrow \cm^{u}_{p}(\rn)\, .
\end{equation*}
Since $(\, \cdot \, , \, \cdot \,)_{\tz,p}$ and $[\, \cdot \, , \, \cdot \,]_{\tz}$
are functors satisfying \eqref{morrey20}, it follows that
\begin{equation*}
(\cm^{u_0}_{p_0}(\rn), \, \cm^{u_1}_{p_1}(\rn))_{\tz,p} \hookrightarrow \cm^{u}_{p}(\rn)
\ \ \mbox{and}\ \
[\cm^{u_0}_{p_0}(\rn), \, \cm^{u_1}_{p_1}(\rn)]_{\tz} \hookrightarrow \cm^{u}_{p}(\rn)
\end{equation*}
under the assumptions of Lemma \ref{help}.
Notice that  $\cm^{u_0}_{p_0}(\rn) + \cm^{u_1}_{p_1}(\rn)$ is lattice $r$-convex
for any  $r \in (0, \min\{1,p_0,p_1\}]$,
since $\cm^{u_0}_{p_0}(\rn)$ is lattice $r$-convex with $r\in (0, \min\{1,p_0\}]$  and  $\cm^{u_1}_{p_1}(\rn)$
 is lattice $r$-convex with $r\in (0, \min\{1,p_1\}]$.

(ii) Lemma \ref{help} is implicitly contained in Spanne \cite{sp66} and Peetre \cite{Pe76}.
An extension of this lemma to more general situations (such as Besov-type or Triebel-Lizorkin-type spaces)
would be highly desirable.
\end{remark}

Let $\tz\in(0,1)$, $s_i\in\rr$, $\tau_i\in[0,\fz)$,
$p_i, q_i\in(0,\fz]$ and $u_i\in[p_i,\fz]$, $i\in\{0,1\},$
such that $s:=(1-\tz)s_0+\tz s_1$, $\tau:=(1-\tz)\tau_0+\tz\tau_1$,
\[
\frac1p:=\frac{1-\tz}{p_0}+\frac\tz{p_1}\, , \quad \frac1q:=\frac{1-\tz}{q_0}+\frac\tz{q_1}\, \quad
\mbox{and} \quad \frac1u:=\frac{1-\tz}{u_0}+\frac{\tz}{u_1}\, .
\]
Let $(X_0,X_1)$ and $(Y_0,Y_1)$ be  quasi-Banach couples.
In what follows, $T: X\to Y$ means
that  $T$ is a linear bounded operator from $X$ and $Y$.
The following interpolation properties of linear operators on
smoothness function spaces built on Morrey spaces are obtained in
this article:
\begin{enumerate}
 \item[(a)] Besov-type and Triebel-Lizorkin-type spaces.
In addition, we assume $\tau_0 \, p_0 = \tau_1\, p_1$.
Then, from Proposition \ref{gustav} and Theorem \ref{COMI},
we deduce that, with $A \in \{B,F\}$,
\begin{eqnarray*}
&&T:\left\{
\begin{array}
    {l@{}l}
A_{p_0,q_0}^{s_0,\tau_0}(\rn)\longrightarrow  Y_0\\
A_{p_1,q_1}^{s_1,\tau_1}(\rn)\longrightarrow Y_1
\end{array}
\right.
\quad\Longrightarrow\quad T : A_{p,q}^{s,\tau}(\rn)\longrightarrow \laz Y_0,Y_1, \tz\raz;\\
&&T:\left\{
\begin{array}
    {l@{}l}
 X_0\longrightarrow A_{p_0,q_0}^{s_0,\tau_0}(\rn)\\
X_1\longrightarrow A_{p_1,q_1}^{s_1,\tau_1}(\rn)
\end{array}
\right.
\quad\Longrightarrow\quad T : \laz X_0,X_1, \tz\raz\longrightarrow A_{p,q}^{s,\tau}(\rn).
\end{eqnarray*}

\item[(b)] Besov-type and Triebel-Lizorkin-type spaces. This time we allow $\tau_0 \, p_0 \neq \tau_1\, p_1$ but require
$\tau_i\in[0,1/p_i)$, $i\in\{0,1\}$.
Let $X_0 + X_1$ be analytically convex.
Then, by Propositions \ref{morreyx} and \ref{complexinterpol}, and Remark \ref{inner1}, we find that, with  $A \in \{B,F\}$,
\begin{eqnarray*}
&&T:\left\{
\begin{array}
    {l@{}l}
 X_0\longrightarrow A_{p_0,q_0}^{s_0,\tau_0}(\rn)\\
X_1\longrightarrow A_{p_1,q_1}^{s_1,\tau_1}(\rn)
\end{array}
\right.
\quad\Longrightarrow\quad T : [X_0,X_1]_\tz^i \longrightarrow A_{p,q}^{s,\tau}(\rn).
\end{eqnarray*}

\item[(c)] Besov-Morrey and Triebel-Lizorkin-Morrey spaces.
In addition, we assume $p_0\, u_1=p_1\, u_0$.
Then, from Proposition \ref{gustav} and Theorem \ref{COMI},
we deduce the following, with $\ca \in \{\cn,\ce\}$,
\begin{eqnarray*}
&&T:\left\{
\begin{array}
    {l@{}l}
\ca_{u_0,p_0,q_0}^{s_0}(\rn)\longrightarrow  Y_0\\
\ca_{u_1,p_1,q_1}^{s_1}(\rn)\longrightarrow Y_1
\end{array}
\right.
\quad\Longrightarrow\quad T : \ca_{u,p,q}^{s}(\rn)\longrightarrow \laz Y_0,Y_1, \tz\raz;\\
&&T:\left\{
\begin{array}
    {l@{}l}
 X_0\longrightarrow \ca_{u_0,p_0,q_0}^{s_0}(\rn)\\
X_1\longrightarrow \ca_{u_1,p_1,q_1}^{s_1}(\rn)
\end{array}
\right.
\quad\Longrightarrow\quad T : \laz X_0,X_1, \tz\raz\longrightarrow \ca_{u,p,q}^{s}(\rn).
\end{eqnarray*}

\item[(d)] Besov-Morrey spaces. We do not require  $p_0\, u_1=p_1\, u_0$ in this case.
Let $X_0 + X_1$ be analytically convex.
Then, by Proposition \ref{morreyn}, Proposition \ref{complexinterpol} and Remark \ref{inner1},
we know that
\begin{eqnarray*}
&&T:\left\{
\begin{array}
    {l@{}l}
 X_0\longrightarrow \cn_{u_0,p_0,q_0}^{s_0}(\rn)\\
X_1\longrightarrow \cn_{u_1,p_1,q_1}^{s_1}(\rn)
\end{array}
\right.
\quad\Longrightarrow\quad T : [X_0,X_1]_\tz^i \longrightarrow \cn_{u,p,q}^{s}(\rn).
\end{eqnarray*}

\item[(e)] Besov-Morrey spaces,  Besov-type and Triebel-Lizorkin-type spaces.
Suppose $0 < \tau := \frac 1p - \frac 1u< 1/p$.
Then, from Theorem \ref{approx5} and Proposition \ref{realbasic},
 it follows that, in case $s_0 \neq s_1$, it holds true that
\begin{eqnarray*}
&&T:\left\{
\begin{array}
    {l@{}l}
A_{p,q_0}^{s_0,\tau_0}(\rn)\longrightarrow  Y_0\\
\ca_{p,q_1}^{s_1,\tau_1}(\rn)\longrightarrow Y_1
\end{array}
\right.
\quad\Longrightarrow\quad T : \cn_{u,p,r}^{s}(\rn)\longrightarrow  (Y_0,Y_1)_{\tz,r}
\end{eqnarray*}
and
\begin{eqnarray*}
&&T:\left\{
\begin{array}
    {l@{}l}
 X_0\longrightarrow A_{p,q_0}^{s_0,\tau_0}(\rn)\\
X_1\longrightarrow \ca_{p,q_1}^{s_1,\tau_1}(\rn)
\end{array}
\right.
\quad\Longrightarrow\quad T : (X_0,X_1)_{\tz,r} \longrightarrow \cn_{u,p,r}^{s}(\rn)
\end{eqnarray*}
with arbitrary $r\in (0,\infty]$ and $A,\ca \in \{B,F\}$.

\item[(f)] Besov-Morrey spaces,  Besov-type and Triebel-Lizorkin-type spaces.
Then, by Theorem \ref{approx7} and Proposition \ref{realbasic}, we know that,
in case $s_0 \neq s_1$, it holds true that
\begin{eqnarray*}
&&T:\left\{
\begin{array}
    {l@{}l}
A_{u,p,q_0}^{s_0}(\rn)\longrightarrow  Y_0\\
\ca_{u,p,q_1}^{s_1}(\rn)\longrightarrow Y_1
\end{array}
\right.
\quad\Longrightarrow\quad T : \cn_{u,p,r}^{s}(\rn)\longrightarrow  (Y_0,Y_1)_{\tz,r}\end{eqnarray*}
and
\begin{eqnarray*}
&&T:\left\{
\begin{array}
    {l@{}l}
 X_0\longrightarrow A_{u,p,q_0}^{s_0}(\rn)\\
X_1\longrightarrow \ca_{u,p,q_1}^{s_1}(\rn)
\end{array}
\right.
\quad\Longrightarrow\quad T : (X_0,X_1)_{\tz,r} \longrightarrow \cn_{u,p,r}^{s}(\rn)
\end{eqnarray*}
with arbitrary $r\in (0,\infty]$ and $A,\ca \in \{\cn, \ce\}$.

\item[(g)] Morrey spaces.
It follows, from
Proposition \ref{realbasic} and Lemma \ref{morreyreal}, that
\begin{itemize}
\item $T:\left\{
\begin{array}
    {l@{}l}
X_0\longrightarrow  \cm_{p_0}^{u_0}(\rn)\\
X_1\longrightarrow \cm_{p_1}^{u_1}(\rn)
\end{array}
\right.
\quad\Longrightarrow\quad T : (X_0,X_1)_{\Theta,p}\longrightarrow \cm_{p}^{u}(\rn)$;
\item
If $p_0=p_1=p$, then
\begin{eqnarray*}
T:\left\{
\begin{array}
    {l@{}l}
X_0\longrightarrow  \cm_{p}^{u_0}(\rn)\\
X_1\longrightarrow \cm_{p}^{u_1}(\rn)
\end{array}
\right.
\quad\Longrightarrow\quad T : (X_0,X_1)_{\Theta,\fz}\longrightarrow \cm_{p}^{u}(\rn);
\end{eqnarray*}
\item
Let $X_0 + X_1$ be analytically convex. Then
\begin{eqnarray*}
T:\left\{
\begin{array}
    {l@{}l}
X_0 \longrightarrow  \cm_{p_0}^{u_0}(\rn) \\
X_1 \longrightarrow  \cm_{p_1}^{u_1}(\rn)
\end{array}
\right.
\quad\Longrightarrow\quad T : [X_0,X_1]_{\Theta}^i \longrightarrow \cm_{p}^{u}(\rn)  ;
\end{eqnarray*}
see Theorem  \ref{morrey4}, Proposition \ref{complexinterpol} and Remark \ref{inner1}.
\end{itemize}
\end{enumerate}


\section{Interpolation of local spaces}
\label{inter2}


Very recently, Triebel in \cite{t12} (see also \cite{t14}) systematically  introduced and studied
two new scales of function spaces, $\cl^r B^s_{p,q} (\rn)$ and $\cl^r F^s_{p,q} (\rn)$,
which were called \emph{local} (or \emph{Morreyfied}) \emph{spaces}.
The original definition of these spaces
relies on the
appropriate wavelet decomposition of the distributions under consideration; see \cite[1.3.1]{t12}.
Later on in \cite{ysy2}, it was proved that the local spaces $\cl^r B^s_{p,q} (\rn)$ and $\cl^r F^s_{p,q} (\rn)$ coincide
with the uniform spaces of the scales
$\bt$ and $\ft$, respectively. By this reason, we skip
the original definition of the local spaces here and
deal with the equivalent description of these classes
as localized variants of $\bt$ and $\ft$.

\begin{definition}\label{dunif}
Let $\Psi$ be a non-negative smooth function in $\rn$ with compact support such that $\Psi (0) >0$. Let $A \in \{B,F\}$,
$s\in\rr$, $\tau\in[0,\fz)$ and $p,q\in(0,\fz]$ ($p\in(0,\fz)$ if $A=F$).
The \emph{uniform space} $A^{s,\tau}_{p,q,{\rm unif}}(\rn)$ is defined as the space of
all $f\in\cs'(\rn)$ such that
$$\|f\|_{A^{s,\tau}_{p,q,{\rm unif}}(\rn)}:=\sup_{\ell\in\zz^n} \|\Psi(\cdot-\ell) f(\cdot)\|_{\at}<\fz.$$
\end{definition}

It was proved in \cite{ysy,ysy2} that the spaces
$A^{s,\tau}_{p,q,{\rm unif}}(\rn)$ are quasi-Banach
spaces independent of the choice of $\Psi$
(in the sense of equivalent quasi-norms).
Obviously, $\at \hookrightarrow \atu$.
In addition, one knows, from \cite{ysy2}, that
\[\at=A^{s,\tau}_{p,q,{\rm unif}}(\rn) \qquad \Longleftrightarrow \qquad
\tau\in[1/p,\fz) \, .
\]

The main result of \cite{ysy2} is the following identification.

\begin{proposition}
Let $s\in\rr$, $\tau\in[0,\fz)$ and $p,q\in(0,\fz]$ ($p\in(0,\fz)$ if $A=F$). Then
\[
\atu=\cl^{n(\tau-1/p)}A^s_{p,q}(\rn)
 \]
in the sense of  equivalent quasi-norms.
\end{proposition}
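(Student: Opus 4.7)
The plan is to prove the identification $\atu = \cl^{n(\tau-1/p)}A^s_{p,q}(\rn)$ by deriving equivalent wavelet-based sequence-space quasi-norms for both sides and matching them.

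First, I would invoke the wavelet characterization of $\at$ from Propositions \ref{wave1} and \ref{wave2}, according to which $\|f\|_{\at}$ is equivalent to an explicit quasi-norm on the wavelet coefficient sequence $\{\lambda_Q(f)\}_{Q\in\mathcal{Q}^*}$ in the sequence space $\sat$; this quasi-norm takes the form of a supremum over dyadic cubes $P$ with weight $|P|^{-\tau}$ of a Besov or Triebel--Lizorkin-type expression built from the sub-collection $\{\lambda_Q(f)\}_{Q\subset P}$.

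Second, I would derive a wavelet description of $\atu$ itself. Since $\Psi(\cdot-\ell)$ is a smooth compactly supported bump, the coefficients $\lambda_Q(\Psi(\cdot-\ell)f)$ are concentrated in cubes $Q$ located near $\ell$, with rapidly decaying off-diagonal tails; this can be made quantitative by combining almost-diagonal estimates for smooth wavelets with the pointwise-multiplier properties of $\at$. Taking $\sup_{\ell\in\zz^n}$ and exploiting the translation structure of the underlying dyadic grid, one obtains an equivalent quasi-norm on $\atu$ of the shape
\[
\sup_{\ell\in\zz^n}\ \sup_{\substack{P\in\mathcal{Q}^*\\ P\subset \ell+[0,2]^n}}|P|^{-\tau}\Bigl[\text{localized sequence quasi-norm of }\{\lambda_Q(f)\}_{Q\subset P}\Bigr],
\]
where the bracket denotes the natural Besov or Triebel--Lizorkin expression appearing in $\sat$ with $\tau=0$.

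Third, I would match this with the wavelet-based definition of $\cl^{n(\tau-1/p)}A^s_{p,q}(\rn)$ introduced in \cite[1.3.1]{t12}. Rewriting $|P|^{-\tau}=|P|^{-1/p}\,|P|^{-(\tau-1/p)}$ and absorbing $|P|^{-1/p}$ into the standard wavelet normalization already built into $\sat$ with $\tau=0$, the remaining factor $|P|^{-(\tau-1/p)}=|P|^{-r/n}$ with $r:=n(\tau-1/p)$ is precisely the additional local weight that defines $\cl^r A^s_{p,q}(\rn)$. Identifying the two sequence characterizations then yields the asserted equivalence of quasi-norms. The main obstacle lies in the first half of Step 2, namely transferring the analytic definition of $\atu$ (multiplication by $\Psi(\cdot-\ell)$ followed by the $\at$-quasi-norm) into the purely combinatorial wavelet-coefficient setting: one needs a careful almost-diagonal analysis to ensure that the localization by $\Psi(\cdot-\ell)$ is compatible with the $\tau$-supremum structure and that the off-diagonal remainder is summable uniformly in $\ell$. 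Once this has been established, the identification with Triebel's local space reduces to a bookkeeping exercise on the wavelet sequence norm.
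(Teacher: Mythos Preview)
The paper does not prove this proposition at all: it is quoted verbatim from \cite{ysy2} (see the sentence ``The main result of \cite{ysy2} is the following identification'' in Section~\ref{inter2}, and the repetition in the Appendix with ``We simply state the following identity; see \cite{ysy2}''). So there is no proof in the paper to compare your proposal against.

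That said, your outline is in the right spirit and close to how the result is actually established in \cite{ysy2}. A few corrections and simplifications are worth noting. First, Proposition~\ref{wave2} is the wavelet characterization of the Besov--Morrey spaces $\cn^s_{u,p,q}(\rn)$, not of $\at$; only Proposition~\ref{wave1} is relevant here. Second, the almost-diagonal machinery you invoke in Step~2 is heavier than necessary: the wavelets in Proposition~\ref{wave1} are compactly supported, and $\Psi$ is compactly supported, so $\langle \Psi(\cdot-\ell)f,\psi_{i,j,k}\rangle=0$ whenever $Q_{j,k}$ lies outside a fixed enlargement of $Q_{0,\ell}$. The localization is exact, not merely rapidly decaying. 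Third, the genuinely nontrivial point is not the off-diagonal decay but the equivalence between $\|\Psi(\cdot-\ell)f\|_{\at}$ and the $\sat$-norm of the restricted coefficient sequence $\{\lambda_Q(f)\chi_\ell(Q)\}_Q$; one direction uses that smooth compactly supported functions are pointwise multipliers on $\at$ uniformly in translation, and the other uses a resolution of unity built from translates of $\Psi$ together with the local structure of the $\sat$-norm. Once this is done, the identification with Triebel's space is immediate because $\cl^r A^s_{p,q}(\rn)$ is \emph{defined} via wavelets in \cite[1.3.1]{t12}, and the paper already records (just after Definition~\ref{d2u}) that the resulting sequence space $\satu$ coincides with Triebel's $\cl^{n(\tau-1/p)}a^{s+n/2}_{p,q}(\rn)$.
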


Parallel to the nonlocal situation  one can prove the following.

\begin{theorem}\label{COMI-u}
Let $\Theta,\, s,\, s_0,\, s_1,\, \tau,\, \tau_0,\,\tau_1,\, p,\, p_0,\, p_1,\, q,\, q_0,\, q_1,\, u,\, u_0,\, u_1$
be as in Theorem \ref{COMI} ($p,\, p_0,\, p_1\in(0,\fz)$ if $A=F$).
If $\tau_0 \, p_0 = \tau_1\, p_1$, then
\begin{equation*}
\lf\laz A_{p_0,q_0,{\rm unif}}^{s_0,\tau_0}(\rn), A_{p_1,q_1,{\rm unif}}^{s_1,\tau_1}(\rn),\tz\r\raz=A_{p,q,{\rm unif}}^{s,\tau}(\rn).
\end{equation*}
\end{theorem}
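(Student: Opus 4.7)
The plan is to mimic the proof of Theorem \ref{COMI}(i), working first at the level of sequence spaces and then transferring to function spaces through a wavelet retract argument. That proof rests on three ingredients: the Calder\'on product identity for the associated sequence spaces (Proposition \ref{morrey2}), Nilsson's theorem (Proposition \ref{t-n}) which converts Calder\'on products into the $\pm$-method, and the wavelet characterization (Propositions \ref{wave1} and \ref{wave2}) realizing the function spaces as retracts of the sequence spaces. All three ingredients have natural uniform analogues, and the idea is to assemble them in the same way.

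First, I would introduce the natural uniform sequence space $a^{s,\tau}_{p,q,{\rm unif}}(\rn)$, whose quasi-norm is obtained from that of $a^{s,\tau}_{p,q}(\rn)$ by restricting the supremum over dyadic cubes $P$ to those with $\ell(P)\le 1$, and check via the wavelet characterization that $A^{s,\tau}_{p,q,{\rm unif}}(\rn)$ is a retract of $a^{s,\tau}_{p,q,{\rm unif}}(\rn)$. These uniform sequence spaces are quasi-Banach lattices indexed by dyadic cubes and are analytically convex. The core step is then to prove the Calder\'on product identity
\[
\lf[a^{s_0,\tau_0}_{p_0,q_0,{\rm unif}}(\rn)\r]^{1-\tz}\, \lf[a^{s_1,\tau_1}_{p_1,q_1,{\rm unif}}(\rn)\r]^\tz = a^{s,\tau}_{p,q,{\rm unif}}(\rn)
\]
under the assumption $\tau_0 p_0 = \tau_1 p_1$. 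The embedding ``$\hookrightarrow$'' follows by cube-wise pointwise factorization as in the first half of the proof of Proposition \ref{morrey2}, since the restriction $\ell(P) \le 1$ is preserved by pointwise operations. For the reverse embedding, given $\lambda \in a^{s,\tau}_{p,q,{\rm unif}}(\rn)$, I would construct factor sequences using the same explicit recipe employed in \cite{yyz} to prove Proposition \ref{morrey2}, with the balancing identity $\tau_0 p_0 = \tau_1 p_1$ guaranteeing that the dyadic Morrey-type weights in the two factors match up correctly; the restriction $\ell(P) \le 1$ is preserved throughout the construction. Nilsson's theorem (Proposition \ref{t-n}) then converts this Calder\'on product identity into
\[
\lf\laz a^{s_0,\tau_0}_{p_0,q_0,{\rm unif}}(\rn), a^{s_1,\tau_1}_{p_1,q_1,{\rm unif}}(\rn), \tz\r\raz = a^{s,\tau}_{p,q,{\rm unif}}(\rn),
\]
and Proposition \ref{complexretract-pm}, applied with the wavelet analysis and synthesis operators, transfers this sequence space identity to the function space identity asserted in Theorem \ref{COMI-u}.

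The principal obstacle is the Calder\'on product identity at the uniform sequence space level, and in particular the reverse embedding. One must verify that the factorization procedure of \cite{yyz}, which is built out of local (cube-wise) quantities, still produces elements in the correct uniform sequence spaces when we restrict to dyadic cubes of side-length at most one. Since the construction is inherently local and the balancing assumption $\tau_0 p_0 = \tau_1 p_1$ is identical to the one in Theorem \ref{COMI}(i), I expect the argument to carry over without essentially new ideas, but the bookkeeping concerning which cubes appear and which remain within the ``uniform'' range must be done carefully; in particular, one should check that the factor sequences, viewed as elements of $a^{s_i,\tau_i}_{p_i,q_i,{\rm unif}}(\rn)$ rather than of $a^{s_i,\tau_i}_{p_i,q_i}(\rn)$, only ``see'' cubes with $\ell(P)\le 1$.
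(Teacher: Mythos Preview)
Your proposal is correct and follows the same overall architecture as the paper: establish the Calder\'on product identity at the uniform sequence space level, apply Nilsson's theorem (Proposition \ref{t-n}), and transfer to function spaces via the wavelet retract argument. The one tactical difference concerns the reverse embedding of the Calder\'on product. You propose to re-run the explicit factorization from \cite{yyz} and verify that it respects the restriction $\ell(P)\le 1$; the paper instead introduces an equivalent quasi-norm on $a^{s,\tau}_{p,q,{\rm unif}}(\rn)$ given by $\sup_{\ell\in\zz^n}\|\{t_Q\chi_\ell(Q)\}_{Q}\|_{a^{s,\tau}_{p,q}(\rn)}$, where $\chi_\ell$ restricts to dyadic cubes contained in the unit cube $Q_{0,\ell}$. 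With this in hand, the reverse embedding becomes a direct reduction: for each $\ell$ the restricted sequence lies in the ordinary (non-uniform) space $a^{s,\tau}_{p,q}(\rn)$, so Proposition \ref{morrey2} supplies factors $t^{0,\ell},t^{1,\ell}$, and the global factors are obtained by gluing, $t^i_Q:=\sum_{\ell\in\zz^n} |t^{i,\ell}_Q|\chi_\ell(Q)$. This sidesteps precisely the bookkeeping you flag as the principal obstacle, since the non-uniform result is used as a black box on each unit cube rather than being re-derived under the uniform constraint.
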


\begin{remark}
 One can expect that a certain part of the theory, developed in Section 2, carries over to these local spaces,
 which will not be dealt with in this article.
For brevity, we concentrate on the most important results here.
\end{remark}


\section{Proofs}
\label{proofs}


In this section, we give proofs of the results stated in
Sections 2 and 3.


\subsection{Proofs of results in Subsection \ref{cpr}}
\label{proof0}


\begin{proof}[Proof of Theorem \ref{morrey1}(iii)]
Under the restrictions $1 < p_i\le u_i < \infty$, $i\in\{0,1\}$, and  $u_0 p_1 \neq u_1 p_0$,
Lemari{\'e}-Rieusset \cite{LR} constructed a family of
fractal sets, $K_m^\beta$, with $\bz\in[0,n)$ and $m\in \nn$, such that the associated
family of positive linear operators $T_m: \cm_{p_i}^{u_i} (\rn) \to \rr$,
$i\in\{0,1\}$,
\[
T_m f := \int_{K_m^\beta} f(x)\, dx\, ,
\]
has the property
\begin{equation}\label{ws-99}
\sup_{m \in \nn} \frac{\|\,  T_m\, \|_{\cm_{p}^u (\rn) \to \rr}}{\|\,  T_m\, \|_{\cm_{p_0}^{u_0} (\rn) \to \rr}^{1-\Theta}\,
 \|\,  T_m\, \|_{\cm_{p_1}^{u_1} (\rn) \to \rr}^\Theta} = \infty
\end{equation}
under some conditions on $\beta$.
By Proposition \ref{cpi}, this implies that
\[
\lf[\cm_{p_0}^{u_0}(\rn)\r]^{1-\tz}
\lf[\cm_{p_1}^{u_1}(\rn)\r]^\tz \subsetneqq \cm_{p}^{u}(\rn)
\]
under the extra condition $\min \{p_0,p_1\}>1$.

We claim that the above restriction $\min \{p_0,p_1\}>1$ can be removed by studying the mapping $f \mapsto |f|^\delta$, $\delta \in(0,\fz)$.
To see this, we argue by contradiction.  Our assumption consists in
\[
\lf[\cm_{p_0}^{u_0}(\rn)\r]^{1-\tz}
\lf[\cm_{p_1}^{u_1}(\rn)\r]^\tz = \cm_{p}^{u}(\rn)
\]
for some $p_0,p_1$ such that  $\min \{p_0,p_1\}=p_0 \le 1$.
We are going to use the following observations:\\
(a) A function $f$ belongs to $\cm_p^u (\rn)$ if and only if $|f|$ belongs to $\cm_p^u (\rn)$.
\\
(b) If $f$ belongs to $\cm_p^u (\rn)$, then $|f|^\delta \in \cm_{p/\delta}^{u/\delta}(\rn)$ and
\[
\|\, |f|^\delta \,\|_{\cm_{p/\delta}^{u/\delta}(\rn)} = \|\, f \,\|_{\cm_{p}^{u}(\rn)} \, .
\]
Indeed, by (a) and (b), we see that
$T: \, f \mapsto (\arg f) \,|f|^\delta$ is a bijection with respect to the pair
$(\cm_{p}^{u}(\rn), \cm_{p/\delta}^{u/\delta}(\rn))$.
Now, we choose $\delta <p_0$.
Then it is easy to see that
\[
T \lf(\lf[\cm_{p_0}^{u_0}(\rn)\r]^{1-\tz}\lf[\cm_{p_1}^{u_1}(\rn)\r]^\tz\r)
= \lf[\cm_{p_0/\delta}^{u_0/\delta}(\rn)\r]^{1-\tz} \lf[\cm_{p_1/\delta}^{u_1/\delta}(\rn)\r]^\tz \, .
\]
Since $T(\cm_{p}^{u}(\rn)) =  \cm_{p/\delta}^{u/\delta}(\rn)$,
 we obtain
$$\cm_{p/\delta}^{u/\delta}(\rn)
= [\cm_{p_0/\delta}^{u_0/\delta}(\rn)]^{1-\tz} [\cm_{p_1/\delta}^{u_1/\delta}(\rn)]^\tz,$$
but this is in conflict with the above known result for
$\min \{p_0,p_1\}>1$, which completes the proof of Theorem \ref{morrey1}(iii).
\end{proof}


\subsection{Proofs of results in Subsection \ref{inter1a}}
\label{proof1}


First, we need to recall a few more notions; see, for example, \cite{n85}.

\begin{definition}
(i) Let $X$ be a quasi-Banach lattice and $p\in[1,\fz]$.
The \emph{$p$-convexification} of $X$, denoted by $X^{(p)}$,
is defined as follows: $x\in X^{(p)}$ if and only if
$|x|^p\in X$. For all $x\in X^{(p)}$, define
$$\|x\|_{X^{(p)}}:=\||x|^p\|_X^{1/p}.$$

(ii)
A quasi-Banach lattice $X$ is said to \emph{be of type
$\mathfrak{E}$ } if there exists an equivalent quasi-norm $|||\cdot|||_X$ on $X$
such that, for some $p\in[1,\fz]$,
$X^{(p)}$ is a Banach lattice in the norm $\|\, \cdot \, \|_{X^{(p)}}:=||||\cdot|^p|||_X^{1/p}$.
\end{definition}

Let $\delta\in(0,\min\{1,p,q\}]$.
Then it is easy to see that
$$\|t\|_{[\sat]^{(1/\delta)}}=\||t|^{1/\delta}\|_{\sat}^\delta= \|t\|_{a^{\delta(s+n/2)-n/2, \tau \delta}_{p/\delta,q/\delta}(\rn)}.$$
Since $p/\delta\ge1$ and $q/\delta\ge1$, we know that $[\sat]^{(1/\delta)}$ is a Banach lattice and
hence $\sat$ is of  type $\mathfrak{E}$. Similarly, the space $n^s_{u,p,q}(\rn)$ is
 also of  type $\mathfrak{E}$.

\begin{definition}
Let $X_0,X_1$ be a couple of quasi-Banach spaces and let $X$ be  an intermediate space
with respect to $X_0+X_1$. Then the \emph{Gagliardo closure} of $X$  with respect
to $X_0+X_1$, denoted by $X^{\sim}$, is defined as
the collection of all  $a\in X_0 + X_1$
such that there exists a sequence $\{a_i\}_{i\in\zz_+}\subset X$
satisfying $a_i\to a$ as $i\to\fz$ in $X_0+X_1$
and $\|a_i\|_X\le \lz$ for some $\lz<\fz$ and all $i\in\zz_+$. For all $a\in X^\sim$,
define
$\|a\|_{X^{\sim}}:=\inf \lz$.
\end{definition}

Our argument will be based on the following result of Nilsson \cite[Theorem 2.1]{n85}.

\begin{proposition}\label{t-n}
Let $X_0$ and $X_1$ be two quasi-Banach lattices
of type $\mathfrak{E}$ and $\Theta \in (0,1)$. Then
$$\laz X_0, X_1\raz_\Theta = (X_0^{1-\Theta}\, X_1^\Theta)^\#$$
and
\[
X_0^{1-\Theta}\, X_1^\Theta \hookrightarrow  \laz X_0,X_1,\Theta\raz
\hookrightarrow (X_0^{1-\Theta}\, X_1^\Theta)^{\sim}.
\]
\end{proposition}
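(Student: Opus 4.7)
The plan is to prove the theorem in three logical steps: establish the two chain embeddings first, and then deduce the equality for $\laz X_0,X_1\raz_\Theta$ using Proposition \ref{nil}. Throughout, the hypothesis that $X_0,X_1$ are of type $\mathfrak{E}$ will be used to pass to Banach lattice techniques after an appropriate $p$-convexification; this is essential because several ingredients (Hölder-type pointwise estimates, norm-closed hulls of sums of absolute values) are delicate in the quasi-Banach regime but become routine on the convexified scales.

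For the first embedding $X_0^{1-\Theta}X_1^\Theta \hookrightarrow \laz X_0,X_1,\Theta\raz$, I would take $f$ in the Calderón product and use Lemma \ref{Leb}(ii) to write $|f|\le\lambda|g|^{1-\Theta}|h|^{\Theta}$ with $\|g\|_{X_0}\le 1$, $\|h\|_{X_1}\le 1$ and $\lambda\approx\|f\|_{X_0^{1-\Theta}X_1^\Theta}$. Then one slices the measure space dyadically via $E_i:=\{x:\,2^i\le |h(x)|/|g(x)|<2^{i+1}\}$ and sets $a_i:=f\chi_{E_i}$. Pointwise on $E_i$ one has the twin bounds $|a_i|\lesssim\lambda\,2^{i\Theta}|g|$ and $|a_i|\lesssim\lambda\,2^{-i(1-\Theta)}|h|$, which are precisely what is needed so that the lattice property delivers \eqref{2.1x} uniformly in $\{\varepsilon_i\}$. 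For the second embedding $\laz X_0,X_1,\Theta\raz \hookrightarrow (X_0^{1-\Theta}X_1^\Theta)^{\sim}$, take $f=\sum a_i$ as in the $\pm$-definition and form the finite partial sums $f_F:=\sum_{i\in F}a_i$. Using the factorisation $|a_i|=(2^{-i\Theta}|a_i|)^{1-\Theta}(2^{i(1-\Theta)}|a_i|)^{\Theta}$, combined with a Hölder-type inequality valid on the $p$-convexifications, one verifies the pointwise bound $|f_F|\le |g_F|^{1-\Theta}|h_F|^{\Theta}$, where $g_F:=\sum_{i\in F}2^{-i\Theta}|a_i|$ and $h_F:=\sum_{i\in F}2^{i(1-\Theta)}|a_i|$ have uniformly controlled norms in $X_0$ and $X_1$ respectively (this is where the type $\mathfrak{E}$ hypothesis is invoked to upgrade the sign-free $\varepsilon_i\in\{\pm1\}$ bound to a bound on sums of absolute values). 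Hence $f_F\in X_0^{1-\Theta}X_1^\Theta$ with norm $\lesssim\|f\|_{\laz\cdot\raz}$ and $f_F\to f$ in $X_0+X_1$, placing $f$ in the Gagliardo closure.

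Finally, I would deduce the identity $\laz X_0,X_1\raz_\Theta=(X_0^{1-\Theta}X_1^\Theta)^{\#}$ by invoking Proposition \ref{nil}, which identifies $\laz X_0,X_1\raz_\Theta$ with the closure of $X_0\cap X_1$ in $\laz X_0,X_1,\Theta\raz$. The two embeddings show that on the common subspace $X_0\cap X_1$ the norms of $X_0^{1-\Theta}X_1^\Theta$ and $\laz X_0,X_1,\Theta\raz$ are equivalent (the Gagliardo closure contributing nothing extra for elements already in both intermediate spaces), so the two norm-closures of $X_0\cap X_1$ coincide. The main obstacle in carrying out this plan will be the second embedding: the pointwise Hölder-type estimate for sums $\sum(u_i)^{1-\Theta}(v_i)^{\Theta}$ is not immediate in a quasi-Banach lattice, and one really needs the $p$-convexity afforded by the type $\mathfrak{E}$ assumption, together with a careful replacement of $a_i$ by $|a_i|$ (which loses the sign information inherent to the $\pm$-definition) justified via the lattice property and a Rademacher-averaging trick after convexification.
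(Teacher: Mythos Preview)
The paper does not prove this proposition at all; it is quoted verbatim as \cite[Theorem 2.1]{n85} and used as a black box. So there is no ``paper's own proof'' to compare against, and what you have written is an attempted reconstruction of Nilsson's argument.

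Your treatment of the first embedding $X_0^{1-\Theta}X_1^\Theta\hookrightarrow\laz X_0,X_1,\Theta\raz$ via the dyadic level sets $E_i=\{2^i\le |h|/|g|<2^{i+1}\}$ is correct and standard; note that it does not even require the type $\mathfrak{E}$ hypothesis, since the $a_i=f\chi_{E_i}$ have disjoint supports and the pointwise bounds $|a_i|\lesssim\lambda 2^{i\Theta}|g|$, $|a_i|\lesssim\lambda 2^{-i(1-\Theta)}|h|$ combine with the lattice property alone. Your deduction of the identity $\laz X_0,X_1\raz_\Theta=(X_0^{1-\Theta}X_1^\Theta)^{\#}$ from Proposition~\ref{nil} and the chain of embeddings is also fine.

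The second embedding, however, has a genuine gap. You claim that the type $\mathfrak{E}$ hypothesis lets one ``upgrade the sign-free $\varepsilon_i\in\{\pm1\}$ bound to a bound on sums of absolute values'', i.e.\ pass from $\sup_{|\varepsilon_i|\le1}\bigl\|\sum_{i\in F}\varepsilon_i b_i\bigr\|_{X_0}\le C$ to $\bigl\|\sum_{i\in F}|b_i|\bigr\|_{X_0}\lesssim C$. This is false already in $L^2$ (which is certainly of type $\mathfrak{E}$): take $b_1,\dots,b_N$ to be Rademacher functions on $[0,1]$, so that $\|\sum\varepsilon_i b_i\|_2=(\sum|\varepsilon_i|^2)^{1/2}\le\sqrt{N}$ while $\|\sum|b_i|\|_2=N$, and the ratio is unbounded. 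A Rademacher-averaging trick in a Banach lattice yields control of the \emph{square function} $\|(\sum|b_i|^2)^{1/2}\|$, not of the $\ell^1$-sum $\|\sum|b_i|\|$, so your proposed majorants $g_F=\sum_{i\in F}2^{-i\Theta}|a_i|$ and $h_F=\sum_{i\in F}2^{i(1-\Theta)}|a_i|$ cannot be bounded this way. Nilsson's actual argument for $\laz X_0,X_1,\Theta\raz\hookrightarrow(X_0^{1-\Theta}X_1^\Theta)^{\sim}$ uses the type $\mathfrak{E}$ structure differently; you would need to consult \cite{n85} to see how this step is carried out.
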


As a preparation, we need the following result on sequence spaces which is of interest for its own.

\begin{theorem}\label{comi}
Let $\tz, \ s,\ s_0,\ s_1, \ \tau,\ \tau_0,\ \tau_1, \ p,\ p_0,\ p_1, \ q,\ q_0,\ q_1$
be as in Theorem \ref{COMI}.
If $\tau_0 \, p_0 = \tau_1 \, p_1$, then
\begin{eqnarray*}
\lf\laz a_{p_0,q_0}^{s_0,\tau_0}(\rn), a_{p_1,q_1}^{s_1,\tau_1}(\rn)\r\raz_\tz & = &
\lf(\lf[a_{p_0,q_0}^{s_0,\tau_0}(\rn)\r]^{1-\tz}
\lf[a_{p_1,q_1}^{s_1,\tau_1}(\rn)\r]^\tz\r)^\#
\\
& = & (a_{p_0,q_0}^{s_0,\tau_0}(\rn), \, a_{p_1,q_1}^{s_1,\tau_1}(\rn), \, a_{p,q}^{s,\tau}(\rn), \#)
\end{eqnarray*}
and
$$
\lf\laz a_{p_0,q_0}^{s_0,\tau_0}(\rn), a_{p_1,q_1}^{s_1,\tau_1}(\rn),\tz\r\raz=
\lf[a_{p_0,q_0}^{s_0,\tau_0}(\rn)\r]^{1-\tz}
\lf[a_{p_1,q_1}^{s_1,\tau_1}(\rn)\r]^\tz=a_{p,q}^{s,\tau}(\rn).$$
\end{theorem}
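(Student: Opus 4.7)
The plan is to combine two tools: the identification of the Calder\'on product given in Proposition \ref{morrey2} and the abstract representation of $\laz X_0, X_1\raz_\tz$ and $\laz X_0, X_1,\tz\raz$ due to Nilsson (Proposition \ref{t-n}). Nilsson's result requires that both endpoints be quasi-Banach lattices of type $\mathfrak{E}$, which is precisely the observation recorded in the paragraph immediately preceding Proposition \ref{t-n}: for any $\delta\in(0,\min\{1,p_i,q_i\}]$, the $(1/\delta)$-convexification of $a_{p_i,q_i}^{s_i,\tau_i}(\rn)$ is isometric to the Banach sequence lattice $a_{p_i/\delta,q_i/\delta}^{\delta(s_i+n/2)-n/2,\tau_i\delta}(\rn)$. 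I set $X_i := a_{p_i,q_i}^{s_i,\tau_i}(\rn)$ and $X := a_{p,q}^{s,\tau}(\rn)$ throughout.

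Under the balance condition $\tau_0 p_0 = \tau_1 p_1$, Proposition \ref{morrey2} yields $X_0^{1-\tz}\,X_1^{\tz} = X$. Plugging this into the first part of Proposition \ref{t-n} gives
$$
\laz X_0,X_1\raz_\tz = (X_0^{1-\tz}\,X_1^{\tz})^{\#} = X^{\#},
$$
and unfolding the definition of $\#$ rewrites $X^{\#}$ as $(X_0,X_1,X,\#)$, which settles the first chain of equalities. For the second chain, the second half of Proposition \ref{t-n} produces the sandwich
$$
X_0^{1-\tz}\,X_1^{\tz} \hookrightarrow \laz X_0,X_1,\tz\raz \hookrightarrow (X_0^{1-\tz}\,X_1^{\tz})^{\sim},
$$
so, after replacing $X_0^{1-\tz}\,X_1^{\tz}$ by $X$, the statement reduces to establishing the Gagliardo closure identity $X^{\sim} = X$.

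The hard part is this last identity; everything else is mechanical substitution. To attack it, given $a\in X^{\sim}$, I would fix a sequence $\{a^{(m)}\}_{m\in\zz_+}\subset X$ with $\|a^{(m)}\|_X \le \lambda$ for some $\lambda\in(0,\fz)$ and $a^{(m)}\to a$ in $X_0+X_1$. Both $X_0$ and $X_1$ are sequence lattices whose quasi-norms dominate the modulus of each individual coordinate, so the coordinate evaluations $t\mapsto t_Q$, $Q\in\mathcal{Q}$, are continuous linear functionals on $X_0+X_1$; hence $a^{(m)}_Q \to a_Q$ for every dyadic cube $Q$. The quasi-norm on $\sat$ is built by an $\ell_q$-aggregation of $L_p$-integrals over balls, both of which are lower semicontinuous under coordinate-wise convergence, so a two-step application of Fatou's lemma (first inside each $L_p$-integral, then inside the $\ell_q$-sum over the scale of dyadic cubes) yields
$$
\|a\|_X \le \liminf_{m\to\fz}\|a^{(m)}\|_X \le \lambda < \fz.
$$
This proves $a\in X$, hence $X^{\sim}\hookrightarrow X$; the reverse embedding is immediate from the definition of the Gagliardo closure. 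Combined with the sandwich above, this closes both chains of equalities.
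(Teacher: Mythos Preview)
Your proposal is correct and follows essentially the same route as the paper: both invoke Nilsson's result (Proposition \ref{t-n}) together with the Calder\'on product identity from Proposition \ref{morrey2}, and then reduce the second chain to showing that the Gagliardo closure of $a_{p,q}^{s,\tau}(\rn)$ in $a_{p_0,q_0}^{s_0,\tau_0}(\rn)+a_{p_1,q_1}^{s_1,\tau_1}(\rn)$ coincides with $a_{p,q}^{s,\tau}(\rn)$ itself, which is done via coordinatewise convergence and Fatou's lemma (this is the content of Lemma \ref{gagliardo} in the paper). The only cosmetic difference is that the paper makes the coordinate-evaluation continuity explicit through the inequality $|(t_i^j)_Q|\lesssim |Q|^{s_j/n+1/2-1/p_j+\tau_j}\|t_i^j\|_{a_{p_j,q_j}^{s_j,\tau_j}(\rn)}$, whereas you phrase it more abstractly.
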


The first formula in Theorem \ref{comi} is a direct consequence of Propositions \ref{morrey2}
and \ref{t-n}.
To prove the second formula, by Proposition \ref{morrey2},  it suffices to show that
$[a_{p,q}^{s,\tau}(\rn)]^\sim = a_{p,q}^{s,\tau}(\rn)$,
which is the conclusion of the following lemma.

\begin{lemma}\label{gagliardo}
Under the same assumptions as in Theorem  \ref{COMI}(i),
the Gagliardo closure $(a_{p,q}^{s,\tau}(\rn))^\sim$ of  $a_{p,q}^{s,\tau}(\rn)$
with respect to
$a_{p_0,q_0}^{s_0,\tau_0}(\rn) + a_{p_1,q_1}^{s_1,\tau_1}(\rn)$
is given by  $a_{p,q}^{s,\tau}(\rn)$.
\end{lemma}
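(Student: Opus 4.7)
The plan is to verify both inclusions, and the easy one is immediate: the inclusion $\sat \hookrightarrow (\sat)^\sim$ follows from the definition of the Gagliardo closure by choosing the constant sequence $a_i := a$. All the content of the lemma therefore lies in the reverse inclusion $(\sat)^\sim \hookrightarrow \sat$, and I would reduce this to showing that the quasi-norm of $\sat$ is lower semicontinuous with respect to convergence in the sum space $X_0+X_1$, where $X_j := a_{p_j,q_j}^{s_j,\tau_j}(\rn)$, $j\in\{0,1\}$. Concretely, given $a \in (\sat)^\sim$ and an approximating sequence $\{a_i\}_{i\in\zz_+}\subset\sat$ with $a_i \to a$ in $X_0+X_1$ and $\sup_i \| a_i \|_{\sat}\le \lambda$, the goal is to prove $\|a\|_{\sat}\le \lambda$.

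First I would establish coordinatewise convergence: for every dyadic cube $Q\in\mathcal{Q}$, $(a_i)_Q \to a_Q$ in $\cc$. For each fixed $Q$, the coordinate functional $t\mapsto t_Q$ is bounded on $X_0$ and on $X_1$, since from the very definition of the sequence space quasi-norms (see Appendix) one has an estimate of the form $|t_Q| \le C_{(Q,s_j,p_j,q_j,\tau_j)} \, \| t \|_{X_j}$. Writing $a_i - a = u_i + v_i$ with $\|u_i\|_{X_0} + \|v_i\|_{X_1}\to 0$, both $(u_i)_Q$ and $(v_i)_Q$ tend to zero for each fixed $Q$, which yields $(a_i)_Q \to a_Q$.

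Next, with coordinatewise convergence in hand, the quasi-norm of $\sat$ is a supremum (over a suitable family of dyadic cubes $P$, normalized by $|P|^{-\tau}$) of iterated $L^p$/$\ell^q$ expressions applied to the nonnegative quantities $|a_Q|$. Since $L^p$- and $\ell^q$-norms are lower semicontinuous under pointwise convergence of nonnegative sequences/functions (an immediate consequence of Fatou's lemma), and since the supremum of a family of lower semicontinuous functionals is again lower semicontinuous, I would conclude
\[
\|a\|_{\sat} \le \liminf_{i\to\fz} \|a_i\|_{\sat} \le \lambda,
\]
whence $a\in\sat$. Combined with Proposition \ref{morrey2}, this proves the second identity in Theorem \ref{comi} as well.

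The principal technical point is the careful bookkeeping of the Fatou-type argument: one has to interchange $\liminf_{i}$ with an outer supremum over dyadic cubes $P$ and with the nested $L^p(\ell^q)$ (in the $F$-case) or $\ell^q(L^p)$ (in the $B$-case) norms that appear in the definition of $\sat$. Both interchanges are standard once the quasi-norm is written out, but they must be performed in the right order (first pass to the limit inside the innermost integrals, then inside the outer sums, and finally take the supremum). The continuity-of-coordinates step is elementary yet indispensable: without it, one cannot extract a pointwise limit from the mere convergence in $X_0+X_1$. No further technical obstacles arise, and the restriction $\tau_0 p_0 = \tau_1 p_1$ enters only implicitly, through the fact that it is this assumption which, via Proposition \ref{morrey2}, identifies the Calder\'on product with $\sat$.
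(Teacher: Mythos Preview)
Your proposal is correct and follows essentially the same route as the paper's proof: both arguments establish coordinatewise convergence $(a_i)_Q\to a_Q$ by observing that each coordinate functional is bounded on $X_0$ and on $X_1$ (the paper writes the explicit bound $|t_Q|\lesssim |Q|^{\frac{s_j}{n}+\frac12-\frac{1}{p_j}+\tau_j}\|t\|_{X_j}$), and then invoke Fatou's lemma to obtain $\|a\|_{\sat}\le\liminf_i\|a_i\|_{\sat}$. The only cosmetic difference is that the paper first notes explicitly (via Proposition~\ref{morrey2}) that $\sat$ is an intermediate space before proceeding.
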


\begin{proof}
Clearly, under the given conditions, $a_{p,q}^{s,\tau}(\rn)$ is an intermediate space
with respect to the pair $(a_{p_0,q_0}^{s_0,\tau_0}(\rn), a_{p_1,q_1}^{s_1,\tau_1}(\rn))$; see
Proposition \ref{morrey2}.

Let $t\in [a_{p,q}^{s,\tau}(\rn)]^\sim$. Then
there exists a sequence $\{t_i\}_{i\in\zz_+}\subset \sat$
such that $t_i\to t$ as $i\to\fz$ in $a_{p_0,q_0}^{s_0,\tau_0}(\rn)+ a_{p_1,q_1}^{s_1,\tau_1}(\rn)$ and
$\|t_i\|_{\sat}\ls \|t\|_{(\sat)^\sim}$ for all $i\in\zz_+$.
Therefore, there exist $t_i^0\in a_{p_0,q_0}^{s_0,\tau_0}(\rn)$
and $t^1_i\in a_{p_1,q_1}^{s_1,\tau_1}(\rn)$ such that
$$t-t_i= t_i^0+t_i^1\, , \qquad i \in \zz_+\, , $$
and $$\|t_i^0\|_{a_{p_0,q_0}^{s_0,\tau_0}(\rn)}
+\|t_i^1\|_{a_{p_1,q_1}^{s_1,\tau_1}(\rn)}\ls
\|t-t_i\|_{a_{p_0,q_0}^{s_0,\tau_0}(\rn)+ a_{p_1,q_1}^{s_1,\tau_1}(\rn)}
\to0$$
as $i\to\fz$. Notice that, for all $Q\in\cq^*$,
$$|(t^0_i)_Q|\ls |Q|^{\frac{s_0}n+\frac12-\frac 1{p_0}+\tau_0}
\|t^0_i\|_{a_{p_0,q_0}^{s_0,\tau_0}(\rn)}\quad {\rm and}\quad
|(t^1_i)_Q|\ls |Q|^{\frac{s_1}n+\frac12-\frac 1{p_1}+\tau_1}
\|t^0_i\|_{a_{p_1,q_1}^{s_1,\tau_1}(\rn)}.$$
We then know that
$(t^0_i)_Q\to0$ and $(t^1_i)_Q\to0$ as $i\to\fz$. Hence
$(t_i)_Q\to t_Q$ as $i\to\fz$.
By the Fatou Lemma, we find that
\begin{eqnarray*}
\|t\|_{\sat}=\lf\|\lf\{\lim_{i\to\fz} (t_i)_Q \r\}_{Q\in\cq^*}\r\|_{\sat}\le
\liminf_{i\to\fz} \|t_i\|_{\sat}\ls \|t\|_{(\sat)^\sim}\, ,
\end{eqnarray*}
which implies that
$[a_{p,q}^{s,\tau}(\rn)]^\sim \hookrightarrow  a_{p,q}^{s,\tau}(\rn)$.
This, combined with Proposition \ref{morrey2}, further shows that
$[a_{p,q}^{s,\tau}(\rn)]^\sim= a_{p,q}^{s,\tau}(\rn)$ in the sense of equivalent
quasi-norms, which completes the proof of Lemma \ref{gagliardo}.
\end{proof}

Replacing Proposition \ref{morrey2} by
Proposition \ref{morrey3}, via an  argument similar to the proof of Theorem \ref{comi},
we obtain the following result on the sequence space  $n^s_{u,p,q}(\rn)$,
the details being omitted.

\begin{theorem}\label{comi-bm}
Let $\tz, \ s,\ s_0,\ s_1,  \ p,\ p_0,\ p_1, \ q,\ q_0,\ q_1, \ u, \ u_0, \ u_1$
be as in Theorem \ref{COMI}.
If $p_0u_1=p_1u_0,$
then
\begin{eqnarray*}
\lf\laz n_{u_0,p_0,q_0}^{s_0}(\rn), n_{u_1,p_1,q_1}^{s_1}(\rn)\r\raz_\tz & = &
\lf(\lf[n_{u_0,p_0,q_0}^{s_0}(\rn)\r]^{1-\tz}
\lf[n_{u_1,p_1,q_1}^{s_1}(\rn)\r]^\tz\r)^\#
\\
& = & (n_{u_0, p_0,q_0}^{s_0}(\rn), \, n_{u_1,p_1,q_1}^{s_1}(\rn), \, n_{u,p,q}^{s}(\rn), \, \#)
\end{eqnarray*}
and
$$
\lf\laz n_{u_0,p_0,q_0}^{s_0}(\rn), n_{u_1,p_1,q_1}^{s_1}(\rn),\tz\r\raz=
\lf[n_{u_0,p_0,q_0}^{s_0}(\rn)\r]^{1-\tz}
\lf[n_{u_1,p_1,q_1}^{s_1}(\rn)\r]^\tz=n_{u,p,q}^{s}(\rn).$$
\end{theorem}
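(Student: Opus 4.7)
The plan is to follow, step by step, the argument that established Theorem~\ref{comi}, making only the substitutions required for the Besov--Morrey sequence spaces. All ingredients have already been set up: $n^{s}_{u,p,q}(\rn)$ is, by the same convexification argument used just after the definition of spaces of type $\mathfrak{E}$, itself of type $\mathfrak{E}$ (pick $\delta\in(0,\min\{1,p,q\}]$; then $[n^{s}_{u,p,q}(\rn)]^{(1/\delta)}$ coincides, up to the obvious shift of parameters, with a Banach--valued Besov--Morrey sequence space). The Calder\'on product identity
\[
\lf[n_{u_0,p_0,q_0}^{s_0}(\rn)\r]^{1-\tz}\lf[n_{u_1,p_1,q_1}^{s_1}(\rn)\r]^\tz = n_{u,p,q}^{s}(\rn)
\]
is exactly the assertion of Proposition~\ref{morrey3} under the hypothesis $p_0 u_1=p_1 u_0$.

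With these two facts in hand, I would apply Nilsson's result (Proposition~\ref{t-n}) to the couple $(X_0,X_1):=(n_{u_0,p_0,q_0}^{s_0}(\rn),n_{u_1,p_1,q_1}^{s_1}(\rn))$. This yields at once
\[
\lf\laz n_{u_0,p_0,q_0}^{s_0}(\rn), n_{u_1,p_1,q_1}^{s_1}(\rn)\r\raz_\tz = \lf(X_0^{1-\tz}X_1^\tz\r)^\# = \lf(n_{u,p,q}^{s}(\rn)\r)^\#,
\]
proving the first displayed formula, and
\[
n_{u,p,q}^{s}(\rn) = X_0^{1-\tz}X_1^\tz \hookrightarrow \lf\laz n_{u_0,p_0,q_0}^{s_0}(\rn), n_{u_1,p_1,q_1}^{s_1}(\rn),\tz\r\raz \hookrightarrow \lf(n_{u,p,q}^{s}(\rn)\r)^\sim,
\]
which supplies two of the three embeddings required for the second formula.

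The one remaining point, and the only place where real work is needed, is the analogue of Lemma~\ref{gagliardo}: one must show that the Gagliardo closure $(n_{u,p,q}^{s}(\rn))^\sim$ relative to the sum $n_{u_0,p_0,q_0}^{s_0}(\rn)+n_{u_1,p_1,q_1}^{s_1}(\rn)$ coincides with $n_{u,p,q}^{s}(\rn)$ itself. The scheme is identical to the one used for $a_{p,q}^{s,\tau}(\rn)$. Given $t\in(n_{u,p,q}^{s}(\rn))^\sim$, choose a sequence $\{t_i\}_{i\in\zz_+}\subset n_{u,p,q}^{s}(\rn)$ with $t_i\to t$ in the sum space and $\|t_i\|_{n_{u,p,q}^{s}(\rn)}\ls \|t\|_{(n_{u,p,q}^{s}(\rn))^\sim}$. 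Decompose $t-t_i=t_i^0+t_i^1$ with $t_i^j\in n_{u_j,p_j,q_j}^{s_j}(\rn)$ whose sum of norms tends to zero. The crux is the pointwise (i.\,e., coordinatewise) estimate for a single cube $Q\in\cq^*$: testing the $n_{u_j,p_j,q_j}^{s_j}$--quasi-norm on the ball $B\supset Q$ of comparable radius isolates the single term $(t_i^j)_Q$, giving
\[
\lf|(t_i^j)_Q\r|\ls |Q|^{\frac{s_j}{n}+\frac12-\frac{1}{u_j}}\,\|t_i^j\|_{n_{u_j,p_j,q_j}^{s_j}(\rn)},\qquad j\in\{0,1\},
\]
so that $(t_i)_Q\to t_Q$ coordinatewise as $i\to\fz$. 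An application of Fatou's lemma to the Besov--Morrey sequence norm (which is legitimate because the inner Morrey quasi-norm is defined via a supremum of $L_p$-means, and $L_p$ and $\ell^q$ both enjoy Fatou) then yields
\[
\|t\|_{n_{u,p,q}^{s}(\rn)}\le \liminf_{i\to\fz}\|t_i\|_{n_{u,p,q}^{s}(\rn)}\ls \|t\|_{(n_{u,p,q}^{s}(\rn))^\sim},
\]
which closes the loop. The only step I expect to require some care is the coordinatewise bound $|(t_i^j)_Q|\ls |Q|^{s_j/n+1/2-1/u_j}\|t_i^j\|_{n_{u_j,p_j,q_j}^{s_j}(\rn)}$, since Besov--Morrey sequence norms carry an extra outer Morrey supremum; verifying it amounts to checking that the ball centered at the dyadic cube $Q$ with radius $\ell(Q)$ gives the asserted estimate, which is a direct unfolding of the definition recalled in Appendix.
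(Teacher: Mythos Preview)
Your proposal is correct and follows exactly the route the paper indicates: the paper merely says ``Replacing Proposition~\ref{morrey2} by Proposition~\ref{morrey3}, via an argument similar to the proof of Theorem~\ref{comi}, we obtain the following result \ldots, the details being omitted,'' and you have supplied precisely those details, including the analogue of Lemma~\ref{gagliardo} with the correct coordinatewise bound $|(t_i^j)_Q|\lesssim |Q|^{s_j/n+1/2-1/u_j}\|t_i^j\|_{n_{u_j,p_j,q_j}^{s_j}(\rn)}$ coming from the Morrey norm of $\chi_Q$.
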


Applying Theorems \ref{comi} and \ref{comi-bm}, together with
Propositions \ref{shest} and \ref{shest2}, we have the following conclusion,
the details being omitted.

\begin{corollary}\label{c-cis}
Let $\tz, \ s,\ s_0,\ s_1, \ \tau,\ \tau_0,\ \tau_1, \ p,\ p_0,\ p_1, \ q,\ q_0,\ q_1, \ u, \ u_0, \ u_1$
be as in Theorem \ref{COMI}.

{\rm (i)} If   $\tau_0\, p_0 = \tau_1\, p_1$, then
\begin{eqnarray*}
\lf\laz a_{p_0,q_0}^{s_0,\tau_0}(\rn), a_{p_1,q_1}^{s_1,\tau_1}(\rn)\r\raz_\tz
&=& \lf[a_{p_0,q_0}^{s_0,\tau_0}(\rn), a_{p_1,q_1}^{s_1,\tau_1}(\rn)\r]^i_\tz
\\
&=& (a_{p_0,q_0}^{s_0,\tau_0}(\rn), \, a_{p_1,q_1}^{s_1,\tau_1}(\rn), \, a_{p,q}^{s,\tau}(\rn), \, \#)\,.
\end{eqnarray*}

{\rm (ii)}
If $\min \{p_0,p_1,q_0,q_1\}\ge 1$
and  $p_0\, u_1 = p_1\, u_0,$ then
\begin{eqnarray*}
\lf\laz n_{u_0,p_0,q_0}^{s_0}(\rn), n_{u_1,p_1,q_1}^{s_1}(\rn)\r\raz_\tz
& = & \lf[n_{u_0,p_0,q_0}^{s_0}(\rn), n_{u_1,p_1,q_1}^{s_1}(\rn)\r]^i_\tz
\\
& = & (n_{u_0, p_0,q_0}^{s_0}(\rn), \, n_{u_1,p_1,q_1}^{s_1}(\rn), \, n_{u,p,q}^{s}(\rn), \, \#).
\end{eqnarray*}
\end{corollary}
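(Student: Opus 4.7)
The plan is to combine the Calderón-product identifications already established in Theorems \ref{comi} and \ref{comi-bm} with the structural descriptions of the (inner) complex interpolation space as the closure of $X_0 \cap X_1$ inside the Calderón product, which are provided by Propositions \ref{shest} and \ref{shest2}. Specifically, Theorem \ref{comi} already gives the equality $\laz a_{p_0,q_0}^{s_0,\tau_0}(\rn), a_{p_1,q_1}^{s_1,\tau_1}(\rn)\raz_\tz = (a_{p_0,q_0}^{s_0,\tau_0}(\rn), a_{p_1,q_1}^{s_1,\tau_1}(\rn), a_{p,q}^{s,\tau}(\rn), \#)$ under $\tau_0p_0=\tau_1p_1$, together with $[a_{p_0,q_0}^{s_0,\tau_0}(\rn)]^{1-\tz}[a_{p_1,q_1}^{s_1,\tau_1}(\rn)]^{\tz} = a_{p,q}^{s,\tau}(\rn)$; the analogous identifications for the spaces $n^{s}_{u,p,q}(\rn)$ under $p_0u_1=p_1u_0$ are given by Theorem \ref{comi-bm}. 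Thus the only additional content in the corollary is the equality between the inner complex method and the Peetre--Gagliardo method.

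For part (i), I would first invoke Lemma \ref{convex}, which guarantees that the sequence spaces $a_{p_i,q_i}^{s_i,\tau_i}(\rn)$, $i\in\{0,1\}$, are analytically convex quasi-Banach lattices. Proposition \ref{shest2} is therefore applicable to the couple $(a_{p_0,q_0}^{s_0,\tau_0}(\rn), a_{p_1,q_1}^{s_1,\tau_1}(\rn))$ and yields
\[
\lf[a_{p_0,q_0}^{s_0,\tau_0}(\rn), a_{p_1,q_1}^{s_1,\tau_1}(\rn)\r]_\tz^i
= \lf[a_{p_0,q_0}^{s_0,\tau_0}(\rn), a_{p_1,q_1}^{s_1,\tau_1}(\rn), [a_{p_0,q_0}^{s_0,\tau_0}(\rn)]^{1-\tz}[a_{p_1,q_1}^{s_1,\tau_1}(\rn)]^{\tz}, \#\r].
\]
Substituting the Calderón-product identification from Theorem \ref{comi} for the inner argument converts the right-hand side to $(a_{p_0,q_0}^{s_0,\tau_0}(\rn), a_{p_1,q_1}^{s_1,\tau_1}(\rn), a_{p,q}^{s,\tau}(\rn), \#)$, matching the value already computed for $\laz\cdot,\cdot\raz_\tz$ in Theorem \ref{comi}. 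This establishes the three-way identification in (i).

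For part (ii), the extra assumption $\min\{p_0,p_1,q_0,q_1\}\ge 1$ is exactly what is needed to guarantee that each $n^{s_i}_{u_i,p_i,q_i}(\rn)$ is a Banach lattice (since the quasi-norm is based on an $\ell^{q_i}$-sum of $\cm^{u_i}_{p_i}$-norms of wavelet coefficients, all of which satisfy the triangle inequality once $p_i,q_i\ge 1$). Hence the Banach version, Proposition \ref{shest}, yields
\[
\lf[n_{u_0,p_0,q_0}^{s_0}(\rn), n_{u_1,p_1,q_1}^{s_1}(\rn)\r]_\tz
= \lf[n_{u_0,p_0,q_0}^{s_0}(\rn), n_{u_1,p_1,q_1}^{s_1}(\rn), [n_{u_0,p_0,q_0}^{s_0}(\rn)]^{1-\tz}[n_{u_1,p_1,q_1}^{s_1}(\rn)]^{\tz}, \#\r],
\]
and by Remark \ref{r-com}(iii) the left-hand side agrees with $[n_{u_0,p_0,q_0}^{s_0}(\rn), n_{u_1,p_1,q_1}^{s_1}(\rn)]_\tz^i$. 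Replacing the Calderón product by $n^{s}_{u,p,q}(\rn)$ via Theorem \ref{comi-bm}, and noting that the same theorem also identifies $\laz n_{u_0,p_0,q_0}^{s_0}(\rn), n_{u_1,p_1,q_1}^{s_1}(\rn)\raz_\tz$ with $(n_{u_0,p_0,q_0}^{s_0}(\rn), n_{u_1,p_1,q_1}^{s_1}(\rn), n_{u,p,q}^{s}(\rn), \#)$, completes part (ii).

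There is no substantive obstacle in this argument, which is essentially a matter of bookkeeping: the genuinely nontrivial work is concentrated in Theorems \ref{comi} and \ref{comi-bm} (the Calderón-product computations and Gagliardo-closure identification) and in Propositions \ref{shest} and \ref{shest2} (the Shestakov-type representation of the complex and inner complex spaces). The only minor check required is the verification that analytic convexity (for part (i)) and the Banach-lattice property (for part (ii)) hold for the relevant sequence lattices, both of which are immediate from Lemma \ref{convex} and the stated lower bounds on $p_i,q_i$.
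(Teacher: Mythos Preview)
Your proposal is correct and follows essentially the same approach as the paper, which simply says to apply Theorems \ref{comi} and \ref{comi-bm} together with Propositions \ref{shest} and \ref{shest2} and omits the details. You have filled in exactly those details: the Peetre--Gagliardo identification and Calder\'on-product formula come from Theorems \ref{comi} and \ref{comi-bm}, while the inner complex method is matched to the $\#$-closure via Proposition \ref{shest2} (using Lemma \ref{convex} for analytic convexity) in part (i), and via Proposition \ref{shest} together with Remark \ref{r-com}(iii) in the Banach setting of part (ii).
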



\subsection*{Proof of Theorem \ref{COMI}}


Theorem \ref{COMI}{\rm(i)} follows from Theorem \ref{comi} in combination with
Proposition \ref{wave1}. Indeed, by  Proposition \ref{wave1}, we know the existence of a homeomorphism $R: ~\at \to \sat$
(for all parameters $s,\ \tau,\ p,\ q$).
Then, for any $f\in \at$, we obtain $R(f)\in \sat$ and
$$\|R(f)\|_{\sat}\ls \|f\|_{\at}.$$
Moreover, by Theorem \ref{comi}, there  exists a sequence $\{t_i\}_{i\in\zz}
\subset a^{s_0,\tau_0}_{p_0,q_0}(\rn)\cap a^{s_1,\tau_1}_{p_1,q_1}(\rn)$ such that $R(f)=\sum_{i\in\zz}\, t_i$ with convergence
in $a^{s_0,\tau_0}_{p_0,q_0}(\rn)+a^{s_1,\tau_1}_{p_1,q_1}(\rn)$
and, for any finite subset $F\subset \zz$ and any bounded sequence $\{\varepsilon_i\}_{i\in\zz}\subset\cc$,
$$
\lf\|\sum_{i\in F} \varepsilon_i \, 2^{i(j-\Theta)}\, t_i\r\|_{a^{s_j,\tau_j}_{p_j,q_j}(\rn)}\ls
\|f\|_{\at} \sup_{i\in\zz}|\varepsilon_i|,\quad j\in\{0,1\}.
$$
Notice that $f_i:=R^{-1}(t_i)\in A^{s_0,\tau_0}_{p_0,q_0}(\rn)\cap A^{s_1,\tau_1}_{p_1,q_1}(\rn)$ for all $i\in\zz$ and $f=\sum_{i\in\zz} R^{-1}(t_i)$
in $A^{s_0,\tau_0}_{p_0,q_0}(\rn)+A^{s_1,\tau_1}_{p_1,q_1}(\rn)$.
Moreover, by Proposition \ref{gustav}, we know that
$$\lf\|\sum_{i\in F} \varepsilon_i \, 2^{i(j-\Theta)}\, R^{-1}(t_i)\r\|_{A^{s_j,\tau_j}_{p_j,q_j}(\rn)}\ls \|f\|_{\at} \sup_{i\in\zz}|\varepsilon_i|,\quad j\in\{0,1\}.$$
This implies   that
$\at \hookrightarrow\laz A^{s_0,\tau_0}_{p_0,q_0}(\rn), A^{s_1,\tau_1}_{p_1,q_1}(\rn),\tz\raz.$
The reverse embedding $$\laz A^{s_0,\tau_0}_{p_0,q_0}(\rn), A^{s_1,\tau_1}_{p_1,q_1}(\rn),\tz\raz\hookrightarrow \at$$
follows from a similar argument to above, the details being omitted.

Similarly,
Theorem \ref{COMI}{\rm(ii)} is a consequence of Theorem \ref{comi-bm} in combination with
Proposition \ref{wave2}, the details being omitted.


\subsection*{Proof of Corollary \ref{cima}}


{\em Step 1.} Proof of (i).
Recall that $F^{0,1/p-1/u}_{p,2}(\rn) = \cm_p^u(\rn)$ if
$1<p\le u<\fz$ (see Mazzucato \cite{ma01} and  Sawano \cite{sawch}).
Hence, if $p_0, p_1\in(1,\infty)$,
then Corollary \ref{cima}(i) is a direct consequence of Theorem  \ref{COMI}(i) with $A=F$.
Furthermore, by the proof of Theorem  \ref{COMI},
we know that, in this case, the Gagliardo
closure of $\cm_p^u(\rn)$ in
$\cm_{p_0}^{u_0}(\rn)+\cm_{p_1}^{u_1}(\rn)$ is $\cm_p^u(\rn)$ itself.

Let us turn to the case  $\min \{p_0,p_1\}\le 1$.
We claim that, also in this case, the Gagliardo closure of
$\cm_p^u(\rn)$ coincides with $\cm_p^u(\rn)$.
Indeed, let $f$ belong to the Gagliardo
closure of $\cm_p^u(\rn)$ in
$\cm_{p_0}^{u_0}(\rn)+\cm_{p_1}^{u_1}(\rn)$.
Since $f\in \cm^u_p(\rn)$ if and only if $|f|\in \cm^u_p(\rn)$, we only need to prove that $|f|\in \cm^u_p(\rn)$.
We know that there exists a sequence $\{f_j\}_j\subset \cm_{p_0}^{u_0}(\rn) + \cm_{p_1}^{u_1}(\rn)$ such that
\[
\lim_{j\to \infty} \|\, f - f_j\, \|_{\cm_{p_0}^{u_0}(\rn) + \cm_{p_1}^{u_1}(\rn)} =0
\qquad \mbox{and} \qquad \sup_{j \in \nn} \, \|\, f_j\, \|_{\cm_{p}^{u}(\rn)} <\infty\, .
\]
Since $||f|-|f_j||\le |f-f_j|$, we conclude that
\[
\lim_{j\to \infty} \|\, |f| - |f_j|\, \|_{\cm_{p_0}^{u_0}(\rn) + \cm_{p_1}^{u_1}(\rn)} =0 \, .
\]
Hence $|f|$ also belongs to the Gagliardo
closure of $\cm_p^u(\rn)$ in
$\cm_{p_0}^{u_0}(\rn)+\cm_{p_1}^{u_1}(\rn)$
and it can be approximated by $\{|f_j|\}_j$ in the quasi-norm $\|\cdot\|_{\cm_{p_0}^{u_0}(\rn) + \cm_{p_1}^{u_1}(\rn)}$.

Choose $\delta < \min \{p_0,p_1\}\le 1$. Notice that, for any $g$,
\[
\|\, g\, \|_{\cm_p^u(\rn)}^\delta = \lf\| \, |g|^\delta \, \r\|_{\cm_{p/\delta}^{u/\delta}(\rn)}.
\]
This yields $|f_j|^\delta\in \cm_{p/\delta}^{u/\delta}(\rn)$.
Since $||f|^\delta-|f_j|^\delta|\le |f-f_j|^\delta$, we find that
\begin{eqnarray*}
\lf\|\, |f|^\delta - |f_j|^\delta\, \r\|_{\cm_{p_0/\delta}^{u_0/\delta}(\rn) + \cm_{p_1/\delta}^{u_1/\delta}(\rn)}
&&\le \lf\|\, |f-f_j|^\delta\, \r\|_{\cm_{p_0/\delta}^{u_0/\delta}(\rn) + \cm_{p_1/\delta}^{u_1/\delta}(\rn)}\\
&&\le \|\,f - f_j\,\|_{\cm_{p_0}^{u_0}(\rn) + \cm_{p_1}^{u_1}(\rn)}^\delta \to 0
\end{eqnarray*}
as $j\to\fz.$ Hence, $|f|^\delta$
belongs to the Gagliardo
closure of $\cm^{u/\delta}_{p/\delta}(\rn)$ in
$\cm_{p_0/\delta}^{u_0/\delta}(\rn) + \cm_{p_1/\delta}^{u_1/\delta}(\rn)$.
Then, by the choice of $\delta$ and the above
known results for $\min\{p_0,p_1\}>1$,  we
conclude that $|f|^\delta\in \cm^{u/\delta}_{p/\delta}(\rn)$
and hence $|f|\in \cm^{u}_{p}(\rn)$. This proves that
the Gagliardo
closure of $\cm^{u/\delta}_{p/\delta}(\rn)$ in
$\cm_{p_0/\delta}^{u_0/\delta}(\rn) + \cm_{p_1/\delta}^{u_1/\delta}(\rn)$ is a subspace of $\cm^u_p(\rn).$
Now Corollary \ref{cima}(i) follows from Proposition \ref{t-n} in combination with Theorem  \ref{morrey1}(ii).

\noindent{\em Step 2.} Proof of (ii).
We may argue as in the proof of Theorem \ref{morrey1}(iii).
First, we need to add a comment.
By using the notation as in the proof of Theorem \ref{morrey1}(iii), we claim that
\begin{equation}\label{ws-100}
\sup_{m \in \nn} \frac{\|\,  T_m\, \|_{\cm_{p}^u (\rn) \to \rr}}{\max \{\|\,  T_m\, \|_{\cm_{p_0}^{u_0} (\rn) \to \rr}\, ,
 \|\,  T_m\, \|_{\cm_{p_1}^{u_1} (\rn) \to \rr}\}} = \infty
\end{equation}
under some conditions on $\beta\in[0,n)$ (see \eqref{ws-99}).
Without loss of generality, we may assume $p_0/u_0 < p_1/u_1$.
Then, by choosing $\beta = n(1-p/u)$ as Lemari{\'e}-Rieusset \cite{LR} did,
we find that
\begin{eqnarray*}
\|\,T_m\,\|_{\cm_{p}^{u} (\rn) \to \rr}& \ge& C\, 2^{-m(n-\beta)(1-1/p)}\,,\\
 \|\,  T_m\, \|_{\cm_{p_0}^{u_0} (\rn) \to \rr} & \le &  C \, 2^{-m(n-\beta)(1-1/p_0)}\, 2^{m(\beta/p_0 + n/u_0-n/p_0)}\, ,
\\
 \|\,  T_m\, \|_{\cm_{p_1}^{u_1} (\rn) \to \rr} &  \le &  C \, 2^{-m(n-\beta)(1-1/p_1)}
\end{eqnarray*}
for some positive constant $C$ independent of $m$; see \cite[Section 6]{LR}.
In case $p< p_1$, we have
\begin{equation}\label{ws-102}
\lim_{m \to \infty} \frac{\|\,  T_m\, \|_{\cm_{p}^u (\rn) \to \rr}}{\|\,  T_m\, \|_{\cm_{p_1}^{u_1} (\rn) \to \rr}} = \infty\, ;
\end{equation}
whereas, in case $u < u_0$, we find that
\begin{equation}\label{ws-103}
\lim_{m\to \infty} \frac{\|\,  T_m\, \|_{\cm_{p}^u (\rn) \to \rr}}{\|\,  T_m\, \|_{\cm_{p_0}^{u_0} (\rn) \to \rr} } = \infty\, .
\end{equation}
Now we distinguish our considerations into three cases.
\begin{enumerate}
\item[(a)] $p_0 < p_1$. In this case, $p<p_1$ and the claim in \eqref{ws-100} follows from \eqref{ws-102}.
\item[(b)] $p_1< p_0$. In this case, our  assumption $p_0/u_0 < p_1/u_1$ implies $u_0 > u_1$ and hence $u_0 >u$. Thus, the claim in \eqref{ws-100} follows from \eqref{ws-103}.
\item[(c)]
$p_0 =p_1$. In this case,
our  assumption $p_0/u_0 < p_1/u_1$ again implies $u_0 >u$ and we can argue as in (b) to show the claim in \eqref{ws-100} holds true.
\end{enumerate}

The final step of the proof is now done by applying Proposition \ref{gustav}(ii),
which completes the proof of  Corollary \ref{cima}.


\subsection*{Proof of Corollary \ref{cimab}}


As a preparation we need the following technical lemma.

\begin{lemma}\label{morrey1b}
Let $\tz\in(0,1)$, $0 <  p_0 \le u_0 < \infty$ and $0 < p_1 \le u_1 < \infty$
such that
 \[
\frac1p=\frac{1-\tz}{p_0}+\frac\tz{p_1} \qquad  \mbox{and}
\qquad
\frac1u=\frac{1-\tz}{u_0}+\frac\tz{u_1}\, .
\]
If $u_0 \, p_1 = u_1\, p_0$, then
\begin{equation*}
\lf[\cm_{p_0, \unif}^{u_0}(\rn)\r]^{1-\tz}
\lf[\cm_{p_1, \unif}^{u_1}(\rn)\r]^\tz=\cm_{p, \unif}^{u}(\rn) \, .
\end{equation*}
\end{lemma}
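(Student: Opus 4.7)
The plan is to prove both inclusions directly from the definitions, in close analogy with the proof of Theorem \ref{morrey1}(ii) in \cite{lyy}, but with the supremum defining each Morrey quasi-norm restricted throughout to balls of volume at most $1$. The structural remark behind the argument is that the hypothesis $u_0p_1 = u_1p_0$ forces $\alpha:= p_0/u_0 = p_1/u_1$, and a short calculation using the balance relations for $1/p$ and $1/u$ shows that $p/u=\alpha$ as well, so all three ratios $p_0/u_0$, $p/u$, $p_1/u_1$ coincide. Equivalently, $\frac{1}{u_j}-\frac{1}{p_j} = \frac{p}{p_j}\bigl(\frac{1}{u}-\frac{1}{p}\bigr)$ for $j\in\{0,1\}$.

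For the forward embedding, I take $f$ in the Calder\'on product with a factorization $|f|\le |f_0|^{1-\tz}|f_1|^\tz$ almost everywhere and $f_j\in \cm_{p_j,\unif}^{u_j}(\rn)$. For any ball $B\subset\rn$ with $|B|\le 1$, H\"older's inequality with the conjugate exponents $\frac{p_0}{(1-\tz)p}$ and $\frac{p_1}{\tz p}$ (conjugacy is exactly the $p$-balance condition) gives
\[
\Bigl(\int_B |f|^p\,dx\Bigr)^{1/p} \le \Bigl(\int_B |f_0|^{p_0}\,dx\Bigr)^{(1-\tz)/p_0}\Bigl(\int_B |f_1|^{p_1}\,dx\Bigr)^{\tz/p_1}.
\]
Multiplying both sides by $|B|^{1/u-1/p}=|B|^{(1-\tz)(1/u_0-1/p_0)}|B|^{\tz(1/u_1-1/p_1)}$ and taking the supremum over all balls with $|B|\le 1$ (the same volume restriction as in the three Morrey norms) yields
\[
\|f\|_{\cm_{p,\unif}^u(\rn)}\le \|f_0\|_{\cm_{p_0,\unif}^{u_0}(\rn)}^{1-\tz}\|f_1\|_{\cm_{p_1,\unif}^{u_1}(\rn)}^\tz,
\]
and passing to the infimum over admissible factorizations gives the desired embedding.

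For the reverse embedding, I use the standard explicit factorization argument. Given $f\in\cm_{p,\unif}^u(\rn)$, set $f_j:=|f|^{p/p_j}$ for $j\in\{0,1\}$. The relation $\frac{1-\tz}{p_0}+\frac{\tz}{p_1}=\frac{1}{p}$ gives immediately $|f_0|^{1-\tz}|f_1|^\tz=|f|$. The core computation is to show that $f_j\in\cm_{p_j,\unif}^{u_j}(\rn)$ with norm at most $\|f\|_{\cm_{p,\unif}^u(\rn)}^{p/p_j}$; this is where the structural identity $\frac{1}{u_j}-\frac{1}{p_j}=\frac{p}{p_j}\bigl(\frac{1}{u}-\frac{1}{p}\bigr)$ is used, since then, for any $B$ with $|B|\le 1$,
\[
|B|^{1/u_j-1/p_j}\Bigl(\int_B|f_j|^{p_j}\,dx\Bigr)^{1/p_j}
= \Bigl[|B|^{1/u-1/p}\Bigl(\int_B|f|^p\,dx\Bigr)^{1/p}\Bigr]^{p/p_j}
\le \|f\|_{\cm_{p,\unif}^u(\rn)}^{p/p_j}.
\]
Consequently $\|f_0\|_{\cm_{p_0,\unif}^{u_0}}^{1-\tz}\|f_1\|_{\cm_{p_1,\unif}^{u_1}}^\tz \le \|f\|_{\cm_{p,\unif}^u}^{(1-\tz)p/p_0+\tz p/p_1}=\|f\|_{\cm_{p,\unif}^u}$, and therefore $f$ lies in the Calder\'on product with norm controlled by $\|f\|_{\cm_{p,\unif}^u(\rn)}$.

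There is essentially no obstacle here: the proof is a routine transposition of the non-uniform factorization argument, and the only things one must check survive localization are (i) that the restriction $|B|\le 1$ is identical on all three Morrey norms and (ii) that the exponent identity $\frac{1}{u_j}-\frac{1}{p_j}=(p/p_j)(\frac{1}{u}-\frac{1}{p})$ holds; both are immediate from $u_0p_1=u_1p_0$. The crucial conceptual point—again exactly as in the non-uniform case—is that the explicit factorization $|f|=|f|^{(1-\tz)p/p_0}\cdot|f|^{\tz p/p_1}$ really does produce functions in the right Morrey spaces only when $p/u$ is constant along the interpolation line, which is precisely the standing hypothesis $u_0p_1=u_1p_0$.
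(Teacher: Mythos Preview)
Your proof is correct and is precisely the approach the paper indicates: it says only that ``the arguments used in the proof of \cite[Proposition 2.1]{lyy} carry over to this locally uniform situation, the details being omitted,'' and you have supplied exactly those details, with the H\"older inequality for the forward inclusion and the explicit factorization $f_j=|f|^{p/p_j}$ (which works because $p_0/u_0=p_1/u_1=p/u$) for the reverse inclusion.
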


\begin{proof}
The arguments used in the proof of \cite[Proposition 2.1]{lyy}
carry over to this locally uniform situation, the details being omitted.
\end{proof}

Now we turn to the proof of Corollary \ref{cimab}.

\begin{proof}[Proof of Corollary \ref{cimab}]
We have to distinguish four cases, comparing with the discussion in Remark \ref{2.12x}.

\noindent {\em Step 1.}  $\min\{\tau_0, \tau_1\} =0$. In this case, according to our restriction $\tau_0 \, p_0= \tau_1 \, p_1$,
we obtain $\tau = \tau_0= \tau_1=0$.
This implies
$$\cl^0_p (\rn) = \cm_{p, \unif}^p (\rn) = L_{p, \unif} (\rn);$$
see (a) of Section \ref{s1} of this article.
By the same arguments as those  used in
 the proof of Corollary \ref{cima}, we conclude that the Gagliardo closure of
$\cm_{p, \unif}^p (\rn)$ with respect to
$\cm_{p_0, \unif}^{p_0} (\rn) + \cm_{p_1, \unif}^{p_1} (\rn)$
is just $\cm_{p, \unif}^p (\rn)$ itself. Now the
desired conclusion of Corollary \ref{cimab} follows from Proposition \ref{t-n}.

\noindent {\em Step 2.} $\min\{\tau_0, \tau_1\}>0$ and $\min\{\tau_0-1/p_0, \tau_1-1/p_1\}<0$. In this case,
our restriction $\tau_0 \, p_0= \tau_1 \, p_1$ implies
\[
\max  \lf\{\tau-1/p, \tau_0-1/p_0, \tau_0-1/p_1\r\} < 0 \, .
\]
In this situation, by (a) of Section \ref{s1} of this article,
we know that $\cl^\tau_p (\rn)=\cm_{p,\unif}^u(\rn)$ with $1/u=1/p-\tau$, and thus we
can argue as in Step 1.

\noindent {\em Step 3.} Either $\tau_0 = 1/p_0$ or $\tau_1=1/p_1$.
In this case, our restriction $\tau_0 \, p_0= \tau_1 \, p_1$ implies $\tau \, p =1$.
Using (c) of Section \ref{s1} of this article, we find that
\[
\cl_{p_0}^{1/p_0}(\rn) = \cl_{p_1}^{1/p_1}(\rn)  = \cl_{p}^{1/p}(\rn)= \bmo \, .
\]
Since $\langle X, X , \Theta\rangle = X$ for any quasi-Banach space,
the desired conclusion of Corollary \ref{cimab} follows also in this case.

\noindent {\em Step 4.} $\max \{\tau_0 - 1/p_0, \tau_1-1/p_1\}>0$.
In this case, by (b) of Section \ref{s1} of this article, we know that
\[
\cl^{\tau_0}_{p_0} (\rn) = B^{n(\tau_0 -1/p_0)}_{\infty,\infty} (\rn)
\, , \quad \cl^{\tau_1}_{p_1} (\rn) = B^{n(\tau_1 -1/p_1)}_{\infty,\infty} (\rn)\, ,
\quad  \cl^{\tau}_p (\rn) = B^{n(\tau -1/p)}_{\infty,\infty} (\rn)\, .
\]
Applying Theorem  \ref{COMI} and Proposition \ref{basic1}(iii), we find that
\begin{eqnarray*}
\lf\laz B_{\infty,\infty}^{n(\tau_0-1/p_0)}(\rn), B_{\infty,\infty}^{n(\tau_1-1/p_1)}(\rn),\tz\r\raz & = &\lf\laz B_{p_0,q_0}^{0,\tau_0}(\rn), B_{p_1,q_1}^{0,\tau_1}(\rn),\tz\r\raz
\nonumber
\\
& = & B^{0,\tau}_{p,q}(\rn)=
B_{\infty,\infty}^{n(\tau-1/p)}(\rn) = \cl^{\tau}_p (\rn),
\end{eqnarray*}
where $q,\ q_0,\ q_1\in(0,\fz]$ satisfy $1/q=1/q_0+1/q_1$. This finishes the proof of Corollary \ref{cimab}.
\end{proof}


\subsection{Proofs of results in Subsection \ref{inter1b}}
\label{proof5}



\subsection*{Proof of Proposition \ref{interprop}}


First we deal with the completeness of $\laz A_0,A_1\raz_\tz$ in (i).
Let $\{a^\ell\}_{\ell\in\nn}$ be a fundamental sequence in $\laz A_0, A_1\raz_\Theta$.
Then, for any $\varepsilon\in(0,\fz)$, there exists
$N:=N_{(\varepsilon)}\in\zz_+$, depending on $\varepsilon$,
such that, for any $\ell,\,m\ge N$,
$$\|a^m-a^\ell\|_{\laz A_0, A_1\raz_\tz}<\varepsilon/4.$$
Hence, there exists $\{b^{m,\ell}_i\}_{i\in\zz}\subset A_0\cap A_1$ such that
\[
a^m-a^\ell= \sum_{i=-\infty}^\infty b_i^{m,\ell} \qquad \, \mbox{converges in} \quad A_0 + A_1\,  ,
\]
$\sum_{i=-\infty}^\infty \varepsilon_i \, 2^{i(j-\Theta)}\, b_i^{m,\ell}$ converges in $A_j$, $j\in\{0,1\}$,
and
\begin{equation}\label{ws-101}
\lf\|\, \sum_{i=-\infty}^\infty \varepsilon_i \, 2^{i(j-\Theta)}\, b_i^{m,\ell} \,  \r\|_{A_j} \le
\frac{\varepsilon}2 \, \sup_{i \in \zz}\, |\varepsilon_i| \,
\end{equation}
for any bounded sequence $\{\varepsilon_i\}_{i\in\zz}$.
By choosing
$\varepsilon_i := \delta_{i,k}$, $i\in \zz$, for fixed $k$,
where $\delta_{i,k}:=1$ if $i=k$, otherwise $\delta_{i,k}:=0$, we know
that, for any $i\in \zz$, $\|b_i^{m,\ell}\|_{A_j}<2^{-i(j-\tz)}\varepsilon/2$, $j\in\{0,1\}$.

On the other hand, since $a^N\in \laz A_0,A_1\raz_\tz$,
it follows that there exists $\{a_i^N\}_{i\in\zz}
\subset A_0\cap A_1$ such that
\[
a^N = \sum_{i=-\infty}^\infty a_i^N \qquad \, \mbox{converges in} \quad A_0 + A_1\,  ,
\]
$\sum_{i=-\infty}^\infty \varepsilon_i \, 2^{i(j-\Theta)}\, a_i^N$ converges in $A_j$, $j\in\{0,1\}$,
and
\begin{equation}\label{ws-101-2}
\lf\|\, \sum_{i=-\infty}^\infty \varepsilon_i \, 2^{i(j-\Theta)}\, a_i^N \,  \r\|_{X_j} \le
(\|a^N\|_{\laz A_0,A_1\raz_\tz}+\delta) \, \sup_{i \in \zz}\, |\varepsilon_i| \,
\end{equation}
for any bounded sequence $\{\varepsilon_i\}_{i\in\zz}$, where $\delta\in(0,\fz)$ can be chosen as small as we want.
Thus, for any $m\ge N$,
\[
a^{m}=a^N+\sum_{i=-\infty}^\infty b_i^{m,N}=\sum_{i=-\infty}^\infty (a_i^N+b_i^{m,N})=: \sum_{i=-\infty}^\infty \wz{a}_i^m \qquad \mbox{converge in} \quad A_0 + A_1\,  ,
\]
where $\{\wz{a}_i^m\}_{i\in\zz}\subset A_0\cap A_1$. Moreover, by
\eqref{ws-101} and \eqref{ws-101-2}, we see that
$\sum_{i=-\infty}^\infty \varepsilon_i \, 2^{i(j-\Theta)}\, \wz{a}_i^m$ converges in $A_j$, $j\in\{0,1\}$,
and
\begin{equation}\label{ws-101-3}
\lf\|\, \sum_{i=-\infty}^\infty \varepsilon_i \, 2^{i(j-\Theta)}\, \wz{a}_i^m \,  \r\|_{A_j} \le
(\|a^N\|_{\laz A_0,A_1\raz_\tz}+\delta+\varepsilon/2) \, \sup_{i \in \zz}\, |\varepsilon_i| \,
\end{equation}
for any bounded sequence $\{\varepsilon_i\}_{i\in\zz}$.
Since, for any $i\in\zz$ and $m,\ \ell\ge N$,
$$\|\wz{a}_i^m-\wz{a}_i^\ell\|_{X_j}\le \|b_i^{m,N}\|_{A_j}+
\|b_i^{\ell,N}\|_{A_j}<2^{-i(j-\tz)}\varepsilon,\quad j\in\{0,1\},$$
we know that $\{\wz{a}_i^m\}_{m}$ is a Cauchy sequence in $A_0\cap A_1$.
By the completeness of $A_0$ and $A_1$,  we obtain the existence
of some $a_{i} := \lim_{m \to \infty} \wz{a}_{i}^m$ in $A_0\cap A_1$.

Since $a^{m}=\sum_{i=-\infty}^\infty \wz{a}_i^m$ in $A_0+A_1$,
for any $\varepsilon\in(0,\fz)$, there exists $L\in\zz_+$ such that, for any $k>L$,
$\|\sum_{L\le|i|\le k}\wz{a}_i^m\|_{A_0+A_1}<\varepsilon,$
which, together with the fact that $\wz{a}_{i}^m\to a_i$ in $A_0\cap A_1$, implies that
$\|\sum_{L\le|i|\le k}{a}_i\|_{A_0+A_1}\le \varepsilon.$
Thus,
$a := \sum_{i=-\infty}^\infty a_i$ converges in $A_0 + A_1$.
Similarly, by \eqref{ws-101-3}, we find that
 $\sum_{i=-\infty}^\infty \varepsilon_i \, 2^{i(j-\Theta)}\, a_i $ converges in $A_j$, $j\in\{0,1\}$,
as well as
\[
\lf\|\, \sum_{i=-\infty}^\infty \varepsilon_i \, 2^{i(j-\Theta)}\, a_i \,  \r\|_{A_j} \ls
\, \sup_{i \in \zz}\, |\varepsilon_i|
\]
for any bounded sequence $\{\varepsilon_i\}_{i\in\zz}$. Moreover, by the definition
of $a_i$ and \eqref{ws-101}, we know that $$\|a-a^m\|_{\laz A_0,A_1\raz_\tz}<\varepsilon$$
whenever $m\ge N$, namely, $a^m$ converges to $a$ in $\laz A_0,A_1\raz_\tz$
as $m\to\fz$. This finishes the proof for the completeness of $\laz A_0,A_1\raz_\tz$
in (i).

Next we show (ii). Notice that, if the summation $\sum_{i=-\infty}^\infty \varepsilon_i \, 2^{i(j-\Theta)}\,  a_i $ converges in $A_j$, then
$$\sum_{i=-\infty}^\infty \varepsilon_i \, 2^{i(j-\Theta)}\, Ta_i$$
converges in $B_j$, $j\in\{0,1\}$, due to the boundedness of
$T:\ A_j\to B_j$.
Then, from
\begin{eqnarray*}
\lf\|\, \sum_{i=-\infty}^\infty \varepsilon_i \, 2^{i(j-\Theta)}\, T a_i \,  \r\|_{B_j}
&= & \lf\|\, T \lf(\sum_{i=-\infty}^\infty \varepsilon_i \, 2^{i(j-\Theta)}\,  a_i \r)\,  \r\|_{B_j}
\\
& \le &  \max \{\|T\|_{A_0 \to B_0}, \|T\|_{A_1 \to B_1} \} \,
\lf\|\,\sum_{i=-\infty}^\infty \varepsilon_i \, 2^{i(j-\Theta)}\,  a_i \,  \r\|_{A_j}
\\
& \ls & \max \{\|T\|_{A_0 \to B_0}, \|T\|_{A_1 \to B_1} \} \,
 \sup_{i \in \zz}\, |\varepsilon_i| \, ,
\end{eqnarray*}
we deduce the desired conclusion in (ii), which completes
the proof of Proposition  \ref{interprop}.


\subsection*{Proof of Lemma \ref{diamond1}}


{\em Step 1.} Proof of (i). {\em Substep 1.1.}
It is known that
\[
\mathring{A}^{s, \tau}_{p,q} (\rn) = {A}^{s,\tau}_{p,q} (\rn) \qquad \Longleftrightarrow
\qquad \tau =0 \quad \mbox{and}\quad \max \{p,q\} <\infty\, ,
\]
$A \in \{B,F\}$; see \cite[Theorem~2.3.3]{t83} for $\tau =0$ and \cite{yyz} for $\tau \in(0,\fz)$.
This implies $\mathring{A}^s_{p,q} (\rn) = \accentset{\diamond}{A}^s_{p,q} (\rn) $ if
$\max \{p,q\} < \infty$.\\
{\em Substep 1.2.} We prove $\mathring{A}^{s,\tau}_{p,q} (\rn)\subsetneqq \accentset{\diamond}{A}^{s,\tau}_{p,q} (\rn) $ if $\tau\in [1/p,\fz)$ and $p\in(0,\fz)$.

We study properties of the function $g(x)\equiv 1$ for all $x\in\rn$.
Let $\{\varphi_j\}_{j\in\zz_+}$ be the smooth decomposition of unity
as defined in (\ref{eq-05}) and (\ref{eq-06}).
By the basic properties of both the Fourier transform and our smooth decomposition of unity, we see that, for all $x\in\rn$,
\[
\cfi(\varphi_j\, \cf g)(x) = \left\{
\begin{array}{lll}
1 & \qquad & \mbox{if}\quad j=0\, ,
\\
0 && \mbox{otherwise}\, .
\end{array}
\right.
\]
This implies that
\[
\|g\|_{\bt}= \sup_{P\in\mathcal{Q}, \, |P|\ge 1} \frac{|P|^{1/p}}{|P|^{\tau}}
\]
for any $\tau,\ p,\ q$ and  $s$ as in Lemma \ref{diamond1}(i).
Hence, $g \in \bt$ if and only if  $\tau \in [1/p,\fz)$.
Since all derivatives of $g$ equal to zero, we further know that
$g\in \accentset{\diamond}{B}^{s, \tau}_{p,q} (\rn)$.

Now let $f \in C_c^\infty (\rn)$. We may assume $\supp f \subset [-2^N, 2^N]^n$
for some $N \in \nn$.
Let $\phi$ denote the scaling function in Proposition \ref{wave1}.
Now we choose a cube $P$ such that $P=Q_{0,m}$ and
\[
\dist (Q_{0,m},[-2^N, 2^N]^n ) > N_2 \, ,
 \]
 where $N_2$ is as in \eqref{4.19}.
Then Proposition \ref{wave1} yields
\[
\| \, g-f\, \|_{\bt}\ge |\laz g-f, \phi_{0,m}\raz| = \lf|\int_\rn
\phi (x)\, dx  \r|>0 \, .
\]
Hence, the function $g \equiv 1$ belongs to $\accentset{\diamond}{B}^{s, \tau}_{p,q} (\rn)$
but not to $\mathring{B}^{s, \tau}_{p,q} (\rn)$ if $\tau \in[1/p,\fz)$.
It is easy to see  that these statements remain true if we replace $\accentset{\diamond}{B}^{s, \tau}_{p,q} (\rn)$ and
$\mathring{B}^{s, \tau}_{p,q} (\rn)$, respectively, by $\accentset{\diamond}{F}^{s, \tau}_{p,q} (\rn)$ and $\mathring{F}^{s, \tau}_{p,q} (\rn)$.
\\
{\em Substep 1.3.}
Now we prove  $\mathring{A}^s_{p,\fz}(\rn) = \accentset{\diamond}{A}^s_{p,\fz}(\rn)$ when $p\in(0,\fz)$. Obviously, we have
$\mathring{A}^s_{p,\fz}(\rn) \hookrightarrow \accentset{\diamond}{A}^s_{p,\fz}(\rn)$.

To see the converse, let $\{\varphi_j\}_{j\in\zz_+}$ be the smooth decomposition of unity as defined in (\ref{eq-05}) and (\ref{eq-06}).
By the Paley-Wiener theorem, for any $f\in \cs'(\rn)$ and any $j \in \zz_+$,
the convolution $\cfi(\varphi_j\, \cf f)$
is a smooth function, i.\,e., an infinitely differentiable function.
Hence, also the function $S_Nf$, defined by
\begin{equation}\label{ws-66}
S_N f (x) := \sum_{j=0}^N \cfi(\varphi_j\, \cf f)(x)\, , \qquad x \in \rn\, , \quad N \in \zz_+\, ,
\end{equation}
is a smooth function.
We claim that
\begin{enumerate}
\item[(a)] $\accentset{\diamond}{B}^s_{p,\infty} (\rn)$ is the collection of all $f \in {B}^s_{p,\infty} (\rn)$
such that
\[
\lim_{j \to \infty}\,  2^{js}\, \|\, \cfi(\varphi_j\, \cf f)\, \|_{L_p (\rn)} =0\, ;
\]
\item[(b)] $\accentset{\diamond}{F}^s_{p,\infty} (\rn)$ is the collection of all $f \in {F}^s_{p,\infty} (\rn)$
such that
\[
\lim_{N \to \infty} \, \lf\| \sup_{j \ge N}\,  2^{js}\, |\cfi(\varphi_j\, \cf f)| \, \r\|_{L_p (\rn)} =0\, .
\]
\end{enumerate}
To prove these claims, we argue as follows. For simplicity,
we concentrate on the $F$-case.
Using some standard Fourier multiplier assertions (see \cite[2.3.7]{t83}),
 embeddings (see
\cite[2.7.1]{t83}) and the lifting properties (see \cite[2.3.8]{t83}), it is easily seen that
$f \in F^s_{p,\infty} (\rn)$ implies that $S_N f \in F^\sigma_{p,q} (\rn)$ for all $N \in \zz_+$, all $\sigma \in \rr$
and all $q \in (0,\infty]$.
For $f \in \accentset{\diamond}{F}^s_{p,\infty} (\rn)$, let $\{f_\ell\}_{\ell\in\nn}$
be  a sequence such that
$D^\alpha f_\ell \in F^s_{p,\infty} (\rn)$ for all $\alpha \in (\zz_+)^n$ and $\ell\in\nn$,  and
\[
\lim_{\ell \to \infty} \, \| \, f-f_\ell\, \|_{F^s_{p,\infty} (\rn)}=0 \, .
\]
Then, for any $\varepsilon\in(0,\fz)$, there exists $N\in\nn$, depending on $\varepsilon$, such that $\|f-f_{\ell}
\|_{F^s_{p,\infty} (\rn)}<\varepsilon$ whenever $\ell\ge N$.
Observe that $f_\ell \in F^\sigma_{p,q}(\rn) $ for all $\sigma \in \rr$, all $q$ and all $\ell$ (see \cite[2.3.8]{t83}).
On the other hand, it holds true that
\begin{eqnarray*}
\| \, f_\ell -S_Nf_\ell\, \|_{F^s_{p,\infty} (\rn)}  & = &
\left\| \, \sum_{j=N}^\infty \, \cfi(\varphi_j\, \cf f_\ell) \,  \right\|_{F^s_{p,\infty} (\rn)}
\lesssim
\lf\| \, \sup_{j \ge N} \, 2^{js}\, |\cfi(\varphi_j\, \cf f_\ell)| \,  \r\|_{L_{p} (\rn)}
\\
& \lesssim & 2^{-N(\sigma -s)}\,
\lf\| \, \sup_{j \ge N} \, 2^{j\sigma}\, |\cfi(\varphi_j\, \cf f_\ell)| \,  \r\|_{L_{p} (\rn)}
\\
& \lesssim & 2^{-N(\sigma -s)} \,
\| \, f_\ell \,  \|_{F^s_{p,\infty} (\rn)} \,
\end{eqnarray*}
for all $\sigma >s$.
The implicit positive
constants in these inequalities are independent of $\ell$.
Therefore, by this and the boundedness in
$F^s_{p,\fz}(\rn)$ of $S_N$ uniformly in $N\in\nn$, with $\kappa:= \min \{1,p\}$, we have
\begin{eqnarray*}
 \| \, f- S_N f\, \|_{F^s_{p,\infty} (\rn)}^\kappa
& \le &
\| \, f-f_\ell\, \|_{F^s_{p,\infty} (\rn)}^\kappa + \| \, f_\ell - S_N f_\ell\, \|_{F^s_{p,\infty} (\rn)}^\kappa
+ \| \, S_N f_\ell - S_N f\, \|_{F^s_{p,\infty} (\rn)}^\kappa
\\
& \ls &
\| \, f-f_\ell\, \|_{F^s_{p,\infty} (\rn)}^\kappa + \| \, f_\ell - S_N f_\ell\, \|_{F^s_{p,\infty} (\rn)}^\kappa\\
&\ls& \| \, f-f_\ell\, \|_{F^s_{p,\infty} (\rn)}^\kappa + 2^{-N\kappa(\sigma-s)} \| \, f_\ell \,  \|_{F^s_{p,\infty} (\rn)}^\kappa,
\end{eqnarray*}
which tends to $0$ as $N\to\fz$,
since
$ \{\| \, f_\ell\,  \|_{F^s_{p,\infty} (\rn)} \}_\ell$ is bounded.
Hence $f = \lim_{N \to \infty} S_N f$ in $F^s_{p,\infty} (\rn)$.
The claim (b) then follows from the observation that
\[
\| \, f- S_N f\, \|_{F^s_{p,\infty} (\rn)} \ls
\lf\| \, \sup_{j \ge N} \, 2^{js}\, |\cfi(\varphi_j\, \cf f)| \,  \r\|_{L_{p} (\rn)}
\ls \| \, f- S_{N-1} f\, \|_{F^s_{p,\infty} (\rn)}
\]
with the implicit positive constants independent of $N$.

The proof of claim (a) is similar, the details being omitted.

Now we are ready to prove
$\mathring{A}^s_{p,\infty} (\rn) = \accentset{\diamond}{A}^s_{p,\infty} (\rn) $.
Let $f \in \accentset{\diamond}{A}^s_{p,\infty} (\rn)$. By the above claims,  it
 suffices to approximate
$S_N f$ by functions from $C_c^\infty (\rn)$.
Indeed, let $\psi$ be as in \eqref{eq-05}. Then
\[
\lim_{M \to \infty} \psi (2^{-M}x)\, S_N f (x) = S_N f (x)
\]
with convergence in ${A}^s_{p,\infty} (\rn)$, as desired.
\\
{\em Substep 1.4.}
We now show that  $\mathring{A}^{s,\tau}_{p,q}(\rn)
\subsetneqq\accentset{\diamond}{A}^{s,\tau}_{p,q}(\rn)$ when $\tau\in(0, 1/p)$.
We use functions defined by a wavelet series (see Appendix and, particularly,
Subsection \ref{sequence}).
Let
\[
f := \sum_{\ell=1}^\infty \phi_{0,(2^\ell, 0, \ldots,0)} \, ,
\]
where $\phi$ is the scaling function in Proposition \ref{wave1}.
Now we show $f\in A^{s, \tau}_{p,q} (\rn)$ by using Proposition \ref{wave1}.
First we see  that
\[
\sum_{i=1}^{2^n-1} \|\, \{ \laz f,\,\psi_{i,j,k}\raz\}_{j,k}\, \|_{\sat} =0\, .
\]
It remains to estimate
\[
\sup_{P\in\cq,\ |P|\ge 1 } \frac{1}{|P|^\tau}\, \lf(\sum_{Q_{0,m}\subset P} |\laz f,\,\phi_{0,m}\raz|^p\r)^{1/p} \, .
\]
An inspection of the supports of the functions $\phi_{0,(2^\ell, 0, \ldots,0)}$ makes clear
that it suffices to consider either cubes $P\in\cq$ with volume $1$ or cubes $P\in\cq$ with
$P=P_M:=[0,2^M]^n$, $M\in \nn$.
Concentrating on the second case, we obtain
\[
\sup_{M \in \nn} \frac{1}{2^{\tau Mn}}\,
\lf(\sum_{0 \le m_j<2^M, \, j\in\{1, \ldots , n\}} |\laz f,\,\phi_{0,m}\raz|^p\r)^{1/p}
\lesssim \sup_{M \in \nn} \frac{1}{2^{\tau Mn}}\, M^{1/p} \lesssim 1\, ,
\]
because $\tau \in(0,\fz)$.
If $P\in\cq$ with volume $1$, then $P=Q_{0,m}$ for some $m\in\zz^n$ and,
in this case,
$$\frac{1}{|P|^\tau}\lf(\sum_{Q_{0,m}\subset P}|\laz f,\phi_{0,m}\raz|^p\r)^{1/p} \asymp1.$$
Thus, $f\in A^{s,\tau}_{p,q}(\rn)$.
Since the support of $f$ is not compact, it is easily shown by using Proposition \ref{wave1}
that $f \not\in \mathring{A}^{s,\tau}_{p,q}(\rn)$; see also Substep 1.2
of this proof.

Now we construct an approximation of $f$ by smooth functions.
The function $\phi$ can be approximated by its Sobolev mollification, denoted by
$\phi^{(\varepsilon)}$, in the norm of $C^{N_1-1}(\rn)$.
To explain the notation, let $\omega$ be an infinitely differentiable function such that $\supp \omega \subset B(0,1)$, $\omega \ge 0$
and $\int_\rn \omega (x)\, dx = 1$.
Then, for $\varepsilon\in(0,\fz)$, we put
\begin{equation}\label{ws-73}
\phi^{(\varepsilon)} (x):= \varepsilon^{-n} \int_{\rn} \omega \lf(\frac{x-y}{\varepsilon}\r) \phi (y)\, dy\, , \qquad x \in \rn\, ,
\end{equation}
as well as
\[
f_\varepsilon := \sum_{\ell=1}^\infty \phi^{(\varepsilon)}_{0,(2^\ell, 0, \ldots,0)} \, .
\]
Clearly, $f_\varepsilon \in C^\infty (\rn)$.
For all $\alpha \in (\zz_+)^n$ with $|\alpha |< N_1 $, it follows
that, for all $x\in\rn$,
\begin{eqnarray}\label{ws-61}
|D^\alpha f (x)-D^\alpha f_\varepsilon (x)| & \le &
\varepsilon^{-n} \int_{\rn} \omega \lf(\frac{x-y}{\varepsilon}\r) \Big| D^\alpha \phi (y)-D^\alpha \phi (x)\Big| \, dy
\le \sqrt{n}
\, \|\, \phi \, \|_{C^{N_1}(\rn)} \, \varepsilon,
\end{eqnarray}
where
\[
\|\, \phi \, \|_{C^{N}(\rn)}:= \max_{|\alpha|\le N} \sup_{x\in \rn} \, |D^\alpha \phi (x)|\, .
\]

We claim $f_\varepsilon \in \bt$ and
\begin{equation}\label{4.10x}
\lim_{\varepsilon \downarrow 0} \, \| \, f - f_\varepsilon\, \|_{\bt} =0.
\end{equation}
Due to the definitions of $f$ and $f_\varepsilon$, and the lifting property
(see \cite[Proposition 5.1]{ysy}), it suffices to prove this claim for $s$ large and $q= \infty$.
In such a situation, we may use the characterization of $\bt$
by differences as proved in
\cite[4.3.2]{ysy}.

Let $p\in [1, \fz]$, $q\in(0,\fz]$ and
\[
0 < s \le \max \{s,\, s+n\tau -n/p\} < M
\]
with $M \in \nn$. We define
\begin{equation*}
\|f\|^\clubsuit_{B^{s, \tau}_{p, \infty}(\rn)} := \dsup_{P\in\mathcal{
Q}}\frac1{|P|^\tau} \, \dsup_{0 < t   < 2\min\{\ell(P),1\}} \, t^{-s} \lf(\dint_P
[a_t(f,x)]^p\,dx\r)^{1/p} \, ,
\end{equation*}
where $\ell(P)$ denotes the side-length of the cube $P$,
\begin{equation*}
a_t(f,x):=  t^{-n}\dint_{t/2\le|h|<t}|\Delta^M_h f(x)|\,dh\,,\quad x\in\rn,
\end{equation*}
and $\Delta^M_h f$ denotes the $M$-th difference of $f$.
In addition, we put
\[
\| \, f\,\|_{L_p^\tau (\rn)} := \sup_{P\in\mathcal{Q},~|P|\ge 1} \, \frac1{|P|^\tau}\,
\lf[\int_P |f(x)|^p dx \r]^{1/p}\, .
\]
Then, from \cite[4.3.2]{ysy}, we deduce that $f\in B^{s, \tau}_{p, \infty}(\rn)$ if and only
if $f\in L^p_\tau(\rn)$ and $\|f\|^\clubsuit_{B^{s, \tau}_{p, \infty}(\rn)}<\fz$. Furthermore,
$\|f\|_{L^p_\tau(\rn)}+\|f\|^\clubsuit_{B^{s, \tau}_{p, \infty}(\rn)}$
and $\|f\|_{B^{s, \tau}_{p, \infty}(\rn)}$ are
equivalent.

As a consequence of \eqref{ws-61}, we find that
\[
\| \, f-f_\varepsilon \,\|_{L_p^\tau (\rn)} \le \sqrt{n} \,
\|\, \phi \, \|_{C^{N_1}(\rn)} \,  \varepsilon \, .
\]
Now we investigate $\|\, f - f_\varepsilon\,
\|^\clubsuit_{B^{s, \tau}_{p, \infty}(\rn)}$. Let $M < N_1$, where $N_1$ is as in \eqref{4.19}.
Since, for all $x\in\rn$,
\[
t^{-n}\dint_{t/2\le|h|<t}|\Delta^M_h (f-f_\varepsilon)(x)|\,dh
\lesssim    \varepsilon \, t^M,
\]
we conclude, for small cubes $P$ (i.\,e., $|P|\le 1$), that
\begin{eqnarray*}
\dsup_{P\in\mathcal{Q},\ |P| \le 1} \frac1{|P|^\tau}
\dsup_{0< t < 2\min\{\ell(P),1\}} \, t^{-s} \lf(\dint_P
[\varepsilon \, t^M]^p\,dx\r)^{1/p} & \lesssim & \varepsilon
\dsup_{P\in\mathcal{Q},\ |P| \le 1} \frac{|P|^{1/p} \, [l(P)]^{M-s}}{|P|^\tau}
\lesssim \varepsilon \, .
\end{eqnarray*}
Within the large cubes $P$, it suffices to consider
$P_L=[0,2^L]^n$, $L\in \nn$.
Then, by the support condition of $f$ and $f_\varepsilon$, we see that
\begin{eqnarray}\label{ws-63}
\dsup_{L \in \nn} \frac{1}{2^{Ln\tau}}
\dsup_{0< t < 2} \, t^{-s} \lf(\dsum_{\ell=1}^{L} \int_{Q_{0,(2^\ell, 0 , \ldots \, ,0)}}
[\varepsilon \, t^M]^p\,dx\r)^{1/p} & \lesssim & \varepsilon
\dsup_{L \in \nn} \frac{L^{1/p}}{2^{Ln\tau}}
 \lesssim  \varepsilon \, .
\end{eqnarray}
Combining these two inequalities, we show the above claim \eqref{4.10x}
in case $p\in[1,\fz]$.

For $p\in(0,1)$, we can argue in principal as above.
However, a few modifications are necessary, since, in this case, the characterization
by differences looks a bit different.
Let $p\in (0,1)$, $q\in(0,\fz]$ and
\[
n \, \lf(\frac 1p -1 \r)  < s \le \max \{s,\, s+n\tau -n/p\} < M
\]
with $M \in \nn$.  Let $s_0$ be chosen such that
$n \, ( 1/p -1 )  < s_0 < s $.
Then $f\in {B}^{s, \tau}_{p,\fz} (\rn)$ if and only
if $f\in L^p_\tau(\rn)$,  $\|f\|^\clubsuit_{{B}^{s, \tau}_{p,\fz} (\rn)}<\fz$ and
\begin{equation}\label{ws-64}
\sup_{P \in \mathcal{Q}, \, |P|\ge 1}
\frac{\|f \|_{B^{s_0}_{p,\infty} (2P)}}{|P|^\tau} < \infty\,,
\end{equation}
where $B^{s_0}_{p,\infty} (2P)$ is defined as in Definition \ref{d5.12} below.
Furthermore,
\[
\|f\|_{L^p_\tau(\rn)}+\|f\|^\clubsuit_{{B}^{s, \tau}_{p,\fz} (\rn)}
+ \sup_{P \in \mathcal{Q}, \, |P|\ge 1}
\frac{\|f \|_{B^{s_0}_{p,\infty} (2P)}}{|P|^\tau}
\qquad \mbox{and} \qquad  \|f\|_{{B}^{s, \tau}_{p,\fz} (\rn)}
\]
are equivalent.
The additional term in \eqref{ws-64} can be treated as in \eqref{ws-63},
the details being omitted.
Thus,  \eqref{4.10x} also holds true in this case.
By this and $f\in {B}^{s,\tau}_{p,q}(\rn)$, we further conclude that $f\in \accentset{\diamond}{B}^{s,\tau}_{p,q}(\rn)$.
Recall that it is proved, in Substep 1.4, that  $f\notin\mathring{B}^{s,\tau}_{p,q}(\rn)$.
Thus, we have $\mathring{B}^{s,\tau}_{p,q}(\rn)
\subsetneqq\accentset{\diamond}{B}^{s,\tau}_{p,q}(\rn)$ when $\tau\in(0, 1/p)$.

The $F$-case can be derived from
${B}^{s,\tau}_{p,\min(p,q)}(\rn)\hookrightarrow  \ft \hookrightarrow {B}^{s,\tau}_{p,\max(p,q)}(\rn)$, the details being omitted.
\\
{\em Step 2.} Proof of (ii).
In case $u=p$, we have $\cn^s_{p,p,q}(\rn) = B^s_{p,q}(\rn)$. With $p\in(0,\infty)$, Step 1 implies that
\[
\mathring{\cn}^s_{p,p,q}(\rn) = \mathring{B}^s_{p,q}(\rn)=
\accentset{\diamond}{B}^s_{p,q} (\rn)= \accentset{\diamond}{\cn}^s_{p,p,q} (\rn)\, .
\]
The non-coincidence when $0<p<u<\fz$
can be proved by an argument parallel to that used in Step 1 for the spaces $\at$. We refer
the reader to Proposition \ref{wave2}
for the wavelet characterization of ${\cn}^s_{p,p,q}(\rn)$ and to
\cite[4.5.2]{ysy} for an appropriate  characterization of ${\cn}^s_{p,p,q}(\rn)$ by differences,
the details being omitted.
This finishes the proof of Lemma \ref{diamond1}.


\subsection*{Proof of Lemma \ref{diamond2}}


{\em Step 1.} Proof of (i). We prove this by considering four cases.
\\
{\em Substep 1.1.} $\tau\in(1/p,\fz)$.
In this case,
\[
A_{p,q}^{s,\tau}(\rn) = B_{\fz,\fz}^{s+n(\tau-1/p)}(\rn) \, ,  \qquad A \in \{B,F\}\, ;
\]
see Proposition \ref{basic1}(iii) below.
Hence, in this case, to show Lemma \ref{diamond2}(i), it suffices to prove that $\accentset{\diamond}{B}_{\fz,\fz}^{s}(\rn)$
is a proper subspace of ${B}_{\fz,\fz}^{s}(\rn)$.
Temporarily we assume $s\in(0, 1)$.
Let $\psi \in C_c^\infty (\rn)$ such that $\psi (x)=1$ if $|x|\le 1$.
Then we define
\begin{equation*}
f_\alpha (x):= \psi (x) \, |x|^\alpha\,, \quad x\in\rn.
\end{equation*}
Clearly, $f_\alpha \in {B}_{\fz,\fz}^{s}(\rn)$ if and only if
$\alpha \ge s$; see, e.\,g., \cite[Lemma 2.3.1/1]{rs96}.
We claim that $f_s \in {B}_{\fz,\fz}^{s}(\rn) \setminus \accentset{\diamond}{B}_{\fz,\fz}^{s}(\rn)$.
When $s\in(0,1)$, since $B^s_{\fz,\fz}(\rn)$ is just the Lipschitz space
of order $s$ (see, for example, \cite[2.3.5]{t83}),
it follows that
\[
\| \, f \, \|_{{B}_{\fz,\fz}^{s}(\rn)} \ge  \sup_{x \neq y} \frac{|f(x)-f(y)|}{|x-y|^s} \, .
\]
Notice that,
for all smooth functions $g$ satisfying that $D^\az g\in B^{s}_{\fz,\fz}(\rn)$ for all $\az\in\zz_+^n$, by the lifting property of $B^s_{\fz,\fz}(\rn)$ (see
\cite[Theorem 2.3.8]{t83}), we conclude that
$g\in B^\sigma_{\fz,\fz}(\rn)$ for all $\sigma\in(s,\fz)$ and hence
\[
\lim_{y \to 0} \frac{|g(0)-g(y)|}{|y|^s}\le \lim_{y \to 0} |y|^{\sigma-s}\|g\|_{B^\sigma_{\fz,\fz}(\rn)} = 0.
\]
Since this kind of functions is dense in $\accentset{\diamond}{B}_{\fz,\fz}^{s}(\rn)$,
the above assentation holds true also for all $g\in \accentset{\diamond}{B}_{\fz,\fz}^{s}(\rn)$.
On the other hand,
\[
\frac{|f_s(0)-f_s(y)|}{|y|^s} = 1 \qquad \mbox{for all} \quad y \neq 0\, , \quad |y|\le 1\, .
\]
This implies
$\| \, f_s - g \, \|_{{B}_{\fz,\fz}^{s}(\rn)}\ge 1$ for all $g \in \accentset{\diamond}{B}_{\fz,\fz}^{s}(\rn)$, and proves the above claim.

Now we remove the restriction $ s\in(0, 1)$ by assuming $s\in\rr$.
In this general case,  we apply the lifting operator
\begin{equation*}
I_\sigma : ~ f \mapsto \cfi\lf((1+ |\cdot|^2)^{\sigma/2} \cf f (\cdot)\r)\, , \qquad f \in \cs' (\rn).
\end{equation*}
Here $\sigma$ is a real number.
It is well known that $I_\sigma$ is an isomorphism which maps
${B}_{\fz,\fz}^{s}(\rn)$ onto ${B}_{\fz,\fz}^{s-\sigma}(\rn)$; see \cite[2.3.8]{t83}.
In addition, $I_\sigma$ maps $\accentset{\diamond}{B}_{\fz,\fz}^{s}(\rn)$ onto
$\accentset{\diamond}{B}_{\fz,\fz}^{s-\sigma}(\rn)$.
By using this, we can transfer the problem to the case $s\in(0,1)$.
This shows that the above claim also holds true for all $s \in \rr$.
\\
{\em Substep 1.2.} $\tau = 1/p >0$. In this case,
we have to prove that $\accentset{\diamond}{A}^{s,\tau}_{p,q} (\rn)$ is a proper subspace of
$\at$.
This time we use wavelet representations of $\at$ (see Proposition \ref{wave1} in Appendix).
Let
\[
f_s := \sum_{j=1}^\infty 2^{-j(s+n/2)} \psi_{1,j,(0, \ldots \, ,0)}\, .
\]
Proposition \ref{wave1} yields
\[
\|\, f_s\, \|_{\bt}  \asymp
\sup_{P\subset Q_{0,0}}  \frac1{|P|^{\tau}}
\left\{ \sum_{j=j_P}^\fz 2^{j(s+\frac n2)q} \left[ \int_P  2^{-j(s+n/2)p}\, \chi_{Q_{j,0}}(x)\,  dx\right]^{\frac qp}
\right\}^{\frac 1q}\, .
\]
Observe that $\sup_{P\subset Q_{0,0}} \cdots
= \sup_{\ell\in\zz_+, ~P=Q_{\ell,0}}\cdots$. This implies,
in case $p< \infty$, that
\[
\|\, f_s\, \|_{\bt}   \asymp
\sup_{\ell \in \zz_+}  2^{\ell n/p} \,
\left\{ \sum_{j=\ell}^\fz 2^{-jnq/p}\right\}^{\frac 1q} \lesssim  1 \, .
\]
Hence,  $f_s \in A^{s,\tau}_{p,q}(\rn)$ for all $q \in (0,\infty]$ and all $p \in (0,\infty)$, $A \in \{B,F\}$
(the $F$-case follows from
${B}^{s,\tau}_{p,\min(p,q)}(\rn)\hookrightarrow  \ft \hookrightarrow {B}^{s,\tau}_{p,\max(p,q)}(\rn)$).

We claim that $f_s \not \in \accentset{\diamond}{A}^{s,\tau}_{p,q} (\rn)$.
The proof makes use of the continuous embedding
\[
{A}^{s,\tau}_{p,q} (\rn) \hookrightarrow {B}^{s + n\tau - n/p}_{\infty,\infty} (\rn)\, ;
\]
see \cite[Proposition~2.6]{ysy}.
From this embedding, we immediately derive that
\[
\accentset{\diamond}{A}^{s,\tau}_{p,q} (\rn) \hookrightarrow \accentset{\diamond}{B}^{s + n\tau - n/p}_{\infty,\infty} (\rn) =
\accentset{\diamond}{B}^{s}_{\infty,\infty} (\rn)\, ,
\]
since $\tau = 1/p$.

Recall that the wavelet characterization of $\accentset{\diamond}{B}^{s}_{\infty,\infty} (\rn)$
looks as follows (see Remark \ref{rbinf}):
Let $s\in\rr$ and $N_1 >s$.  Then $f \in \accentset{\diamond}{B}^{s}_{\infty,\infty} (\rn)$
if and only if $f$ can be represented as in \eqref{4.22} (with
convergence in $\cs'(\rn)$),
$\| \, \Phi (f)\, \|^*_{b^s_{\infty,\infty}(\rn)} < \infty$
and
\[
 \lim_{j\to \infty} 2^{j(s+n/2)} \, \max_{i\in\{1, \ldots , 2^n-1\}} \, \sup_{k\in \zz^n}\, |\laz f,\,\psi_{i,j,k}\raz|=0\, .
\]
Form the definition of $f_s$, it follows obviously that $f_s \not\in \accentset{\diamond}{B}^{s}_{\infty,\infty} (\rn)$.
Consequently, $f_s \not\in \accentset{\diamond}{A}^{s, \tau}_{p,q} (\rn)$
and hence $\accentset{\diamond}{A}^{s, \tau}_{p,q} (\rn)\subsetneqq
{A}^{s, \tau}_{p,q}(\rn)$. \\
{\em Substep 1.3.}  $\tau\in(0,1/p)$ and $q\in(0, \infty]$. In this case,
we argue as in Substep 1.2.
Let
\begin{equation}\label{ws-67}
f_{s,p,\tau} := \sum_{j=1}^\infty 2^{-j(s+ n(\tau -1/p) + n/2)} \psi_{1,j,(0, \ldots \, ,0)}\, .
\end{equation}
Proposition \ref{wave1} yields
\[
\|\, f_{s,p,\tau}\, \|_{\bt}  \asymp
\sup_{P\subset Q_{0,0}}  \frac1{|P|^{\tau}}
\left\{ \sum_{j=j_P}^\fz 2^{j(s+\frac n2)q} \left[ \int_P  2^{-j(s+n(\tau -1/p)+n/2)p}\, \chi_{Q_{j,0}}(x)\,  dx\right]^{\frac qp}
\right\}^{\frac 1q}\, .
\]
As above, this implies,
in case $\tau \in(0,\fz)$, that
\[
\|\, f_{s,p,\tau}\, \|_{\bt}   \asymp
\sup_{\ell \in \zz_+}  2^{\ell n \tau} \,
\left\{ \sum_{j=\ell}^\fz 2^{-jn\tau q}\right\}^{\frac 1q} \lesssim  1 \, .
\]
Hence,  $f_{s,p,\tau} \in A^{s,\tau}_{p,q}(\rn)$ for all $q \in (0,\infty]$, $A \in \{B,F\}$
(again the $F$-case follows from ${B}^{s,\tau}_{p,\min(p,q)}(\rn)\hookrightarrow  \ft \hookrightarrow {B}^{s,\tau}_{p,\max(p,q)}(\rn)$).
Since $f_{s,p,\tau} \not \in \accentset{\diamond}{B}^{s + n\tau-n/p}_{\infty,\infty} (\rn)$ (see Substep 1.2.),
we conclude $f_{s,p,\tau} \not\in \accentset{\diamond}{A}^{s,\tau}_{p,q} (\rn)$
and hence $\accentset{\diamond}{A}^{s, \tau}_{p,q} (\rn)\subsetneqq
{A}^{s, \tau}_{p,q}(\rn)$.
\\
{\em Substep 1.4.}  $\tau =0$. In this case, it is known that
\[
\mathring{A}^s_{p,q} (\rn) = {A}^s_{p,q} (\rn) \qquad \Longleftrightarrow \qquad
\max\{p,q\} <\infty\, ;
\]
see \cite[Theorem~2.3.3]{t83}.
Hence, by Lemma \ref{diamond1}, we know that $\accentset{\diamond}{A}^{s}_{p,q} (\rn) = {A}^{s}_{p,q} (\rn)$ if $\max \{p,q\}<\infty$.

Let $p< \infty$ and $q= \infty$. Again, by Lemma \ref{diamond1}, we have $\accentset{\diamond}{A}^{s}_{p,\infty} (\rn) =
\mathring{A}^{s}_{p,\infty} (\rn)$,
and the latter is known to be a proper subspace of ${A}^{s}_{p,\infty} (\rn)$.

Let $p = \infty$ and $q < \infty$.
Then $\accentset{\diamond}{B}^{s}_{\infty,q} (\rn) = {B}^{s}_{\infty,q} (\rn)$, since
\[
\lim_{N \to \infty} \| \, f- S_N f \, \|_{{B}^{s}_{\infty,q} (\rn)} \asymp
\lim_{N \to \infty} \lf\{ \sum_{j=N}^\infty 2^{jsq}\,
\| \, \cfi(\varphi_j\, \cf f)\, \|_{L_{\infty} (\rn)}^q\r\}^{1/q} =0\, .
\]

Finally, $\accentset{\diamond}{B}^{s}_{\infty,\infty} (\rn)$ is a proper subset of
${B}^{s}_{\infty,\infty} (\rn)$,
since $f_s$ in Substep 1.2 belongs to ${B}^{s}_{\infty,\infty} (\rn)$ and it does not belong to $\accentset{\diamond}{B}^{s}_{\infty,\infty} (\rn)$.

This finishes the proof of (i).
\\
{\em Step 2.} Proof of (ii). To show this, we consider two cases.\\
{\em Substep 2.1.}
Let $0 < p \le u < \infty$ and $q\in(0,\infty)$.
Let  $S_N f$ be defined as in \eqref{ws-66}.
Using the  Fourier multiplier theorem of Tang and Xu \cite{tx},
it is not difficult to prove
\[
\| \, f - S_N f \, \|_{\cn^s_{u,p,q}(\rn)} \ls \lf( \sum_{j=N}^\infty 2^{jsq}\, \|\, \cfi(\varphi_j \cf f)\|^q_{\cm_p^u (\rn)}
\r)^{1/q} \longrightarrow 0
\]
if  $N$ tends to infinity. This implies $\accentset{\diamond}{\cn}^s_{u,p,q}(\rn) = \cn^s_{u,p,q}(\rn)$ for $q\in(0,\infty)$.
\\
{\em Substep 2.2.}
Let $0 < p \le u < \infty$ and $q= \infty$.
Because of ${\cn}^s_{u,p,\infty}(\rn) = {B}^{s, \frac 1p - \frac 1u}_{p,\infty}(\rn)$,
the desired conclusion follows from Step 1. This finishes the proof of
Lemma \ref{diamond2}.

\begin{remark}
It is of certain interest to point out
that Proposition \ref{wave2} implies that  the function $f_{s,p,\tau}$, defined in \eqref{ws-67},
does not belong to  any of the spaces ${\cn}^s_{u,p,q}(\rn)$, $1/u := 1/p - \tau$, $ \tau\in(0,1/p)$ and $q\in(0,\infty)$.
This implies that
\[
{B}^{s, \frac 1p - \frac 1u}_{p,q_0}(\rn)  \hookrightarrow  {\cn}^s_{u,p,q_1}(\rn)  \qquad \Longleftrightarrow
\qquad q_1 = \infty
\]
and
\[
{F}^{s, \frac 1p - \frac 1u}_{p,q_0}(\rn)  \hookrightarrow  {\cn}^s_{u,p,q_1}(\rn)  \qquad \Longrightarrow
\qquad q_1 = \infty\, ,
\]
which have been known before, and we refer the reader
to Sawano \cite{sa0}.
\end{remark}


\subsection*{Proof of Lemma \ref{densel}}


{\em Step 1.} Proof of (i). We need the following modification of Proposition \ref{morrey2} (see \cite{yyz}), here and hereafter,
$\mathring{a}_{p,q}^{s,\tau}(\rn)$ denotes the closure of finite sequences in
${a}_{p,q}^{s,\tau}(\rn)$.

\begin{proposition}\label{morrey2b}
Let $\tz\in(0,1)$, $s,\ s_0,\ s_1\in\rr$, $\tau,\ \tau_0,\ \tau_1\in[0,\fz)$,
$p,\ p_0,\ p_1\in(0,\fz]$ and $q,\ q_0,\ q_1\in(0,\fz]$
such that $s=s_0(1-\tz)+s_1\tz$, $\tau=\tau_0(1-\tz)+\tau_1\tz$,
$\frac1p=\frac{1-\tz}{p_0}+\frac\tz{p_1}$ and
$\frac1q=\frac{1-\tz}{q_0}+\frac\tz{q_1}$.
If $\tau_0 \, p_0 = \tau_1\, p_1$, then
\begin{equation*}
\lf[\mathring{a}_{p_0,q_0}^{s_0,\tau_0}(\rn)\r]^{1-\tz}
\lf[\mathring{a}_{p_1,q_1}^{s_1,\tau_1}(\rn)\r]^\tz= \mathring{a}_{p,q}^{s,\tau}(\rn) \, ,
\qquad a\in \{f,b\}\, .
\end{equation*}
\end{proposition}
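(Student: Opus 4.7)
The plan is to deduce Proposition \ref{morrey2b} from Proposition \ref{morrey2} (which gives the Calder\'on product identification without any closure taken) by a truncation/density argument carried out at the level of the index set $\mathcal{Q}^*$. Concretely, since the identity in Proposition \ref{morrey2} yields equivalent quasi-norms between $[a_{p_0,q_0}^{s_0,\tau_0}(\rn)]^{1-\tz}[a_{p_1,q_1}^{s_1,\tau_1}(\rn)]^{\tz}$ and $a^{s,\tau}_{p,q}(\rn)$, convergence of sequences in one quasi-norm is equivalent to convergence in the other, and I only have to check that the subset of finite sequences coincides with a dense subset of both spaces appearing in Proposition \ref{morrey2b}.

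For the inclusion $[\mathring{a}_{p_0,q_0}^{s_0,\tau_0}(\rn)]^{1-\tz}[\mathring{a}_{p_1,q_1}^{s_1,\tau_1}(\rn)]^{\tz}\hookrightarrow \mathring{a}_{p,q}^{s,\tau}(\rn)$, I start from $|t|\le |t_0|^{1-\tz}|t_1|^{\tz}$ with $t_j\in \mathring{a}_{p_j,q_j}^{s_j,\tau_j}(\rn)$, $j\in\{0,1\}$. For any finite $F\subset\mathcal{Q}^*$ the lattice property gives $|t\chi_{F^c}|\le |t_0\chi_{F^c}|^{1-\tz}|t_1\chi_{F^c}|^{\tz}$, hence, by Lemma \ref{Leb}(ii),
\[
\|t\chi_{F^c}\|_{[a_{p_0,q_0}^{s_0,\tau_0}(\rn)]^{1-\tz}[a_{p_1,q_1}^{s_1,\tau_1}(\rn)]^{\tz}} \le \|t_0\chi_{F^c}\|_{a_{p_0,q_0}^{s_0,\tau_0}(\rn)}^{1-\tz}\,\|t_1\chi_{F^c}\|_{a_{p_1,q_1}^{s_1,\tau_1}(\rn)}^{\tz}.
\]
Now I verify the crucial auxiliary fact that $\|t_j\chi_{F^c}\|_{a_{p_j,q_j}^{s_j,\tau_j}(\rn)}\to 0$ as $F\uparrow \mathcal{Q}^*$: given $\varepsilon>0$, choose a finite sequence $s_\varepsilon$ with $\|t_j-s_\varepsilon\|_{a_{p_j,q_j}^{s_j,\tau_j}(\rn)}<\varepsilon$, and for all $F\supset \supp s_\varepsilon$ observe that $|t_j\chi_{F^c}|\le |t_j-s_\varepsilon|$, so the lattice property finishes this auxiliary claim. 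Combining this with Proposition \ref{morrey2}, I conclude $\|t-t\chi_F\|_{a^{s,\tau}_{p,q}(\rn)}\to 0$, exhibiting $t$ as a limit of finite sequences and therefore placing $t$ in $\mathring{a}^{s,\tau}_{p,q}(\rn)$.

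For the reverse inclusion $\mathring{a}_{p,q}^{s,\tau}(\rn)\hookrightarrow [\mathring{a}_{p_0,q_0}^{s_0,\tau_0}(\rn)]^{1-\tz}[\mathring{a}_{p_1,q_1}^{s_1,\tau_1}(\rn)]^{\tz}$, using that the Calder\'on product is a quasi-Banach space (Lemma \ref{Leb}(i)) with quasi-norm equivalent to that of $a^{s,\tau}_{p,q}(\rn)$, it suffices to show that every finite sequence $t\in a^{s,\tau}_{p,q}(\rn)$ belongs to the Calder\'on product on the right. By Lemma \ref{Leb}(ii) and Proposition \ref{morrey2} there exist $g\in a_{p_0,q_0}^{s_0,\tau_0}(\rn)$ and $h\in a_{p_1,q_1}^{s_1,\tau_1}(\rn)$ with $|t|\le \lambda |g|^{1-\tz}|h|^{\tz}$ for some $\lambda\in(0,\fz)$. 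Since $t$ has finite support, I replace $g$ and $h$ by the truncations $g\chi_{\supp t}$ and $h\chi_{\supp t}$; these are still upper bounds (and their quasi-norms only decrease, by the lattice property), and they are now finite sequences, so they lie trivially in $\mathring{a}_{p_j,q_j}^{s_j,\tau_j}(\rn)$. This places $t$ in $[\mathring{a}_{p_0,q_0}^{s_0,\tau_0}(\rn)]^{1-\tz}[\mathring{a}_{p_1,q_1}^{s_1,\tau_1}(\rn)]^{\tz}$, and passing to the $\|\cdot\|_{a^{s,\tau}_{p,q}(\rn)}$-closure of finite sequences completes the proof.

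The only real technical point is the H\"older-type Calder\'on-product estimate applied to the truncated pair $(t_0\chi_{F^c},t_1\chi_{F^c})$ together with the equivalence of quasi-norms from Proposition \ref{morrey2}; once this is in place, everything is a one-line truncation/density argument and no new estimate at the level of the sequence spaces is required. All parameter ranges, including the quasi-Banach range $\min\{p_0,p_1,q_0,q_1\}<1$ and the identity $\tau_0 p_0=\tau_1 p_1$, enter only through Proposition \ref{morrey2} and Lemma \ref{Leb}, so no separate case analysis is needed.
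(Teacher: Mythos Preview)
The paper does not give its own proof of this proposition; it merely cites \cite{yyz}. Your argument is a correct self-contained derivation from Proposition~\ref{morrey2}: the truncation step for the inclusion $\subseteq$ is clean, and for the reverse inclusion the key observation that the dominating pair $(g,h)$ may be cut down to $\supp t$ without increasing norms is exactly right.

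One small expository wrinkle in your reverse inclusion: when you write that ``the Calder\'on product is a quasi-Banach space with quasi-norm equivalent to that of $a^{s,\tau}_{p,q}(\rn)$,'' you must mean $[a_{p_0,q_0}^{s_0,\tau_0}(\rn)]^{1-\tz}[a_{p_1,q_1}^{s_1,\tau_1}(\rn)]^{\tz}$ (by Proposition~\ref{morrey2}), not $[\mathring a_{p_0,q_0}^{s_0,\tau_0}(\rn)]^{1-\tz}[\mathring a_{p_1,q_1}^{s_1,\tau_1}(\rn)]^{\tz}$, since the latter equivalence is precisely what you are proving. What you actually need, and what your truncation argument does establish, is the \emph{norm bound} $\|t\|_{[\mathring a_0]^{1-\tz}[\mathring a_1]^{\tz}}\le \|t\|_{[a_0]^{1-\tz}[a_1]^{\tz}}\asymp \|t\|_{a^{s,\tau}_{p,q}(\rn)}$ on finite sequences. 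With that bound in hand, completeness of $[\mathring a_0]^{1-\tz}[\mathring a_1]^{\tz}$ (Lemma~\ref{Leb}(i)) together with the continuous embedding from the first half lets you pass to limits: a Cauchy sequence of finite sequences in $a^{s,\tau}_{p,q}(\rn)$ is Cauchy in $[\mathring a_0]^{1-\tz}[\mathring a_1]^{\tz}$ and the two limits agree. Spelling this out would tighten the exposition, but the mathematics is sound.
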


Thanks to Proposition \ref{t-n}, we obtain
\[
\lf\laz \mathring{a}_{p_0,q_0}^{s_0,\tau_0}(\rn), \mathring{a}_{p_1,q_1}^{s_1,\tau_1}(\rn)\r\raz_\tz
=  \overline{\mathring{a}_{p_0,q_0}^{s_0,\tau_0}(\rn) \cap \mathring{a}_{p_1,q_1}^{s_1,\tau_1}(\rn)
}^{\| \, \cdot \, \|_{{a}_{p,q}^{s,\tau}(\rn)}}
\, .
\]
Since all finite sequences belong to $\mathring{a}_{p_0,q_0}^{s_0,\tau_0}(\rn) \cap \mathring{a}_{p_1,q_1}^{s_1,\tau_1}(\rn)$,
it is clear that the closure must be $\mathring{a}_{p,q}^{s,\tau}(\rn)$.
By means of Propositions \ref{wave1} and  \ref{interprop}, this carries over to the function spaces.
Consequently, it holds true that
\[
\mathring{A}_{p,q}^{s,\tau}(\rn) = \lf\laz \mathring{A}_{p_0,q_0}^{s_0,\tau_0}(\rn), \mathring{A}_{p_1,q_1}^{s_1,\tau_1}(\rn)\r\raz_\tz
\hookrightarrow \lf\laz {A}_{p_0,q_0}^{s_0,\tau_0}(\rn), {A}_{p_1,q_1}^{s_1,\tau_1}(\rn)\r\raz_\tz \, .
\]
Concerning the remaining embedding, we recall
\[
{A}^{s,\tau}_{p,q} (\rn) \hookrightarrow {B}^{s + n\tau - n/p}_{\infty,\infty} (\rn)\,
\]
(see \cite[Proposition~2.6]{ysy}), and therefore
\[
\lf\laz {A}_{p_0,q_0}^{s_0,\tau_0}(\rn), {A}_{p_1,q_1}^{s_1,\tau_1}(\rn)\r\raz_\tz \hookrightarrow
\lf\laz {B}_{\infty,\infty}^{s_0 + n \tau_0 -n/p_0}(\rn), {B}_{\infty,\infty}^{s_1 + n \tau_1-n/p_1}(\rn)\r\raz_\tz \, .
\]
To calculate
$\laz {B}_{\infty,\infty}^{s_0 + n \tau_0 -n/p_0}(\rn), {B}_{\infty,\infty}^{s_1 + n \tau_1-n/p_1}(\rn)\raz_\tz $,
we use the identity
\begin{eqnarray*}
&& \hspace{-0.7cm}
\lf\laz {B}_{\infty,\infty}^{s_0 + n \tau_0 -n/p_0}(\rn), {B}_{\infty,\infty}^{s_1 + n \tau_1-n/p_1}(\rn)\r\raz_\tz
\\
& = &
\lf( {B}_{\infty,\infty}^{s_0 + n \tau_0 -n/p_0}(\rn), {B}_{\infty,\infty}^{s_1 + n \tau_1-n/p_1}(\rn),
{B}_{\infty,\infty}^{s + n \tau-n/p}(\rn) , \#\r)\,
\end{eqnarray*}
in Proposition \ref{C-CIS}(i).
Then,  it suffices to show
\begin{equation}\label{ws-68}
\lf( {B}_{\infty,\infty}^{s_0}(\rn), {B}_{\infty,\infty}^{s_1}(\rn), {B}_{\infty,\infty}^{s}(\rn) , \#\r)
 =
\accentset{\diamond}{B}_{\infty,\infty}^{s}(\rn) \, ,
\end{equation}
if $s_0 \neq s_1$, $s:= s_0\,  (1-\Theta) + s_1 \, \Theta$.
To see this, without loss of generality, we may assume $s_0 < s_1$. Then ${B}_{\infty,\infty}^{s_1}(\rn) \hookrightarrow {B}_{\infty,\infty}^{s_0}(\rn)$
and
\[
{B}_{\infty,\infty}^{s_1}(\rn) \cap {B}_{\infty,\infty}^{s_0}(\rn) = {B}_{\infty,\infty}^{s_1}(\rn)
\]
follows.
Applying a simple lifting argument (see Theorem 2.3.8 in \cite{t83}), we find
that
\begin{eqnarray*}
\{ f \in {B}_{\infty,\infty}^{s_1}(\rn):&&  D^\alpha f \in {B}_{\infty,\infty}^{s_1}(\rn) \quad \mbox{for all}\: \alpha \in \zz_+\}
\\
& = & \{ f \in {B}_{\infty,\infty}^{s}(\rn):\quad  D^\alpha f \in {B}_{\infty,\infty}^{s}(\rn) \quad \mbox{for all}\: \alpha \in \zz_+\}
\, .
\end{eqnarray*}
Hence
\[
\accentset{\diamond}{B}_{\infty,\infty}^{s}(\rn) \hookrightarrow
\overline{{B}_{\infty,\infty}^{s_1}(\rn)}^{\|\, \cdot \, \|_{{B}_{\infty,\infty}^{s}(\rn)}}  \, .
\]
To prove the converse, let $f\in {B}_{\infty,\infty}^{s_1}(\rn)$.
Then
\begin{eqnarray*}
\| \, f- S_N f \, \|_{{B}_{\infty,\infty}^{s}(\rn)} & \lesssim &
\sup_{j\ge N} \, 2^{js} \, \| \, \cfi(\varphi_j\, \cf f) \, \|_{L_{\infty}(\rn)}
\\
& \lesssim &
2^{-N(s_1 - s)}\, \sup_{j\ge N} \, 2^{js_1} \, \| \, \cfi(\varphi_j\, \cf f) \, \|_{L_{\infty}(\rn)}
\\
& \lesssim &
2^{-N(s_1 - s)}\, \| \, f\, \|_{B_{\infty,\infty}^{s_1}(\rn)}\, \to0
\end{eqnarray*}
as  $N\to\fz$.
This proves $f \in \accentset{\diamond}{B}_{\infty,\infty}^{s}(\rn)$ and therefore, the claim \eqref{ws-68} is established.
\\
{\em Step 2.} Proof of (ii). This time we need the following modification of Proposition \ref{morrey3}  (see \cite{yyz}).

\begin{proposition}\label{morrey3b}
Let $\tz\in(0,1)$, $s,\ s_0,\ s_1\in\rr$, $q,\ q_0,\ q_1\in(0,\fz]$,
$0<p\le u\le\fz$, $0<p_0\le u_0\le\fz$ and $0<p_1\le u_1\le\fz$
such that $\frac1q=\frac{1-\tz}{q_0}+\frac\tz{q_1}$,
$\frac1p=\frac{1-\tz}{p_0}+\frac\tz{p_1}$, $s=s_0(1-\tz)+s_1\tz$
and $\frac1u=\frac{1-\tz}{u_0}+\frac{\tz}{u_1}$.
If $p_0u_1=p_1u_0,$
then
\begin{equation*}
\lf[\mathring{n}_{u_0,p_0,q_0}^{s_0}(\rn)\r]^{1-\tz}\lf[\mathring{n}_{u_1,p_1,q_1}^{s_1}(\rn)\r]^\tz=
\mathring{n}_{u,p,q}^{s}(\rn).
\end{equation*}
\end{proposition}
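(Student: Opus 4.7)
The plan is to establish both inclusions of Proposition~\ref{morrey3b} by building on Proposition~\ref{morrey3} and the explicit decomposition underlying its proof in \cite{yyz}. Both inclusions proceed by finite-sequence approximation, paralleling the argument that yields Proposition~\ref{morrey2b}.

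\emph{Inclusion} $[\mathring n^{s_0}_{u_0,p_0,q_0}(\rn)]^{1-\tz}[\mathring n^{s_1}_{u_1,p_1,q_1}(\rn)]^\tz\hookrightarrow\mathring n^s_{u,p,q}(\rn)$. Given $t$ in the Calder\'on product, Lemma~\ref{Leb}(ii) lets us fix $\lambda\in(0,\fz)$ and decompose $|t|\le \lambda|t_0|^{1-\tz}|t_1|^\tz$ pointwise with $t_j\in\mathring n^{s_j}_{u_j,p_j,q_j}(\rn)$ of unit norm. For each $k\in\nn$, I would select finite sequences $t_j^{(k)}$ with $\|t_j-t_j^{(k)}\|_{n^{s_j}_{u_j,p_j,q_j}(\rn)}<1/k$ for $j\in\{0,1\}$, let $F_k\subset\mathcal Q^*$ be the (finite) union of their supports, and set $t^{(k)}:=t\chi_{F_k}$. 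Since $t_j^{(k)}$ vanishes outside $F_k$,
\[
|t-t^{(k)}|=|t|\chi_{\mathcal Q^*\setminus F_k}\le \lambda\,|t_0-t_0^{(k)}|^{1-\tz}\,|t_1-t_1^{(k)}|^\tz,
\]
and Proposition~\ref{morrey3} gives $\|t-t^{(k)}\|_{n^s_{u,p,q}(\rn)}\ls \lambda/k\to 0$, placing $t$ in $\mathring n^s_{u,p,q}(\rn)$.

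\emph{Reverse inclusion.} The key observation is that the decomposition used in \cite{yyz} to prove Proposition~\ref{morrey3} is local and support-preserving: it assigns to $t=\{t_Q\}_{Q\in\mathcal Q^*}$ two sequences $\Phi_j(t)$, $j\in\{0,1\}$, defined by formulas of the form $\Phi_j(t)_Q:=c_j(|Q|)\,|t_Q|^{p/p_j}$, so that $\supp\Phi_j(t)=\supp t$ and $\Phi_j(t\chi_F)=\Phi_j(t)\chi_F$ for every $F\subset\mathcal Q^*$. The construction yields the pointwise identity $|t_Q|=|\Phi_0(t)_Q|^{1-\tz}|\Phi_1(t)_Q|^\tz$ together with a quantitative norm bound of the form $\|\Phi_j(t)\|_{n^{s_j}_{u_j,p_j,q_j}(\rn)}\ls \|t\|_{n^s_{u,p,q}(\rn)}^{\beta_j}$ for some $\beta_j\in(0,\fz)$. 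Given $t\in\mathring n^s_{u,p,q}(\rn)$, pick finite sequences $t^{(k)}\to t$ in $n^s_{u,p,q}(\rn)$, set $F_k:=\supp t^{(k)}$ and $\tilde t^{(k)}:=t\chi_{F_k}$; using that $n^s_{u,p,q}(\rn)$ is a quasi-Banach lattice and $|t-\tilde t^{(k)}|\le|t-t^{(k)}|$ pointwise, $\|t-\tilde t^{(k)}\|_{n^s_{u,p,q}(\rn)}\to 0$. Since $\supp(t-\tilde t^{(k)})\cap F_k=\emptyset$, locality gives $\Phi_j(t)-\Phi_j(\tilde t^{(k)})=\Phi_j(t-\tilde t^{(k)})$; the quantitative bound then yields $\|\Phi_j(t)-\Phi_j(\tilde t^{(k)})\|_{n^{s_j}_{u_j,p_j,q_j}(\rn)}\to 0$, so $\Phi_j(t)\in\mathring n^{s_j}_{u_j,p_j,q_j}(\rn)$, and the pointwise identity places $t$ in the Calder\'on product of the closures.

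The main obstacle I anticipate is to extract from \cite{yyz} this precise local form of the decomposition together with its pointwise identity and quantitative norm bound, since these facts are packaged implicitly there rather than isolated as a separate lemma; once the decomposition is reformulated in this explicit way, the two inclusions above establish Proposition~\ref{morrey3b}, and equivalence of the induced quasi-norms then follows from the bounded inverse theorem for quasi-Banach spaces applied to the two continuous embeddings.
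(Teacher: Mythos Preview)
The paper does not give its own proof of this proposition; it introduces the statement with ``(see \cite{yyz})'' and uses it as a black box in Step~2 of the proof of Lemma~\ref{densel}. So there is no in-paper argument to compare against.

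Your strategy is correct and is precisely the natural way to derive the $\mathring{n}$-version from Proposition~\ref{morrey3}. The first inclusion is clean: the pointwise inequality $|t-t^{(k)}|\le\lambda|t_0-t_0^{(k)}|^{1-\tz}|t_1-t_1^{(k)}|^{\tz}$ on the complement of $F_k$ is exactly right, and the embedding direction of Proposition~\ref{morrey3} (which needs no restriction on the parameters) then gives $\|t-t^{(k)}\|_{n^s_{u,p,q}(\rn)}\to 0$. For the reverse inclusion, your reading of the decomposition in \cite{yyz} is accurate: for Besov--Morrey sequence spaces the construction is entrywise, of the form $\Phi_j(t)_Q=c_j(|Q|)\,|t_Q|^{p/p_j}$, so it commutes with multiplication by $\chi_F$ and satisfies $\|\Phi_j(t)\|_{n^{s_j}_{u_j,p_j,q_j}(\rn)}\lesssim\|t\|_{n^s_{u,p,q}(\rn)}^{p/p_j}$; your disjoint-support argument then goes through verbatim. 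Note that since $(1-\tz)\frac{p}{p_0}+\tz\frac{p}{p_1}=1$, the two quantitative bounds already give the norm equivalence directly, so the appeal to the bounded inverse theorem at the end is unnecessary (though harmless).
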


Now we can proceed as in Step 1 since, with $\tau:= \frac 1p - \frac 1u$,
\[
{\cn}_{u,p,q}^{s}(\rn) \hookrightarrow {\cn}_{u,p,\infty}^{s}(\rn) =
{B}^{s,\tau}_{p,\infty} (\rn) \hookrightarrow {B}^{s + n\tau - n/p}_{\infty,\infty} (\rn)\, .
\]
{\em Step 3.} Proof of (iii). Clearly, by \eqref{ws-69}, we know that
$\lf\laz {A}_{p_0,q_0}^{s_0,\tau_0}(\rn), {A}_{p_1,q_1}^{s_1,\tau_1}(\rn)\r\raz_\tz$
is a subspace of ${A}_{p,q}^{s,\tau}(\rn)$.
If $\lf\laz {A}_{p_0,q_0}^{s_0,\tau_0}(\rn), {A}_{p_1,q_1}^{s_1,\tau_1}(\rn)\r\raz_\tz$ would coincide with
${A}_{p,q}^{s,\tau}(\rn)$, Step 1 would imply
that
\[
{A}_{p,q}^{s,\tau}(\rn) \hookrightarrow \accentset{\diamond}{B}_{\infty,\infty}^{s+n\tau -n/p}(\rn)\, .
\]
But, in \eqref{ws-67}, we have found a function $f_{s,p,\tau}$
such that $f_{s,p,\tau} \in {A}_{p,q}^{s,\tau}(\rn)  \setminus
\accentset{\diamond}{B}_{\infty,\infty}^{s+n\tau -n/p}(\rn)$ if $\tau\in(0,1/p)$.
Thus, $$\lf\laz {A}_{p_0,q_0}^{s_0,\tau_0}(\rn), {A}_{p_1,q_1}^{s_1,\tau_1}(\rn)\r\raz_\tz
\subsetneqq {A}_{p,q}^{s,\tau}(\rn),$$
which completes the proof of Lemma \ref{densel}.


\subsection*{Proof of Theorem \ref{gagl1}}


{\em Step 1.} Proof of (i). Without loss of generality, we may
assume $p_0 + q_0 <\infty$.
By Propositions \ref{morrey2} and \ref{t-n}, we see that
\[
\lf\laz {a}_{p_0,q_0}^{s_0}(\rn), {a}_{p_1,q_1}^{s_1}(\rn)\r\raz_\tz =
\overline{{a}_{p_0,q_0}^{s_0}(\rn) \cap {a}_{p_1,q_1}^{s_1}(\rn)}^{\|\, \cdot \, \|_{a_{p,q}^{s}(\rn)}}\, .
\]
Since finite sequences are contained in
${a}_{p_0,q_0}^{s_0}(\rn) \cap {a}_{p_1,q_1}^{s_1}(\rn)$,
and dense in $a_{p,q}^{s}(\rn)$ due to $p + q <\infty$,
it follows that
\[
\overline{{a}_{p_0,q_0}^{s_0}(\rn) \cap {a}_{p_1,q_1}^{s_1}(\rn)}^{\|\, \cdot \, \|_{a_{p,q}^{s}(\rn)}} = a_{p,q}^{s}(\rn)\,.
\]
Based on Propositions \ref{interprop} and \ref{wave1}, this equality can be transferred to the related function spaces. The equivalence with
$\accentset{\diamond}{A}^s_{p,q}(\rn)$ and $\mathring{A}^s_{p,q}(\rn)$
follows from Lemmas \ref{diamond1} and \ref{diamond2}.
\\
{\em Step 2.} Proof of (ii). Let $s_0 \neq s_1$. Then the case $q_0 = q_1 =q = \infty$ has been treated above (see \eqref{ws-68}).
The general case $q_0,q_1 \in (0, \infty]$ can be handled in the same way.

Now we turn to the case $s=s_0 = s_1$ and $q_0 < q < q_1$.
Observe that
\[
{B}_{\infty,q_0}^{s}(\rn) \cap {B}_{\infty,q_1}^{s}(\rn) =
{B}_{\infty,q_0}^{s}(\rn) \, .
\]
Hence, we need to calculate the closure of ${B}_{\infty,q_0}^{s}(\rn)$ in ${B}_{\infty,q}^{s}(\rn)$.
Clearly, for $f \in {B}_{\infty,q}^{s}(\rn)$, we have
$S_N f \in {B}_{\infty,q_0}^{s}(\rn)$ and
\begin{eqnarray*}
\| \, f - S_N f \, \|_{{B}_{\infty,q}^{s}(\rn)} & \lesssim &  \lf[
\sum_{j=N}^\infty 2^{jsq}\, \|\, \cfi(\varphi_j\, \cf f) \, \|_{L_\infty (\rn)}^q\r]^{1/q}
\\
& \lesssim &  \lf[
\sum_{j=N}^\infty 2^{jsq_0}\, \|\, \cfi(\varphi_j\, \cf f)\, \|_{L_\infty (\rn)}^{q_0}
\r]^{1/q_0}\, .
\end{eqnarray*}
Since the right-hand side of the above inequalities tends to $0$ as $N \to \infty$, we conclude that
\[
\laz {B}_{\infty,q_0}^{s}(\rn), {B}_{\infty,q_1}^{s}(\rn) \raz_\tz=\overline{{B}_{\infty,q_0}^{s}(\rn)}^{\| \, \cdot \, \|_{{B}_{\infty,q}^{s}(\rn)}} =
 {B}_{\infty,q}^{s}(\rn) \, .
 \]
{\em Step 3.} Proof of (iii). We follow the proof of the claim in \eqref{ws-68}.
Without loss of generality, we may assume $s_0 < s_1$. Then ${A}_{p,\infty}^{s_1}(\rn) \hookrightarrow {A}_{p,\infty}^{s_0}(\rn)$
and
\[
{A}_{p,\infty}^{s_1}(\rn) \cap {A}_{p,\infty}^{s_0}(\rn) =
{A}_{p,\infty}^{s_1}(\rn)
\]
follows.
Again  a simple lifting argument (see Theorem 2.3.8 in \cite{t83}) yields
\begin{eqnarray*}
\{ f \in {A}_{p,\infty}^{s_1}(\rn)  :  &&  D^\alpha f \in {A}_{p,\infty}^{s_1}(\rn)  \quad \mbox{for all}\: \alpha \in \zz_+\}
\\
& = & \{ {A}_{p,\infty}^{s}(\rn) :\quad  D^\alpha f \in {A}_{p,\infty}^{s}(\rn)  \quad \mbox{for all}\: \alpha \in \zz_+\}
\, .
\end{eqnarray*}
Hence
\[
\accentset{\diamond}{A}_{p,\infty}^{s}(\rn) \hookrightarrow
\overline{{A}_{p,\infty}^{s_1}(\rn)}^{\|\, \cdot \, \|_{{A}_{p,\infty}^{s}(\rn)}}  \, .
\]
To prove the converse, let $f\in {A}_{p,\infty}^{s_1}(\rn)$.
Then
\[
\| \, f- S_N f \, \|_{{A}_{p,\infty}^{s}(\rn)}  \lesssim
2^{-N(s_1 - s)}\, \| \, f\, \|_{A_{p,\infty}^{s_1}(\rn)}\, .
\]
This implies $f \in \accentset{\diamond}{A}_{p,\infty}^{s}(\rn)$, and hence
\[
\overline{{A}_{p,\infty}^{s_0}(\rn) \cap {A}_{p,\infty}^{s_1}(\rn)}^{\|\, \cdot \, \|_{A_{p,\infty}^{s}(\rn)}} =
\accentset{\diamond}{A}_{p,\infty}^{s}(\rn)\,.
\]
{\em Step 4.} Proofs of (iv) and (v). We  follow some arguments taken over from
\cite{ssv} where a similar situation for the complex method has been treated.
By similarity, we concentrate on the $F$-case. Notice that
our assumptions are guaranteeing
\[
F_{p_0,\infty}^{s_0}(\rn) \hookrightarrow F_{p,1}^{s}(\rn) \hookrightarrow F_{p_1,\infty}^{s_1}(\rn)\, ;
\]
see \cite[Theorem~2.7.1]{t83}.
This implies that
\[
F_{p_0,\infty}^{s_0}(\rn) \cap  F_{p_1,\infty}^{s_1}(\rn) = F_{p_0,\infty}^{s_0}(\rn) \, .
\]
Clearly,
\[
\overline{C_c^\infty (\rn)}^{\|\, \cdot \, \|_{F_{p,\infty}^{s}(\rn) }}
\hookrightarrow \overline{F_{p_0,\infty}^{s_0}(\rn)}^{\|\, \cdot \, \|_{F_{p,\infty}^{s}(\rn) }}
\hookrightarrow \overline{F_{p,1}^{s}(\rn)}^{\|\, \cdot \, \|_{F_{p,\infty}^{s}(\rn) }}
\]
Since $C_c^\infty (\rn)$ is dense in $F_{p,1}^{s}(\rn)$, we know that
all three spaces coincide with $\mathring{F}_{p,\infty}^{s}(\rn)$.
But, in Lemma \ref{diamond1}, we prove, in this situation,
$\mathring{F}_{p,\infty}^{s}(\rn) = \accentset{\diamond}{F}_{p,\infty}^{s}(\rn)$.
\\
{\em Step 5.} Proof of (vi).
Again, we follow \cite{ssv}, where a similar situation for the complex method is treated.
It suffices to argue on the level of sequence spaces due to Proposition \ref{wave1}.
For $j\in\zz_+$, let  $K_j$ be a subset of $\zz^n$ with cardinality
\[
|K_j|= \lceil 2^{-j\{(s_1-s_0)\cdot\frac{1}{1/p_1-1/p_0}-d\}}\rceil\, ,
\]
where $\lceil t\rceil$
denotes the smallest integer larger than or equal to $t\in \rr.$
We define a sequence $\lambda :=\{\lambda_{j,k}\}_{j,k}$ by
\[
\lambda_{j,k}:=\begin{cases}\displaystyle 2^{j\cdot\frac{p_1s_1-p_0s_0}{p_0-p_1}}& \qquad \text{if}\quad  k\in K_j,\ j\in\zz_+,\\
0
& \qquad \text{otherwise}.\end{cases}
\]
It is easily checked that
\[
\lambda\in b^{s,0}_{p,\infty}(\rn)\setminus \mathring{b}^{s,0}_{p,\infty}(\rn) \qquad \mbox{and}\qquad
\lambda\in b^{s_0,0}_{p_0,\infty}(\rn)\cap b^{s_1,0}_{p_1,\infty}(\rn) \, .
\]
The counterpart of Proposition \ref{C-CIS} on the sequence space level yields
\[
\lambda\in \overline{b^{s_0,0}_{p_0,\infty}(\rn) \cap b^{s_1,0}_{p_1,\infty}(\rn)}^{b^s_{p,\infty}(\rn)}
=\laz b^{s_0,0}_{p_0,\infty}(\rn),b^{s_1,0}_{p_1,\infty}(\rn) \raz_\Theta.
\]
Hence,  the embedding $\mathring{b}^{s,0}_{p,\infty}(\rn) \hookrightarrow \laz b^{s_0,0}_{p_0,\infty}(\rn),
\, b^{s_1,0}_{p_1,\infty}(\rn) \raz_\Theta$, as well as its function space version,
is strict in this case.
Moreover, $\laz B^{s_0}_{p_0,\infty}(\rn), \, B^{s_1}_{p_1,\infty}(\rn) \raz_\Theta \subsetneqq B^{s}_{p,\infty}(\rn) $
is a consequence of Lemma \ref{densel}(i).
\\
{\em Step 6.} Proof of (vii).
We proceed as in Step 4. Observe that this time $\max \{p,q\}< \infty$.
Thanks to the embedding
$B^{s_0}_{p_0,\infty} (\rn) \hookrightarrow B^{s_1}_{\infty,q_1} (\rn)$
(see \cite[2.7.1]{t83}),
we know that
\[C_c^\fz(\rn)\hookrightarrow
B^{s_0}_{p_0,\infty} (\rn) \cap B^{s_1}_{\infty,q_1} (\rn) = B^{s_0}_{p_0,\infty} (\rn)\, .
\]
On the other hand, using $B^{s_0}_{p_0,\infty} (\rn) \hookrightarrow B^{s}_{p,q} (\rn)$ (see \cite[2.7.1]{t83}), we find that
\[
\overline{C_c^\infty (\rn)}^{B^{s}_{p,q} (\rn)} \hookrightarrow
\overline{B^{s_0}_{p_0,\infty} (\rn)}^{B^{s}_{p,q} (\rn)} \hookrightarrow
\overline{B^{s}_{p,q} (\rn)}^{B^{s}_{p,q} (\rn)}  = B^{s}_{p,q} (\rn)\, .
\]
Lemma \ref{diamond1}(i) shows that the space on the left-hand side
of the above formula coincides with $B^{s}_{p,q} (\rn)$.
The proof of Theorem \ref{gagl1} is then complete.


\subsection*{Proof of Theorem \ref{gagl7}}


By Proposition \ref{nil}, we have to calculate
\[
\overline{{A}_{p,q_0}^{s_0, \tau}(\rn) \cap {A}_{p,q_1}^{s_1,\tau}(\rn)}^{
\|\, \cdot \, \|_{{A}_{p,q}^{s, \tau}(\rn)}}\, .
\]
{\em Step 1.}
First, we assume $A=B$ and $s_0 >s_1$. Clearly,
${B}_{p,q_0}^{s_0, \tau}(\rn) \cap {B}_{p,q_1}^{s_1,\tau}(\rn) =
{B}_{p,q_0}^{s_0, \tau}(\rn)$.
We claim
\[
\overline{{B}_{p,q_0}^{s_0, \tau}(\rn)}^{\|\, \cdot \, \|_{{B}_{p,q}^{s, \tau}(\rn)}}
= \accentset{\diamond}{B}_{p,q}^{s, \tau}(\rn) \, .
\]
But this follows immediately from
\begin{eqnarray}\label{ws-105}
\Big\{f \in {B}_{p,q_0}^{s_0, \tau}(\rn): && \ D^\alpha f \in {B}_{p,q_0}^{s_0, \tau}(\rn) \quad
\mbox{for all}\quad \alpha \in \zz_+^n\Big\}
\nonumber\\
&& = \Big\{f \in {B}_{p,q}^{s, \tau}(\rn): \quad D^\alpha f \in {B}_{p,q}^{s, \tau}(\rn) \quad
\mbox{for all}\quad \alpha \in \zz_+^n\Big\} \, .
\end{eqnarray}
To prove this identity,  we need the following result:
Let $m  \in \nn$. Then $A^{s,\tau}_{p,q} (\rn)$ is the
collection of all $f\in \cs' (\rn)$ such that
\[
\sum_{|\alpha|=m} \, \| \, D^\alpha f\, \|_{A^{s-m,\tau}_{p,q} (\rn)} < \infty
\]
in the sense of equivalent quasi-norms.
If  $A=F$, by Propositions \ref{basic1} and \ref{basic2},
this property of $\ft$ when $\tau=0$ or $\tau\in[1/p,\fz)$ can be found in \cite[2.3.8]{t83}, while when $\tau\in(0,1/p)$ was proved by Tang and Xu \cite{tx};
If $A=B$,  by Proposition \ref{basic1} again,  this property of $\bt$ when $\tau=0$ or $\tau\in(1/p,\fz)$ was proved in \cite[2.3.8]{t83}, while
when $\tau\in(0,1/p]$ can be proved by an argument similar to that used
in the proof of \cite[Theorem 2.15(ii)]{tx}.

In addition, we mention the embedding
\[
A^{s,\tau}_{p,q} (\rn) \hookrightarrow C_{ub}(\rn) \qquad \mbox{if}\quad s+n\tau -n/p >0
\]
(see  \cite[Proposition~2.6]{ysy}), where
$C_{ub}(\rn)$ denotes the space of uniformly continuous and bounded functions on $\rn$.
This, together with the above property of $\at$,
shows that both sets in \eqref{ws-105} contain only $C^\infty(\rn)$ functions
and hence they are equal. Thus, the above claim
$\overline{{B}_{p,q_0}^{s_0, \tau}(\rn)}^{\|\, \cdot \, \|_{{B}_{p,q}^{s, \tau}(\rn)}}
= \accentset{\diamond}{B}_{p,q}^{s, \tau}(\rn)$ holds true.
This proves the desired conclusion of Theorem \ref{gagl7}
in this case.
\\
{\em Step 2.}
This time we consider the case $A=B$, $s=s_0 =s_1$ and $q_0 < q <  q_1$. Clearly,
this time
$${B}_{p,q_0}^{s, \tau}(\rn) \cap {B}_{p,q_1}^{s,\tau}(\rn) =
{B}_{p,q_0}^{s, \tau}(\rn)$$ and we can argue as in Step 1.
\\
{\em Step 3.} Also, if $A=F$, we can argue as in Step 1, the details being omitted.
This finishes the proof of Theorem \ref{gagl7}.


\subsection*{Proof of Theorem \ref{gagl2}}


Under the assumptions of Theorem \ref{gagl2}, we see that
\begin{eqnarray*}
\lf\laz {A}_{p_0,q_0}^{s_0, \tau_0}(\rn), {\ca}_{p_1,q_1}^{s_1,\tau_1}(\rn)\r\raz_\tz & = &
\lf\laz {B}_{\infty,\infty}^{s_0+n(\tau_0-1/p_0)}(\rn), {B}_{\infty,\infty}^{s_1+n(\tau_1-1/p_1)}(\rn)\r\raz_\tz
\\
& = & \accentset{\diamond}{B}_{\infty,\infty}^{s+n(\tau-1/p)}(\rn) = \accentset{\diamond}{A}_{p,q}^{s, \tau}(\rn)\, ;
\end{eqnarray*}
see Proposition \ref{basic1} in  Appendix and \eqref{ws-68}.
This finishes the proof of Theorem \ref{gagl2}.


\subsection*{Proof of Theorem \ref{gagl4}}


Clearly, it follows from Proposition \ref{C-CIS} that
\[
\lf\laz \cn_{u,p,q_0}^{s_0}(\rn), \cn_{u,p,q_1}^{s_1}(\rn)\r\raz_\tz = \overline{\cn_{u,p,q_0}^{s_0}(\rn)\cap \cn_{u,p,q_1}^{s_1}(\rn)
}^{\|\, \cdot \, \|_{\cn_{u,p,q}^{s}(\rn)}}.
\]
For fixed $u$ and $p$, we put
\[
N_q^s(\rn):=
\Big\{ f \in C^\infty (\rn): \quad D^\alpha f \in \cn_{u,p,q}^{s}(\rn)\quad \mbox{for all}\quad \alpha \in \zz_+^n \Big\}\, .
\]
If $s_0 >s_1$, then the obvious embedding ${\cn}_{u,p,q_0}^{s_0}(\rn) \hookrightarrow {\cn}_{u,p,q_1}^{s_1}(\rn)$
implies that
\[
N_{q_0}^{s_0}(\rn) = N_{q_1}^{s_1}(\rn) = N_{q}^{s}(\rn)\, .
\]
However, also in case $s_0 = s_1$, we have the
coincidence of these spaces independent of $q_0,q_1$, due to an argument similar to
that
used in the proof for
\eqref{ws-105}.
On the other hand, Lemma \ref{diamond2} yields $\accentset{\diamond}{\cn}_{u,p,q_i}^{s_i}(\rn) = {\cn}_{u,p,q_i}^{s_i}(\rn)$, $i\in\{0,1\}$.
All above observations, together with Proposition \ref{C-CIS}, further implies
that
\begin{eqnarray*}
\lf\laz \cn_{u,p,q_0}^{s_0}(\rn), \cn_{u,p,q_1}^{s_1}(\rn)\r\raz_\tz
&&\hookleftarrow \overline{N_{q_0}^{s_0}(\rn)\cap N_{q_1}^{s_1}(\rn)}^{\|\, \cdot \, \|_{\cn_{u,p,q}^{s}(\rn)}}= \overline{N_{q}^{s}(\rn)}^{\|\, \cdot \, \|_{\cn_{u,p,q}^{s}(\rn)}}\\
&& = \accentset{\diamond}{\cn}_{u,p,q}^{s}(\rn)= {\cn}_{u,p,q}^{s}(\rn)
\hookleftarrow \lf\laz \cn_{u,p,q_0}^{s_0}(\rn), \cn_{u,p,q_1}^{s_1}(\rn)\r\raz_\tz,
\end{eqnarray*}
which implies the desired conclusion of Theorem \ref{gagl4} and hence completes its proof.


\subsection*{Proof of Lemma \ref{morrey43}}


{\em Step 1.} Proof of (i).
Let $M(\rn)$ denote the space of all functions  $g \in {\cm}_p^u (\rn)$ having the properties
\eqref{ws-51}, \eqref{ws-50} (uniformly in $y\in \rn$) and \eqref{ws-74} (uniformly in $r\in(0,\fz)$).

We first show  $\mathring{\cm}_p^u (\rn) \hookrightarrow M(\rn)$.
Obviously, all smooth compactly supported functions satisfy
the conditions  \eqref{ws-51} through
\eqref{ws-74}.
Let now $g \in \mathring{\cm}_p^u (\rn)$ and $\{f_\ell\}_\ell \in C_c^\infty(\rn)$
be an approximating sequence of $g$ in ${\cm}_p^u (\rn)$.
Then
\begin{eqnarray}\label{ws-70}
|B(y,r)|^{1/u-1/p} \lf[\int_{B(y,r)} |g(x)|^p\,dx\r]^{1/p}\nonumber
&&\le   |B(y,r)|^{1/u-1/p}\lf[\int_{B(y,r)} |g(x)-f_\ell(x)|^p\,dx\r]^{1/p}\nonumber\\
&& \quad +
|B(y,r)|^{1/u-1/p}\lf[\int_{B(y,r)} |f_\ell(x)|^p\,dx\r]^{1/p}\, .
\end{eqnarray}
For any given $\varepsilon\in(0,1)$, since $f_\ell\to g$
in $\cm^u_p(\rn)$ as $\ell\to\fz$,
we choose $\ell$ so large such that
\[
|B(y,r)|^{1/u-1/p}\lf[\int_{B(y,r)} |g(x)-f_\ell(x)|^p\,dx\r]^{1/p} \le \frac{\varepsilon}{2}
\]
(simultaneously for all $r$ and all $y$).
Next, by $f_\ell$ satisfying \eqref{ws-50}, we choose $r$, depending on the already chosen $\ell$ but independent of
$y$,   so large such that
\[
|B(y,r)|^{1/u-1/p}\lf[\int_{B(y,r)} |f_\ell(x)|^p\,dx\r]^{1/p} \le \frac{\varepsilon}{2}\, .
\]
Consequently,
\[
|B(y,r)|^{1/u-1/p}  \lf[\int_{B(y,r)} |g(x)|^p\,dx\r]^{1/p} \le  \varepsilon,
\]
if $r$ is large enough and  independent of $y$. This proves that $g$ has
the property \eqref{ws-50}.

Now we turn to show that $g$ satisfies  \eqref{ws-51}.
Again we make use of \eqref{ws-70}. Then, instead of choosing $r$ large, we have to choose $r$ small enough. From \eqref{ws-70} and $\{f_\ell\}_{\ell\in\nn}$ satisfying
\eqref{ws-51}, together with an argument similar to the above, it follows that
\[
|B(y,r)|^{1/u-1/p}  \lf[\int_{B(y,r)} |g(x)|^p\,dx\r]^{1/p} \le \varepsilon,
\]
if $r$ is small enough and independent of $y$. This means, $g$ also satisfies \eqref{ws-51}.

It remains to prove that $g$ satisfies \eqref{ws-74}. By \eqref{ws-70} again,
together with $\{f_\ell\}_{\ell\in\nn}$ satisfying
\eqref{ws-74} and an argument similar to above, we see that
\[
|B(y,r)|^{1/u-1/p}  \lf[\int_{B(y,r)} |g(x)|^p\,dx\r]^{1/p} \le \varepsilon,
\]
if $|y|\ge N:=N_{(\varepsilon)}$ is large enough. Thus, $g\in M(\rn)$.
Altogether this proves that  $\mathring{\cm}_p^u (\rn) \hookrightarrow M(\rn)$.

To show the converse, we suppose $g \in M(\rn)$.
For such $g$, we have to prove the existence of a sequence of smooth compactly supported
functions  approximating $g$ in the norm of ${\cm}_p^u (\rn)$.
We prove this by three steps.
\\
{\em Substep 1.1.} {For $g\in M(\rn)$, we wish to approximate $g$
by uniformly continuous functions}.
In this first step, we will find it convenient
to replace the balls in the definition of $\|\cdot\|_{\cm^u_p(\rn)}$ by dyadic cubes. This results in an equivalent norm, and the
convergence is not influenced.

Let
\[
N(t):= \left\{ \begin{array}{lll}
t & \qquad &                \mbox{if}\quad 0\le t \le 1,
\\
2-t &&  \mbox{if}\quad 1\le t \le 2,
\\
0 &&  \mbox{otherwise}
               \end{array}\right.
\]
be the standard hat function ($B$-spline of order 2).
Its tensor product is denoted by
\[
\overline{N}(x):= \prod_{i=1}^n N(x_i)\,  \qquad \mbox{for all}\ \ x:= (x_1, \ldots \, , x_n) \, .
\]
Then the integer shifts of $\overline{N}$ form a decomposition of unity, i.\,e.,
\[
\sum_{k \in \zz^n} \overline{N}(x-k) = 1 \qquad \mbox{for all}\quad x \in \rn\, .
\]
For $j\in\zz$ and $k\in\zz^n$, let
\[
Q_{j,k}^* := \{x \in \rn: \quad 2^{-j}k_i \le x_i < 2^{-j}(k_i +2)\, , \: i\in\{1, \ldots , n\}\}\, .
\]
Observe, $\overline{Q_{j,k}^*}= \supp \overline{N}(2^j \, \cdot \, -k)$.
For $f \in L_1^{\ell oc} (\rn)$ and $x\in\rn$, we put
\[
T_j f(x):= \sum_{k \in \zz^n}  2^{n(j-1)}\, \lf[ \int_{Q_{j,k}^*} f(y)\, dy\r]\, \overline{N}(2^jx-k)\, , \qquad j \in \zz_+ \, .
\]

Many times, we use the following $L_p(\rn)$-stability of the integer translates of $\overline{N}$.
For any dyadic cube $Q \subset \rn $, it holds true that
\begin{eqnarray*}
\int_{Q} |T_j f(x)|^p dx & = & \int_Q  \lf|\sum_{k: \: Q_{j,k}^* \cap Q \neq \emptyset}  2^{n(j-1)}\,
\lf[\int_{Q_{j,k}^*} f(y)\, dy\r]\, \overline{N}(2^jx-k)\r|^p\, dx
\\
& \asymp & \sum_{k: \: Q_{j,k}^* \cap Q \neq \emptyset}  \int_{Q_{j,k}^*}  \lf|2^{n(j-1)}\,  \int_{Q_{j,k}^*} f(y)\, dy\r|^p dx
\asymp \sum_{k: \: Q_{j,k}^* \cap Q \neq \emptyset} 2^{jn(p-1)}  \lf|\int_{Q_{j,k}^*} f(y)\, dy\r|^p.
\end{eqnarray*}
Next we employ the H\"older inequality and find that
\begin{eqnarray*}
\lf[\int_{Q} |T_j f(x)|^p dx\r]^{1/p} & \lesssim & \lf[
\sum_{k: \: Q_{j,k}^* \cap Q \neq \emptyset} 2^{jn(p-1)} \, 2^{-jnp/p'} \int_{Q_{j,k}^*} |f(y)|^p\, dy  \r]^{1/p}
\lesssim  \lf[\int_{Q} |f(y)|^p\, dy  \r]^{1/p}\, .
\end{eqnarray*}
Here the implicit positive constants are independent  of $f,\ Q$ and $j$.
Replacing the dyadic cube by a cube of  type, $\wz Q:=\Pi_{i=1}^n[y_i-2^N,y_i+2^N]$, $y \in \rn$, we find that
a similar estimate is true:
\begin{equation}\label{ws-71}
\lf[\int_{\wz Q} |T_j f(x)|^p dx\r]^{1/p}  \lesssim  \lf[\int_{2\wz Q} |f(y)|^p\, dy  \r]^{1/p}\, ,
\end{equation}
where $2\wz Q$ denotes the cube with the same center as $\wz Q$, sides parallel to the sides of $\wz Q$ and side-length
twice larger.
The inequality \eqref{ws-71} further implies that,
if $g \in M(\rn)$, also $T_j g$ belongs to $M(\rn)$.

Now we prove that $T_j g$ converges to $g$ in $\cm^u_p(\rn)$.
We consider two cases: \\
(a) Let $Q$ be a dyadic cube such that $|Q|\ge 2^{-jn}$.
For brevity, we put
\[
a_{j,k} := 2^{n(j-1)}\,  \int_{Q_{j,k}^*} g(y)\, dy \, .
\]
This implies that
\begin{eqnarray*}
\int_Q |g(x)- T_j g(x)|^p \, dx & = &  \int_Q \lf| \sum_{k\in\zz^n: ~Q_{j,k}^* \cap Q \neq \emptyset} [g(x)-a_{j,k}] \, \overline{N}(2^jx-k)\r|^p \, dx
\\
& \lesssim & \sum_{k\in\zz^n: ~Q_{j,k}^* \cap Q \neq \emptyset} \int_{Q_{j,k}^*} |g(x)-a_{j,k}|^p dx.
\end{eqnarray*}
By the H\"older inequality, we see that
\begin{eqnarray*}
&&\sum_{k\in\zz^n: ~Q_{j,k}^* \cap Q \neq \emptyset} \int_{Q_{j,k}^*} |g(x)-a_{j,k}|^p dx \\
 &&\quad =
\sum_{k\in\zz^n: ~Q_{j,k}^* \cap Q \neq \emptyset} \int_{Q_{j,k}^*} 2^{n(j-1)p}\,  \lf|\int_{Q_{j,k}^*} [g(x) - g(y)]\, dy \r|^p dx
\\
&&\quad \le2^{jn}
\sum_{k\in\zz^n: ~Q_{j,k}^* \cap Q \neq \emptyset}  \, \int_{Q_{j,k}^*} \int_{Q_{j,k}^*} |g(x) - g(y)|^p\, dy dx
\\
&&\quad =2^{jn} \int_{Q_{j,0}^*}
\sum_{k\in\zz^n: ~Q_{j,k}^* \cap Q \neq \emptyset}  \, \int_{Q_{j,k}^*} |g(x+ 2^{-j}k) - g(y)|^p\, dy dx
\\
&&\quad \lesssim \sup_{|h|\le \sqrt{n} 2^{-j+1}} \sum_{k\in\zz^n: ~Q_{j,k}^* \cap Q \neq \emptyset} \,  \,
\int_{Q_{j,k}^*} |g(y+h) - g(y)|^p\, dy
\\
 &&\quad \lesssim \sup_{|h|\le \sqrt{n} 2^{-j+1}}  \,  \int_{Q} |g(y+h) - g(y)|^p\, dy.
\end{eqnarray*}
As above, we switch again to cubes of  type,
$\wz Q:=\Pi_{i=1}^n[y_i-2^N,y_i+2^N]$, $y \in \rn$, and
it follows that
\begin{equation}\label{ws-72}
\lf[\int_{\wz Q} |g(x)- T_j g(x)|^p \, dx \r]^{1/p} \lesssim
\sup_{|h|\le \sqrt{n} 2^{-j+1}}  \,  \lf[\int_{2\wz Q} |g(y+h) - g(y)|^p\, dy\r]^{1/p}.
\end{equation}
(b) Let $\wz Q$ be a  cube of type, $\Pi_{i=1}^n[y_i-2^N,y_i+2^N]$, $y \in \rn$, such that $|\wz Q| < 2^{-jn}$.
By obvious modifications of the above argument, we find, also in this case, that \eqref{ws-72} holds true.

Now we use \eqref{ws-72}
to prove $\|\, g - T_j g \, \|_{\cm_p^u (\rn)}\to 0$ as $j\to\fz$.
Let $\tau_h g (x):= g(x+h)$, $x \in \rn$.
Continuity of  translations is well known in the context of $L_p(\rn)$-spaces.
This will be also applied here. However, we need one more preparation.
Since $g\in M(\rn)$, it follows that, for any $\varepsilon \in(0,\fz)$, there exists $N_0\in \nn$, depending on $\varepsilon$,  such that
\begin{equation}\label{4.18x1}
 2^{Nn(\frac 1u - \frac 1p)}\, \lf[\int_{\Pi_{i=1}^n[y_i-2^N,\,y_i+2^N]} |g(x)|^p dx \r]^{1/p} < \varepsilon
\qquad \mbox{for all} \quad N \ge N_0\, ,
\end{equation}
uniformly in $y$.
Similarly, there exists $N_1\in\nn$, depending
on $\varepsilon$, such that
\begin{equation}\label{4.18x2}
 2^{-Nn(\frac 1u - \frac 1p)}\, \lf[\int_{\Pi_{i=1}^n[y_i-2^{-N},\,y_i+2^{-N}]} |g(x)|^p dx \r]^{1/p} < \varepsilon
\qquad \mbox{for all} \quad N \ge N_1\, ,
\end{equation}
uniformly in $y$.
Using \eqref{ws-71},  it suffices to deal with those cubes $\wz Q$
such that $2^{-N_1n} \le |\wz Q| \le 2^{N_0n}$.
Then \eqref{ws-72} leads to
\begin{eqnarray}\label{ws-80}
&&\sup_{-N_1 \le N \le N_0} 2^{Nn(\frac 1u - \frac 1p)}\, \lf[\int_{\Pi_{i=1}^n[y_i-2^N,\,y_i+2^N]} |g(x)-T_j g (x)|^p dx \r]^{1/p}
\nonumber\\
&&  \quad \lesssim 2^{-N_1n(\frac 1u - \frac 1p)}\, \lf[\int_{\Pi_{i=1}^n[y_i-2^{N_0},\,y_i+2^{N_0}]} |g(x)-T_j g (x)|^p dx \r]^{1/p}
\nonumber
\\
&&\quad  \lesssim  2^{-N_1n(\frac 1u - \frac 1p)}\, \sup_{|h|\le \sqrt{n} 2^{-j+1}}  \,
\lf[\int_{\Pi_{i=1}^n[y_i-2^{N_0},\,y_i+2^{N_0}]} |g(x)- g(x+h)|^p dx \r]^{1/p}\, .
\end{eqnarray}
Since
\[
\lim_{|h|\to 0}\, \| \, f (\, \cdot\, ) - f (\, \cdot\, +h)\, \|_{L_p (\rn)} = 0 \qquad \mbox{for all} \quad f\in L_p (\rn),
\]
the right-hand side in \eqref{ws-80} tends to zero for $j$ tending to $\infty$.
By the translation invariance of the $L_p(\rn)$-norm, this estimate is independent of $y$. Combining \eqref{4.18x1}, \eqref{4.18x2} and \eqref{ws-80},
we see that, for all $g \in M(\rn)$,
\begin{equation}\label{4.19x}
\|\, g - T_j g \, \|_{\cm_p^u (\rn)} \lesssim \varepsilon
\end{equation}
if $j$ is large enough. Observe that, for each $j\in\zz_+$, $T_j g$ is an uniformly continuous function on $\rn$,
since
\begin{eqnarray*}
2^{n(j-1)}\, \lf| \int_{Q_{j,k}^*} g(y)\, dy\r| & \ls &  2^{jn/p} \lf[\int_{Q_{j,k}^*} |g(y)|^p\, dy\r]^{1/p}
\\
& \lesssim_j &  |Q_{j,k}^*|^{\frac 1u - \frac 1p}\,   \lf[\int_{Q_{j,k}^*} |g(y)|^p\, dy\r]^{1/p}
 \lesssim_j  \| \, g \, \|_{\cm_p^u (\rn)}\, ,
\end{eqnarray*}
where $\ls_j$ denotes the implicit positive constants depending on $j$. Thus,
this shows that $g\in M(\rn)$ can be approximated by uniformly continuous functions
in $\cm^u_p(\rn)$.
\\
{\em Substep 1.2.} For $g \in M(\rn)$, we wish to approximate $T_j g$   by $C^\infty$-functions.
To this end,  we use the Sobolev mollification (see \eqref{ws-73}).
To simplify notation, we denote $T_jg$ just by $u$.
Consequently, for any given $\wz \varepsilon\in(0,\fz)$, there exists $\delta\in(0,\fz)$,
depending on $\wz \varepsilon$, such that,  if $|x-y|<\delta$, then
$|u(x)-u(y)|< \wz \varepsilon$. Thus,
for given $\varepsilon \in(0,\fz)$, we see that
\begin{eqnarray*}
&& \hspace*{-0.7cm}
\sup_{-N_1 \le N \le N_0}\,  2^{Nn(\frac 1u - \frac 1p)}\, \lf[\int_{\Pi_{i=1}^n[y_i-2^{N},\,y_i+2^{N}]}
|u(x)-u^{(\delta)} (x)|^p dx \r]^{1/p}
\\
& \lesssim &   2^{-N_1 n(\frac 1u - \frac 1p)}\, \lf[\int_{\Pi_{i=1}^n[y_i-2^{N_0},\,y_i+2^{N_0}]}
\lf|\delta^{-n} \int_{|x-y|<\delta} \omega
\lf(\frac{x-y}{\delta}\r)\, [u(x)-u(y)] \, dy  \r|^p dx \r]^{1/p}\\
& \lesssim & \wz\varepsilon \,  2^{-N_1n(\frac 1u - \frac 1p)}\, 2^{N_0n/p}\lesssim\varepsilon\, ,
\end{eqnarray*}
if $\delta$ is chosen small enough,
where $u^{(\delta)}$ is defined as in \eqref{ws-73}
with $\phi$ and $\varepsilon$ replaced, respectively, by $u$ and $\delta$.
This, together with \eqref{4.18x1}, \eqref{4.18x2}, \eqref{4.19x} and the definition of $u^{(\delta)}$, further implies that
\begin{equation}\label{4.19xx}
\|\, u - u^{(\delta)} \, \|_{\cm_p^u (\rn)} \lesssim \varepsilon
\end{equation}
if $\delta$ is sufficiently small. Since $u^{(\delta)}$ is smooth,
we obtain the desired conclusion in Substep 1.2.
 \\
{\em Substep 1.3.} The final step consists now in approximating
$u^{(\delta)}$ by compactly supported smooth functions. Let $\psi$ be as in \eqref{eq-05}.
We define
\[
u^{(\delta)}_\ell (x) := \psi (x/\ell) \, u^{(\delta)} (x)  \, , \qquad x \in \rn\, , \quad \ell \in \nn\, .
\]
Of course, $u^{(\delta)}$ and $u^{(\delta)}_\ell$ have the properties
\eqref{ws-51}, \eqref{ws-50} and \eqref{ws-74}, since $0 \le \psi (x)\le 1$ for all $x$.
Similar to the above estimates \eqref{4.18x1}
and \eqref{4.18x2} for $g$, we conclude that, for given $\varepsilon\in(0,\fz)$,
there exists $\varepsilon_1\in(0,\fz)$ such that
\begin{equation}\label{4.19x3}
\sup_{|B| < \varepsilon_1}
|B|^{\frac 1u - \frac 1p}\,   \lf[\int_{B}
|u^{(\delta)} (x)|^p\, dx\r]^{1/p} \le  \varepsilon
\end{equation}
and
\begin{equation}\label{4.19x4}
\sup_{1/\varepsilon_1< |B|}
|B|^{\frac 1u - \frac 1p}\,   \lf[\int_{B}
|u^{(\delta)} (x)|^p\, dx\r]^{1/p}  \le  \varepsilon \, .
\end{equation}
In addition, since $u^{(\delta)}$
satisfies \eqref{ws-74}, we know that there exists $\varepsilon_2\in(0,\fz)$ such that
\begin{equation}\label{4.19x5}
\sup_{|y| >1/ \varepsilon_2}
|B(y,r)|^{\frac 1u - \frac 1p}\,   \lf[\int_{B(y,r)}
|u^{(\delta)} (x)|^p\, dx\r]^{1/p}  \le  \varepsilon \, .
\end{equation}
All three  inequalities  above remain true with $u^{(\delta)} $ replaced by $u^{(\delta)}_\ell$ since $0 \le \psi \le 1$. From these observations
\eqref{4.19x3}, \eqref{4.19x4} and \eqref{4.19x5},
 we deduce that
\begin{eqnarray}\label{ws-82}
&&\ \ \|\, u^{(\delta)}  - u^{(\delta)}_\ell \|_{\cm_p^u(\rn)}\nonumber\\
 &&\ \ \quad\le \sup_{|y|\le 1/\varepsilon_2} \sup_{\varepsilon_1 < |B(y,r)|< 1/\varepsilon_1} |B(y,r)|^{\frac 1u - \frac 1p}\,   \lf[\int_{B(y,r)}
|u^{(\delta)} (x)  - u^{(\delta)}_\ell(x)|^p\, dx\r]^{1/p}\nonumber\\
&&\ \ \quad\quad +\sup_{\gfz{|y|>1/\varepsilon_2}{r\in(0,\fz)}}\cdots
+\sup_{\gfz{y\in\rn}{ |B(y,r)|\ge 1/\varepsilon_1}}\cdots+\sup_{\gfz{y\in\rn}{ |B(y,r)|\le\varepsilon_1}}\cdots\,\nonumber\\
&&\ \ \quad\ls \sup_{|y|\le 1/\varepsilon_2} \sup_{\varepsilon_1 < |B(y,r)|< 1/\varepsilon_1}|B(y,r)|^{\frac 1u - \frac 1p}\,   \lf[\int_{B(y,r)}
|u^{(\delta)} (x)  - u^{(\delta)}_\ell(x)|^p\, dx\r]^{1/p} +6 \varepsilon\, .
\end{eqnarray}
Choosing
\begin{equation}\label{ws-83}
\ell \ge \frac{1}{\varepsilon_2} + \lf(\frac{1}{\varepsilon_1}\r)^{1/n},
\end{equation}
we find that the first term on the right-hand side of
\eqref{ws-82} vanishes, due to the
definition of $u^{(\delta)}_\ell$.
Combining this observation, \eqref{4.19x} and \eqref{4.19xx}, we know that
$\|g-u^{(\delta)}_\ell\|_{\cm_p^u(\rn)}\ls \varepsilon$ if $\ell$
is large enough and $\delta$ is small enough, namely,  $g$
can be approximated by a sequence of smooth compactly supported functions
in  ${\cm}_p^u (\rn)$.
Hence $M(\rn)= \mathring{\cm}_p^u (\rn)$.
 This proves (i).
\\
{\em Step 2.} Proof of (ii).
We proceed as in Step 1. This time  $M(\rn)$ denotes the space of all functions  $g \in {\cm}_p^u (\rn)$ having the properties
\eqref{ws-50} (uniformly in $y\in \rn$) and \eqref{ws-74} (uniformly in $r$).

First we prove $\accentset{*}{\cm}_p^u (\rn) \hookrightarrow M(\rn)$.
It is easy to see that, if $f$ is a compactly supported function,
then $f$ satisfies \eqref{ws-50} and \eqref{ws-74}, due to
 the compactness of its support.
Moreover, the limits of compactly supported functions in $\cm_p^u (\rn)$
also satisfy  these two properties \eqref{ws-50} and \eqref{ws-74}. This proves $\accentset{*}{\cm}_p^u (\rn) \hookrightarrow M(\rn)$.

Next we show $M(\rn) \hookrightarrow \accentset{*}{\cm}_p^u (\rn)$.
Let $f \in M(\rn)$. We need to find a sequence of compactly supported
functions which converges to $f$ in $\cm_p^u (\rn)$. Indeed,
the desired approximating sequence  of $f$ in $\cm_p^u (\rn)$
is simply given by
\[
f_\ell (x) := f(x)\, \chi_{B(0,\ell)} (x)\, , \qquad x \in \rn\, , \quad \ell \in \nn\, .
\]
To see this, since $f$ and $\{f_\ell\}_{\ell\in\zz_+}$ are all elements of
$M(\rn)$, we conclude that \eqref{4.19x4} and \eqref{4.19x5}
remain true for $f$ and $f_\ell$ with $\ell\in\zz_+$. Then,
 similar to the proof of \eqref{ws-82}, we see that
\begin{eqnarray*}
\|\, f^{(\delta)}  - f_\ell^{(\delta)} \|_{\cm_p^u(\rn)}
\ls  \varepsilon
+ \sup_{|y|\le 1/\varepsilon_2} \sup_{|B(y,r)|< 1/\varepsilon_1}\,
|B(y,r)|^{\frac 1u - \frac 1p}\,   \lf[\int_{B(y,r)}
|f^{(\delta)} (x)  - f^{(\delta)}_\ell(x)|^p\, dx\r]^{1/p} \, .
\end{eqnarray*}
Choosing $\ell$ as in \eqref{ws-83}, we find that
the second term on the right-hand side
of the above inequality vanishes.
On the other hand, similar to the arguments used in Substep 1.2, we know that
$$\|f-f^{(\delta)}\|_{\cm_p^u (\rn)}+\|f_\ell-f_\ell^{(\delta)}\|_{\cm_p^u (\rn)}\ls
\varepsilon$$
if $\delta$ is small enough.  Altogether we find that
$f$ can be approximated by $f_\ell$ in $\cm_p^u (\rn)$. This proves $M(\rn)= \accentset{*}{\cm}_p^u (\rn)$.
\\
{\em Step 3.} Proof of (iii). This time we let $M(\rn)$ denote
the collection of all $f \in {\cm}^{u}_{p}(\rn)$
such that \eqref{ws-51} holds true uniformly in  $y \in \rr^n$.

First we show $\accentset{\diamond}{\cm}_p^u (\rn) \hookrightarrow M(\rn)$.
Let $f$ be a $C^\infty(\rn)$ function such that $f$ and all its
derivatives $D^\alpha f$ belong to
${\cm}^{u}_{p}(\rn)$. From this and Proposition \ref{basic1}(iv),
it follows that $f\in W^m {\cm}^{u}_{p}(\rn)$ for any $m\in\nn$, as well as all of
its derivatives, here $W^m {\cm}^{u}_{p}(\rn)$ denotes the Morrey-Sobolev
space of order $m$.
Since $W^m {\cm}^{u}_{p}(\rn) \hookrightarrow L_\infty (\rn)$ for sufficiently large $m$,
we know that $f$ and  its derivatives $D^\alpha f$ are all bounded.
Consequently, we conclude that
\begin{eqnarray*}
|B(y,r)|^{\frac 1u - \frac 1p}\,   \lf[\int_{B(y,r)}
|f(x)|^p\, dx\r]^{1/p}
&& \le   |B(y,r)|^{\frac 1u - \frac 1p}\,   \lf[\int_{B(y,r)}
|f(y)|^p\, dx\r]^{1/p} \\
&&\quad+ |B(y,r)|^{\frac 1u - \frac 1p}\,   \lf[\int_{B(y,r)}
|f(x)-f(y)|^p\, dx\r]^{1/p}
\\
&& \lesssim  |f(y)|\, |B(y,r)|^{\frac 1u}\, +
\max_{|\alpha |=1}\, \| \, D^\alpha f\, \|_{L_\infty (\rn)} \, |B(y,r)|^{\frac 1u + \frac 1n}
\\
&& \lesssim  \max_{|\alpha | \le 1}\, \| \, D^\alpha f\, \|_{L_\infty (\rn)} \, |B(y,r)|^{\frac 1u}\,\max\{r,1\}.
\end{eqnarray*}
Clearly, the right-hand side of the above inequalities
tends to $0$ if $r \downarrow 0$ (uniformly in $y$).
This property carries over to the limits in $\cm^u_p(\rn)$ of such kind of functions.
Hence $\accentset{\diamond}{\cm}_p^u (\rn) \hookrightarrow M(\rn)$.

It remains to prove that any $f \in M(\rn)$ can be approximated by functions  which, together
with all its derivatives, belong to ${\cm}^{u}_{p}(\rn)$.
We shall work with the Sobolev mollification $f^{(\delta)} $ of $f$.
By the definition of the Sobolev mollification and the generalized Minkowski inequality,  we have
\begin{eqnarray}\label{ws-84}
\lf[\int_{B(y,r)}
|D^\alpha f^{(\delta)} (x)|^p\, dx\r]^{1/p}
&&=
\lf[\int_{B(y,r)}
\lf|\delta^{-n-|\alpha|} \int_\rn (D^\alpha \omega) \lf(\frac{x-y}{\delta}\r)f(y)\, dy\r|^p\,
dx\r]^{1/p}\nonumber
\\
&& =
\lf[\int_{B(y,r)}
\lf|\delta^{-n-|\alpha|} \int_\rn (D^\alpha \omega) \lf(\frac{z}{\delta}\r)f(x-z)\, dz\r|^p\,
dx\r]^{1/p}
\nonumber
\\
&&\le
\delta^{-n-|\alpha|} \int_\rn \lf|(D^\alpha \omega) \lf(\frac{z}{\delta}\r)\r|
\lf[\int_{B(y,r)} |f(x-z)|^p dx \r]^{1/p}\, dz
\nonumber
\\
&& \le  \delta^{-|\alpha|}
\sup_{z \in \rn}\lf[\int_{B(z,r)} |f(x)|^p dx \r]^{1/p} \, .
\end{eqnarray}
This shows that also $f^{(\delta)}$ and all its
derivatives $D^\alpha f^{(\delta)} $
belong to $\cm_p^u (\rn)$.
Since $f$ satisfies \eqref{ws-51}, we see that,
for given $\varepsilon\in(0,\fz)$, there exists
$\varepsilon_1\in(0,\fz)$ such that
\[
\sup_{|B| < \varepsilon_1}
|B|^{\frac 1u - \frac 1p}\,   \lf[\int_{B}
|f (x)|^p\, dx\r]^{1/p} \le  \varepsilon \, .
\]
Hence, employing \eqref{ws-84} with $\alpha =0$ and the Minkowski inequality, we find that
\begin{eqnarray*}
\|\, f^{(\delta)}  -  f\,  \|_{\cm_p^u(\rn)}
 & \le &  2 \varepsilon
+
\sup_{|B| \ge  \varepsilon_1}
|B|^{\frac 1u - \frac 1p}\,   \lf[\int_{B}
|f(x)- f^{(\delta)} (x)|^p\, dx\r]^{1/p}
\\
& \le &  2 \varepsilon
+ \delta^{-n}\,
\sup_{|B| \ge  \varepsilon_1}
|B|^{\frac 1u - \frac 1p}\,   \lf\{\int_{B}
\lf| \int_\rn \omega \lf(\frac{x-y}{\delta} \r)[f(x)- f(y)]\, dy
\r|^p\, dx\r\}^{1/p}
\\
& \le &  2 \varepsilon
+ \sup_{|B| \ge  \varepsilon_1}
|B|^{\frac 1u - \frac 1p}\, \sup_{|h|\le \delta}  \lf[\int_{B}
|f(x)- f(x+h)|^p\, dx\r]^{1/p}
\, .
\end{eqnarray*}
By the definition of the supremum, there exists a sequence $\{(y_j,r_j)\}_{j\in\nn}$
such that
\begin{eqnarray*}
\sup_{|B| \ge  \varepsilon_1}  \sup_{|h|\le \delta} &&
|B|^{\frac 1u - \frac 1p}\lf[\int_{B} |f(x)  -  f(x+h)|^p dx\r]^{1/p}
\nonumber
\\
& < &
\frac{1}{j} +  \sup_{|h|\le \delta} |B(y_j,r_j)|^{\frac 1u - \frac 1p}\, \lf[\int_{B(y_j,r_j)}
|f(x)- f(x+h)|^p\, dx\r]^{1/p}
\le 2 \varepsilon,
\end{eqnarray*}
if $j$ is large enough and if $\delta $ is small enough
(since, for a fixed $j$,
we can apply the $L_p(\rn)$-continuity of the translation).
Inserting this inequality into the previous one, we are done,
which completes the proof of Lemma \ref{morrey43}.


\subsection*{Proof of Lemma  \ref{morrey410}}


Part (i) is already proved by using the example \eqref{falpha}. Part (ii) follows from
Lemma \ref{morrey43}. We focus on (iii).
Obviously, the function
$g_{n/u}$ in \eqref{galpha} belongs to $ \accentset{*}{\cm}^{u}_{p}(\rn)$.
However, it does not belong to  $\accentset{\diamond}{\cm}^{u}_{p}(\rn)$, since,
 according to Lemma \ref{morrey43}(iii),
functions from this space satisfy \eqref{ws-51} but
\[
\lim_{r \downarrow 0} |B(0,r)|^{\frac 1u - \frac 1p} \lf[\int_{B(0,r)} |x|^{np/u}\, dx\r] >0\,,
\]
which implies that $g_{n/u}$ does not satisfies \eqref{ws-51}.
Moreover, the function $h_{n/u}$ in  \eqref{halpha} belongs to $ \accentset{\diamond}{\cm}^{u}_{p}(\rn)$,
since $h_{n/u}$ is a $C^\infty(\rn)$ function such that
all derivatives also belong to $\cm_p^u (\rn)$.
It does not belong to  $\accentset{*}{\cm}^{u}_{p}(\rn)$, since,
according to Lemma \ref{morrey43}(iii),
functions from this space satisfy \eqref{ws-50} but
\[
\lim_{r \downarrow \infty} |B(0,r)|^{\frac 1u - \frac 1p} \lf[\int_{B(0,r)} |x|^{np/u}\, dx\r] >0\,
\]
which implies that $h_{n/u}$ does not satisfies \eqref{ws-50}.
This proves (iii) and hence finishes the proof of Lemma  \ref{morrey410}.


\subsection*{Proof of Lemma \ref{morrey41}}


{\em Step 1.} Preliminaries.
For any ball $B\subset \rn$, the H\"older inequality yields
\[
\lf[\int_B |f(x)|^p dx\r]^{1/p} \le
\lf[\int_B |f(x)|^{p_0} dx\r]^{(1-\Theta)/p_0} \, \lf[\int_B |f(x)|^{p_1} dx\r]^{\Theta/p_1} \, ,
\]
which implies that
\[
|B|^{\frac 1u - \frac 1p} \lf[\int_B |f(x)|^p dx\r]^{1/p}  \le
\|\, f\, \|_{\cm_{p_0}^{u_0}(\rn)}^{1-\Theta} \, \|\, f\, \|_{\cm_{p_1}^{u_1}(\rn)}^\Theta\, .
\]
In other words, $$\cm_{p_0}^{u_0}(\rn) \cap \cm_{p_1}^{u_1}(\rn) \hookrightarrow \cm_{p}^{u}(\rn).$$
\\
{\em Step 2.} Suppose $f \in \cm_{p_0}^{u_0}(\rn) \cap \cm_{p_1}^{u_1}(\rn)$. By
$p_0 \le p \le p_1$ and the H\"older inequality, we know that
\begin{eqnarray}\label{2.36x1}
 |B(y,r)|^{\frac 1u - \frac 1p}\lf[\int_{B(y,r)} |f(x)|^p dx\r]^{1/p}
&&\le  |B(y,r)|^{\frac 1u - \frac{1}{p_1}} \lf[\int_{B(y,r)} |f(x)|^{p_1} dx\r]^{1/p_1}\nonumber
\\
 && = |B(y,r)|^{\frac 1u - \frac{1}{u_1}}
|B(y,r)|^{\frac{1}{u_1} - \frac{1}{p_1}}
\lf[\int_{B(y,r)} |f(x)|^{p_1} dx\r]^{1/p_1}\nonumber
\\
 && \le  |B(y,r)|^{\frac 1u - \frac{1}{u_1}}
\|\, f \, \|_{\cm_{p_1}^{u_1}(\rn)} \, ,
\end{eqnarray}
which converges to $0$ as $r\to0$, due to $u_1 >u$. By Lemma \ref{morrey43}(iii),
this proves $f \in \accentset{\diamond}{\cm}^{u}_{p}(\rn)$ if $p \in [1,\fz)$.
Thus, if  $p \in [1,\fz)$, $$\cm_{p_0}^{u_0}(\rn) \cap \cm_{p_1}^{u_1}(\rn)\hookrightarrow
\accentset{\diamond}{\cm}^{u}_{p}(\rn).$$

Now we argue by using our test function $g_\alpha$ in \eqref{galpha} with $\alpha = n/u$
to show
that $\cm_{p_0}^{u_0}(\rn) \cap \cm_{p_1}^{u_1}(\rn)$ is not
dense in $\cm^u_p(\rn)$.
It is known that this time $g_\az\in \cm^u_p(\rn)$.
From $p< u$ and \eqref{2.36x1},  it follows
that
\begin{eqnarray*}
&&\lim_{r \downarrow 0} \, |B(0,r)|^{\frac 1u - \frac 1p}
\lf[\int_{B(0,r)} |g_\alpha (x) - f(x)|^p dx\r]^{1/p}
=  \lim_{r \downarrow 0} \, |B(0,r)|^{\frac 1u - \frac 1p}
\lf[\int_{B(0,r)} |g_\alpha (x)|^p dx\r]^{1/p} >0
\end{eqnarray*}
for all $f \in \cm_{p_0}^{u_0}(\rn) \cap \cm_{p_1}^{u_1}(\rn)$.
This means that there exist functions in $\cm^u_p(\rn)$ that can not
be approximated by functions from $\cm_{p_0}^{u_0}(\rn) \cap \cm_{p_1}^{u_1}(\rn)$,
namely, $\cm_{p_0}^{u_0}(\rn) \cap \cm_{p_1}^{u_1}(\rn)$ is not dense in $\cm^u_p(\rn)$,
which completes the proof of Lemma \ref{morrey41}.

\begin{remark}
In the above proof of Lemma \ref{morrey41}, we prove more than stated.
Indeed, we show
\begin{enumerate}
\item[{\rm (i)}] $\accentset{*}{\cm}^{u}_{p}(\rn) \not\subset
\overline{\cm_{p_0}^{u_0}(\rn) \cap
\cm_{p_1}^{u_1}(\rn)}^{\|\, \cdot \, \|_{\cm_p^u(\rn)}} \, ,$
\end{enumerate}
but
\begin{enumerate}
\item[{\rm (ii)}] $\overline{
\cm_{p_0}^{u_0}(\rn) \cap \cm_{p_1}^{u_1}(\rn)}^{\|\, \cdot \, \|_{\cm_p^u(\rn)}}
\hookrightarrow \accentset{\diamond}{\cm}^{u}_{p}(\rn).$
\end{enumerate}
\end{remark}


\subsection*{Proof of Corollary \ref{morrey42}}


{\em Step 1}. Proof of (i). Theorem \ref{morrey1}(i) and  Proposition \ref{t-n}
yield
\[
\lf\laz\cm^{u_0}_{p_0}(\rn), \cm^{u_1}_{p_1}(\rn)\r\raz_\tz \hookrightarrow
\lf[\cm^{u_0}_{p_0}(\rn)\r]^{1-\Theta} \, \lf[ \cm^{u_1}_{p_1}(\rn)\r]^\tz \hookrightarrow  \cm^{u}_{p}(\rn)
\, .
\]
Theorem \ref{morrey1}(iii) implies that this embedding is proper
if $u_0 p_1 \neq u_1 p_0$.

It remains to consider the case $u_0 p_1 = u_1 p_0$.
Without loss of generality, we may assume $p_0 \le p_1$.
Under these conditions, by Proposition \ref{nil} and
Corollary \ref{cima}(i), we see that
\begin{eqnarray}\label{2.37x1}
\lf\laz\cm^{u_0}_{p_0}(\rn), \cm^{u_1}_{p_1}(\rn)\r\raz_\tz =
\overline{\cm^{u_0}_{p_0}(\rn) \cap  \cm^{u_1}_{p_1}(\rn)}^{\|\, \cdot \, \|_{\cm_p^u(\rn)}}\, .
\end{eqnarray}
If $p_0=u_0$ and $p_1=u_1$, then
we have
\[
\lf\laz\cm^{p_0}_{p_0}(\rn), \cm^{p_1}_{p_1}(\rn)\r\raz_\tz =
\overline{L_{p_0}(\rn) \cap  L_{p_1}(\rn)}^{\|\, \cdot \, \|_{L_p(\rn)}}=L_p(\rn)=\cm^p_p(\rn).
\]
If  $p_0 =  p_1$, we must have  $u_0 = u_1 $ and therefore
\[
\lf\laz\cm^{u_0}_{p_0}(\rn), \cm^{u_1}_{p_1}(\rn)\r\raz_\tz = \lf\laz\cm^{u_0}_{p_0}(\rn), \cm^{u_0}_{p_0}(\rn)\r\raz_\tz
= \cm^{u_0}_{p_0}(\rn) \, .
\]
If $p_0\neq u_0$ and $p_0 <  p_1 $, then $u_0 < u_1 $ follows  and therefore $u < u_1$.
In this case,  Lemma  \ref{morrey41} yields the desired conclusion.
This proves (i).

\noindent{\em Step 2}. Proof of (ii).
By Proposition \ref{nil} and Corollary \ref{cima}(i), it is known that
\[
\mathring{\cm}^{u}_{p}(\rn) \hookrightarrow
\lf\laz \mathring{\cm}^{u_0}_{p_0}(\rn), \mathring{\cm}^{u_1}_{p_1}(\rn)\r\raz_\tz
\hookrightarrow \lf\laz {\cm}^{u_0}_{p_0}(\rn), {\cm}^{u_1}_{p_1}(\rn)\r\raz_\tz
\, .
\]
On the other hand, it is easy to show that
the test function $g_\alpha$ in \eqref{galpha} with $\alpha = n/u_1$
belongs to $\cm^{u_0}_{p_0}(\rn) \cap  \cm^{u_1}_{p_1}(\rn)$, and hence belongs to
$\lf\laz {\cm}^{u_0}_{p_0}(\rn), {\cm}^{u_1}_{p_1}(\rn)\r\raz_\tz$.
However, Step 2 of the proof of Lemma \ref{morrey41} implies that $g_\az$
can not be approximated by $C_c^\fz(\rn)$ functions in
the norm $\|\cdot\|_{\cm^u_p(\rn)}$,
due to Lemma \ref{morrey43}(i). Namely, $g_\az$ does not belong to
$\mathring{\cm}^{u}_{p}(\rn)$. This proves (ii).

\noindent{\em Step 3}. Proof of (iii). Since $p_0 < p_1$, it follows $u_0 < u_1$.
Hence, we apply Lemma \ref{morrey41} in case $p \in [1,\fz)$ and conclude
$$\lf\laz\cm^{u_0}_{p_0}(\rn), \cm^{u_1}_{p_1}(\rn)\r\raz_\tz \hookrightarrow \accentset{\diamond}{\cm}^{u}_{p}(\rn).$$
It remains to show that these spaces do not coincide.
This time we argue with our test function $h_{n/u}$ in \eqref{halpha}.
Clearly, $h_{n/u} \in \accentset{\diamond}{\cm}^{u}_{p}(\rn)$. We now show that
$$h_{n/u}\notin \laz\cm^{u_0}_{p_0}(\rn), \cm^{u_1}_{p_1}(\rn)\raz_\tz.$$
By \eqref{2.37x1}, we only need to
show that $$h_{n/u}\notin
\overline{\cm^{u_0}_{p_0}(\rn) \cap  \cm^{u_1}_{p_1}(\rn)}^{\|\, \cdot \, \|_{\cm_p^u(\rn)}}.$$
Observe that, by an elementary calculation, we know that, for all $t\in (0,\infty)$,
\[
2^{jn (\frac 1u - \frac 1t)} \, \lf[\int_{2^j \le |x|\le 2^{j+1}} |h_{n/u}(x)|^t \, dx\r]^{1/t} = C_{(n,t,u)} >0\,, \qquad
j\in \nn \, ,
\]
where $C_{(n,t,u)}$ denotes a positive constant depending on $n$, $t$ and $u$.
Now, for any given $\varepsilon\in(0,\fz)$, assume that there exists $f_\varepsilon \in  \cm^{u_0}_{p_0}(\rn) \cap  \cm^{u_1}_{p_1}(\rn)$ such that
$\| \, h_{n/u} - f_\varepsilon\, \|_{\cm_p^u(\rn)} < \varepsilon $.
Without loss of generality, we may assume $|f_\varepsilon(x)|\le |x|^{-n/u}$ for all
$x\in\rn\setminus\{(0,\ldots,0)\}$
(otherwise, we switch to $\min\{|f_\varepsilon(x)|, |x|^{-n/u}\}$).
Fixing $\varepsilon\in(0,\fz)$ sufficiently small, we conclude that
\begin{equation}\label{ws-94}
2^{jn (\frac 1u - \frac 1p)} \,
\lf[\int_{2^j \le |x|\le 2^{j+1}} |f_\varepsilon(x)|^p \, dx\r]^{1/p} > \frac{C_{(n,p,u)}}{2}\, , \qquad j \in \nn\, ,
\end{equation}
where $C_{(n,p,u)}$ denotes a positive constant depending on $n$, $p$ and $u$.
But this is in contradiction with $f_\varepsilon \in  \cm^{u_0}_{p_0}(\rn) \cap  \cm^{u_1}_{p_1}(\rn)$.
To explain this  contradiction, by our pointwise assumption $|f_\varepsilon (x)|\le |x|^{-n/u}$
for all $x\in\rn\setminus\{(0,\ldots,0)\}$,
we see that
\begin{equation}\label{ws-93}
\lf[\int_{2^j \le |x|\le 2^{j+1}} |f_\varepsilon (x)|^{p_1} \, dx\r]^{1/p_1}
\lesssim 2^{jn (\frac{1}{p_1}- \frac 1u)} \, , \qquad j \in \nn\, .
\end{equation}
Finally, we employ the H\"older inequality, \eqref{ws-94}
and \eqref{ws-93} to find
that
\begin{eqnarray*}
\frac{C_{(n,p,u)}}{2} &<&  \, 2^{jn (\frac 1u - \frac 1p)} \,
\lf[\int_{2^j \le |x|\le 2^{j+1}} |f_\varepsilon (x)|^p \, dx\r]^{1/p}
\\
& \le & \lf\{2^{jn (\frac 1{u_0} - \frac 1{p_0})} \,
\lf[\int_{2^j \le |x|\le 2^{j+1}} |f_\varepsilon (x)|^{p_0} \, dx\r]^{1/p_0}\r\}^{1-\Theta}
\\
&& \qquad \qquad \times \quad
\, \lf\{2^{jn (\frac 1{u_1} - \frac1{p_1})} \,
\lf[\int_{2^j \le |x|\le 2^{j+1}} |f_\varepsilon (x)|^{p_1} \, dx\r]^{1/p_1}\r\}^\Theta
\\
& \lesssim & \|\, f_\varepsilon \,\|_{\cm_{p_0}^{u_0}(\rn)}^{1-\Theta}
\, 2^{jn (\frac 1{u_1} - \frac1{u})\Theta} \, , \qquad j \in \nn\, .
\end{eqnarray*}
Because of $u < u_1$, the right-hand side
of the above inequalities tends to zero for $j$ tending to infinity.
This is a contradiction. Hence, our assumption, that there is a function
$f_\varepsilon  \in  \cm^{u_0}_{p_0}(\rn) \cap  \cm^{u_1}_{p_1}(\rn)$ in an $\varepsilon$ distance of $h_{n/u}$,
is impossible. This proves $$\lf\laz\cm^{u_0}_{p_0}(\rn),
\cm^{u_1}_{p_1}(\rn)\r\raz_\tz \subsetneqq \accentset{\diamond}{\cm}^{u}_{p}(\rn),$$
which completes the proof of Corollary \ref{morrey42}.


\subsection*{Proof of Theorem \ref{gp01}}


{\em Step 1.} We prove $$
\lf\laz\cm^{u_0}_{p_0}(\rn), \cm^{u_1}_{p_1}(\rn)\r\raz_\tz \hookrightarrow
\cm^{u_0,u_1,\tz}_{p_0,p_1} (\rn).$$
Let $f \in \cm^{u_0}_{p_0}(\rn) \cap \cm^{u_1}_{p_1}(\rn)$. Then $f \in \accentset{\diamond}{\cm}^{u}_{p}(\rn)$ follows from
Lemma \ref{morrey41}, which further
implies the validity of \eqref{ws-95a} and \eqref{ws-95b}  due to Lemma \ref{morrey43}(iii).
The conditions \eqref{ws-96} and \eqref{ws-97} are obviously true since $f \in \cm^{u_0}_{p_0}(\rn) \cap \cm^{u_1}_{p_1}(\rn)$.
Thus, we conclude that $f\in \cm^{u_0,u_1,\tz}_{p_0,p_1} (\rn)$,
which implies that
\[
\cm^{u_0}_{p_0}(\rn) \cap \cm^{u_1}_{p_1}(\rn) \hookrightarrow \cm^{u_0,u_1,\tz}_{p_0,p_1} (\rn) \, .
\]
Taking the closure in $\cm_p^u (\rn)$, by
Proposition \ref{nil} and Corollary \ref{cima}(i), we obtain $$
\lf\laz\cm^{u_0}_{p_0}(\rn), \cm^{u_1}_{p_1}(\rn)\r\raz_\tz \hookrightarrow
\cm^{u_0,u_1,\tz}_{p_0,p_1} (\rn).$$

\noindent{\em Step 2.} It remains to show
$$ \cm^{u_0,u_1,\tz}_{p_0,p_1} (\rn) \hookrightarrow
\lf\laz\cm^{u_0}_{p_0}(\rn), \cm^{u_1}_{p_1}(\rn)\r\raz_\tz.$$
We claim
\begin{equation}\label{ws-98}
 \cm^{u_0,u_1,\tz}_{p_0,p_1} (\rn) = \accentset{\diamond}{\cm}^{u_0,u_1,\tz}_{p_0,p_1} (\rn)\, .
\end{equation}
To prove this claim, we need $p_0 \in[1,\fz)$.
Clearly, we work again with the Sobolev mollification. As in Step 3 of the proof of Lemma \ref{morrey43},
it follows that, for any $f\in \cm^{u_0,u_1,\tz}_{p_0,p_1} (\rn)$,
\[
I_1 (f-f^{(\delta)}) + I_2 (f-f^{(\delta)}) + I_3 (f-f^{(\delta)}) \to 0
\]
if $\delta \downarrow 0$, which implies $f\in
\cm^{u_0}_{p_0}(\rn) \cap \cm^{u_1}_{p_1}(\rn)$
and hence
\begin{equation}\label{4.33x1}
 \cm^{u_0,u_1,\tz}_{p_0,p_1} (\rn) \hookrightarrow \accentset{\diamond}{\cm}^{u_0,u_1,\tz}_{p_0,p_1} (\rn)\, .
\end{equation}

Now, let $f\in  \accentset{\diamond}{\cm}^{u_0,u_1,\tz}_{p_0,p_1} (\rn)$, namely, $f \in \cm^{u_0,u_1,\tz}_{p_0,p_1} (\rn)$ such that all derivatives $D^\alpha f$, $\alpha \in \zz_+^n$, belong to
$\cm^{u_0,u_1,\tz}_{p_0,p_1} (\rn)$ as well.
By the definition of $\cm^{u_0,u_1,\tz}_{p_0,p_1} (\rn)$
and the H\"older inequality, we obtain $D^\alpha f \in \cm_p^u (\rn) $ for all $\alpha \in \zz_+^n$.
Since $W^m (\cm_p^u (\rn)) \hookrightarrow L_\infty (\rn)$ if $m >n/p$ (see \cite{dch}),
we conclude $f \in L_\infty (\rn)$.
By Definition \ref{d2.38},
it is easy to see that $$L_\infty (\rn)\cap {\cm}^{u_0,u_1,\tz}_{p_0,p_1} (\rn)
 \hookrightarrow \cm^{u_0}_{p_0}(\rn) \cap \cm^{u_1}_{p_1}(\rn).$$
 Therefore, by this, Proposition \ref{nil}
 and Step 1, we further conclude that
\begin{eqnarray}\label{4.33x2}
\accentset{\diamond}{\cm}^{u_0,u_1,\tz}_{p_0,p_1}(\rn)  & \hookrightarrow & \overline{\cm^{u_0}_{p_0}(\rn) \cap \cm^{u_1}_{p_1}(\rn)}^
{\|\, \cdot \, \|_{\cm_p^u(\rn)}} = \lf\laz\cm^{u_0}_{p_0}(\rn), \cm^{u_1}_{p_1}(\rn)\r\raz_\tz\nonumber
\\
& \hookrightarrow &
\cm^{u_0,u_1,\tz}_{p_0,p_1} (\rn)\, ,
\end{eqnarray}
since the convergence in $\|\, \cdot \, \|_{{\cm}^{u_0,u_1,\tz}_{p_0,p_1} (\rn)}$ implies the convergence in
$\|\, \cdot \, \|_{{\cm}^{u}_{p} (\rn)}$ by the H\"older inequality.
Combining \eqref{4.33x1} and \eqref{4.33x2},
we obtain \eqref{ws-98},
which, together with \eqref{4.33x2} again, implies that
$$\lf\laz\cm^{u_0}_{p_0}(\rn), \cm^{u_1}_{p_1}(\rn)\r\raz_\tz
=
\cm^{u_0,u_1,\tz}_{p_0,p_1} (\rn).$$
This finishes the proof of Theorem  \ref{gp01}.


\subsection*{Proof of Theorem \ref{gp03}}


First we have to prove
the formula
\begin{equation}\label{ws-104}
\lf\laz\cm^{u_0}_{p_0}((0,1)^n), \cm^{u_1}_{p_1}((0,1)^n)\, , \tz\r\raz = \cm^{u}_{p}((0,1)^n) \, .
\end{equation}
This can be done by the method of the retraction and the coretraction.
Here the coretraction is defined as
the extension from $(0,1)^n$ to $\rn$ by zero.
Hence, \eqref{ws-104} becomes a consequence of Corollary \ref{cima}(i).
Second, we employ  Proposition \ref{nil} to obtain
\[
\lf\laz\cm^{u_0}_{p_0}((0,1)^n), \cm^{u_1}_{p_1}((0,1)^n)\r\raz_\tz =
\overline{\cm^{u_0}_{p_0}((0,1)^n) \cap \cm^{u_1}_{p_1}((0,1)^n)}^{\|\, \cdot \, \|_{\cm^{u}_{p}((0,1)^n)}} \, .
\]
Next we continue, as in Lemma \ref{morrey41}, to conclude that
\[
\cm^{u_0}_{p_0}((0,1)^n) \cap \cm^{u_1}_{p_1}((0,1)^n) \hookrightarrow \accentset{\diamond}{\cm}^{u}_{p}((0,1)^n)\, .
\]
Finally, we observe
\begin{eqnarray*}
\Big\{ f \in C^\infty ((0,1)^n): &&  D^\alpha f \in L_\infty ((0,1)^n) \quad \mbox{for all}\quad \alpha \in \zz_+^n \Big\}
\\
& \subset  & \: \Big(\cm^{u_0}_{p_0}((0,1)^n) \cap \cm^{u_1}_{p_1}((0,1)^n)\Big)
\:  \subset \accentset{\diamond}{\cm}^{u}_{p}((0,1)^n)\, .
 \end{eqnarray*}
Taking the closure in both sides of the above formula
with respect to the norm $\|\, \cdot \, \|_{\cm_p^u (\rn)}$ (see Lemma \ref{gp02}), we then obtain
the desired conclusion of Theorem  \ref{gp03}.


\subsection*{Proof of Theorem \ref{gagl5}}


To prove Theorem  \ref{gagl5}, we need
the following conclusions, which have their own interest. The first one shows that,
if $\Omega$ is a Lipschitz domain, then there exists a universal linear bounded
extension operator
from $\cn^s_{u,p,q}(\Omega)$ into $\cn^s_{u,p,q}(\rn)$
for all $s\in\rr$, $q\in(0,\fz]$ and $0<p\le u<\fz$. In the construction of this operator
we follow Rychkov \cite[Theorem 2.2]{ry99}.

\begin{proposition}\label{exten}
Let $\Omega \subset \rn$ be an interval if $n=1$ or a Lipschitz domain if $n\ge 2$.
Then there exists
a linear bounded operator $\mathcal{E}$ which maps
$\cn^s_{u,p,q}(\Omega)$ into $\cn^s_{u,p,q}(\rn)$
for all $s\in\rr$, $q\in(0,\fz]$ and $0<p\le u<\fz$ such that, for all
$f\in D'(\Omega)$,
$\mathcal{E}f|_\Omega=f$ in $D'(\Omega)$.
\end{proposition}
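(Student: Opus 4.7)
The plan is to adapt the universal extension operator of Rychkov \cite{ry99} to the Besov-Morrey scale. Rychkov's construction has the decisive advantage of being defined intrinsically, depending on neither $s$, nor $p$, $u$, $q$, nor the scale (Besov vs.\ Triebel-Lizorkin), so only the boundedness has to be re-examined once the operator is written down.

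First, I would treat the half-space $\Omega = \rn_+ := \{x\in\rn:\ x_n>0\}$. Following \cite[Section 4]{ry99}, choose a pair of smooth functions $(\phi_0,\phi)$ on $\rn$ with $\supp \phi_0,\ \supp\phi \subset \rn_- := \{x\in\rn:\ x_n<0\}$, with $\int_\rn \phi_0(x)\,dx \neq 0$ and with $\phi$ having infinitely many vanishing moments, and let $(\psi_0,\psi)$ be a dual pair, supported in $\rn_-$, such that the Calder\'on-type reproducing identity
\[
g = \psi_0*\phi_0*g + \sum_{j=1}^\infty \psi_j*\phi_j*g\qquad \mbox{in}\quad \cs'(\rn)
\]
holds, where $\phi_j(\cdot):=2^{jn}\phi(2^j\cdot)$ and similarly for $\psi_j$. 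For $f\in\cn^s_{u,p,q}(\rn_+)$, pick any tempered extension $\wz f\in\cs'(\rn)$ (as always, this exists because $\cn^s_{u,p,q}(\rn_+)$ is defined by restriction) and set
\[
\mathcal{E}f := \psi_0*\phi_0*\wz f + \sum_{j=1}^\infty \psi_j*\phi_j*\wz f \, .
\]
The support conditions on $\phi_j,\psi_j$ ensure that $(\phi_j*\wz f)(x)$ and $(\psi_j*\phi_j*\wz f)(x)$ depend only on the values of $\wz f$ on $\rn_+$, hence $\mathcal{E}f$ is independent of the chosen extension $\wz f$, and $\mathcal{E}f|_{\rn_+} = f$ in $D'(\rn_+)$.

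Second, I would establish the boundedness $\mathcal{E}:\cn^s_{u,p,q}(\rn_+)\to\cn^s_{u,p,q}(\rn)$. Rychkov's proof in the classical case rests on two ingredients: a local-means characterization of the target space, and Peetre-type maximal function estimates controlled by the Hardy-Littlewood maximal operator. Local-means characterizations for the scale $\cn^s_{u,p,q}(\rn)$ are available (see, e.\,g., \cite{ysy} and the references therein), and the Hardy-Littlewood maximal operator is bounded on $\cm^u_p(\rn)$ for $1<p\le u<\fz$; for $p\le 1$ the vector-valued Fefferman-Stein inequality can be circumvented, as in Rychkov, by a pointwise domination argument using the $r$-convexity of $\cn^s_{u,p,q}(\rn)$ with $r:=\min\{1,p\}$ and the boundedness of the maximal operator on $\cm^{u/r}_{p/r}(\rn)$. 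With these replacements, Rychkov's estimates carry over verbatim and yield $\|\mathcal{E}f\|_{\cn^s_{u,p,q}(\rn)} \lesssim \|f\|_{\cn^s_{u,p,q}(\rn_+)}$ with implicit constant independent of $s,p,q,u$ (in fact depending only on a finite window).

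Third, I would pass from the half-space to a general Lipschitz domain by the standard localization. Cover $\partial\Omega$ by finitely many open balls $B_k$ in each of which $\Omega\cap B_k$ lies, after a rotation $R_k$, above the graph of a Lipschitz function $w_k:\rr^{n-1}\to\rr$; add an interior chart $B_0\Subset\Omega$. In each boundary chart use the bi-Lipschitz flattening map $\Phi_k(x',x_n):=(x',x_n-w_k(x'))$, which pulls $\Omega\cap B_k$ back to a subset of $\rn_+$; construct the corresponding half-space extension via Step~2; push forward by $\Phi_k^{-1}$; and finally glue using a smooth partition of unity $\{\zeta_k\}$ subordinate to $\{B_k\}$. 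The special Lipschitz case is handled directly in a single chart.

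The main obstacle is the third step: the bi-Lipschitz change of variables $\Phi_k$ only preserves $\cn^s_{u,p,q}$ for a limited range of $s$ (roughly $|s|<1$), since the flattening is merely Lipschitz. To cover arbitrary $s\in\rr$ one has to combine the change of variables with the lifting operator $I_\sigma = (1-\Delta)^{\sigma/2}$, which is an isomorphism on $\cn^s_{u,p,q}(\rn)$ (an analogue of \cite[2.3.8]{t83} valid in the Besov-Morrey scale; cf.\ Mazzucato \cite{ma03}), together with commutator estimates that absorb the differentiation onto the Lipschitz coefficient. An alternative is to exploit the universality of $\mathcal{E}$ by working with differences across an appropriately large smoothness window and then lifting; this is Rychkov's own strategy and should transfer without modification. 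Apart from this rather technical point, every step reduces to verifying the classical Rychkov estimates with $L_p$-norms replaced by $\cm^u_p$-norms, which is purely a matter of invoking the Morrey-valued maximal inequality.
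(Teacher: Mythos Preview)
Your first two steps are essentially correct and match the spirit of the paper's proof. The issue lies in Step~3. You propose to pass from the half-space to a special Lipschitz domain by a bi-Lipschitz flattening map $\Phi_k(x',x_n)=(x',x_n-w_k(x'))$, and you correctly identify that this creates a regularity obstacle: composition with a merely Lipschitz map is not obviously bounded on $\cn^s_{u,p,q}$ for all $s\in\rr$. Your proposed fixes (lifting plus commutators, or ``Rychkov's own strategy'') are vague, and in fact Rychkov's strategy is precisely \emph{not} to flatten.

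The paper's proof (following \cite{ry99} more faithfully) bypasses this obstacle entirely. After the standard localization to a special Lipschitz domain $\Omega=\{x_n>w(x')\}$ with Lipschitz constant $A$, one works \emph{directly} on $\Omega$ without any change of variables. The key geometric observation is that the open cone $K=\{(x',x_n):|x'|<A^{-1}x_n\}$ satisfies $x+K\subset\Omega$ for every $x\in\Omega$. Choosing $\phi_0,\phi,\psi_0,\psi\in\cd(-K)$ with the usual moment conditions, the operator
\[
\mathcal{E}f:=\sum_{j\in\zz_+}\psi_j*(\phi_j*f)_\Omega
\]
(where $(\cdot)_\Omega$ denotes extension by zero) is well defined on $\cd'(\Omega)$ and satisfies $\mathcal{E}f|_\Omega=f$, because $\supp\psi_j\subset -K$ forces $\psi_j*(\phi_j*f)_\Omega(x)$ for $x\in\Omega$ to depend only on values inside $\Omega$. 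For the boundedness, the Peetre maximal function of $(\phi_j*f)_\Omega$ at a point $x\notin\overline{\Omega}$ is controlled by its value at the reflected point $\widetilde{x}=(x',2w(x')-x_n)\in\Omega$; this is Rychkov's reflection trick and it uses only the Lipschitz bound on $w$, not any smoothness. With this in hand, the Morrey-space Peetre maximal characterization of $\cn^s_{u,p,q}(\rn)$ (available in \cite{lsuyy}) gives the estimate for all $s$ at once.

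In short: the half-space case is not the right base case. The right base case is the special Lipschitz domain itself, treated with a cone adapted to the Lipschitz constant. This is what makes the extension operator universal in $s$ without ever invoking a bi-Lipschitz change of variables.
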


\begin{proof}
By similarity, we concentrate us on the case $n\ge 2$.
A standard procedure (see, for example, \cite[Subsection 1.2]{ry99})
shows that, to prove Proposition \ref{exten},  we only need to consider
the case when $\Omega$ is a special Lipschitz domain.
In this case, let
$$K:=\{(x',x_n)\in\rn:\ |x'|<A^{-1}x_n\}$$
and $-K:=\{-x:\ x\in K\}$,
where $A$ is the Lipschitz constant of the boundary Lipschitz
function $\omega$ of $\Omega$.
Then $K$ has the property that $x+K\subset \Omega$ for any $x\in \Omega$.

Let $\phi_0\in D(-K)$ and $\phi(\cdot):=\phi_0(\cdot)-\phi_0(\cdot/2)$ be
such that $\int_\rn \phi_0(x)\,dx\neq 0$ and
$L_\phi\ge \lfloor s \rfloor$. Here and hereafter, $L_\phi$ denotes the
maximal number such that $\int_\rn \phi(x)x^\az\,dx=0$ for all
$\az\in\zz_+^n$ with $|\az|\le L_\phi$.
Then, by \cite[Proposition 2.1]{ry99}, there exist functions $\psi_0$
and $\psi$ in $D(-K)$ such that $L_\psi\ge L_\phi$
and, for all $f\in D'(\Omega)$,
$$f=\sum_{j\in\zz_+} \psi_j\ast \phi_j\ast f$$
in $D'(\Omega)$.
For all $f\in D'(\Omega)$, we define
\begin{equation}\label{extope}
\mathcal{E} f:=\sum_{j\in\zz_+} \psi_j\ast (\phi_j\ast f)_\Omega,
\end{equation}
here and hereafter, for any function $g:\ \Omega\to \rr$,
$g_\Omega$ denotes the extension of $g$ from $\Omega$ to
$\rn$ by setting $g_\Omega(x):=g(x)$ if $x\in\Omega$
and $g_\Omega(x):=0$ if $x\in \rn\setminus\Omega$.

For all $s\in\rr$, $q\in(0,\fz]$ and $0<p\le u<\fz$, let $\ell_q^s(\cm^u_p(\rn))$
be the space of all sequences $\{g_j\}_{j\in\zz_+}$ of measurable functions on $\rn$
such that
$$\|\{g_j\}_{j\in\zz_+}\|_{\ell_q^s(\cm^u_p(\rn))}:=\lf\{\sum_{j\in\zz_+}2^{jsq}\lf\|G_j\r\|_{\cm^u_p(\rn)}^q
\r\}^{1/q}<\fz,$$
where $G_j$ denotes the Peetre maximal function of $g_j$, namely,
$$G_j(x):=\sup_{y\in\rn}\frac{|g_j(y)|}{(1+2^j|x-y|)^N}$$ for all $x\in\rn$
and $N\in\nn\cap (\frac{n}{\min\{1,p\}},\fz)$. By \cite[(2.14)]{ry99}, we know that,
if $L_\phi\ge \lfloor s\rfloor$ and $L_\psi\ge N$, then
there exists $\sigma\in(0,\fz)$ such that, for any sequence $\{g_j\}_{j\in\zz_+}$ with $\|\{g_j\}_{j\in\zz_+}\|_{\ell_q^s(\cm^u_p(\rn))}<\fz$,
it holds true that
\begin{equation*}
2^{ls}|\phi_l\ast \psi_j\ast g_j(x)|\ls
2^{-|l-j|\sigma}2^{js} G_j(x),\quad x\in\rn,\ l\in\zz_+,
\end{equation*}
and hence
\begin{eqnarray*}
\|\psi_j\ast g_j\|_{\cn^{s-2\sigma}_{u,p,q}(\rn)}
&&\ls \lf[\sum_{l\in\zz_+}2^{l(-2\sigma+|l-j|\sigma)q}\r]^{1/q}\|2^{js}G_j\|_{\cm^u_p(\rn)}
\ls 2^{-j\sigma}\|\{g_j\}_{j\in\zz_+}\|_{\ell_q^s(\cm^u_p(\rn))}.
\end{eqnarray*}
This implies that $\sum_{j\in\zz_+}\psi_j\ast g_j$ converges in
$\cn^{s-2\sigma}_{u,p,q}(\rn)$ and hence in $\cs'(\rn)$,
since $\cn^{s-2\sigma}_{u,p,q}(\rn)\hookrightarrow \cs'(\rn)$.
Therefore, we further have
\begin{equation*}
2^{ls}\lf|\phi_l\ast \lf(\sum_{j\in\zz_+}\psi_j\ast g_j\r)(x)\r|\ls \sum_{j\in\zz_+}
2^{-|l-j|\sigma}2^{js} G_j(x),\quad x\in\rn,\ l\in\zz_+.
\end{equation*}
Applying this, we then see that
\begin{eqnarray}\label{extinq}
\lf\|\sum_{j\in\zz_+}\psi_j\ast g_j\r\|_{\cn^{s}_{u,p,q}(\rn)}
&&\ls \|\{g_j\}_{j\in\zz_+}\|_{\ell_q^s(\cm^u_p(\rn))}.
\end{eqnarray}

Let $f\in \cn^s_{u,p,q}(\Omega)$. Then, for any $\varepsilon\in(0,\fz)$,
there exists $g\in \cn^s_{u,p,q}(\rn)$
such that $g|_\Omega=f$ in $D'(\Omega)$ and
$$\|g\|_{\cn^s_{u,p,q}(\rn)}\le \|f\|_{\cn^s_{u,p,q}(\Omega)}+\varepsilon.$$
Let $g_j:=(\phi_j\ast f)_\Omega$  for all $j\in\zz_+$. By \eqref{extinq}
and the fact (see  \cite[pp.\,247-248]{ry99}) that
\begin{eqnarray*}
\sup_{y\in \Omega}\frac{|\phi_j\ast f(y)|}{(1+2^j|x-y|)^N}&&\lf\{
\begin{array}
    {l@{}l}
=\dsup_{y\in \Omega}\frac{|\phi_j\ast f(y)|}{(1+2^j|x-y|)^N},\ &\ x\in \Omega,\\
\ls\dsup_{y\in \Omega}\frac{|\phi_j\ast f(y)|}{(1+2^j|\wz{x}-y|)^N},\ &\ x\notin \overline{\Omega},
\end{array}
\right.\\
&&\lf\{
\begin{array}
    {l@{}l}
\le\dsup_{y\in \rn}\frac{|\phi_j\ast g(y)|}{(1+2^j|x-y|)^N},\ &\ x\in \Omega,\\
\ls\dsup_{y\in \rn}\frac{|\phi_j\ast g(y)|}{(1+2^j|\wz{x}-y|)^N},\ &\ x\notin \overline{\Omega},
\end{array}
\right.\\
\end{eqnarray*}
where $\wz{x}:=(x', 2w(x')-x_n)\in \Omega$ is the symmetric point to
$x=(x',x_n)\notin \overline{\Omega}$ with respect to $\partial \Omega$,
we conclude that
\begin{eqnarray*}
\lf\|\mathcal{E}f\r\|_{\cn^{s}_{u,p,q}(\rn)}
&&\ls \|\{(\phi_j\ast f)_\Omega\}_{j\in\zz_+}\|_{\ell_q^s(\cm^u_p(\rn))}\\
&&\ls \lf\{\sum_{j\in\zz_+} 2^{jsq}\lf\|\sup_{y\in \rn}\frac{|\phi_j\ast g(y)|}{(1+2^j|\cdot-y|)^N}\r\|_{\cm^u_p(\rn)}\r\}^{1/q},
\end{eqnarray*}
which, together with the characterization of $\cn^s_{u,p,q}(\rn)$
via the Peetre maximal function (see, for example,
\cite[Subsection 11.2]{lsuyy}) and the choice of $g$,
further implies that
\begin{eqnarray*}
\lf\|\mathcal{E}f\r\|_{\cn^{s}_{u,p,q}(\rn)}
&&\ls \|g\|_{\cn^{s}_{u,p,q}(\rn)}\ls  \|f\|_{\cn^s_{u,p,q}(\Omega)}+\varepsilon.
\end{eqnarray*}
Letting $\varepsilon\to 0$, we then know that
$\mathcal{E}$ is a bounded linear operator from $\cn^s_{u,p,q}(\Omega)$
into $\cn^s_{u,p,q}(\rn)$.

Finally, since the supports of $\psi_0$ and $\psi$ lie in $-K$, it follows that
$$\mathcal{E}f|_\Omega=\sum_{j\in\zz_+} \psi_j\ast \phi_j\ast f=f$$
in $D'(\Omega)$ (see page 249 of \cite{ry99}). Thus, $\mathcal{E}$ is the desired
extension operator from $\cn^s_{u,p,q}(\Omega)$
into $\cn^s_{u,p,q}(\rn)$. This finishes the proof of Proposition \ref{exten}.
\end{proof}

\begin{remark}\label{obs}
 Let $s\in\rr$, $q\in(0,\fz]$,
$0<p\le u<\fz$ and $\Omega \subset \rn$ be an interval if $n=1$ or a Lipschitz domain if $n\ge 2$.
One advantage  of the construction of $\mathcal{E}$ in
\eqref{extope} lies in that, if $f\in \cn^s_{u,p,q}(\Omega)$
and $g:=\mathcal{E}f$ is the extension of $f$ to $\rn$,
then  $\partial^\az g=\mathcal{E}(\partial^\az f)$ for all
$\az\in\zz_+^n$ (see \cite[(4.70)]{t08}).
\end{remark}

By the observation in the above  remark, we have
the following characterization of $\cn^s_{u,p,q}(\Omega)$.

\begin{proposition}\label{lift}
Let  $q\in(0,\fz]$, $0<p\le u<\fz$, $s=\sigma+k$ with $\sigma\in\rr$ and $k\in\nn$,
 and $\Omega \subset \rn$ be an interval if $n=1$ or a Lipschitz domain if $n\ge 2$.
Then
$$\cn^s_{u,p,q}(\Omega)=\lf\{f\in \cn^\sigma_{u,p,q}(\Omega):\
\partial^\az f\in \cn^\sigma_{u,p,q}(\Omega),\ |\az|\le k\r\}$$
and there exists a positive constant $C\in[1,\fz)$ such that, for all $f\in \cn^s_{u,p,q}(\Omega)$,
$$C^{-1}\|f\|_{\cn^s_{u,p,q}(\Omega)}\le \sum_{|\az|\le k} \|\partial^\az f\|_{\cn^\sigma_{u,p,q}(\Omega)}\le C\|f\|_{\cn^s_{u,p,q}(\Omega)}.$$
\end{proposition}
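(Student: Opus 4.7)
The plan is to reduce the assertion to the corresponding lifting property on $\rn$ by means of the universal extension operator $\mathcal{E}$ constructed in Proposition \ref{exten}, exploiting in an essential way the commutation relation $\partial^\alpha \mathcal{E}f=\mathcal{E}(\partial^\alpha f)$ recorded in Remark \ref{obs}.

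First, we recall that, on the whole space, it holds true that
\[
\cn^s_{u,p,q}(\rn)=\lf\{g\in \cn^\sigma_{u,p,q}(\rn):\ \partial^\az g\in \cn^\sigma_{u,p,q}(\rn)\ \text{for all}\ |\az|\le k\r\}
\]
in the sense of equivalent quasi-norms; this is the analogue for Besov-Morrey spaces of the classical lifting assertion for Besov spaces (see, e.\,g., \cite[2.3.8]{t83}) and can be established by the $\cn$-space version of the Fourier multiplier theorem of Tang and Xu \cite{tx} applied to the symbols $\xi^\alpha(1+|\xi|^2)^{-k/2}$ and $(1+|\xi|^2)^{k/2}$, combined with the Littlewood-Paley characterization of $\cn^s_{u,p,q}(\rn)$. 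We take this as given.

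Next, given $f\in\cn^s_{u,p,q}(\Omega)$, let $g:=\mathcal{E}f\in\cn^s_{u,p,q}(\rn)$. By Remark \ref{obs}, $\partial^\az g=\mathcal{E}(\partial^\az f)$ for every $\az\in\zz_+^n$, and hence the lifting on $\rn$ gives
\[
\sum_{|\az|\le k}\|\mathcal{E}(\partial^\az f)\|_{\cn^\sigma_{u,p,q}(\rn)}=\sum_{|\az|\le k}\|\partial^\az g\|_{\cn^\sigma_{u,p,q}(\rn)}\asymp \|g\|_{\cn^s_{u,p,q}(\rn)}\ls \|f\|_{\cn^s_{u,p,q}(\Omega)} \, .
\]
Restricting to $\Omega$, we obtain $\partial^\az f\in\cn^\sigma_{u,p,q}(\Omega)$ together with the upper estimate $\sum_{|\az|\le k}\|\partial^\az f\|_{\cn^\sigma_{u,p,q}(\Omega)}\ls \|f\|_{\cn^s_{u,p,q}(\Omega)}$.

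Conversely, suppose that $f\in\cn^\sigma_{u,p,q}(\Omega)$ satisfies $\partial^\az f\in\cn^\sigma_{u,p,q}(\Omega)$ for all $|\az|\le k$. Put $g:=\mathcal{E}f$. By Remark \ref{obs} and the boundedness of $\mathcal{E}:\cn^\sigma_{u,p,q}(\Omega)\to\cn^\sigma_{u,p,q}(\rn)$,
\[
\sum_{|\az|\le k}\|\partial^\az g\|_{\cn^\sigma_{u,p,q}(\rn)}=\sum_{|\az|\le k}\|\mathcal{E}(\partial^\az f)\|_{\cn^\sigma_{u,p,q}(\rn)}\ls \sum_{|\az|\le k}\|\partial^\az f\|_{\cn^\sigma_{u,p,q}(\Omega)} \, .
\]
Applying the lifting property on $\rn$ again, we deduce that $g\in\cn^s_{u,p,q}(\rn)$ with $\|g\|_{\cn^s_{u,p,q}(\rn)}\ls \sum_{|\az|\le k}\|\partial^\az f\|_{\cn^\sigma_{u,p,q}(\Omega)}$. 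Since $g|_\Omega=f$ in $D'(\Omega)$, the definition of $\cn^s_{u,p,q}(\Omega)$ via restriction yields $f\in\cn^s_{u,p,q}(\Omega)$ together with the matching lower estimate. Combining both estimates, we obtain the desired norm equivalence, which completes the proof of Proposition \ref{lift}.

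The only non-routine ingredient is the lifting property on $\rn$ for the Besov-Morrey scale; once this is granted, the extension/restriction machinery of Proposition \ref{exten} together with the commutation in Remark \ref{obs} makes the transfer to $\Omega$ essentially automatic.
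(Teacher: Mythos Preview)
Your proof is correct and follows essentially the same route as the paper: both arguments invoke the lifting property on $\rn$ (the paper cites \cite[Theorem 2.15(i)]{tx} directly, while you sketch how it follows from Tang--Xu's multiplier theorem), and then transfer it to $\Omega$ via the universal extension operator $\mathcal{E}$ from Proposition \ref{exten} together with the commutation $\partial^\alpha \mathcal{E}f=\mathcal{E}(\partial^\alpha f)$ from Remark \ref{obs}. The only cosmetic difference is that you write out the two inclusions a bit more explicitly, but the logic and the ingredients are identical.
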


\begin{proof}
It is known from \cite[Theorem 2.15(i)]{tx} that Proposition \ref{lift}
holds true when $\Omega =\rn$. From this and Remark \ref{obs},
we deduce that, for all $f\in \cn^s_{u,p,q}(\Omega)$,
\begin{eqnarray*}
\sum_{|\az|\le k} \|\partial^\az f\|_{\cn^\sigma_{u,p,q}(\Omega)}
\le \sum_{|\az|\le k} \|\partial^\az (\mathcal{E}f)\|_{\cn^\sigma_{u,p,q}(\rn)}
\ls\|\mathcal{E}f\|_{\cn^s_{u,p,q}(\rn)}\ls \|f\|_{\cn^s_{u,p,q}(\Omega)}.
\end{eqnarray*}
Conversely, by \cite[Theorem 2.15(i)]{tx}  and Remark \ref{obs}
again, we  see that
\begin{eqnarray*}
 \|f\|_{\cn^s_{u,p,q}(\Omega)}&&\le \|\mathcal{E} f\|_{\cn^s_{u,p,q}(\rn)}
 \ls \sum_{|\az|\le k} \|\partial^\az (\mathcal{E}f)\|_{\cn^\sigma_{u,p,q}(\rn)}\sim
 \sum_{|\az|\le k} \|\mathcal{E} (\partial^\az f)\|_{\cn^\sigma_{u,p,q}(\rn)}\\
 &&\ls \sum_{|\az|\le k} \|\partial^\az f\|_{\cn^\sigma_{u,p,q}(\Omega)},
\end{eqnarray*}
which completes the proof of Proposition \ref{lift}.
\end{proof}

Now we are ready to prove  Theorem  \ref{gagl5}.

\begin{proof}[Proof of Theorem  \ref{gagl5}]
For brevity, we put
\[
N^s_{u,p,q}(\Omega) := \Big\{f \in \cn^s_{u,p,q} (\Omega): \quad
D^\alpha f \in \cn^s_{u,p,q} (\Omega) \quad \mbox{for all}\quad \alpha \in \zz_+^n\Big\} \, .
\]
We claim that $N^s_{u,p,q}(\Omega) $ is independent of $s,\ u,\ p$ and $q$.
Indeed, by Proposition \ref{lift}, we know that, if $f\in N^s_{u,p,q}(\Omega)$,
then $f\in \cn^\sigma_{u,p,q}(\Omega)$
for any $\sigma\in\rr$, and hence $\mathcal{E}f\in \cn^\sigma_{u,p,q}(\rn)$
for any $\sigma\in\rr$.
In addition, we mention the embedding
\[
\cn^s_{u,p,q} (\rn) \hookrightarrow C_{ub}(\rn) \qquad \mbox{if}\quad s>n/p\,;
\]
see Kozono and Yamazaki \cite{KY} or Sickel \cite{s011a}.
Combining these two arguments, we find that
\[
N^s_{u,p,q}(\Omega)  := \Big\{f \in C^\infty (\Omega): \quad
D^\alpha f \in C (\Omega) \quad \mbox{for all}\quad \alpha \in \zz_+^n\Big\} \,
\]
and this proves the above claim.
Hence, we may write $N(\Omega) := N^s_{u,p,q}(\Omega)$.
This implies  that
\begin{eqnarray*}
\accentset{\diamond}{\cn}^s_{u,p,q} (\Omega) & = &
\overline{N(\Omega) }^{\cn^s_{u,p,q} (\Omega)}
= \overline{N^{s_0}_{u_0,p_0,q_0}(\Omega) \cap N^{s_1}_{u_1,p_1,q_1}(\Omega) }^{\cn^s_{u,p,q} (\Omega)}
\\
& \hookrightarrow &
\overline{\cn^{s_0}_{u_0,p_0,q_0}(\Omega) \cap \cn^{s_1}_{u_1,p_1,q_1}(\Omega)}^{\cn^s_{u,p,q} (\Omega)}  \, .
\end{eqnarray*}
On the other hand, by Lemma \ref{diamond2} and its proof, we know that
\[
\accentset{\diamond}{\cn}^{s_i}_{u_i,p_i,q_i} (\Omega) = {\cn}^{s_i}_{u_i,p_i,q_i} (\Omega)
\]
and  $S_Nf\to f$ as $N\to\fz$ in ${\cn}^{s_i}_{u_i,p_i,q_i}(\Omega)$, $i\in\{0,1\}$, if $q_0,\,q_1\in(0,\infty)$. Thus,
any $f\in \cn^{s_0}_{u_0,p_0,q_0}(\Omega) \cap \cn^{s_1}_{u_1,p_1,q_1}(\Omega)$
can be approximated by $S_Nf\in N(\Omega)$ in $\cn^s_{u,p,q} (\Omega)$, which further implies that
\begin{eqnarray*}
\accentset{\diamond}{\cn}^s_{u,p,q} (\Omega)=
\overline{\cn^{s_0}_{u_0,p_0,q_0}(\Omega) \cap \cn^{s_1}_{u_1,p_1,q_1}((0,1)^n)}^{\cn^s_{u,p,q} (\Omega)}  \, .
\end{eqnarray*}

Now we continue with an application of Theorem \ref{COMI}(ii),
which, together with  the existence of a bounded linear extension operator
\[\ce \in \cl ({\cn}^{s_i}_{u_i,p_i,q_i} (\Omega)),
{\cn}^{s_i}_{u_i,p_i,q_i} (\rn))\, , \qquad i\in\{0,1\},
\]
in Proposition \ref{exten} (see also Sawano \cite{sa10} for the case
of smooth domains),
and the method of the retraction and the coretraction, implies that
\[
\lf\laz \cn_{u_0,p_0,q_0}^{s_0}(\Omega), \cn_{u_1,p_1,q_1}^{s_1}(\Omega),\tz\r\raz=
\cn_{u,p,q}^{s}(\Omega),
\]
if $p_0\, u_1=p_1\, u_0$.
Proposition \ref{nil} makes clear that
\[
 \overline{\cn^{s_0}_{u_0,p_0,q_0}(\Omega) \cap \cn^{s_1}_{u_1,p_1,q_1}(\Omega)}^{\cn^s_{u,p,q} (\Omega)} =
\laz \cn^{s_0}_{u_0,p_0,q_0}(\Omega),  \cn^{s_1}_{u_1,p_1,q_1}(\Omega) \raz_\tz \, .
\]
This finishes the proof of Theorem  \ref{gagl5}.
\end{proof}


\subsection{Proofs of results in Subsection  \ref{inter1c}}
\label{proof6}



\subsubsection{Proofs of results in Subsection  \ref{inter1ca}}


For reader's convenience, we give proofs of  Propositions \ref{complexfinal},
\ref{complexinterpol} and  \ref{complexretract}.
Notice that, in our references \cite{km98} and \cite{kmm}, the additional
assumption that $X_0 \cap X_1$ is dense in $X_j$, $j\in\{0,1\}$, is used.
In the  proofs given below, we avoid this assumption.

\begin{proof}[Proof of Proposition \ref{complexfinal}]
Let $\{f_n\}_{n\in\nn}$ denote a Cauchy sequence in $\ca (X_0,X_1)$.
Since $X_0 + X_1$ is analytically convex, we conclude, from Proposition \ref{ac}, that,
for any $z\in S$,
\[\| \, f_n (z) - f_m (z)\|_{X_0 + X_1} \ls \max_{t\in\rr} \Big\{\| \, f_n (it) - f_m (it)\|_{X_0} \,,\ \
\| \, f_n (1+it) - f_m (1+it)\|_{X_1}\Big\}\, .
\]
Hence, for any $z \in S$, there exists a limit $f(z)= \lim_{n\to\fz} f_n (z) \in X_0 + X_1$,
due to the completeness of $X_0+X_1$.
Because this convergence is uniform on any open set $U \subset S_0$,  we conclude, by
using Proposition \ref{acc}, that $f$ is an analytic function.
On the other hand, since  functions $f_n(it)$ and $f_n (1+it)$ are continuous
and bounded on $t\in\rr$ and the boundedness is uniform in $n\in\nn$,
their limit functions $f(it)$ and $f (1+it)$ are continuous and bounded on $t\in\rr$ as well, i.\,e.,
$f \in \ca (X_0,X_1).$ This proves (i).

Part (ii) is a consequence of the following observation. Let $\cn_\Theta$ be the set of all functions
$f\in \ca (X_0,X_1)$ such that $f(\Theta) =0$. Consequently, $\cn_\Theta$ is a closed linear subspace of
$ \ca (X_0,X_1)$. Since $[X_0,X_1]_\Theta$ is isomorphic to $\ca (X_0,X_1)/\cn_\Theta$, it is a complete space. This finishes the proof of (ii) and hence
Proposition \ref{complexfinal}.
\end{proof}

\begin{proof}[Proof of Proposition \ref{complexinterpol}]
Temporarily we assume $\|\, T \, \|_{X_j \to Y_j}>0$, $j\in\{0,1\}$.
For $\tz\in(0,1)$, we define
\[
 g(z):= \lf(\frac{\|\, T \, \|_{X_0 \to Y_0}}{\|\, T \, \|_{X_1 \to Y_1}}\r)^{z-\Theta} \, Tf (z)\, ,
 \qquad  z \in S, \quad f \in \ca (X_0,X_1)\, .
\]
Hence, for all $t\in\rr$,
\[
\|\, g(it)\, \|_{Y_0}
\le  \lf(\frac{\|\, T \, \|_{X_0 \to Y_0}}{\|\, T \, \|_{X_1 \to Y_1}}\r)^{-\Theta}
\|\, T\, \|_{X_0 \to Y_0} \, \|\, f(it)\, \|_{X_0}
\]
and, similarly,
\[
\|\, g(1+it)\, \|_{Y_1}
\le  \lf(\frac{\|\, T \, \|_{X_0 \to Y_0}}{\|\, T \, \|_{X_1 \to Y_1}}\r)^{1-\Theta}
\|\, T\, \|_{X_1 \to Y_1} \, \|\, f(1+it)\, \|_{X_1}\,.
\]
This implies that $g$ belongs to $\ca (Y_0,Y_1)$. Let
$x:= f(\Theta) \in [X_0,X_1]_\Theta$. Then
$$y:= g(\Theta) = T f (\Theta) \in [Y_0,Y_1]_\Theta$$ and
\[
\| \, y\, \|_{[Y_0,Y_1]_\Theta} \le \| \, g \, \|_{\ca (Y_0,Y_1)}
\le \|\, T \, \|_{X_0 \to Y_0}^{1-\Theta} \,
\|\, T \, \|_{X_1 \to Y_1}^\Theta \| \, f \, \|_{\ca (X_0,X_1)}\, .
\]
Taking the infimum
in both sides of the above inequality
with respect to all $f \in \ca (X_0,X_1)$ such that $f(\Theta) =x$,
we obtain the desired conclusion.
If   $\|\, T \, \|_{X_0 \to Y_0} =0$ or $\|\, T \, \|_{X_1 \to Y_1} =0$,
then one has to replace this quantity by $\varepsilon
\in(0,\fz)$ in the definition of $g$ and
finally, consider $\varepsilon \downarrow 0$, the details being omitted.
This finishes the proof of Proposition \ref{complexfinal}.
\end{proof}

\begin{proof}[Proof of Proposition \ref{complexretract}]
One may use Triebel's arguments in the proof of  \cite[Theorem  1.2.4]{t78},
since the closed graph theorem remains true in the context of quasi-Banach spaces,
the details being omitted. This finishes the proof of Proposition \ref{complexretract}.
\end{proof}


\subsubsection{Proofs of results in Subsection \ref{inter1cd}}



\subsection*{Proof of Theorem \ref{morrey4}}


By Remark \ref{morrey24}(i), we know that all spaces under consideration are
lattice $r$-convex for some $r$.
Proposition \ref{shest2} and Theorem \ref{morrey1}(i) yield
\[
[\cm^{u_0}_{p_0}(\rn), \, \cm^{u_1}_{p_1}(\rn)]_{\tz}^i
\hookrightarrow \lf[\cm^{u_0}_{p_0}(\rn)\r]^{1-\Theta} \,
\lf[ \cm^{u_1}_{p_1}(\rn)\r]^\tz \hookrightarrow  \cm^{u}_{p}(\rn) \, .
\]
This shows (i) of Theorem \ref{morrey4}.

To prove (ii), first observe that
Theorem \ref{morrey1}(iii) implies that
this embedding is proper if $u_0 p_1 \neq u_1 p_0$.
In case $u_0 p_1 = u_1 p_0$, we derive, from  Corollaries \ref{comi}
and \ref{nil2},
that
\[
[\cm^{u_0}_{p_0}(\rn), \, \cm^{u_1}_{p_1}(\rn)]_{\tz}^i = \laz \cm^{u_0}_{p_0}(\rn), \, \cm^{u_1}_{p_1}(\rn) \raz_{\tz}\, .
\]
Now the desired conclusion  (ii) of Theorem \ref{morrey4}
follows from Corollary \ref{morrey42} and Theorem \ref{gp01},
the details being omitted.
This finishes the proof of (ii) and hence Theorem \ref{morrey4}.

\begin{remark}
Theorem \ref{morrey4} shows that \cite[Theorem 3(ii)]{LR} is not correct.
However,  let us mention that \cite{LR} has been our main source for the cases
$u_0 p_1 \neq u_1 p_0$.
\end{remark}


\subsection*{Proofs of Propositions \ref{morreyx} and  \ref{morreyn}}


In both conclusions of Proposition \ref{morreyx},
the first embedding
is a consequence of the definition of the complex method
and of the inner complex method.
The second embedding has been proved in \cite{yyz}. We give a sketch for
the convenience of the reader.
In the proofs of \cite[Propositions 2.6 and 2.7]{yyz}, the condition
$\tau_0 p_0 = \tau_1 p_1$ is not used to establish the embeddings
\[
\lf[a_{p_0,q_0}^{s_0,\tau_0}(\rn)\r]^{1-\tz}
\lf[a_{p_1,q_1}^{s_1,\tau_1}(\rn)\r]^\tz \hookrightarrow a_{p,q}^{s,\tau}(\rn) \, ,
\qquad a\in \{f,b\}\, ;
\]
see also Proposition \ref{morrey2}.
This has to be combined  with  \cite[Proposition 1.10]{yyz}:
\[
[a_{p_0,q_0}^{s_0,\tau_0}(\rn), a_{p_1,q_1}^{s_1,\tau_1}(\rn)]_\tz
\hookrightarrow
\lf[a_{p_0,q_0}^{s_0,\tau_0}(\rn)\r]^{1-\tz}
\lf[a_{p_1,q_1}^{s_1,\tau_1}(\rn)\r]^\tz \hookrightarrow a_{p,q}^{s,\tau}(\rn).
\]
By these and Proposition \ref{wave1},
together with an argument similar to that used in the proof of Theorem \ref{COMI},
we then obtain the second embeddings of  Proposition \ref{morreyx},
the details being omitted.
Hence, Proposition \ref{morreyx} is proved.

Concerning the proof of Proposition \ref{morreyn}, we argue in the same way
as the proof of Proposition \ref{morreyx}.
In case of the embedding
\[
\lf[n_{u_0,p_0,q_0}^{s_0}(\rn)\r]^{1-\tz}\lf[n_{u_1,p_1,q_1}^{s_1}(\rn)\r]^\tz \hookrightarrow
n_{u,p,q}^{s}(\rn),
\]
the restriction $p_0 u_1 = u_0 p_1$ was not used in \cite[Proposition~2.8]{yyz}; see Proposition \ref{morrey3}.
Furthermore, the embedding
\[
[n_{u_0,p_0,q_0}^{s_0}(\rn), n_{u_1,p_1,q_1}^{s_1}(\rn)]_\tz \hookrightarrow
\lf[n_{u_0,p_0,q_0}^{s_0}(\rn)\r]^{1-\tz}\lf[n_{u_1,p_1,q_1}^{s_1}(\rn)\r]^\tz
\]
has been proved in  \cite[Proposition~1.10]{yyz}.
The proof of Proposition  \ref{morreyn} is then finished.


\subsection*{Proof of Theorem \ref{gp03n}}


Because of
\[
\lf\laz\cm^{u_0}_{p_0}((0,1)^n), \cm^{u_1}_{p_1}((0,1)^n)\, , \tz\r\raz = \cm^{u}_{p}((0,1)^n) \,
\]
(see \eqref{ws-104})
and
\[
\lf[\cm^{u_0}_{p_0}((0,1)^n)\r]^{1-\tz}\, \lf[ \cm^{u_1}_{p_1}((0,1)^n)\r]^\tz  = \cm^{u}_{p}((0,1)^n)
\]
(see \cite[(2.3)]{lyy}),
it follows, from Corollary \ref{nil2}, that
\[
[\cm^{u_0}_{p_0}((0,1)^n), \cm^{u_1}_{p_1}((0,1)^n)]_\tz^i = \laz\cm^{u_0}_{p_0}((0,1)^n), \cm^{u_1}_{p_1}((0,1)^n)\raz_\tz\, .
\]
Now we apply Theorem \ref{gp03} to obtain the desired conclusion, which completes the proof
of Theorem \ref{gp03n}.


\subsection{Proofs of results in Subsection  \ref{inter1d}}



\subsection*{Proof of Corollary  \ref{approx10}}


The conditions in Corollary  \ref{approx10} are guaranteeing that
\[
A_{p_i,q_i}^{s_i,\tau_i}(\rn) = B_{\fz,\fz}^{s_i+n(\tau_i-1/p_i)}(\rn) \, ,  \qquad A \in \{B,F\}\,, \quad i\in\{0,1\}\, ;
\]
see Proposition \ref{basic2}(iii).
Now it suffices to recall
\[
(B_{\fz,\fz}^{s_0+n(\tau_0-1/p_0)}(\rn), \, B_{\fz,\fz}^{s_1+n(\tau_1-1/p_1)}(\rn))_{\tz,q} =
B_{\fz,q}^{s+n(\tau-1/p)}(\rn)\, ;
\]
see \cite[Theorem 2.4.2]{t83}. This
finishes the proof of Corollary \ref{approx10}.


\subsection*{Proof of Lemma \ref{morreyreal}}


Part (i) is proved in Lemma \ref{help} and Remark \ref{morrey24}(i).
For (ii), we refer the reader  to Lemari{\'e}-Rieussiet \cite{LR}.
Concerning (iii), we use
$$ L_p (\rn) = (L_{p_0}(\rn), L_{p_1}(\rn))_{\tz,\infty}$$ because of $p_0 = p_1$.
Hence, taking Proposition
\ref{realbasic}(ii) into account, we may employ Lemma \ref{help}
with this functor $(\, \cdot\, , \, \cdot\, )_{\Theta,\infty}$.
Choosing $T=I$, we obtain the desired conclusion in (iii),
which completes the proof of Lemma \ref{morreyreal}.


\subsection*{Proof of Theorem \ref{morreyreal2}}


In the case (a), we see that $p_0=p_1=p$ and $u_0=u_1=u$,
and hence
$$(\cm^{u_0}_{p_0}(\rn), \cm^{u_1}_{p_1}(\rn))_{\tz,q}=(\cm^{u_0}_{p_0}(\rn), \cm^{u_0}_{p_0}(\rn))_{\tz,q}= \cm^{u_0}_{p_0}(\rn)=\cm^u_p(\rn).$$
In the  case (b), we have
$$(\cm^{p_0}_{p_0}(\rn), \cm^{p_1}_{p_1}(\rn))_{\tz,q}=(L_{p_0}(\rn), L_{p_1}(\rn))_{\tz,p}=L_{p}(\rn)=\cm^p_p(\rn).$$

Next we argue by contradiction to show, if neither (a) nor (b) is true, then
$$(\cm^{u_0}_{p_0}(\rn), \cm^{u_1}_{p_1}(\rn))_{\tz,q}\neq \cm^{u}_{p}(\rn).$$
Let us assume
$
(\cm^{u_0}_{p_0}(\rn), \cm^{u_1}_{p_1}(\rn))_{\tz,q} =  \cm^{u}_{p}(\rn)
$
for some $\Theta \in (0,1)$ and some $q \in (0,\infty]$.
We now consider two cases.
\\
{\em Step 1.} Let $ q\in(0,\infty)$.
Then \cite[Theorem 3.4.2]{BL} yields that
$\cm^{u_0}_{p_0}(\rn) \cap  \cm^{u_1}_{p_1}(\rn)$ must be dense in
$\cm^{u}_{p}(\rn)$.
Now, applying Lemma \ref{morrey41},
we see that the above assumption yields a contradiction.
\\
{\em Step 2.} Let $q= \infty$.
Lemari{\'e}-Rieussiet \cite{LR} has proved that
\[
\cm^{u}_{p}(\rn)  \hookrightarrow
 \lf(\cm^{u_0}_{p_0}(\rn), \cm^{u_1}_{p_1}(\rn)\r)_{\tz,\infty}
 \qquad \Longleftrightarrow \qquad p_0 \, u_1 = p_1 \, u_0\, .
\]
Also in the quoted article \cite{LR}, one can find that
\[
 \lf(\cm^{u_0}_{p_0}(\rn), \cm^{u_1}_{p_1}(\rn)\r)_{\tz,\infty} \hookrightarrow
\cm^{u}_{p}(\rn)
\]
implies $p_0 =p_1$.
Hence, our assumption yields $p_0 =p_1$ and $u_0 =u_1$, i.\,e.,
$$\cm^{u_0}_{p_0}(\rn) = L_{p_0}(\rn)\quad{\rm and}\quad \cm^{u_1}_{p_1}(\rn)=L_{p_0}(\rn).$$
Of course,
$\lf(L_{p_0}(\rn), L_{p_0}(\rn)\r)_{\tz,\infty} = L_{p_0}(\rn)$, but this case is
already excluded in Step 2. This finishes the proof of Theorem \ref{morreyreal2}.


\subsection*{Proof of Lemma \ref{lem-real}}


The key tool is \cite[Theorem~1.4.2]{t78}, which holds true also for
quasi-Banach cases; see  \cite[Remark 1.4.2/3]{t78}. Let $\eta:=\tz p/p_1$.
Then $p=(1-\eta)p_0+\eta p_1$ follows. Applying
\cite[Theorem~1.4.2]{t78} with $q=1$,
we obtain, for any sequence  $a:=\{a_j\}_{j\in\zz_+}$, $a_j \in X_0 + X_1$,
\begin{eqnarray*}
\|a\|_{(\ell^{s_0}_{p_0}(X_0), \ell^{s_1}_{p_1}(X_1))_{\tz,p}}^p &\asymp&
\int_0^\fz t^{-\eta} \inf_{a=a^0+a^1}\sum_{j\in\zz_+}\lf[
2^{js_0p_0}\|a^0_j\|^{p_0}_{X_0}+t2^{js_1p_1}\|a^1_j\|^{p_1}_{X_1} \r]\,\frac{dt}t
\\
& \asymp &   \sum_{j\in\zz_+} \int_0^\fz t^{-\eta} \inf_{a_j=a^0_j+a^1_j}\lf[
2^{js_0p_0}\|a^0_j\|^{p_0}_{X_0}+t2^{js_1p_1}\|a^1_j\|^{p_1}_{X_1} \r]\,\frac{dt}t.
\end{eqnarray*}
By a  change of  variable $y:= (k_j)^{-1} t$ with
$k_j:= 2^{j(s_0p_0-s_1p_1)}$, we conclude that
\begin{eqnarray*}
\|a\|_{(\ell^{s_0}_{p_0}(X_0), \ell^{s_1}_{p_1}(X_1))_{\tz,p}}^p
& \asymp &  \sum_{j\in\zz_+} \int_0^\fz y^{-\eta} \inf_{a_j=a^0_j+a^1_j}\lf[
k_j^{-\eta}2^{js_0p_0}\|a^0_j\|^{p_0}_{X_0}+ y k_j^{1-\eta}
2^{js_1p_1}\|a^1_j\|^{p_1}_{X_1} \r]\,\frac{dy}y\, .
\end{eqnarray*}
Observe that $s$ and $p$ satisfy the following identities
$$\eta(s_1p_1-s_0p_0)+s_0p_0=\tz p s_1+ (1-\tz p/p_1)s_0p_0=[\tz s_1+(1-\tz)s_0]p=sp $$
and
\[
(1-\eta)(s_0p_0-s_1p_1)+s_1p_1=sp \, .
\]
Hence
\begin{eqnarray*}
\|a\|_{(\ell^{s_0}_{p_0}(X_0), \ell^{s_1}_{p_1}(X_1))_{\tz,p}}^p
&\asymp &  \sum_{j\in\zz_+} 2^{jsp} \int_0^\fz y^{-\eta} \inf_{a_j=a^0_j+a^1_j}\lf[
\|a^0_j\|^{p_0}_{X_0} + y \|a^1_j\|^{p_1}_{X_1} \r]\,\frac{dy}y
\\
&\asymp & \|a\|^p_{\ell^s_p((X_0,X_1)_{\tz,p})},
\end{eqnarray*}
which proves the desired conclusion and hence
completes the proof of Lemma \ref{lem-real}.


\subsection{Proof of Lemma \ref{help}}


Let $B$ denote any ball in $\rn$.
Since $T \in \cl(X_0,\cm^{u_0}_{p_0}(\rn)) \cap \cl(X_1,\cm^{u_1}_{p_1}(\rn))$,
we know that
\[
\chi_B \,   T \in \cl(X_0, L_{p_0}(B)) \cap \cl(X_1,L_{p_1}(B))\,
\]
and
\[
\| \chi_B \, T \,\|_{X_i \to L_{p_i}(B)}
\le |B|^{\frac{1}{p_i}-\frac{1}{u_i}}\, M_i, \quad i\in\{0,1\}.
\]
Moreover, by \eqref{morrey20}, we see that
\[
\chi_B \,  T \in \cl\lf[F(X_0,X_1), F(L_{p_0}(B), L_{p_1}(B))\r]\: \hookrightarrow \: \cl\lf[F(X_0,X_1), L_{p}(B)\r]\,
\]
and
\[
\|\, \chi_B \,   T \|_{F(X_0,X_1) \to L_{p}(B)} \le C_F \, \lf[|B|^{\frac{1}{p_0}-\frac{1}{u_0}}\, M_0 \r]^{1-\tz}\,
\lf[|B|^{\frac{1}{p_1}-\frac{1}{u_1}}\, M_1\r]^\tz \,
\]
with a positive constant $C_F$ independent of $B$.
Let $f \in F(X_0,X_1)$.
Then we conclude  $Tf \in L_p (B)$
and
\[
\lf[ \int_B |Tf (x)|^p dx\r]^{1/p} \le
C_F \, M_0^{1-\tz}\, M_1^\tz \, |B|^{\frac{1}{p}-\frac{1}{u}}\,  \| \, f \, \|_{F(X_0,X_1)}
\, .
\]
This finishes the proof of Lemma \ref{help}.


\subsection{Proof of Theorem \ref{COMI-u}}
\label{proof2}


Similar to the arguments used in the proof of
Theorem \ref{COMI}, we first consider the sequence spaces related to
$\atu$.

\begin{definition}\label{d2u}
Let $s\in\rr$, $\tau\in[0,\infty)$ and $p,\ q \in(0,\fz]$.

The \emph{sequence space} $\sbtu$ is defined as the collection of all complex-valued sequences
$t:=\{t_Q\}_{Q\in\cq^*}$ such that
$$\|t\|_{\sbtu}:=
\sup_{P\in\mathcal{Q}^*}\frac1{|P|^{\tau}}\left[\sum_{j=j_P}^\fz
2^{j(s+\frac n2)q}\left\{\int_P\lf[\sum_{\ell(Q)=2^{-j}}
|t_Q|\chi_Q(x)\r]^p dx\right\}^{\frac qp}\right]^{\frac 1q}<\fz$$
with the usual modifications made in case $p=\fz$ and/or $q=\fz$.

The \emph{sequence space} $\sftu$ with $p\in(0,\fz)$
is defined as the collection of all  complex-valued sequences
$t:=\{t_Q\}_{Q\in\cq^*}$ such that
$$\|t\|_{\sftu}:=
\sup_{P\in\mathcal{Q}^*}\frac1{|P|^{\tau}}\left\{\int_P\left[
\sum_{j=j_P}^\fz  2^{j(s+\frac n2)q}
\sum_{\ell(Q)=2^{-j}}
|t_Q|^q\chi_Q(x)\right]^{\frac pq}\,dx\right\}^{\frac 1p}<\fz$$
with the usual modification made when $q=\fz$.
\end{definition}

As before, we use $\satu$ to denote either $\sbtu$ or $\sftu$. Notice that the
space $\satu$ coincides with the sequence space
$\cl^{n(\tau-1/p)}a^{s+n/2}_{p,q}(\rn)$ related to
$\cl^{n(\tau-1/p)}A^{s}_{p,q}(\rn)$; see \cite[Definition 1.30]{t12}.

It is easy to see that the space $\satu$ has the following equivalent characterization,
the details being omitted.

\begin{proposition}\label{pu}
Let $s\in\rr$, $\tau\in[0,\infty)$ and $p,\ q \in(0,\fz]$.
A sequence $t:=\{t_Q\}_{Q\in\cq^*}\in \satu$ if and only if
$$\|t\|_{\widetilde{\satu}}:=\sup_{\ell\in\zz^n}
\lf\|\lf\{t_Q \chi_{\ell}(Q)\r\}_{Q\in\cq^*}\r\|_{\sat}<\fz,$$
where $\chi_{\ell}:=\chi_{\{R\in\cq^*:\ R\subset Q_{0,\ell}\}}$. Moreover, $\|\cdot\|_{\widetilde{\satu}}$ is equivalent to $\|\cdot\|_{\satu}.$
\end{proposition}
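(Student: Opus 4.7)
The plan is to establish the two quasi-norm inequalities $\|t\|_{\satu}\lesssim \|t\|_{\widetilde{\satu}}$ and $\|t\|_{\widetilde{\satu}}\lesssim \|t\|_{\satu}$ by direct comparison on the level of individual cubes $P$ appearing in the respective suprema. The key observation is that the sup defining $\|\cdot\|_{\satu}$ runs over $P\in\cq^*$ (so $|P|\le1$), whereas the ambient sup in $\|\cdot\|_{\sat}$ runs over all $P\in\cq$; the localizer $\chi_\ell$ kills all contributions from $Q$ outside the unit cube $Q_{0,\ell}$, so the only genuinely new test cubes in the $\sat$-norm of the truncated sequence are those with $|P|>1$, and for these $|P|^{-\tau}\le 1$ does the work.

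For the easy direction $\|t\|_{\satu}\lesssim \|t\|_{\widetilde{\satu}}$, I would fix any $P\in\cq^*$; since $|P|\le 1$, there is a unique $\ell\in\zz^n$ with $P\subset Q_{0,\ell}$, and every $Q\in\cq^*$ with $Q\subset P$ satisfies $Q\subset Q_{0,\ell}$, so $t_Q=t_Q\chi_\ell(Q)$. Consequently, the $P$-term in the definition of $\|t\|_{\satu}$ equals the $P$-term in $\|\{t_Q\chi_\ell(Q)\}_Q\|_{\sat}$, which is bounded by $\|t\|_{\widetilde{\satu}}$; taking the sup over $P$ closes this half. The same argument works verbatim in the $F$-case since $Q\not\subset Q_{0,\ell}$ implies $\chi_Q$ does not contribute inside $P\subset Q_{0,\ell}$.

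For the reverse direction, I fix $\ell\in\zz^n$ and split the sup in $\|\{t_Q\chi_\ell(Q)\}_Q\|_{\sat}$ into $P\subset Q_{0,\ell}$ (automatically $P\in\cq^*$, so the corresponding terms are $\le\|t\|_{\satu}$) and $P\supsetneq Q_{0,\ell}$ with $|P|\ge 2^n$, i.e.\ $j_P\le -1$. In the latter case, only cubes $Q\subset Q_{0,\ell}$ contribute, and since $Q\in\cq^*$ forces $\ell(Q)\le 1$, the sum $\sum_{j=j_P}^\infty$ reduces to $\sum_{j=0}^\infty$; the integration over $P$ then coincides with integration over $Q_{0,\ell}$ and yields exactly the quantity that defines the $Q_{0,\ell}$-term of $\|t\|_{\satu}$ (up to the factor $|Q_{0,\ell}|^{\tau}=1$), multiplied by the prefactor $|P|^{-\tau}\le 1$. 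Thus each such $P$-term is controlled by $\|t\|_{\satu}$, and taking suprema over $P$ and then $\ell$ finishes the proof.

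The only mildly delicate point is the bookkeeping in the $F$-case, where the order of the $\ell^q$-sum in $j$ and the $L^p$-integration over $P$ is reversed; here one notes that, on $P\setminus Q_{0,\ell}$, the integrand vanishes because every $Q$ appearing has $Q\subset Q_{0,\ell}$, so the integral over $P$ collapses to an integral over $Q_{0,\ell}$, and then the same reduction as above applies. I do not anticipate any genuine obstacle; the entire argument is a careful case analysis on the relative size of $P$ and $Q_{0,\ell}$.
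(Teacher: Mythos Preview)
Your argument is correct, and the paper itself omits the proof entirely (``It is easy to see \ldots, the details being omitted''), so there is nothing to compare against. Your case split on the relative position of $P$ and $Q_{0,\ell}$ --- with the disjoint case contributing zero and the case $Q_{0,\ell}\subsetneq P$ handled via $|P|^{-\tau}\le 1$ --- is exactly the natural verification the authors leave to the reader.
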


Applying Proposition \ref{pu}, one can show that the Calder\'on product property of $\sat$
in Proposition \ref{morrey2} is also true for $\satu$.

\begin{proposition}\label{bfcu}
Let all parameters be as in Proposition \ref{morrey2}. Then
\begin{equation*}
\lf[a_{p_0,q_0,{\rm unif}}^{s_0,\tau_0}(\rn)\r]^{1-\tz}
\lf[a_{p_1,q_1,{\rm unif}}^{s_1,\tau_1}(\rn)\r]^\tz=a_{p,q,{\rm unif}}^{s,\tau}(\rn)
\end{equation*}
\end{proposition}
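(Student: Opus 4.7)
The plan is to lift the Calder\'on product identity for $\sat$ (Proposition \ref{morrey2}) to the uniform setting by means of the equivalent characterization of $\satu$ provided by Proposition \ref{pu}. That characterization allows us to view $\satu$ as the subspace of sequences $t=\{t_Q\}_{Q\in\cq^*}$ whose restrictions $t\chi_\ell:=\{t_Q\chi_\ell(Q)\}_{Q\in\cq^*}$ to each dyadic unit cube $Q_{0,\ell}$ have $\sat$-quasi-norm uniformly bounded in $\ell\in\zz^n$. Throughout I will use, from Proposition \ref{morrey2}, both the embedding
\[
\lf[a_{p_0,q_0}^{s_0,\tau_0}(\rn)\r]^{1-\tz}\lf[a_{p_1,q_1}^{s_1,\tau_1}(\rn)\r]^\tz\hookrightarrow a_{p,q}^{s,\tau}(\rn)
\]
and, under the standing hypothesis $\tau_0\,p_0=\tau_1\,p_1$, its converse.

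For the embedding $[a_{p_0,q_0,\unif}^{s_0,\tau_0}(\rn)]^{1-\tz}[a_{p_1,q_1,\unif}^{s_1,\tau_1}(\rn)]^\tz\hookrightarrow a_{p,q,\unif}^{s,\tau}(\rn)$ I would appeal to Lemma \ref{Leb}(ii). Given $t$ in the Calder\'on product with $|t_Q|\le\lambda|t^0_Q|^{1-\tz}|t^1_Q|^\tz$ $\mu$-a.e. and $\|t^j\|_{a_{p_j,q_j,\unif}^{s_j,\tau_j}(\rn)}\le 1$, the very same pointwise inequality is preserved under multiplication by $\chi_\ell(Q)$. The embedding part of Proposition \ref{morrey2}, applied to the restricted sequences, then yields
\[
\|t\chi_\ell\|_{\sat}\ls\lambda\,\|t^0\chi_\ell\|_{a_{p_0,q_0}^{s_0,\tau_0}(\rn)}^{1-\tz}\,\|t^1\chi_\ell\|_{a_{p_1,q_1}^{s_1,\tau_1}(\rn)}^\tz\le\lambda,
\]
uniformly in $\ell\in\zz^n$, and Proposition \ref{pu} concludes.

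For the reverse embedding I would reuse the explicit decomposition employed in the proof of Proposition \ref{morrey2} in \cite{yyz}. Writing it schematically, that argument produces, from $t\in a_{p,q}^{s,\tau}(\rn)$, two sequences $t^j=\{t^j_Q\}_{Q\in\cq^*}$ of the form
\[
t^j_Q:=2^{j_Q\,b_j}\,|Q|^{c_j}\,|t_Q|^{p/p_j}\,(\operatorname{sgn} t_Q)^{\delta_j}
\]
with exponents $b_j,\,c_j,\,\delta_j$ chosen so that $|t^0_Q|^{1-\tz}|t^1_Q|^\tz=|t_Q|$ and
\[
\|t^j\|_{a_{p_j,q_j}^{s_j,\tau_j}(\rn)}\ls\|t\|_{a_{p,q}^{s,\tau}(\rn)}^{p/p_j},\qquad j\in\{0,1\}.
\]
The decisive observation is that this construction is \emph{local} in the cube index: it depends only on $t_Q$ and $j_Q$, hence commutes with the restriction operator $t\mapsto t\chi_\ell$, so that $(t\chi_\ell)^j=t^j\chi_\ell$. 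Applying the identity on each translate,
\[
\|t^j\chi_\ell\|_{a_{p_j,q_j}^{s_j,\tau_j}(\rn)}\ls\|t\chi_\ell\|_{a_{p,q}^{s,\tau}(\rn)}^{p/p_j}\le\|t\|_{a_{p,q,\unif}^{s,\tau}(\rn)}^{p/p_j}
\]
uniformly in $\ell\in\zz^n$, and Proposition \ref{pu} gives $t^j\in a_{p_j,q_j,\unif}^{s_j,\tau_j}(\rn)$. Combined with $|t_Q|=|t^0_Q|^{1-\tz}|t^1_Q|^\tz$, this exhibits $t$ as a member of the Calder\'on product with the correct quasi-norm bound.

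The main obstacle is the second step: making sure the decomposition from \cite{yyz} really is pointwise/scale-local (and uses $\tau_0\,p_0=\tau_1\,p_1$ in exactly the way needed) so that it intertwines with the $\chi_\ell$-restriction. If in \cite{yyz} the $t^j$'s are instead built by a supremum-over-cubes argument with a global normalization, then one must re-examine the construction to confirm that the per-translate quasi-norm $\|t^j\chi_\ell\|_{a_{p_j,q_j}^{s_j,\tau_j}(\rn)}$ can still be controlled by $\|t\chi_\ell\|_{a_{p,q}^{s,\tau}(\rn)}$ and not only by $\|t\|_{a_{p,q}^{s,\tau}(\rn)}$, which would fail for $t\in a_{p,q,\unif}^{s,\tau}(\rn)\setminus a_{p,q}^{s,\tau}(\rn)$. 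In that case, a cleaner route is to verify directly that Proposition \ref{pu} realizes $\satu$ as a $1$-complemented sublattice of $\ell^\infty(\zz^n;\sat)$ (with the natural componentwise lattice order), and to invoke the general stability of Calder\'on products under such complemented sublattice embeddings, which reduces the equality to Proposition \ref{morrey2} componentwise.
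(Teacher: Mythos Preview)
Your forward embedding is fine and matches the paper (the paper actually states it even more tersely, ``follows directly from the H\"older inequality,'' but your restriction-and-apply-Proposition~\ref{morrey2} argument is equivalent).

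For the reverse embedding, the paper does \emph{not} rely on any explicit local formula from \cite{yyz}. Instead it implements exactly your alternative route, but concretely: given $t\in a_{p,q,\unif}^{s,\tau}(\rn)$, apply Proposition~\ref{morrey2} as a black box to each restricted sequence $t\chi_\ell$ separately, obtaining (possibly unrelated) decompositions $t^{0,\ell},t^{1,\ell}$ with $|t_Q\chi_\ell(Q)|\le |t^{0,\ell}_Q|^{1-\tz}|t^{1,\ell}_Q|^\tz$ and the right norm bounds. Then glue by setting $t^i_Q:=\sum_{\ell\in\zz^n}|t^{i,\ell}_Q|\chi_\ell(Q)$. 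Because every $Q\in\cq^*$ lies in exactly one $Q_{0,\ell}$, the sum has at most one nonzero term, so $(t^i)\chi_m=|t^{i,m}|$ and Proposition~\ref{pu} gives $\|t^i\|_{a_{p_i,q_i,\unif}^{s_i,\tau_i}(\rn)}=\sup_m\|t^{i,m}\|_{a_{p_i,q_i}^{s_i,\tau_i}(\rn)}\ls\|t\|_{\satu}$; the pointwise bound $|t_Q|\le|t^0_Q|^{1-\tz}|t^1_Q|^\tz$ is immediate (the H\"older step is trivial since the supports are disjoint).

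So your instinct in the final paragraph is the one the paper follows. Your main route---pulling a specific pointwise formula out of \cite{yyz} and checking it commutes with $\chi_\ell$---would likely also work for the $b$-spaces, but for the $f$-spaces the Frazier--Jawerth-type construction behind Proposition~\ref{morrey2} is considerably more involved than a simple $|t_Q|^{p/p_j}$ ansatz, and verifying locality there is unnecessary labor. The black-box-plus-glue argument avoids all of that.
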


\begin{proof} The embedding $$[a_{p_0,q_0,{\rm unif}}^{s_0,\tau_0}(\rn)]^{1-\tz}
[a_{p_1,q_1,{\rm unif}}^{s_1,\tau_1}(\rn)]^\tz \hookrightarrow a_{p,q,{\rm unif}}^{s,\tau}(\rn)$$
follows directly from the H\"older inequality.

Now we prove the inverse embedding. Let $t:=\{t_Q\}_{Q\in\cq^*}\in \satu$.
By Proposition \ref{pu}, we know that, for all $\ell\in\zz^n$,
$\{t_Q \chi_{\ell}(Q)\}_{Q\in\cq^*}\in\sat$ and
$$\lf\|\lf\{t_Q \chi_{\ell}(Q)\r\}_{Q\in\cq^*}\r\|_{\sat}\ls \|t\|_{\satu}.$$
Then, by Proposition \ref{morrey2}, we conclude that there exist
$t^{0,\ell}:=\{t^{0,\ell}_Q\}_{Q\in\cq^*}\in a^{s_0,\tau_0}_{p_0,q_0}(\rn)$ and
$t^{1,\ell}:=\{t^{1,\ell}_Q\}_{Q\in\cq^*}\in a^{s_1,\tau_1}_{p_1,q_1}(\rn)$
such that $|t_Q\chi_{\ell}(Q)|\le |t^{0,\ell}_Q|^{1-\tz} |t^{1,\ell}_Q|^{\tz}$ for all $Q\in\cq^*$ and
$$\|t^{0,\ell}\|_{a^{s_0,\tau_0}_{p_0,q_0}(\rn)}^{1-\tz}
\|t^{1,\ell}\|_{a^{s_1,\tau_1}_{p_1,q_1}(\rn)}^\tz\ls \lf\|\lf\{t_Q \chi_{\ell}(Q)\r\}_{Q\in\cq^*}\r\|_{\sat}\ls \|t\|_{\satu}.$$

Define $t^{0}$ and $t^1$ by setting, for all $Q\in\cq^*$,
 $t^{0}_Q:=\sum_{\ell\in\zz^n}|t^{0,\ell}_Q|\chi_\ell(Q)$
and  $t^{1}_Q:=\sum_{\ell\in\zz^n}|t^{1,\ell}_Q|\chi_\ell(Q)$.
Then, by the H\"older inequality, we see that
$$|t_Q|=\sum_{\ell\in\zz^n} |t_Q|\chi_\ell(Q)
\le \sum_{\ell\in\zz^n} |t^{0,\ell}_Q|^{1-\tz} |t^{1,\ell}_Q|^{\tz}\chi_\ell(Q)\le |t^0_Q|^{1-\tz}|t^1_Q|^\tz.$$
Moreover, we have
\begin{eqnarray*}
\|t^i\|_{a^{s_i,\tau_i}_{p_i,q_i,{\rm unif}}(\rn)}&&=\sup_{m\in\zz^n}\lf\|\lf\{t^i_Q\chi_m(Q)
\r\}_{Q\in\cq^*}\r\|_{a^{s_i,\tau_i}_{p_i,q_i}(\rn)}\\
&&=\sup_{m\in\zz^n}\|t^{i,m}\|_{a^{s_i,\tau_i}_{p_i,q_i}(\rn)}\ls \|t\|_{\satu},
\end{eqnarray*}
which implies that $a_{p,q,{\rm unif}}^{s,\tau}(\rn)\hookrightarrow[a_{p_0,q_0,{\rm unif}}^{s_0,\tau_0}(\rn)]^{1-\tz}
[a_{p_1,q_1,{\rm unif}}^{s_1,\tau_1}(\rn)]^\tz$, and hence completes the proof
of Proposition \ref{bfcu}.
\end{proof}

Repeating the arguments used in the proofs of Theorem \ref{comi} and Corollary \ref{c-cis},
with Proposition \ref{morrey2} replaced by Proposition \ref{bfcu}, we obtain
the following interpolation formulas,
the details being omitted.

\begin{theorem}\label{comi-u}
Let $\tz\in(0,1)$, $s_i\in\rr$, $\tau_i\in[0,\fz)$,
$p_i, q_i\in(0,\fz]$, $i\in\{1,2\},$
such that $$s=(1-\tz)s_0+\tz s_1,\ \  \tau=(1-\tz)\tau_0+\tz\tau_1,\ \
\frac1p=\frac{1-\tz}{p_0}+\frac\tz{p_1},\ \ \frac1q=\frac{1-\tz}{q_0}+\frac\tz{q_1}\quad{\rm and}\quad
\frac{\tau_0}{p_1}=\frac{\tau_1}{p_0}.$$
Then
\begin{equation*}
\lf\laz a_{p_0,q_0,{\rm unif}}^{s_0,\tau_0}(\rn), a_{p_1,q_1,{\rm unif}}^{s_1,\tau_1}(\rn)\r\raz_\tz
=(a_{p,q,{\rm unif}}^{s,\tau}(\rn))^\#
\end{equation*}
and
$$
\lf\laz a_{p_0,q_0,{\rm unif}}^{s_0,\tau_0}(\rn), a_{p_1,q_1,{\rm unif}}^{s_1,\tau_1}(\rn),\tz\r\raz=a_{p,q,{\rm unif}}^{s,\tau}(\rn).$$

Assume further that $p_i, q_i\in[1,\fz]$. Then
$$\laz a_{p_0,q_0,{\rm unif}}^{s_0,\tau_0}(\rn), a_{p_1,q_1,{\rm unif}}^{s_1,\tau_1}(\rn)\raz_\tz
=(a_{p,q,{\rm unif}}^{s,\tau}(\rn))^\#=[a_{p_0,q_0,{\rm unif}}^{s_0,\tau_0}(\rn), a_{p_1,q_1,{\rm unif}}^{s_1,\tau_1}(\rn)]_\tz.$$
\end{theorem}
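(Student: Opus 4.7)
The plan is to mimic the arguments developed in Subsection \ref{proof1} for the non-uniform case, with Proposition \ref{morrey2} systematically replaced by its uniform counterpart Proposition \ref{bfcu}. Since Proposition \ref{bfcu} already supplies the Calder\'on product identity
\[
\lf[a_{p_0,q_0,\unif}^{s_0,\tau_0}(\rn)\r]^{1-\tz}\lf[a_{p_1,q_1,\unif}^{s_1,\tau_1}(\rn)\r]^\tz = a_{p,q,\unif}^{s,\tau}(\rn),
\]
and since $\satu$ is a quasi-Banach lattice of type $\mathfrak{E}$ (by the same $\delta$-convexification argument used after Proposition \ref{t-n}, applied through the characterization in Proposition \ref{pu}), Nilsson's result (Proposition \ref{t-n}) immediately yields
\[
\lf\laz a_{p_0,q_0,\unif}^{s_0,\tau_0}(\rn), a_{p_1,q_1,\unif}^{s_1,\tau_1}(\rn)\r\raz_\tz = (a_{p,q,\unif}^{s,\tau}(\rn))^\#
\]
and the chain of embeddings
\[
a_{p,q,\unif}^{s,\tau}(\rn) \hookrightarrow \lf\laz a_{p_0,q_0,\unif}^{s_0,\tau_0}(\rn), a_{p_1,q_1,\unif}^{s_1,\tau_1}(\rn), \tz\r\raz \hookrightarrow \lf(a_{p,q,\unif}^{s,\tau}(\rn)\r)^{\sim},
\]
so the second identity reduces to proving that the Gagliardo closure of $a_{p,q,\unif}^{s,\tau}(\rn)$ with respect to the sum space equals itself.

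The core step, therefore, is a uniform variant of Lemma \ref{gagliardo}. I would proceed as follows: take $t\in (a_{p,q,\unif}^{s,\tau}(\rn))^\sim$, together with an approximating sequence $\{t^{(i)}\}_{i\in\zz_+}\subset \satu$ satisfying $\|t^{(i)}\|_{\satu}\ls \|t\|_{(\satu)^\sim}$ and $t^{(i)}\to t$ in $a_{p_0,q_0,\unif}^{s_0,\tau_0}(\rn)+a_{p_1,q_1,\unif}^{s_1,\tau_1}(\rn)$. Decompose the difference as $t-t^{(i)}=t^{0,(i)}+t^{1,(i)}$ with each summand controlled in the corresponding uniform space; using the entrywise pointwise bound
\[
|t^{j,(i)}_Q|\ls |Q|^{s_j/n+1/2-1/p_j+\tau_j}\|t^{j,(i)}\|_{a_{p_j,q_j,\unif}^{s_j,\tau_j}(\rn)}
\]
(valid because $Q\in\cq^*$ has bounded diameter of influence on a fixed shift), one deduces entrywise convergence $t^{(i)}_Q\to t_Q$. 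Then, by the localised characterization in Proposition \ref{pu}, for every $\ell\in\zz^n$ the truncated sequences $\{t^{(i)}_Q\chi_\ell(Q)\}_{Q\in\cq^*}$ are uniformly bounded in $\sat$, and the Fatou lemma applied inside each cube $Q_{0,\ell}$ yields
\[
\lf\|\{t_Q\chi_\ell(Q)\}_Q\r\|_{\sat} \le \liminf_{i\to\infty}\|t^{(i)}\|_{\satu}\ls \|t\|_{(\satu)^\sim},
\]
with the implicit constants independent of $\ell$. Taking the supremum over $\ell$ and invoking Proposition \ref{pu} again gives $t\in\satu$ with $\|t\|_{\satu}\ls \|t\|_{(\satu)^\sim}$, which is the desired identity.

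For the final statement under the Banach restriction $p_i,q_i\in[1,\fz]$, I would combine the already established Calder\'on product identity with the Shestakov-type results Propositions \ref{shest} and \ref{shest2}, exactly as in the derivation of Corollary \ref{c-cis}. Proposition \ref{shest} gives $[X_0,X_1]_\tz=(X_0,X_1,X_0^{1-\tz}X_1^\tz,\#)$ in the Banach lattice setting, so the complex method agrees with $\laz\cdot,\cdot\raz_\tz$, and Proposition \ref{bfcu} identifies the Calder\'on product as $a_{p,q,\unif}^{s,\tau}(\rn)$, closing the chain of coincidences.

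The main obstacle is the Gagliardo-closure step: unlike the non-uniform case, where a single global quasi-norm controls the sequence, in the uniform case one must justify that a pointwise-a.e.\ (here entrywise) limit preserves the supremum over shifts $\ell\in\zz^n$ uniformly. The passage via the shift-localized characterization of Proposition \ref{pu} combined with the Fatou lemma is the delicate point, since one must prevent the supremum in $\ell$ from blowing up under weak limits; once this is handled cleanly, the remainder of the argument is a routine transcription of Subsection \ref{proof1}.
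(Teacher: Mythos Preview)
Your proposal is correct and follows essentially the same approach as the paper, which simply states that one repeats the arguments of Theorem \ref{comi} and Corollary \ref{c-cis} with Proposition \ref{morrey2} replaced by Proposition \ref{bfcu}, omitting all details. You have in fact supplied those omitted details, in particular the uniform analogue of Lemma \ref{gagliardo} via the shift-localized characterization in Proposition \ref{pu} and Fatou's lemma, which is exactly how the paper intends the argument to run.
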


Theorem \ref{COMI-u} is then an immediate consequence of Theorem \ref{comi-u}
and the wavelet characterization of
the spaces $\atu=\cl^{n(\tau-1/p)}A^s_{p,q}(\rn)$ in \cite[Theorem 1.32]{t12}, the
details being omitted.


\section{Appendix -- Function spaces}


For the convenience of the reader, we recall definitions and collect some properties
of the function spaces considered in this article.


\subsection{Besov-type and Triebel-Lizorkin-type spaces}


Besov-type and Triebel-Lizorkin-type spaces are generalizations
of Besov and Triebel-Lizorkin spaces. As
 Besov and Triebel-Lizorkin spaces, also these more general scales of spaces
 can be introduced in very different ways.
Here we use  the Fourier-analytical approach.
Let $\psi \in C_c^\infty(\rn)$ be a radial function such that $\psi (x) \le 1$ for all $x$,
\begin{equation}\label{eq-05}
\psi (x):= 1 \qquad \mbox{if} \quad   |x| \le  1
\qquad \mbox{and}\qquad
\psi (x):= 0 \qquad \mbox{if} \quad   |x|\ge \frac 32\,.
\end{equation}
Then, with $\varphi_0 := \psi$,
\begin{equation}\label{eq-06}
\varphi (x):= \varphi_0 (x/2)- \varphi_0 (x) \qquad \mbox{and}\qquad
\varphi_j (x):= \varphi (2^{-j+1}x)\, , \quad x\in\rn, \ \ j \in \nn \, ,
\end{equation}
we have
\[
\sum_{j=0}^\infty \varphi_j (x) = 1 \qquad \mbox{for all} \quad x \in \rn\, .
\]
In what follows, for $f\in\cs'(\rn)$, we use $\cf f$ to denote its \emph{Fourier transform},
and $\cf^{-1}f$ its \emph{inverse Fourier transform}.

\begin{definition}\label{d1}
Let $s\in\rr$, $\tau\in[0,\infty)$, and $q \in(0,\fz]$.

{\rm(i)} Let $p\in(0,\fz]$.
The \emph{Besov-type space} $\bt$ is defined as the
collection of all $f\in \mathcal{S}'(\rn)$ such that
$$\|f\|_{\bt}:=
\sup_{P\in\mathcal{Q}}\frac1{|P|^{\tau}}\left\{\sum_{j=\max\{j_P,0\}}^\fz
2^{js q}\left[\int_P
|\cfi (\vz_j \, \cf f)(x)|^p\,dx\right]^{q/p}\right\}^{1/q}<\fz$$
with the usual modifications made in case $p=\fz$ and/or $q=\fz$.

{\rm(ii)} Let $p\in(0,\fz)$. The
\emph{Triebel-Lizorkin-type space} $\ft$
is defined as the collection of all $f\in \mathcal{S}'(\rn)$ such that
$$\|f\|_{\ft}:=
\sup_{P\in\mathcal{Q}}\frac1{|P|^{\tau}}\left\{\int_P\left[
\sum_{j=\max\{j_P,0\}}^\fz  2^{js q}|\cfi ( \vz_j\, \cf f)(x)|^q\right]^{p/q}\,dx\right\}^{1/p}<\fz$$
with the usual modification made when $q=\fz$.
\end{definition}

The above definition represents a natural extension
 of the Fourier-analytical approach to  Besov
 and Triebel-Lizorkin spaces; see, e.\,g.,
 \cite{fj90}, \cite{Pe76} and \cite{t83,t92}.
The homogenous version of these spaces were introduced in
 \cite{yy1,yy2} in order to clarify the relation between Besov and Triebel-Lizorkin spaces and $Q$ spaces (see \cite{ejpx,dx,x01,x06}).

Several classical spaces can be identified within these scales.

\begin{proposition}\label{basic1}
Let $s\in\rr$, $q\in(0,\fz]$ and $\tau\in[0,\fz)$.

{\rm(i)} It holds true that
\[
F_{p,q}^{s,0}(\rn) = F^s_{p,q}(\rn)\ (\mbox{for}\ p\in(0,\fz)) \qquad \mbox{and}\qquad
B_{p,q}^{s,0}(\rn) = B^s_{p,q}(\rn)\ (\mbox{for}\ p\in(0,\fz]) \, .
\]

{\rm(ii)} For all $p\in(0,\fz)$, $F^{s,1/p}_{p,q}(\rn)=F^s_{\fz,q}(\rn)$
(\cite[Corollary 6.9]{fj90}).

{\rm(iii)} Let $p\in(0,\fz)$.
If either $q\in(0,\fz)$ and $\tau\in(1/p,\fz)$ or
$q=\fz$ and $\tau\in[1/p,\fz)$, then
\[
A_{p,q}^{s,\tau}(\rn) = B_{\fz,\fz}^{s+n(\tau-1/p)}(\rn) \, ,  \qquad A \in \{B,F\}\, .
\]

{\rm(iv)} For all $1<p\le u<\fz$ and $m\in\nn$,
$$\cm^u_p(\rn)=F^{0,1/p-1/u}_{p,2}(\rn)\quad {\rm and}\quad W^m\cm^u_p(\rn)=F^{m,1/p-1/u}_{p,2}(\rn),$$
where $W^m\cm^u_p(\rn)$ denotes the Morrey-Sobolev space of order $m$.
\end{proposition}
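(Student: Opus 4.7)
My plan is to verify each of the four identifications separately, since they concern genuinely different parameter regimes and rely on different techniques; in all four cases the content is essentially classical or already present in the literature, so the main task is to assemble the correct characterizations rather than to develop new arguments.

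For (i), I would exploit that when $\tau=0$ the normalizing factor $|P|^{-\tau}$ is unity and $\max\{j_P,0\}=0$ whenever $|P|\ge 1$. One direction is immediate by restriction to $P\subset\rn$: $\|f\|_{A^{s,0}_{p,q}(\rn)}\le \|f\|_{A^s_{p,q}(\rn)}$. For the reverse, I would test the supremum on the sequence $Q_{-L,0}\uparrow\rn$ and apply monotone convergence inside the $L_p$-integral (or inside the $L_p(\ell^q)$-integral in the $F$-case) to recover the classical Fourier-analytic norms of \cite{t83}.

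Statement (ii) is the Frazier--Jawerth identification $F^{s,1/p}_{p,q}(\rn)=F^s_{\fz,q}(\rn)$ from \cite[Corollary~6.9]{fj90}; I would quote it, the underlying proof being a transfer to sequence spaces via the $\varphi$-transform followed by the equivalent description of $F^s_{\fz,q}(\rn)$ as a supremum of dyadic averages of $\ell^q$-sums of Littlewood--Paley blocks. Statement (iv), the identification $\cm^u_p(\rn)=F^{0,1/p-1/u}_{p,2}(\rn)$ for $1<p\le u<\fz$, is contained in Mazzucato \cite{ma01} and Sawano \cite{sawch}; their argument combines the Littlewood--Paley square-function characterization of $L_p$ with a local decomposition of $f$ into parts supported near and far from each ball. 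The Morrey--Sobolev companion $W^m\cm^u_p(\rn)=F^{m,1/p-1/u}_{p,2}(\rn)$ then follows from the lifting property of $F^{s,\tau}_{p,q}(\rn)$ with respect to differentiation, due to Tang and Xu \cite{tx} (see also \cite[Proposition~5.1]{ysy}), applied coordinate by coordinate with $s=0$, $\tau=1/p-1/u$ and $q=2$.

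The only part requiring genuine work is (iii), and here I expect the main obstacle. Following the strategy of \cite{ysy}, the plan is to test the supremum defining $\|f\|_{A^{s,\tau}_{p,q}(\rn)}$ on the smallest admissible cubes $P=Q_{j,k}$, for which $j_P=j$, so that a single summand in the defining expression is comparable to
\[
2^{jn(\tau-1/p)}\,\lf[|Q_{j,k}|^{-1}\int_{Q_{j,k}} |\cfi(\vz_j\,\cf f)(x)|^p\,dx\r]^{1/p}\cdot 2^{js}.
\]
The Plancherel--P\'olya (Nikol'skij) inequality, valid for the band-limited function $\cfi(\vz_j\,\cf f)$, then lets me replace this quantity, up to equivalence after taking $\sup_k$, by $2^{j(s+n(\tau-1/p))}\,\|\cfi(\vz_j\,\cf f)\|_{L_\fz(\rn)}$, which is precisely the $j$-th Littlewood--Paley block of the $B^{s+n(\tau-1/p)}_{\fz,\fz}(\rn)$-quasi-norm. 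The harder direction is the converse: one must sum over $j\ge j_P$ inside the outer $\ell^q$- or $L_p$-norm and still be bounded by the supremum in $j$ of the individual blocks, which forces a geometric-series estimate that converges only when $\tau>1/p$ strictly, whereas in the borderline case $\tau=1/p$ one is forced to take $q=\fz$ so that the outer summation collapses to a single supremum. This dichotomy is exactly what produces the bifurcated hypothesis in (iii), and the careful bookkeeping of these geometric sums is the technical core of the argument.
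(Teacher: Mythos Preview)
Your proposal is correct. The paper does not actually prove this proposition: it is stated in the Appendix as a collection of known facts, and the subsequent Remark simply records that (i) is obvious, (ii) is \cite[Corollary~6.9]{fj90}, (iii) is \cite{yy4}, and (iv) is in \cite{ma01} and \cite{sawch}. Your references and arguments line up exactly with these citations, and your sketch for (iii) via Plancherel--P\'olya and a geometric-series estimate is the standard route (and indeed the one in \cite{yy4}), with the bifurcation $\tau>1/p$ versus $\tau=1/p,\,q=\infty$ arising precisely where you locate it.
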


\begin{remark}
Proposition \ref{basic1}(i) is obvious,
Proposition \ref{basic1}(ii) is a well-known result of Frazier and Jawerth
\cite[Corollary 6.9]{fj90}. The identity in Proposition \ref{basic1}(iii) was recently proved in
\cite{yy4}.
Finally, the Littlewood-Paley assertion in  Proposition \ref{basic1}(iv)
can be found in Mazzucato
\cite{ma01} and Sawano \cite{sawch}.
\end{remark}


\subsection{Besov-Morrey  and Triebel-Lizorkin-Morrey spaces}


The following spaces were first introduced and investigated in \cite{KY,ma03,tx,st}.

\begin{definition} \label{besov}
Let $s\in\rr$ and  $q\in(0,\fz]$. Let $\{\varphi_j\}_{j\in\zz_+}$ be the smooth dyadic decomposition of unity as defined in
as in \eqref{eq-05} and \eqref{eq-06}.

(i) Let $0<p\le u\le\fz$. The \emph{Besov-Morrey space} $\mathcal{N}_{u,p,q}^s(\rn)$
is defined as the space of all $f\in\cs'(\rn)$ such that
$$\|f\|_{\mathcal{N}_{u,p,q}^s(\rn)}
:=\lf\{\sum_{j\in\zz_+}2^{jsq}\|\cfi (\vz_j \, \cf f)\|_{\cm^u_p(\rn)}^{q}\r\}^{1/q}<\fz.$$

(ii) Let $0<p\le u < \fz$. The \emph{Triebel-Lizorkin-Morrey space}
$\ce_{u,p,q}^s(\rn)$
is defined as the space of all $f\in\cs'(\rn)$ such that
$$\|f\|_{\ce_{u,p,q}^s(\rn)}
:=\lf\|\lf[\sum_{j\in\zz_+}2^{jsq}|\cfi (\vz_j\, \cf f)|^q
\r]^{1/q}\r\|_{\cm^u_p(\rn)}<\fz.$$
\end{definition}

The following relations can be found in \cite{ysy} and \cite{syy}.

\begin{proposition}
 \label{basic2}
Let $s\in\rr$, $q\in(0,\fz]$ and  $0 < p \le u \le \fz$.
Then
\[
\ce^s_{u,p,q}(\rn)=F^{s,1/p-1/u}_{p,q}(\rn)\,  \qquad \mbox{if} \qquad u < \infty
\]
and
\[
\cn^s_{u,p,\fz}(\rn)=B^{s,1/p-1/u}_{p,\fz}(\rn).
\]
In addition, it holds true that
\[
\cn^s_{u,p,q}(\rn) \subsetneqq B^{s,1/p-1/u}_{p,q}(\rn) \qquad \mbox{if} \quad q<\fz
\quad{and}\quad u\neq p
\, .
\]
\end{proposition}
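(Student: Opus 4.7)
The plan is to argue all three statements by unfolding the definitions and comparing them term by term. First I would fix the Morrey exponent $\tau := 1/p-1/u\in[0,\infty)$, so that $|B|^{1/u-1/p}=|B|^{-\tau}$, and recall the well-known equivalence
\[
\|g\|_{\cm_p^u(\rn)}\asymp \sup_{P\in\cq}|P|^{-\tau}\|g\chi_P\|_{L_p(\rn)},
\]
which replaces the sup over arbitrary balls by a sup over dyadic cubes; this lets me compare the Morrey-based expression directly with the dyadic supremum in the definitions of $\ft$ and $\bt$. The only difference between $\|f\|_{\ce^s_{u,p,q}(\rn)}$ (resp.\ $\|f\|_{\cn^s_{u,p,\infty}(\rn)}$) written with this dyadic supremum and $\|f\|_{F^{s,\tau}_{p,q}(\rn)}$ (resp.\ $\|f\|_{B^{s,\tau}_{p,\infty}(\rn)}$) is then that the former sums $j$ over all of $\zz_+$ while the latter restricts $j\ge \max\{j_P,0\}$.

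For the identity $\ce^s_{u,p,q}(\rn)=F^{s,\tau}_{p,q}(\rn)$, the easy estimate $\|f\|_{F^{s,\tau}_{p,q}(\rn)}\le C\|f\|_{\ce^s_{u,p,q}(\rn)}$ is just dropping the terms $j<j_P$ and enlarging the inner integral. For the converse I would, for each dyadic $P$ with $j_P>0$, control the missing low-frequency pieces
$\sum_{0\le j<j_P}2^{jsq}|\cfi(\varphi_j\cf f)|^q$
on $P$ by a Peetre-type maximal function argument: each $\cfi(\varphi_j\cf f)$ is band-limited at scale $2^{-j}\ge\ell(P)$, so its restriction to $P$ is pointwise dominated by the mean of $|\cfi(\varphi_j\cf f)|^p$ over a dilate of $P$ and hence, after a geometric summation and a reindexing of cubes, absorbable into the Morrey-type supremum over larger dyadic cubes. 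The argument for $\cn^s_{u,p,\infty}(\rn)=B^{s,\tau}_{p,\infty}(\rn)$ is structurally the same, except that the outer $\sup_j$ and $\sup_P$ are interchanged in the Besov-Morrey quasi-norm; the truncation issue for small cubes is again handled by the same band-limited/Peetre estimate, which is the technical core.

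The embedding $\cn^s_{u,p,q}(\rn)\hookrightarrow B^{s,\tau}_{p,q}(\rn)$ for $q<\fz$ follows immediately by pulling the supremum over $P$ outside the $\ell^q$ sum:
\[
\sup_P\sum_{j\ge j_P^+}2^{jsq}\lf(|P|^{-\tau}\|\cfi(\varphi_j\cf f)\chi_P\|_{L_p(\rn)}\r)^q
\le \sum_{j\ge 0}2^{jsq}\|\cfi(\varphi_j\cf f)\|_{\cm_p^u(\rn)}^q.
\]
To show strictness when $q<\fz$ and $u\neq p$ (so $\tau>0$), I would construct a concrete counterexample by choosing a smooth bump $\psi$, a lacunary sequence of widely separated points $\{x_j\}_{j\ge 0}\subset\rn$ and setting
\[
f:=\sum_{j\ge 0}c_j\,\psi_j(\cdot-x_j)\qquad\text{with}\qquad \psi_j:=2^{jn/u}\psi(2^j\cdot),
\]
with the $\cf\psi_j$ sitting in disjoint annuli so that $\cfi(\varphi_j\cf f)$ is essentially $c_j\psi_j(\cdot-x_j)$. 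Then $\|\cfi(\varphi_j\cf f)\|_{\cm_p^u(\rn)}\asymp|c_j|$ (the sup is attained on the tiny ball $B(x_j,2^{-j})$), while for any single dyadic cube $P$ only the frequencies $j\ge j_P^+$ whose spike $x_j$ lies in $P$ can contribute with full Morrey weight. Choosing $\{c_j\}$ so that $\{c_j 2^{js}\}\notin\ell^q$ but the weights nevertheless give a finite sup in the $B^{s,\tau}_{p,q}$ norm (only finitely many $x_j$'s ever cluster in any one dyadic cube if the $x_j$'s are sufficiently separated) will yield $f\in B^{s,\tau}_{p,q}(\rn)\setminus\cn^s_{u,p,q}(\rn)$. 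The delicate step I anticipate as the main obstacle is the low-frequency absorption in the Morrey$\leftrightarrow$Besov-type identities for small dyadic cubes, where one must convert a sum over frequencies finer than the cube into a supremum over coarser cubes without losing the $\tau$-power of the volume; once this is settled via the Peetre maximal function, the remaining statements reduce to elementary $\ell^q$ Minkowski-type reasoning and the explicit construction above.
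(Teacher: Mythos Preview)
The paper does not give its own proof of this proposition; it simply records the three statements and refers to \cite{ysy} and \cite{syy}. Your outline for the two identities (comparing the truncated sum in $F^{s,\tau}_{p,q}$, $B^{s,\tau}_{p,\infty}$ with the full sum in the Morrey formulation and absorbing the missing low-frequency block by a Peetre-type pointwise bound on band-limited functions) is the standard route followed in those references, and your one-line proof of the embedding $\cn^s_{u,p,q}\hookrightarrow B^{s,\tau}_{p,q}$ is correct.

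For the strictness, your construction works but your stated reason does not. The claim that ``only finitely many $x_j$'s ever cluster in any one dyadic cube'' is false for large cubes: as $|P|\to\infty$, $P$ eventually contains all the $x_j$. What actually makes $\|f\|_{B^{s,\tau}_{p,q}}$ finite is the geometric decay coming from $\tau>0$: with $c_j=2^{-js}$ one has, for any dyadic $P$ with $j_P=\ell\ge 0$,
\[
|P|^{-\tau q}\sum_{j\ge\ell,\;x_j\in P}2^{jsq}\,c_j^q\,\|\psi_j(\cdot-x_j)\|_{L_p(P)}^q
\lesssim 2^{\ell n\tau q}\sum_{j\ge\ell}2^{-jn\tau q}\lesssim 1,
\]
and for $\ell\le 0$ the prefactor $|P|^{-\tau q}\le 1$ together with the convergent series $\sum_{j\ge 0}2^{-jn\tau q}<\infty$ does the job. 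The spatial separation of the $x_j$ is needed only to ensure that the Fourier pieces $\cfi(\varphi_j\cf f)$ really are (essentially) $c_j\psi_j(\cdot-x_j)$ and to handle the small-cube case cleanly; it plays no role in bounding the large-cube contribution.

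It is worth noting that the paper, in the proof of Lemma~\ref{diamond2} and the remark following it, exhibits a different witness for the same strictness: the wavelet tower
\[
f_{s,p,\tau}=\sum_{j\ge 1}2^{-j(s+n(\tau-1/p)+n/2)}\,\psi_{1,j,(0,\ldots,0)},
\]
all concentrated at the origin rather than spatially separated. Via the wavelet characterizations (Propositions~\ref{wave1} and \ref{wave2}) one reads off directly that $f_{s,p,\tau}\in B^{s,\tau}_{p,q}(\rn)$ but $f_{s,p,\tau}\notin \cn^s_{u,p,q}(\rn)$ for $q<\infty$ and $\tau=1/p-1/u>0$. That construction avoids the separation bookkeeping altogether; yours has the advantage of being stated purely in Fourier-analytic terms, but you should replace the ``finitely many in any cube'' heuristic by the geometric-series computation above.
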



\subsection{The local spaces of Triebel}


We do not recall the original definition of the spaces $\cl^{r}A^s_{p,q}(\rn)$  given in Triebel \cite[1.3.1]{t12}.
We simply state the following identity; see \cite{ysy2}.

\begin{proposition}
Let $s\in\rr$, $\tau\in[0,\fz)$ and $q\in(0,\fz]$.
Let $p\in(0,\fz]$ if $A=B$ and $p\in(0,\fz)$ if $A=F$.
Then
\[
\atu=\cl^{n(\tau-1/p)}A^s_{p,q}(\rn)
 \]
in the sense of  equivalent quasi-norms.
\end{proposition}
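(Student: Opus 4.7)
The plan is to identify both $\atu$ and $\cl^{n(\tau-1/p)}A^s_{p,q}(\rn)$ with the common wavelet sequence space $\satu$ from Definition \ref{d2u}, whose ``cube-localized'' description
\[
\|t\|_{\widetilde{\satu}} = \sup_{\ell\in\zz^n} \lf\| \{t_Q\chi_\ell(Q)\}_{Q\in\cq^*}\r\|_{\sat}
\]
has already been recorded in Proposition \ref{pu}. Triebel's definition in \cite[1.3.1]{t12} is wavelet-based, so unpacking his notation and comparing sequence quasi-norms already gives that $\cl^{n(\tau-1/p)}A^s_{p,q}(\rn)$ is isomorphic to $\satu$ via the wavelet transform, with the parameter correspondence $r=n(\tau-1/p)$. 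Hence the proof reduces to establishing the wavelet characterization
\[
f\in \atu \iff \{t_Q\}_{Q\in\cq^*}\in\widetilde{\satu}, \qquad t_Q:=\laz f,\psi_Q\raz,
\]
with equivalent quasi-norms, using Proposition \ref{wave1} for $\at$ as a black box.

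The first, easier direction: assume $f\in\atu$. Choosing $\Psi$ (without loss of generality via a partition-of-unity argument) so that its integer translates cover $\rn$, the hypothesis gives that $\Psi(\cdot-\ell)f \in \at$ with norm uniformly bounded in $\ell$. Proposition \ref{wave1} then yields wavelet coefficients of $\Psi(\cdot-\ell)f$ in $\sat$ uniformly in $\ell$. An almost-orthogonality argument, exploiting that $\psi_Q$ is essentially supported in $Q$ and that the dyadic cubes $Q\subset Q_{0,\ell}$ lie in the region where $\Psi(\cdot-\ell)$ is comparable to its value near $\ell$, shows that $t_Q$ for such $Q$ coincides up to controlled error with the coefficient of $\Psi(\cdot-\ell)f$; summing in $\ell$ with the finite overlap of supports gives $\|\{t_Q\}\|_{\widetilde{\satu}} \lesssim \|f\|_{\atu}$.

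For the reverse direction, I would represent $f$ by its wavelet series (guaranteed by Proposition \ref{wave1} once $\{t_Q\}\in\widetilde{\satu}\hookrightarrow \sat$ in view of $\widetilde{\satu}\hookrightarrow \satu\hookleftarrow \cl^{n(\tau-1/p)}a^{s+n/2}_{p,q}(\rn)$), then multiply term by term by $\Psi(\cdot-\ell)$. The key fact is that multiplication by a compactly supported smooth function is bounded on $\at$ (see \cite{ysy,s011a}), and that $\Psi(\cdot-\ell)\psi_Q$ is negligible unless $Q$ lies within a fixed number of unit cubes around $\ell$; translation invariance reduces matters to $\ell=0$, and then standard multiplier estimates control $\|\Psi(\cdot-\ell)f\|_{\at}$ by a finite sum of local sequence norms $\|\{t_Q\chi_{\ell'}(Q)\}_Q\|_{\sat}$ with $\ell'$ in a bounded neighborhood of $\ell$. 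Taking supremum over $\ell$ yields $\|f\|_{\atu}\lesssim \|\{t_Q\}\|_{\widetilde{\satu}}$.

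The main obstacle will be this reverse direction, specifically controlling $\|\Psi(\cdot-\ell)f\|_{\at}$ uniformly in $\ell$ at the level of the cube-based quasi-norm defining $\at$. The quasi-Banach regime $\min\{p,q\}<1$ forces a detour through a Peetre-type maximal function and the associated maximal characterization of $\at$ from \cite{ysy}, and one must track the decay of $\cfi(\varphi_j\,\cf(\Psi(\cdot-\ell)f))$ at scales both coarser and finer than $2^{-j}$, uniformly in $\ell$. Once the $B$-case is handled, the $F$-case follows by the same blueprint since the pointwise multiplier theorem and Proposition \ref{wave1} are available uniformly across both scales; the assembly of the $\widetilde{\satu}$-localization from the $\Psi$-localization and vice versa is the technical heart of the argument, while the remaining identification with $\cl^{n(\tau-1/p)}A^s_{p,q}(\rn)$ is a matter of reading off sequence-level definitions.
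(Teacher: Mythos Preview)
The paper does not prove this proposition. It is stated twice---once in Section~\ref{inter2} and once in the Appendix---and in both places it is attributed to the external reference \cite{ysy2} (Yuan, Sickel, Yang, \emph{Math.\ Nachr.}\ 2013), with no argument given here. So there is no ``paper's own proof'' to compare your proposal against.

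That said, your strategy is consistent with how the paper organizes the surrounding material: it explicitly notes (just after Definition~\ref{d2u}) that the sequence space $\satu$ coincides with Triebel's $\cl^{n(\tau-1/p)}a^{s+n/2}_{p,q}(\rn)$, and the proof of Theorem~\ref{COMI-u} invokes the wavelet characterization of $\cl^{n(\tau-1/p)}A^s_{p,q}(\rn)$ from \cite[Theorem~1.32]{t12}. So reducing the identification to a wavelet/sequence-space isomorphism is exactly the right framework. The substantive work you identify---passing between the $\Psi$-localized quasi-norm defining $\atu$ and the cube-localized sequence quasi-norm $\|\cdot\|_{\widetilde{\satu}}$ via pointwise multipliers and almost-orthogonality---is indeed where the argument lives, and is carried out in \cite{ysy2}. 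Your sketch is a reasonable outline of that proof; the details (especially the uniform-in-$\ell$ control in the quasi-Banach range) are nontrivial but standard once the multiplier and maximal characterizations from \cite{ysy} are in hand.
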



\subsection{Associated sequence spaces}
\label{sequence}


As in case of Besov-Triebel-Lizorkin spaces, discretizations play an important role.
Either by the $\varphi$-transform or by the wavelet transform, one can relate these
function spaces  to sequence spaces.
We recall their definitions.

\begin{definition}\label{d2}
Let $s\in\rr$, $\tau\in[0,\infty)$ and $q \in(0,\fz]$.

The \emph{sequence space} $\sbt$ with $p\in(0,\fz]$ is defined as the collection of all sequences
$t:=\{t_Q\}_{Q\in\cq^*}\subset \cc$ such that
$$\|t\|_{\sbt}:=
\sup_{P\in\mathcal{Q}}\frac1{|P|^{\tau}}\left[\sum_{j=\max\{j_P,0\}}^\fz\hspace{-0.2cm}
2^{j(s+\frac n2)q}\left\{\int_P\lf[\sum_{\ell(Q)=2^{-j}}
|t_Q|\chi_Q(x)\r]^p dx\right\}^{\frac qp}\right]^{\frac 1q}<\fz$$
with the usual modifications made in case $p=\fz$ and/or $q=\fz$.

The \emph{sequence space} $\sft$ with $p\in(0,\fz)$
is defined as the collection of all sequences
$t:=\{t_Q\}_{Q\in\cq^*}\subset \cc$ such that
$$\|t\|_{\sft}:=
\sup_{P\in\mathcal{Q}}\frac1{|P|^{\tau}}\left\{\int_P\left[
\sum_{j=\max\{j_P,0\}}^\fz  2^{j(s+\frac n2)q}
\sum_{\ell(Q)=2^{-j}}
|t_Q|^q\chi_Q(x)\right]^{\frac pq}\,dx\right\}^{\frac 1p}<\fz$$
with the usual modification made when $q=\fz$.
\end{definition}

To explain the connection between sequence spaces and function spaces, we employ wavelet decompositions.
Wavelet bases in function spaces are a
well-developed concept. We refer the reader to the monographs of Meyer \cite{me},
Wojtasczyk \cite{woj} and Triebel \cite{t06,t08} for the general
$n$-dimensional case (for the one-dimensional case we refer the reader to the
books of Hernandez and Weiss \cite{hw}  and  Kahane and Lemari{\'e}-Rieuseut
\cite{kl}). Let $\wz{\phi}$ be
an {\it orthonormal scaling function} on $\rr$ with compact support
and of sufficiently high regularity. Let $\wz{\psi}$ be one
{\it corresponding orthonormal wavelet}\index{wavelet}.
Then the {\it tensor product ansatz} yields a scaling function $\phi$ and
associated wavelets\index{wavelet}
$\psi_1,\,\ldots,\,\psi_{2^n-1}$, all defined now on $\rn$; see, e.\,g.,
\cite[Proposition 5.2]{woj}.
We suppose
\begin{equation}\label{4.19}
\phi\in C^{N_1}(\rn)\hs\mathrm{and}\hs
\supp\phi\subset[-N_2,\,N_2]^n
\end{equation}
for some natural numbers $N_1$ and $N_2$. This implies that
\begin{equation*}
\psi_i\in C^{N_1}(\rn)\hs\mathrm{and}\hs
\supp\psi_i\subset[-N_3,\,N_3]^n,\hs i\in\{1,\ldots,2^n-1\}
\end{equation*}
for some $N_3 \in \nn$.
For $k\in\zz^n$, $j\in\zz_+$ and $i\in\{1,\ldots,2^n-1\}$, we shall use the
standard abbreviations in this context:
$$\phi_{j,k}(x):=2^{jn/2}\phi(2^jx-k)\hs\mathrm{and}\hs
\psi_{i,j,k}(x):=2^{jn/2}\psi_i(2^jx-k),\hs x\in\rn.$$
Furthermore, it is well
known that
\[
\int_\rn \psi_{i,j,k}(x)\, x^\gz\,dx = 0 \qquad  \mbox{if}
\qquad  |\gz|\le N_1\,
\]
(see \cite[Proposition 3.1]{woj}) and
\begin{equation*}
\{\phi_{0,k}: \ k\in\zz^n\}\: \cup \: \{\psi_{i,j,k}:\ k\in\zz^n,\
j\in\zz_+,\ i\in\{1,\ldots,2^n-1\}\}
\end{equation*}
yields an {\it orthonormal basis} of $L_2(\rn)$; see \cite[Section
3.9]{me} or \cite[Section 3.1]{t06}. Namely, each function $f \in L_2 (\rn)$
admits a representation
\begin{equation}\label{4.22}
f=\dsum_{k\in\zz^n} \lambda_k\, \phi_{0,k}+\dsum_{i=1}^{2^n-1}\dsum_{j=0}^\infty
\dsum_{k\in\zz^n} \lambda_{i,j,k}\, \psi_{i,j,k}\, ,
\end{equation}
where $\lambda_k:=\laz f,\,\phi_{0,k}\raz$ and
$\lambda_{i,j,k}:=\laz f,\,\psi_{i,j,k}\raz$.
Concerning the mapping
\[
\Phi : \quad f \mapsto \{\lambda_k\}_k \: \cup \:
\{\lambda_{i,j,k}\}_{i,j,k}
\]
the following is known (see \cite{lsuyy}).

\begin{proposition}\label{wave1}
Let $s\in\rr$, $\tau\in[0,\infty)$ and $q \in(0,\fz]$.

{\rm (i)} Let $p\in(0,\fz]$ and $N_1$ be sufficiently large (in dependence on $s,p,\tau$).
 Then $f \in \bt$ if and only if $f$ can be represented as in \eqref{4.22} (convergence in $\cs'(\rn)$) and
\begin{eqnarray*}
\| \, \Phi (f)\, \|^*_{\sbt}&&:=
\sup_{P \in {\mathcal Q}} \frac{1}{|P|^\tau} \lf\{ \sum_{Q_{0,m} \subset P} |\laz f,\,\phi_{0,m}\raz|^p \r\}^{1/p}+ \sum_{i=1}^{2^n-1} \|\, \{ \laz f,\,\psi_{i,j,k}\raz\}_{j,k}\, \|_{\sbt}
\end{eqnarray*}
is finite.
In addition, the quasi-norms $\| \, \Phi (f)\, \|^*_{\sbt}$ and $\| \, f\, \|_{\bt}$
are equivalent.

{\rm (ii)} Let $p\in(0,\fz)$ and $N_1$ be sufficiently large (in dependence on $s,p,q$ and $\tau$).
 Then $f \in \ft$ if and only if $f$ can be represented as in \eqref{4.22} (convergence in $\cs'(\rn)$) and
\begin{equation*}
\| \, \Phi (f)\, \|^*_{\sft}:=
\sup_{P \in {\mathcal Q}} \frac{1}{|P|^\tau} \lf\{ \sum_{Q_{0,m} \subset P} |\laz f,\,\phi_{0,m}\raz|^p\r\}^{1/p}
+ \sum_{i=1}^{2^n-1} \|\, \{ \laz f,\,\psi_{i,j,k}\raz\}_{j,k}\, \|_{\sft}
\end{equation*}
is finite.
In addition, the quasi-norms $\| \, \Phi (f)\, \|^*_{\sft}$ and $\| \, f\, \|_{\ft}$
are equivalent.
 \end{proposition}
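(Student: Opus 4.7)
\textbf{Plan for Proposition \ref{wave1}.} The strategy is to prove the equivalence of the two quasi-norms by establishing both inequalities via the smooth atomic and molecular decomposition theory for $\at$ and $\cn^s_{u,p,q}(\rn)$ already developed in \cite{ysy,lsuyy}, and then to identify the wavelet series with $f$ in $\cs'(\rn)$. The arguments for (i) and (ii) are parallel; only the summation order inside the $\sat$-norm differs, so the reductions and estimates go through in the same way once done for one case.

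\emph{Analysis inequality $\|\Phi(f)\|^*_{\sat}\lesssim \|f\|_{\at}$.} The scaling function $\phi_{0,k}$ and each wavelet $\psi_{i,j,k}$, after the normalization $2^{-j(s+n/2)}\psi_{i,j,k}$, qualify as smooth molecules for $\at$: compact support in a dyadic cube, $C^{N_1}$-regularity, and (for wavelets) vanishing moments up to order $N_1$. The molecular characterization of $\at$ then yields pointwise bounds of the form $|\laz f,\psi_{i,j,k}\raz|\lesssim 2^{-j(s+n/2)}$ times a ``localized'' piece of $\|f\|_{\at}$ supported near $Q_{j,k}$, and summing/reassembling these bounds over the dyadic cubes $P$ in the definition of $\|\cdot\|^*_{\sat}$ delivers the desired estimate.

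\emph{Synthesis inequality $\|f\|_{\at}\lesssim \|\Phi(f)\|^*_{\sat}$.} Conversely, given any sequence $t=\{\lambda_k\}\cup\{\lambda_{i,j,k}\}$ with $\|t\|^*_{\sat}<\fz$, the renormalized $\lambda_{i,j,k}\psi_{i,j,k}$ and $\lambda_k\phi_{0,k}$ serve as smooth atoms for $\at$ with support, regularity $N_1$, and cancellation moments $N_1$. The smooth atomic decomposition theorem for $\at$ (see \cite[Section 3]{ysy} or \cite[Section 7]{lsuyy}) then implies that the wavelet series converges in $\cs'(\rn)$ to an element $g\in\at$ with $\|g\|_{\at}\lesssim \|t\|^*_{\sat}$.

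\emph{Identification.} To close the circle, we must show that for $f\in\at$ the wavelet series built from $\Phi(f)$ converges in $\cs'(\rn)$ to $f$ itself. Given $h\in\cs(\rn)$, the rapid decay $|\laz h,\psi_{i,j,k}\raz|\lesssim_M 2^{-jM}(1+|k|)^{-M}$ (from the smoothness and vanishing moments of $\psi_i$ and the Schwartz decay of $h$) combined with the polynomial-order growth of $|\laz f,\psi_{i,j,k}\raz|$ (from $f\in\cs'(\rn)$) justifies absolute convergence and interchange of sum and pairing; then the $L_2$-orthonormality of the wavelet basis gives $\laz f,h\raz=\laz g,h\raz$ for all $h\in\cs(\rn)$, hence $f=g$.

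\emph{Main obstacle.} The principal technical difficulty is the correct calibration of $N_1$ in terms of the parameters $(s,\tau,p,q)$: the renormalized wavelets must simultaneously meet the smoothness threshold required by the molecular characterization (large enough to dominate $s+n\tau$) and the moment threshold required by the atomic characterization (large enough to compensate on the negative-smoothness side controlled by $1/p-1/u$-type shifts and $\tau$). A secondary, but also delicate, point is the reduction of the dyadic-supremum structure in $\|\cdot\|_{\at}$ and $\|\cdot\|^*_{\sat}$ to cube-by-cube atomic/molecular estimates, which forces careful tracking of the tail contributions coming from atoms whose support is not contained in a given $P$ but only meets a fixed dilate of it.
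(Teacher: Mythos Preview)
The paper does not give its own proof of this proposition: it states the result in the Appendix with the attribution ``the following is known (see \cite{lsuyy})'' and, in the subsequent remark, also points to \cite{ysy}, Sawano \cite{sa0}, and Rosenthal \cite{ro13}. Your sketch via the smooth atomic and molecular decomposition machinery of \cite{ysy,lsuyy} is exactly the route those references take, so your plan is consistent with what the paper is citing rather than different from it.
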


\begin{remark}\label{rbinf}
(i) Such isomorphisms in the framework of Morrey spaces and smoothness spaces related to Morrey spaces
are also investigated  in \cite{ysy} ($s>0$); for  $\tau < 1/p$, one may also consult Sawano \cite{sa0} and Rosenthal \cite{ro13}.

(ii) There is a particular case of Proposition \ref{wave1} which plays a role in our investigations.
Let $\tau = 0$ and $p=q= \infty$ (see Proposition \ref{basic1}(i)). Then, for $N_1>|s|$, we have
$f \in B^s_{\infty,\infty} (\rn)$ if and only if $f$ can be represented as in (4.22) (convergence in $\cs'(\rn)$) and
\begin{eqnarray*}
\| \, \Phi (f)\, \|^*_{b^s_{\infty,\infty}(\rn)} & := &
\sup_{P \in {\mathcal Q}} \frac{1}{|P|^\tau} \lf\{ \sum_{Q_{0,m} \subset P} |\laz f,\,\phi_{0,m}\raz|\r\}
\\
& & \qquad + \sup_{i=1,\,  \ldots , \, 2^n-1}\,  \sup_{j \in \zz_+}\,  2^{j(s + n/2)}\,  \sup_{k \in \zz^n}\, |\laz f,\,\psi_{i,j,k}\raz| <\infty\, .\nonumber
\end{eqnarray*}
In addition, the quasi-norms $\| \, \Phi (f)\, \|^*_{b^s_{\infty,\infty}(\rn)}$ and $\| \, f\, \|_{B^s_{\infty,\infty}(\rn)}$
are equivalent.
\end{remark}

Also Besov-Morrey and Triebel-Lizorkin-Morrey spaces allow such characterizations.
However, for  Triebel-Lizorkin-Morrey spaces, this follows immediately from Proposition \ref{basic2}.
Hence, we may concentrate on Besov-Morrey spaces.

\begin{definition} Let $s\in\rr$, $q\in(0,\fz]$ and $0< p\le u\le\fz.$
The \emph{sequence space} $n_{u,p,q}^s(\rn)$
is defined as the space
of all sequences $t:=\{t_Q\}_{Q\in\cq^\ast}\subset\cc$ such that
$$\|t\|_{n_{u,p,q}^s(\rn)}
:=\lf\{\sum_{j\in\zz_+}2^{j(s+\frac n2)q}
\lf\|\sum_{\ell(Q)=2^{-j}}|t_Q|\chi_Q\r\|_{\cm^u_p(\rn)}^q\r\}^{1/q}<\fz.$$
\end{definition}

\begin{proposition}\label{wave2}
Let $s\in\rr$, $p,\ q \in(0,\fz]$ and $p \le u \le \infty$.
Let $N_1$ be sufficiently large (in dependence on $s,p,\tau$).
 Then $f \in \cn^s_{u,p,q} (\rn)$ if and only if $f$ can be represented as in (4.22) (convergence in $\cs'(\rn)$) and
\begin{eqnarray*}
\| \, \Phi (f)\, \|^*_{n_{u,p,q}^s(\rn)}:=
\sup_{P \in {\mathcal Q}} \frac{1}{|P|^\tau} \lf\{ \sum_{Q_{0,m} \subset P} |\laz f,\,\phi_{0,m}\raz|\r\}
+ \sum_{i=1}^{2^n-1} \|\, \{ \laz f,\,\psi_{i,j,k}\raz\}_{j,k}\, \|_{n_{u,p,q}^s(\rn)}
\end{eqnarray*}
is finite.
In addition, the quasi-norms $\| \, \Phi (f)\, \|^*_{n^s_{u,p,q}(\rn)}$ and $\| \, f\, \|_{\cn^s_{u,p,q} (\rn)}$
are equivalent.
\end{proposition}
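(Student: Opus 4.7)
The plan is to establish the two directions via the standard analysis/synthesis scheme, following the pattern used in Proposition \ref{wave1} for the Besov-type spaces $\bt$ and in \cite{ysy,sa0,ro13}, but with the Morrey norm $\|\cdot\|_{\cm^u_p(\rn)}$ replacing the Lebesgue norm. The key inputs are: (a) a Peetre maximal function characterization of $\cn^s_{u,p,q}(\rn)$, which is available from \cite{KY,ma03} and Subsection 11.2 of \cite{lsuyy}; (b) a smooth atomic (or molecular) decomposition theorem for $\cn^s_{u,p,q}(\rn)$, which can be proved by combining the Calder\'on reproducing formula with the boundedness of the Hardy-Littlewood maximal operator on $\cm^u_p(\rn)$ when $p>1$ and on a suitable power-rescaling in the general case (vector-valued versions being available, e.g., from the Fefferman-Stein type inequalities of Tang-Xu \cite{tx} on Morrey spaces).

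For the synthesis direction, I would start with a sequence $t=\{t_Q\}_{Q\in\cq^\ast}\cup\{\lambda_k\}_{k\in\zz^n}$ such that $\|\Phi(f)\|^\ast_{n^s_{u,p,q}(\rn)}<\infty$ and form the candidate $f := \sum_k\lambda_k\phi_{0,k}+\sum_{i,j,k} t_{Q_{j,k}}\psi_{i,j,k}$. Since $\phi$ and $\psi_i$ have compact support and $\psi_i$ has vanishing moments up to order $N_1$, each $2^{-j(s+n/2)}\psi_{i,j,k}$ is (up to a universal constant) a smooth atom supported near $Q_{j,k}$ centered at scale $2^{-j}$. Applying the atomic decomposition theorem, together with the obvious embedding of the $\phi_{0,k}$-part into $\cn^s_{u,p,q}(\rn)$ (using $\supp\phi_{0,k}\subset Q_{0,k}+[-N_2,N_2]^n$ and Lemma \ref{Leb}-style estimates on the Morrey lattice), yields $f\in\cn^s_{u,p,q}(\rn)$ with $\|f\|_{\cn^s_{u,p,q}(\rn)}\lesssim \|\Phi(f)\|^\ast_{n^s_{u,p,q}(\rn)}$, and, at the same time, convergence of the series in $\cs'(\rn)$.

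For the analysis direction, given $f\in\cn^s_{u,p,q}(\rn)$, I would estimate $|\laz f,\psi_{i,j,k}\raz|$ pointwise in terms of the Peetre maximal function $(\varphi_j^\ast f)_a(x):=\sup_{y\in\rn}|\cfi(\varphi_j\cf f)(x-y)|/(1+2^j|y|)^a$, for $a>0$ chosen sufficiently large. The smoothness and support properties \eqref{4.19} of the wavelet, combined with its vanishing moments, give the pointwise bound $|\laz f,\psi_{i,j,k}\raz|\lesssim 2^{-j(n/2+L)}\inf_{x\in Q_{j,k}}(\varphi_j^\ast f)_a(x)$ on the relevant scales (with an analogous, easier bound for $|\laz f,\phi_{0,k}\raz|$ controlled by $\sup_{x\in Q_{0,k}}(\varphi_0^\ast f)_a(x)$). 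Summing these pointwise estimates in the Morrey $\ell^q$-norm and invoking the Peetre maximal characterization of $\cn^s_{u,p,q}(\rn)$ give $\|\Phi(f)\|^\ast_{n^s_{u,p,q}(\rn)}\lesssim\|f\|_{\cn^s_{u,p,q}(\rn)}$.

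The main obstacle will be the analysis step: specifically, transferring the Peetre maximal function bound for $\cf^{-1}(\varphi_j\cf f)$ into a clean estimate of the Morrey-lattice quasi-norm $\|\sum_{\ell(Q)=2^{-j}}|t_Q|\chi_Q\|_{\cm^u_p(\rn)}$, because, unlike in $L_p(\rn)$, one cannot freely interchange the $\ell^q$-sum with the $\cm^u_p(\rn)$-norm without paying attention to the supremum over balls in the Morrey quasi-norm. This is resolved by first controlling $\sum_{\ell(Q)=2^{-j}}|t_Q|\chi_Q(x)$ pointwise a.e.\ by a constant multiple of $2^{jn/2}(\varphi_j^\ast f)_a(x)$ (using the Plancherel--Polya--Nikol'ski\u\i\ inequality to absorb $\inf_{Q_{j,k}}$ into a pointwise object), after which the Morrey norm and the $\ell^q(2^{jsq})$-sum can be handled by the already available Peetre-maximal equivalence for $\cn^s_{u,p,q}(\rn)$. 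The remaining pieces, namely completeness of the wavelet system and convergence in $\cs'(\rn)$ of \eqref{4.22}, follow from the embedding $\cn^s_{u,p,q}(\rn)\hookrightarrow\cs'(\rn)$ together with the two norm equivalences just established.
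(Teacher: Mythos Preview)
The paper does not give its own proof of Proposition \ref{wave2}; it simply refers the reader to \cite{sa0,st} and \cite{ro13}. Your sketch follows exactly the standard analysis/synthesis scheme used in those references (Peetre maximal characterization plus atomic decomposition, with the Fefferman--Stein maximal inequality on Morrey spaces as the engine), so there is nothing to compare and no substantive gap to flag.
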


For a proof, we refer the reader to  \cite{sa0,st} and  \cite{ro13}.


\subsection{Spaces on domains}
\label{domain}

Spaces on domains are defined by restrictions in our article.
For us, this is the most convenient way within this article.
Here, for all domains $\Omega\subset \rn$ and  $g\in \cs'(\rn)$,
$g|_\Omega$ denotes the restriction of $g$ on $\Omega$.

\begin{definition}\label{d5.12}
 Let $X (\rn)$ be a quasi-normed space of tempered distributions such that $X(\rn) \hookrightarrow \cs'(\rn)$.
Let $\Omega $ denote an open, nontrivial subset of $\rn$.
Then  $X(\Omega)$ is defined as the collection of all $f \in \cd' (\Omega)$ such that
there exists a distribution $g \in X(\rn)$ satisfying
\[
f (\varphi) = g (\varphi) \qquad \mbox{for all} \quad \varphi \in \cd (\Omega) \, .
\]
Here $\varphi \in \cd (\Omega)$ is extended by zero on $\rn\setminus \Omega$.
Moreover, let
\[
\| \, f\,\|_{X(\Omega)} := \inf \Big\{\| \, g\,\|_{X(\rn)}: \quad g_{|_\Omega} =f  \Big\} \, .
\]
\end{definition}

Hence, the spaces $F^{s,\tau}_{p,q} (\Omega)$, $B^{s,\tau}_{p,q} (\Omega)$, $\ce_{u,p,q}^s(\Omega)$
and $\cn_{u,p,q}^s(\Omega)$ are now well defined.
In this article,
we also consider Morrey spaces on domains and Campanato spaces on domains.
These spaces are not always spaces of distributions.
Therefore we gave in Section \ref{s1} of this article and in Subsection  \ref{inter1b}
independent definitions of Campanato spaces and Morrey spaces.
They coincide in the sense of equivalent norms in case that both variants are applicable.

\Acknowledgements{Wen Yuan is supported by the National
Natural Science Foundation  of China (Grant No. 11471042) and the
Alexander von Humboldt Foundation. Dachun Yang is supported by the National
Natural Science Foundation of China (Grant Nos. 11171027 and 11361020).  This project is also partially supported
by the Specialized Research Fund for the Doctoral Program of Higher Education
of China (Grant No. 20120003110003)  and the Fundamental Research Funds for Central
Universities of China (Grant Nos. 2013YB60
and 2014KJJCA10).
The authors would like to thank Professor Yoshihiro Sawano and
Doctor Alberto Arenas G\'omez for some helpful discussions on Remarks \ref{Sawano}
and \ref{r2.24}, respectively.}




\begin{thebibliography}{999}

\bahao\baselineskip 11.5pt

\bibitem{ax04} Adams D R, Xiao J. Nonlinear potential
analysis on Morrey spaces and their capacities.
Indiana Univ Math J, 2004, 53: 1629--1663

\bibitem{ax11}  Adams D R, Xiao J.
Morrey potentials and harmonic maps.
Comm Math Phys, 2011, 308: 439--456

\bibitem{ax12a}  Adams D R, Xiao J.
Regularity of Morrey commutators.
Trans Amer Math Soc, 2012, 364: 4801--4818

\bibitem{ax12b}  Adams D R, Xiao J.
Morrey spaces in harmonic analysis.
Ark Mat, 2012, 50: 201--230

\bibitem{BS}
Bennett  C. Sharpley R. Interpolation of Operators. Boston: Academic Press, 1988

\bibitem{be80}
{Berezhnoi E I}. Banach spaces, concave functions and interpolation of linear operators.
Funktsional Anal i Prilozhen, 1980, 14: 62--63 (in Russian)

\bibitem{BL}
Bergh J, L\"ofstr\"om J.
Interpolation Spaces. An Introduction. New York: Springer-Verlag, 1976

\bibitem{brv} Blasco O, Ruiz A, Vega L. Non-interpolation in
Morrey-Campanato and block spaces. Ann Scuola Norm Sup Pisa Cl
Sci (4), 1999, 28: 31--40

\bibitem{Bou}
Bourdaud G. Remarques sur certains sous-espaces de $BMO(\rn)$ et de $bmo(\rn)$.
Ann Inst Fourier, 2002,  52: 1187--1218

\bibitem{Bow2}
{Bownik M}. Duality and interpolation of anisotropic Triebel-Lizorkin spaces.
Math Z, 2008, 259: 131--169

\bibitem{br09}
{Brudny\v{\i} Yu A}. Sobolev spaces and their relatives:
local polynomial approximation approach.
In: Sobolev Spaces in Mathematics. II. New York: Springer, 2009, 31--68

\bibitem{BK}
{Brudny\v{\i} Yu A, Kruglyak N Ya}.
Interpolation Functors and Interpolation Spaces.
 Amsterdam: North Holland, 1991

\bibitem{ca64}
Calder\'on A P. Intermediate spaces and interpolation, the complex method.
Studia Math, 1964, 24: 113--190

\bibitem{ca1} Campanato S. Proprieta di inclusione per spaci di Morrey.
Richerche Mat, 1963, {12}: 67--86

\bibitem{ca2} Campanato S. Proprieta di h\"olderianita di alcune
classi di funzioni. Ann Scuola Norm Sup Pisa, 1963, {17}:
175--188

\bibitem{ca3} Campanato S. Proprieta di una famiglia di
spazi funzionali. Ann Scuola Norm Sup Pisa, 1964,  {18}:
137--160

\bibitem{ca4} Campanato S. Teoremi di interpolazione per
transformazioni che applicano $L^p$ in $C^{k,\alpha}$. Ann Scuola
Norm Sup Pisa, 1964, {18}: 345--360

\bibitem{cm}
Campanato S, Murthy M K V.
Una generalizzazione del teorema di Riesz-Thorin.
Ann Scuola Norm Sup Pisa (3), 1965, 19: 87--100

\bibitem{cpp}
Cobos F, Peetre J, Persson L E.
On the connection between real and complex interpolation of quasi-Banach spaces.
Bull Sci Math, 1998, 122: 17--37

\bibitem{dx} Dafni G, Xiao J.
Some new tent spaces and duality theorems for fractional Carleson
measures and $Q_\alpha({\mathbb R}^n)$. J Funct Anal, 2004,  208:
377--422

\bibitem{dch}
{Dchumakeva G T}. A criterion for the imbedding of the Sobolev-Morrey class  $W^l_{p,\Phi}$ in the space $C$.
Mat Zametki, 1985, {37}: 399--406

\bibitem{el021}
{El~Baraka A}. An embedding theorem for Campanato spaces.
Electron J Differential Equations, 2002, 66: 1--17

\bibitem{el022}
{El~Baraka A}. Function spaces of BMO
and Campanato type. Proc. of the 2002 Fez Conference on
Partial Differential Equations.  Electron J
Differ Equ Conf, 2002, 9: 109--115,
San Marcos, TX: Southwest Texas State Univ, 2002 (electronic)

\bibitem{el062}
{El~Baraka A}.
Littlewood-Paley characterization for Campanato spaces.
J Funct Spaces Appl, 2006, 4: 193--220

\bibitem{ejpx} Ess\'en M, Janson S, Peng L, Xiao J.
$Q$ spaces of several real variables. Indiana Univ Math J, 2000, 49:
575--615

\bibitem{fj90}  Frazier M, Jawerth B.
A discrete transform and decompositions of distribution spaces.
J Funct Anal, 1990, 93: 34--170

\bibitem{flyy}  Fu X, Lin H, Yang Da, Yang Do.
Hardy spaces $H^p$ over non-homogeneous metric measure spaces
and their applications. Sci China Math, 2015, 58: 309--388

\bibitem{Ga}
Gagliardo E. Caratterizzazione costruttiva
di tutti gli spazi di interpolazione tra spazi di Banach.
Symposia Mathematica, 1968, {2}: 95--106

\bibitem{g82}
Gustavsson J. On interpolation of weighted
$L^p$-spaces and Ovchinnikov's theorem. Studia Math, 1982, 72:
237--251

\bibitem{gp77}
Gustavsson J, Peetre J.
Interpolation of Orlicz spaces. Studia Math, 1977, 60: 33--59

\bibitem{hs12}
Haroske D D, Skrzypczak L.
Continuous embeddings of Besov-Morrey function spaces.
Acta Math Sin (Engl Ser), 2012, 28: 1307--1328

\bibitem{hs13}
Haroske D D, Skrzypczak L.  Embeddings of Besov-Morrey spaces
on bounded domains. Studia Math, 2013, 218: 119--144

\bibitem{hw} Hern\'andez E, Weiss G. A First Course on Wavelets.
Studies in Advanced Mathematics.  Boca Raton, FL: CRC Press, 1996.

\bibitem{jan}
Janson S. Minimal and maximal methods of interpolation.
J Funct Anal, 1981, 44: 50--73

\bibitem{JN} John F, Nirenberg L. On functions of bounded
mean oszillation.
Comm Pure Appl Math, 1961, {14}: 415--426

\bibitem{kl} Kahane J P, Lemari{\'e}-Rieuseut P G.
Fourier Series and Wavelets. New York:
Gordon and Breach Publ, 1995

\bibitem{k86} Kalton N. Analytic functions in non-locally convex lattices.
Studia Math, 1986, 83: 275--303

\bibitem{k86b} Kalton N. Plurisubharmonic functions on quasi-Banach spaces.
Studia Math, 1986, 84: 297--324

\bibitem{kmm} Kalton N, Mayboroda S, Mitrea M. Interpolation
of Hardy-Sobolev-Besov-Triebel-Lizorkin spaces and applications to
problems in partial differential equations. Interpolation Theory
and Applications. Contemp Math, 2007, 445: 121--177

\bibitem{km98} Kalton N, Mitrea M. Stability results on interpolation
scales of quasi-Banach spaces and applications.
Trans Amer Math Soc, 1998, 350: 3903--3922

\bibitem{KY}
Kozono H, Yamazaki M. Semilinear heat
equations and the Navier-Stokes equation with distributions in new
function spaces as initial data. Comm
Partial Differential Equations, 1994, 19: 959--1014

\bibitem{KPS}
{Kre{\u{\i}n S G}, Petunin Yu I, Sem\"enov E M}.
Interpolation of Linear Operators.  Moscow: Nauka, 1978,
engl. translation AMS, Providence, R.I., 1982

\bibitem{KJF}
Kufner A, John O, Fu\v{c}ik S.
Function Spaces. Prague: Academia, 1977

\bibitem{LR07}
Lemari{\'e}-Rieusset P G.
The Navier-Stokes equations in the critical Morrey-Campanato space.
Rev Mat Iberoamericana, 2007, 23: 897--930

\bibitem{LR12}
Lemari{\'e}-Rieusset P G. The role of Morrey spaces
in the study of Navier-Stokes and Euler equations.
Eurasian Math J, 2012, 3: 62--93

\bibitem{LR}
Lemari{\'e}-Rieusset P G.
Multipliers and Morrey spaces.
Potential Anal, 2013, 38: 741--752

\bibitem{LR2}
Lemari{\'e}-Rieusset P G.
Erratum to ``Multipliers and Morrey spaces".
Potential Anal, 2014, 41: 1359--1362

\bibitem{LR2+}
Lemari{\'e}-Rieusset P G.
Sobolev multipliers, maximal functions and
parabolic equations with a quadratic nonlinearity. Preprint, 2013,
avaiable in: www.maths.univ-evry.fr/prepubli/387.pdf

\bibitem{LXY} Li P, Xiao J, Yang Q.
Global mild solutions of fractional Navier-Stokes equations with
small initial data in critical Besov-$Q$ spaces.
Electron J Differential Equations, 2014,  185: 37 pp

\bibitem{lsuyy1}
Liang Y, Sawano Y, Ullrich T, Yang D, Yuan W.
New characterizations of Besov-Triebel-Lizorkin-Hausdorff spaces including
coorbits and wavelets.
J Fourier Anal Appl, 2012,  18: 1067--1111

\bibitem{lsuyy}
Liang Y, Yang D, Yuan W, Sawano T, Ullrich T.
A new framework for generalized Besov-type and Triebel-Lizorkin-type spaces.
Dissertationes Math (Rozprawy Mat), 2013, 489: 1--114

\bibitem{ly14}
Lin H, Yang D, Equivalent boundedness
of Marcinkiewicz integrals on non-homogeneous metric measure spaces.
Sci China Math, 2014, 57: 123--144

\bibitem{LY}
Long R, Yang L.  BMO functions in spaces of homogeneous type.
Sci Sinica Ser A, 1984, 27:  695--708

\bibitem{l72}
Lozanovski\u\i G Ja. A remark on a certain interpolation theorem of Calder\'on.
Funkcional Anal i Prilo\u zen, 1972, 6: 89--90 (in Russian)

\bibitem{l73}
Lozanovski\u\i G Ja. On some Banach lattices IV.
Sibirsk. Math Zh, 1973, 14: 140--155

\bibitem{lyy} Lu Y, Yang D, Yuan W.
Interpolation of Morrey spaces on metric measure spaces.
Canad Math Bull, 2014, 57: 598--608

\bibitem{lun}
Lunardi A.
Interpolation Theory. Lect. Notes. Pisa:
Scuola Normale Superiore Pisa, 2009

\bibitem{Ma}
{Maligranda L}. Orlicz Spaces and Interpolation. Seminars in Mathematics 5.
Campinas: Universidade Estadual de Campinas, Departamento de Matem\'atica, 1989

\bibitem{ma01}
{Mazzucato} A.
Decomposition of Besov-Morrey spaces.
In: Harmonic Analysis at Mount Holyoke 2001.
Contemp Math, 2003, 320: 279--294

\bibitem{ma03}
{Mazzucato} A.
Besov-Morrey spaces: function space theory and applications to non-linear PDE.
Trans Amer Math Soc, 2003, 355: 1297--1369

\bibitem{MM}
{Mendez O, Mitrea M}. The Banach envelopes of
Besov and Triebel-Lizorkin spaces and applications to partial
differential equations.
J Fourier Anal Appl, 2000, 6, 503--531

\bibitem{me} Meyer Y. Wavelets and Operators. Cambridge:
Cambridge Univ. Press, 1992

\bibitem{ns14} Nakai E, Sawano Y. Orlicz-Hardy spaces and
their duals. Sci China Math, 2014, 57: 903--962

\bibitem{n85}
Nilsson P. Interpolation of Banach lattices.
Studia Math, 1985, 82: 135--154

\bibitem{Ov}
Ovchinnikov V I. The method of orbits in interpolation theory.
Math Reports, 1984, 1: 349--515, London: Harwood Publ., 1984

\bibitem{p69}
Peetre J. On the theory of $\cl_{p,\lz}$ spaces.
J Funct Anal, 1969, 4: 71--87

\bibitem{p71}
Peetre J. Sur l'utilisation des suites inconditionellement sommables dans la
th\'eorie des espaces d'interpolation.
Rend Sem Mat Univ Padova, 1971, 46: 173--190

\bibitem{Pe76}
{Peetre} J. {New Thoughts on Besov Spaces}. Durham:  Duke Univ. Press, 1976

\bibitem{PKJF}
Pick L, Kufner A, John O, Fu\v{c}ik S.
Function Spaces II. Vol.~1.
Banach Function Spaces. Berlin:  de Gruyter, 2012

\bibitem{RSS}
Rafeiro H, Samko N, Samko S.
Morrey-Campanato spaces: an overview. In: Operator Theory, Pseudo-differential
Equations, and Mathematical Physics.
Oper Theory Adv Appl, 228: 293-323, Basel: Birkh\"auser/Springer Basel AG, 2013.

\bibitem{ro13}
Rosenthal M.
Local means, wavelet bases and wavelet isomorphisms in Besov-Morrey and Triebel-Lizorkin-Morrey spaces.
Math Nachr, 2013, 286: 59--87

\bibitem{rv95} Ruiz A, Vega L. Corrigenda to ``Unique continuation
for Schr\"odinger operators with potential in Morrey spaces'' and a remark
on interpolation of Morrey spaces.
Publ Mat, 1995,  3: 405--411

\bibitem{rs96} Runst T, Sickel W.
Sobolev Spaces of Fractional Order, Nemytskij Operators,
and Nonlinear Partial Differential Equations.
de Gruyter Series in Nonlinear Analysis and Applications 3.
Berlin: Walter de Gruyter \& Co., 1996

\bibitem{ry99}
V. S. Rychkov, On restrictions and extensions of the Besov and
Triebel-Lizorkin spaces with respect to Lipschitz domains.
J London Math Soc (2), 1999, 60: 237--257

\bibitem{sa0}
{Sawano} Y. Wavelet characterization of Besov-Morrey and
Triebel-Lizorkin-Morrey spaces.
Funct Approx Comment Math, 2008,  38: 93--107

\bibitem{sawch}
{Sawano} Y. A note on Besov-Morrey and Triebel-Lizorkin-Morrey spaces.
Acta Math Sin (Engl Ser), 2009, 25: 1223--1242

\bibitem{sa10}
{Sawano} Y. Besov-Morrey spaces and Triebel-Lizorkin-Morrey spaces on domains.
Math Nachr, 2010, 283: 1--32


\bibitem{st}
Sawano Y, Tanaka H. Decompositions of Besov-Morrey spaces and
Triebel-Lizorkin-Morrey spaces.
Math Z, 2007, 257: 871--904

\bibitem{sat}
Sawano Y, Tanaka H. Besov-Morrey spaces and Triebel-Lizorkin-Morrey
spaces for non-doubling measures. Math Nachr, 2009, 282: 1788--1810


\bibitem{st15}
Sawano Y, Tanaka H. The Fatou property of block spaces. J Math Sci Univ Tokyo,
In Press

\bibitem{syy}
Sawano Y, Yang D, Yuan W. New applications of Besov-type
and Triebel-Lizorkin-type spaces. J Math Anal Appl, 2010, 363: 73--85

\bibitem{s74}
Shestakov V A. Interpolation of linear operators in spaces of measurable functions.
Funktsional Anal i Prilozhen, 1974, 8: 91--92 (in Russian)

\bibitem{s74m}
Shestakov V A. On complex interpolation of Banach spaces
of measurable functions.
Vestnik Leningrad Univ, 1974, 19: 64--68 (in Russian)

\bibitem{sh81}
Shestakov V A. Transformations of Banach lattices and the interpolation of linear operators.
Bull Acad Polon Sci, 1981, 29: 569--577 (in Russian)

\bibitem{s011}
Sickel W. Smoothness spaces related to Morrey spaces--a survey. I.
Eurasian Math J, 2012, 3: 110--149

\bibitem{s011a}
Sickel W. Smoothness spaces related to Morrey spaces--a survey. II.
Eurasian Math J, 2013, 4: 82--124

\bibitem{ssv}
Sickel W, Skrzypczak L, Vyb\'\i ral J.
Complex interpolation  of  weighted Besov- and Lizorkin-Triebel spaces.
 Acta Math Sinica (Eng Ser), 2014, 30: 1297--1323

\bibitem{ssvb}
Sickel W, Skrzypczak L, Vyb\'\i ral J.  Complex interpolation of weighted Besov- and
Lizorkin-Triebel spaces (extended version). arxiv: 1212.1614

\bibitem{sp66}
Spanne S. Sur l'interpolation entres les espaces  $\cl^{(p,\Phi)}_k$.
Ann Scuola Norm Sup Pisa, 1966, 20: 625--648

\bibitem{s64}
Stampacchia G. $\cl^{(p,\lz)}$-spaces and interpolation.
Comm Pure Appl Math, 1964, 17: 293--306

\bibitem{tl15} Tan C, Li J. Littlewood-Paley theory on metric spaces
with non doubling measures and its applications. Sci China Math, 2015, 58: 983--1004

\bibitem{tx}
Tang L, Xu J. Some properties of Morrey type Besov-Triebel
spaces. Math Nachr, 2005, 278: 904--914

\bibitem{ta92}
Taylor M. Analysis on Morrey spaces and applications to
Navier-Stokes and other evolution equations.
Comm Partial Differential Equations, 1992, 17: 1407--1456

\bibitem{t78} Triebel H.
Interpolation Theory, Function Spaces, Differential Operators.
Amsterdam-New York: North-Holland Publishing Co., 1978.

\bibitem{t81} Triebel H.
Complex interpolation and Fourier multipliers for the spaces
$B^{s}_{p,q}$ and $F^{s}_{p,q}$ of
Besov-Hardy-Sobolev type: the case $0<p\leq \infty$,
$0<q\leq \infty$.  Math Z, 1981, 176, 495--510

\bibitem{t83} Triebel H. Theory of Function Spaces. Basel: Birkh\"auser
Verlag, 1983

\bibitem{t92}
Triebel H. Theory of Function Spaces. II.  Basel:
Birkh\"auser Verlag, 1992

\bibitem{t06} Triebel H. Theory of Function Spaces III. Basel:
Birkh\"auser Verlag, 2006

\bibitem{t08} Triebel H. Function Spaces and Wavelets on Domains.
 EMS Tracts in Mathematics 7. Z\"urich: European Mathematical Society (EMS), 2008

\bibitem{t12} Triebel H. Local Function Spaces,
Heat and Navier-Stokes Equations.  EMS Tracts in Mathematics 20. Z\"urich:
European Mathematical Society (EMS), 2013

\bibitem{t14} Triebel H.
Hybrid Function Spaces, Heat and Navier-Stokes Equations.
EMS Tracts in Mathematics 24. Z\"urich: European Mathematical Society (EMS), 2014.

\bibitem{tu} Turpin P.
Convexit{\'e}s dans les espaces vectoriels topologiques g\'en\'eraux.
Dissertationes Math (Rozprawy Mat), 1974, 131: 221 pp (in French)

\bibitem{woj} Wojtaszczyk P. A Mathematical Introduction to Wavelets.
 Cambridge: Cambridge Univ. Press, 1997

\bibitem{x01} Xiao J. Holomorphic $Q$ Classes. Lecture Notes in
Math. 1767. Berlin: Springer, 2001

\bibitem{x06} Xiao J. Geometric $Q_p$ Functions. Basel:
Birkh\"auser Verlag, 2006

\bibitem{xu14} Xu J. Decompositions of non-homogeneous Herz-type
Besov and Triebel-Lizorkin spaces. Sci China Math, 2014, 57: 315--331

\bibitem{yy1}
Yang D, Yuan W. A new class of function spaces connecting Triebel-Lizorkin
spaces and Q spaces. J Funct Anal, 2008, 255: 2760--2809

\bibitem{yy2}
Yang D, Yuan W.  New Besov-type spaces and Triebel-Lizorkin-type spaces
including Q spaces. Math Z, 2010, 265: 451--480

\bibitem{yy4} Yang D, Yuan W.
Relations among Besov-type spaces, Triebel-Lizorkin-type spaces and
generallized Carleson measure spaces. Appl Anal, 2013, 92: 549--561

\bibitem{yyz1} Yang D, Yuan W, Zhuo C. Fourier multipliers on
Triebel-Lizorkin-type spaces. J Funct Spaces Appl, 2012,
Article ID 431016: 37 pp

\bibitem{yyz}  Yang D, Yuan W, Zhuo C.
Complex interpolation on Besov-type
and Triebel-Lizorkin-type spaces. Anal Appl (Singap), 2013, 11:
1350021, 45 pp

\bibitem{y14} Yuan W. A note on complex interpolation
and Calder\'on product of quasi-Banach spaces.  arXiv: 1405.5735

\bibitem{yhsy15} Yuan W, Haroske D D, Skrzypczak L, Yang D.
Embedding properties of Besov-type spaces. Appl Anal, 2015, 94: 318--340

\bibitem{yhsy14} Yuan W, Haroske D D, Skrzypczak L, Yang D.
Embedding properties of weighted Besov-type spaces. Anal Appl (Singap), 2014,
DOI: 10.1142/S0219530514500493.

\bibitem{ysy} Yuan W, Sickel W, Yang D.
Morrey and Campanato Meet Besov, Lizorkin and Triebel. Lecture Notes
in Mathematics 2005. Berlin: Springer-Verlag, 2010

\bibitem{ysy2} Yuan W, Sickel W, Yang D.
On the coincidence of
certain approaches to smoothness spaces related to Morrey spaces.
Math Nachr, 2013, 286: 1571--1584
\end{thebibliography}
\end{document}